\documentclass[11pt]{article}

\usepackage[margin=1in]{geometry}
\usepackage{amsmath,amssymb,amsthm}
\usepackage{graphicx}
\usepackage{float}
\usepackage{booktabs}
\usepackage{xcolor}
\usepackage{colortbl}
\usepackage[colorlinks=true,linkcolor=blue,citecolor=blue,urlcolor=blue]{hyperref}
\usepackage{etoolbox}
\pretocmd{\section}{\clearpage}{}{}

\usepackage{listings}
\definecolor{leanblue}{RGB}{0,0,180}
\definecolor{Nblue}{RGB}{31,119,180}
\definecolor{Ngreen}{RGB}{44,160,44}
\definecolor{leangreen}{RGB}{0,128,0}
\definecolor{leanred}{RGB}{180,0,0}
\definecolor{followerviolet}{RGB}{120,40,180}
\lstdefinelanguage{Lean}{
  morekeywords={
    import, open, variable, variables, section, end, axiom,
    def, theorem, lemma, example, abbrev, set, unfold, conv_lhs,
    noncomputable, namespace, where, by, intro, intros,
    apply, fun, match, with, if, then, else, do,
    let, in, exact, rfl, simp, dsimp, rw, have, from, ring, field_simp,
    push_cast, ring_nf, linear_combination, congr, exact_mod_cast, simpa, only
  },
  sensitive=true,
  morecomment=[l]{--},
  morestring=[b]"
}
\lstdefinestyle{leanstyle}{
  language=Lean,
  basicstyle=\ttfamily\footnotesize,
  keywordstyle=\color{leanblue}\bfseries,
  commentstyle=\color{leangreen},
  stringstyle=\color{purple},
  identifierstyle=\color{black},
  columns=fullflexible,
  keepspaces=true,
  showstringspaces=false,
  breaklines=true,
  breakindent=2em,
  numbers=left,
  numberstyle=\tiny\color{gray},
  stepnumber=1,
  literate={ℝ}{{$\mathbb{R}$}}1
           {ℂ}{{$\mathbb{C}$}}1
           {ℤ}{{$\mathbb{Z}$}}1
           {ω}{{$\omega$}}1
           {φ}{{$\varphi$}}1
           {ψ}{{$\psi$}}1
           {χ}{{$\chi$}}1
           {σ}{{$\sigma$}}1
           {θ}{{$\theta$}}1
           {→}{{$\to$}}1
           {←}{{$\leftarrow$}}1
           {‖}{{$\|$}}1
           {≠}{{$\neq$}}1
           {¬}{{$\neg$}}1
           {∃}{{$\exists$}}1
           {∀}{{$\forall$}}1
           {⌈}{{$\lceil$}}1
           {⌉}{{$\rceil$}}1
           {₀}{{$_0$}}1
           {₁}{{$_1$}}1
           {₂}{{$_2$}}1
           {∓}{{$\mp$}}1
           {·}{{$\cdot$}}1
           {ℕ}{{$\mathbb{N}$}}1
           {Σ}{{$\Sigma$}}1
           {∑}{{$\sum$}}1
           {∈}{{$\in$}}1
           {ζ}{{$\zeta$}}1
           {π}{{$\pi$}}1
           {⊢}{{$\vdash$}}1
}

\graphicspath{{figures/}}

\newtheorem{theorem}{Theorem}[section]
\newtheorem{proposition}[theorem]{Proposition}

\newtheorem{lemma}[theorem]{Lemma}
\newtheorem{corollary}[theorem]{Corollary}
\theoremstyle{definition}

\newtheorem{remark}[theorem]{Remark}

\newcommand{\linkmatd}[2]{%
\left(\begin{array}{@{}l@{\,}l@{\,}l@{}}
\cos #1 & -\sin #1           & \frac{\cos #1}{#2}\\[1pt]
\sin #1 & \phantom{-}\cos #1 & \frac{\sin #1}{#2}\\[1pt]
\multicolumn{1}{@{}c@{\,}}{0} & \multicolumn{1}{c@{\,}}{0} & \multicolumn{1}{c@{}}{1}
\end{array}\right)}
\newcommand{\linkmatx}[2]{%
\left(\begin{array}{@{}l@{\,}l@{\,}l@{}}
\cos #1 & -\sin #1           & -\frac{|\chi|\cos #1}{#2}\\[1pt]
\sin #1 & \phantom{-}\cos #1 & -\frac{|\chi|\sin #1}{#2}\\[1pt]
\multicolumn{1}{@{}c@{\,}}{0} & \multicolumn{1}{c@{\,}}{0} & \multicolumn{1}{c@{}}{1}
\end{array}\right)}
\newcommand{\linkmat}[2]{%
\left(\begin{array}{@{}l@{\ \,}l@{\ \,}l@{}}
\cos #1 & -\sin #1           & #2\cos #1\\
\sin #1 & \phantom{-}\cos #1 & #2\sin #1\\
\multicolumn{1}{@{}c@{\ \,}}{0} & \multicolumn{1}{c@{\ \,}}{0} & \multicolumn{1}{c@{}}{1}
\end{array}\right)}

\title{\textbf{Remainder Terms of the Riemann Zeta Function}}
\author{Paul Stahura\\[0.3em]\small\texttt{paul+zeta@stahura.net}}
\date{\today}

\begin{document}
\maketitle

\begin{abstract}
Starting from the Riemann--Siegel formula for $\zeta(s)$ given in Siegel's
1932 paper, we introduce a change of variable, $t=I(T)$, that replaces
Siegel's coupled pair ``imaginary part $t$ / summation cutoff $m$'' with one
continuous cutoff parameter $T$. Five results follow.

\noindent (1) We split Siegel's single remainder integral $R$ into two exact
pieces and prove $R=R_{1}+R_{2}$, so that
$\zeta=\Sigma_{1}+R_{1}+\Sigma_{2}+R_{2}$; the algebra is formally verified
in Lean in an appendix, as are the critical-line equalities that descend
from it.

\noindent (2) Each remainder is nothing more than \emph{one additional,
fractional partial summand} appended to its Dirichlet sum:
$R_{1}=\hat d_1\,(m+1)^{-s}$ and $R_{2}=\hat d_2\,\chi(s)\,(m+1)^{s-1}$
with $m=\lfloor T\rfloor$; we prove the weights $\hat d_1,\hat d_2$ are
real across the strip, equal and positive on the critical line, and
positive off it outside two narrow windows of width $O(1/T)$ per unit
interval of $T$.

\noindent (3) The weights are periodic in the index: $d_1$ repeats one
waveform in every unit interval of $T$, converging to the explicit tangent
waveform $\mathcal W_{\infty}$, and at fixed fractional part $\{T\}=x$ the
normalized weight $\sqrt{m+1}\,d_1$ converges to a closed-form limit profile
$d(x)$.

\noindent (4) We find thin loci in the strip, the equal-leg and folded-leg
ovals, that are the only places off the critical line where a zero of
$\zeta$ could possibly lie.

\noindent (5) We construct a counting function $N_{\ast}(T)$ from
critical-line information alone; unlike the Riemann--von Mangoldt count it
sees only the zeros on the line, and it is continuous and increasing (the
latter under the Riemann hypothesis), passing an integer exactly where $Z$
changes sign.
\end{abstract}

\tableofcontents

\section{Introduction}

Siegel published his study of Riemann's unpublished papers in
1932~\cite{Siegel1932}. Working through the Nachla\ss{} at
G\"ottingen\footnote{The chronology pins the reading to G\"ottingen,
where the Nachla\ss{} is held: Siegel studied it there in the early
1930s (though he didn't move to G\"ottingen until 1938) and published
in 1932, before his first visit to the Institute for Advanced Study
(January--June 1935) and his longer residence there (1940--1951).} he
recovered, from calculations Riemann never published, an asymptotic
evaluation of $\zeta$ by the saddle-point method. The result is what is
now called the Riemann--Siegel formula, and it is the starting point of
this paper.

This paper does seven things: the five results announced in the
abstract, together with the reparameterization they rest on and an
account of the geometry from which they came.

\begin{enumerate}
\item \textbf{A reparameterization.} We introduce a change of variable
$t=I(T)$ that collapses Siegel's two roles for $t$ (the imaginary part of the
input and the quantity determining the summation cutoff) into one continuous
cutoff parameter $T$ (\S\ref{sec:IT}). The map $t=I(T)$ is exact, while our
choice $m=\lfloor T\rfloor$ differs from Siegel's standard cutoff. The
Riemann--Siegel identity remains exact with this new cutoff.

\item \textbf{Two new remainders.} The Riemann--Siegel formula
states that
$\zeta = \Sigma_1 + \Sigma_2 + R$, where $\Sigma_1,\Sigma_2$ are finite Dirichlet
sums and $R$ is a remainder integral. We define two exact
remainder terms,
$R_{1}$ and $R_{2}$, and prove
\[
R \;=\; R_{1} + R_{2},
\qquad\text{hence}\qquad
\zeta \;=\; \Sigma_{1} + R_{1} + \Sigma_{2} + R_{2}.
\]
This is the paper's central identity, stated and proved as
Theorem~\ref{thm:decomp} (\S\ref{sec:decomp}). A formalization of its proof
in Lean is given in Appendix~\ref{app:lean-decomp}.

\item \textbf{Just a fractional summand.} The two new remainders are not
opaque integrals: each is exactly \emph{one more summand} of its Dirichlet
sum, namely the $(m+1)$st term, scaled by a real ``fractional'' weight
$\hat d_1$ or $\hat d_2$ (\S\ref{sec:summand}). The exact zeta function
thus reduces to the main sum and the dual sum, each carried one fractional
summand past its cutoff, with no other remainder term:
\[
\zeta(s)=\sum_{n=1}^{m} n^{-s} + \hat d_1\,(m+1)^{-s}
        + \chi(s)\sum_{n=1}^{m} n^{s-1} + \hat d_2\,\chi(s)\,(m+1)^{s-1},
\]
which is \eqref{eq:zeta-clean} of \S\ref{sec:summand}. The weights are
positive on the critical line, where they also coincide, $d_1=d_2$, and
positive off it outside narrow windows (Proposition~\ref{prop:weights},
proved in Appendix~\ref{sec:positivity}).

\item \textbf{Periodicity.} In the variable $\{T\}$ the weights repeat one
fixed waveform in every unit interval of $T$ (\S\ref{sec:periodicity}):
$d_1$ converges to the explicit tangent waveform $\mathcal W_{\infty}$ of
\eqref{eq:W-infty}, and at fixed fractional part $\{T\}=x$ the normalized
weight $\sqrt{m+1}\,d_1$ converges to a closed-form limit profile $d(x)$.
The waveform and its limit were first observed geometrically, as the yin
and yang curves of \S\ref{sec:yinyang-inf}; only afterwards did we show
they can be reached symbolically from a function in Siegel's paper
(\S\ref{sec:tangent-derivation}).

\begin{center}
{\small Periodicity of the weights\quad(detail of
Figure~\ref{fig:d1-periodicity})}\\[2pt]
\includegraphics[width=0.96\textwidth]{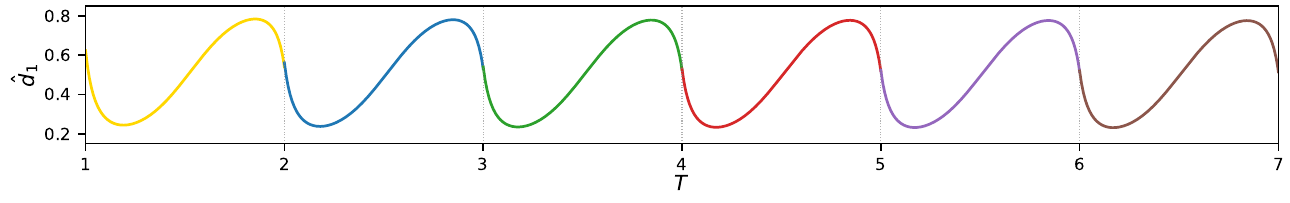}
\end{center}

\item \textbf{Where the insights came from: geometry.} None of this was
first derived; it was first \emph{seen}. Drawing the partial-sum chains of
$\zeta$ as links and joints in the complex plane, and sweeping $\sigma$
and $T$ continuously, is how the mapping $I(T)$, the yin-yang curves, the
split of $R$, and the
weights were found (\S\ref{sec:geometry}, \S\ref{sec:geometric-origins}).
The bisector point, the two legs it makes with the origin and $\zeta$, and
the angles $\vartheta_1,\vartheta_2$ organize everything that follows.

\item \textbf{Loci that confine the zeros.} A zero of $\zeta$ requires the
two legs to be equal in length \emph{and} folded back onto one another.
Off the critical line the equal-leg locus consists of thin ovals, and
those ovals are the only places off the line where a zero could possibly
lie. A zero also needs the fold angle to reach $\pi$ there, so showing
the two conditions never meet on an oval is yet another possible
route to the Riemann hypothesis (\S\ref{sec:toward-proof},
\S\ref{sec:eqleg-density}).

\item \textbf{A new zero-counting function.} From critical-line information alone
we build
\[
N_{\ast}(T)
=\frac{1}{\pi}\arg\Bigl(Z-\frac{iZ'}{\vartheta'}\Bigr)+\frac32,
\]
which is \eqref{eq:N-star-prufer} of \S\ref{sec:counting}: unlike the
Riemann--von
Mangoldt count $N(t)$, which counts all zeros in the strip, $N_{\ast}$
counts only the zeros on the critical line; it is continuous, increasing under the
Riemann hypothesis, and passes an integer exactly where $Z$ changes sign
(\S\ref{sec:counting}). $N(t)$ is a step function and tells nothing about
$|Z|$, while the vector behind $N_{\ast}$ carries $|Z|$ along with the
count: its winding gives the zeros and its modulus gives the envelope of
$|Z|$.
\end{enumerate}

\begin{center}
\textbf{Riemann--von Mangoldt count \textcolor{Nblue}{$N(t)$}, and \textcolor{Ngreen}{$N_{\ast}(T)$}}\\[0.4em]
\includegraphics[width=\linewidth]{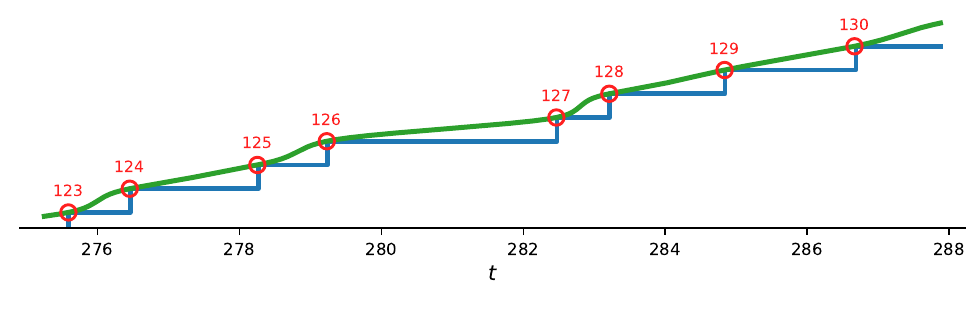}
\end{center}

\section{Siegel's 1932 formula}\label{sec:siegel}

We begin with the Riemann--Siegel formula, which expresses $\zeta(s)$ as two
finite Dirichlet sums plus a remainder integral, as it appears in Siegel's
1932 paper~\cite{Siegel1932} (his equation 13):
\begin{equation}\label{eq:siegel13}
\zeta(s)
=
\sum_{n=1}^{m} n^{-s}
\;+\;
\frac{(2\pi)^s}{2\,\Gamma(s)\,\cos\!\bigl(\tfrac{\pi s}{2}\bigr)}
\sum_{n=1}^{m} n^{s-1}
\;+\;
\frac{(2\pi)^s e^{\tfrac{\pi i s}{2}}}{\Gamma(s)\,\bigl(e^{2\pi i s}-1\bigr)}
\int_{C_2} g(x)\,\mathrm{d}x,
\end{equation}
where $m$ is an integer cutoff, fixed below in \eqref{eq:siegel-m}, and
in Siegel's notation $\epsilon=e^{-\pi i/4}=(1-i)/\sqrt2$. The oriented
path $C_2$ consists of two half-lines meeting at
$\eta\bigl(1-\tfrac{\epsilon}{2}\bigr)$, with
$\eta=+\sqrt{t/2\pi}$ the saddle point; one half-line passes through
$\eta$, the other through $-(m+\tfrac12)$, and $g$ is defined below.

It is convenient to name the three pieces. Recall the functional-equation
factor
\begin{equation}
\chi(s) \;:=\; 2^{s}\pi^{s-1}\sin\!\Bigl(\tfrac{\pi s}{2}\Bigr)\Gamma(1-s)
\;=\;
\frac{(2\pi)^s}{2\,\Gamma(s)\,\cos\!\bigl(\tfrac{\pi s}{2}\bigr)},
\end{equation}
so that, writing $s=\sigma+it$ and taking the first $m$ terms,
\begin{align}
\Sigma_{1}(s) &\;:=\; \sum_{n=1}^{m} n^{-s}
\quad\text{(the main sum)}, \\[2pt]
\Sigma_{2}(s) &\;:=\; \frac{(2\pi)^s}{2\,\Gamma(s)\,\cos\!\bigl(\tfrac{\pi s}{2}\bigr)}
\sum_{n=1}^{m} n^{s-1}
= \chi(s)\sum_{n=1}^{m} n^{s-1}
\quad\text{(the dual sum)}, \\[2pt]
R(s) &\;:=\; \frac{(2\pi)^s e^{\tfrac{\pi i s}{2}}}{\Gamma(s)\,\bigl(e^{2\pi i s}-1\bigr)}
\int_{C_2} g(x)\,\mathrm{d}x,
\end{align}
where
\begin{equation}
g(x)\;:=\;x^{s-1}\,\frac{e^{-2\pi i m x}}{e^{2\pi i x}-1}.
\end{equation}
With these names, \eqref{eq:siegel13} reads
\begin{equation}\label{eq:siegel-named}
\zeta(s) = \Sigma_{1} + \Sigma_{2} + R.
\end{equation}

In Siegel's formulation $m$ is the summation cutoff, and to minimize the
remainder he takes
\begin{equation}\label{eq:siegel-m}
m=\left\lfloor\sqrt{\tfrac{t}{2\pi}}\right\rfloor .
\end{equation}
Thus $t$ plays two roles at once: it is the imaginary part of the input $s$,
and, through \eqref{eq:siegel-m}, it also fixes the number of terms $m$. In
this way $m$ depends on $t$. The next section reverses the dependency.

\section{Reparameterization and cutoff choice: The \texorpdfstring{$I(T)$}{I(T)} mapping}\label{sec:IT}

Unless a statement explicitly gives a larger domain, assume throughout that
$T>0$, $0<\sigma<1$, $t=I(T)$, and $s=\sigma+it$.

We introduce a change-of-variable mapping for $t$, and with it fix the
variables used for the rest of the paper:
\begin{equation}\label{eq:IT}
t
\;=\;
\frac{\pi\,\bigl(2\,T + 1\bigr)}{\log\!\bigl(\tfrac{1}{T} + 1\bigr)}
\;=\; I(T),
\qquad
s=\sigma+it,
\qquad
m \;=\; \lfloor T\rfloor,
\qquad
M \;=\; \lceil T\rceil,
\end{equation}
For non-integer $T$, $M=m+1$ and is the index of the next omitted summand.
At integer $T$, $M=m$ and refers instead to the last included summand. All
``next summand'' statements below are therefore restricted to non-integer
$T$ or interpreted through one-sided limits.
Here $T$ is a real number, while $m=\lfloor T\rfloor$ is an integer. So now
$m$ and $t$ depend on $T$, the reverse of Siegel's \eqref{eq:siegel-m}. The single
function $I$ therefore determines both the imaginary part of the input and the
integer summation cutoff; for this reason we call $T$ ``the index'' even
though it is real.

\begin{remark}\label{rem:T-vs-a}
The index is similar to, but not exactly, the classical
Riemann--Siegel cutoff $\sqrt{t/2\pi}$. Expanding the logarithm
in \eqref{eq:IT} gives
\[
\frac{I(T)}{2\pi}
\;=\;T^{2}+T+\tfrac16+O\!\bigl(\tfrac1{T^{2}}\bigr)
\;=\;\Bigl(T+\tfrac12\Bigr)^{2}-\tfrac1{12}+O\!\bigl(\tfrac1{T^{2}}\bigr),
\]
and taking square roots,
\[
\sqrt{\tfrac t{2\pi}}
\;=\;T+\tfrac12-\frac{1}{24\,(T+\tfrac12)}+O\!\bigl(\tfrac1{T^{3}}\bigr).
\]
So $T$ is approximately, but not exactly, $\sqrt{t/2\pi}-\tfrac12$, and
the approximation improves as $T$ grows: the difference between $T+\tfrac12$ and
$\sqrt{t/2\pi}$ is $O(1/T)$, with explicit leading term
$\tfrac1{24}(T+\tfrac12)^{-1}$. (At $T=6$, for instance,
$\sqrt{t/2\pi}=6.4936$ while $T+\tfrac12-\tfrac1{24\cdot6.5}=6.4936$ as
well.)
\end{remark}

The abbreviations of \eqref{eq:IT} are in force for the rest of
the paper: every object below depends on $\sigma$ and $T$ through them,
and we do not carry the arguments. In this notation Siegel's formula
\eqref{eq:siegel-named} becomes
\begin{equation}\label{eq:zeta-T}
\zeta
=
\Sigma_{1} + \Sigma_{2} + R,
\end{equation}
where
\begin{align}
\Sigma_{1} &= \sum_{n=1}^{m} n^{-s}
\quad\text{(the main sum)}, \\[4pt]
\Sigma_{2} &= \chi(s)
\sum_{n=1}^{m} n^{\,s-1}
\quad\text{(the dual sum)},
\end{align}
where $s=\sigma+i\,I(T)$ and $m=\lfloor T\rfloor$. Using the identity
\begin{equation}
\frac{(2\pi)^s e^{\tfrac{\pi i s}{2}}}{\Gamma(s)\bigl(e^{2\pi i s}-1\bigr)}
\;=\;
\frac{\chi(s)}{\,e^{i\pi s}-1\,},
\end{equation}
the remainder becomes
\begin{equation}\label{eq:R-T}
R
=
\frac{\chi(s)}{\,e^{\,i\pi s}-1\,}
\int_{C_2} g(x)\,\mathrm{d}x ,
\end{equation}
where $s=\sigma+i\,I(T)$.

For any integer $q\geq0$, let
\[
R_q(s):=\zeta(s)-\sum_{n=1}^{q}n^{-s}
-\chi(s)\sum_{n=1}^{q}n^{s-1}.
\]
Siegel's standard choice is $q=\lfloor\sqrt{t/(2\pi)}\rfloor$, whereas here
$t=I(T)$ and $q=m=\lfloor T\rfloor$; below we write $R:=R_m$. Thus the map
$t=I(T)$ is exact, but the construction combines that exact
reparameterization with a change from Siegel's standard cutoff.

\section{Decomposing the remainder: \texorpdfstring{$R=R_{1}+R_{2}$}{R = R1 + R2}}\label{sec:decomp}

We now split Siegel's single remainder $R$ into two exact pieces, named
$R_{1}$ and $R_{2}$, stated in the variables $t$, $s$, $m$, and $M$ fixed
in \eqref{eq:IT}. Define
\begin{align}
R_{1} &\;:=\; d_{1}\, M^{-it},
\label{eq:R1-def}\\[4pt]
R_{2} &\;:=\; d_{2}\, M^{\,it}\,
\frac{\chi}{|\chi|},
\label{eq:R2-def}\\
\intertext{with real weights}
d_{1} &\;:=\; |R|\,
\frac{\sin\!\bigl(\omega - \arg R + \arg\chi\bigr)}{\sin\!\bigl(2\omega + \arg\chi\bigr)},
\label{eq:d1}\\[4pt]
d_{2} &\;:=\; |R|\,
\frac{\sin\!\bigl(\omega + \arg R\bigr)}{\sin\!\bigl(2\omega + \arg\chi\bigr)},
\label{eq:d2}\\
\intertext{where $R$ is Siegel's remainder \eqref{eq:R-T}, $\chi=\chi(s)$, and}
\omega &\;:=\; t\,\log M .
\label{eq:omega-def}
\end{align}
For non-integer $T$ the angle $\omega=t\log(m+1)$ carries a
geometric meaning: it is the total \emph{clockwise} rotation accumulated
by the summands of the first Dirichlet sum, so that $-\omega$ is the
absolute direction angle, measured from the positive real axis, of the
next summand: the summand $n^{-s}=n^{-\sigma}e^{-it\ln n}$
rotates by $\theta_n=-t\ln\tfrac{n+1}{n}$ relative to its predecessor, and these
relative rotations telescope,
\begin{equation}\label{eq:omega-telescope}
\theta_1+\theta_2+\cdots+\theta_m
\;=\;
-t\ln\tfrac{2}{1}-t\ln\tfrac{3}{2}-\cdots-t\ln\tfrac{m+1}{m}
\;=\;
-t\ln(m+1)
\;=\;
-\omega,
\end{equation}
so the $(m+1)$st summand points along $e^{-i\omega}$. (At integer $T$,
where $M=m$, the direction $e^{-i\omega}$ is instead that of
the last \emph{included} summand; the two directions are exactly
antiparallel there, and the proof of Theorem~\ref{thm:decomp} shows the
ceiling is the orientation under which the decomposition holds at the
integers as well.)

Writing $\psi=\arg\chi(s)$, the
definitions \eqref{eq:R1-def}--\eqref{eq:R2-def} express $R$ as a combination
of two fixed \emph{unit} directions,
\begin{equation}\label{eq:R-two-dirs}
R \;=\; d_1\,e^{-i\omega} \;+\; d_2\,e^{\,i(\omega+\psi)},
\end{equation}
namely the phases of the next summands of the two Dirichlet sums. The following
proposition collects the basic facts about the two weights.

\begin{proposition}[Reality and positivity of the weights $d_1$ and $d_2$]\label{prop:weights}
Let $\psi=\arg\chi(s)$ and $x=\{T\}$, and let $T$ be
non-integer.
\begin{enumerate}
\item[(i)] Each remainder lies along the \emph{next} summand of its
Dirichlet sum.
\item[(ii)] For every $\sigma$ and every $T$ with
$\sin(2\omega+\psi)\neq0$, the weights $d_1$ and $d_2$ are real.
\item[(iii)] On the critical line $\sigma=\tfrac12$ they are equal and
positive.
\item[(iv)] Fix $\sigma\in(0,1)$, $\sigma\neq\tfrac12$. For all
$T\geq T_0(\sigma)$, both weights are positive except on two windows of
width $O_\sigma(1/T)$, centered $O(1/T)$ from $x=\tfrac14$ and
$\tfrac34$. Inside each window one weight is negative, with
$d_1/d_2\to-1$ at the pole between them, so the windows cannot be
removed.
\end{enumerate}
\end{proposition}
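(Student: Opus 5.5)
The plan is to get (i) and (ii) from pure linear algebra, (iii) from the symmetry of the functional equation plus the Riemann--Siegel asymptotics of $R$, and (iv) by perturbing (iii) in $\sigma$.

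\textbf{Parts (i) and (ii).} Write $R=|R|e^{i\rho}$ with $\rho=\arg R$. Regard \eqref{eq:R-two-dirs} as a real $2\times2$ linear system for the unknowns $d_1,d_2\in\mathbb R$ in the basis $e^{-i\omega}$, $e^{i(\omega+\psi)}$ of $\mathbb C\cong\mathbb R^2$. Its determinant is $\operatorname{Im}\bigl(\overline{e^{-i\omega}}\,e^{i(\omega+\psi)}\bigr)=\sin(2\omega+\psi)$.

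To solve it, multiply \eqref{eq:R-two-dirs} by $e^{-i(\omega+\psi)}$ and take imaginary parts. This kills the $d_2$ term and gives $|R|\sin(\rho-\omega-\psi)=-d_1\sin(2\omega+\psi)$, which is exactly \eqref{eq:d1}. Multiplying instead by $e^{i\omega}$ and taking imaginary parts gives \eqref{eq:d2}.

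So whenever $\sin(2\omega+\psi)\ne0$, the real numbers \eqref{eq:d1}--\eqref{eq:d2} are the unique real solution, and $R=R_1+R_2$ holds. This is (ii), with reality holding by construction.

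For (i), note that $M^{-s}=M^{-\sigma}M^{-it}$, so $R_1=d_1M^{-it}$ is a real multiple of the next main summand. Likewise $\chi M^{s-1}=|\chi|M^{\sigma-1}e^{i\psi}M^{it}$, so $R_2$ is a real multiple of the next dual summand. "Lies along" is therefore understood up to the sign of $d_j$, and that sign is the subject of (iii)--(iv).

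\textbf{Part (iii).} On $\sigma=\tfrac12$ we have $|\chi|=1$ and $\psi=-2\vartheta(t)$, the Riemann--Siegel theta function. From $e^{i\vartheta}\chi=e^{-i\vartheta}$ and $\overline{n^{-s}}=n^{s-1}$ we get $e^{i\vartheta}\Sigma_2=\overline{e^{i\vartheta}\Sigma_1}$. Hence $e^{i\vartheta}(\Sigma_1+\Sigma_2)$ is real. Since $Z=e^{i\vartheta}\zeta$ is real, $e^{i\vartheta}R$ is real as well, so $\rho\equiv\psi/2\pmod\pi$.

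Substituting this, both numerators become $\pm\sin(\omega+\psi/2)$ with the same sign, so $d_1=d_2$. Using $\sin(2\omega+\psi)=2\sin(\omega+\psi/2)\cos(\omega+\psi/2)$ gives the clean formula
\[
d_1=d_2=\frac{e^{i\vartheta}R}{2\cos(\omega-\vartheta)}.
\]
Positivity is then the statement that the real number $e^{i\vartheta}R$ and $\cos(\omega-\vartheta)$ have the same sign. I would prove this in three steps.

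1. Insert the Riemann--Siegel main term, adapted to the cutoff $m=\lfloor T\rfloor$ and to $t=I(T)$. It reads $e^{i\vartheta}R=(-1)^{m-1}(t/2\pi)^{-1/4}\Psi(p)+O(t^{-3/4})$, with $\Psi$ Siegel's ratio $\cos 2\pi(p^2-p-\tfrac1{16})/\cos 2\pi p$, and $p$ an affine function of $x=\{T\}$ by Remark~\ref{rem:T-vs-a}.
2. Expand $\omega-\vartheta=t\log(m+1)-\tfrac t2\log\tfrac t{2\pi}+\tfrac t2+\tfrac\pi8+O(1/t)$ using \eqref{eq:IT}. The choice of $I(T)$ is what makes this an explicit function of $x$ with the same parity factor $(-1)^{m}$.
3. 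Check that the two leading profiles in $x$ have matching sign on all of $[0,1)$. Their zeros should cancel, since $\cos 2\pi p$ is the same factor that produces the poles near $x=\tfrac14,\tfrac34$.

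This is the main obstacle. The asymptotic gives positivity only for $T\ge T_0$, and only away from the common zeros, where a separate local argument (a removable singularity with a positive limit) is needed. It also needs a uniform, explicit error bound for the Riemann--Siegel remainder with this nonstandard cutoff. I would try Gabcke-type explicit bounds for large $T$, combined with a rigorous interval-arithmetic check on the finite range $T<T_0$.

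\textbf{Part (iv).} For $\sigma\ne\tfrac12$ the exact symmetry is lost, but the same expansions hold with relative corrections. These carry factors like $(m+1)^{\pm(\sigma-1/2)}$ and $|\chi|=(t/2\pi)^{1/2-\sigma}(1+O(1/t))$.

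On the line, the zero of the denominator $\sin(2\omega+\psi)$ near $x=\tfrac14,\tfrac34$ was cancelled by a zero of the numerators. Off the line the numerators' zeros shift by an amount of order $(\sigma-\tfrac12)/T$ in $x$, so they no longer sit on the pole. Away from these points the leading-order sign analysis of (iii) persists for $T\ge T_0(\sigma)$, giving positivity.

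Between a pole and the displaced numerator zero, exactly one of $d_1,d_2$ changes sign. This produces windows of width $O_\sigma(1/T)$ centered within $O(1/T)$ of $\tfrac14$ and $\tfrac34$. Finally, at the pole itself $R$ stays finite and nonzero while the denominator vanishes, so $d_1$ and $d_2$ blow up with opposite signs. Their ratio is the ratio of the numerators at the pole, and comparing \eqref{eq:d1} and \eqref{eq:d2} when $2\omega+\psi\equiv0\pmod\pi$ shows this ratio tends to $-1$. Hence the windows cannot be removed.
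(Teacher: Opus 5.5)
Your parts (i)--(ii) and the exact reduction $d_1=d_2=e^{i\vartheta}R/(2\cos(\omega-\vartheta))$ follow the paper. Your one-formula Riemann--Siegel input $e^{i\vartheta}R=(-1)^{m-1}a^{-1/2}\Psi(a-m)+O(a^{-3/2})$, with $a-m\in(\tfrac12,\tfrac32)$, is correct and neater than the paper's case split at $\{T\}=\tfrac12$. It yields $\sqrt a\,d_1\to\Psi(x+\tfrac12)/\bigl(2\cos(2\pi x(1-x)-\tfrac{3\pi}8)\bigr)=d(x)$ in one stroke. But it has to be proved, not asserted. Gabcke's bound is for the cutoff $N=\lfloor a\rfloor$. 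When $N=m+1$ you have $R_m=R_N+(m+1)^{-s}+\chi(m+1)^{s-1}$, whose rotated contribution $2\cos(\omega-\vartheta)/\sqrt{m+1}$ is of the same order $a^{-1/2}$ as the main term. Your formula only follows after expanding that cosine as $(-1)^{m+1}\cos(2\pi\hat p^2-\tfrac\pi8)+O(1/T)$ and then using $\Psi(\hat p)+\Psi(\hat p+1)=2\cos(2\pi\hat p^2-\tfrac\pi8)$.

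The genuine gap is your Step~3 of (iii), which is where positivity actually lives, and you have misdiagnosed it. There are no common zeros near $x=\tfrac14,\tfrac34$ that need a local argument. The zeros of $\sin(2\omega+\psi)=2\sin(\omega-\vartheta)\cos(\omega-\vartheta)$ there are those of $\sin(\omega-\vartheta)$, which your reduction already cancelled exactly. The zeros of $\cos 2\pi p$ at $p=\tfrac34,\tfrac54$ are removable singularities of $\Psi$ itself (with $\Psi=\tfrac12,\tfrac32$ there), not sign changes.

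What must be proved are two lower bounds.
(a) $|\cos(\omega-\vartheta)|\ge\sin\tfrac\pi8-O(1/T)$. Your Step~2 gives $\cos(\omega-\vartheta)=(-1)^{m+1}\cos(2\pi x(1-x)-\tfrac{3\pi}8)+O(1/T)$, and the argument lies in $[-\tfrac{3\pi}8,\tfrac\pi8]$; this is \eqref{eq:cosu-bound}.
(b) $\Psi\ge\sin\tfrac\pi8$ on $[\tfrac12,\tfrac32]$. On $[\tfrac12,1]$ this is Lemma~\ref{lem:C0-trig}(i). On $[1,\tfrac32]$ it is, via the identity above, Lemma~\ref{lem:C0-trig}(ii): the subtracted correction is at most half the newly included summand.
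Together these give $\sqrt a\,d_1\ge\tfrac12\sin\tfrac\pi8-O(1/T)$ uniformly in $x$, with nothing excluded.

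Your (iv) rests on the same bounds. The claim that ``the sign analysis of (iii) persists'' off the line holds only because of this uniform margin. Your pole-ratio step is wrong as stated. At $2\omega+\psi=k\pi$ the numerators of \eqref{eq:d1}--\eqref{eq:d2} give $d_1/d_2=(-1)^{k+1}$, which is $+1$ when $k$ is odd, with both weights diverging the same way. It is (a), carried off the line by $\psi=-2\vartheta+O_\sigma(1/t)$, that makes every pole a zero of $\sin(\omega+\tfrac\psi2)$ and so forces $k$ to be even.

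You also still need to derive, not posit, that the tilt $\arg R-\tfrac\psi2$ is $O_\sigma(1/T)$. The paper takes this from Siegel's off-line leading term being a real multiple of $e^{i\psi/2}$ up to relative $O_\sigma(1/t)$. A genuine pole further needs that tilt to be nonzero there, not merely $R\neq0$.

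Finally, your interval-arithmetic check cannot reach down to $T\to0$. For $0<T<1$ one has $m=0$, $\omega=0$ and $R=\zeta$, so $d_1=Z/(2\cos\vartheta)$. This tends to $\zeta(\tfrac12)/2<0$ and has a pole where $\vartheta=-\tfrac\pi2$. As in the paper (analytic for $T\ge T_0$, numerical on $1<T\le30$), (iii) can only be established for $T$ bounded below.
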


The proof, which occupies Appendix~\ref{sec:positivity}, reduces
everything to a sign criterion on the two numerators of
\eqref{eq:d1}--\eqref{eq:d2}.
Geometrically, positivity is the statement that each
fractional summand points \emph{forward} along its
partial sum, a positive fraction of the next term.

\begin{theorem}\label{thm:decomp}
Suppose $T>0$ and $\sin(2\omega+\psi)\neq0$. Then
\[
R_{1} + R_{2} = R.
\]
Consequently,
\[
\zeta = \Sigma_{1} + R_{1} + \Sigma_{2} + R_{2}.
\]
At a zero of $\sin(2\omega+\psi)$, the combined expression on a punctured
neighborhood has limiting value $R$. At an off-line pole the individual
terms $R_1$ and $R_2$ diverge and are not assigned finite values there.
\end{theorem}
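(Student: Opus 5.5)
The identity is pure trigonometry in the plane: any complex number can be written uniquely as a real combination of two non-parallel unit vectors. The two unit directions are $u_1=e^{-i\omega}$ and $u_2=e^{i(\omega+\psi)}$. First I rewrite the definitions. Since $M^{-it}=e^{-it\log M}=e^{-i\omega}$ and $M^{it}\chi/|\chi|=e^{i\omega}e^{i\psi}$, we have $R_1=d_1u_1$ and $R_2=d_2u_2$. So the claim $R_1+R_2=R$ is exactly \eqref{eq:R-two-dirs}. By \eqref{eq:R-T} and the definition of $R_m$, $\zeta=\Sigma_1+\Sigma_2+R$. The consequence $\zeta=\Sigma_1+R_1+\Sigma_2+R_2$ is then immediate once the identity holds.

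To prove $R=d_1u_1+d_2u_2$, I write $R=|R|e^{i\rho}$ with $\rho=\arg R$ (if $R=0$ both sides vanish trivially). I then solve the $2\times2$ real linear system for real coefficients $a,b$ with $R=au_1+bu_2$. The determinant is
\[
\operatorname{Im}(\bar u_1u_2)=\sin(2\omega+\psi),
\]
which is nonzero by hypothesis. By Cramer's rule,
\[
a=\frac{\operatorname{Im}(\bar R u_2)}{\operatorname{Im}(\bar u_1u_2)}
=|R|\frac{\sin(\omega+\psi-\rho)}{\sin(2\omega+\psi)},
\qquad
b=\frac{\operatorname{Im}(\bar u_1R)}{\operatorname{Im}(\bar u_1u_2)}
=|R|\frac{\sin(\rho+\omega)}{\sin(2\omega+\psi)}.
\]
These are exactly \eqref{eq:d1} and \eqref{eq:d2}. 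For the check, I expand $d_1e^{-i\omega}+d_2e^{i(\omega+\psi)}$ and use
\[
\sin\alpha\,e^{-i\omega}+\sin\beta\,e^{i(\omega+\psi)}=\sin(\alpha+\beta)\,e^{i\rho}
\quad\text{with }\alpha=\omega+\psi-\rho,\ \beta=\omega+\rho,
\]
which is a standard sine-addition computation. Here $\alpha+\beta=2\omega+\psi$, so the denominator cancels. This is the algebra that the Lean appendix verifies.

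Two points need care. The first is branch independence. $\arg R$ and $\arg\chi$ are defined only mod $2\pi$, but shifting $\rho$ or $\psi$ by $2\pi$ leaves every sine unchanged, so $d_1$ and $d_2$ are well defined. The second is integer $T$. There $M=m$, and $\omega=t\log m$ is still the phase used in $R_1$ and $R_2$. The vector identity never uses $M=m+1$, so it holds verbatim, which is the point of choosing the ceiling.

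The main subtlety is the final clause. Near a zero of $\sin(2\omega+\psi)$ with $\sigma$ fixed, $R$ is continuous in $T$ because it equals $\zeta-\Sigma_1-\Sigma_2$ with $m$ locally constant away from integers. On the punctured neighbourhood $R_1+R_2=R$ holds identically, so the limit of the combined expression is $R$ at the point itself, even though $d_1$ and $d_2$ individually may blow up.

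On the critical line I expect the numerators to vanish simultaneously, since $|\chi|=1$ and the symmetry forces $\rho$ into a compatible position. This is the removable case. Off the line the numerators do not vanish together, so the individual terms diverge while their sum stays finite. I would state this only as a description of why no finite values are assigned, not as something needing proof beyond the identity on the punctured neighbourhood.
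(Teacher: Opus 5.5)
Your proposal is correct and is essentially the paper's proof. After rewriting $R_1=d_1e^{-i\omega}$ and $R_2=d_2e^{i(\omega+\psi)}$, both arguments rest on the identity $e^{-i\omega}\sin(\omega-\phi+\psi)+e^{i(\omega+\psi)}\sin(\omega+\phi)=e^{i\phi}\sin(2\omega+\psi)$ of Steps~3--5. You reach it through the Cramer's-rule reading that the paper itself uses for Proposition~\ref{prop:weights}(ii), which already gives existence and uniqueness of the real coefficients, so your closing sine check confirms rather than proves. Your handling of integer $T$ (the algebra never uses $M=m+1$) and of the zeros of $\sin(2\omega+\psi)$ (the identity on a punctured neighbourhood plus continuity of $R$ away from the integers) matches the paper's.
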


The qualifier in the last sentence costs nothing. On the critical line
$\sigma=\tfrac12$ the vanishing of the denominator $\sin(2\omega+\psi)$ is
always cancelled by the numerators of
\eqref{eq:d1}--\eqref{eq:d2}, so $d_1$ and $d_2$ have no poles there at
all (Appendix~\ref{sec:positivity}). Off the line the two weights do have genuine
poles at those points, but the divergences are equal and opposite and
cancel in the sum, which stays finite and equal to $R$
(Appendix~\ref{sec:pole-locations}).

\begin{proof}
Write $\phi = \arg R$, $\psi = \arg\chi$, and $\omega=t\log M$, so that
$R=|R|e^{i\phi}$ and $\chi/|\chi|=e^{i\psi}$. Since $M$ is a
positive real base and $\omega=t\log M$ by
\eqref{eq:omega-def}, the substitution
$M^{\,\pm it}=e^{\pm i\omega}$ is valid for \emph{every}
$T>0$, integer or not.

\smallskip
\noindent\textbf{Step 1 (substitute).} From \eqref{eq:R1-def}--\eqref{eq:d2},
\[
R_{1} = |R|\,\frac{\sin(\omega - \phi + \psi)}{\sin(2\omega + \psi)}\, e^{-i\omega},
\qquad
R_{2} = |R|\,\frac{\sin(\omega + \phi)}{\sin(2\omega + \psi)}\, e^{i\omega}\, e^{i\psi}.
\]

\noindent\textbf{Step 2 (add).}
\[
R_{1} + R_{2}
= \frac{|R|}{\sin(2\omega + \psi)}
\Bigl[\, e^{-i\omega}\sin(\omega - \phi + \psi) + e^{i(\omega + \psi)}\sin(\omega + \phi)\,\Bigr].
\]

\noindent\textbf{Step 3 (expand via $\sin\theta = \tfrac{e^{i\theta}-e^{-i\theta}}{2i}$).}
\begin{align*}
e^{-i\omega}\sin(\omega - \phi + \psi)
&= \frac{e^{i(\psi - \phi)} - e^{i(-2\omega + \phi - \psi)}}{2i}, \\
e^{i(\omega + \psi)}\sin(\omega + \phi)
&= \frac{e^{i(2\omega + \psi + \phi)} - e^{i(\psi - \phi)}}{2i}.
\end{align*}

\noindent\textbf{Step 4 (add the expansions).} The $e^{i(\psi-\phi)}$ terms
cancel, leaving
\[
\frac{1}{2i}\Bigl[\, e^{i(2\omega + \psi + \phi)} - e^{i(\phi - \psi - 2\omega)}\,\Bigr].
\]

\noindent\textbf{Step 5 (factor $e^{i\phi}$).}
\[
= \frac{e^{i\phi}}{2i}\Bigl[\, e^{i(2\omega + \psi)} - e^{-i(2\omega + \psi)}\,\Bigr]
= e^{i\phi}\sin(2\omega + \psi).
\]

\noindent\textbf{Step 6 (substitute back).}
\[
R_{1} + R_{2}
= \frac{|R|}{\sin(2\omega + \psi)}\cdot e^{i\phi}\sin(2\omega + \psi)
= |R|e^{i\phi} = R.
\]

\smallskip
\noindent\textbf{Integer $T$.} No case split is needed: Steps 1--6 used
only trigonometric algebra and the substitutions recorded at the start of
the proof, all valid for every $T>0$. What deserves spelling out is the
bookkeeping that the ceiling in \eqref{eq:omega-def} handles. At integer
$T$ one has $M=\lceil T\rceil=m$ while $\lfloor T+1\rfloor=m+1$, and by the
definition \eqref{eq:IT} of $I$ the two candidate angles differ by
\[
t\log\tfrac{m+1}{m}
\;=\;
t\log\bigl(1+\tfrac1T\bigr)
\;=\;
\pi(2T+1),
\]
exactly, which for integer $T$ is an \emph{odd} multiple of $\pi$. Passing
from $\omega$ to $\omega'=\omega+\pi(2T+1)$ therefore flips the sign of
both unit directions, $e^{\mp i\omega'}=-e^{\mp i\omega}$, and
simultaneously flips the sign of both weights: each numerator in
\eqref{eq:d1}--\eqref{eq:d2} shifts by an odd multiple of $\pi$ and
changes sign, while the common denominator shifts by the even multiple
$2\pi(2T+1)$ and does not. The two flips cancel in the products, so the
partial summands $R_{1}$ and $R_{2}$, and with them the identity
$R_{1}+R_{2}=R$, come out the same under either convention,
\emph{provided} the weights and the directions are computed from the same
angle. The definitions \eqref{eq:R1-def}--\eqref{eq:d2} do exactly that:
both use $M$, through \eqref{eq:omega-def}. (Mixing the
conventions, with weights from $\lfloor T+1\rfloor$ against directions
from $M$, would flip only one of the two signs and produce
$-R$ at the integers.)

\smallskip
\noindent\textbf{Exceptional points.} The algebra above divides by
$\sin(2\omega+\psi)$ and so applies wherever that denominator is nonzero.
Its zeros are isolated in $T$, and $R$ is continuous, so on a punctured
neighborhood of each zero the identity $R_1+R_2=R$ holds and forces the
limit of $R_1+R_2$ to exist and equal $R$ there, which is the limiting
sense adopted in the statement. At an off-line pole this does not assign
finite values to the two individual terms.
\end{proof}

A formalization of the algebra of Theorem~\ref{thm:decomp} in Lean is given in
Appendix~\ref{app:lean-decomp}. It assumes no axioms of its own; the appendix
records what its three hypotheses are and what is left outside its scope.
A second Lean file, reproduced in Appendix~\ref{app:lean-critical}, formalizes
the critical-line statements that descend from
Proposition~\ref{prop:weights}(iii): the reflection \eqref{eq:R2-conj-R1} and
the equal-leg corollaries of \S\ref{sec:legs}.
Since $R=R_{1}+R_{2}$, we have in particular proved the exact formula
\begin{equation}\label{eq:zeta-ps}
\zeta(s)
=
\Sigma_{1} + R_{1} + R_{2} + \Sigma_{2}.
\end{equation}
\section{The remainders are ``one more partial summand''}\label{sec:summand}

Theorem~\ref{thm:decomp} is more than an algebraic identity. Unlike Siegel's
integral $R$, which bears no
resemblance to the terms of the Dirichlet sums, each of $R_{1}$ and $R_{2}$
is \emph{exactly the next term} of its partial sum, scaled by a real
number (positive in the sense of Proposition~\ref{prop:weights}). We call
such a term a \emph{fractional summand}.

Concretely, the last term actually included in $\Sigma_1$ is the $m$-th
term. For non-integer $T$, the very next term is the $M$-th, and
$R_{1}$ lies along its direction, scaled by a real signed coefficient;
similarly for $R_{2}$ along the next term of $\Sigma_2$. It is literally
shortened only when the corresponding hatted coefficient lies in $[0,1]$.
Table~\ref{tab:frac} uses this non-integer convention.

\begin{table}[htbp]
\centering
\begin{tabular}{c>{\columncolor{gray!15}}llclcl}
\toprule
 & \multicolumn{1}{>{\columncolor{gray!15}}c}{\shortstack[c]{Last\\ summand}}
 & \multicolumn{1}{c}{\shortstack[c]{Next\\ summand}}
 &
 & \multicolumn{1}{c}{\shortstack[c]{Fractional\\ amount}}
 &
 & \multicolumn{1}{c}{\shortstack[c]{Partial\\ summand}} \\
\midrule
$R_{1}$
& $\hphantom{\chi(s)\,}m^{-s}$
& $\hphantom{\chi(s)\,}M^{-s}$
& $\times$
& $\bigl(d_{1}\,M^{\sigma}=\hat d_1\bigr)$
& $=$
& $d_{1}\,M^{-it}$ \\[10pt]
$R_{2}$
& $\chi(s)\,m^{\,s-1}$
& $\chi(s)\,M^{\,s-1}$
& $\times$
& $\Bigl(d_{2}\,\dfrac{M^{\,1-\sigma}}{|\chi(s)|}=\hat d_2\Bigr)$
& $=$
& $d_{2}\,M^{\,it}\,\dfrac{\chi(s)}{|\chi(s)|}$ \\
\bottomrule
\end{tabular}
\caption{Each remainder is the $M$-th (``next'') summand of its Dirichlet sum,
scaled by a real fractional amount; here $s=\sigma+it$ with $t=I(T)$,
$m=\lfloor T\rfloor$, and $M=\lceil T\rceil$. The shaded column is the last
summand actually included in the sum, shown for reference; the remaining
columns display the identity (next summand) $\times$ (fractional amount)
$=$ (partial summand), the fractional amount being exactly the hatted
weight $\hat d_1$ or $\hat d_2$ of \eqref{eq:frac-vs-weight}, and the
product simplifying to $R_{1}$ and $R_{2}$ respectively.}
\label{tab:frac}
\end{table}

Reading \eqref{eq:zeta-ps} in this light, the zeta function takes the
clean ``one more summand'' form
\begin{equation}\label{eq:zeta-clean}
\boxed{\;
\zeta(s)=\sum_{n=1}^{m} n^{-s} + \hat d_1\,M^{-s}
        + \chi(s)\sum_{n=1}^{m} n^{s-1} + \hat d_2\,\chi(s)\,M^{s-1}
\;}
\end{equation}
with $\hat d_1,\hat d_2$ real numbers, positive on the critical line (and off it
outside the pole windows of Proposition~\ref{prop:weights}). In words: the two finite Dirichlet sums,
each carried exactly one fractional term past its cutoff, together
reproduce $\zeta$ with no other remainder term.

The coefficients here are the \emph{fractional amounts} of
Table~\ref{tab:frac}, not the weights $d_1,d_2$ of
\eqref{eq:R1-def}--\eqref{eq:R2-def}. The hat is what distinguishes them,
and the two differ by the length of the summand they scale:
\begin{equation}\label{eq:frac-vs-weight}
\hat d_1=M^{\sigma}\,d_1,
\qquad
\hat d_2=\frac{M^{\,1-\sigma}}{|\chi|}\,d_2.
\end{equation}
The unhatted weights are signed coefficients satisfying
$|R_{1}|=|d_1|$ and $|R_{2}|=|d_2|$. The hatted quantities are signed
normalized coefficients; they are literal fractions of their summands only
where $0\leq\hat d_i\leq1$. At
$\sigma=\tfrac12$, $T=6.18$ for instance, $d_1=0.0875$ while
$\hat d_1=\sqrt7\,d_1=0.2315$.

\section{Periodicity in \texorpdfstring{$T$}{T}}\label{sec:periodicity}

Exact periodicity is rare in the study of $\zeta$. As a function of $t$,
each summand $n^{-it}=e^{-it\log n}$ has the fixed frequency $\log n$.
What grows with $t$ is the cutoff and hence the largest active frequency,
$\log m=\tfrac12\log(t/(2\pi))+o(1)$. The closest
classical notions are Bohr's almost periodicity in the half-plane
$\sigma>1$ \cite[Ch.~XI]{Titchmarsh1986} and the fixed coefficient functions $C_k(p)$ of the
Riemann--Siegel expansion, which depend only on the fractional part
$p=\bigl\{\sqrt{t/2\pi}\bigr\}$. The index $T$ exposes structure of the second kind:
nothing here is exactly periodic. The unnormalized weights decay with $T$,
while their normalized profiles converge to a common unit-cell limit. This
section studies that asymptotic repetition and attempts to explain its origin.

\subsection{\texorpdfstring{$d_1$ and $d_2$}{d1 and d2} on the critical line}\label{sec:periodicity-line}

Figure~\ref{fig:d1-periodicity} plots the weight $d_1$ and the fractional
amount $\hat d_1=\sqrt{m+1}\,d_1$ on the critical line for
$1<T<7$; on the line the two weights coincide, $d_2=d_1$
(Proposition~\ref{prop:weights}(iii)), so the figure shows both at once.
A pattern is visible: $\hat d_1$ traces nearly the same waveform
in every unit interval of $T$, while $d_1$ repeats the same shape with
slowly decaying amplitude and a small jump at each integer, where the
summand it scales hands off from one term to the next.

\begin{figure}[H]
\centering
\includegraphics[width=\textwidth]{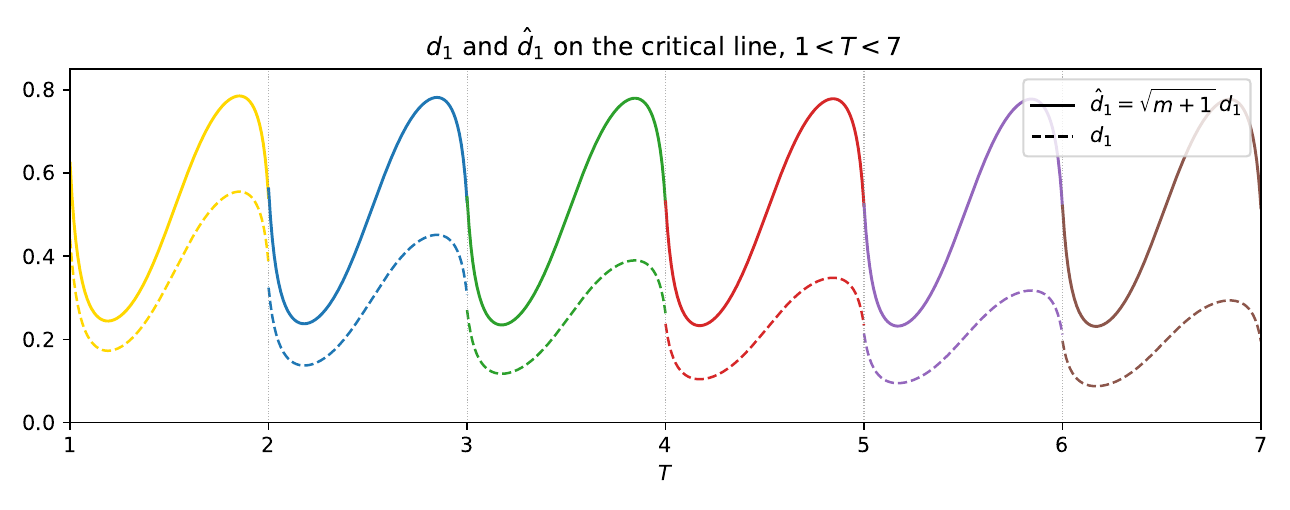}
\caption{$d_1$ (dashed) and $\hat d_1=\sqrt{m+1}\,d_1$ (solid) on the
critical line for $1<T<7$, one color per integer interval of $T$; this
color scheme is held throughout the paper, a separate color for each
interval $[m,m+1]$, to show the periodicity in $T$. Computed from
the exact reduction
$d_1=r/(2\cos u)$ of \eqref{eq:exact-line}. In each unit interval
$\hat d_1$ sweeps nearly the same waveform, while $d_1$ decays like
$T^{-1/2}$ and jumps at the integers. Generated by
\texttt{fig\_d1\_periodicity.py}.}
\label{fig:d1-periodicity}
\end{figure}

\subsection{A continuous and periodic normalization}\label{sec:periodicity-pinned}

The near-repetition has an exact limiting shape.
Figure~\ref{fig:pinned-waves} overlays the six unit intervals of
Figure~\ref{fig:d1-periodicity} after two normalizations that put the
waves on a common footing.

The first normalization rescales and centers: on
$[m,m+1]$ the centered coordinate
\[
P(T)=T^{1/2}\Bigl(d_1(T)-\Phi\bigl(-1,\tfrac12,m+1\bigr)\Bigr),
\qquad
\Phi\bigl(-1,\tfrac12,m+1\bigr)=\sum_{k\ge m+1}\frac{(-1)^{k-m-1}}{\sqrt{k}},
\]
undoes the $T^{-1/2}$ decay and measures $d_1$ against the alternating sum
of the remaining summand lengths ($\Phi$ is the Lerch transcendent).

The second
normalization pins both ends of the cycle (which fall exactly at
integer $T$) so they begin and end at zero:
the endpoint values of $P$ on $[m,m+1]$ are
$+\varepsilon_m$ and $-\varepsilon_{m+1}$, where
$\varepsilon_n=n^{1/2}\bigl(\Phi(-1,\tfrac12,n)-d_1(n^{-})\bigr)$.\footnote{The
right endpoint is the definition of $\varepsilon_{m+1}$. The left endpoint
uses the exact handoff relation $d_1(m^{+})+d_1(m^{-})=m^{-1/2}$, visible
as the jump at each integer in Figure~\ref{fig:d1-periodicity}, together
with $\Phi(-1,\tfrac12,m)+\Phi(-1,\tfrac12,m+1)=m^{-1/2}$. The handoff
relation follows from the reduction $d_1=r/(2\cos u)$ of
\eqref{eq:exact-line}: at $T=m$ the angle $u$ jumps by
$I(m)\log\tfrac{m+1}{m}=\pi(2m+1)$, an odd multiple of $\pi$, so $\cos u$
changes sign, while $r$ drops the newly included summand's contribution
$2m^{-1/2}\cos u$; dividing gives $d_1(m^{+})=m^{-1/2}-d_1(m^{-})$.}
Subtracting the chord through them,
\[
\mathcal W(T)=P(T)-(1-x)\,\varepsilon_m+x\,\varepsilon_{m+1},
\qquad x=T-m,
\]
pins each wave to zero at the two integer endpoints of its unit
interval. Plotted against the
fractional part $x=\{T\}$, the six waves nearly coincide, and as
$T\to\infty$ they converge to the tangent waveform
\begin{equation}\label{eq:W-infty}
\mathcal W_{\infty}(x)
=-\tfrac12\tan(2\pi x)\,
\tan\!\bigl(2\pi(x-\tfrac14)(x-\tfrac34)\bigr),
\end{equation}
at rate $O(1/T)$; the convergence is proved in Appendix~\ref{sec:limits}
(Corollary~\ref{cor:W-limit}).

\begin{figure}[htbp]
\centering
\includegraphics[width=0.9\textwidth]{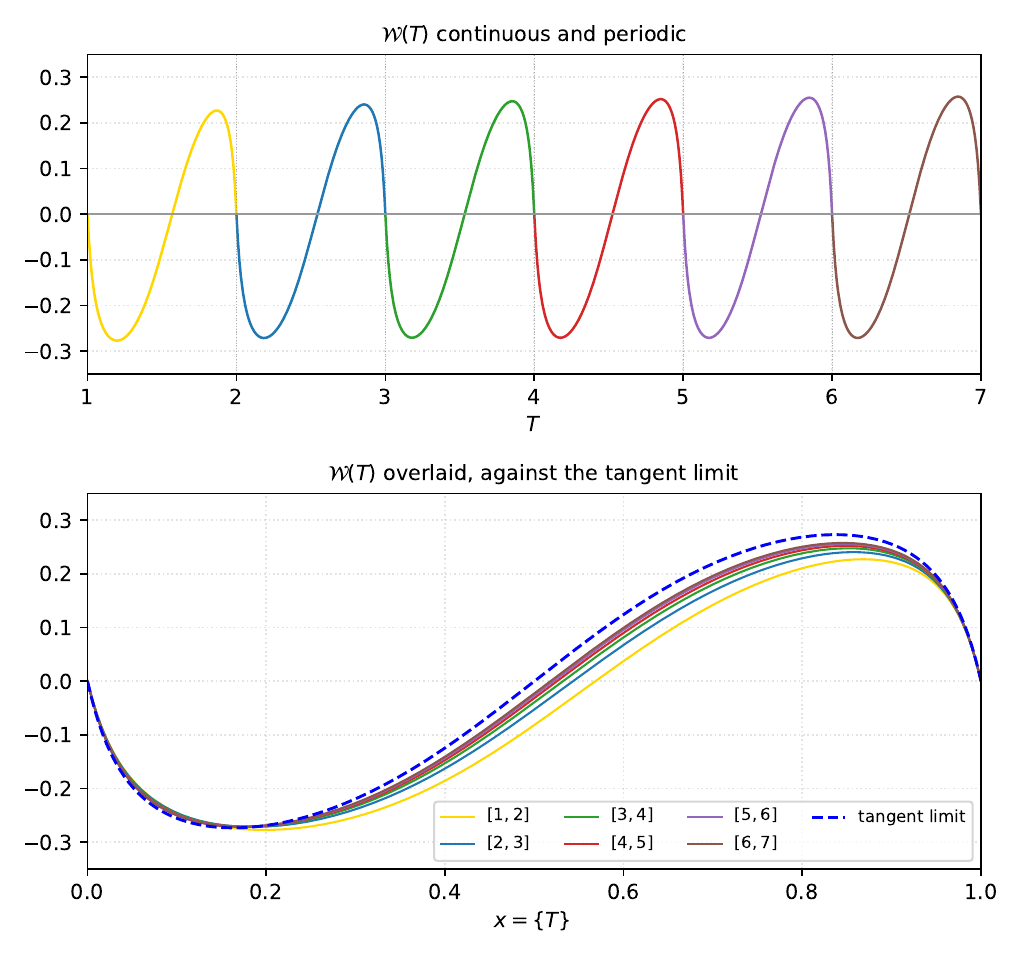}
\caption{Top: the pinned waves $\mathcal W(T)$ for $1<T<7$, one color per
unit interval, the same range as Figure~\ref{fig:d1-periodicity}; the
normalizations have removed the decay and the jumps, leaving a curve that
is zero at every integer. Bottom: the six unit intervals overlaid, each
pinned wave on $[m,m+1]$, $m=1,\dots,6$, plotted against the fractional
part $x=\{T\}$ in the color of the top panel, with the tangent limit
$\mathcal W_{\infty}$ dashed. The deviation from the limit falls from
$0.087$ on $[1,2]$ to $0.025$ on $[6,7]$. Generated by
\texttt{fig\_pinned\_waves.py}.}
\label{fig:pinned-waves}
\end{figure}

Figure~\ref{fig:d-limit} records the same convergence for the fractional
amount itself: at fixed fractional part $x=\{T\}$ the normalized weight
$\hat d_1=\sqrt{m+1}\,d_1$ tends to a closed-form profile
$d(x)=\tfrac12+\mathcal W_{\infty}(x)$, derived in the next subsection.

\begin{figure}[H]
\centering
\includegraphics[width=0.9\textwidth]{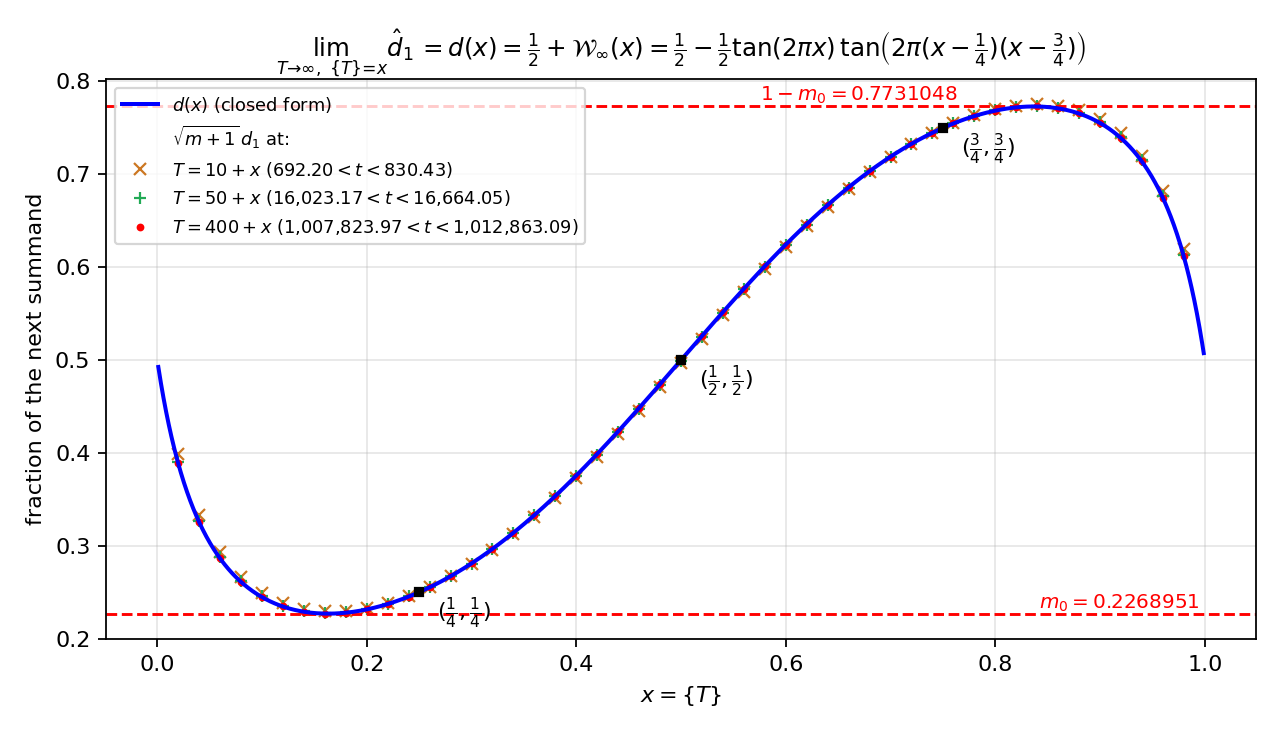}
\caption{The closed-form profile $d(x)$ (blue) against the exact
$\sqrt{m+1}\,d_1$ sampled at $T=10+x$, $50+x$, $400+x$ on the critical
line. Already at $T=10+x$ the samples sit on the curve; the marked squares
are the exact values $(\tfrac14,\tfrac14)$, $(\tfrac12,\tfrac12)$,
$(\tfrac34,\tfrac34)$, and the dashed red lines are the extremes
$m_0=0.2268951\ldots$ and $1-m_0=0.7731048\ldots$ of the profile.
Generated by \texttt{fig\_d\_limit.py}.}
\label{fig:d-limit}
\end{figure}

\subsection{Derivation of the tangent limit from Siegel's \texorpdfstring{$F(\tau)$}{F(tau)}}\label{sec:tangent-derivation}

The tangent waveform \eqref{eq:W-infty} is not an empirical fit: it can
be derived from a function as old as Riemann's unpublished notes. Evaluating the elementary integral in equation~5 of Siegel's 1932
paper~\cite{Siegel1932}, Siegel arrives at a closed form he calls the
``desired result'',
\begin{equation}\label{eq:siegel-F}
F(\tau)\;=\;
\frac{1}{1-e^{-2\pi i \tau}}
\;-\;
\frac{e^{i\pi \tau^{2}}}{e^{i\pi \tau}-e^{-i\pi \tau}} .
\end{equation}
For real $\tau$ its complex conjugate has the compact form
\begin{equation}\label{eq:siegel-Fbar}
\overline{F(\tau)}\;=\;\frac{e^{-i\pi\tau^{2}}-e^{-i\pi\tau}}{2i\sin(\pi\tau)}
\;=\;\frac{e^{-i\pi\tau^{2}}-e^{-i\pi\tau}}{e^{i\pi\tau}-e^{-i\pi\tau}},
\end{equation}
and it is the conjugate we work with below.
The apparent poles at integer $\tau$ are removable: $\tau^{2}-\tau$ is
even there, so the numerator of \eqref{eq:siegel-Fbar} vanishes with the
denominator, and
$\overline{F(\tau)}\to\tfrac12-n$ as $\tau\to n$. Figure~\ref{fig:F-curve}
traces the curve.

\begin{figure}[H]
\centering
\includegraphics[width=0.62\textwidth]{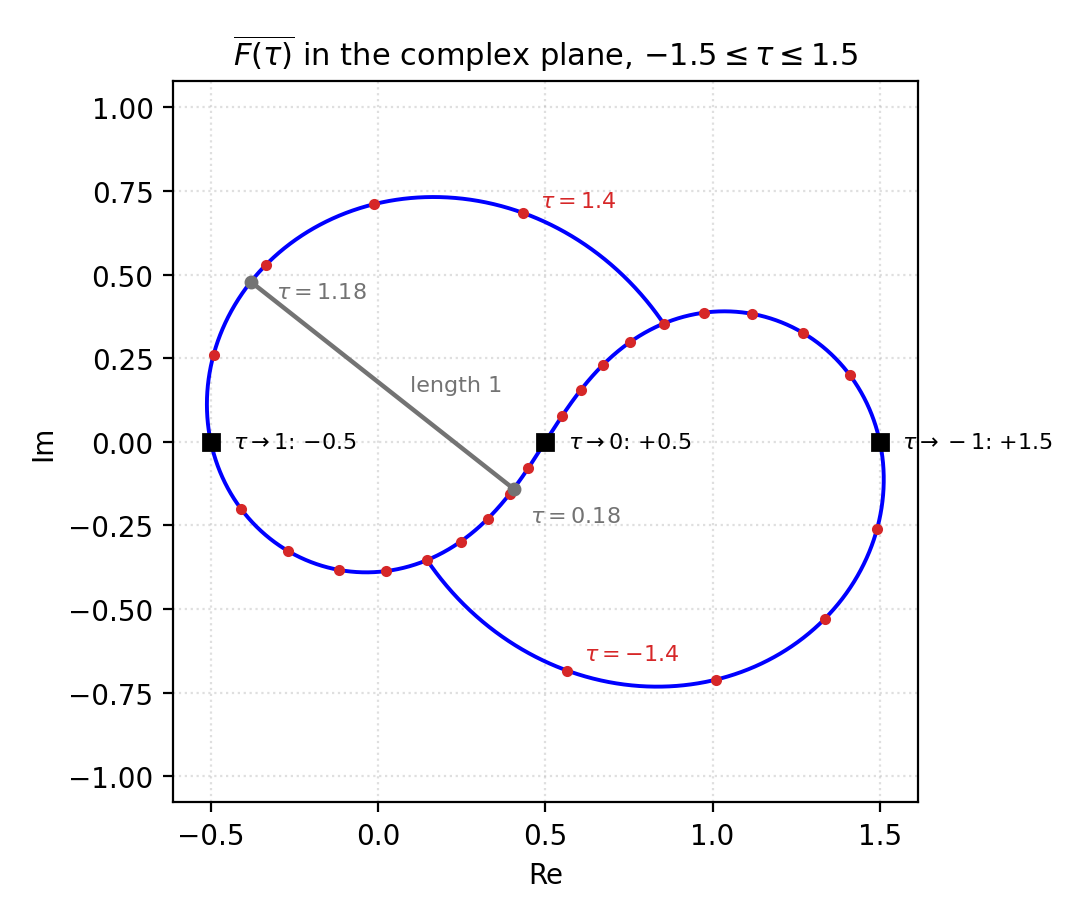}
\caption{The trace of $\overline{F(\tau)}$ of \eqref{eq:siegel-Fbar} in
the complex
plane for $-1.5\le\tau\le1.5$, with red dots every $0.1$ in $\tau$. The
apparent poles at the integers are removable (black squares), with
$\overline{F(\tau)}\to\tfrac12-n$ as $\tau\to n$: the values $-\tfrac12$,
$+\tfrac12$, $+\tfrac32$ at $n=1,0,-1$. Points of the curve one apart in
$\tau$ are exactly one unit apart in the plane:
$\overline{F(\tau+1)}-\overline{F(\tau)}=-e^{-i\pi(\tau^{2}+\tau)}$, a
unit vector; the gray
chord joins $\overline{F(0.18)}$ to $\overline{F(1.18)}$ and has length
exactly $1$. Generated
by \texttt{fig\_F\_curve.py}.}
\label{fig:F-curve}
\end{figure}

\begin{remark}[regarding $F$ as the source of Siegel's $f(s)$]\label{rem:F-to-f}
$F$ is more than an intermediate step in Siegel's derivation: it is the
source of his function $f$, the function that carries the first half of
zeta, $f(s)\approx\Sigma_1+R_{1ak}$ (\S\ref{sec:kuznetsov} records the
details as \eqref{eq:siegel-f}). In his \S3 Siegel multiplies his
equation~5, that is \eqref{eq:siegel-F}, by $u^{-s}$ and integrates along
the bisector of the first quadrant, and under that operation the two
terms of $F$ separate into the two halves of zeta. The pole term
$1/(1-e^{-2\pi iu})$ yields
$-(2\pi)^{s-1}e^{\,i\pi(1-s)/2}\,\Gamma(1-s)\,\zeta(1-s)$, while the
Gaussian term yields $f$ itself:
\begin{equation}\label{eq:f-from-F}
f(s)\;=\;\bigl(e^{\pi i s}-1\bigr)
\int_{0}^{e^{i\pi/4}\infty}
\frac{u^{-s}\,e^{\pi i u^{2}}}{e^{\pi i u}-e^{-\pi i u}}\,\mathrm{d}u
\qquad (\sigma<0),
\end{equation}
the restriction $\sigma<0$ being needed only for convergence at the
endpoint $u=0$; $f$ is continued analytically from there, and the contour
form \eqref{eq:siegel-58} of \S\ref{sec:kuznetsov} converges for every
$s$. The same separation can be read off \eqref{eq:siegel-Fbar}: since
$1/(1-e^{2\pi i\tau})=-e^{-\pi i\tau}/(2i\sin\pi\tau)$, the numerator
term $e^{-\pi i\tau^{2}}$ of $\overline{F}$ is the $f(s)$ side and
$-e^{-\pi i\tau}$ is the $\zeta(1-s)$ side, so the two lobes of
Figure~\ref{fig:F-curve} carry the two halves in their numerator. So $F$
is the common source of both $\Sigma_1+R_{1ak}$ and its reflected
counterpart (the yin-yang connection is drawn in
\S\ref{sec:yinyang-inf}), and this common origin is one route into the
convergence proved by other means in Theorem~\ref{thm:yin-convergence}.
\end{remark}

Everything we need is contained in the polar form of $\overline F$.
Factoring the
mean exponent $e^{-i\pi(\tau^{2}+\tau)/2}$ out of the numerator turns
the difference of exponentials into a sine:
\begin{equation}\label{eq:F-polar}
\overline{F(\tau)}
=-\,\frac{\sin\!\bigl(\tfrac{\pi}{2}(\tau^{2}-\tau)\bigr)}{\sin(\pi\tau)}\;
e^{-\tfrac{i\pi}{2}(\tau^{2}+\tau)} .
\end{equation}
Now rescale. Let $a=\sqrt{t/2\pi}$, $N=\lfloor a\rfloor$, and
$\hat p=a-N$ be the classical Riemann--Siegel quantities (the same ones
used throughout Appendix~\ref{sec:positivity}), and substitute
$\tau=2\hat p-\tfrac12$ into \eqref{eq:F-polar}. The three pieces
collapse: $\sin(\pi\tau)=-\cos(2\pi\hat p)$,
$\tfrac{\pi}{2}(\tau^{2}-\tau)=2\pi\bigl(\hat p^{2}-\hat p-\tfrac1{16}\bigr)
+\tfrac{\pi}{2}$, and
$\tfrac{\pi}{2}(\tau^{2}+\tau)=2\pi\hat p^{2}-\tfrac{\pi}{8}$, so that
\begin{equation}\label{eq:F-C0}
\overline{F}\bigl(2\hat p-\tfrac12\bigr)
= C_0(\hat p)\,e^{-iA},
\qquad
A=2\pi\hat p^{2}-\tfrac\pi8,
\qquad
C_0(\hat p)=\frac{\cos\!\bigl(2\pi(\hat p^{2}-\hat p-\tfrac1{16})\bigr)}
{\cos(2\pi\hat p)} .
\end{equation}
In words: evaluated at $\tau=2\hat p-\tfrac12$, twice the fractional
part less one half, $\overline F$ has amplitude exactly $C_0(\hat p)$, the
leading coefficient of the Riemann--Siegel correction, and phase $-A$;
both reappear, together with $B=2\pi\hat p$, in
Lemma~\ref{lem:C0-trig}.

The limit now assembles from the critical-line machinery of
Appendix~\ref{sec:positivity}. On the line the exact reduction
\eqref{eq:exact-line} reads $d_1=r/(2\cos u)$; Gabcke's form of the
Riemann--Siegel expansion \eqref{eq:gabcke} gives $r$ the leading term
$(-1)^{N-1}a^{-1/2}\,C_0(\hat p)$, the amplitude of \eqref{eq:F-C0}; and
the phase expansion \eqref{eq:cosu} gives
$\cos u=(-1)^{m+1}\cos\beta+O(1/T)$, where
$\beta=2\pi(x-\tfrac14)(x-\tfrac34)$ and $x=\{T\}$. In each half of the
unit interval the offset is $\hat p=x\pm\tfrac12$ \eqref{eq:cut-cases},
and the sign factors combine as in Steps~3 and~4 of the proof of
Theorem~\ref{thm:line-bound}, so that
\[
\hat d_1=\sqrt{m+1}\,d_1
\;\longrightarrow\;
\frac{C_0(x+\tfrac12)}{2\cos\beta}
\quad(x<\tfrac12),
\qquad
\hat d_1
\;\longrightarrow\;
1-\frac{C_0(x-\tfrac12)}{2\cos\beta}
\quad(x>\tfrac12),
\]
the extra $1$ in the second case being the newly included summand. Two
lines of the product formula
$\cos(A\mp B)=\cos A\cos B\pm\sin A\sin B$ turn both branches into one and
the same expression,
\[
\hat d_1
\;\longrightarrow\;
\tfrac12\bigl(1-\tan(2\pi x)\tan\beta\bigr)
\;=\;\tfrac12+\mathcal W_{\infty}(x)
\;=:\;d(x),
\]
the tangent waveform of \eqref{eq:W-infty} lifted by $\tfrac12$.

Written in branch form, with $g$ recording the $x<\tfrac12$ branch,
\begin{equation}\label{eq:d-profile}
d(x)=
\begin{cases}
g(x), & x<\tfrac12,\\[2pt]
1-g(1-x), & x\ge\tfrac12,
\end{cases}
\qquad
g(y)=\frac{-\cos\bigl(2\pi y^{2}-\tfrac{5\pi}{8}\bigr)}
{2\cos(2\pi y)\,\cos\bigl(2\pi y(1-y)-\tfrac{3\pi}{8}\bigr)}.
\end{equation}
In \eqref{eq:d-profile} the point $y=\tfrac14$ is a
removable $0/0$ singularity of $g$ (the numerator vanishes together with
$\cos(2\pi y)$), filled in by continuity with $g(\tfrac14)=\tfrac14$;
this supplies the values of $d$ at $x=\tfrac14$ and, through the second
branch, at $x=\tfrac34$. The $d(x)$ limit profile passes
through $(\tfrac14,\tfrac14)$, $(\tfrac12,\tfrac12)$,
$(\tfrac34,\tfrac34)$ and stays inside the band $[m_0,\,1-m_0]$ with
$m_0=0.2268951\ldots$ (at finite $T$ the excursions are slightly
larger but still bounded between $\tfrac15$ and $\tfrac45$;
Theorem~\ref{thm:d1-bounds} in Appendix~\ref{sec:d1-bounds}); in particular
the fractional amount never
approaches either end of its summand.

Figure~\ref{fig:d-limit} plots the $d(x)$ limit profile against the
exact normalized weight $\hat d_1=\sqrt{m+1}\,d_1$: the convergence is
at rate $O(1/T)$, and the samples at $T=10+x$
already sit on the curve. Theorem~\ref{thm:d1-limit} in
Appendix~\ref{sec:limits} makes the derivation of this subsection rigorous,
controlling every error term including those at the transition points
$x=0,\tfrac12,1$.

\subsection{Separating \texorpdfstring{$\overline F$}{F bar} into two
curves: yin and yang at infinity}\label{sec:yinyang-inf}

Because the curve in Figure~\ref{fig:F-curve} resembles a
\emph{taijitu}\,(\raisebox{-0.35ex}{\includegraphics[height=1.9ex]{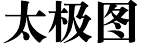}}),
we call its two component curves yin and yang. The pair of curves turns the limit profile
\eqref{eq:d-profile} into a geometric picture. Write $x=\{T\}$ for the
fractional part, so that $x$ runs from
$0$ to $1$ across each unit interval of $T$, and write
\begin{equation}\label{eq:psi-fn}
\Psi(x)\;:=\;
\frac{\cos\!\bigl(2\pi(x^{2}-x-\tfrac1{16})\bigr)}{\cos(2\pi x)}
\end{equation}
for the Riemann--Siegel correction function: it is the same function as
$C_0$ of \eqref{eq:F-C0} (we keep Gabcke's symbol $C_0$ for the analytic
input and $\Psi$ for the limit curve), and it appears in \S15.3 of
Titchmarsh's book~\cite{Titchmarsh1986} and in \S5.4.1 of Pugh's
thesis~\cite{Pugh1998}. Define the two curves
\begin{equation}\label{eq:yin-inf}
\mathrm{Yin}_{\infty}(x)\;:=\;
\Psi(x)\Bigl(-\cos\bigl(2\pi(x^{2}-\tfrac1{16})\bigr)
+i\sin\bigl(2\pi(x^{2}-\tfrac1{16})\bigr)\Bigr)+1
\;=\;1-\Psi(x)\,e^{-2\pi i(x^{2}-\frac1{16})}
\end{equation}
and
\begin{equation}\label{eq:yang-inf}
\mathrm{Yang}_{\infty}(x)\;:=\;
-e^{4\pi ix}\,\mathrm{Yin}_{\infty}(x)+e^{4\pi ix}
\;=\;e^{4\pi ix}\bigl(1-\mathrm{Yin}_{\infty}(x)\bigr).
\end{equation}
Both are built from $\overline F$ alone: the exponent
$2\pi(x^{2}-\tfrac1{16})$ is the phase $A$ of \eqref{eq:F-C0}, so
$\Psi(x)\,e^{-iA}=\overline F(2x-\tfrac12)$ and
\begin{equation}\label{eq:yinyang-from-F}
\mathrm{Yin}_{\infty}(x)=1-\overline F\bigl(2x-\tfrac12\bigr),
\qquad
\mathrm{Yang}_{\infty}(x)=e^{4\pi ix}\,\overline F\bigl(2x-\tfrac12\bigr).
\end{equation}
As $x$ runs over the unit interval the argument $\tau=2x-\tfrac12$ runs
from $-\tfrac12$ to $\tfrac32$, from one lobe-meeting point of
Figure~\ref{fig:F-curve} to the other, covering both lobes once. The yin
curve is the point reflection of that arc through $\tfrac12$; the yang
curve is the same arc spun by the unit factor $e^{4\pi ix}$, two full
turns per unit interval. Figure~\ref{fig:yinyang-inf} draws the pair.

\begin{figure}[H]
\centering
\includegraphics[width=0.7\textwidth]{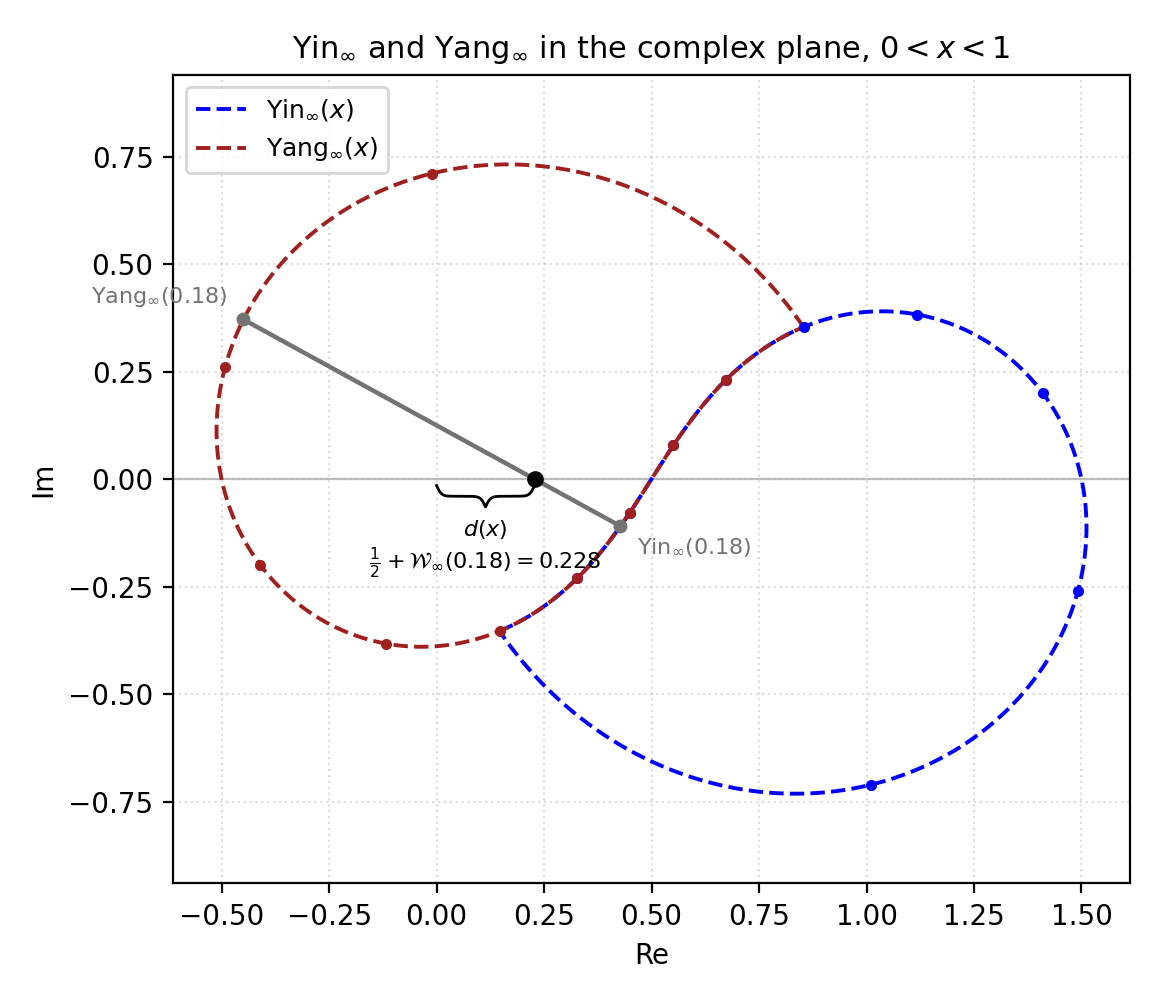}
\caption{The curves $\mathrm{Yin}_{\infty}(x)$ (blue, dashed) and
$\mathrm{Yang}_{\infty}(x)$ (dark red, dashed) for $0<x<1$, with dots every $0.1$
in $x$; both are the single curve $\overline F$ of
Figure~\ref{fig:F-curve} repackaged as in \eqref{eq:yinyang-from-F}. The
gray chord joins the two points at $x=0.18$; it crosses the real axis at
$d(0.18)=\tfrac12+\mathcal W_{\infty}(0.18)\approx0.228$ (black dot), as
Proposition~\ref{prop:yinyang-chord} asserts for every $x$. Generated by
\texttt{fig\_yinyang\_infinity.py}.}
\label{fig:yinyang-inf}
\end{figure}

The pair of curves is more than a repackaging: the chord between them
crosses the real axis at exactly the limit profile $d(x)$.

\begin{proposition}[the chord recovers the limit profile]
\label{prop:yinyang-chord}
For $x\in(0,1)$ with $x\neq\tfrac14,\tfrac34$, the line through
$\mathrm{Yin}_{\infty}(x)$ and $\mathrm{Yang}_{\infty}(x)$ meets the
real axis at the single point
\[
\tfrac12+\mathcal W_{\infty}(x)\;=\;d(x),
\]
the limit profile of \eqref{eq:d-profile}.
\end{proposition}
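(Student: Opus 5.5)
The plan is a direct computation in polar form, using \eqref{eq:yinyang-from-F} together with \eqref{eq:F-C0} at $\hat p=x$. Write $w:=\overline F(2x-\tfrac12)=\Psi(x)\,e^{-iA}$ with $A=2\pi x^2-\tfrac\pi8$, and $\theta:=4\pi x$. Then
\[
P:=\mathrm{Yin}_\infty(x)=1-w,\qquad Q:=\mathrm{Yang}_\infty(x)=e^{i\theta}w .
\]
I would parametrize the line as $P+\lambda(Q-P)$ and find where it meets the real axis. Setting the imaginary part to zero gives
\[
\Psi\sin A+\lambda\,\Psi\bigl(\sin(\theta-A)-\sin A\bigr)=0,
\]
so $\lambda$ is uniquely determined precisely when the line is not horizontal. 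The crossing point is then
\[
\operatorname{Re}P+\lambda\operatorname{Re}(Q-P)
=1-\Psi\cos A+\lambda\bigl(\Psi\cos(\theta-A)+\Psi\cos A-1\bigr).
\]

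The key simplification is to re-express everything through $\beta=2\pi(x-\tfrac14)(x-\tfrac34)=2\pi x^2-2\pi x+\tfrac{3\pi}8$. Two identities do the work:
\[
2\pi\bigl(x^2-x-\tfrac1{16}\bigr)=\beta-\tfrac\pi2,
\qquad
\tfrac\theta2-A=\tfrac\pi2-\beta .
\]
The first gives $\Psi(x)=\sin\beta/\cos(2\pi x)$. The second, combined with sum-to-product, gives
\[
\sin(\theta-A)-\sin A=2\cos(\tfrac\theta2)\sin(\tfrac\theta2-A)=2\cos(2\pi x)\cos\beta .
\]
The real parts also pair up: $\cos(\theta-A)+\cos A=2\cos(2\pi x)\sin\beta$. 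Substituting these, the factor $\cos(2\pi x)$ cancels against $\Psi$. The crossing becomes a rational expression in $\sin\beta,\cos\beta,\sin A,\cos A,\cos 2\pi x$. Finally I would write $A=2\pi x-(\tfrac\pi2-\beta)$ to expand $\sin A$ and $\cos A$ in terms of $2\pi x$ and $\beta$. The target is to reduce the crossing to $\tfrac12\bigl(1-\tan(2\pi x)\tan\beta\bigr)=\tfrac12+\mathcal W_\infty(x)$ by \eqref{eq:W-infty}. That this equals $d(x)$ is the identification already made in \S\ref{sec:tangent-derivation}.

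The main obstacle is this last trigonometric collapse, since the bookkeeping of signs is error-prone. As a fallback I would verify the identity by clearing denominators and checking it as a polynomial identity in $e^{2\pi i x}$ and $e^{i\beta}$. I would also check it numerically at $x=0.18$ against the value $\approx0.228$ quoted in Figure~\ref{fig:yinyang-inf}.

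Non-degeneracy also needs checking: the line must be non-horizontal so that the intersection is a single point. Non-horizontality means $\cos(2\pi x)\cos\beta\neq0$. Here $\cos(2\pi x)=0$ exactly at the excluded points $x=\tfrac14,\tfrac34$, which are also where $\Psi$ has its removable singularity. For $x\in(0,1)$, $\beta$ ranges over $[-\tfrac\pi8,\tfrac{3\pi}8)$, so $\cos\beta>0$. A non-horizontal line in particular forces $P\neq Q$, so the line is well defined and meets $\mathbb R$ in exactly one point.
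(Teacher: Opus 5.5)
Your proposal is correct and is essentially the paper's own computation: it uses the same angles $A$, $2\pi x$ and $\beta$, the same sum-to-product step $\sin(4\pi x-A)-\sin A=2\cos(2\pi x)\cos\beta$, and reaches the same end form $\cos(2\pi x+\beta)/\bigl(2\cos(2\pi x)\cos\beta\bigr)$, with your parametrization $P+\lambda(Q-P)$ in place of the paper's two-point intercept formula. The collapse you flagged does go through: with $A=2\pi x+\beta-\tfrac\pi2$, the numerator over $2\cos(2\pi x)\cos\beta$ is $2\cos(2\pi x)\cos\beta-\cos(2\pi x-\beta)=\cos(2\pi x+\beta)$; and for non-horizontality you also need $\Psi=\sin\beta/\cos(2\pi x)\neq0$, which holds because $\beta\neq0$ away from $x=\tfrac14,\tfrac34$.
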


\begin{proof}
Abbreviate $A=2\pi x^{2}-\tfrac\pi8$, $B=2\pi x$, and $\Psi=\Psi(x)$, so
that $\mathrm{Yin}_{\infty}=1-\Psi e^{-iA}$ and
$\mathrm{Yang}_{\infty}=\Psi e^{i(2B-A)}$. The imaginary parts are
$\Psi\sin A$ and $\Psi\sin(2B-A)$, the real parts $1-\Psi\cos A$ and
$\Psi\cos(2B-A)$. The line through points $z_{1},z_{2}$ with
$\operatorname{Im}z_{1}\neq\operatorname{Im}z_{2}$ crosses the real
axis at
\[
x_{0}
=\frac{\operatorname{Re}z_{1}\operatorname{Im}z_{2}
-\operatorname{Re}z_{2}\operatorname{Im}z_{1}}
{\operatorname{Im}z_{2}-\operatorname{Im}z_{1}} .
\]
Substituting, the denominator is
$\Psi\bigl(\sin(2B-A)-\sin A\bigr)=2\Psi\cos B\sin(B-A)$, and the
numerator is $\Psi\bigl(\sin(2B-A)-\Psi\sin 2B\bigr)$. Since
$\Psi=\cos(A-B)/\cos B$ (the form of $C_0$ used in
Lemma~\ref{lem:C0-trig}) and $2\sin B\cos(A-B)=\sin A+\sin(2B-A)$, the
numerator collapses to $-\Psi\sin A$, so
\[
x_{0}=\frac{-\sin A}{2\cos B\,\sin(B-A)} .
\]
Now let $\beta=2\pi(x-\tfrac14)(x-\tfrac34)$ as in
\S\ref{sec:tangent-derivation}. Then $A-B=\beta-\tfrac\pi2$, so
$\sin(B-A)=\cos\beta$; and $B+\beta=A+\tfrac\pi2$, so
$-\sin A=\cos(B+\beta)$. Therefore
\[
x_{0}
=\frac{\cos(B+\beta)}{2\cos B\cos\beta}
=\tfrac12\bigl(1-\tan(2\pi x)\tan\beta\bigr)
=\tfrac12+\mathcal W_{\infty}(x)
=d(x).
\]
Since $|\beta|\le\tfrac{3\pi}8<\tfrac\pi2$ on $[0,1]$, the factor
$\cos\beta$ never vanishes; the excluded points $x=\tfrac14,\tfrac34$
are exactly where $\cos B=0$, and there $\sin A=\sin(2B-A)=0$: both ends
of the chord land on the real axis itself, and the crossing point is
recovered instead by continuity, $d(\tfrac14)=\tfrac14$ and
$d(\tfrac34)=\tfrac34$.
\end{proof}

\subsection{General yin and yang curves}\label{sec:yinyang-general}

What are the yin and yang curves when $\sigma$ is not $\tfrac12$ and $T$
is not infinite? Nothing in their construction requires either limit.
Return to the exact
decomposition $\zeta=\Sigma_1+\Sigma_2+R$ of \eqref{eq:zeta-T} and, as
in \S\ref{sec:decomp}, write $t=I(T)$, $s=\sigma+it$, and
$M=\lceil T\rceil$. The natural frame in which to watch the remainder is
the one that makes the next summand of the main sum the unit segment:
translate the end of the main sum, $\Sigma_1$, to the origin, and divide
by the summand vector $M^{-s}$, that is,
\begin{equation}\label{eq:frame-map}
z\;\longmapsto\;(z-\Sigma_1)\,M^{s}.
\end{equation}
In this coordinate frame, the next summand of the main sum runs from $0$ to $1$.
Define two points in this frame,
\begin{equation}\label{eq:yin1}
\mathrm{Yin}(s)\;:=\;R\,M^{s}
\end{equation}
and
\begin{equation}\label{eq:yang1}
\mathrm{Yang}(s)\;:=\;\mathrm{Yin}(s)\;-\;\chi(s)\,M^{2s-1}.
\end{equation}
The first point is where the local coordinate frame map
\eqref{eq:frame-map} sends $\Sigma_1+R=\zeta-\Sigma_2$, the end of the
remainder. The second point is where the transformation to the local
frame sends $\Sigma_1+R-\chi M^{s-1}$, the
same point after the dual sum takes one more step $n=M$: subtracting the
next dual summand $\chi M^{s-1}$ and applying \eqref{eq:frame-map}
contributes the $\chi M^{2s-1}$ term. As $T$ sweeps with $\sigma$ fixed,
the two points trace the general yin and yang
curves.\footnote{The construction has an exact mirror. Sending the next
summand of the \emph{dual} sum to the unit segment instead (translate
$\Sigma_1+R$ to the origin and divide by the dual summand vector
$-\chi M^{s-1}$, that is, $z\mapsto(\Sigma_1+R-z)\,M^{1-s}/\chi$), the
two ends of the next \emph{main} summand land at $R\,M^{1-s}/\chi$ (the
image of $\Sigma_1$) and $R\,M^{1-s}/\chi-M^{1-2s}/\chi$ (the image of
$\Sigma_1+M^{-s}$), and the mirrored chord crosses the real axis at
$\hat d_2=d_2M^{1-\sigma}/|\chi|$, the fractional amount of the dual
sum's next summand.}

The chord property of Proposition~\ref{prop:yinyang-chord} is not
special to the limit: it is the $T\to\infty$ limit of an identity that
holds exactly at every finite $T$, which we state next. Figure~\ref{fig:yinyang-general} draws
the pair for $1\leq T\leq2$ on the critical line over the limit curves
of \S\ref{sec:yinyang-inf}, with the chord at $T=1.18$.

\begin{figure}[H]
\centering
\includegraphics[width=\textwidth]{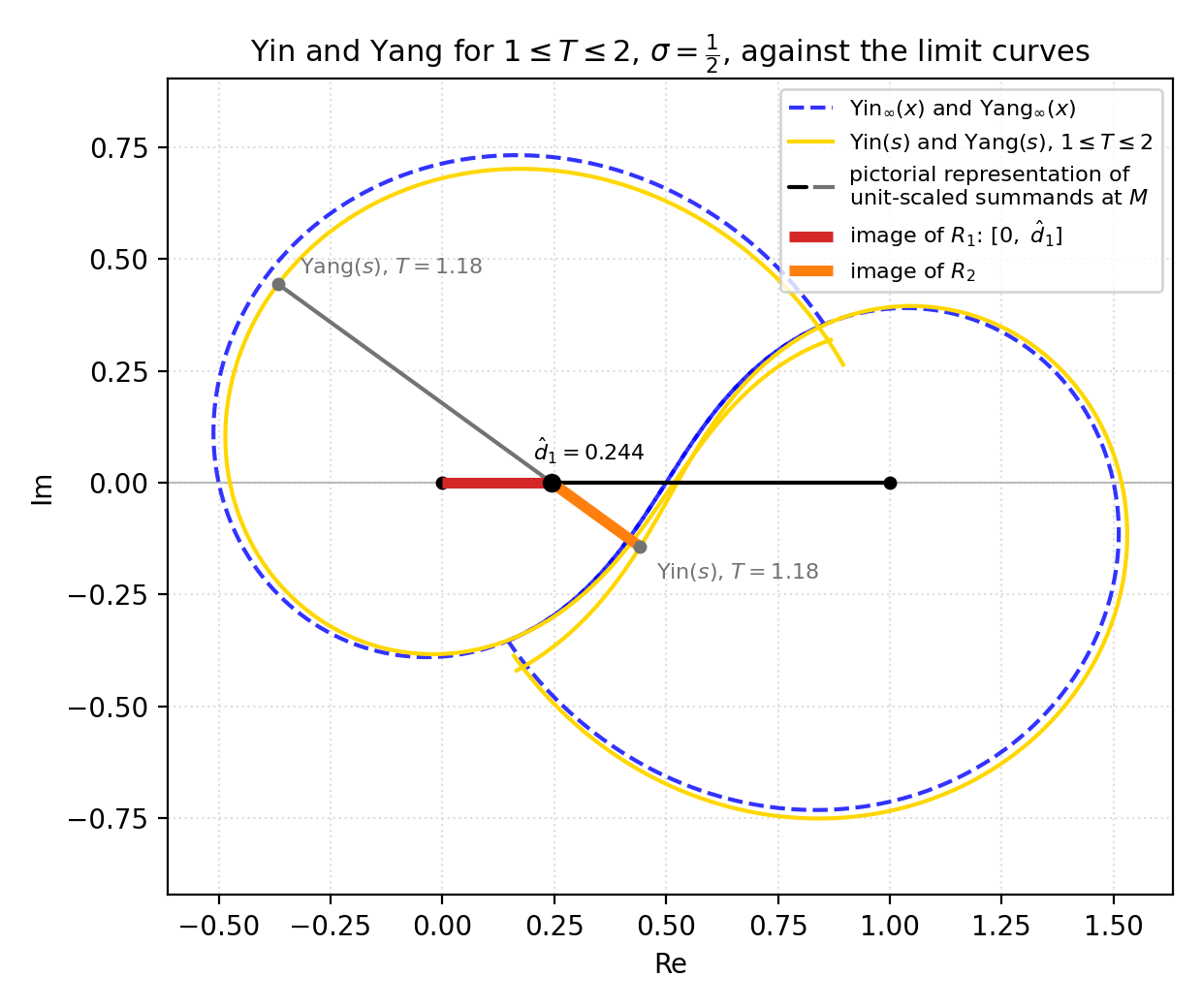}
\caption{The general yin and yang curves for $1\leq T\leq2$ at
$\sigma=\tfrac12$ (solid, the color of the $[1,2]$ interval of
Figure~\ref{fig:pinned-waves}), drawn over
the limit curves $\mathrm{Yin}_{\infty}$ and $\mathrm{Yang}_{\infty}$ of
\S\ref{sec:yinyang-inf} (dashed, both in the blue of
Figure~\ref{fig:F-curve}): even on this very
first interval the finite curves track their limits closely. The gray
chord joins the two points at
$T=1.18$ and crosses the real axis at the black dot
$\hat d_1=0.244$. The red piece $[0,\hat d_1]$ of the axis
is the in-frame image of $R_1$, and the orange piece from the crossing
to the yin point is the image of $R_2$. The black segment from $0$ to
$1$ and the gray chord are together a pictorial representation of the
unit-scaled summands at $M$: the black segment is the next summand of
the main sum carried to $[0,1]$ by the local coordinate frame
transformation, and the chord is the next
summand of the dual sum in the same frame (on the critical line both
have length exactly $1$). Generated by
\texttt{fig\_yinyang\_general.py}.}
\label{fig:yinyang-general}
\end{figure}

\begin{proposition}[the chord recovers the fractional amount of the next main summand]
\label{prop:yinyang-chord-finite}
Fix $\sigma$ and non-integer $T$ with $\sin(2\omega+\psi)\neq0$, where
$\omega=t\log M$ and $\psi=\arg\chi(s)$ as in \S\ref{sec:decomp}. Then
$\mathrm{Yin}(s)$ and $\mathrm{Yang}(s)$ both lie on the line
through the real point $\hat d_1=d_1M^{\sigma}$ with direction
$e^{i(2\omega+\psi)}$:
\begin{equation}\label{eq:yinyang-line}
\mathrm{Yin}=\hat d_1+\bigl(d_2M^{\sigma}\bigr)e^{i(2\omega+\psi)},
\qquad
\mathrm{Yang}=\hat d_1+\bigl(d_2M^{\sigma}-|\chi|M^{2\sigma-1}\bigr)
e^{i(2\omega+\psi)},
\end{equation}
with real coefficients. In particular the line through the two points
meets the real axis at the single point
\[
\hat d_1\;=\;d_1M^{\sigma}\;=\;\frac{R_1}{M^{-s}},
\]
the fractional amount of Table~\ref{tab:frac}: the ratio is real
because $R_1$ lies along the next main summand $M^{-s}$, and it equals
$|R_1|/|M^{-s}|$ whenever $d_1>0$.
\end{proposition}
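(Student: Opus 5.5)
The plan is to start from the two-direction form \eqref{eq:R-two-dirs} of Theorem~\ref{thm:decomp}, $R=d_1e^{-i\omega}+d_2e^{i(\omega+\psi)}$, which is valid because $\sin(2\omega+\psi)\neq0$, and push it through the frame map \eqref{eq:frame-map}. Everything reduces to computing $M^{s}$ times each term. Since $M$ is a positive real and $\omega=t\log M$, we have $M^{s}=M^{\sigma}e^{i\omega}$. Hence
\[
\mathrm{Yin}=R\,M^{s}=d_1M^{\sigma}+d_2M^{\sigma}e^{i(2\omega+\psi)}
=\hat d_1+\bigl(d_2M^{\sigma}\bigr)e^{i(2\omega+\psi)},
\]
which is the first identity of \eqref{eq:yinyang-line}. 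The coefficients $d_1M^{\sigma}$ and $d_2M^{\sigma}$ are real by Proposition~\ref{prop:weights}(ii).

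For the yang point, I will write $\chi=|\chi|e^{i\psi}$ and note that
\[
M^{2s-1}=M^{2\sigma-1}e^{2i\omega},
\qquad\text{so}\qquad
\chi M^{2s-1}=|\chi|M^{2\sigma-1}e^{i(2\omega+\psi)}.
\]
Subtracting this from $\mathrm{Yin}$ as in \eqref{eq:yang1} changes only the real coefficient in front of $e^{i(2\omega+\psi)}$. That gives the second identity of \eqref{eq:yinyang-line}.

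It then follows that both points lie on the line $\{\hat d_1+\lambda e^{i(2\omega+\psi)}:\lambda\in\mathbb R\}$. Its direction is not real, since $\sin(2\omega+\psi)\neq0$, so the line meets the real axis only at $\lambda=0$, that is, at $\hat d_1$. The two points are distinct because $|\chi|M^{2\sigma-1}>0$, so this line is the chord through them.

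It remains to identify the crossing point with $R_1/M^{-s}$. From \eqref{eq:R1-def}, $R_1/M^{-s}=d_1M^{-it}M^{s}=d_1M^{\sigma}$. This is real and equals $|R_1|/|M^{-s}|$ when $d_1>0$, because $|M^{-s}|=M^{-\sigma}$ and $|R_1|=|d_1|$.

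The main subtlety, rather than any real obstacle, is keeping the phase conventions consistent. One must use $M=\lceil T\rceil$ in both $\omega$ and the frame map, which is harmless here since $T$ is non-integer. One must also treat $\arg\chi$ as the same $\psi$ that appears in $d_1$ and $d_2$, and check that $\sin(2\omega+\psi)\neq0$ is exactly the non-degeneracy that makes the chord transverse to the real axis.
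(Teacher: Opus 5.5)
Your proposal is correct and follows the paper's proof essentially step for step: substitute the two-direction form of $R$ into $\mathrm{Yin}=R\,M^{s}$ with $M^{s}=M^{\sigma}e^{i\omega}$, subtract $\chi M^{2s-1}=|\chi|M^{2\sigma-1}e^{i(2\omega+\psi)}$ to get the yang point, and use $\sin(2\omega+\psi)\neq0$ to make the line transverse to the real axis. Your extra checks, that the two points are distinct and that $R_1/M^{-s}=d_1M^{\sigma}$ explicitly, fill in details the paper leaves implicit.
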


\begin{proof}
Since $M$ is a positive real base, $M^{s}=M^{\sigma}e^{i\omega}$.
Substituting the two-direction form \eqref{eq:R-two-dirs} of the
remainder, $R=d_1e^{-i\omega}+d_2e^{i(\omega+\psi)}$, into
\eqref{eq:yin1},
\[
\mathrm{Yin}
=R\,M^{\sigma}e^{i\omega}
=d_1M^{\sigma}+d_2M^{\sigma}e^{i(2\omega+\psi)}.
\]
For the yang point, $\chi M^{2s-1}=|\chi|M^{2\sigma-1}e^{i(2\omega+\psi)}$,
and subtracting it from $\mathrm{Yin}$ gives the second formula of
\eqref{eq:yinyang-line}. The coefficients $d_1M^{\sigma}$,
$d_2M^{\sigma}$, and $d_2M^{\sigma}-|\chi|M^{2\sigma-1}$ are real by
Proposition~\ref{prop:weights}(ii), and the direction
$e^{i(2\omega+\psi)}$ is not horizontal exactly when
$\sin(2\omega+\psi)\neq0$, so the line meets the real axis at the single
point $d_1M^{\sigma}=\hat d_1$.
\end{proof}

Whenever $0<d_2<|\chi|M^{\sigma-1}$, that is, whenever the dual
fractional amount is strictly between $0$ and $1$, the two coefficients
in \eqref{eq:yinyang-line} have opposite signs: the yin and yang points
then lie on opposite sides of the real axis and the segment itself, not
merely its extension, crosses at $\hat d_1$.

Nothing in the construction is tied to the critical line, and the curves
converge to the limit curves of \S\ref{sec:yinyang-inf} at every $\sigma$.
Figure~\ref{fig:yinyang-asym} draws the yin curve against the
\emph{reflected} yang curve $1-\mathrm{Yang}(s)$ over $0\leq T\leq7$ at
three values of $\sigma$. The point reflection $z\mapsto1-z$ exchanges
the two ends of the unit segment, and under it the yang family lands on
top of the yin family; the two limit traces coincide exactly, since a
direct computation from \eqref{eq:yin-inf}--\eqref{eq:yang-inf}, using
$\Psi(1-x)=\Psi(x)$, gives
\begin{equation}\label{eq:refl-yang}
1-\mathrm{Yang}_{\infty}(x)\;=\;\mathrm{Yin}_{\infty}(1-x).
\end{equation}
So a single dashed teardrop, the trace of $\mathrm{Yin}_{\infty}$,
is the limit of both families.

\begin{figure}[H]
\centering
\includegraphics[width=0.92\textwidth]{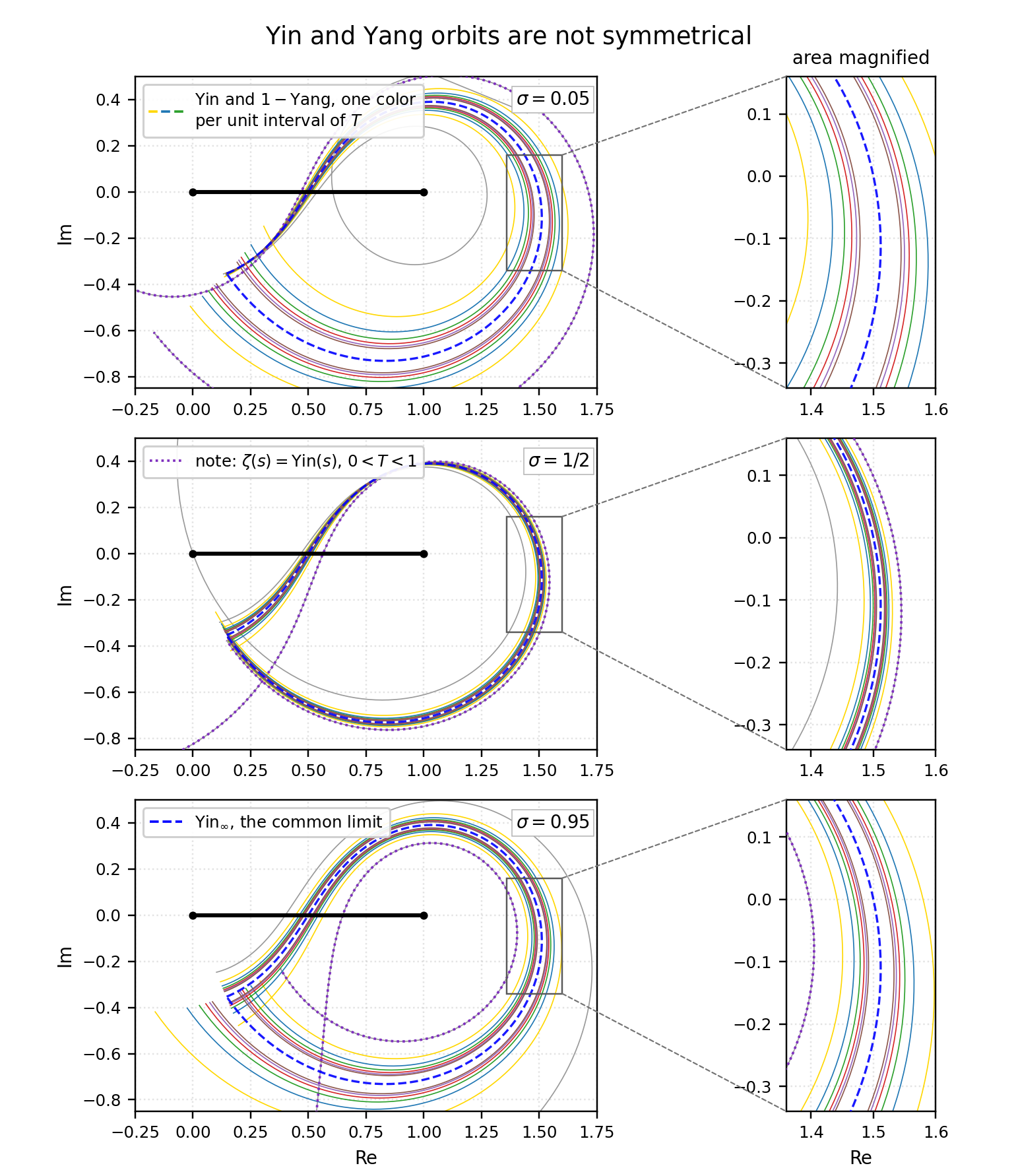}
\caption{The yin curve against the reflected yang curve
$1-\mathrm{Yang}(s)$, to make comparison easier. The figure is for
$0\leq T\leq7$, one loop of each per unit
interval of $T$, out to $t\approx353$. One row per $\sigma$, all on the
same scale: $\sigma=0.05$ on top, the critical line in the middle,
$\sigma=0.95$ below. Both curves of each unit interval wear that
interval's color, the colors of Figures~\ref{fig:d1-periodicity}
and~\ref{fig:pinned-waves}; the first period
$0\leq T\leq1$ is gray, and there the yin curve is $\zeta(\sigma+it)$
itself (dotted purple). The
black segment from $0$ to $1$ is the next summand of the main sum
carried to the unit interval by the frame map \eqref{eq:frame-map}, as
in Figure~\ref{fig:yinyang-general}. The
dashed blue curve, appearing between them, is the limit teardrop
$\mathrm{Yin}_{\infty}$ of \S\ref{sec:yinyang-inf}, which by
\eqref{eq:refl-yang} is also the trace of the reflected limit
$1-\mathrm{Yang}_{\infty}$. \emph{Left:} all seven periods, the view
cropped to the cluster; the stray gray arcs crossing it belong to the
first period, where $t<13.6$ and the frame is still far from its limit.
\emph{Right:} the boxed window magnified: at every $\sigma$ the later
periods hug the dashed limit more closely, in interval order. Generated
by \texttt{fig\_yinyang\_asym.py}.}
\label{fig:yinyang-asym}
\end{figure}

The convergence visible in Figure~\ref{fig:yinyang-asym} is a theorem,
and the machinery used in Appendix~\ref{sec:positivity} already
contains most of its proof:
by \eqref{eq:yin1} the yin point is $R$ carried through the frame map
\eqref{eq:frame-map}, and Appendix~\ref{sec:positivity} knows the size, the
sign, and the phase of $R$ to exactly the accuracy needed. The one
genuinely new ingredient is a polar form of the limit curves.

\begin{lemma}[the two limit curves are really a single curve, described
explicitly: $\mathrm{Yang}_{\infty}(x)=\mathrm{Yin}_{\infty}(x-\tfrac12)$]
\label{lem:yinf-polar}
Write $A(x)=2\pi\bigl(x^{2}-\tfrac1{16}\bigr)$ as in \eqref{eq:F-C0} and
$\theta(x)=2\pi x(1-x)-\tfrac{3\pi}8$, the phase of \eqref{eq:cosu}.
Then for all real $x$, with removable points filled in by continuity,
\begin{equation}\label{eq:yinf-polar}
\mathrm{Yin}_{\infty}(x)\;=\;1-\Psi(x)\,e^{-iA(x)}
\;=\;\Psi\bigl(x+\tfrac12\bigr)\,e^{\,i\theta(x)},
\end{equation}
and the yang limit is the yin limit half a unit behind in the argument:
\begin{equation}\label{eq:yinf-halflag}
\mathrm{Yang}_{\infty}(x)
\;=\;\mathrm{Yin}_{\infty}\bigl(x-\tfrac12\bigr)
\;=\;\mathrm{Yin}_{\infty}(x)-e^{\,2i\theta(x)}.
\end{equation}
\end{lemma}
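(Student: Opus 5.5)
The whole lemma reduces to elementary exponential identities once $\Psi$ is written in the form already used in the proof of Proposition~\ref{prop:yinyang-chord}. With $A=A(x)=2\pi x^{2}-\tfrac\pi8$ and $B=2\pi x$, the first step is to record
\[
\Psi(x)=\frac{\cos(A-B)}{\cos B},
\qquad
\theta(x)=B-A-\tfrac\pi2 .
\]
The first is the definition \eqref{eq:psi-fn}, since $A-B=2\pi(x^{2}-x-\tfrac1{16})$; the second is checked by expanding $\theta$. I would prove everything at points with $\cos B\neq0$, and at the finitely many remaining points in each unit interval ($x\equiv\tfrac14,\tfrac34$) invoke continuity of both sides, as the statement allows.

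\textbf{Polar form \eqref{eq:yinf-polar}.} Write $\cos(A-B)e^{-iA}=\tfrac12\bigl(e^{-iB}+e^{-i(2A-B)}\bigr)$ and $\cos B=\tfrac12(e^{iB}+e^{-iB})$. The $e^{-iB}$ terms then cancel in the numerator of $1-\Psi e^{-iA}$, leaving
\[
\tfrac12\bigl(e^{iB}-e^{-i(2A-B)}\bigr)=i\sin A\,e^{i(B-A)} .
\]
Hence
\[
\mathrm{Yin}_\infty(x)=\frac{i\sin A\,e^{i(B-A)}}{\cos B}.
\]
Next I evaluate $\Psi(x+\tfrac12)$ by shifting the argument. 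The shift replaces $B$ by $B+\pi$, so $\cos B$ becomes $-\cos B$, and a short computation gives $A(x+\tfrac12)-(B+\pi)=A-\tfrac\pi2$. Therefore $\Psi(x+\tfrac12)=-\sin A/\cos B$. Combining this with $e^{i\theta}=-i\,e^{i(B-A)}$ reproduces exactly the expression for $\mathrm{Yin}_\infty(x)$ above.

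\textbf{Half-lag \eqref{eq:yinf-halflag}.} From \eqref{eq:yinyang-from-F}, $\mathrm{Yang}_\infty=\Psi\,e^{i(2B-A)}$. On the other hand the polar form gives $\mathrm{Yin}_\infty(x-\tfrac12)=\Psi(x)\,e^{i\theta(x-\frac12)}$. Expanding, $\theta(x-\tfrac12)=4\pi x-2\pi x^{2}-\tfrac{15\pi}8$, which differs from $2B-A=4\pi x-2\pi x^{2}+\tfrac\pi8$ by exactly $-2\pi$, so the two agree. For the last equality I compute
\[
\mathrm{Yin}_\infty-\mathrm{Yang}_\infty
=1-\Psi\bigl(e^{-iA}+e^{i(2B-A)}\bigr)
=1-2\Psi\cos B\,e^{i(B-A)}
=1-2\cos(A-B)\,e^{-i(A-B)} .
\]
This equals $-e^{-2i(A-B)}=e^{2i\theta}$, because $2\theta=2(B-A)-\pi$.

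The only real obstacle is bookkeeping: the signs and the constant offsets $\tfrac\pi8$, $\tfrac{3\pi}8$, $\tfrac{5\pi}8$ under the shifts $x\mapsto x\pm\tfrac12$. Pinning down $\theta=B-A-\tfrac\pi2$ first makes every later step a one-line check. At the excluded points, both sides are continuous: the singularity of $\Psi$ is removable there, as noted after \eqref{eq:d-profile}. So the identities extend to all real $x$.
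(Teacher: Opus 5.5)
Your proof is correct and follows essentially the same route as the paper. Both arguments rewrite $\Psi=\cos(A-B)/\cos B$ and $\theta=B-A-\tfrac\pi2$, clear the factor $\cos B$ to verify the polar form (you by factoring exponentials, the paper by comparing real and imaginary parts), then evaluate the polar form at $x-\tfrac12$, match phases modulo $2\pi$, and use $e^{-iA}+e^{i(2B-A)}=2\cos B\,e^{i(B-A)}$ for the final equality.
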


\begin{proof}
For \eqref{eq:yinf-polar}, multiply through by $\cos(2\pi x)$ and write
$\varphi=2\pi x$. On the left, $\Psi(x)\cos(2\pi x)=\cos(A-\varphi)$,
since $2\pi(x^{2}-x-\tfrac1{16})=A(x)-\varphi$. On the right,
$\cos\bigl(2\pi(x+\tfrac12)\bigr)=-\cos(2\pi x)$ and
$(x+\tfrac12)^{2}-(x+\tfrac12)-\tfrac1{16}=x^{2}-\tfrac5{16}$, so
$\Psi(x+\tfrac12)\cos(2\pi x)=-\cos\bigl(A(x)-\tfrac\pi2\bigr)=-\sin A$;
also $\theta(x)=\varphi-A-\tfrac\pi2$, so
$e^{i\theta}=-ie^{\,i(\varphi-A)}$. The claim therefore reads
\[
\cos\varphi-\cos(A-\varphi)\,e^{-iA}
\;=\;(-\sin A)\,(-i)\,e^{\,i(\varphi-A)}
\;=\;i\sin A\,e^{\,i(\varphi-A)},
\]
and expanding the left side settles it: the real part is
$\cos\varphi-\cos(A-\varphi)\cos A=\sin A\,\sin(A-\varphi)$ (product
formula), the imaginary part is $\sin A\,\cos(A-\varphi)$, and
$\sin A\bigl(\sin(A-\varphi)+i\cos(A-\varphi)\bigr)
=i\sin A\,e^{-i(A-\varphi)}$, which is the right side.

For \eqref{eq:yinf-halflag}, the definition \eqref{eq:yang-inf} and the
first equality of \eqref{eq:yinf-polar} give
$\mathrm{Yang}_{\infty}(x)=e^{\,2i\varphi}\bigl(1-\mathrm{Yin}_{\infty}(x)\bigr)
=\Psi(x)\,e^{\,i(2\varphi-A)}$. Applying \eqref{eq:yinf-polar} at
$x-\tfrac12$ instead,
$\mathrm{Yin}_{\infty}(x-\tfrac12)=\Psi(x)\,e^{\,i\theta(x-\frac12)}$,
and $\theta(x-\tfrac12)=2\pi(x-\tfrac12)(\tfrac32-x)-\tfrac{3\pi}8
\equiv2\varphi-A\pmod{2\pi}$, so the first equality holds. For the
second, $e^{\,2i\theta}=-e^{\,2i(\varphi-A)}$, and
\begin{align*}
\mathrm{Yin}_{\infty}(x)-\mathrm{Yang}_{\infty}(x)
&=1-\Psi\bigl(e^{-iA}+e^{\,i(2\varphi-A)}\bigr)
=1-2\Psi\cos\varphi\,e^{\,i(\varphi-A)}\\
&=1-2\cos(A-\varphi)\,e^{\,i(\varphi-A)}
=-e^{\,2i(\varphi-A)}
=e^{\,2i\theta(x)},
\end{align*}
using $\Psi\cos\varphi=\cos(A-\varphi)$ in the middle step.
\end{proof}

\begin{theorem}[the Yin and Yang curves converge to the limit curves,
$\mathrm{Yin}_{\infty}$ and $\mathrm{Yang}_{\infty}$]
\label{thm:yin-convergence}
Fix $x\in(0,1)$ and $\sigma\in(0,1)$. Then
\[
\lim_{\substack{T\to\infty\\ \{T\}=x}}\mathrm{Yin}(s)
\;=\;\mathrm{Yin}_{\infty}(x),
\qquad
\lim_{\substack{T\to\infty\\ \{T\}=x}}\mathrm{Yang}(s)
\;=\;\mathrm{Yang}_{\infty}(x),
\]
at rate $O(1/T)$, uniformly for $x$ and $\sigma$ in compact subsets of
$(0,1)$; the limit does not depend on $\sigma$.
\end{theorem}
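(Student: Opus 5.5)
The plan is to reduce the convergence of $\mathrm{Yin}(s)=R\,M^{s}$ to the Riemann--Siegel asymptotics of $R$ already used in Appendix~\ref{sec:positivity}, and then obtain the Yang statement algebraically. For the Yang half, write $\mathrm{Yang}=\mathrm{Yin}-\chi M^{2s-1}$ as in \eqref{eq:yang1}. Compare this with Lemma~\ref{lem:yinf-polar}, which gives $\mathrm{Yang}_\infty=\mathrm{Yin}_\infty-e^{2i\theta(x)}$. It therefore suffices to prove two facts: $\mathrm{Yin}(s)\to\mathrm{Yin}_\infty(x)$, and $\chi(s)M^{2s-1}=e^{2i\theta(x)}+O(1/T)$.

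The second fact is pure Stirling. We have $|\chi(s)|=(t/2\pi)^{1/2-\sigma}(1+O(1/t))$ and $|M^{2s-1}|=M^{2\sigma-1}$. By Remark~\ref{rem:T-vs-a}, $\sqrt{t/2\pi}=T+\tfrac12+O(1/T)$ while $M=T+1-x$, so the ratio of the two lengths is $1+O(1/T)$ uniformly for $\sigma$ in compacta. For the phase, $\arg\chi=-2\vartheta(t)$, with $\vartheta(t)=\tfrac t2\log\tfrac t{2\pi}-\tfrac t2-\tfrac\pi8+O(1/t)$. Adding $2t\log M$ and inserting $t=I(T)$ with $M=m+1$ gives $2\omega+\psi$. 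Expanding $\log M-\tfrac12\log(t/2\pi)$ in powers of $(1-x-\tfrac12)/(T+\tfrac12)$ to second order, the terms of size $t\cdot T^{-2}\asymp1$ survive, and the phase should come out as $2\theta(x)+O(1/T)$ modulo $2\pi$. This is exactly the expansion recorded as \eqref{eq:cosu} (where $u$ has phase $\theta$), so I would cite it rather than redo it. The integer multiples of $2\pi$ coming from $t\log$ terms are handled using the identity $t\log(1+1/T)=\pi(2T+1)$.

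For the first fact, I would use Gabcke's form \eqref{eq:gabcke} of the Riemann--Siegel expansion, adapted from the classical cutoff $N=\lfloor a\rfloor$ to the cutoff $m=\lfloor T\rfloor$, as in \eqref{eq:cut-cases}. For $x<\tfrac12$ we have $N=m$ and $\hat p=x+\tfrac12$. For $x>\tfrac12$ we have $N=m+1$, so $R_m=R_N+M^{-s}+\chi M^{s-1}$. In the frame map the extra main summand contributes exactly $1$, and the extra dual summand contributes $\chi M^{2s-1}$. The leading term of the off-line expansion is $R_N=(-1)^{N-1}e^{-i\vartheta}(t/2\pi)^{-\sigma/2}\bigl(C_0(\hat p)+O(t^{-1/2})\bigr)$, up to the standard factor. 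The $\sigma$-dependence there is only the modulus $(t/2\pi)^{-\sigma/2}$, and it cancels against $M^{\sigma}$ up to $1+O(1/T)$, because $M^2$ and $t/2\pi$ differ by a factor $1+O(1/T)$. This cancellation is why the limit is independent of $\sigma$. What remains is the phase $e^{i(\omega-\vartheta)}(-1)^{N-1}$, which by the same Stirling expansion tends to a function of $x$ alone.

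The main obstacle is identifying these constants. In the case $x<\tfrac12$, one must check that $(-1)^{N-1}C_0(x+\tfrac12)e^{i(\omega-\vartheta)}$ equals $\Psi(x+\tfrac12)e^{i\theta(x)}$, which is the polar form \eqref{eq:yinf-polar}. In the case $x>\tfrac12$, one must check that $1+\chi M^{2s-1}+(\text{the }R_N\text{ term with }\hat p=x-\tfrac12)$ equals the same expression. Here Lemma~\ref{lem:yinf-polar}, applied at the shifted argument, should do the work. It gives $\mathrm{Yin}_\infty(x)=\mathrm{Yin}_\infty(x-\tfrac12)+e^{2i\theta}$, together with the sign flip of $(-1)^{N-1}$ as $N$ increments.

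The error terms are then controlled as follows. The Gabcke remainder contributes $O(t^{-1/2})=O(1/T)$ after normalization. The phase error is $O(1/T)$, and the magnitude error is $O(1/T)$. All of these are uniform on compact sets of $x$ and $\sigma$, since $C_0$ is bounded and entire and the compact sets avoid the cut points $x=0,\tfrac12,1$.
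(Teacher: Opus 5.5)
Your plan follows the paper's proof. You reduce Yang to Yin through $\mathrm{Yang}-\mathrm{Yin}=-\chi M^{2s-1}\to-e^{2i\theta(x)}$. You re-cut Siegel's remainder at $m$, so the extra pair $M^{-s}+\chi M^{s-1}$ becomes $1+\chi M^{2s-1}$ in the frame. You let $M^{\sigma}a^{-\sigma}\to1$ remove the $\sigma$-dependence. Finally you identify the two case limits with $\mathrm{Yin}_{\infty}(x)$ via \eqref{eq:yinf-polar} and \eqref{eq:yinf-halflag}.

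The step that fails is your justification of uniformity. A compact subset of $(0,1)$ need not avoid $x=\tfrac12$; only $0$ and $1$ are excluded. Your split ``$x<\tfrac12\Rightarrow N=m$, $x>\tfrac12\Rightarrow N=m+1$'' misses two places:
\begin{itemize}
\item the point $x=\tfrac12$ itself, which the statement includes as a fixed $x$;
\item the window $\tfrac12<x<\tfrac12+\tfrac1{24T}+O(T^{-2})$. There $a$ is still below $m+1$, so $N=m$, there is no Iverson pair, and $\hat p$ is close to $1$ rather than $0$.
\end{itemize}
That window is present at every height, so the supremum over a compact set such as $[\tfrac14,\tfrac34]$ is not controlled by your argument. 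Your $x>\tfrac12$ expansion is simply not the right one there.

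The repair is to split on $N$ rather than on $x$. Within each case, $\hat p=x\pm\tfrac12+O(1/T)$ and $u\equiv\pi(m+1)+\theta(x)+O(1/T)$ hold uniformly in $x$. Both limiting expressions, $\Psi(x+\tfrac12)e^{i\theta(x)}$ and $1-\Psi(x-\tfrac12)e^{i\theta(x)}+e^{2i\theta(x)}$, equal $\mathrm{Yin}_{\infty}(x)$ for every real $x$ by Lemma~\ref{lem:yinf-polar}. So it does not matter which case a given $T$ falls in. The paper handles the same point by checking the seam directly: $\Psi(1)=\Psi(0)=\cos\tfrac\pi8$ and $1+e^{i\pi/4}=2\cos\tfrac\pi8\,e^{i\pi/8}$, so both case formulas give $\cos\tfrac\pi8\,e^{i\pi/8}=\mathrm{Yin}_{\infty}(\tfrac12)$.

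A smaller point: you cannot just cite \eqref{eq:cosu} for the phase. It records only $\cos u$, which fixes $u$ only up to sign. You need $e^{iu}$ itself in the Yin limit (to turn $(-1)^{N-1}e^{iu}$ into $\pm e^{i\theta(x)}$) and $e^{2iu}$ in the Yang limit. Use the angle-level case expansions displayed just before \eqref{eq:cosu}, kept as angles. They give $u\equiv\pi(m+1)+\theta(x)+O(1/T)\pmod{2\pi}$, as in the paper's Step~2. Your proposed direct Stirling expansion of $2t\log(M/a)$ would also produce this, provided you carry it through as an angle.
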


The proof needs only Gabcke's bound \eqref{eq:gabcke} on the critical
line, and off it additionally Siegel's first-order asymptotics
\eqref{eq:R-general}, the same input as in the proof of
Proposition~\ref{prop:weights}(iv).

\begin{proof}
Throughout, $C_0$ and $\Psi$ are the same function
(\S\ref{sec:yinyang-inf}).

\emph{Step 1 (the yin point in asymptotic closed form).} By
\eqref{eq:yin1}, $\mathrm{Yin}=R\,M^{s}=R\,(m+1)^{\sigma}e^{\,i\omega}$,
with $M=m+1$ for non-integer $T$. On the critical line $R=e^{-i\vartheta}r$
with $r$ real (Step~1 of the proof of
Proposition~\ref{prop:weights}(iii)), so
$\mathrm{Yin}=\sqrt{m+1}\;r\,e^{\,iu}$, and inserting
\eqref{eq:r-two-cases}--\eqref{eq:gabcke},
\begin{equation}\label{eq:yin-asymp}
\mathrm{Yin}
=\Bigl(\tfrac{m+1}{a}\Bigr)^{\!1/2}(-1)^{N-1}\bigl(C_0(\hat p)+E\bigr)
e^{\,iu}
\;+\;\bigl[N=m{+}1\bigr]\,\bigl(1+e^{\,2iu}\bigr),
\end{equation}
where the extra summand of \eqref{eq:r-two-cases} became the closed pair
$2\cos u\,e^{\,iu}=1+e^{\,2iu}$. Off the line, the first term of
Siegel's asymptotic expansion of the remainder at fixed $\sigma$
\cite{Siegel1932}, the same input used in the proof of
Proposition~\ref{prop:weights}(iv), gives
\begin{equation}\label{eq:R-general}
\begin{split}
R(s)={}&(-1)^{N-1}\,\frac{C_0(\hat p)}{2}
\Bigl(a^{-\sigma}e^{-i\tilde\vartheta(t)}
+\chi(s)\,a^{\sigma-1}e^{+i\tilde\vartheta(t)}\Bigr)\\
&+\bigl[N=m{+}1\bigr]\bigl((m{+}1)^{-s}+\chi\,(m{+}1)^{s-1}\bigr)
\;+\;O_\sigma\bigl(a^{-\sigma-1}\bigr),
\end{split}
\end{equation}
with the elementary phase
$\tilde\vartheta(t)=\tfrac t2\log\tfrac t{2\pi}-\tfrac t2-\tfrac\pi8
=\vartheta(t)+O(1/t)$. The error is additive because Siegel's expansion
advances in powers of $a^{-1}$, one step of which is all we keep. Against
the displayed term it is a relative $O(1/a)=O(1/T)$, the order of Gabcke's
$E$ in \eqref{eq:gabcke}: the two pieces of the parenthesis align rather
than cancel, since $|\chi(s)|\,a^{\sigma-1}=a^{-\sigma}\bigl(1+O(1/t)\bigr)$
and their phases differ by $\arg\chi+2\tilde\vartheta=O_\sigma(1/t)$, so the
parenthesis has modulus $2a^{-\sigma}\bigl(1+o(1)\bigr)$, while $C_0$ stays
between $\cos\tfrac{3\pi}{8}=0.3826\ldots$ and $\cos\tfrac\pi8=0.9238\ldots$
on $[0,1]$. The frame factor $(m+1)^{\sigma}e^{\,i\omega}$
carries the two pieces of the parenthesis to
$\bigl(\tfrac{m+1}{a}\bigr)^{\sigma}e^{\,i(\omega-\tilde\vartheta)}$ and
$|\chi|\,a^{\sigma-1}(m+1)^{\sigma}e^{\,i(\omega+\psi+\tilde\vartheta)}$,
and since $|\chi(\sigma+it)|=a^{1-2\sigma}\bigl(1+O(1/t)\bigr)$ (by
Stirling's formula) and $\psi=-2\vartheta+O_\sigma(1/t)$, both tend to
$e^{\,iu}$; likewise the Iverson pair maps to
$1+|\chi|(m+1)^{2\sigma-1}e^{\,i(2\omega+\psi)}\to1+e^{\,2iu}$. In every
case
\[
\mathrm{Yin}=(-1)^{N-1}C_0(\hat p)\,e^{\,iu}\bigl(1+O_\sigma(1/T)\bigr)
+\bigl[N=m{+}1\bigr]\bigl(1+e^{\,2iu}\bigr)\bigl(1+O_\sigma(1/T)\bigr).
\]

\emph{Step 2 (the phase, kept as an angle).} The two case expansions of
$u$ displayed before \eqref{eq:cosu}, kept as angles rather than used
only through their cosines, combine with \eqref{eq:cut-cases} into the
single statement
\[
u\;\equiv\;\pi(m+1)+\theta(x)+O(1/T)\pmod{2\pi},
\]
while $\hat p\to x+\tfrac12$ (with $N=m$) for $x<\tfrac12$ and
$\hat p\to x-\tfrac12$ (with $N=m+1$) for $x>\tfrac12$.

\emph{Step 3 (case $x<\tfrac12$).} Here $N=m$, the Iverson term is
absent, and the sign $(-1)^{N-1}=(-1)^{m-1}$ cancels against
$e^{\,i\pi(m+1)}=(-1)^{m+1}$. With $\tfrac{m+1}a\to1$ (by
\eqref{eq:a-vs-T}) and $C_0=\Psi$,
\[
\mathrm{Yin}\;\longrightarrow\;
\Psi\bigl(x+\tfrac12\bigr)e^{\,i\theta(x)}
\;=\;\mathrm{Yin}_{\infty}(x)
\]
by the polar form \eqref{eq:yinf-polar}.

\emph{Step 4 (case $x>\tfrac12$).} Here $N=m+1$, the sign product is
$-1$, and the Iverson pair is present:
\[
\mathrm{Yin}\;\longrightarrow\;
1-\Psi\bigl(x-\tfrac12\bigr)e^{\,i\theta(x)}+e^{\,2i\theta(x)} .
\]
Since $\theta(x)=-A(x-\tfrac12)$ exactly, the first two terms are
$\mathrm{Yin}_{\infty}(x-\tfrac12)$, and the single-curve relation
\eqref{eq:yinf-halflag} finishes:
$\mathrm{Yin}\to\mathrm{Yin}_{\infty}(x-\tfrac12)+e^{\,2i\theta(x)}
=\mathrm{Yin}_{\infty}(x)$.

\emph{Step 5 (the yang point).} By \eqref{eq:yang1},
$\mathrm{Yang}-\mathrm{Yin}=-\chi\,M^{2s-1}$, whose modulus
$|\chi|(m+1)^{2\sigma-1}=\bigl(\tfrac{m+1}a\bigr)^{2\sigma-1}
\bigl(1+O(1/t)\bigr)\to1$ and phase
$2\omega+\psi=2u+O_\sigma(1/t)\equiv2\theta(x)+O(1/T)$, so the
subtracted vector tends to $e^{\,2i\theta(x)}$ and
$\mathrm{Yang}\to\mathrm{Yin}_{\infty}(x)-e^{\,2i\theta(x)}
=\mathrm{Yang}_{\infty}(x)$, again by \eqref{eq:yinf-halflag}.

Every ingredient contributes $O(1/T)$: the amplitude ratio
$\tfrac{m+1}a=1+O(1/T)$, Gabcke's $E=O(1/a)$, the phase remainder of
Step~2, and off the line the $O_\sigma(1/t)$ terms of $\psi$ and
$|\chi|$ together with the $O_\sigma(a^{-\sigma-1})$ of
\eqref{eq:R-general}, which against a leading term of size $a^{-\sigma}$ is
again $O(1/T)$. At
the seam $x=\tfrac12$ the two case formulas agree:
$\Psi(1)=\Psi(0)=\cos\tfrac\pi8$ and
$1+e^{\,i\pi/4}=2\cos\tfrac\pi8\,e^{\,i\pi/8}$, so both give
$\cos\tfrac\pi8\,e^{\,i\pi/8}$, which is
$\mathrm{Yin}_{\infty}(\tfrac12)$; the narrow boundary intervals of
\eqref{eq:cut-cases} near $x=0,1$ are excluded by keeping $x$ in a
compact subset of $(0,1)$.
\end{proof}

Three attributes of the yin and yang curves stand out.
\begin{enumerate}
\item At any $\sigma$, the yin and yang curves approach
$\mathrm{Yin}_{\infty}$ and $\mathrm{Yang}_{\infty}$ as $T\to\infty$:
this is Theorem~\ref{thm:yin-convergence}, at rate $O(1/T)$, with a
limit that does not depend on $\sigma$. Figure~\ref{fig:yinyang-asym}
shows the successive orbits settling onto the limit curves as
$T\to\infty$ for each of $\sigma=0.05$, $\tfrac12$, and $0.95$. The chord joining the pair is
a \emph{unit needle}: its length is $|\chi|M^{2\sigma-1}$, exactly $1$
at every $T$ on the critical line and $1+O(1/T)$ at any fixed $\sigma$.
On the critical line, and at $T\to\infty$, the two curves travel at
constant unit separation.
\item The curves are not symmetrical (when not at infinity), and this
becomes obvious once the
yang curve is reflected to $1-\mathrm{Yang}(s)$ and compared with the
yin curve: by \eqref{eq:refl-yang} the two limit traces coincide
exactly, but at finite $T$ the two families only interleave, closing on
one another as $T$ grows and coinciding at no finite
$T$.\footnote{The asymmetry is provable in the strongest sense: over
one period the two curves have different arc lengths (at
$\sigma=\tfrac12$ the yin loop of $[6,7]$ is longer by $0.029$; at
$\sigma=0.05$ by $0.32$; at $\sigma=0.95$ the yang loop is longer by
$0.26$; in each case whichever family runs outside, in the sense of the
next attribute, is the longer one), so no isometry of the
plane (reflection, rotation, or point reflection) carries one curve
exactly onto the other. What does hold exactly, at each instant, is a
\emph{moving} mirror: at any $\sigma$ the two points are the ends of
the in-frame next dual summand, hence point reflections of each other
through its midpoint, and on the critical line
$\mathrm{Yin}=1+e^{\,2iu}\,\overline{\mathrm{Yang}}$ exactly, a glide
reflection whose axis direction $e^{\,iu}$ turns with $T$ (both inputs
are exact there: with $u=\omega-\vartheta(t)$ as in
Appendix~\ref{sec:positivity}, $\mathrm{Yin}$ is a real multiple of
$e^{\,iu}$, and
$\mathrm{Yang}=\mathrm{Yin}-e^{\,2iu}$ since $|\chi M^{2s-1}|=1$). The
nearest fixed map, the time-reversed point reflection
$\mathrm{Yang}(T)\approx1-\mathrm{Yin}(2m{+}1{-}T)$ at the same
$\sigma$, becomes exact only as $T\to\infty$.} The global asymmetry coexists
with an instantaneous balance: on the critical line
\eqref{eq:yinyang-line} with $d_1=d_2$ makes the crossing point
equidistant from $0$ and the yin point (both distances $\hat d_1$) and
from $1$ and the yang point (both $1-\hat d_1$), so the pair rides two
concentric circles centered at the moving crossing, with radii summing
to $1$.
\item Which family runs outside depends on $\sigma$. For $\sigma$ close
to $1$ the yin curve runs inside the reflected yang curve; for $\sigma$
close to $0$ it runs outside; and on the critical line the two swap
sides near $\{T\}=\tfrac12$, one crossing outward as the other crosses
inward: at matched positions on the teardrop the two offsets from the
limit trade signs. The swap is forced by an exact duality: since
$\chi(s)\chi(1-s)=1$ exchanges the main and dual sums,
$R(1-s)=\chi(1-s)\,R(s)$, and with $\lambda=\chi(1-s)M^{1-2s}$,
\[
\mathrm{Yin}(1-s)=\lambda\,\mathrm{Yin}(s),\qquad
\mathrm{Yang}(1-s)=\lambda\bigl(\mathrm{Yin}(s)-1\bigr),\qquad
\lambda\,\bigl(\mathrm{Yin}-\mathrm{Yang}\bigr)(s)=1,
\]
so reflecting $\sigma\mapsto1-\sigma$ (at conjugate height)
interchanges yin and yang and swaps the needle with the unit segment.
\end{enumerate}

\clearpage

\section{Summands as links and joints}\label{sec:matrix-product}

Equation~\eqref{eq:zeta-ps} presents $\zeta$ as the sum of the four terms
$\Sigma_1+R_{1}+\Sigma_2+R_{2}$, two of which are partial sums. Read
geometrically, each summand of a partial sum is one \emph{link} of a
planar chain, and each remainder $R_{1},R_{2}$ is represented by one
additional signed partial-summand link. When $0\leq\hat d_i\leq1$, the
coefficient is the fraction of that link used; a negative coefficient
reverses its direction, and a coefficient exceeding one extends it past
the full summand.
With $t$, $s$, $m$, and $M$ as in \eqref{eq:IT} and $\omega=t\log M$ as in
\eqref{eq:omega-def}, denote two complex numbers by
\begin{equation}
B_1=\Sigma_1+R_{1},\qquad B_2=\Sigma_2+R_{2},\qquad \zeta=B_1+B_2 .
\end{equation}
The pair $B_1,B_2$ will be used heavily in what follows: $B_1$ is the
bisector point of \S\ref{sec:bisector}, the two legs of
\S\ref{sec:legs} run from the origin to $B_1$ and from $B_1$ to $\zeta$,
and the angles read off those legs drive the zero-counting functions of
\S\ref{sec:counting}.

For non-integer $T$, each remainder is a single extra signed link appended
to its partial sum, pointing along the direction of the $(m+1)$st summand
with real coefficient $d_1$ or $d_2$, which makes each remainder the
partial summand of
\S\ref{sec:summand}:
\begin{equation}\label{eq:R-phasors}
R_{1}=d_1\,M^{-it}=d_1\,e^{-i\omega},\qquad
R_{2}=d_2\,M^{\,it}\,\frac{\chi}{|\chi|}=d_2\,e^{\,i(\omega+\arg\chi)},
\end{equation}
with $d_1,d_2$ as in \eqref{eq:d1}--\eqref{eq:d2}; the phasor form
restates \eqref{eq:R-two-dirs}.

\subsection{Sum form}\label{sec:sum-form}

We now write zeta in different forms, first as a sum and then, in the
next subsection, as a product, so the two forms can be compared side by
side.
Splitting $\zeta$ into its partial sum plus this last
fractional link,
\begin{alignat}{2}
B_1 &= \phantom{\chi(s)}\sum_{n=1}^{m} n^{-s}    &&{}+ d_1\,e^{-i\omega}, \label{eq:B1-sum}\\
B_2 &= \chi(s)\sum_{n=1}^{m} n^{\,s-1}           &&{}+ d_2\,e^{\,i(\omega+\arg\chi)}. \label{eq:B2-sum}
\end{alignat}

\begin{figure}[H]
\centering
\includegraphics[width=0.86\textwidth]{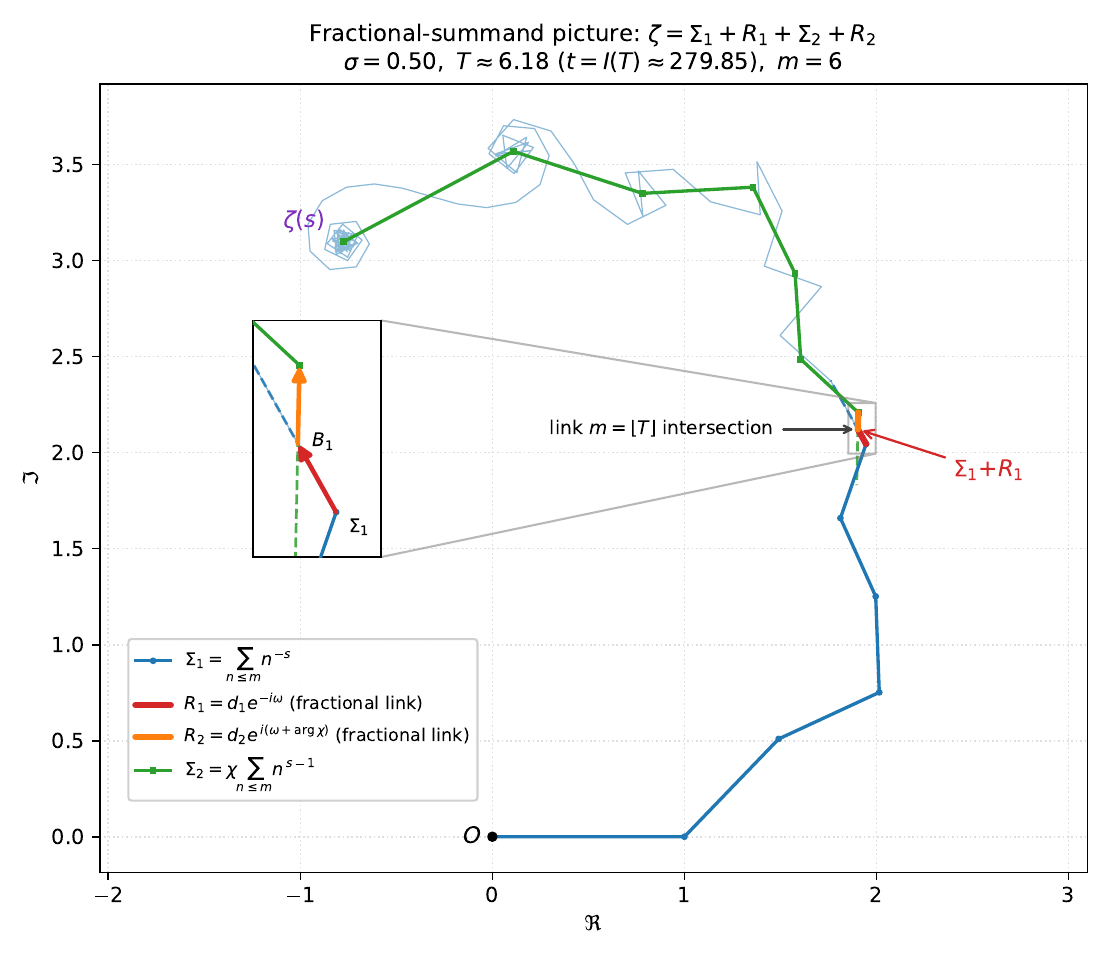}
\caption{The fractional-summand picture: pictorial (spiral) representation of the
four pieces of \eqref{eq:zeta-clean} in the complex plane for $\sigma=0.5$ and
$T\approx 6.18$ (equivalently $t=I(T)\approx 279.85$, $m=6$). The blue path is the
partial-sum spiral $\Sigma_1=\sum_{n\le m}n^{-s}$; the green path,
laid tip-to-tail from the point $\Sigma_1+R_{1}$, is
$\Sigma_2=\chi\sum_{n\le m}n^{\,s-1}$. The short red and orange segments $R_{1}=d_1e^{-i\omega}$ and
$R_{2}=d_2e^{\,i(\omega+\arg\chi)}$ lie along link $m$ of each spiral
(the faint dashed vectors show the full next summand, of which each remainder is
a ``fractional'' part), so that the four pieces add up to the point $\zeta(s)$.
Underneath, the thin faint blue path continues the $\Sigma_1$ chain past $m$
out to link $\lfloor I(T)/\pi\rfloor=89$, winding into its spiral near
$\zeta$.
On the critical line $d_1=d_2$, so the two remainders have equal length.
The fractional links are short here ($d_1=d_2\approx0.087$), so the inset
zooms in on them. Generated
by \texttt{fig1\_spiral\_summands.py}.}
\label{fig:spiral-summands}
\end{figure}

\begin{figure}[H]
\centering
\includegraphics[width=0.72\textwidth]{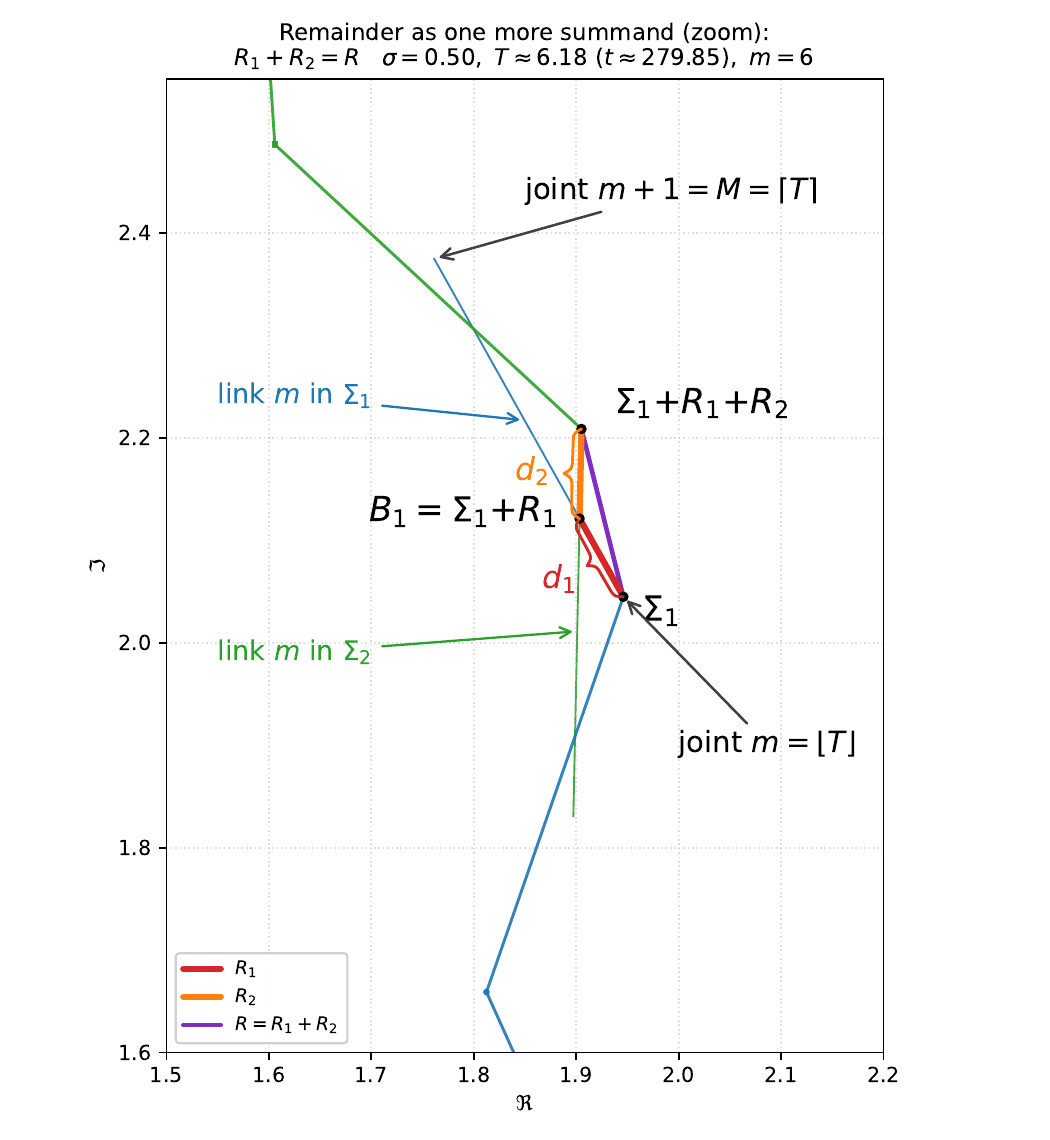}
\caption{Zoom into the joint region of Figure~\ref{fig:spiral-summands}
(imaginary part $\approx 1.5$--$2.6$), showing the theorem
$R_{1}+R_{2}=R$ as a small triangle. Starting from the joint $\Sigma_1$
(end of the sum-1 partial sum), the red link $R_{1}$ reaches
$\Sigma_1+R_{1}$ and the orange link $R_{2}$ reaches
$\Sigma_1+R_{1}+R_{2}$; the straight purple vector from $\Sigma_1$ to that
last joint is Siegel's remainder $R=R_{1}+R_{2}$ (see the legend). The
curly braces labeled $d_1$ and $d_2$ span the two fractional links,
marking their lengths, i.e.\ how far each reaches from its own joint.
The thin blue and green lines show the entire extent each fractional summand
would have if it were not shortened to the lengths $d_1$ and $d_2$: each
extends its fractional link to the full $(m+1)$st summand. The faint blue and
green chains are the neighboring links of the two partial sums, arriving at
and departing from the joint region. Generated by
\texttt{fig2\_remainder\_zoom.py}.}
\label{fig:remainder-zoom}
\end{figure}

\subsection{Product form}\label{sec:product-form}

This subsection is a tangent, and a reader in a hurry can skip it:
nothing later rests on the matrix form, which is motivated by an analogy
with multi-jointed planar robot manipulators, and a desire for a product
form of the zeta function but over the integers, not over the primes.

The usual product form of the zeta function is the Euler product over
the primes, one multiplication for each prime,
\begin{equation}\label{eq:euler-product}
\zeta(s)=\prod_{p\ \mathrm{prime}}\frac{1}{1-p^{-s}}
\qquad(\Re s>1).
\end{equation}
The goal of this subsection is to write each link
as a homogeneous coordinate transformation matrix, and then to group the
links into a single transformation matrix, so that $\zeta$ takes the form
of a matrix product with four components: the two sums $\Sigma_1,\Sigma_2$
and the two partial-summand remainders $R_{1},R_{2}$.

Each summand is a link in the plane. We number the
links of each chain from zero: link $n$ runs from joint $n$ to joint $n+1$
(joint $n$ being the value of the partial sum after $n$ summands), so link $n$
represents the $(n+1)$st summand. The $m$ summands of a partial sum are thus
links $0,\dots,m-1$, and each remainder is the \emph{fractional link} $m$. We
encode one link by the $3\times3$ matrix with link length $l$ and rotation
$\theta$ from the previous link,\footnote{This is the planar case of the
Denavit--Hartenberg convention of robot kinematics~\cite{DenavitHartenberg1955},
in which each link of a serial chain is encoded by the $4\times4$ homogeneous
transformation
$\mathrm{Rot}_z(\theta_i)\,\mathrm{Trans}_z(d_i)\,\mathrm{Trans}_x(a_i)\,
\mathrm{Rot}_x(\alpha_i)$
built from four parameters: the joint angle $\theta_i$, the joint offset $d_i$,
the link length $a_i$ and the link twist $\alpha_i$. A planar chain has
$d_i=\alpha_i=0$, which leaves the two parameters $(\theta_i,a_i)$ and the
$3\times3$ matrix \eqref{eq:link-matrix}; \eqref{eq:zeta-serial} is then the
forward-kinematic product of the chain. One collision of names to watch: the
Denavit--Hartenberg offset $d_i$ has nothing to do with the weights $d_1,d_2$ of
\eqref{eq:d1}--\eqref{eq:d2}.}
\begin{equation}\label{eq:link-matrix}
\mathsf L(\theta,l)=\linkmat{\theta}{l},
\end{equation}
where the accumulated position is the top two entries of the last column.
The link matrix rotates by $\theta$ and then translates forward $l$
along the new heading. Its inverse is again a link-shaped matrix, of a
second type that translates \emph{back} $l$ along the current heading and
then undoes the rotation:
\begin{equation}\label{eq:link-matrix-2}
\mathsf L(\theta,l)^{-1}=
\begin{pmatrix}
\cos\theta & \phantom{-}\sin\theta & -l\\
-\sin\theta & \phantom{-}\cos\theta & \phantom{-}0\\
\phantom{-}0 & \phantom{-}0 & \phantom{-}1
\end{pmatrix}.
\end{equation}
Forward links build chain 1; inverses are what let chain 2 be walked
backwards.

\emph{Chain 1.} Collect the $m$ summand links of $\Sigma_1$ into a single
product and give the fractional link its own name:
\begin{equation}\label{eq:P1-TR1-defs}
P_{\Sigma_1}=\prod_{n=0}^{m-1}\mathsf L\bigl(\theta_n,\,(n+1)^{-\sigma}\bigr),
\qquad
\mathsf L_{R_{1}}=\mathsf L\bigl(-t\ln\tfrac{m+1}{m},\,d_1\bigr),
\end{equation}
where link $n$ carries the two parameters
$\theta_n$, its rotation from the previous link, and
$l_n=(n+1)^{-\sigma}$, its length: as in \eqref{eq:omega-telescope},
$\theta_0=0$ and $\theta_n=-t\ln\tfrac{n+1}{n}$ for $n\ge1$. Written out, with
the first two factors and the last two shown, the chain that
reaches $B_1$ is
\begin{equation}\label{eq:P1-written}
\footnotesize
P_{\Sigma_1}\mathsf L_{R_{1}}=
\underbrace{\left(\begin{array}{@{}c@{\ \,}c@{\ \,}c@{}}
1 & 0 & 1\\
0 & 1 & 0\\
0 & 0 & 1
\end{array}\right)}_{\text{link }0}
\underbrace{\linkmatd{\theta_1}{2^{\sigma}}}_{\text{link }1}\;\cdots\;
\underbrace{\linkmatd{\theta_{m-1}}{m^{\sigma}}}_{\text{link }m-1}
\underbrace{\linkmat{\theta_m}{d_1}}_{\text{fractional link }m},
\end{equation}
the leading factor being the unit step $1^{-s}=1$ along the real axis, where
$\theta_0=0$ and $l_0=1$ collapse \eqref{eq:link-matrix} to a pure translation,
and the trailing factor being $\mathsf L_{R_{1}}$, whose rotation
$\theta_m=-t\ln\tfrac{m+1}{m}$ continues the same pattern while its length is
$d_1$ rather than a power of $m+1$. The product carries the frame from the origin
to the point $B_1=\Sigma_1+R_{1}$, whose coordinates are the top two
entries of the last column, and it arrives with accumulated rotation
$-t\ln(m+1)=-\omega$ by the telescoping \eqref{eq:omega-telescope}.

\emph{Chain 2.} The second chain is traversed tip-first, exactly as
Figure~\ref{fig:spiral-summands} draws the green chain: the fractional
link $R_{2}$ leaves $B_1$, and the summand links follow in descending
order.
Walking a link backwards is exactly its inverse \eqref{eq:link-matrix-2};
regrouping each rotation with the backward translation that follows it
returns everything to the forward type \eqref{eq:link-matrix}, with
negated parameters and each rotation paired with the preceding link's
length. Two bookkeeping factors appear at the ends of the backward walk: a
leading pure translation $\mathsf L(0,-d_2)$, and a trailing pure rotation
$\mathsf L(-\arg\chi,0)$, with the phase rotation associated with $\chi$
peeled off on its own; the modulus $|\chi|$ remains in the link lengths.
The leading translation absorbs the zero-length joint
$\mathsf L(2\omega+\arg\chi+\pi,\,0)$ that must sit between the two chains (a
rotation followed by a translation is a single link), defining the
backward fractional link
\begin{equation}\label{eq:TR2-def}
\mathsf L_{R_{2}}
\;:=\;
\mathsf L\bigl(2\omega+\arg\chi+\pi,\,0\bigr)\,
\mathsf L(0,\,-d_2)
\;=\;
\mathsf L\bigl(2\omega+\arg\chi+\pi,\;-d_2\bigr);
\end{equation}
the backward summand links collect into
\begin{equation}\label{eq:P2-def}
P_{\Sigma_2}
\;:=\;
\prod_{n=m}^{1}\mathsf L\bigl(\theta_n,\;-|\chi|\,n^{\sigma-1}\bigr)
\quad\text{($n$ descending, $\theta_n=-t\ln\tfrac{n+1}{n}$)};
\end{equation}
and the trailing rotation is simply dropped, since a post-multiplied
zero-length joint never moves the position column, only the final
orientation.

Written out as \eqref{eq:P1-written} was, with the first two factors and the last
two shown and with $\psi=\arg\chi$ as before, the second chain is
\begin{equation}\label{eq:P2-written}
\footnotesize
\begin{aligned}
\mathsf L_{R_{2}}P_{\Sigma_2}
&=\underbrace{\linkmat{(2\omega+\psi+\pi)}{-d_2}}_{\text{fractional link }m}
\underbrace{\linkmatx{\theta_m}{m^{1-\sigma}}}_{\text{link }m-1}\;\cdots\\[3pt]
&\hphantom{{}={}}\cdots\;
\underbrace{\linkmatx{\theta_2}{2^{1-\sigma}}}_{\text{link }1}
\underbrace{\linkmat{\theta_1}{-|\chi|}}_{\text{link }0},
\end{aligned}
\end{equation}
the leading rotation being the merged joint of \eqref{eq:TR2-def}. The links run
backwards, from the fractional link $m$ down to link $0$, each brace naming the
link whose length that factor carries; the rotation it carries is the $\theta_n$
of the link after it, which is the regrouping. Its
last column is not $B_2$ but $e^{i\omega}B_2$, chain 2 read in the frame chain 1
leaves behind, whose orientation is $-\omega$; multiplying on the left by
$P_{\Sigma_1}\mathsf L_{R_{1}}$ turns that back onto world axes and adds $B_1$, which
is \eqref{eq:zeta-serial}. All three columns agree to $28$ digits at
$\sigma=\tfrac12$, $T=6.18$ (\texttt{check\_chain2\_matrix.py}).

\paragraph{The full $3\times3$ equation for $\zeta$.}
With these four names the two chains combine into one serial product, in a
single link type:
\begin{equation}\label{eq:zeta-serial}
\mathsf T_{\zeta}
\;=\;
P_{\Sigma_1}\,\mathsf L_{R_{1}}\,\mathsf L_{R_{2}}\,P_{\Sigma_2}
\;=\;
\begin{pmatrix}
\cos\alpha_{\zeta} & -\sin\alpha_{\zeta} & \Re\,\zeta\\
\sin\alpha_{\zeta} & \phantom{-}\cos\alpha_{\zeta} & \Im\,\zeta\\
0 & 0 & 1
\end{pmatrix},
\qquad \alpha_{\zeta}=\pi+\arg\chi.
\end{equation}
The last column is $\zeta$ and the rotation block records $\alpha_{\zeta}$,
discussed below.
The factors line up one to one with the terms of \eqref{eq:zeta-ps},
repeated here for comparison:
\begin{equation*}\tag{\ref{eq:zeta-ps}}
\zeta(s)
=
\Sigma_{1} + R_{1} + R_{2} + \Sigma_{2}.
\end{equation*}
Figure~\ref{fig:zeta-chain} draws \eqref{eq:zeta-serial} with one arrow
per factor.

\begin{figure}[H]
\centering
\includegraphics[width=0.8\textwidth]{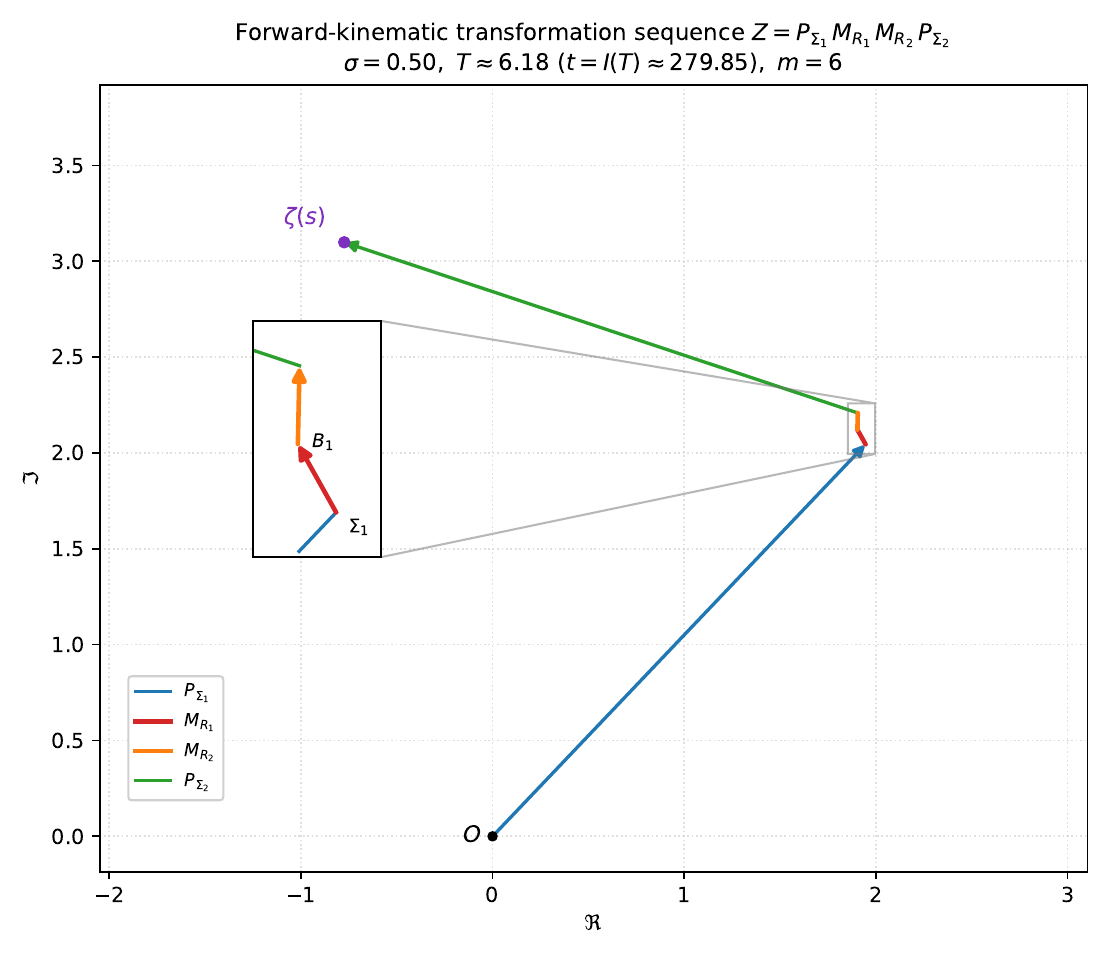}
\caption{The serial chain of \eqref{eq:zeta-serial} at $\sigma=\tfrac12$,
$T=6.18$ ($t=I(T)\approx279.85$, $m=6$), one arrow for the net displacement
of each matrix factor: $P_{\Sigma_1}$ (blue, $O$ to $\Sigma_1$),
$\mathsf L_{R_{1}}$ (red, to
$B_1$), $\mathsf L_{R_{2}}$ (orange, to $B_1+R_{2}$; its rotation parameter
carries the merged joint $2\omega+\arg\chi+\pi$), and $P_{\Sigma_2}$ (green,
to $\zeta$). The four arrows are the four terms of
$\zeta=\Sigma_1+R_{1}+R_{2}+\Sigma_2$; the two fractional links are
short here ($d_1=d_2\approx0.087$), so the inset zooms in on them. Colors
match Figure~\ref{fig:spiral-summands}.
Generated by
\texttt{fig\_zeta\_chain.py}.}
\label{fig:zeta-chain}
\end{figure}

Three features of \eqref{eq:zeta-serial}
deserve comment. First, the rotation $2\omega+\arg\chi+\pi$ carried by
$\mathsf L_{R_{2}}$ has three jobs at once: $+\omega$ cancels chain 1's
accumulated rotation $-t\ln(m+1)=-\omega$, a
further $+(\omega+\arg\chi)$ pre-rotates against the counter-rotation of
the backward-walked chain 2, and the final $+\pi$ rotates the backward
translations to point forward again; together
$\omega+(\omega+\arg\chi)+\pi=2\omega+\arg\chi+\pi$. Second, the links of
$P_{\Sigma_2}$ carry exactly the rotation parameters $\theta_n$ of chain 1:
in this form the two chains use identical rotations and differ only in
their lengths, $(n+1)^{-\sigma}$ forward versus $-|\chi|\,n^{\sigma-1}$
backward. Third, the net displacements of the four factors $P_{\Sigma_1}$,
$\mathsf L_{R_{1}}$, $\mathsf L_{R_{2}}$, $P_{\Sigma_2}$ are exactly the four terms
$\Sigma_1$, $R_{1}=d_1e^{-i\omega}$,
$R_{2}=d_2e^{\,i(\omega+\arg\chi)}$, $\Sigma_2$ of \eqref{eq:zeta-ps},
and the final orientation $\alpha_{\zeta}=\pi+\arg\chi$ is the direction of
$-\chi/|\chi|$: the trailing rotation $\mathsf L(-\arg\chi,0)$ (the phase
rotation associated with $\chi$, peeled off automatically when the base
link of chain 2 is regrouped; the modulus $|\chi|$ remains in the link
lengths) was discarded, so $\chi$'s direction is
left showing in the orientation block, and $\mathsf T_{\zeta}$ packages the pair
$(\zeta,\arg\chi)$ in one matrix. (Appending the discarded
$\mathsf L(-\arg\chi,0)$ would leave the position column untouched and make the
orientation the constant $\pi$.)

\section{Other remainders}\label{sec:other-remainders}

The split $R=R_{1}+R_{2}$ of \S\ref{sec:decomp} is not the only way to
dissect Siegel's remainder in two. This section introduces two others, so
that three splits are in play, all reassembling the same $R$: exactly in
two of the three cases and to high accuracy in the third. We first name
the three, then drill down on the newest of them, Kuznetsov's; a closing
subsection (\S\ref{sec:remainder-scaling}) records a scaling observation.
The remainder figures at different heights are nearly scaled copies, with
the residual amplitude discrepancy tending toward zero as $T$ grows.

\subsection{The three splits and their names}\label{sec:remainders-summary}

To keep the remainders straight we name them
with subscripts. Siegel's remainder integral is written $R$ when unsplit; each
named split below recovers that same $R$ (exactly, or approximately for the
Kuznetsov form):
\begin{enumerate}
\item \textbf{Riemann--Siegel half-split.}
\[
R \;=\; R_{rs} \;=\; R_{1rs}+R_{2rs},
\qquad
R_{1rs}=R_{2rs}=\tfrac12 R.
\]
Here $R_{1rs}$ is simply half the remainder integral.
\item \textbf{Partial-summand split} (\S\ref{sec:decomp}--\S\ref{sec:summand}),
the split of this paper.
\[
R \;=\; R_{ps} \;=\; R_{1ps}+R_{2ps}.
\]
Here $R_{1ps}$ is a partial next summand: the next Dirichlet term after
$\Sigma_1$ stops, the $M$-th, shortened to length $d_1$. The unsubscripted
$R_{1},R_{2}$ used everywhere else in this paper are exactly
$R_{1ps},R_{2ps}$; we attach the $ps$ subscript only where several splits
stand side by side, as in this section and \S\ref{sec:geometry}.
\item \textbf{Kuznetsov / Siegel $f$ split} (\S\ref{sec:kuznetsov}).
\[
R \;\approx\; R_{ak} \;=\; R_{1ak}+R_{2ak}.
\]
Here $R_{1ak}$ is, to high accuracy, Siegel's $f(s)$ minus the first
partial sum, $R_{1ak}\approx f(s)-\Sigma_1$, with equality if this is
taken as the definition (\S\ref{sec:kuznetsov}).
\end{enumerate}

All three are measured from the same pair of joints, both at index
$m=\lfloor T\rfloor$: each $R_{1\bullet}$ leaves joint $m$ of the $\Sigma_1$
chain of \S\ref{sec:matrix-product}, which is the partial sum $\Sigma_1$
itself, and each $R_{2\bullet}$
arrives at joint $m$ of the $\Sigma_2$ chain, which is the point $\Sigma_1+R$.
A split therefore chooses nothing but the point $\Sigma_1+R_{1\bullet}$ at which
the two halves meet (for the $ps$ split this meeting point is the $B_1$ of
\S\ref{sec:matrix-product}); the anchoring joints are common to all three.
Named for their splits, the three such points are
\begin{equation}\label{eq:B1-three}
B_{1rs}=\Sigma_1+\tfrac12R,
\qquad
B_{1ps}=\Sigma_1+R_{1ps}=B_1,
\qquad
B_{1ak}=\Sigma_1+R_{1ak}\approx f(s).
\end{equation}

\subsection{Kuznetsov's remainders}\label{sec:kuznetsov}

It is worth contrasting our remainders $R_{1ps},R_{2ps}$ with the
remainder of Alexey
Kuznetsov~\cite{Kuznetsov2025}, whose paper appeared on the arXiv in March 2025
and in print in 2026. It gives a simple and accurate algorithm for
computing $\zeta$, which we have found useful in our own numerical work.
There are two differences. The first is one of \emph{kind}: Kuznetsov's
remainder is an \emph{approximation} to $R$, whereas $R_{1ps}+R_{2ps}=R$ is an
\emph{exact} identity (Theorem~\ref{thm:decomp}), though, as we shall see
below, if $R_{1ak}$ is read as $f(s)-\Sigma_1$ the $ak$ split is exact too,
the approximation moving into the numerical evaluation of $f$. The second is one of
\emph{direction}: neither of his two halves lies along the next summand of its
Dirichlet sum, while each of ours is exactly that summand, shortened.

Where his pair comes from is worth a word, since Siegel has one remainder and
Kuznetsov has two. Siegel's \S2 opens at his equation~8, one Dirichlet sum plus
one contour integral, and the residue theorem carries that into his
equation~13, our \eqref{eq:siegel13}: still one integral, now standing beside
both sums. One integral, one remainder, and that single $R$ is what we split in
two. Kuznetsov opens instead at the symmetric formula of Siegel's \S3, Siegel's
equation~56, the one he says makes the functional equation evident. It
carries two integrals, one in $s$ and its reflection in $1-s$ weighted by
$\chi$, so his two halves arrive with the starting point rather than from a
later choice. Gabcke's rederivation~\cite{Gabcke1979} begins in the same place,
and Kuznetsov reaches it through the quadrature forms of
Galway~\cite{Galway2001} and Arias de Reyna~\cite{AriasDeReyna2011} rather than
from Siegel directly.

Writing Kuznetsov's approximate remainder as $R_{ak}$ (the subscript ``$ak$''
for Alexey Kuznetsov, and to distinguish it from Siegel's exact, unsubscripted
$R$), his method gives
\begin{equation}\label{eq:Rak}
R \;\approx\; R_{ak}
\;=\; -\tfrac12(-1)^{m}
\Bigl(I_{1} + \chi(s)\,I_{2}\Bigr),
\end{equation}
where $I_1,I_2$ are the two integrals of Kuznetsov's construction, each
evaluated as a short finite series in precomputed coefficients
$\omega_0,\omega_1[n],\lambda[n]$ (his notation; this $\omega_0$ is
unrelated to the direction angle $\omega$ of \eqref{eq:omega-def}); in
some of our calculations we use $8$ such
coefficients, which already gives a dozen or more digits of accuracy across
the range of interest. Just as we split $R$ into two pieces, the two terms of \eqref{eq:Rak}
define Kuznetsov's two half-remainders
\begin{equation}\label{eq:Rak-split}
R_{ak}=R_{1ak}+R_{2ak},\qquad
R_{1ak}=-\tfrac12(-1)^{m}I_{1},\qquad
R_{2ak}=-\tfrac12(-1)^{m}\chi\,I_{2},
\end{equation}
so that, in parallel with \eqref{eq:zeta-ps},
\[
\zeta \;\approx\; \Sigma_1 + R_{1ak} + \Sigma_2 + R_{2ak}.
\]
The split \eqref{eq:Rak-split} mirrors the functional equation: $R_{1ak}$ pairs
with the first Dirichlet sum $\Sigma_1$ and $R_{2ak}$ carries the
functional-equation factor $\chi$ alongside $\Sigma_2$.
Figure~\ref{fig:kuznetsov-zoom} compares the two routes from $\Sigma_1$ to
$\Sigma_1+R$: the exact fractional links on one side and Kuznetsov's dashed
pair on the other.

\begin{figure}[!htb]
\centering
\includegraphics[width=0.72\textwidth]{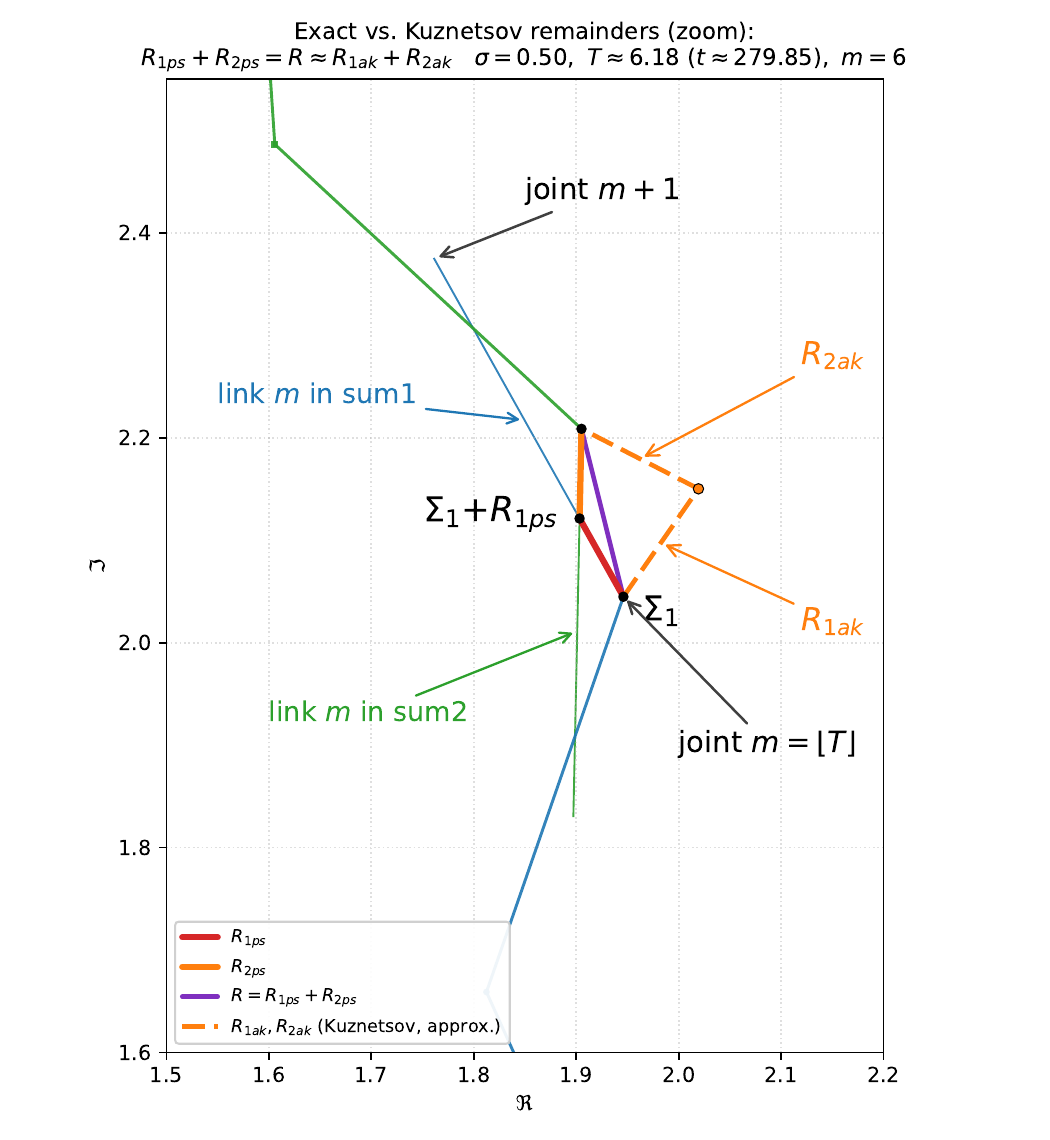}
\caption{Exact versus Kuznetsov remainders, in the same zoom region as
Figure~\ref{fig:remainder-zoom}. The exact fractional summands
$R_{1ps}$ (red) and $R_{2ps}$ (orange) run from $\Sigma_1$ to $\Sigma_1+R$ on
one side of the resultant $R$ (purple); Kuznetsov's approximate half-remainders
$R_{1ak},R_{2ak}$ are drawn as the orange dashed pair. Since $R_{1ak}+R_{2ak}\approx R$
(here to within $10^{-14}$), the dashed path lands on $\Sigma_1+R$; unlike the
exact links, the $ak$ links do not lie along a summand direction. Computed with
$8$ Kuznetsov coefficients and generated by \texttt{fig4\_kuznetsov\_zoom.py}.}
\label{fig:kuznetsov-zoom}
\end{figure}

This pairing is exactly Siegel's: in his 1932 paper~\cite{Siegel1932} Siegel
writes $\zeta$ as
$f(s)$ plus its reflected counterpart, and the first side is
\begin{equation}\label{eq:siegel-f}
f(s) \;\approx\; \Sigma_1 + R_{1ak},
\end{equation}
i.e.\ the first partial sum completed by an approximation to the first half of
the remainder. The sign is $\approx$ rather than $=$ because $R_{1ak}$ is the
quantity $-\tfrac12(-1)^{m}I_1$ of \eqref{eq:Rak-split}, with
$I_1$ evaluated as a truncated series; read the other way round, as the
\emph{definition} $R_{1ak}:=f(s)-\Sigma_1$, the relation is an identity and the
approximation moves into the numerical evaluation of $R_{1ak}$.

The form of \eqref{eq:siegel-f} is Siegel's own, and it is worth recording where
in his paper to look for it. He defines $f$ in his \S3, equation~58, as the
contour integral
\begin{equation}\label{eq:siegel-58}
f(s)
\;=\;
\int_{0\swarrow 1}
\frac{x^{-s}\,e^{\pi i x^{2}}}{e^{\pi i x}-e^{-\pi i x}}\,\mathrm{d}x ,
\end{equation}
where his symbol $0\swarrow 1$ marks a straight path crossing the real axis
between $0$ and $1$, running from upper right to lower left parallel to the
bisector of the first and third quadrants; it is the reflection in the real axis
of the path $0\nwarrow 1$ carrying the integral $\Phi$ of his \S1. The Gaussian
factor $e^{\pi i x^{2}}$ decays in both directions along that line, so
\eqref{eq:siegel-58} converges. Siegel then develops $f$ asymptotically in his \S4,
where his equation~84 reads
\begin{equation}\label{eq:siegel-84}
f(s)
\;=\;
\sum_{n=1}^{m_{1}} n^{-s}
\;+\;
O\!\bigl((\lvert s\rvert/2\pi e)^{-\sigma/2}\bigr)
\qquad (\sigma\ge 0,\; t>0),
\end{equation}
with $m_{1}=[\Re\eta_{1}-\Im\eta_{1}]$ and $\eta_{1}=\sqrt{s/2\pi i}$, which to
leading order is Siegel's cutoff $\lfloor\sqrt{t/2\pi}\rfloor$
of~\eqref{eq:siegel-m}. Earlier in the same section he states it in words as
well: developing $f$ asymptotically yields ``as a principal term a sum of
$[\sqrt{t/2\pi}\,]$ summands, that is $\sum_{n=1}^{m}n^{-s}$''. So ``first
partial sum plus a remainder'' is Siegel's own description of $f$, and it
shows the remainder is genuinely needed: at $\sigma=\tfrac12$ the bare
partial sum misses $f$ by $O(t^{-1/4})$, the gap that $R_{1ak}$ closes.
One caution on the cutoff: because
$\sqrt{t/2\pi}=T+\tfrac12-\tfrac1{24T}+O(T^{-2})$ by
Remark~\ref{rem:T-vs-a}, while
$\Re\eta_{1}-\Im\eta_{1}=\sqrt{t/2\pi}+\tfrac{\sigma}{4\pi T}+O(T^{-2})$,
Siegel's $m_{1}$ is one more than our $m=\lfloor T\rfloor$ whenever
$\{T\}>\tfrac12+\tfrac1{24T}-\tfrac{\sigma}{4\pi T}$; at $\sigma=\tfrac12$
the two corrections nearly cancel, the coefficient
$\tfrac1{24}-\tfrac1{8\pi}$ being about $0.0019$. This does not disturb
\eqref{eq:siegel-f}, since $R_{1ak}$ carries the cutoff as a parameter and
re-centers with it. Siegel's choice aligns the cutoff with the saddle and
yields the asymptotic remainder stated above. Equation numbers cited here are
those of Siegel's paper as reprinted
in his collected works, the numbering carried over by the Barkan--Sklar
translation~\cite{BarkanSklar2018}.\footnote{One incidental finding while
checking \eqref{eq:siegel-f} against the source: the translation's
equation~83 prints the sum as $\sum n^{s-1}$, while equation~84, derived from
it, has $\sum n^{-s}$. The numerics say $n^{-s}$ is correct, so equation~83
carries a typo in the Barkan--Sklar translation.}

Neither Siegel
nor Kuznetsov names the other side, but it is $\Sigma_2+R_{2ak}$, and adding the
two recovers $\zeta$ to the same accuracy. The contrast with our result is that
$\Sigma_1+R_{1ps}$ and $\Sigma_2+R_{2ps}$ are exact however they are read, and
each remainder there is a single fractional summand
(\S\ref{sec:summand}).

\subsection{Scaling the remainders with the fractional part of \texorpdfstring{$T$}{T} unchanged}\label{sec:remainder-scaling}

This subsection is a diversion into an observation we made about the
three remainders just discussed. At a fixed fractional part $\{T\}$,
the remainder figures at different heights are nearly scaled copies, with
the residual amplitude discrepancy tending toward zero as $T$ grows.
This scaling is the unit-$T$
periodicity of the yin and yang curves (\S\ref{sec:yinyang-general})
expressed in the raw remainders: the frame map there multiplies by $M^{s}$, which
removes exactly the $M^{-\sigma}$ size factor and the $\omega$ rotation
observed here, so a figure that repeats with $\{T\}$ in that frame must
reappear here as a nearly scaled copy at each $\{T\}$; and since all three
splits assemble the same $R$, the copy carries all three. Nothing in the
rest of the paper uses this observation.

Write $T=\lfloor T\rfloor+\{T\}$ for the integer and fractional parts of the
index. Empirically, the \emph{shape} of the remainder figure depends almost
entirely on the fractional part $\{T\}$. By shape we mean the relative angles
among $R$, $R_{1ps}$, $R_{2ps}$, $R_{1ak}$, and $R_{2ak}$ once the
configuration is rotated into the frame in which $R$ lies along the positive
real axis. The integer part $\lfloor T\rfloor$ mainly retunes the overall
size: the lengths scale like $M^{-\sigma}$ times a
slowly varying amplitude. Figure~\ref{fig:remainder-scale-grid} makes this
visible. Each panel shows only the remainders, drawn in the $R$-frame
(rotated so $R$ lies on the real axis and translated so the midpoint of $R$
sits at the origin), with a common axis scale. The top row holds
$\{T\}=0.18$ at $T=6.18$, $50.18$, and $100.18$; the bottom row holds
$\{T\}=0.72$ at $T=6.72$, $50.72$, and $100.72$. Across a row the figure
shrinks but keeps its shape; across a column (same $\lfloor T\rfloor$,
different $\{T\}$) the shape changes.

\begin{figure}[H]
\centering
\includegraphics[width=\textwidth]{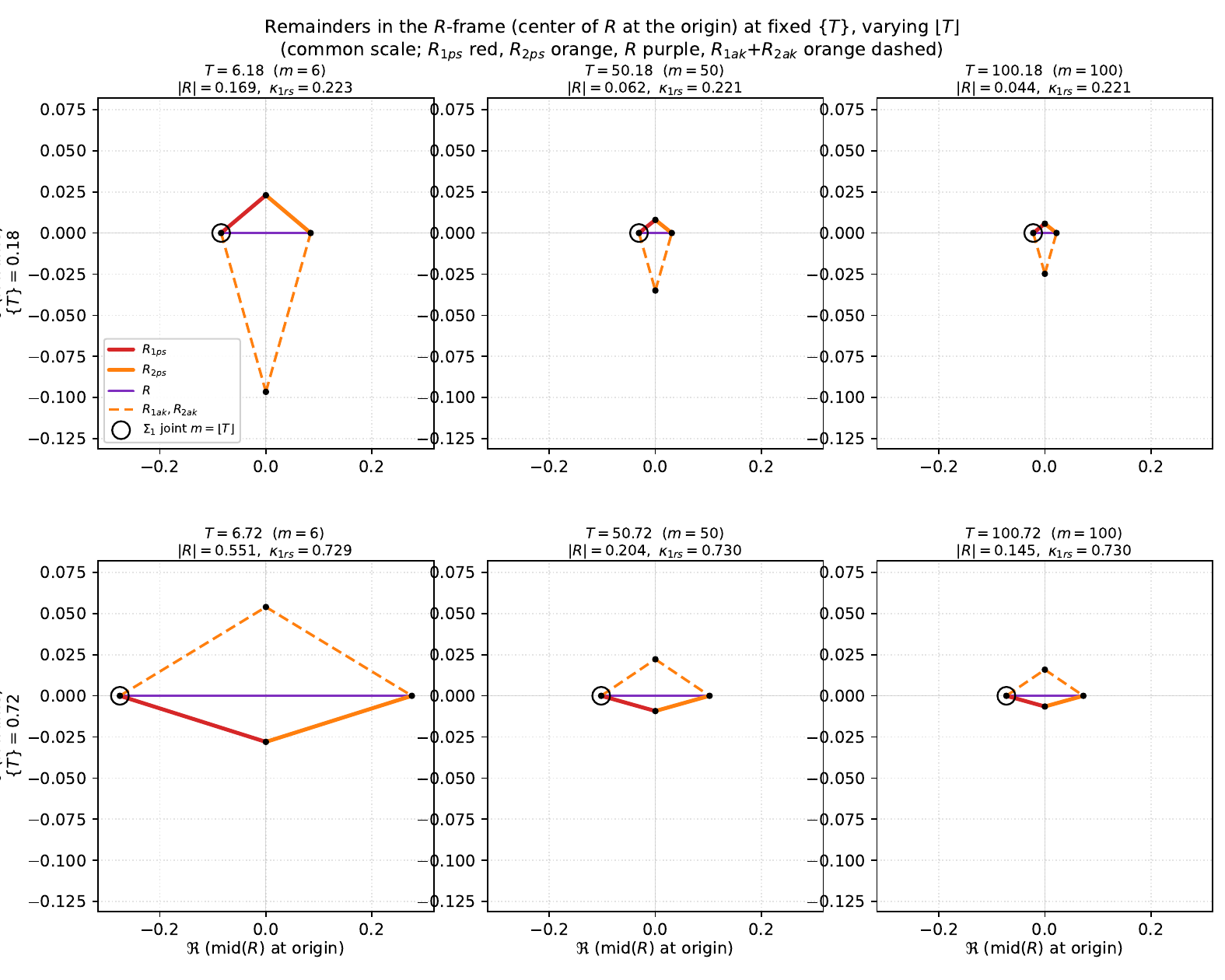}
\caption{Remainders in the $R$-frame at fixed fractional part $\{T\}$,
varying $\lfloor T\rfloor$. The frame is centered on the midpoint of $R$
(so $R$ runs from $-\lvert R\rvert/2$ to $+\lvert R\rvert/2$ on the real
axis). Top row: $\{T\}=0.18$ at $T=6.18$, $50.18$, $100.18$. Bottom row:
$\{T\}=0.72$ at $T=6.72$, $50.72$, $100.72$. Colors match
Figure~\ref{fig:kuznetsov-zoom}: $R_{1ps}$ red, $R_{2ps}$ orange, $R$
purple, $R_{1ak}$ and $R_{2ak}$ orange dashed. The large open circle marks the
left endpoint of $R$, which is the anchoring joint $\Sigma_1$ at $m=\lfloor
T\rfloor$ that all three splits leave from. All six panels share the
same axis scale. Generated by \texttt{fig\_remainder\_scale\_panels.py}.}
\label{fig:remainder-scale-grid}
\end{figure}

That figure holds $\{T\}$ fixed and climbs the strip. The complementary view
holds the chain fixed and lets $T$ sweep a whole unit, which is
Figure~\ref{fig:remainder-frame-sweep}: the same $R$-frame in sixteen snapshots
from $T=4$ to $T=5$ at $m=4$. Two things
show up there that a fixed $\{T\}$ cannot show. First, as $T$ sweeps, both
apexes stay pinned to the perpendicular bisector of $R$: on the critical line
$|R_{1ps}|=|R_{2ps}|$ exactly (by
Proposition~\ref{prop:weights}(iii), since $d_1=d_2$ and the two directions
are unit), likewise for the $ak$ pair, and this holds at every instant of the
motion, so the state of a split is at all times the single number giving the
height of its apex above the chord.
Second, the two splits move quite differently: the $ak$ apex marches steadily
from below $R$ to well above it, while the $ps$ apex swings down onto the chord,
crosses to the far side, and comes back, touching the chord exactly at the two
heights where $\sin(2\omega+\arg\chi)$ vanishes
(Appendix~\ref{sec:pole-locations}). There the two unit directions of
\eqref{eq:R-two-dirs} have become parallel and no longer span the plane, and
on the critical line the split collapses into
the plain halving $R_{1ps}=R_{2ps}=\tfrac12R$: the $d_1$ pole of
\eqref{eq:d1} is removable at $\sigma=\tfrac12$, its numerator vanishing with
its denominator, which is why the sweep passes through those heights smoothly.
Off the line it is a genuine pole: at $\sigma=0.6$ and $\sigma=0.75$, $d_1$
runs through infinity and changes sign there, and that is what the windows of
Appendix~\ref{sec:pole-locations} are.

\begin{figure}[H]
\centering
\includegraphics[width=0.82\textwidth]{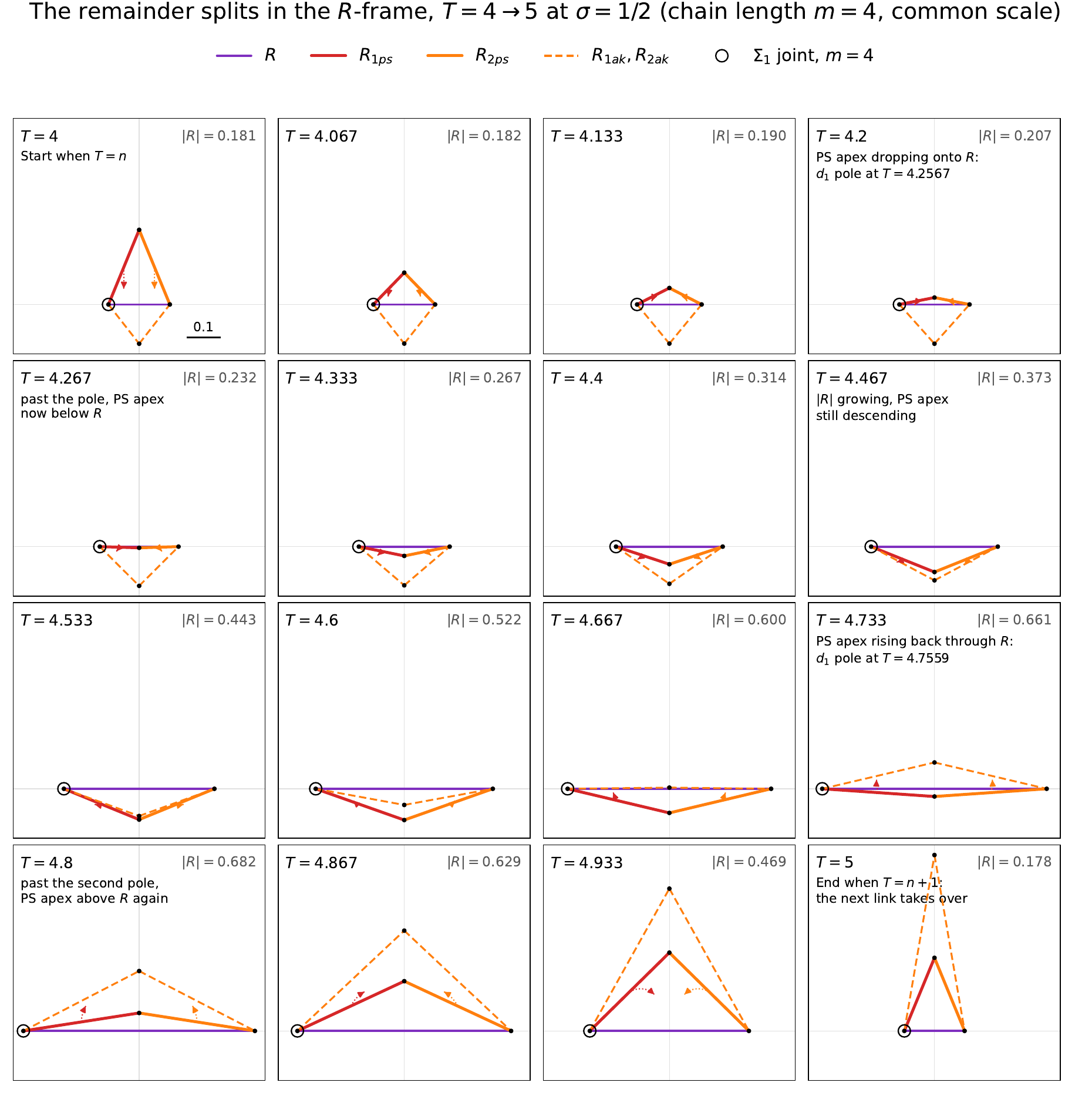}
\caption{The remainder splits in the $R$-frame as $T$ sweeps one unit,
$T=4\to5$ at $\sigma=\tfrac12$. Frame and colors as in
Figure~\ref{fig:remainder-scale-grid}: $R$ purple along the real axis,
midpoint at the origin; $R_{1ps}$ red and $R_{2ps}$ orange from the $\Sigma_1$
joint (open circle) to the far end of $R$; $R_{1ak}$, $R_{2ak}$ orange
dashed; all panels to one scale, $|R|$ given in each. The chain
length is held at $m=4$ throughout, so $T=5$ is the handoff. Dotted arrows
trace the midpoints of $R_{1ps}$ and $R_{2ps}$ from each panel to the next.
The $ps$ apex crosses the chord between panels four and five and again
between twelve and thirteen, the two heights of
\eqref{eq:pole-condition} where the split is the plain halving; the $ak$ apex
crosses once, between ten and eleven.
Generated by \texttt{fig\_remainder\_frame\_sweep.py}.}
\label{fig:remainder-frame-sweep}
\end{figure}

To make the scaling precise, factor out the leading $M^{-\sigma}$ from each
half-remainder length by writing
\[
|R_{1\bullet}| \;=\; \kappa_{1\bullet}\,M^{-\sigma}
\]
for the three first halves
$R_{1\bullet}\in\{R_{1rs},\,R_{1ps},\,R_{1ak}\}$ of
\S\ref{sec:remainders-summary}. Each split is then on the same footing:
$R=R_{1rs}+R_{2rs}=R_{1ps}+R_{2ps}\approx R_{1ak}+R_{2ak}$, and for the
$rs$ half-split one has $R_{1rs}=R_{2rs}=\tfrac12 R$ so
$\kappa_{2rs}=\kappa_{1rs}$. The three amplitudes admit the forms
\begin{equation}\label{eq:kappa}
\begin{aligned}
\kappa_{1rs}
&=
\tfrac12\,M^{\sigma}\,|R|
\;\approx\;
\tfrac14\,M^{\sigma}\,\bigl|I_{1}+\chi(s)\,I_{2}\bigr|,
\\[4pt]
\kappa_{1ps}
&=
|d_{1}|\,M^{\sigma}
=
2\kappa_{1rs}\,
\left|
\frac{\sin(\omega-\arg R+\psi)}{\sin(2\omega+\psi)}
\right|,
\\[4pt]
\kappa_{1ak}
&=
\tfrac12\,M^{\sigma}\,|I_{1}|,
\end{aligned}
\end{equation}
with $\psi=\arg\chi(s)$ and
$I_1,I_2$ the Kuznetsov integrals of \S\ref{sec:kuznetsov}; the
``$\approx$'' in the first line holds when $R$ is evaluated by Kuznetsov's
method \eqref{eq:Rak}, and the second equality in the $\kappa_{1ps}$ line is
exact with $\kappa_{1rs}$ read as $\tfrac12 M^{\sigma}|R|$. Note that
$\kappa_{1ps}=|\hat d_1|$ is a nonnegative normalized length. The signed
normalized coefficient remains $\hat d_1=d_1M^{\sigma}$, as in
\eqref{eq:frac-vs-weight}. Consequently, at
fixed $\sigma$ and fixed $\{T\}=f$, the passage from $T_1=N_1+f$ to
$T_2=N_2+f$ (write $M_i=N_i+1$) multiplies every half-remainder length by
\begin{equation}\label{eq:remainder-scale}
\frac{|R_{1\bullet}|_{T_2}}{|R_{1\bullet}|_{T_1}}
=
\Bigl(\frac{M_1}{M_2}\Bigr)^{\sigma}
\frac{\kappa_{1\bullet}(T_2)}{\kappa_{1\bullet}(T_1)}.
\end{equation}
When the fractional part is held fixed the ratio of $\kappa$'s is nearly $1$,
so the leading factor $\bigl(M_1/M_2\bigr)^{\sigma}$ already accounts
for almost all of the size change. Figure~\ref{fig:remainder-scale-match}
checks this in the $R$-frame: the left panel is $T=6.18$; the right panel is
$T=50.18$ with all remainder vectors scaled by the single factor
$\lambda=|R|_{6.18}/|R|_{50.18}$, the $rs$ instance
$\bigl(M_2/M_1\bigr)^{\sigma}\,\kappa_{1rs}(T_1)/\kappa_{1rs}(T_2)$ of
\eqref{eq:remainder-scale}. Scaling by $\lambda$ makes the two $R$'s match by
construction; the test is that the split halves nearly coincide too, and they
do, $|R_{1ps}|$ agreeing to about $0.3\%$.

\begin{figure}[H]
\centering
\includegraphics[width=0.92\textwidth]{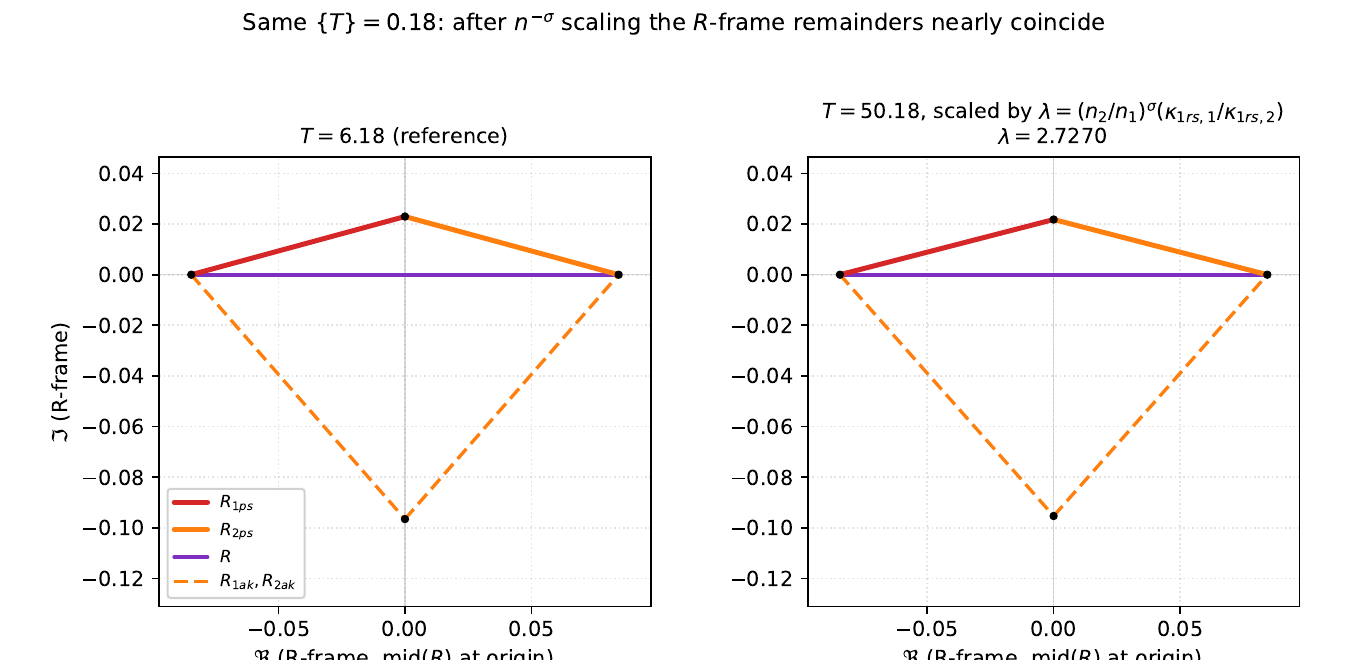}
\caption{Same fractional part $\{T\}=0.18$: $R$-frame remainders at
$T=6.18$ (left) and at $T=50.18$ after scaling by
$\lambda=\bigl(M_2/M_1\bigr)^{\sigma}\,\kappa_{1rs}(T_1)/\kappa_{1rs}(T_2)$
(right). The scaled figure matches the reference to high accuracy
($|R_{1ps}|$ agrees to about $0.3\%$). Generated by
\texttt{fig\_remainder\_scale\_panels.py}.}
\label{fig:remainder-scale-match}
\end{figure}

If one wants exactness rather than a close empirical match, divide by $R$
itself. By \eqref{eq:d1} the modulus of $R$ cancels identically, leaving
\begin{equation}\label{eq:R1ps-over-R}
\frac{R_{1ps}}{R}
\;=\;
\frac{\sin\bigl(\omega-\arg R+\arg\chi\bigr)}
     {\sin\bigl(2\omega+\arg\chi\bigr)}\;
e^{-i(\omega+\arg R)},
\end{equation}
a function of the angles alone, exact at every $T$ and every $\sigma$ (off
the line $\arg R$ must itself be computed; on the line it is $-\vartheta$
modulo $\pi$): the size is then gone exactly, not almost. On the critical line the equal weights
of Proposition~\ref{prop:weights}(iii) force
$\operatorname{Re}(R_{1ps}/R)=\tfrac12$,
so the shape is carried by the single number $\operatorname{Im}(R_{1ps}/R)$,
and that number is a function of $\{T\}$ up to $O(1/T)$, which is the content
of Theorem~\ref{thm:d1-limit}.

\begin{figure}[H]
\centering
\includegraphics[width=0.95\textwidth]{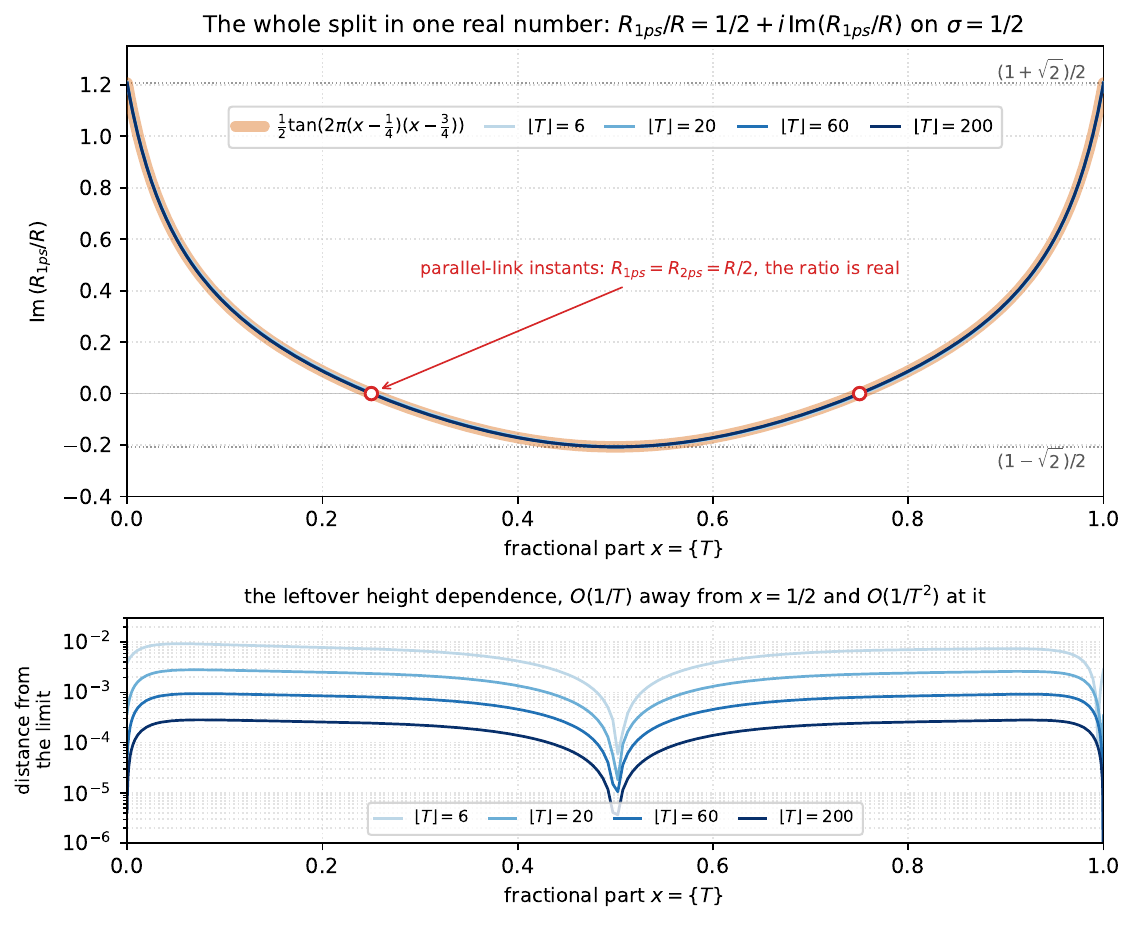}
\caption{The split after division by $R$, on the critical line. Top:
$\operatorname{Im}(R_{1ps}/R)$ against $x=\{T\}$ at
$\lfloor T\rfloor=6,20,60,200$, each computed through $\zeta$ and the
Cramer solution \eqref{eq:d1}, laid over the limit
$\tfrac12\tan\bigl(2\pi(x-\tfrac14)(x-\tfrac34)\bigr)$ (thick orange). The
limit falls from $(1+\sqrt2)/2$ at the integers to $(1-\sqrt2)/2$ at
$x=\tfrac12$, a swing of exactly $\sqrt2$, and vanishes at the two
parallel-link instants (circles; $x=0.250156$ and $0.750155$ at
$\lfloor T\rfloor=200$), where $R_{1ps}=R_{2ps}=\tfrac12R$. Bottom: distance
from the limit, $O(1/T)$ in general and $O(1/T^{2})$ at $x=\tfrac12$.
Generated by \texttt{fig\_ratio\_shape.py}.}
\label{fig:ratio-shape}
\end{figure}

Figure~\ref{fig:ratio-shape} dissects that single number. On the critical
line $\arg R=\tfrac12\arg\chi$ and $\chi=e^{-2i\vartheta}$, with
$\vartheta$ the Riemann--Siegel theta function of
Appendix~\ref{sec:phase-bound}, so
$\omega-\arg R+\arg\chi=\omega-\vartheta$ and
$2\omega+\arg\chi=2(\omega-\vartheta)$, the $\pi$ ambiguity in $\arg R$
flipping the sine and the exponential of \eqref{eq:R1ps-over-R} together.
That equation therefore collapses to
\begin{equation}\label{eq:ratio-tangent}
\frac{R_{1ps}}{R}
\;=\;
\frac{e^{-i(\omega-\vartheta)}}{2\cos(\omega-\vartheta)}
\;=\;
\frac12-\frac{i}{2}\tan(\omega-\vartheta),
\end{equation}
which is the exact critical-line form $d_1=r/(2\cos u)$ of
\eqref{eq:exact-line} divided by $R$. Three things follow, and
Figure~\ref{fig:ratio-shape} shows all three. First, the real part is $\tfrac12$ identically,
which is Proposition~\ref{prop:weights}(iii) once more; the plotted values were
computed the long way, through $\zeta$ and \eqref{eq:d1}, and at all $964$
samples their real parts agree with $\tfrac12$ to thirty digits. Second,
everything that is left is the one angle $\omega-\vartheta$ modulo $\pi$,
and that angle tends to $-\beta$, where
$\beta=2\pi(x-\tfrac14)(x-\tfrac34)$ and $x=\{T\}$; this is the same
reduction that turns $2\cos(\omega-\vartheta)$ into $2\cos\beta$ in the
proof of Theorem~\ref{thm:d1-limit}. So
$\operatorname{Im}(R_{1ps}/R)\to\tfrac12\tan\beta$, which is the second of
the two tangent factors in the limit waveform \eqref{eq:W-infty}: that
waveform is $-\tan(2\pi x)$ times this very quantity. The four
curves lie within $9.3\times10^{-3}$,
$2.8\times10^{-3}$, $9.4\times10^{-4}$ and $2.8\times10^{-4}$ of the limit at
$\lfloor T\rfloor=6,20,60,200$, shrinking in proportion to the index, which
is the $O(1/T)$ rate of the theorem seen directly. Third, the range is
fixed by $\beta$, which runs from $\tfrac{3\pi}{8}$ at the integers to
$-\tfrac{\pi}{8}$ at $x=\tfrac12$: the curve therefore lies between
$(1+\sqrt2)/2$ and $(1-\sqrt2)/2$, swinging by exactly $\sqrt2$, and the
tangent stays $\tfrac{\pi}{8}$ clear of its pole at $\tfrac{\pi}{2}$,
which is the boundedness of $d_1$ on the line read as an angle. It crosses
zero where $\beta$ does, at the parallel-link instants, where the split is
the plain halving as above.

\section{The geometry behind the result (experimental mathematics)}\label{sec:geometry}

The decomposition $R=R_{1ps}+R_{2ps}$ was not first derived; it was first
\emph{seen}. Over roughly nine years we built and used an interactive
visualization tool, Zest,\footnote{Source code and standalone executables (Mac
and PC) and a web app at \url{https://github.com/paul-stahura/zest}.} which draws the
partial-sum chains of $\zeta$ in the complex plane and lets us sweep $\sigma$
and $T$ continuously while measuring what appears on the screen. This section
reconstructs the series of observations made with that tool. It led, in order,
to a distinguished link and point of the spiral (the \emph{bisector link} and
\emph{bisector point}), and from their motion to the mapping $I(T)$ of
\S\ref{sec:IT}. The final observation in that sequence, the \emph{yin} and
\emph{yang} curves from which the weights $d_1,d_2$ of \S\ref{sec:decomp}
were read off, was developed in \S\ref{sec:yinyang-inf} and
\S\ref{sec:yinyang-general}; \S\ref{sec:geometric-origins} reconstructs how
both the mapping and the curves were found.

Throughout we use the link and joint numbering of
\S\ref{sec:matrix-product}: joint $k$ of a chain is the value of its partial
sum after $k$ summands, and link $k$ runs from joint $k$ to joint $k+1$, so
that link $k$ represents the $(k+1)$st summand and each remainder is the
fractional link $m$. The \emph{forward} link chain ($\Sigma_1$) is
anchored at joint $0$, the origin, and it represents the main sum. The
\emph{reverse} chain ($\Sigma_2$), representing the dual sum, is
anchored at $\zeta$ and traversed backwards, so its joint $m$ lands at
$\zeta-\Sigma_2=\Sigma_1+R$. In the pictures the chains are drawn well past
joint $m=\lfloor T\rfloor$: the sum, though ultimately divergent, first winds
into an Euler-like spiral whose center lies near $\zeta$; it seemingly
converges before it diverges. Figure~\ref{fig:full-spirals} shows the picture:
the forward chain winds from the origin into a spiral centered near $\zeta$,
while the reverse chain winds from $\zeta$ into a mirror spiral centered near
the origin, and the two link chains cross. If more links were drawn, the links would
continue to spiral outward around $\zeta$ and cover the entire figure, because
the zeta sum diverges on the critical strip, unlike the Dirichlet
$L$-functions of \S\ref{sec:IT-Lfunctions}, whose non-principal characters have bounded
partial sums and
so those chains close in on their limit instead of covering the plane.\footnote{At $t=176.13$ on the critical line, the partial
sums of $L(s,\chi_3)$ come within $0.021$, $0.0021$ and $0.00019$ of the value
by $N=10^{3}$, $10^{5}$ and $3\times10^{6}$, a drift inward like $N^{-1/2}$,
while zeta's are off by $0.18$, $1.80$ and $9.83$, a drift outward at
$\sqrt{N}/t$. For a principal character modulo $q$,
\[
L(s,\chi_0)=\zeta(s)\prod_{p\mid q}(1-p^{-s}),
\]
rather than $\zeta(s)$ itself. The convergence behavior of higher-degree
$L$-function Dirichlet series depends on the family and normalization, so
no blanket critical-strip claim is made here.} We stop drawing links at link number
\begin{equation}\label{eq:LN}
L_N(T,S_n)=\left\lfloor\frac{I(T)}{\pi\,(2S_n+1)}\right\rfloor,
\end{equation}
where $S_n$ is the spiral number, spiral number $0$ being the last one (the
biggest, at $\zeta$). Note that $\zeta$ itself is not typically \emph{on} this
last link; the last link is the link closest to $\zeta$. The same formula
locates the other spirals and pseudo-spirals too: with $S_n=1$, for example, it yields the link
number of the link most in the center of the second-to-last spiral.

\begin{figure}[p]
\centering
\includegraphics[height=0.82\textheight,keepaspectratio]{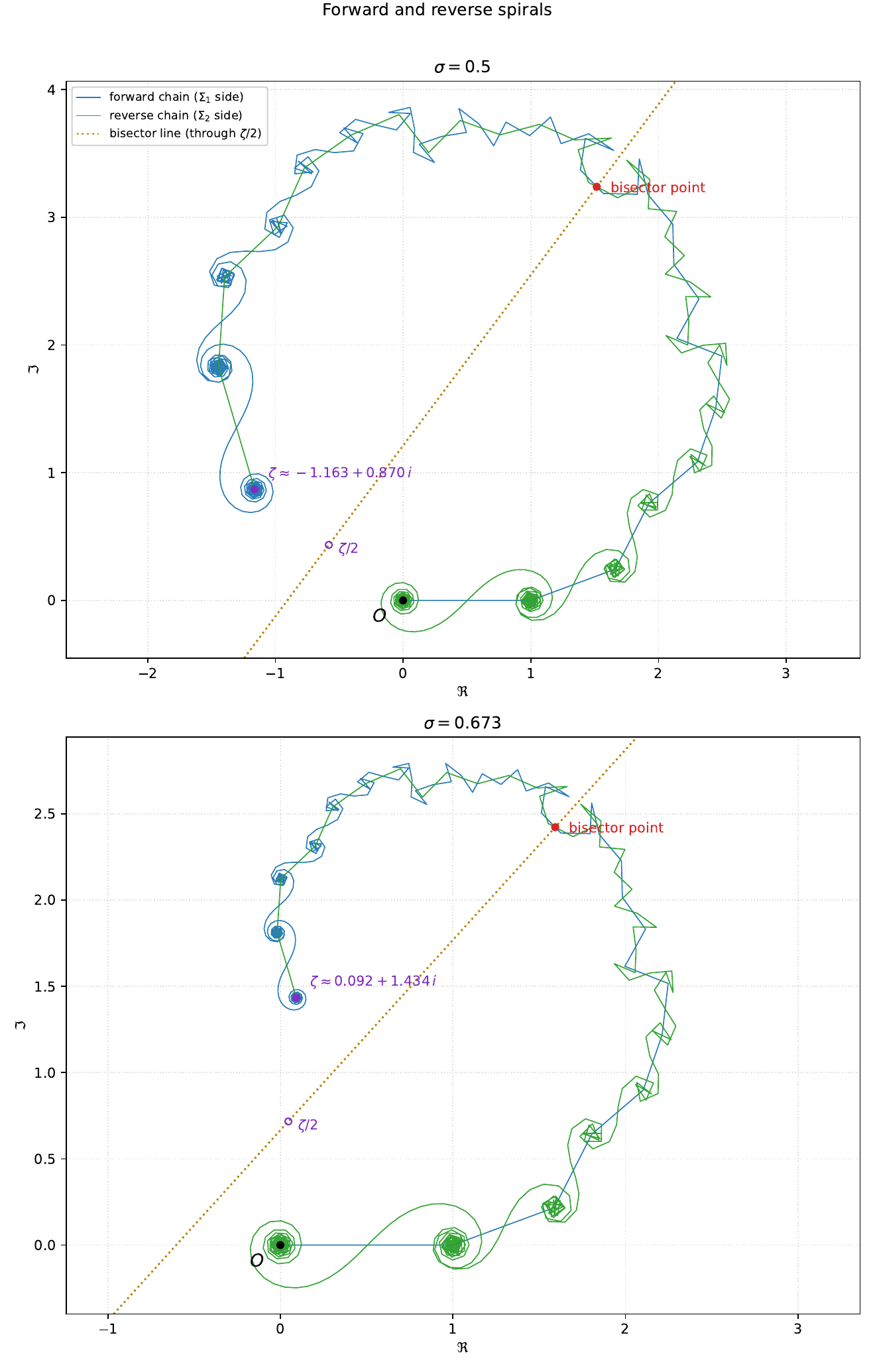}
\caption{The forward spiral (blue, anchored at the origin $O$) and the reverse
spiral (green, anchored at $\zeta$) for $T=14.3085$
(so $t=I(T)\approx 1377.33$ and $m=14$), on the critical line
$\sigma=\tfrac12$ (top, $\zeta\approx-1.163+0.870i$) and off it at
$\sigma=0.673$ (bottom, $\zeta\approx 0.092+1.434i$). Each spiral is drawn out
to link $\lfloor I(T)/\pi\rfloor=438$, the link nearest its spiral center, for
a total of $439$ links per spiral (the count is independent of $\sigma$). The
red dot is the bisector point $B_1=\Sigma_1+R_{1ps}$, where the two chains
cross at their bisector links (\S\ref{sec:bisector}); the dotted line is the
bisector line through the bisector point and $\zeta/2$ (open circle). On the
critical line (top) the whole configuration is symmetric about the bisector
line; at $\sigma=0.673$ (bottom) the symmetry is lost. Generated by
\texttt{fig\_full\_spirals.py}.}
\label{fig:full-spirals}
\end{figure}

\begin{figure}[p]
\centering
\includegraphics[height=0.8\textheight,keepaspectratio]{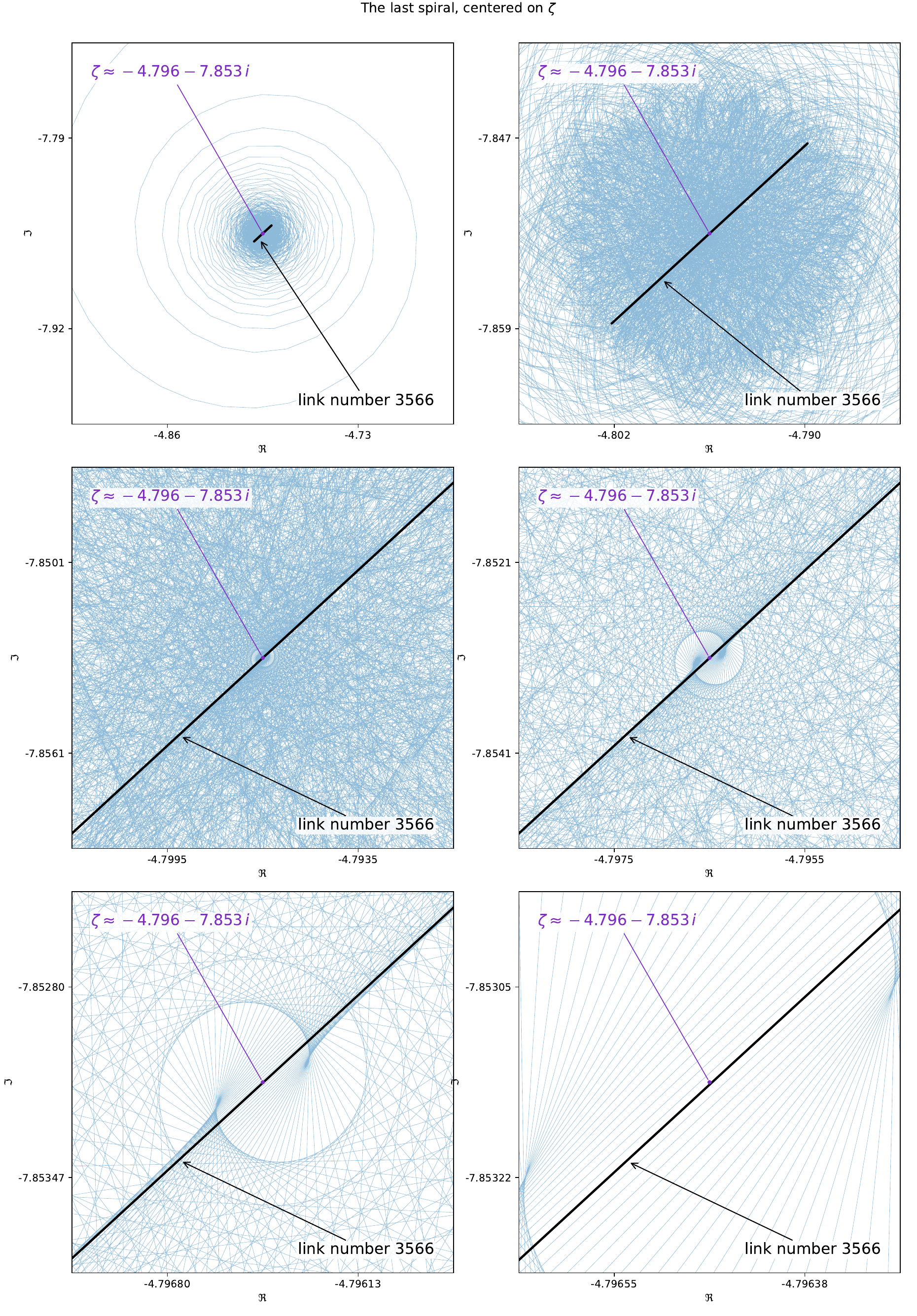}
\caption{Zooming in on the last spiral, the spiral centered near
$\zeta$, of the forward chain at $T=41.73$ (so
$t=I(T)\approx 11204.74$, $\sigma=\tfrac12$,
$\zeta\approx-4.796-7.853i$). The chain carries
$\lfloor I(T)/\pi\rfloor+1=3567$ links; the last link is number $3566$
by \eqref{eq:LN} with $S_n=0$, and it is highlighted in black in every
panel.
Each panel is more zoomed in than the last (panels $1$--$6$, left to
right from the top). Near the
spiral center each link turns by nearly $\pi$, so the links sweep back
and forth across $\zeta$ and the joints settle onto a tight ring of
radius about half a link length: at coarse scale the spiral renders as
a dense annulus (panels $1$--$3$), and the near-diameter links envelope
a lens-shaped clearing around $\zeta$ (panels $4$--$5$). Observe in
panel $6$ that $\zeta$ is \textbf{\emph{not on}} the last link: the last link
is only the link closest to $\zeta$. Generated by
\texttt{fig\_last\_spiral\_zoom.py}.}
\label{fig:last-spiral-zoom}
\end{figure}

\clearpage
\subsection{The bisector link and the bisector point}\label{sec:bisector}

Drawn at height $T$, each link chain exhibits spirals and pseudo-spirals, and (as
others have also observed~\cite{Erickson2005,Nickel2013,Kapitonets2019}) there is a one-to-one correspondence between the
joints and the spiral centers, in a type of symmetry; Nickel puts it as the
center-to-center distances and angles of the large linked spirals reproducing
the initial steps exactly. The link
at index $m=\lfloor T\rfloor$ sits at the boundary between the two
regimes, and we call it the \emph{bisector link}. Each chain has one: the \emph{forward bisector link} (link $m$ of the
$\Sigma_1$ chain, running from $\Sigma_1$ along the $(m+1)$st summand
direction) and the \emph{reverse bisector link} (link $m$ of the $\Sigma_2$
chain, running from $\Sigma_1+R$ along its $(m+1)$st summand direction).

The two spirals intersect at their bisector links, and we call the point where
the forward and reverse bisector links cross the \emph{bisector point}. One
qualification: the crossing can momentarily fail. Twice per unit of $T$, near
fractional parts ${\approx}\,0.25$ and ${\approx}\,0.75$, the two bisector
links become parallel and, off the critical line, do not intersect; these are
the isolated instants at which $\sin(2\omega+\arg\chi)$, the denominator of
\eqref{eq:d1}--\eqref{eq:d2}, vanishes. Those are exactly the places where
$d_1$ and $d_2$ have poles: the parallel-link condition and the pole
condition are the same equation, which is why the precise locations were
worked out in Appendix~\ref{sec:pole-locations}. On the critical line the two links are then collinear rather than
merely parallel, so the bisector point survives there.
In the notation of the earlier sections the bisector point is exactly
\begin{equation}\label{eq:bisector-point}
B_1 \;=\; \Sigma_1 + R_{1ps} \;=\; \Sigma_1 + R - R_{2ps};
\end{equation}
it is visible as the joint between the red and orange fractional
links in Figures~\ref{fig:spiral-summands} and~\ref{fig:remainder-zoom}.

\subsection{Legs}\label{sec:legs}

When $\sigma=\tfrac12$ the spiral object
has a type of symmetry about the line through the bisector point and
$\tfrac{\zeta}{2}$, so the bisector point sits at the apex of an isosceles
triangle whose base runs from the origin to $\zeta$ and whose equal sides are
the two legs $B_1$ and $\zeta-B_1$ (this is Corollary~\ref{cor:equal-legs}
below, proved from the reflection
$B_2=\chi\overline{B_1}$ and verified in Lean in
Appendix~\ref{app:lean-critical}; equivalently, the perpendicular projection of the
bisector point onto the base $\mathbb{R}e^{\,i\psi/2}$ carrying $\zeta$, where
$\psi=\arg\chi$, is exactly $\tfrac{\zeta}{2}$, which is
Corollary~\ref{cor:bisector-proj}). Off the line we keep the same
terminology even though the triangle's legs are then (usually) unequal.

\begin{figure}[H]
\centering
\includegraphics[width=\textwidth]{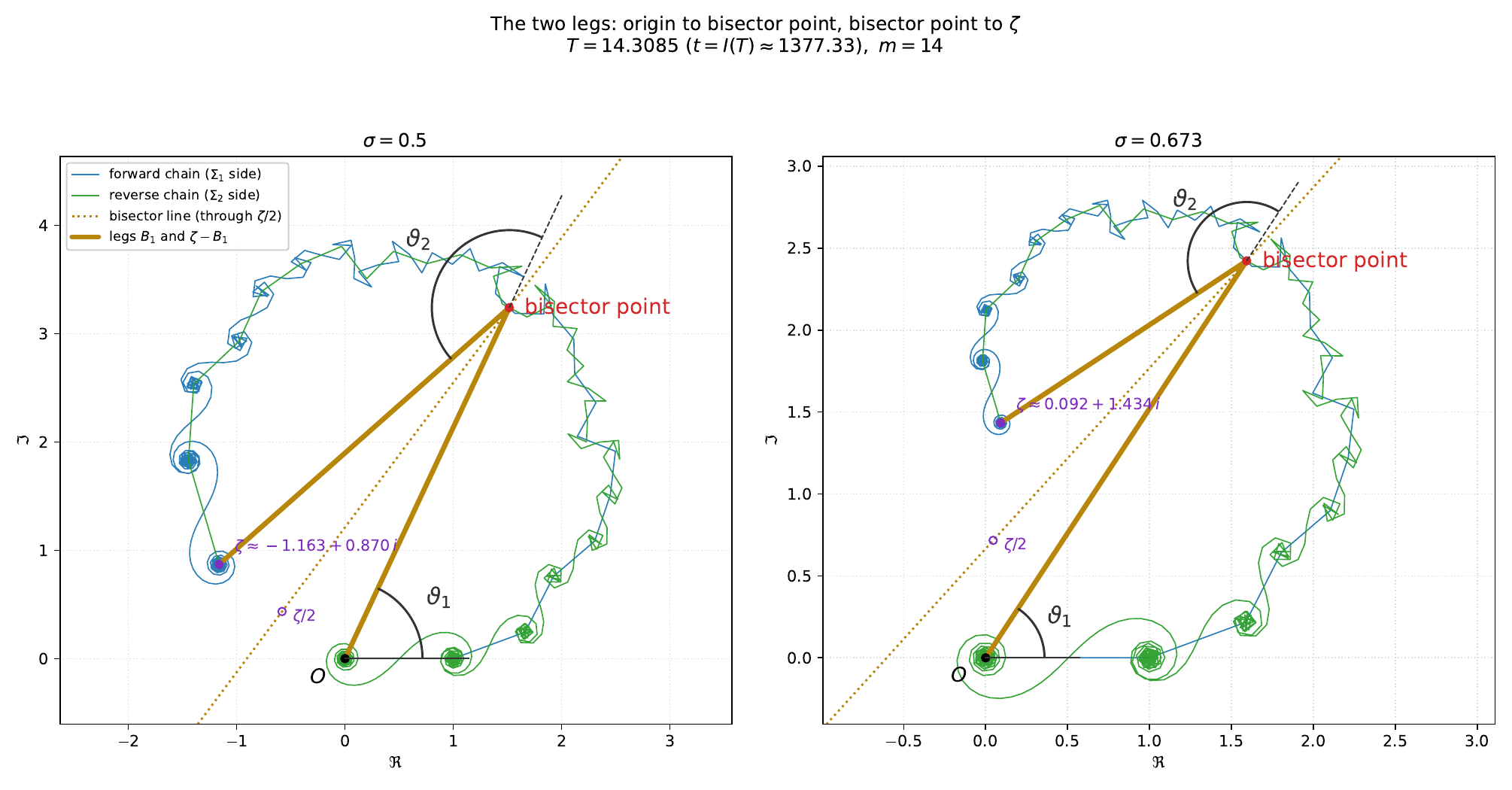}
\caption{The two legs, drawn as thick dark-yellow segments over the spirals of
Figure~\ref{fig:full-spirals}: one from the origin $O$ to the bisector point
($B_1$), the other from the bisector point to $\zeta$ ($\zeta-B_1$). On the
critical line (left, $\sigma=\tfrac12$) the legs have equal length
($|B_1|=|\zeta-B_1|\approx 3.577$): the bisector point is the apex of an
isosceles triangle whose base runs from $O$ to $\zeta$, and the dotted
bisector line is its axis of symmetry. Off the critical line (right,
$\sigma=0.673$) the legs are unequal ($\approx 2.899$ and $\approx 1.797$) at
this $T$. Both panels also mark the two angles defined below: $\vartheta_1$,
at $O$ from the positive real axis to Leg~1, and $\vartheta_2$, at the
bisector point from the extension of Leg~1 (dashed) to Leg~2.
Generated by \texttt{fig\_legs.py}.}
\label{fig:legs}
\end{figure}

On the critical line the two fractional summands are not independent: each is
the other's complex conjugate, rotated by the functional-equation factor.
Writing $R_{1ps}=d_1e^{-i\omega}$ and $R_{2ps}=d_2e^{\,i(\omega+\arg\chi)}$ as in
\S\ref{sec:matrix-product} and using that $|\chi|=1$ when $\sigma=\tfrac12$, the
equality $d_1=d_2$ of Proposition~\ref{prop:weights}(iii) is exactly
equivalent to the compact relation
\begin{equation}\label{eq:R2-conj-R1}
R_{2ps}=\chi\,\overline{R_{1ps}}\qquad\bigl(\sigma=\tfrac12\bigr).
\end{equation}
Geometrically, \eqref{eq:R2-conj-R1} says the second fractional summand is the
mirror image of the first (conjugation across the real axis) followed by
the rotation $\chi$. Because $\chi$ has unit modulus on the line this rotation
is an isometry, so $|R_{1ps}|=|R_{2ps}|$ immediately. This is the
functional-equation reflection $s\leftrightarrow 1-s$ acting as a symmetry of
the two remainders, and it is the source of the equal-leg phenomenon we now
develop.

The two legs $B_1=\Sigma_1+R_{1ps}$ and $B_2=\Sigma_2+R_{2ps}$ of
\S\ref{sec:matrix-product} meet at a ``bisector point'': Leg~1 runs from the
origin to $\Sigma_1+R_{1ps}$, and Leg~2 from there to $\zeta$. Their lengths are
\begin{align}
L_{1} &= \bigl|\,\Sigma_{1}+R_{1ps}\,\bigr|,\\
L_{2} &= \bigl|\,\Sigma_{2}+R_{2ps}\,\bigr|,
\end{align}
while $\vartheta_1$ is the angle Leg~1 makes with the real axis and $\vartheta_2$
is the angle from Leg~1 to Leg~2 (oriented so that a positive $\vartheta_2$
rotates Leg~1 toward Leg~2):\footnote{The subscripted leg angles
$\vartheta_1,\vartheta_2$ are not to be confused with the Riemann--Siegel
theta function $\vartheta(t)$ of Appendix~\ref{sec:phase-bound}.}
\begin{align}
\vartheta_{1} &= \arg\!\bigl(\Sigma_{1}+R_{1ps}\bigr),\\[2pt]
\vartheta_{2} &= \arg\!\left(\frac{\Sigma_{2}+R_{2ps}}{\Sigma_{1}+R_{1ps}}\right).
\end{align}
With these, $\zeta$ is simply the sum of the two legs,
\begin{equation}\label{eq:zeta-legs-polar}
\zeta
= L_{1}\,e^{\,i\vartheta_{1}}
\;+\;
L_{2}\,e^{\,i(\vartheta_{1}+\vartheta_{2})},
\end{equation}
or equivalently, in matrix form,
\begin{equation}\label{eq:zeta-matrix}
\zeta
=
\begin{pmatrix}1 & i\end{pmatrix}
\begin{pmatrix}
\cos\vartheta_{1} & \cos(\vartheta_{1}+\vartheta_{2})\\[2pt]
\sin\vartheta_{1} & \sin(\vartheta_{1}+\vartheta_{2})
\end{pmatrix}
\begin{pmatrix}
L_{1}\\[2pt]
L_{2}
\end{pmatrix},
\end{equation}
where the row $\begin{pmatrix}1 & i\end{pmatrix}$ turns the resulting real
$(x,y)$ components into the complex value $x+iy$.

\begin{corollary}\label{cor:equal-legs}
When $\sigma=\tfrac12$ the two legs are always equal in length: $L_1=L_2$.
\end{corollary}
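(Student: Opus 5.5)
The plan is to prove the reflection $B_2=\chi\,\overline{B_1}$ on the critical line and then take moduli, using $|\chi|=1$ at $\sigma=\tfrac12$. Since $B_1=\Sigma_1+R_{1ps}$ and $B_2=\Sigma_2+R_{2ps}$, the reflection splits into two pieces that are proved separately and then added.

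The first piece is the reflection of the Dirichlet sums, $\Sigma_2=\chi\,\overline{\Sigma_1}$. At $s=\tfrac12+it$ with real $t$ we have $\overline{n^{-s}}=n^{-\bar s}=n^{-(1/2-it)}=n^{s-1}$, because $1-s=\bar s$ on the line. Summing over $n\le m$ gives $\overline{\Sigma_1}=\sum_{n\le m}n^{s-1}$, and multiplying by $\chi$ gives $\Sigma_2$ by its definition. This step is routine.

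The second piece is the reflection of the remainders, $R_{2ps}=\chi\,\overline{R_{1ps}}$, which is \eqref{eq:R2-conj-R1}. Write $R_{1ps}=d_1e^{-i\omega}$. By Proposition~\ref{prop:weights}(ii) the weight $d_1$ is real, so $\overline{R_{1ps}}=d_1e^{i\omega}$. On the line $|\chi|=1$, hence $\chi=e^{i\psi}$ and
\[
\chi\,\overline{R_{1ps}}=d_1e^{i(\omega+\psi)}.
\]
By Proposition~\ref{prop:weights}(iii), $d_1=d_2$ on the critical line, so this equals $d_2e^{i(\omega+\psi)}=R_{2ps}$. Adding the two pieces gives $B_2=\chi\,\overline{B_1}$, and therefore $L_2=|B_2|=|\chi|\,|B_1|=L_1$.

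The substantive input is Proposition~\ref{prop:weights}(iii), which is taken as given from Appendix~\ref{sec:positivity}. The only real obstacle is the exceptional set where $\sin(2\omega+\psi)=0$, at which $d_1$ and $d_2$ are defined only as removable limits. There I would argue by continuity. The paper states that on the line these singularities are removable, so $B_1$ and $B_2$ extend continuously across them. Both $L_1$ and $L_2$ are then continuous in $T$ and agree on a punctured neighborhood of each isolated exceptional point, hence they agree at the point itself.

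At integer $T$ no separate argument is needed. By the bookkeeping in the proof of Theorem~\ref{thm:decomp}, the ceiling convention $M=\lceil T\rceil$ leaves $R_1$ and $R_2$ well defined there, and the algebra above never used that $T$ is non-integer.
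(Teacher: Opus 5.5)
Your proof is correct and is essentially the paper's own argument. You reflect the Dirichlet sums summand by summand, $\Sigma_2=\chi\overline{\Sigma_1}$; you obtain the fractional-summand reflection $R_{2ps}=\chi\overline{R_{1ps}}$ of \eqref{eq:R2-conj-R1} from $d_1\in\mathbb{R}$ and $d_1=d_2$; and you finish with $|\chi|=1$ on the line.

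Two small remarks on your extra care at exceptional points. First, Proposition~\ref{prop:weights} is stated for non-integer $T$, so at integer $T$ the input you actually need is Step~1 of the proof of Theorem~\ref{thm:line-bound}, whose reduction \eqref{eq:exact-line} gives $d_1=d_2$ without using non-integrality. Second, the removability you borrow from the paper fails for $0<T<1$ where $\cos u=0$: there $\Sigma_1=\Sigma_2=0$, $u=-\vartheta$, and $d_1=Z/(2\cos\vartheta)$ genuinely diverges because $Z\neq0$ below $t=14.13$. At such points the legs themselves are undefined, so the corollary loses nothing.
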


This is immediate from Proposition~\ref{prop:weights}(iii):
on the critical line every summand of $\Sigma_2$ is $\chi$ times the
conjugate of the corresponding summand of $\Sigma_1$, and by
\eqref{eq:R2-conj-R1}, which is equivalent to $d_1=d_2$, the same holds for the
fractional summands. Hence
$\Sigma_2+R_{2ps} = \chi\,\overline{\Sigma_1+R_{1ps}}$, and since $|\chi|=1$
on the line, the two legs have the same length. A Lean proof is recorded in
Appendix~\ref{app:lean-critical}.

The same reflection locates the apex exactly, which is what the dotted bisector
line of Figures~\ref{fig:full-spirals} and~\ref{fig:legs} draws.

\begin{corollary}\label{cor:bisector-proj}
When $\sigma=\tfrac12$ the perpendicular projection of the bisector point $B_1$
onto the line through the origin carrying $\zeta$ is exactly $\tfrac{\zeta}{2}$:
writing $\psi=\arg\chi$,
\begin{equation}\label{eq:bisector-proj}
\Re\!\bigl(e^{-i\psi/2}B_1\bigr)\,e^{\,i\psi/2}\;=\;\frac{\zeta}{2}.
\end{equation}
\end{corollary}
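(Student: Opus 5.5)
The plan is to use the reflection identity already established in the proof of Corollary~\ref{cor:equal-legs}: on the critical line every summand of $\Sigma_2$ is $\chi$ times the conjugate of the corresponding summand of $\Sigma_1$. The same holds for the fractional summands by \eqref{eq:R2-conj-R1}. Hence
\[
B_2=\Sigma_2+R_{2ps}=\chi\,\overline{B_1},\qquad \zeta=B_1+\chi\,\overline{B_1}.
\]
Everything then reduces to a one-line computation with the unit number $\chi=e^{i\psi}$.

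First I will record that $|\chi|=1$ at $\sigma=\tfrac12$, so that $\chi=e^{i\psi}$ exactly, with $\psi=\arg\chi$ any fixed choice of argument. Next I will rotate into the frame of the base line by multiplying by $e^{-i\psi/2}$:
\[
e^{-i\psi/2}\zeta=e^{-i\psi/2}B_1+e^{i\psi/2}\overline{B_1}
= w+\overline{w},\qquad w:=e^{-i\psi/2}B_1,
\]
since $e^{i\psi/2}\overline{B_1}=\overline{e^{-i\psi/2}B_1}$. Therefore $e^{-i\psi/2}\zeta=2\Re w$ is real. This shows both that $\zeta$ lies on the line $\mathbb{R}e^{i\psi/2}$, which is the ``line carrying $\zeta$'', and that
\[
\Re\bigl(e^{-i\psi/2}B_1\bigr)=\tfrac12 e^{-i\psi/2}\zeta .
\]
Multiplying back by $e^{i\psi/2}$ gives \eqref{eq:bisector-proj}. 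The left side of \eqref{eq:bisector-proj} is, by definition, the orthogonal projection of $B_1$ onto the line through $0$ with unit direction $e^{i\psi/2}$, so this is exactly the geometric claim.

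The only points needing care are bookkeeping rather than analysis. The first is the ambiguity of $\psi$ modulo $2\pi$: replacing $\psi$ by $\psi+2\pi$ flips $e^{\pm i\psi/2}$ by a sign, but the product $\Re(e^{-i\psi/2}B_1)e^{i\psi/2}$ is invariant, so the projection does not depend on the choice. The second is the validity of $R_{2ps}=\chi\overline{R_{1ps}}$ at the exceptional points where $\sin(2\omega+\psi)=0$. There I will appeal to the remark after Theorem~\ref{thm:decomp}: on the line $d_1,d_2$ have removable singularities, so the relation extends by continuity. The same applies at integer $T$, using the convention $M=\lceil T\rceil$. I expect this continuity check to be the main, if mild, obstacle; the algebra itself is immediate once $B_2=\chi\overline{B_1}$ is in hand.
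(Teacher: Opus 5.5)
Your proposal is correct and is essentially the paper's own argument: from $B_2=\chi\overline{B_1}$ and $\chi=e^{i\psi}$ on the line, you get $e^{-i\psi/2}\zeta=w+\overline{w}=2\Re w$ with $w=e^{-i\psi/2}B_1$, and rotating back by $e^{i\psi/2}$ gives \eqref{eq:bisector-proj}; the Lean lemma \texttt{proj\_eq\_half\_zeta} of Appendix~\ref{app:lean-critical} formalizes exactly this computation. Your remarks on the sign ambiguity of $e^{\pm i\psi/2}$ and on extending $d_1=d_2$ through the removable points are sound bookkeeping that the paper leaves implicit.
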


Indeed $B_2=\chi\overline{B_1}$ gives
$e^{-i\psi/2}\zeta=e^{-i\psi/2}B_1+\overline{e^{-i\psi/2}B_1}
=2\Re(e^{-i\psi/2}B_1)$, and $e^{\,i\psi/2}$ is the direction of $\zeta$ (this is
the reality of the Riemann--Siegel $Z$ function). So the isosceles triangle with
base from $O$ to $\zeta$ has its apex over the midpoint of that base, and the
equal-leg statement $L_1=L_2$ is the same fact read as a distance. This is
also proved in Appendix~\ref{app:lean-critical}.

\paragraph{Consequences for zeta zeros.} A zero of $\zeta$ in the critical strip
requires \emph{both} of the following:
\[
\frac{L_{1}}{L_{2}} = 1
\qquad\text{and}\qquad
\vartheta_2 = \pi,
\]
i.e.\ the two legs must be equal in length \emph{and} Leg~2 must fold back onto
Leg~1.\footnote{Tolkien's subtitle for \emph{The Hobbit}, \emph{There and Back
Again}, is the whole of the requirement. A hobbit leaves the Shire (the origin)
for Mordor (the bisector point) and his journey must end where it began, and no
choice of direction for the return trip from Mordor can bring the hobbit back to
the Shire unless the way back is exactly as long as the way out. Equal legs is that necessity; $\vartheta_2=\pi$ is the further
demand that the way back be the way he came.} By Corollary~\ref{cor:equal-legs} the length condition holds automatically on
the critical line; there, a zero occurs exactly when $\vartheta_2 =
\pi$. Away from the critical line, the two legs are unequal except at
exceptional points, so most of the strip is immediately ruled out for zeros. The leg-length ratio is symmetric across the
line,
\begin{equation}
\frac{L_{1}(\sigma,T)}{L_{2}(\sigma,T)}
=
\frac{L_{2}(1-\sigma,T)}{L_{1}(1-\sigma,T)} .
\end{equation}
Numerically we have never found a point with $\sigma\neq\tfrac12$ where the
ratio is $1$ and $\vartheta_2=\pi$ simultaneously; such a point would disprove
the Riemann hypothesis.

\begin{figure}[p]
\centering
\includegraphics[width=0.9\textwidth]{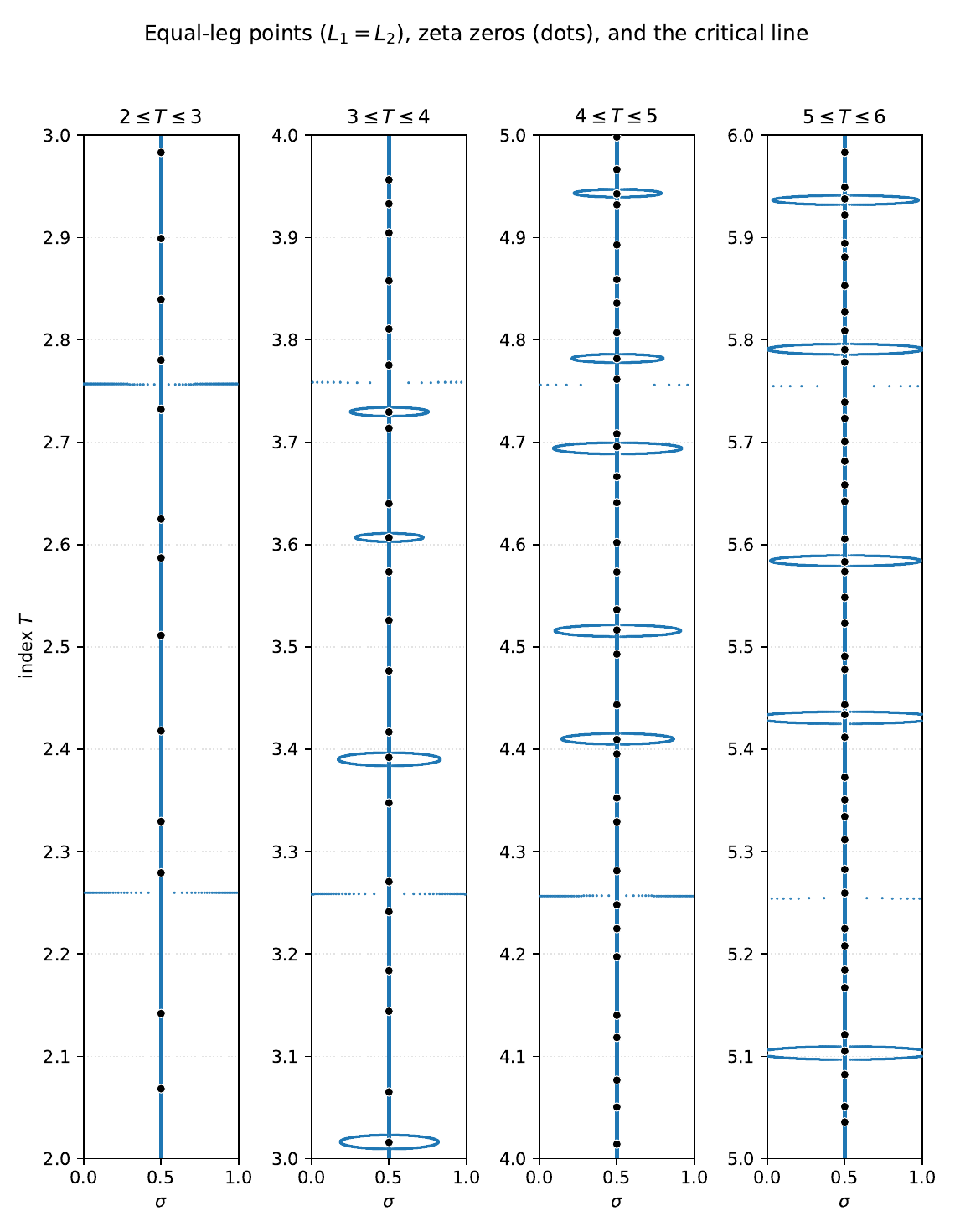}
\caption{Four windows of the critical strip ($0\le\sigma\le1$, index $T$
vertical): $2\le T\le3$, $3\le T\le4$, $4\le T\le5$, and $5\le T\le6$. The
blue vertical line is the critical line $\sigma=\tfrac12$, the black dots are
the zeta zeros, and the small blue points are the equal-leg points: points
$(\sigma,T)$ where $L_1=L_2$, i.e.\ where the bisector point
$B_1=\Sigma_1+R_{1ps}$ is equidistant from the origin and from $\zeta$. By
Corollary~\ref{cor:equal-legs} the entire critical line consists of equal-leg
points; off the line the equal-leg locus forms isolated ovals and thin
horizontal bands, and every zero sits on the critical line where the length
condition is automatic. The horizontal bands, near fractional parts
${\approx}\,0.25$ and ${\approx}\,0.75$ of $T$, are the instants where the two
bisector links become parallel and fail to intersect (\S\ref{sec:bisector}):
the bisector point has a pole there ($d_1$ and $d_2$ blow up), and as $T$
passes through it the equal-leg point sweeps across the full width of the
strip. The sweep gets faster as $T$ grows, so the fixed-step sampling leaves
fewer dots and the bands appear fainter. Generated by
\texttt{fig\_equal\_legs\_strips.py}.}
\label{fig:equal-legs-strips}
\end{figure}

\begin{figure}[p]
\centering
\includegraphics[width=0.9\textwidth]{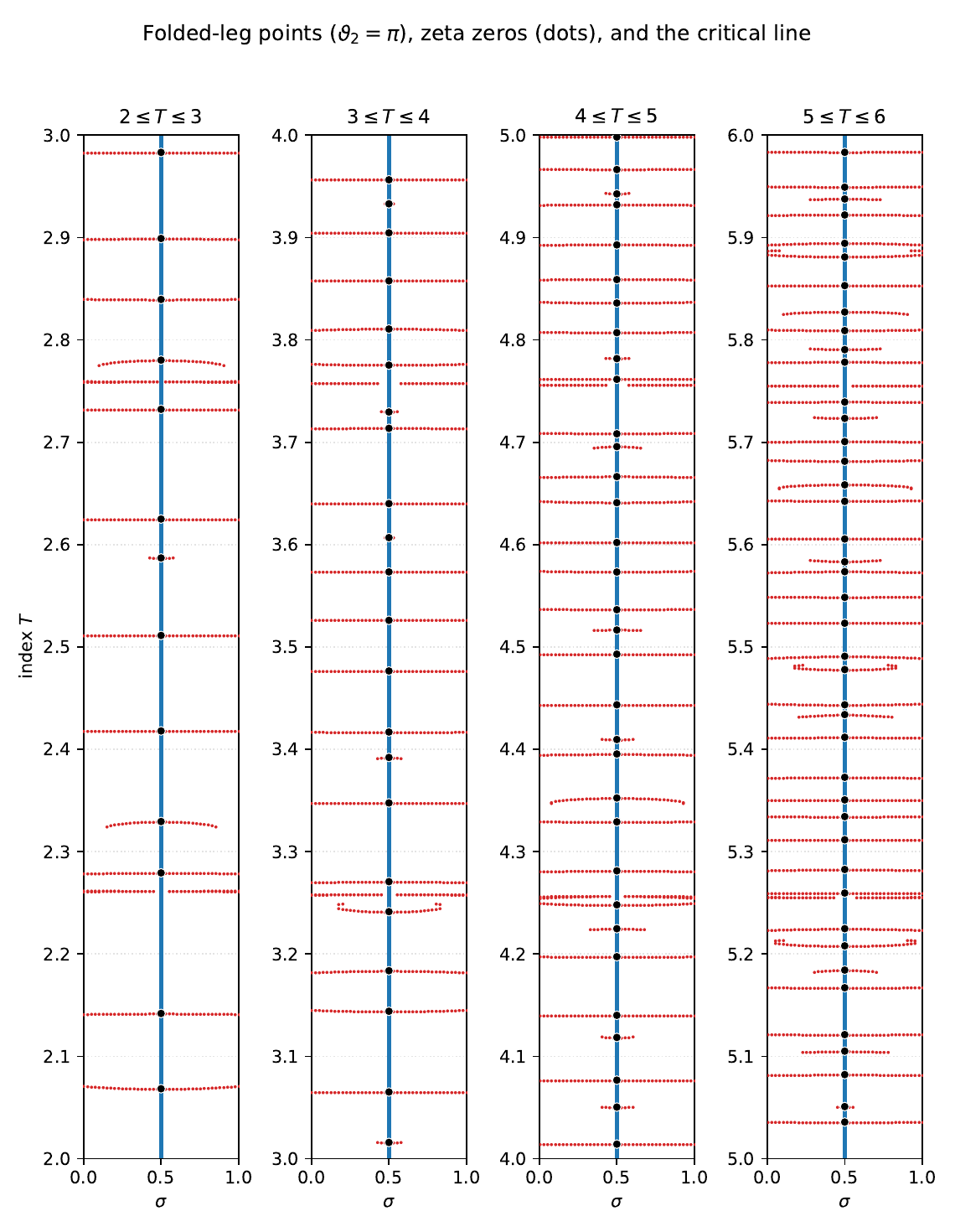}
\caption{The same four windows of the critical strip as in
Figure~\ref{fig:equal-legs-strips}, now showing the \emph{folded-leg} points
(red): points $(\sigma,T)$ where $\vartheta_2=\pi$, i.e.\ where Leg~2 folds
straight back onto Leg~1 and a zeta zero is therefore possible. The blue
vertical line is the critical line and the black dots are the zeta
zeros. The folded-leg locus forms nearly horizontal curves sweeping the
strip; a zero requires \emph{both} $\vartheta_2=\pi$ and $L_1=L_2$, and since
the length condition holds automatically on the critical line
(Corollary~\ref{cor:equal-legs}), every crossing of a red curve with the
critical line is a zeta zero. Generated by \texttt{fig\_theta2\_strips.py}.}
\label{fig:theta2-strips}
\end{figure}

\begin{figure}[p]
\centering
\includegraphics[width=0.9\textwidth]{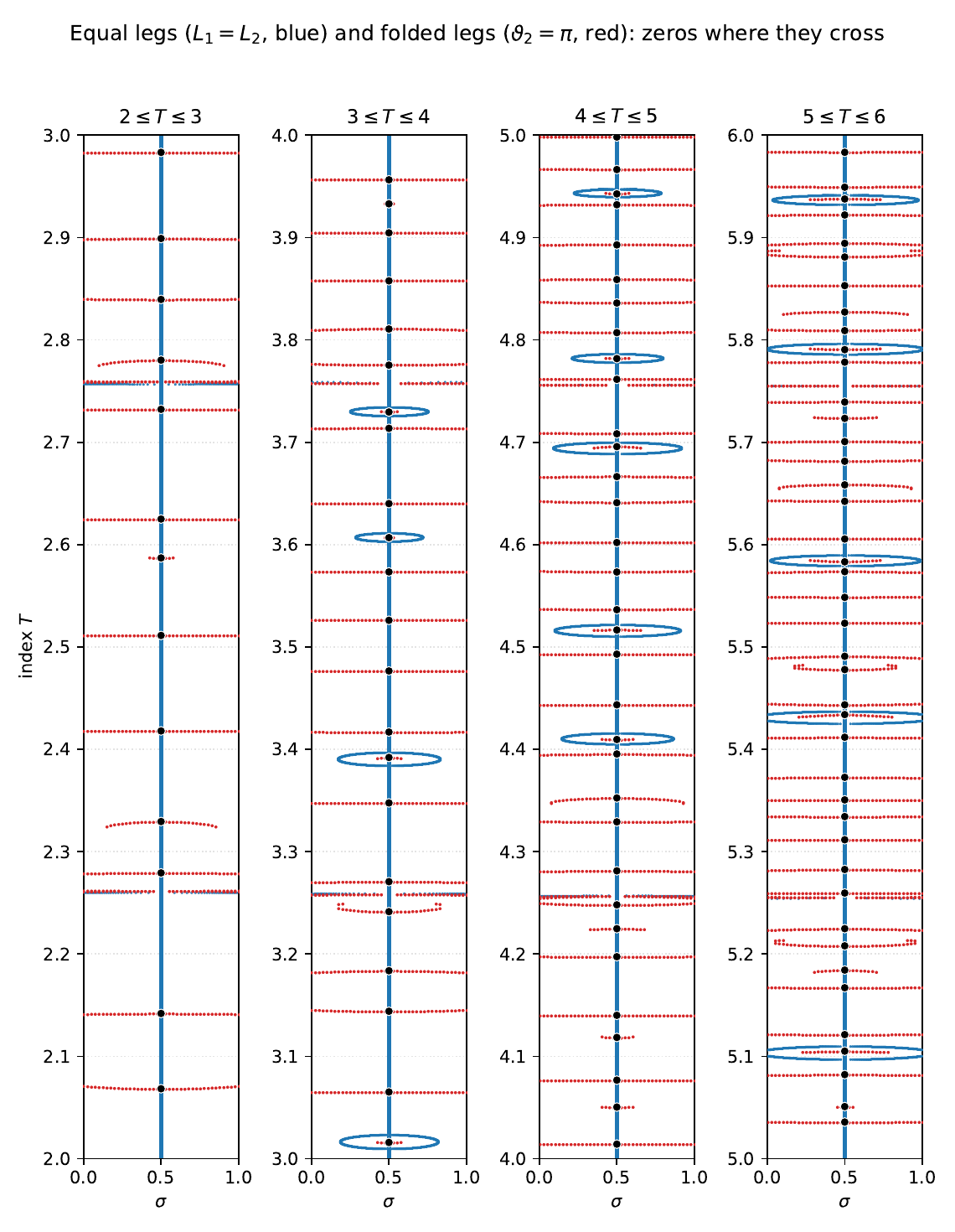}
\caption{Figures~\ref{fig:equal-legs-strips} and~\ref{fig:theta2-strips}
overlaid: the equal-leg locus $L_1=L_2$ (blue), the folded-leg locus
$\vartheta_2=\pi$ (red), the critical line (blue, since it belongs to the
equal-leg locus), and the zeta zeros (black dots). A zero requires both
conditions at once, so zeros occur exactly where
red meets blue. On the critical line the blue locus is the line itself
(Corollary~\ref{cor:equal-legs}), and every red curve crossing it produces a
zero;
away from the line, the red curves and the blue ovals interleave without
touching. A red/blue crossing off the critical line would be a
counterexample to the Riemann hypothesis. Generated by
\texttt{fig\_combined\_strips.py}.}
\label{fig:combined-strips}
\end{figure}

\clearpage

Figure~\ref{fig:combined-strips} makes the stakes plain: wherever a region of
the critical strip contains no red and no blue dots, a zero of $\zeta$ is
impossible. On the critical line the two legs are always equal in length, so
whenever $\vartheta_2=\pi$ there, a zero occurs. Off the line the legs are
seldom equal in length, but sometimes they are (the blue ovals and bands).
If one could prove that, off the critical line, in the rare times when
the legs are equal, that $\vartheta_2$ is simultaneously never $\pi$, one would have proven
the Riemann hypothesis true.

\clearpage
\subsection{PS, AK, and RS legs and angles}\label{sec:ps-ak-r2}

The equal-leg and folded-leg loci above were drawn with the exact
bisector point $B_1=\Sigma_1+R_{1ps}$. The same two conditions
$L_1=L_2$ and $\vartheta_2=\pi$ can be restated for any other choice of
bisector point. Two natural alternatives come from Kuznetsov's split
\eqref{eq:Rak-split}: take $B_1=\Sigma_1+R_{1ak}$, or take the midpoint
remainder $B_1=\Sigma_1+(R_{1ak}+R_{2ak})/2$, which is the geometric
analogue of Siegel's half-remainder $\tfrac12 R$ once
$R_{1ak}+R_{2ak}\approx R$.

Figure~\ref{fig:ps-ak-r2} magnifies a short window of the strip,
$4.65\le T\le4.80$, and overlays the three choices side by side. On the
left, equal-leg loci: PS in blue, AK in green, and $R/2$ in purple. The
three families form nested ovals about the critical line, but the
nesting order is not fixed: already in this short window the lower
oval has PS outermost and AK innermost, while the upper oval has AK
outermost and $R/2$ innermost, and across $3\le T\le20$ all six
orderings of PS, AK and $R/2$ occur with no simple alternating
pattern. On the right, folded-leg loci $\vartheta_2=\pi$: PS in red,
AK in orange, and $R/2$ in purple (hollow). The zeros (black) still sit
where a folded curve meets the critical line; the three remainder
choices shift the off-line curves but agree on those critical-line
crossings in this window.

\begin{figure}[p]
\centering
\includegraphics[width=\textwidth,height=0.78\textheight,keepaspectratio]{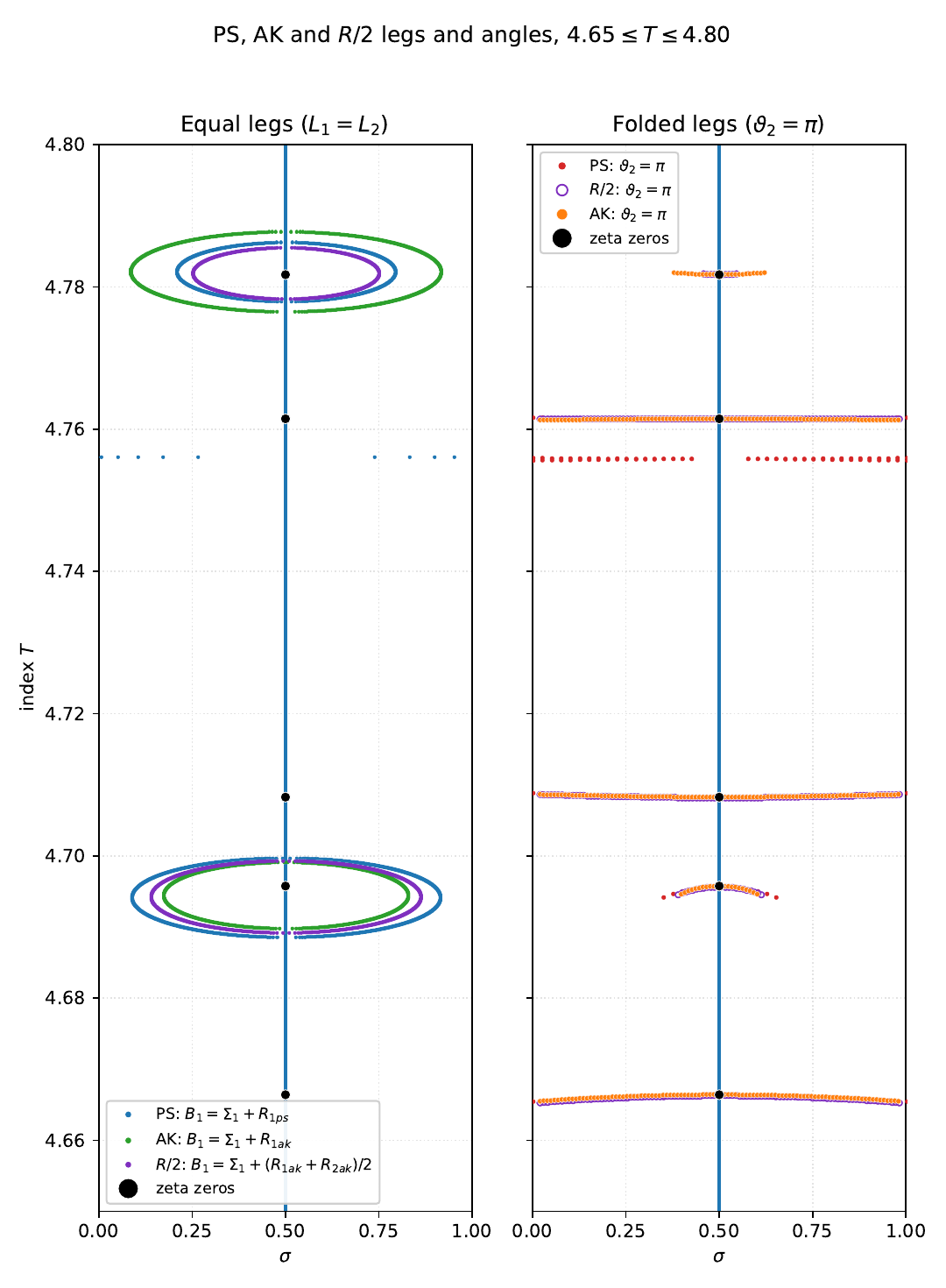}
\caption{PS, AK and $R/2$ legs and angles in the window
$4.65\le T\le4.80$. Left: equal-leg loci $L_1=L_2$ for three choices of
bisector point: PS $B_1=\Sigma_1+R_{1ps}$ (blue), AK
$B_1=\Sigma_1+R_{1ak}$ (green), and
$R/2$ $B_1=\Sigma_1+(R_{1ak}+R_{2ak})/2$ (purple). Right: folded-leg
loci $\vartheta_2=\pi$ for the same three choices (red, orange, and
hollow purple).
The blue vertical is the critical line; black dots are zeta zeros.
Equal-leg and folded-leg loci computed from the definitions
$L_1=L_2$ and $\vartheta_2=\arg((\zeta-B_1)/B_1)$ for each choice of
$B_1$. Generated by \texttt{fig\_ps\_ak\_r2\_legs\_angles.py}.}
\label{fig:ps-ak-r2}
\end{figure}

Figure~\ref{fig:ps-ak-r2-oval-9441} is the same three-way overlay for a
pair of neighboring equal-leg ovals near $T\approx9.44$, drawn in the
window $9.415\le T\le9.465$. The lower PS oval sits around the zero at
$T\approx9.431$ and the upper around $T\approx9.451$; a third zero at
$T\approx9.440$ lies in the gap between them (not inside either oval).
The AK and $R/2$ ovals nest with the PS locus as in
Figure~\ref{fig:ps-ak-r2}.

\begin{figure}[p]
\centering
\includegraphics[width=\textwidth,height=0.78\textheight,keepaspectratio]{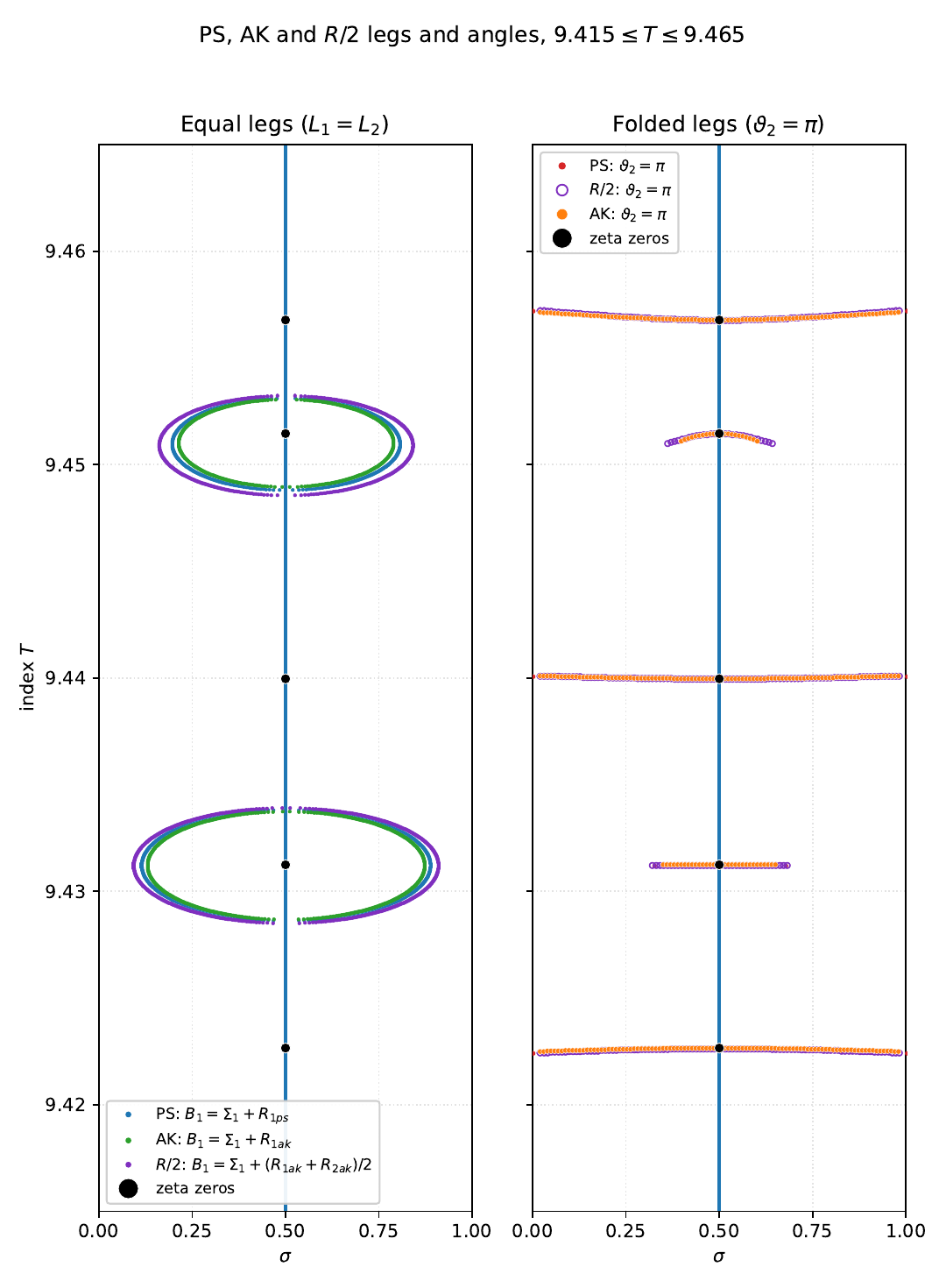}
\caption{PS, AK and $R/2$ legs and angles for two neighboring equal-leg
ovals near $T\approx9.44$, in the window $9.415\le T\le9.465$.
Left: equal-leg loci $L_1=L_2$ for PS (blue), AK (green), and $R/2$
(purple). Right: folded-leg loci $\vartheta_2=\pi$ for the same three
choices (red, orange, and hollow purple). The blue vertical is the
critical line; black dots are zeta zeros. The lower oval surrounds the
zero at $T\approx9.431$ and the upper the zero at $T\approx9.451$; the
zero at $T\approx9.440$ sits in the gap between them. Loci computed as in
Figure~\ref{fig:ps-ak-r2}. Generated by
\texttt{fig\_ps\_ak\_r2\_legs\_angles.py}
(\texttt{--out fig\_ps\_ak\_r2\_oval\_9441 --t-lo 9.415 --t-hi 9.465}).}
\label{fig:ps-ak-r2-oval-9441}
\end{figure}

\paragraph{Intersecting equal-leg ovals and the Riemann hypothesis.}
The nested ovals in Figures~\ref{fig:ps-ak-r2}
and~\ref{fig:ps-ak-r2-oval-9441} suggest a comparison that uses only exact
remainders. Write
\[
B_{1ps}
=
\Sigma_1+R_{1ps},
\qquad
B_{1rs}
=
\Sigma_1+R_{1rs},
\qquad
R_{1rs}
:=
\tfrac12 R,
\]
where $R=R_{1ps}+R_{2ps}=\zeta-\Sigma_1-\Sigma_2$ is Siegel's remainder, so
$R_{1rs}$ is the exact half-remainder (``$rs$'' for Riemann--Siegel, parallel to
the subscripts ``$ps$'' and ``$ak$'') and $B_{1rs}$ is the midpoint of
the remainder segment from $\Sigma_1$ to $\Sigma_1+R$. Let
$\mathcal{E}_{\mathrm{ps}}$ and $\mathcal{E}_{\mathrm{rs}}$ be the corresponding
equal-leg loci in the strip:
the sets of $(\sigma,T)$ at which $|B_1|=|\zeta-B_1|$ for
$B_1=B_{1ps}$ and $B_1=B_{1rs}$ respectively.

If $\zeta(\sigma+iI(T))=0$, then for either choice of bisector point one has
$L_2=|\zeta-B_1|=|B_1|=L_1$, so
\[
\frac{L_1}{L_2}=1
\]
holds simultaneously for both $B_{1ps}$ and $B_{1rs}$. Every
zero therefore lies in
$\mathcal{E}_{\mathrm{ps}}\cap\mathcal{E}_{\mathrm{rs}}$. On the critical line
this is automatic: Corollary~\ref{cor:equal-legs} puts the whole line in
$\mathcal{E}_{\mathrm{ps}}$, and the $rs$ split is reflection-symmetric there
as well: $\Sigma_2=\chi\,\overline{\Sigma_1}$ and $R=\chi\,\overline{R}$ (the
latter because $e^{-i\psi/2}R$ is real on the line: $R=e^{-i\vartheta}r$ with
$r$ real and $\psi=-2\vartheta$, Appendix~\ref{sec:phase-bound}), so
\[
\chi\,\overline{B_{1rs}}
=\Sigma_2+\tfrac12 R
=\zeta-B_{1rs},
\]
and since $|\chi|=1$ this gives
$\bigl|B_{1rs}\bigr|=\bigl|\zeta-B_{1rs}\bigr|$: the whole
line lies in $\mathcal{E}_{\mathrm{rs}}$ as well. (Note that
$B_{1rs}\neq\zeta/2$; only its perpendicular projection onto the
base is $\zeta/2$, by Corollary~\ref{cor:bisector-proj}.) The
two loci therefore intersect everywhere
along $\sigma=\tfrac12$; that common set is expected and does not constrain zeros
beyond the folded-leg condition already discussed.

Off the critical line the situation is different. An off-line zero would be an
off-line point of $\mathcal{E}_{\mathrm{ps}}\cap\mathcal{E}_{\mathrm{rs}}$: a
point where the PS and $rs$ equal-leg ovals meet. Consequently, an off-line
intersection of $\mathcal{E}_{\mathrm{ps}}$ and $\mathcal{E}_{\mathrm{rs}}$ is
\emph{necessary} for an off-line zero (it is not sufficient: the folded-leg
condition $\vartheta_2=\pi$, or equivalently $\zeta=0$, is still required). The
contrapositive is the statement of interest:

\begin{quote}
\emph{If $\mathcal{E}_{\mathrm{ps}}$ and $\mathcal{E}_{\mathrm{rs}}$ have no
point in common with $\sigma\neq\tfrac12$, then $\zeta$ has no zero off the
critical line.}
\end{quote}

In particular, if the two loci are disjoint off the line throughout the strip,
the Riemann hypothesis follows; if they are disjoint off the line for all
$T\le T_{\max}$, then there is no off-line zero with index at most $T_{\max}$.
The figures above are consistent with a strict nesting (the ovals approach one
another but do not cross off the line). A proof that this nesting persists for
all $T$, with due care at the pole heights of $d_1$ in
Appendix~\ref{sec:pole-locations} where $B_{1ps}$ ceases to be defined in
the usual way, would therefore be a proof of the Riemann hypothesis.

In short: on the critical line the leg-length ratio is always one
(Corollary~\ref{cor:equal-legs}, and likewise for $B_{1rs}$), so the
zeros there are determined by the legs folding back,
$\vartheta_2=\pi$, for either the $ps$ or the $rs$ choice of bisector point.
Off the line the length condition is no longer free: a zero can occur only
where $L_1/L_2=1$ for the legs, and since that must hold for both choices it
is a necessary condition that
\begin{equation}\label{eq:both-ratios-one}
\frac{L_1}{L_2}=1
\end{equation}
simultaneously for the $ps$ legs and for the $rs$ legs.

\paragraph{The mean leg-imbalance metric.}
The equal-leg loci above are the zero sets of a continuous imbalance. For each
of the three named remainder-splits of \S\ref{sec:remainders-summary}, write the two
leg lengths
\begin{equation}\label{eq:leg-lengths}
L_1 \;=\; |\Sigma_1+R_1|,
\qquad
L_2 \;=\; |\Sigma_2+R_2|,
\end{equation}
where $(R_1,R_2)$ is $(R_{1ps},R_{2ps})$, $(R_{1rs},R_{2rs})=(\tfrac12 R,\tfrac12 R)$,
or $(R_{1ak},R_{2ak})$ respectively. The normalized length imbalance of a
split is
\begin{equation}\label{eq:delta-leg}
\delta
\;=\;
\frac{|L_1-L_2|}{L_1+L_2+\varepsilon},
\qquad \varepsilon=10^{-18},
\end{equation}
so $\delta=0$ if and only if $L_1=L_2$ (the bisector condition for that
choice of bisector point). Averaging the three splits gives the plotted color map
\begin{equation}\label{eq:delta-bar}
\bar\delta
\;=\;
\frac{\delta_{\mathrm{ps}}+\delta_{\mathrm{rs}}+\delta_{\mathrm{ak}}}{3}.
\end{equation}
On the critical line each split has $L_1=L_2$ (Corollary~\ref{cor:equal-legs}
and the parallel identities for $rs$ and $ak$), so $\bar\delta\approx0$ and
the strip is dark purple along $\sigma=\tfrac12$. Off the line
$\bar\delta$ rises, and the equal-leg ovals of
Figures~\ref{fig:ps-ak-r2}--\ref{fig:ps-ak-r2-oval-9441} appear as the dark
cellular structures hugging the critical line in
Figure~\ref{fig:leg-imbalance}.

\begin{figure}[H]
\centering
\includegraphics[width=\textwidth]{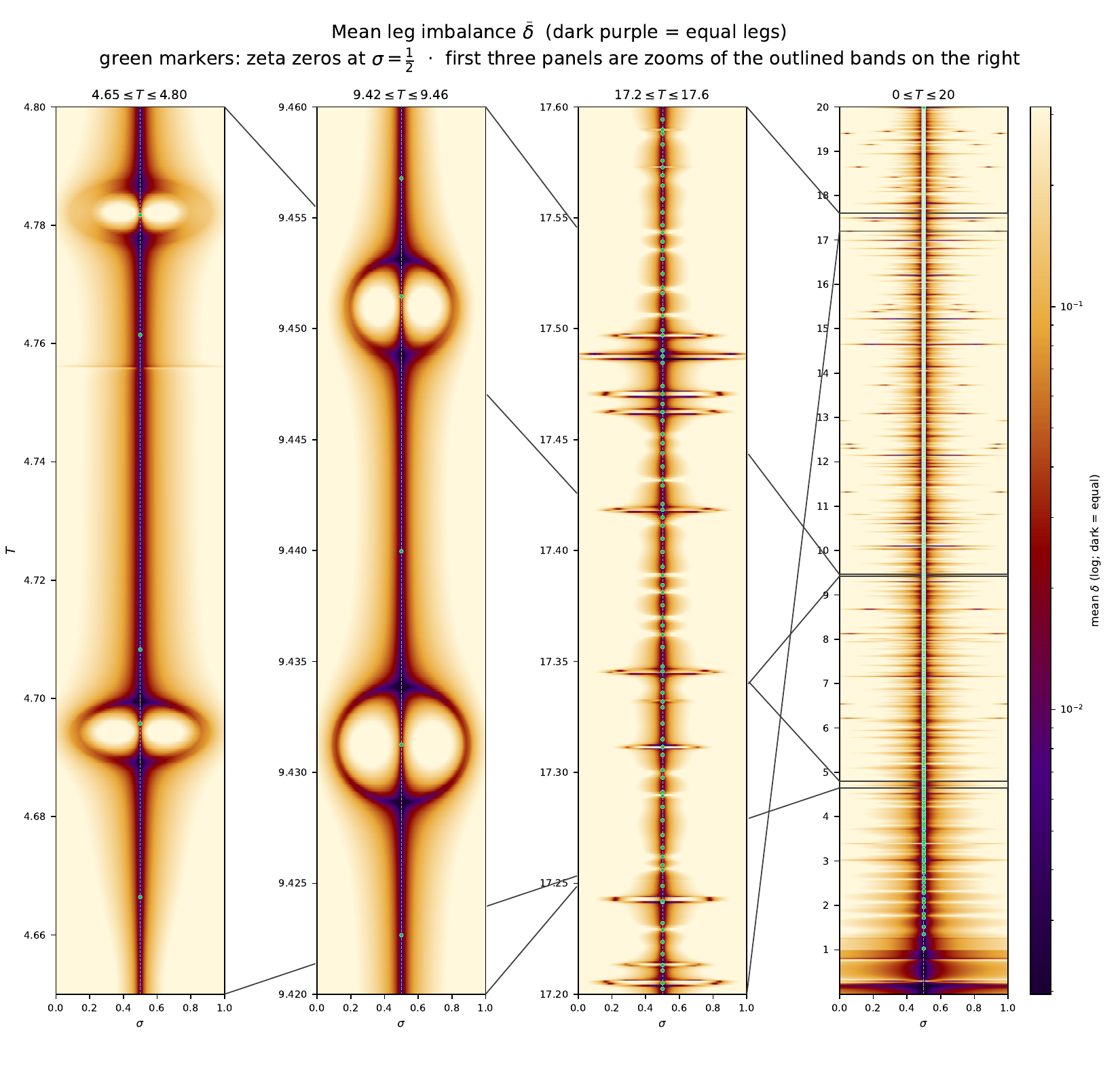}
\caption{Mean leg imbalance $\bar\delta$ of \eqref{eq:delta-bar}
(dark purple $=$ equal leg lengths). Horizontal axis $\sigma\in[0,1]$;
vertical axis the spiral index $T$. Color is $\bar\delta$ on a log scale.
Green markers are zeta zeros at $\sigma=\tfrac12$. The rightmost panel is
the full strip $0\le T\le20$; the first three panels are zooms of the
outlined bands
($4.65\le T\le4.80$, $9.42\le T\le9.46$, $17.2\le T\le17.6$).
The dark vertical band on $\sigma=\tfrac12$ is the simultaneous
equal-leg condition for the $ps$, $rs$, and $ak$ splits. Generated by
\texttt{plot\_leg\_equality\_four\_panel.py}.}
\label{fig:leg-imbalance}
\end{figure}

\paragraph{The mean fold-deviation metric.}
We treat the folded-leg loci in a similar fashion to the equal-leg loci. For each of the three remainder splits
write the two leg vectors $v_1=\Sigma_1+R_1$ and $v_2=\Sigma_2+R_2$, and let
$\vartheta_2=\arg(v_2/v_1)\in(-\pi,\pi]$ be the fold angle between them. The
normalized fold deviation of a split is the scale-free quantity
\begin{equation}\label{eq:tau-fold}
\tau
\;=\;
\frac{\pi-|\vartheta_2|}{\pi},
\end{equation}
so $\tau=0$ if and only if $\vartheta_2=\pi$ (leg~2 folded straight back onto
leg~1) and $\tau=1$ where the legs run parallel. Averaging the three splits
gives the angle companion of $\bar\delta$,
\begin{equation}\label{eq:tau-bar}
\bar\tau
\;=\;
\frac{\tau_{\mathrm{ps}}+\tau_{\mathrm{rs}}+\tau_{\mathrm{ak}}}{3},
\end{equation}
plotted in Figure~\ref{fig:theta2-pi-field} over the same four windows as
Figure~\ref{fig:leg-imbalance}. The two color maps organize themselves in
opposite directions. The length condition is automatic on the critical line
and rare off it, so $\bar\delta$ concentrates its dark values in a vertical
band. The fold condition instead is met along the nearly horizontal
folded-leg curves that sweep the strip at the mean zero density, on and off
the line alike (Figure~\ref{fig:theta2-strips}), so $\bar\tau$ concentrates
its dark values in horizontal filaments crossing the full width of the
strip. A zeta zero requires both: it is a point where a dark filament of
Figure~\ref{fig:theta2-pi-field} crosses the dark vertical band of
Figure~\ref{fig:leg-imbalance}, and every green marker sits on a filament at
$\sigma=\tfrac12$.

\begin{figure}[H]
\centering
\includegraphics[width=\textwidth]{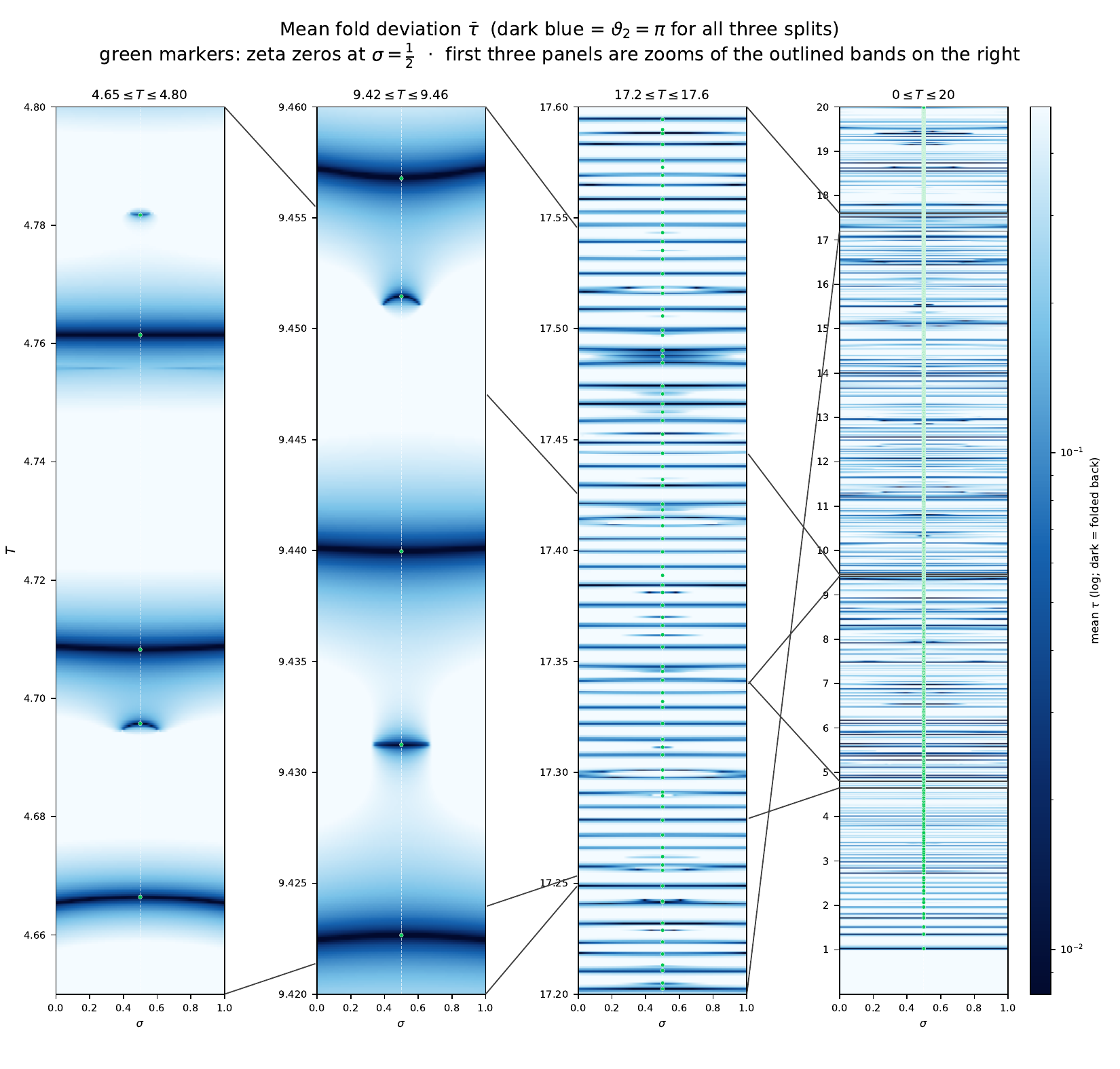}
\caption{Mean fold deviation $\bar\tau$ of \eqref{eq:tau-bar} (dark blue $=$
$\vartheta_2=\pi$ simultaneously for the $ps$, $rs$, and $ak$ splits).
Horizontal axis $\sigma\in[0,1]$; vertical axis the spiral index $T$; color
is $\bar\tau$ on a log scale, in the four-panel layout of
Figure~\ref{fig:leg-imbalance}: the rightmost panel is the full strip
$0\le T\le20$ and the first three panels are zooms of the outlined bands
($4.65\le T\le4.80$, $9.42\le T\le9.46$, $17.2\le T\le17.6$). Green markers
are zeta zeros at $\sigma=\tfrac12$. Where the equal-leg color map of
Figure~\ref{fig:leg-imbalance} is dark along the critical line, the fold
color map is dark along nearly horizontal filaments sweeping the strip at the
mean zero density, and every green marker sits on one. Generated by
\texttt{plot\_theta2\_pi\_four\_panel.py}.}
\label{fig:theta2-pi-field}
\end{figure}

Both color maps are exactly symmetric across the critical line: the leg ratio
obeys $L_1/L_2(\sigma,T)=L_2/L_1(1-\sigma,T)$, which leaves
\eqref{eq:delta-leg} unchanged, and the fold angle satisfies the
corresponding reflection, so $\bar\delta$ and $\bar\tau$ take the same value
at $(\sigma,T)$ and $(1-\sigma,T)$; on the plotted grids the two halves agree
to a relative $10^{-10}$. Half of each panel therefore carries the whole
color map, and the two conditions can share one strip.
Figure~\ref{fig:leg-theta2-spliced} splices them in the first two zoom
windows: the left half of each strip is the leg imbalance $\bar\delta$ and
the right half is the fold deviation $\bar\tau$, meeting at the seam
$\sigma=\tfrac12$. The two halves state the two conditions
of a zero together: the length condition holds along the half-line itself and on
the equal-leg ovals hugging it, the angle condition holds on the dark
filaments arriving from both sides, and each filament reaches the seam
exactly at a green zeta zero marker.

\begin{figure}[H]
\centering
\includegraphics[width=0.95\textwidth]{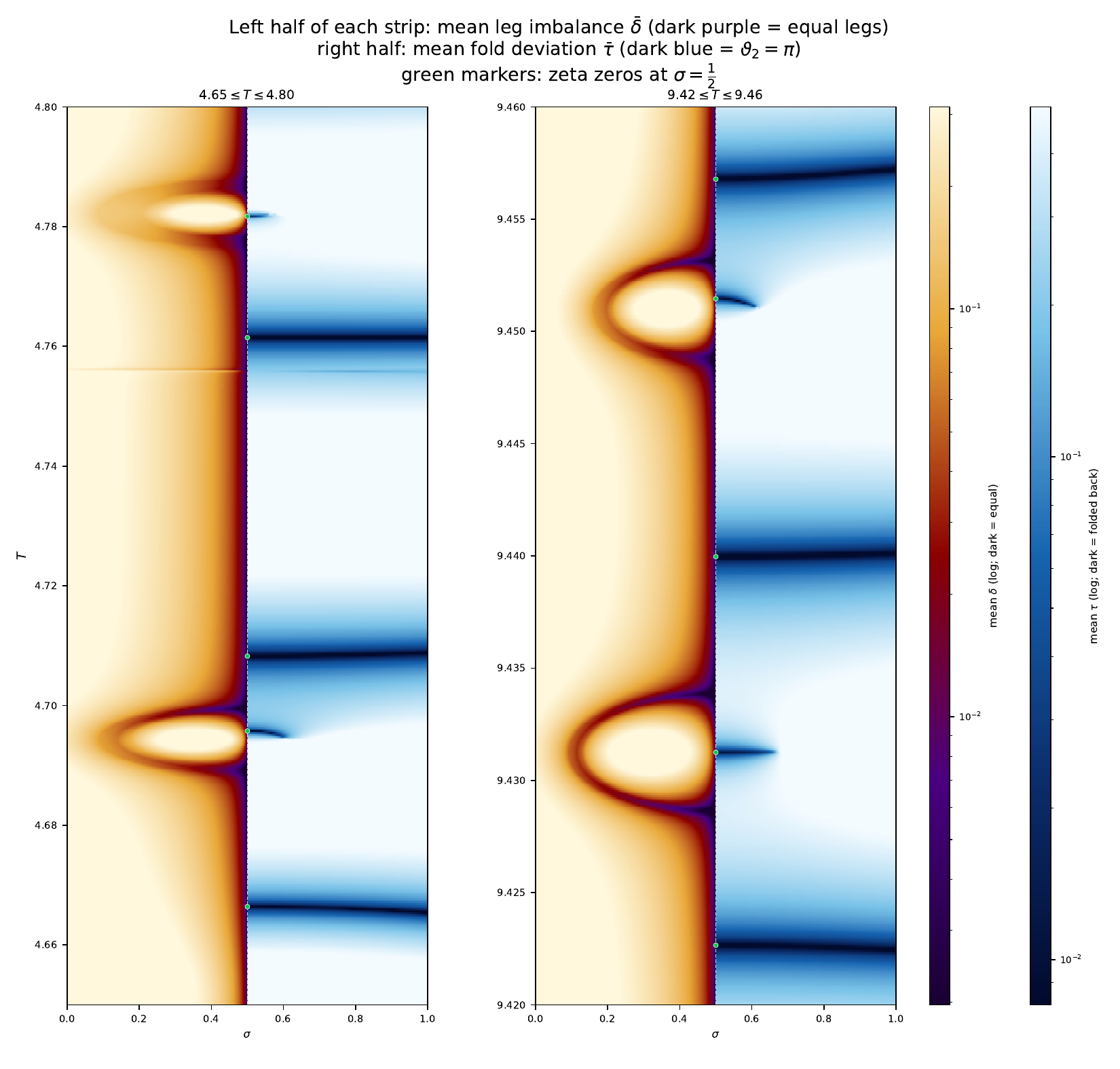}
\caption{The two zoom windows of Figures~\ref{fig:leg-imbalance}
and~\ref{fig:theta2-pi-field} spliced at the critical line: in each strip the
left half ($\sigma<\tfrac12$) shows the mean leg imbalance $\bar\delta$ of
\eqref{eq:delta-bar} on the color scale of Figure~\ref{fig:leg-imbalance}
(dark purple $=$ equal legs), and the right half ($\sigma>\tfrac12$) shows
the mean fold deviation $\bar\tau$ of \eqref{eq:tau-bar} on the color scale
of Figure~\ref{fig:theta2-pi-field} (dark blue $=$ folded back). Both color maps
are symmetric under $\sigma\leftrightarrow1-\sigma$ (to a relative $10^{-10}$
on these grids), so each half carries its whole color map and the splice loses
nothing. Green markers are zeta zeros. The length condition is dark along
the seam and on the ovals flanking it; the angle condition is dark on the
horizontal filaments; a zero is a filament reaching the seam. Generated by
\texttt{plot\_leg\_theta2\_spliced.py}.}
\label{fig:leg-theta2-spliced}
\end{figure}

\subsection{Toward a proof}\label{sec:toward-proof}

The geometry assembled in this section suggests two routes to the Riemann
Hypothesis, listed here in increasing order of how many remainder splits
participate.

\begin{enumerate}
\item \emph{Equal legs and fold-back together, for a single split.}
A zero at $(\sigma,T)$ forces, for any single choice of bisector point, both
leg conditions of \S\ref{sec:legs} at the same input: $L_1=L_2$ \emph{and}
$\vartheta_2=\pi$. On the critical line the length condition is automatic
(Corollary~\ref{cor:equal-legs}), so zeros there are cut out by the fold-back
alone. Off the line the two conditions trace independent families of curves
(the equal-leg ovals and the nearly horizontal folded-leg curves of
Figure~\ref{fig:combined-strips}), and a proof that the two families never
pass through the same off-line point, for even one split, would prove the
Riemann hypothesis.

\item \emph{Equal legs for two or more splits at once.}
A zero makes the legs equal for \emph{every} remainder split simultaneously: when
$\zeta=0$, $L_2=|\zeta-B_1|=|B_1|=L_1$ for any $B_1$. An off-half-line zero
would therefore be an off-half-line point of
$\mathcal{E}_{\mathrm{ps}}\cap\mathcal{E}_{\mathrm{rs}}$ (and of
$\mathcal{E}_{\mathrm{ak}}$ as well). This route drops the angle condition
entirely and asks only that two equal-leg ovals meet off the line; the
figures of \S\ref{sec:ps-ak-r2} show the ovals nesting without crossing.
This route is developed in detail below.
\end{enumerate}

\paragraph{Route 2 in detail.}
An off-half-line zero at $(\sigma,T)$ would require both equal-leg
identities at the same input:
\begin{subequations}\label{eq:equal-legs-abs}
\begin{equation}\label{eq:ps-equal-legs-abs}
\bigl|\Sigma_1+R_{1ps}\bigr|
=
\bigl|\Sigma_2+R_{2ps}\bigr|,
\end{equation}
\begin{equation}\label{eq:rs-equal-legs-abs}
\bigl|\Sigma_1+\tfrac12 R\bigr|
=
\bigl|\Sigma_2+\tfrac12 R\bigr|.
\end{equation}
\end{subequations}
Here \eqref{eq:ps-equal-legs-abs} is the $ps$ condition and
\eqref{eq:rs-equal-legs-abs} the $rs$ condition with $R_{1rs}=\tfrac12 R$.
Each is one real constraint on $(\sigma,T)$: an equality of two moduli is
one real equation, so the pair is two real conditions on the two real
coordinates of the strip, the right number of conditions to cut out isolated points (or
nothing).
On $\sigma=\tfrac12$ they are not independent, so they do not cut the
half-line down to points. Off it, they are.

On $\sigma=\tfrac12$ both hold for every $T$, by
Corollary~\ref{cor:equal-legs}. They are identities along the whole
half-line and do not cut anything out there. Every point of
$\sigma=\tfrac12$ is already a common root; that common set is expected
and does not constrain zeros beyond the folded-leg condition, which this
route does not use.

Off the half-line that automatic identity is gone. Then
\eqref{eq:ps-equal-legs-abs} and \eqref{eq:rs-equal-legs-abs} are
independent: they are not the same equation written twice, and satisfying
one does not force the other. Each traces its own family of curves, the
$ps$ ovals $\mathcal{E}_{\mathrm{ps}}$ and the $rs$ ovals
$\mathcal{E}_{\mathrm{rs}}$. A common root is a pair $(\sigma,T)$ that
solves both at once, hence a point of
$\mathcal{E}_{\mathrm{ps}}\cap\mathcal{E}_{\mathrm{rs}}$. On the half-line
that intersection is the whole line; off it, a meeting of the two oval
families.

An off-half-line zero would be such a root: a $(\sigma,T)$ that
satisfies both \eqref{eq:ps-equal-legs-abs} and
\eqref{eq:rs-equal-legs-abs} at once. At a zero the chain (the whole Dirichlet chain formed by the links of
$\Sigma_1$, the remainder, and $\Sigma_2$) closes,
$\zeta=0$, so for any $B_1$
\[
L_2=|\zeta-B_1|=|-B_1|=|B_1|=L_1.
\]
That does not depend on how $B_1$ was chosen, so it holds for $B_{1ps}$
and $B_{1rs}$ at once. A zero therefore sits on both loci, on the
half-line or off it; off the half-line that is exactly a common root of
\eqref{eq:equal-legs-abs}. The implication is one-way:
\[
\text{off-half-line zero}
\quad\Longrightarrow\quad
\text{off-half-line point of }\mathcal{E}_{\mathrm{ps}}\cap\mathcal{E}_{\mathrm{rs}}.
\]
A meeting of the ovals is necessary for an off-half-line zero, not
sufficient: the fold-back $\vartheta_2=\pi$ is still required. For
proving there are no off-half-line zeros, necessity is what is needed.

A proof that \eqref{eq:equal-legs-abs} has no solution for
$\sigma\neq\tfrac12$ (away from the poles of $d_1$, where the $ps$
split is undefined) is a proof that those two loci never meet off the
half-line: the off-half-line points of the intersection
\emph{are} the off-half-line common roots of \eqref{eq:equal-legs-abs}.
The contrapositive of the implication then gives the Riemann hypothesis.
\emph{If there is no off-half-line meeting, there cannot be an off-half-line
zero, because such a zero would force a meeting.} In one line,
\begin{align*}
\text{no off-half-line solution of \eqref{eq:equal-legs-abs}}
&\;\Longleftrightarrow\;
\text{no off-half-line point of }\mathcal{E}_{\mathrm{ps}}\cap\mathcal{E}_{\mathrm{rs}}
\\
&\;\Longrightarrow\;
\text{no off-half-line zero}
\;\Longleftrightarrow\;
\text{RH}.
\end{align*}
The figures of \S\ref{sec:ps-ak-r2} already suggest the ovals nest and do
not cross. Proving what those pictures represent, namely that
\eqref{eq:equal-legs-abs} has no off-half-line solution, would be a
proof of RH.

\subsection{Equal legs density}\label{sec:eqleg-density}

If a zero were to exist off the critical line, it would have to lie on an
equal-leg locus. The reason is the one used in \S\ref{sec:ps-ak-r2}: at a zero
the chain closes, $\zeta=0$, so the bisector point is equidistant from the two
ends of the chain,
\[
L_2=|\zeta-B_1|=|{-B_1}|=|B_1|=L_1 ,
\]
whichever split is used for $B_1$. Every zero, on the line or off it, is
therefore an equal-leg point. On the line that says nothing, the whole line
being equal-leg by Corollary~\ref{cor:equal-legs}; off the line it is a genuine
restriction, because the off-line part of the locus is not the whole strip but
the isolated ovals of Figure~\ref{fig:equal-legs-strips}, symmetric about
$\sigma=\tfrac12$. An off-line zero has to sit on one of those ovals. Their
number and size are therefore worth measuring: they are the only places in the
strip where an off-line zero can be. This subsection reports a numerical census
of them for $0\le T\le401$ (which is $t$ of $1.0129\times10^{6}$).

\paragraph{The imbalance and the scan.} The census counts sign changes of the
\emph{signed} imbalance
\begin{equation}\label{eq:g-imbalance}
g(\sigma,T)\;=\;\frac{L_1-L_2}{L_1+L_2},
\end{equation}
which is \eqref{eq:delta-leg} without the absolute value. The modulus is dropped
on purpose: $g$ changes sign across an oval, and it is the sign change that can
be counted. Numerically $g$ is odd under $\sigma\mapsto1-\sigma$, the reflection
that exchanges the two legs, so it vanishes identically on the critical line
(Corollary~\ref{cor:equal-legs} again), and the ovals are symmetric about the
line, as the figures show.

The scan fixes $\sigma=\tfrac12+\varepsilon$ and steps in $T$, recording sign
changes of $g$. Each oval contributes two, a bottom and a top, and only bottoms
are counted, so an oval straddling an integer $T$ is counted once, to the interval
holding its bottom. The pole bands are excluded: at the heights of
Appendix~\ref{sec:pole-locations} the PS bisector point runs off to infinity and the
equal-leg point sweeps the full width of the strip, which is not an oval. Those
bands are a PS artifact, $599$ of them against $1$ for AK, whose remainder has
no pole to escape through. Roots are refined by bisection to $10^{-13}$. The
step is
\begin{equation}\label{eq:oval-step}
\Delta T\;=\;10^{-4}/m ,
\end{equation}
which holds the number of samples across an oval fixed, the median oval height
falling like $1/T$: $1.3\times10^{-3}$ in the interval at $T=25$ and
$5.2\times10^{-5}$ in the one at $T=400$. Ovals thinner than the step are still caught, by minimizing $g$
wherever it has a positive local minimum below a twentieth of the interval's
median and testing whether the minimum dips below zero.

Four checks bound the error. First, $\zeta$ evaluated as $\Sigma_1+\Sigma_2+R_{ak}$
agrees with \texttt{mpmath} to $2\times10^{-10}$ at $T=150$ and
$3\times10^{-8}$ at $T=1000$, and $g$ vanishes on the critical line to machine
precision. Second, recounting the intervals at $m=250$, $275$, $299$, $300$, $350$, $398$,
$399$ and $400$ with $\Delta T/2$ and $\Delta T/4$ returns the same count every
time, for both splits, and finds the same thinnest cross-section in each. Third, the window $3\le T\le4$ reproduces
Figure~\ref{fig:equal-legs-strips}, the same four ovals at $T=3.0105$,
$3.3845$, $3.6031$ and $3.7257$. Fourth, the $\sigma$-extent of each oval was
measured by bisection, which is what decides whether a scan at a given
$\varepsilon$ can see it: at $\varepsilon=0.001$ every oval in the sample is
reachable and at $\varepsilon=0.01$ some $99.95\%$ are, so the count quoted
below, taken at $\varepsilon=0.01$, is not limited by the scan's reach.

\paragraph{Are they ovals?} Drawn against the index they are hairlines. The
component at $T=300$ in Figure~\ref{fig:eqleg-shape} is $0.39$ wide in $\sigma$
and $6.9\times10^{-5}$ tall in $T$, a ratio of $5600$ to one, and every critical strip
figure here stretches the vertical axis by some such factor to make the locus
visible at all. That shape is an artifact of the coordinate. The index is a
compressed ruler: at $T=300$, one step in $T$ is a step of $I'(300)=3789$ in
$t$. Undo that and the component is $6.9\times10^{-5}\times3789=0.26$ tall
against $0.39$ wide, which is its shape in the $s$-plane, where $\zeta$ is
evaluated. It is an oval, and close to an ellipse: comparing its half-height at
eight fractions of its half-width $a$ against $\sqrt{1-(u/a)^2}$, the one drawn
departs by $1.8\%$ rms. Over a sample of $300$ of them at nine heights from
$T=5$ to $T=400$, drawn at random within each interval, the median departure is
$0.5\%$, with $80\%$ inside $2\%$ and $94\%$ inside $5\%$; the ratio of width to
height has median $1.22$ and ranges from $0.96$ to $2.83$, with no trend in
height, the interval medians sitting between $1.13$ and $1.27$ throughout.
In the sample of $300$ closed one-zero components, the components are
numerically smooth, convex, and nearly elliptical, with a median
width-to-height ratio of $1.22$. This description excludes the $23$ two-zero
components, which are hourglasses: two oval lobes joined at a neck, with one
zero in each lobe, as in Figure~\ref{fig:two-zero-hourglass}. It also excludes
the $72$ components cut off by the edge of the strip instead of closing inside
it, all of which lie below $T=100$.

\begin{figure}[H]
\centering
\includegraphics[width=\textwidth]{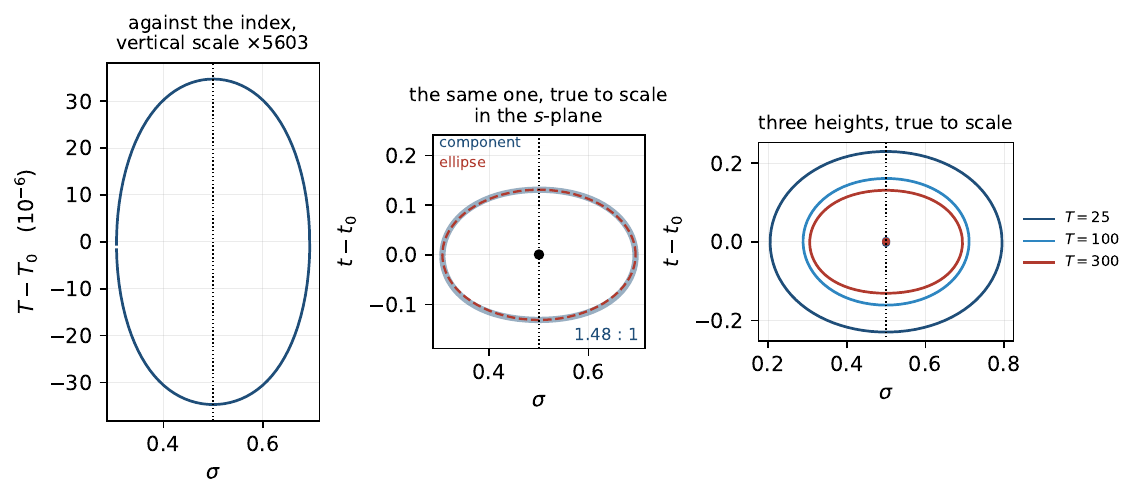}
\caption{Why the components of the equal-leg locus are called ovals. Left: one of them at $T=300$
plotted against the index, as the strip figures plot them, the vertical axis
stretched by $5600$ to make it visible. Middle: the same component in the
$s$-plane with the axes to a common scale, and the ellipse through its extremes
dashed over it; the dot is the zero of $\zeta$ it encloses. Right: components at
$T=25$, $100$ and $300$ to a common scale, each drawn about its zero. They shrink
with height and hold their shape. All three are the median-height component of
their interval, taken among those whose zero sits nearest the middle, since the
zero is central only on average. Generated by \texttt{fig\_oval\_shape.py} of
the equal-leg density companion note in the Zest repository.}
\label{fig:eqleg-shape}
\end{figure}

\paragraph{The oval count.} Over $0\le T\le401$, scanning the line
$\sigma=\tfrac12+\varepsilon$ with $\varepsilon=0.01$, there are $328\,511$
ovals for the AK remainder split $B_1=\Sigma_1+R_{1ak}$ and $328\,191$ for the
PS split $B_1=\Sigma_1+R_{1ps}$, against $1\,771\,661$ zeros of $\zeta$ in the
same range. The two splits converge on each other with
height: the mean disagreement per unit interval grows from $1.5$ ovals to
$5.0$, but falls from six percent of the count to $0.32\%$. The oval density of
both splits tracks the density of the zeros. Writing the interval's zeros per oval as a function of height, and
fitting over $3\le T\le401$ weighted by the oval count,
\begin{equation}\label{eq:oval-ratio}
\frac{\#\{\text{zeros in }(m,m+1)\}}{\#\{\text{ovals in }(m,m+1)\}}
\;\approx\;4.38+\frac{11.14}{\ln(t/2\pi)}
\qquad(3\leq T\leq401).
\end{equation}
This is a weighted empirical fit; above $T=100$, its binned residuals are
within $\pm0.01$. The competing fit $6.74\,T^{-0.040}$ is indistinguishable
on the measured data. If extrapolated, the displayed logarithmic model tends
to $4.38$, and with
$dN/dT=I'(T)\ln(t/2\pi)/2\pi\approx4T\ln T$ would give an oval density near
$0.9\,T\ln T$. This extrapolation is not an established asymptotic law.
Figure~\ref{fig:eqleg-density} plots both fits.

\begin{figure}[H]
\centering
\includegraphics[width=\textwidth]{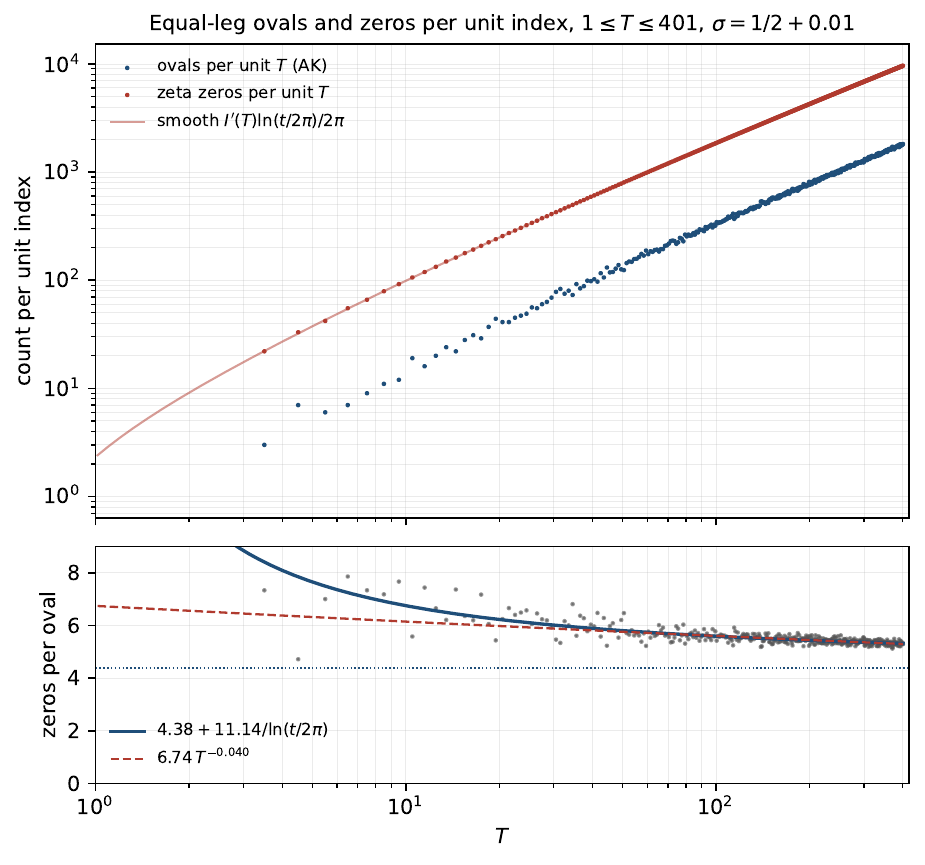}
\caption{Above: equal-leg ovals and zeros of $\zeta$ per unit interval of the
index, $1\le T\le401$, at $\sigma=\tfrac12+0.01$, log-log. The two clouds are
parallel. Below: the interval's zeros per oval, with \eqref{eq:oval-ratio} solid
and its extrapolated limit $4.38$ dotted, against the power law dashed. The two are
indistinguishable over $50\le T\le400$ and part company on either side.
Generated by \texttt{census\_analyze.py} of the equal-leg density companion
note in the Zest repository.}
\label{fig:eqleg-density}
\end{figure}

\paragraph{Where an off-line zero could be.} The observed oval extent
identifies a narrow region for further searching, but the sampled
cross-section does not provide a rigorous exclusion bound. In height: each
oval is about $0.49$ of a mean
zero spacing tall, a ratio steady to within $0.01$ above $T=100$, and there are about a fifth as many ovals as zeros, so the
ovals cover $8.8\%$ of the index axis over $350\le T<401$, up from $6.4\%$ over
$3\le T<25$, a share growing like $0.66\ln T+5.0$ per cent. On the sampled
cross-section $\sigma=\tfrac12+0.01$, detected oval intervals therefore cover
$8.8\%$ of the $T$-axis over $350\leq T<401$. This is sampled cross-sectional
coverage, not a bound on all possible off-line zero heights, because the
components may extend farther in height nearer the critical line. In $\sigma$:
the ovals draw in toward the critical line as $T$ grows. The
median oval reaches $\sigma=1.01$ at $T=5$ but only $\sigma=0.67$ at $T=400$;
the ninetieth percentile falls from $\sigma=1.11$ to $\sigma=0.77$;
and of the $258\,814$ ovals above $T=200$, just four reach $\sigma=0.95$.
Figure~\ref{fig:eqleg-reach} shows both measurements.

The tail does not vanish, though, and the distinction matters. What retreats is
the bulk of the ovals, not the widest of them: the widest oval in each interval
still reaches $\sigma\approx0.9$ at every height measured (never below
$\sigma=0.88$ for $T\ge50$), and the widest above
$T=200$ gets to $\sigma=0.962$. What thins is their number, from about eight
ovals per unit interval reaching $\sigma\ge0.9$ over $50\le T<100$ to four over
$100\le T<200$, two over $200\le T<301$ and one over $301\le T<401$, while the
total count per interval grows five-fold. A zero well off the line at large $T$
is therefore not excluded by any
of this; it would have to lie on the one or two such ovals per unit interval
rather than on any of the eighteen hundred, which is a sharper statement than the
count alone gives.

\begin{figure}[H]
\centering
\includegraphics[width=\textwidth]{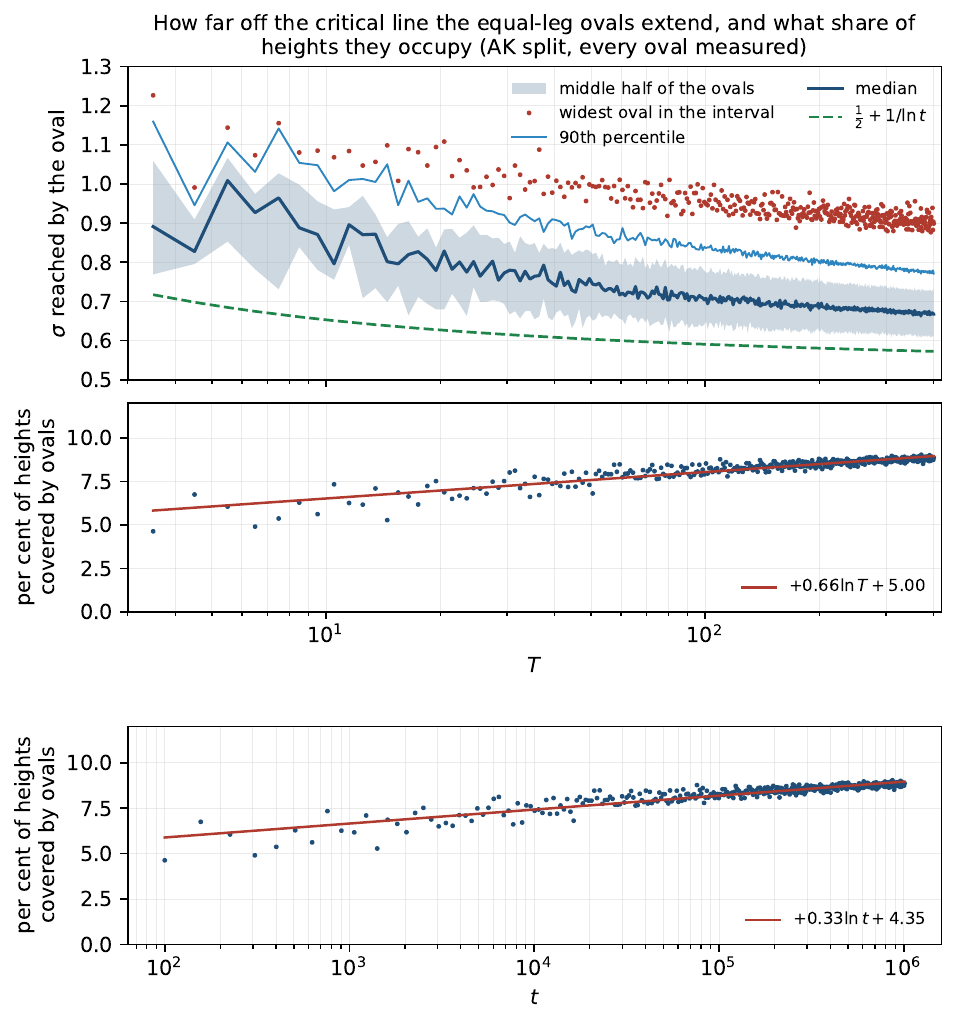}
\caption{Above: how far off the critical line the ovals extend, from
half-widths measured by bisection for every one of the $328\,511$ ovals in the
census, one point per unit interval, from $T=3$ where the first ovals are. The
band is the middle half of the ovals in that interval and the dots are the widest
oval in it. The dashed green curve is $\sigma=\tfrac12+1/\ln t$, drawn on the
index axis through $t=I(T)$. The bulk draws in toward
the line as $T$ grows while the widest ovals do not. Middle: the percentage of the
index axis covered by ovals, which is the share of heights at which an off-line
zero could sit, rising like $0.66\ln T+5.0$. Below: the same share read along
$t$, where $\zeta$ is evaluated. This is the middle panel with its horizontal
axis relabeled in $t$ instead of $T$, and its law reads $0.33\ln t+4.35$, the
law above rewritten through $\ln t=2\ln T+\ln2\pi+O(1/T)$. Measured directly in
$t$, rather than relabeled, the share moves by under $0.01$ of a percentage
point above $T=25$. Generated by
\texttt{reach\_census.py} and \texttt{fig\_reach.py} of the equal-leg density
companion note in the Zest repository.}
\label{fig:eqleg-reach}
\end{figure}

\paragraph{Usually, one zero per oval.} Almost every detected component
brackets exactly one critical-line zero: $328\,467$ of the $328\,511$ AK
components do so, while $23$ contain two zeros and $21$ fail the stated
one-zero bracket test. Not every zero has a detected component, and that
direction fails badly rather than by exception.
The zero sits at the oval's center on average rather than individually.
Its position within the oval has median $0.5000$ and mean $0.5000$ of the
height, so there is no systematic offset, but the spread about that is $0.14$.
While $91\%$ of zeros are in the middle half, one oval in eleven holds its
zero outside it and one in fifty outside the range $0.16$ to $0.84$.
The two-zero ovals are all unusually close pairs, separated by $0.25$ to
$0.65$ of the local mean spacing, which is what an oval half a spacing tall
would have to straddle to catch two. Those components are the hourglasses of
the previous paragraph: two oval lobes and a neck, one zero in each lobe.
Figure~\ref{fig:two-zero-hourglass} draws the first of them, at
$T\approx201.72$, and another at $T\approx320.57$, in the three-split style of
Figure~\ref{fig:ps-ak-r2}.

\begin{figure}[H]
\centering
\includegraphics[width=0.83\textwidth]{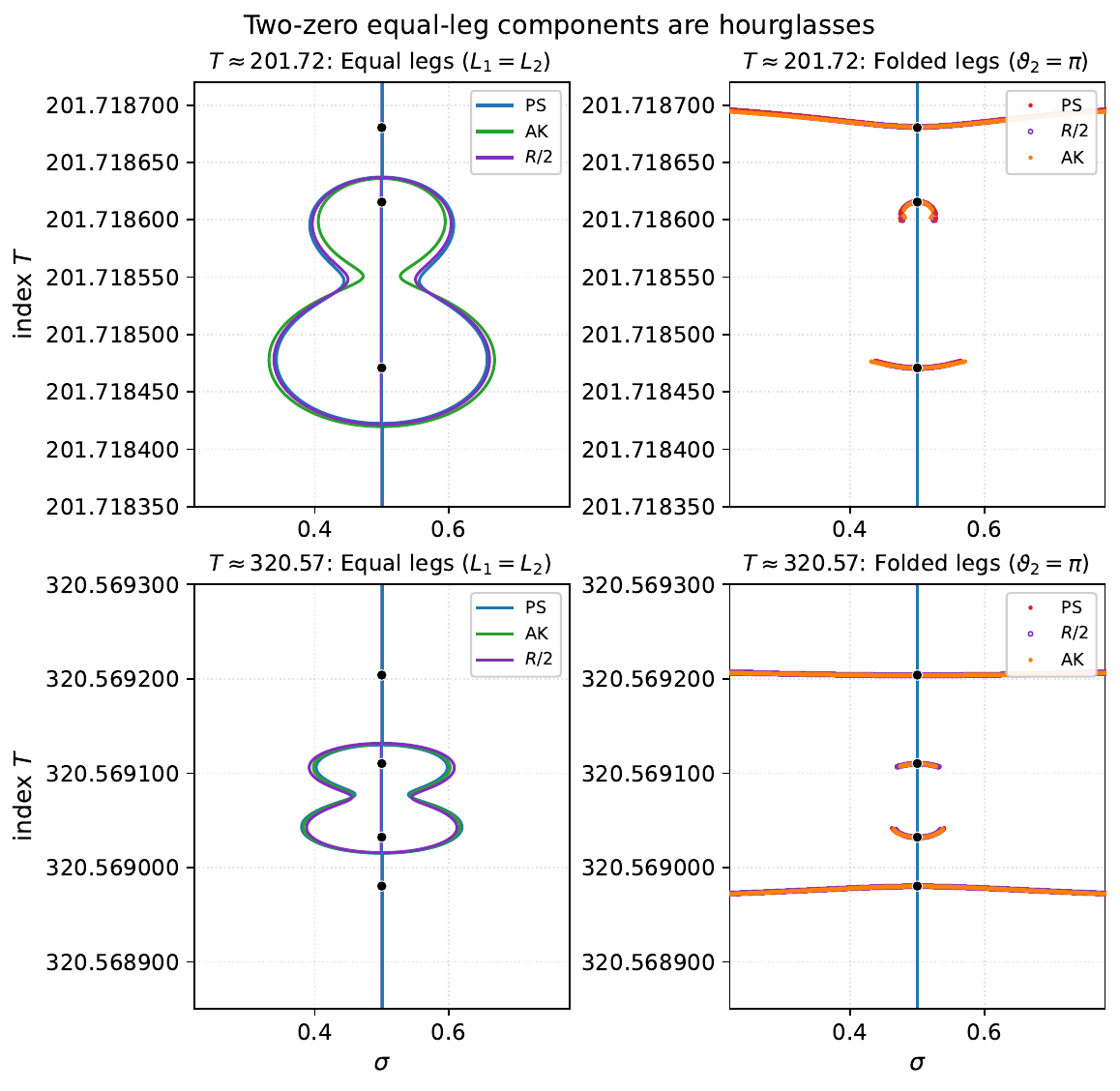}
\caption{Two AK ovals that each enclose two zeros, drawn as in
Figure~\ref{fig:ps-ak-r2}. Top: the first such oval, at
$T\approx201.72$. Bottom: another at $T\approx320.57$. Left: equal-leg
loci $L_1=L_2$ for PS (blue), AK (green), and $R/2$ (purple). Right:
folded-leg loci $\vartheta_2=\pi$ for the same three choices (red, orange,
and hollow purple). The blue vertical is the critical line; black dots are
zeta zeros. Each equal-leg component is an hourglass, two oval lobes joined
at a neck, with one zero in each lobe. The three splits nest and keep that
shape. Generated by \texttt{fig\_two\_zero\_hourglass.py}.}
\label{fig:two-zero-hourglass}
\end{figure}
Of the twenty-one empties, eighteen are ovals that span an integer, where the
chain gains a link, with their zero just across the integer-$T$ boundary; one is the
degenerate interval below $T=1$, where $\Sigma_1$ is empty; and two, at
$T=393$ and $T=398$, are ovals whose $\sigma$-tip stops a hair past the scan
line, at $\sigma=0.51004$ and $\sigma=0.51003$, so that the cross-section the
scan cuts is a sliver $3\times10^{-7}$ tall at the very tip, lying just to
one side of the zero's height. The first of those two misses below the zero
and the second above it, by under $10^{-7}$ either way. Both ovals do hold
their zero: at the critical line each is some $4\times10^{-6}$ tall with the
zero inside, and the cross-sections hold it out to $\sigma=0.50997$. The miss
is in the bracket test at $\varepsilon=0.01$, not in the oval. The exception
rate falls from $0.104\%$ over $3\le T<50$ to $0.007\%$ over $100\le T<200$
and then edges back up to $0.014\%$ over $301\le T<401$, the rise being the
close pairs, whose number grows with the zero count while staying under one
oval in seven thousand.

The other direction is not close to one for one, and it is the more important
half of the statement. There are $1\,771\,661$ zeros in the range against
$328\,511$ ovals, so approximately
$1-328\,511/1\,771\,661=81.46\%$, or about four zeros in five, have no
detected component around them. The
relationship between detected components and zeros is an incidence relation,
not a single-valued map: most components contain one zero, $23$ contain two,
and $21$ fail the stated one-zero bracket test, while most zeros lie in no
detected component. Which zeros acquire components is a question the census
does not answer.

Three things follow. First, the off-line locus is tied to the zeros rather
than being a structure independent of them. Almost every oval wraps one zero, centered on
it in the mean, about half a mean spacing tall and a fifth wider than that; the ovals are a
selection from the zeros rather than a phenomenon of their own, which is why
their count obeys \eqref{eq:oval-ratio} instead of a law of its own.

Second, it makes the disjointness program of \S\ref{sec:ps-ak-r2} and
\S\ref{sec:toward-proof} local. What is needed for an off-line zero is that
$\mathcal{E}_{\mathrm{ps}}$ and $\mathcal{E}_{\mathrm{rs}}$ meet off the line.
Since an oval almost always encloses exactly one zero, the two ovals that have to be
compared are the PS oval and the $rs$ oval belonging to the \emph{same} zero,
and the global statement ``the two loci have no common point off the line''
becomes one nesting statement per bracketed zero, of which there are about
$0.9\,T\ln T$ per unit index. That is the right shape for an argument: a local
inequality attached to each zero, to be proved uniformly, rather than a
statement about two curves wandering the strip.

Third, the caution. None of this is evidence for or against the Riemann
Hypothesis, and the one-zero-per-oval law in particular is not. The zero counted
inside each oval is found \emph{on} the critical line, by sign changes of
Hardy's $Z$. The census evaluates an off-line numerical approximation to
$\zeta$ when computing the imbalance $g$ on $\sigma=0.51$, but it does not
test the equation $\zeta=0$ along the detected components and does not search
them for off-line zeros. A component already holding an on-line zero at its
center is not forbidden from passing through one. No orphan component was
detected among the components intersecting the scan line $\sigma=0.51$ over
the stated range; this does not exclude components that fail to intersect that
line. The census also points at the search that
would test the hypothesis directly, since evaluating $\zeta$ along the ovals is
a one-dimensional problem (about $0.9\,T\ln T$ closed curves per unit index),
where searching the strip for off-line zeros is a two-dimensional one.

There is a converse worth stating. Almost every detected component brackets a
zero, but a zero usually has no detected component: in this sample only about
$18.54\%$ of zeros acquire one, and what distinguishes those from the rest is
open. The bracketing requires the
transverse imbalance $\partial g/\partial\sigma$ on the line to change sign on
both sides of the zero, and nothing here explains why it does so for two zeros
in nine and not for the others.

\section{Geometric origins of the \texorpdfstring{$I(T)$}{I(T)} and Yin and Yang functions}\label{sec:geometric-origins}

The two devices this paper leans on hardest, the index map $t=I(T)$ of
\S\ref{sec:IT} and the yin and yang curves of
\S\ref{sec:yinyang-inf}--\ref{sec:yinyang-general}, were both found in
the bisector frame, by watching the link chains of \S\ref{sec:geometry}
move. This section reconstructs the two discoveries.

\subsection{Geometric origin of \texorpdfstring{$I(T)$}{I(T)} functions}\label{sec:IT-origin}

Change the point of view: hold the forward bisector link still. Translate
joint $m$ to the origin and rotate and scale so that the bisector link lies
along the real axis from $0$ to $1$; everything else moves in this \emph{bisector
frame} as $t$ increases. Watching the two links on either side of the bisector
link, the link at $m-1$ behind it and the link at $m+1$ ahead of it, one
sees a periodic picture: each neighboring link revolves around the
stationary bisector link approximately \emph{twice} (approaching exactly two
revolutions as $T$ grows), and then the whole pattern repeats. The instant the
pattern repeats is precisely the handoff instant of \S\ref{sec:bisector}: the
neighboring link \textbf{\emph{folds back}} onto the bisector link, the bisector point moves
onto that next link and that next link becomes the new bisector link with its own two
neighbors.

\begin{figure}[H]
\centering
\includegraphics[width=\textwidth]{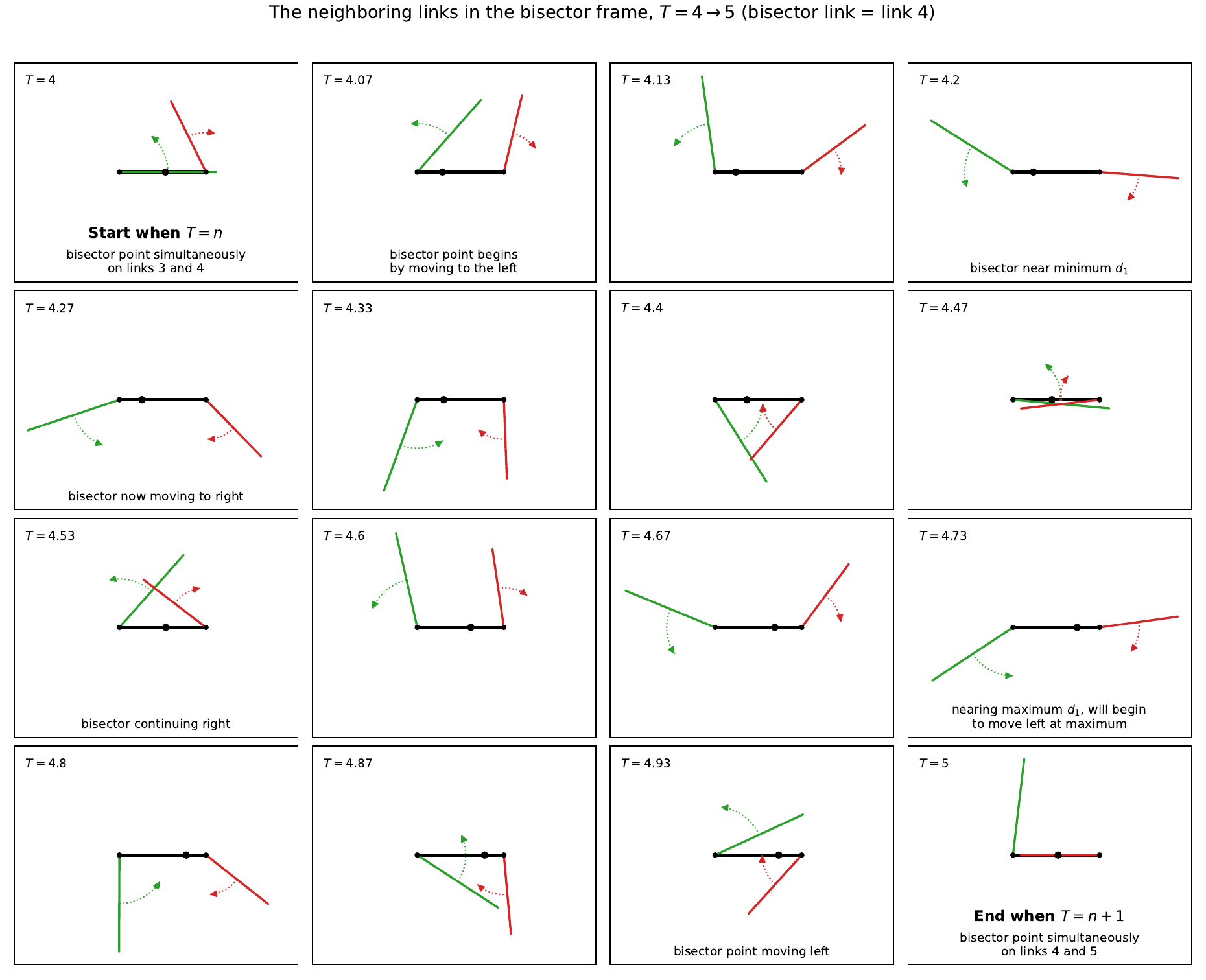}
\caption{The two neighboring links in the bisector frame as $T$ sweeps
from $4$ to $5$ ($\sigma=\tfrac12$), sixteen snapshots read left to
right from the top left ($T=4$) to the bottom right ($T=5$). In every
panel the bisector link, link $m=4$ of the forward chain, is held
fixed along the real axis from $0$ to $1$ (black); link $3$ (green)
hangs off joint $4$ at the origin and link $5$ (red) off joint $5$ at
the point $1$, both scaled relative to the unit bisector link. The
dotted curved arrow on each neighbor runs from the middle of the link
to the middle of that link as the next panel depicts it, tracing the
motion the link takes between panels. The black dot on the bisector
link marks the bisector point, at the crossing fraction
$\hat d_1=M^{\sigma}d_1$ along the unit link. As $T$
advances, each neighbor revolves around the stationary bisector link
approximately twice. At $T=4$ (Start) the green link is folded back
onto the bisector link (unwrapped phase increment $9\pi$, hence geometric
joint angle $\pi\pmod{2\pi}$),
so the bisector point lies on links $3$ and $4$ simultaneously: that is
the handoff instant at which link $4$ became the bisector link. At
$T=5$ (End) it is the red link that folds back (unwrapped phase increment
$11\pi$, hence geometric joint angle $\pi\pmod{2\pi}$),
the bisector point lies on links $4$ and $5$ simultaneously, and
link $5$ takes over as the new bisector link. Generated by
\texttt{fig\_bisector\_frame.py}.}
\label{fig:bisector-frame}
\end{figure}

Besides the fact that the two links resemble the motion of the two different
length hands of a clock but with one going clockwise and the other going
counterclockwise, the \emph{periodicity} begs for a clock that ticks once per
bisector point handoff, and that is
where $I(T)$ came from: we sought a formula $t=I(T)$ whose input $T$ passes
through an integer at exactly the moments the bisector point moves from one
link to the next. Two facts about the joint angle between the outgoing and
incoming bisector links pin the formula down.

\begin{enumerate}
\item \emph{From the zeta function.} Consecutive summands of $\Sigma_1$ differ
in phase by $t\ln\frac{n+1}{n}$, so the turn at the joint where link $T-1$
meets link $T$ (interpolating the index continuously) has unwrapped phase
increment
\begin{equation}\label{eq:jangle}
J_{\mathrm{unw}}(T) \;=\; t\,\log\!\Bigl(\tfrac{1}{T}+1\Bigr).
\end{equation}
\item \emph{From the observed motion.} At a handoff the two links fold back
onto one another, so the unwrapped phase increment there is an
\emph{odd multiple of $\pi$} (hence geometric joint angle $\pi$); and the neighboring link revolves around the
stationary bisector link \emph{twice} per unit of the index, so the odd
multiple advances by two with each unit of $T$ (hence the factor $2$).
Interpolating between handoffs, define the unwrapped phase increment by
\begin{equation}\label{eq:jangle-count}
J_{\mathrm{unw}}(T) \;=\; \pi\,(2T+1).
\end{equation}
The corresponding geometric joint angle is
\[
J_{\mathrm{geom}}(T)\equiv J_{\mathrm{unw}}(T)\pmod{2\pi};
\]
at every integer $T$, $J_{\mathrm{geom}}(T)\equiv\pi\pmod{2\pi}$.
\end{enumerate}

Setting \eqref{eq:jangle} equal to \eqref{eq:jangle-count} and solving for $t$
yields
\begin{equation}\label{eq:IT-derived}
\pi\,(2T+1) \;=\; t\,\log\!\Bigl(\tfrac{1}{T}+1\Bigr)
\qquad\Longrightarrow\qquad
t \;=\; \frac{\pi\,(2T+1)}{\log\!\bigl(\tfrac{1}{T}+1\bigr)} \;=\; I(T),
\end{equation}
which is the mapping adopted in \S\ref{sec:IT}: under $t=I(T)$ the bisector
point resides on link $\lfloor T\rfloor$, and $T$ is an integer exactly when
the bisector point moves from one link to the next.

\begin{remark}
Taylor-expanding the reciprocal logarithm in \eqref{eq:IT-derived},
$1/\log(1+\tfrac1T)=T+\tfrac12-\tfrac1{12T}+\tfrac1{24T^2}-\cdots$, and
collecting orders of $T$ gives a family of invertible polynomial
approximations to $I(T)$; keeping successively more terms:

\begin{center}
\begin{tabular}{c|l|l}
terms & $I(T)\;\approx$ & solved for $T$\\
\hline
$2$ & $2\pi\bigl(T^2+T\bigr)$
    & $T=-\tfrac12+\sqrt{\tfrac{t}{2\pi}+\tfrac14}$\\[4pt]
$3$ & $2\pi\bigl(T^2+T+\tfrac16\bigr)$
    & $T=-\tfrac12+\sqrt{\tfrac{t}{2\pi}+\tfrac1{12}}$\\[4pt]
$4$ & $2\pi\bigl(T^2+T+\tfrac16-\tfrac{1}{180T^2}\bigr)$
    & $T\approx-\tfrac12+\sqrt{\tfrac{t}{2\pi}+\tfrac1{12}+\tfrac{\pi}{90t}}$
\end{tabular}
\end{center}

\noindent
(The $1/T$ term of the expansion cancels, so the fourth term is already
$O(T^{-2})$.) The $2$-term form misses $I(T)$ by a constant
${\approx}\,\pi/3$; the $3$-term form is accurate to $O(T^{-2})$ and the
$4$-term form to $O(T^{-3})$. At $T=20$ their errors in $T$ after
inversion are $4\times10^{-3}$, $3\times10^{-7}$ and $4\times10^{-11}$
respectively. The first two rows invert exactly (quadratics); the
$4$-term row is inverted by substituting the $3$-term inverse into the
small $T^{-2}$ correction. Figure~\ref{fig:IT} draws $I(T)$ against the
two-term form; the difference is invisible at scale, about one against
values in the thousands.

The table also places Siegel's cutoff \eqref{eq:siegel-m} on the index. The
$3$-term row is $t/2\pi=(T+\tfrac12)^{2}-\tfrac1{12}+O(T^{-2})$, so
\begin{equation}\label{eq:siegel-cutoff-index}
\sqrt{\frac{t}{2\pi}}\;=\;T+\frac12-\frac{1}{24T}+O(T^{-2}),
\end{equation}
which is right to $5\times10^{-4}$ at $T=6.18$, $5\times10^{-5}$ at $T=20$ and
$2\times10^{-6}$ at $T=100$. Siegel's $m$ therefore steps \emph{near} the
\emph{half}-integers of $T$, a shade past them by $1/24T$, while ours steps
\emph{exactly at} the integers.
\end{remark}

\begin{figure}[H]
\centering
\includegraphics[width=0.88\textwidth]{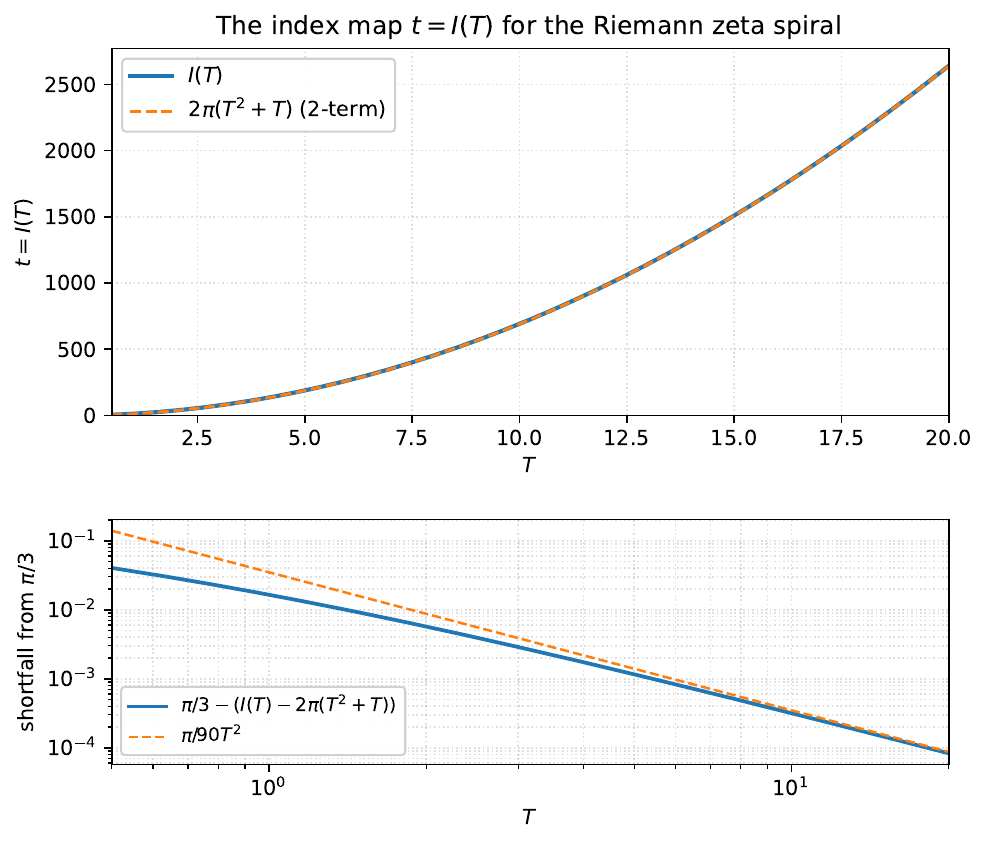}
\caption{Above: the index map $t=I(T)$ of \eqref{eq:IT} (solid), together with
the two-term approximation $2\pi(T^{2}+T)$ from the Taylor table above
(dashed). Below: the difference between the two, which the panel above cannot
show, the curves differing by about one against values in the thousands. That
difference is the constant $\pi/3$ of the table, the $\tfrac16$ the third term
restores, so what is plotted log-log is its distance from that constant, which
is the fourth term: it follows $\pi/90T^{2}$ (dashed) from $T=5$ on, and is
$8\times10^{-5}$ by $T=20$.
Generated by \texttt{fig\_IT.py}.}
\label{fig:IT}
\end{figure}

We stress that the polynomial forms of the table serve only to compare
with the familiar $\sqrt{t/2\pi}$:
everywhere in this paper $t=I(T)$ is used \emph{exactly}. The defining
property of the mapping, that the neighboring link folds back onto the
bisector link \emph{exactly} as $T$ passes an integer and the bisector
point transfers from one link to the next exactly at that instant,
holds only for the exact $I(T)$. Under any approximation, including the one whose
inverse is essentially Siegel's $\sqrt{t/2\pi}$, the fold instants drift
off the integers by the corresponding error in the table: the clock
still ticks once per handoff, but never precisely on the hour.

\begin{remark}[comparing the classical $t$ rescaling so that zeros have unit
mean spacing, and $I(T)$]\label{rem:unfolding}
In random-matrix and zeta-statistics work, zero ordinates are unfolded using
the smooth counting function
\[
\overline N(t)=\frac{t}{2\pi}\log\frac{t}{2\pi}
-\frac{t}{2\pi}+\frac78,
\qquad
\widetilde\gamma_n=\overline N(\gamma_n).
\]
The ordinates $\widetilde\gamma_n$ have asymptotically unit mean spacing,
which lets one compare zero statistics at different heights. The exact
staircase instead satisfies $N(\gamma_n)=n$ under the simple-zero convention
and therefore removes the fluctuations by construction. Our $I(T)$ is a change of
variable in the same spirit, a reparameterization of the height $t$, but it
runs in the opposite direction, compressing ever more zeros into each unit
interval of the index $T$. For example, the interval $[5,6]$ carries $42$
zeros, $[10,11]$ carries $106$, $[20,21]$ carries $256$, and $[50,51]$ carries
$802$. Rescaling, or what the random-matrix literature calls unfolding, the
zeros flattens the count to unit spacing; the index does the reverse.
Unfolding views the zeros one at a time, each in its own unit of the new
variable; the index views them in groups of ever increasing size, one
crowd of zeros per unit of $T$.
\end{remark}

\subsubsection{\texorpdfstring{$I(T)$}{I(T)} functions for other \texorpdfstring{$L$}{L}-functions besides the zeta function}\label{sec:IT-Lfunctions}

The same change-of-variable idea extends from $\zeta$ to Dirichlet
$L$-functions. This subsection is an aside recording another observation
we made along the way; nothing in it is used elsewhere in this paper.
Let $p$ be an odd prime. On the critical line the partial sums of
\begin{equation}\label{eq:Lchi}
L(s,\chi_p)
\;=\;
\sum_{n=1}^{\infty}\chi_p(n)\,n^{-s}
\end{equation}
form a spiral analogous to zeta's, but now the character
$\chi_p(n)=\bigl(\tfrac{n}{p}\bigr)$ inserts periodic sign changes and, when
$p\mid n$, zero-length \emph{phantom} links. The series is conditionally,
not absolutely, convergent on $\Re s=\tfrac12$. Counting only the nonzero links as \emph{links},
there are $p-1$ links and one phantom in every block of $p$ summands, and
the natural index $T$ is the link index (phantoms receive no $T$).

Watching these link chains as $t$ varies, we noticed that the bisector point
still moves from one link to the next, just as for zeta, but not always when
links fold back on one another: sometimes the bisector point moves from link to link at a
\emph{joint}, as the bisector line sweeps through it. So $L$-functions have
other event types that zeta does not. An \emph{event} is the instant the
bisector point moves from one link to the next. There are two ways this can
happen: either the two
links fold back onto one another (the turning angle at their common joint is
an odd multiple of $\pi$), so the bisector point slides across the joint from
the old link onto the new one; or the bisector line sweeps through a joint,
carrying the crossing from one link to its neighbor. For zeta only the first
kind occurs: every handoff of \S\ref{sec:bisector} is a folded event, once
per unit of $T$, and there are no bisector events. For $L(s,\chi_p)$ both
kinds occur, recurring with period $\varphi(p)=p-1$ in the integer index $T$:
\begin{itemize}
\item \emph{folded} events, where the turning angle at the current joint is
$\pm\pi$ (the chain reverses), at $T\equiv0\pmod{p-1}$;
\item \emph{bisector} events, where the joint lies on the perpendicular
bisector of the segment from $0$ to $L(s,\chi_p)$, at the remaining residues
modulo $p-1$.
\end{itemize}
For $p=3$ one has $\varphi(3)=2$: even $T$ are folds and odd $T$ are
bisectors. The map $t=I_p(T)$ is defined so that integer $T$ lands at the
$t$-value of its event. Figure~\ref{fig:L3-spirals} draws the two link
chains of $L(s,\chi_3)$ at $T=6.125$.

\begin{figure}[H]
\centering
\includegraphics[width=0.85\textwidth]{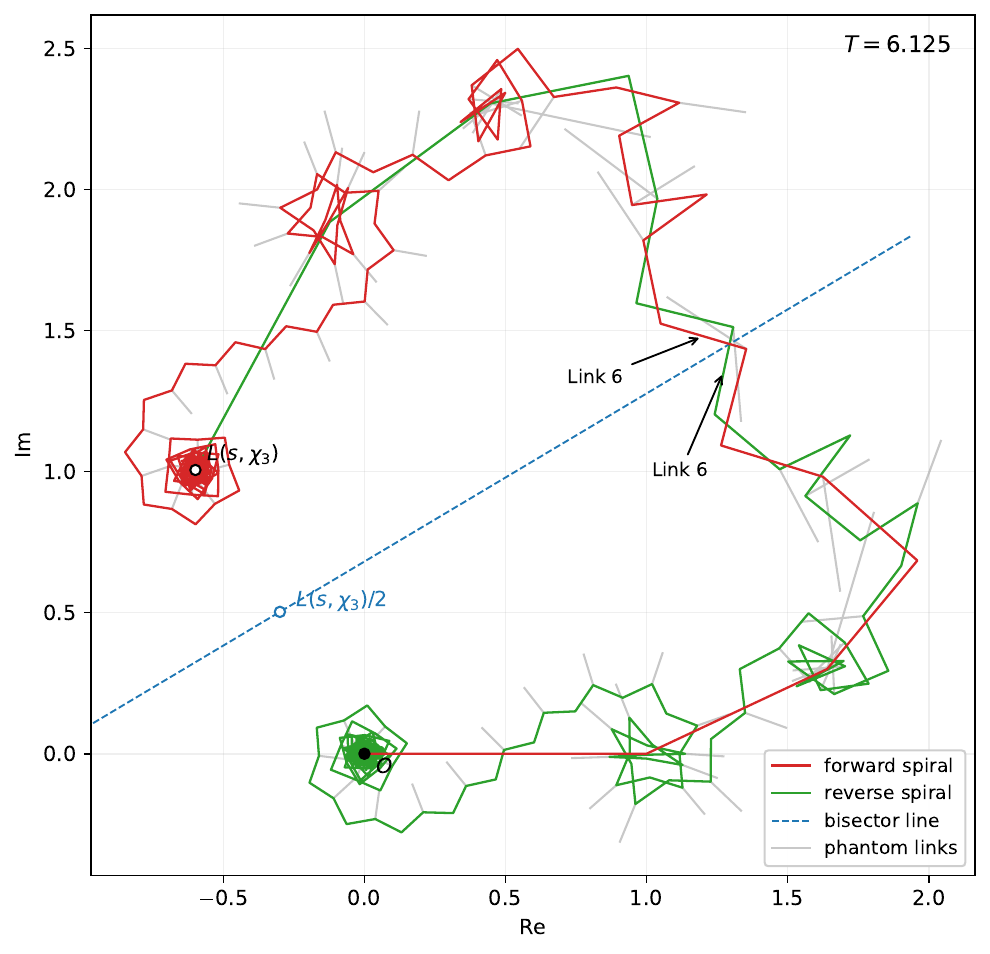}
\caption{The two partial-sum spirals of $L(s,\chi_3)$ at $T=6.125$ on the
critical line ($t=I_3(T)\approx176.13$), drawn with
$\lfloor 2T(T{+}1)p\rfloor=261$ summands. The forward chain (red) starts at
the origin $O$ and winds into its spiral around the analytic value
$L(s,\chi_3)$ (open dot); the reverse chain (green) is its mirror image
across the bisector line (dashed blue), the perpendicular bisector of the
segment from $O$ to $L(s,\chi_3)$ through the labeled midpoint
$L(s,\chi_3)/2$, so the tail of the link chain sits on $L(s,\chi_3)$ and its
head lands at $O$. Every third summand has $\chi_3(n)=0$: these zero-length phantom links
are drawn light gray as the connectors the chain skips. The labeled link $6$
of each chain is the current bisector link: the two chains cross on it,
on the bisector line. Generated by \texttt{fig\_L3\_spirals.py}.}
\label{fig:L3-spirals}
\end{figure}

Because only a fraction $(p-1)/p$ of the summands are links, the smooth
\emph{base} formula generalizing \eqref{eq:IT} is
\begin{equation}\label{eq:Ip-base}
I_p^{\mathrm{base}}(T)
\;=\;
\frac{4\pi\,T}{(p-1)\,\log\dfrac{pT+(p-1)}{pT-(p-1)}}\,.
\end{equation}
Asymptotically $I_p(T)\sim\dfrac{2p}{(p-1)^{2}}\,\pi\,T^{2}$: the leading
coefficient is $\tfrac32\pi$ for $p=3$ and $\tfrac58\pi$ for $p=5$, against
$2\pi$ for zeta's own $I(T)$. (Zeta is not the $p=1$ member of this family,
since the factor $2p/(p-1)^2$ diverges there; for zeta every summand is a
link and there are no phantoms.) The base does not run through the middle of the
events: it touches $I_p$ at the folds and lies below it in between, missing a
one-signed bulge of period $p-1$ imposed by the character. For $p=3$ the
contact is exact, since at even $T$ the gate of \eqref{eq:I3} vanishes and
$c=2$ turns \eqref{eq:I3-geom} into the base; for $p=5$ the fitted correction
returns to zero at $T\equiv0\pmod4$ to within $2\times10^{-3}$. One restores
the bulge by writing
\begin{equation}\label{eq:Ip-split}
I_p(T)
\;=\;
I_p^{\mathrm{base}}(T)+C_p(T).
\end{equation}

For $p=3$ the character is $[1,-1,0]$. The base specializes to
$I_3^{\mathrm{base}}(T)=2\pi T\big/\log\frac{3T+2}{3T-2}$, and the
correction is built from the character-modulated map
\begin{equation}\label{eq:I3-geom}
c=\frac{3+\cos(\pi T)}{2},
\qquad
I_3^{\mathrm{geom}}(T)
=\frac{c\,\pi\,T}{\log\dfrac{3T+c}{3T-c}},
\end{equation}
together with a small odd-$T$ offset
$\alpha(T)=0.390914+0.01712\,T^{-2}$ fitted to event scans:
\begin{equation}\label{eq:I3}
I_3(T)
=
I_3^{\mathrm{geom}}(T)
+\tfrac12\bigl(1-\cos\pi T\bigr)\,\alpha(T).
\end{equation}
(Equivalently $I_3=I_3^{\mathrm{base}}+C_3$ with
$C_3=I_3^{\mathrm{geom}}-I_3^{\mathrm{base}}
+\tfrac12(1-\cos\pi T)\,\alpha$; the gate vanishes on even $T$, so
$C_3=0$ on folds.) On odd $T=1,\ldots,99$ the residual between scanned
event times and \eqref{eq:I3} has RMSE about $8\times10^{-6}$.

For $p=5$ the same pattern gives
$I_5^{\mathrm{base}}(T)=\pi T\big/\log\frac{5T+4}{5T-4}$ and a fitted
period-$4$ correction
\begin{equation}\label{eq:I5}
C_5(T)
=
0.6206-0.6576\cos\tfrac{\pi T}{2}
+0.0369\cos\pi T+\tfrac{0.169}{T^{3.12}}.
\end{equation}
Figure~\ref{fig:I-I3-I5} compares $I(T)$, $I_3(T)$, and $I_5(T)$ on a common
range of $T$.

\begin{figure}[H]
\centering
\includegraphics[width=0.92\textwidth]{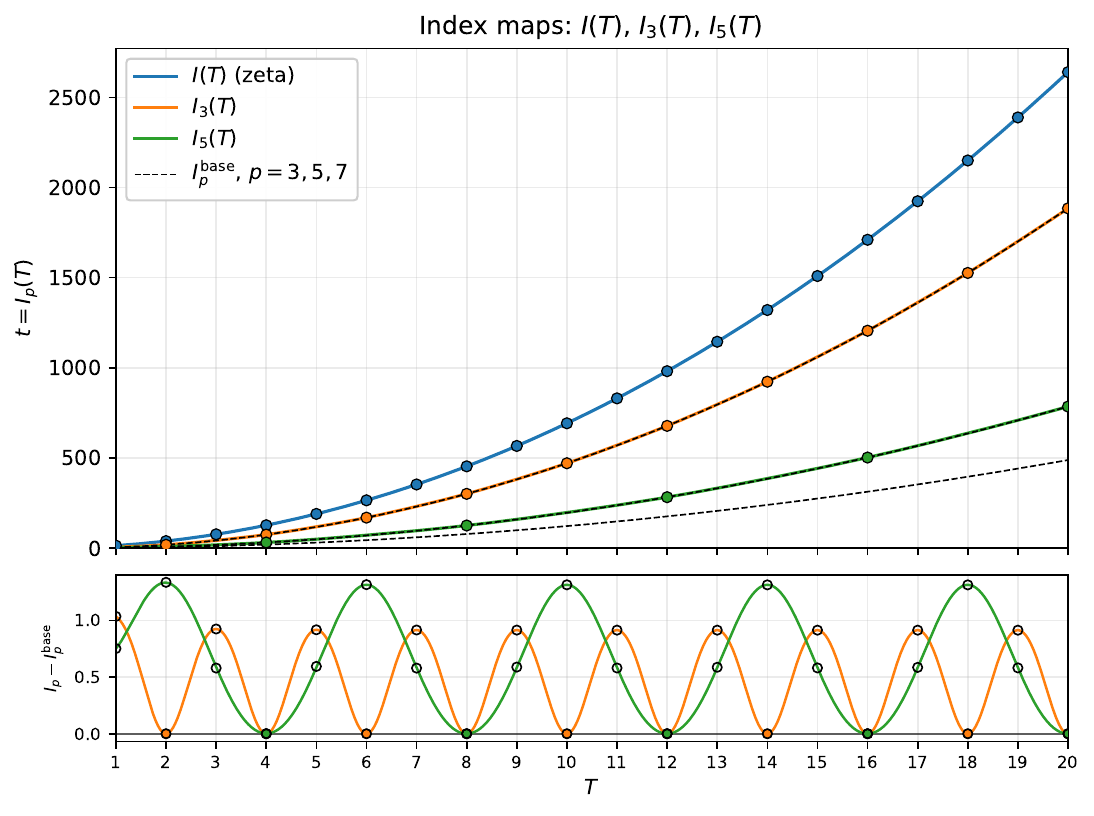}
\caption{Top: index maps on $T\in[1,20]$: zeta's $I(T)$ (blue), $I_3(T)$ for
$p=3$ (orange), and $I_5(T)$ for $p=5$ (green).
Dots mark the fold events, at $T\equiv0\pmod{p-1}$: every integer on the
zeta curve, every other integer on $I_3$, and every fourth integer on
$I_5$. Leading growth is quadratic with coefficients $2\pi$, $\tfrac32\pi$,
and $\tfrac58\pi$ respectively. Thin dashed black: the smooth base maps
\eqref{eq:Ip-base} for $p=3,5,7$. The $p=3$ and $p=5$ bases are hidden under
$I_3$ and $I_5$ at this scale; the $p=7$ base is the lowest curve, drawn for
comparison, no $I_7$ having been fitted here. Bottom: the character bulge on
its own scale, $I_p-I_p^{\mathrm{base}}$, which the top panel cannot resolve
because it is of order $1$ against values in the thousands. It is one-signed,
returning to zero at the folds (dots) and rising to ${\approx}\,0.92$ between
them for $p=3$ and ${\approx}\,1.33$ for $p=5$, and its amplitude barely
decays: the $p=3$ peaks are $0.9253$, $0.9183$, $0.9153$ at $T=3,5,11$. So the
correction is a fixed additive offset, not a fixed relative one. Open black
circles are the numerically scanned events at every integer $T$, measured
against the same base: the folds sit on zero, the intermediate events ride the
bulge, and all of them follow the modeled curves to within
$1.7\times10^{-5}$ for $p=3$ and $1.0\times10^{-2}$ for $p=5$ on $T\le20$.
Generated with the Dirichlet $L$-function companion note.}
\label{fig:I-I3-I5}
\end{figure}

\clearpage
\subsection{Geometric origin of the yin and yang curves}\label{sec:yinyang-origin}

This subsection records the observation that led us to formulate the
equations of the yin and yang curves; only later did we find their
connection to equations in Siegel's 1932 paper~\cite{Siegel1932}.
We attempt to explain what happens when the link chains are viewed in
various coordinate frames, and how they move relative to one another as
$\{T\}$ is varied at various $\lfloor T\rfloor$; this is hard to describe
in words but easy to see with computer visualization assistance using the
Zest program.

The story starts with watching the bisector point ride the chain. As we have
said, as $T$ increases at a steady rate, the bisector point stays on a single link, moving back and forth along
it (at the fraction $\hat d_1$ of its length) all while $T$ is not an
integer. Then at the instant it becomes an integer, the
link the bisector point is on hands off the role of carrying the bisector
point to the next link in the chain. When $\sigma=\tfrac12$, there is no
jump: the handoff happens exactly at the instant when the outgoing
and incoming links fold back onto one another and momentarily occupy the same
line in the plane (when $\sigma=\tfrac12$ the two bisector links are then
coincident; for $\sigma\ne\tfrac12$ they are parallel). When
$\sigma=\tfrac12$, the path of the bisector point is therefore continuous
both as a curve in the complex plane and as a position along the
one-dimensional link structure. Off the line it is continuous only on
regular intervals between parallel-link instants.

\S\ref{sec:IT-origin} watched the two links \emph{neighboring} the bisector
link in the bisector frame. Now watch the \emph{reverse bisector link} in that
same frame: holding the forward bisector link stationary while $T$ increases,
the reverse bisector link revolves around it, and each end of the revolving
link follows a path reminiscent of a teardrop, the pair of paths together
resembling a yin-yang (Figure~\ref{fig:yinyang}). The paths of the two ends
are exactly the yin and yang curves of \S\ref{sec:yinyang-general}: the
bisector frame is the local coordinate frame \eqref{eq:frame-map}, and the two
ends of the revolving link are the points $\mathrm{Yin}(s)$ and
$\mathrm{Yang}(s)$ of \eqref{eq:yin1}--\eqref{eq:yang1}. That is where those
definitions came from, and this subsection reconstructs the rest of the
discovery: how the crossing of the revolving link with the real axis produced
the weights $d_1$ and $d_2$ of \S\ref{sec:decomp}. (The formulas for the
curves, their asymmetry, and their $T\to\infty$ limit were developed in
\S\ref{sec:yinyang-inf}--\ref{sec:yinyang-general} and are not repeated
here.)

\begin{figure}[htbp]
\centering
\includegraphics[width=\textwidth]{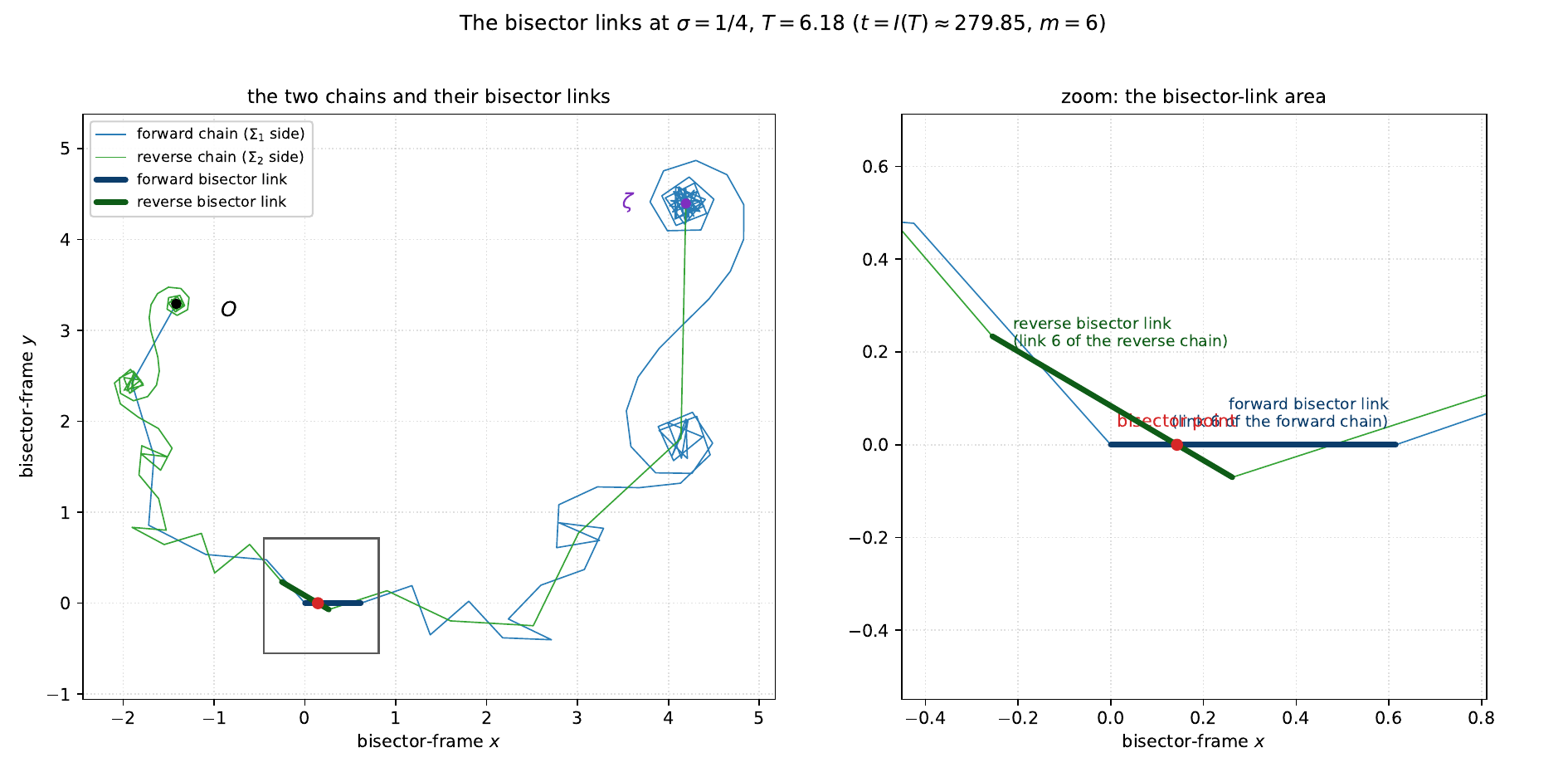}
\caption{The two bisector links at $\sigma=\tfrac14$, $T=6.18$
($t=I(T)\approx 279.85$, $m=6$). Both panels are drawn in the coordinate
frame of the \emph{forward bisector link}: the whole picture is translated
and rotated so that this link lies along the positive $x$-axis, with no
rescaling. All lengths are true, so the link runs from $0$ to
$7^{-1/4}\approx0.615$. \emph{Left:} the forward chain (blue, starting at the
world origin $O$) and the reverse chain (green, ending at $\zeta$), each
drawn out to the link nearest its spiral center; link $6$ of each chain
(the forward and reverse bisector links) is overdrawn with a thick stroke,
and the red dot where they cross is the bisector point.
\emph{Right:} zoom into the gray window: the two bisector links crossing at
the bisector point, which sits on the $x$-axis at distance $d_1$ from the
origin. The bisector frame of this section additionally rescales this link
to the unit interval $[0,1]$ and watches the thick reverse link revolve
around it. Generated by \texttt{fig\_yinyang\_spirals.py}.}
\label{fig:yinyang-spirals}
\end{figure}

\begin{figure}[htbp]
\centering
\includegraphics[width=0.82\textwidth]{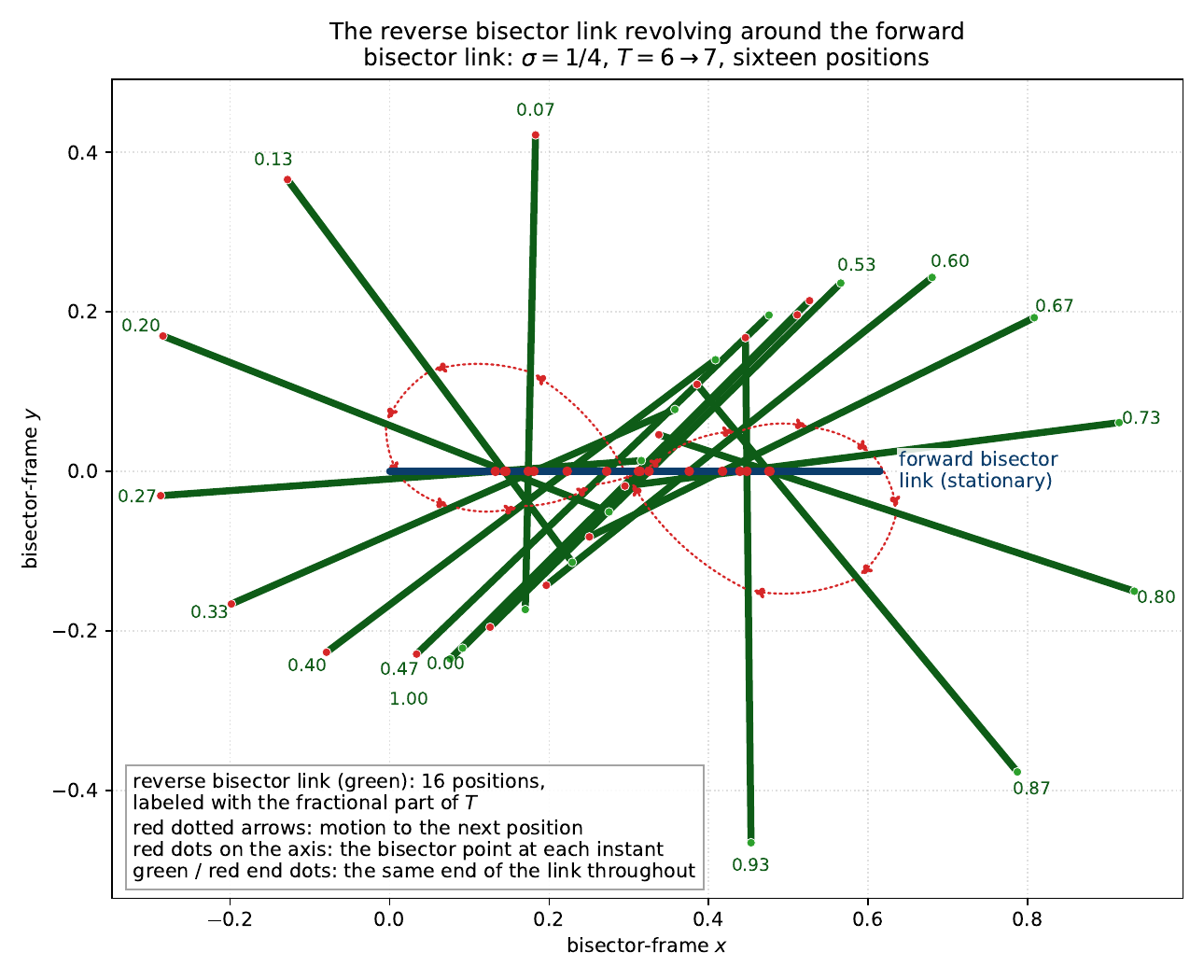}
\caption{The two bisector links in the frame of the forward bisector link as
$T$ sweeps from $6$ to $7$ ($\sigma=\tfrac14$, the frame of
Figure~\ref{fig:yinyang-spirals}, lengths unscaled). The forward bisector
link is stationary (the single thick dark-blue segment on the $x$-axis)
while the reverse bisector link (green, same thickness) is drawn at sixteen
equally spaced values of $T$, each position labeled on its outer end with the
fractional part of $T$; over the sweep it revolves fully around the
stationary link, and at $T=7$ (labeled $1.00$) it returns nearly to its $T=6$
position; the heading turns through a half-revolution and then back, so
the directed segment ends in the same orientation without having rotated
all the way around. The red dotted arrows run from the middle of each position to the
middle of the next, tracing the motion. The small green and red dots mark
the two ends of the revolving link consistently: green is always the
joint-$m$ end and red the joint-$(m{+}1)$ end; they trace the yin and yang
curves of Figure~\ref{fig:yinyang}, in the same colors. Each red dot on the
$x$-axis is the bisector point at one instant: the
crossing of that position with the $x$-axis, at $(d_1,0)$. Near the parallel
instants (fractional part of $T$ near $\tfrac14$ and $\tfrac34$) the
revolving link is nearly parallel to the stationary one and the crossing
races along the axis (the poles of $d_1$ of
Appendix~\ref{sec:pole-locations}). Generated by
\texttt{fig\_bisector\_sweep.py}.}
\label{fig:bisector-sweep}
\end{figure}

The motivating question is the one left open in \S\ref{sec:bisector}: the two
bisector links cross, but \emph{where}? The answer matters because, by
\eqref{eq:bisector-point}, the crossing (the bisector point) sits at
distance $d_1$ from the joint $\Sigma_1$ along the forward bisector link and
at distance $d_2$ from the joint $\Sigma_1+R$ along the reverse bisector link.
If we can \emph{compute} where the crossing is, we obtain $d_1$ and $d_2$, and with
them a new formula for $\zeta$ that uses the fractional summands in place of
the usual remainders. Of course we could do this knowing $\zeta$, but the
trick is to do it without knowing $\zeta$ or either partial sum.

\begin{figure}[htbp]
\centering
\includegraphics[width=0.95\textwidth]{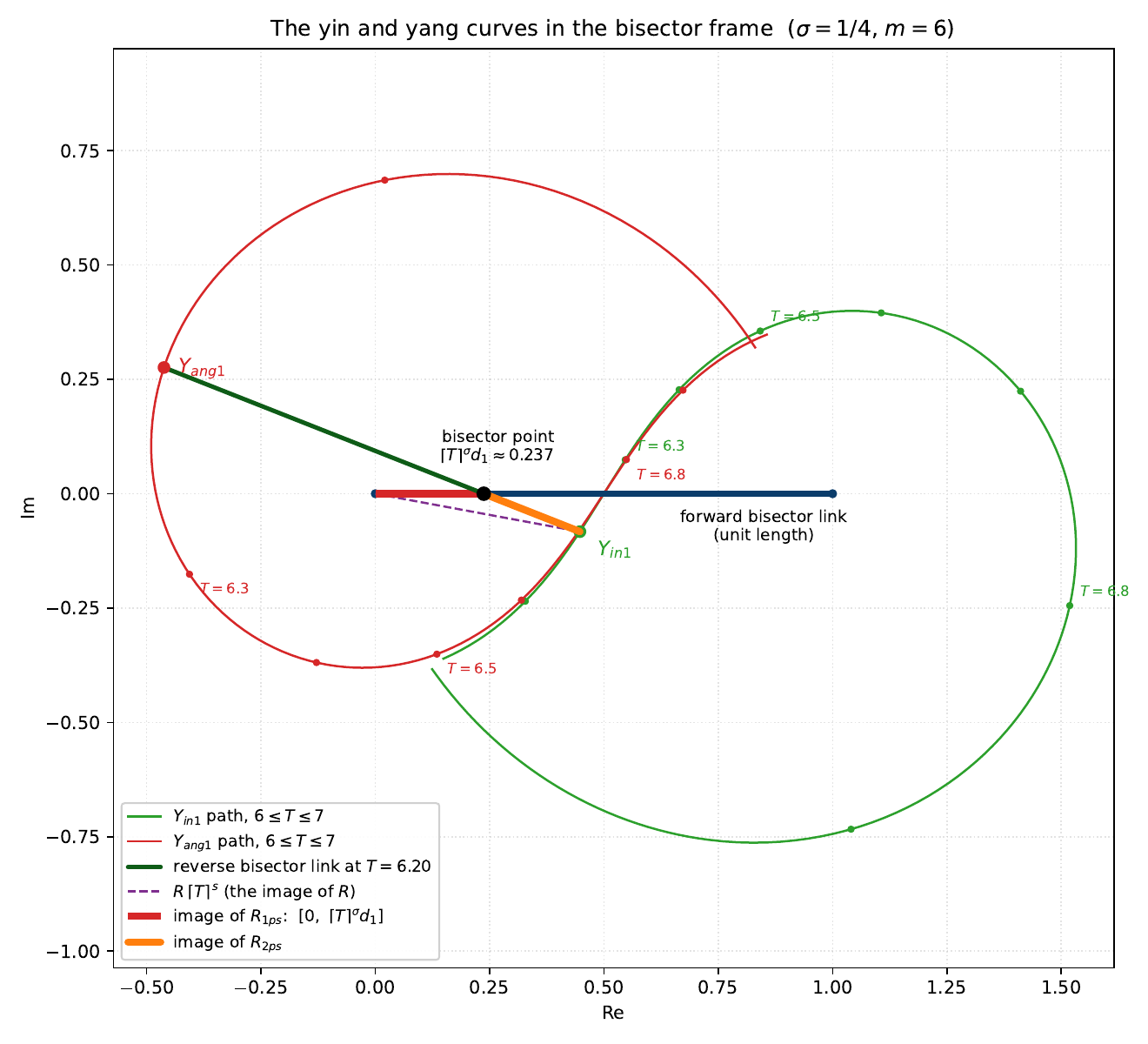}
\caption{The yin and yang curves in the bisector frame, $\sigma=\tfrac14$,
$m=6$. The frame sends the forward bisector link to the unit interval $[0,1]$
(dark blue). As $T$ sweeps $6\le T\le 7$
(Figure~\ref{fig:bisector-sweep}), the near end of
the reverse bisector link traces the yin curve $\mathrm{Yin}$ (green, one dot
every $0.1$ in $T$) and the far end traces the yang curve $\mathrm{Yang}$
(red); the two teardrop paths together resemble a yin-yang. The reverse
bisector link itself is drawn at $T=6.20$ (dark green): it crosses the real
axis at the bisector point (black dot), at the crossing fraction
$\hat d_1=M^{\sigma}d_1\approx 0.237$ of the way along the unit link. The
dashed violet vector from $0$ to $\mathrm{Yin}$ is the in-frame image of
Siegel's remainder $R$; the crossing splits it into the images of $R_{1}$
(red, the piece of the unit link from $0$ to the bisector point) and $R_{2}$
(orange, from the bisector point to $\mathrm{Yin}$). Generated by
\texttt{fig\_yinyang.py}.}
\label{fig:yinyang}
\end{figure}

The frame did not begin as the abstraction of \S\ref{sec:yinyang-general};
it began as a way around a difficulty. In the world frame the reverse chain
is anchored at $\zeta$, which is both moving and precisely the quantity we
do not wish to assume known; worse, the reverse bisector link also moves
relative to its own anchor. The resolution is that the reverse chain's
joint $m$ sits at $\Sigma_1+R$: relative to the forward joint $\Sigma_1$ it
is displaced by exactly $R$, and $R$ has its own integral representation
\eqref{eq:R-T}, independent of any evaluation of $\zeta$. Applying the
frame map \eqref{eq:frame-map} to the two ends of the reverse bisector link
gives exactly $\mathrm{Yin}(s)=R\,M^{s}$ and
$\mathrm{Yang}(s)=\mathrm{Yin}-\chi M^{2s-1}$ of
\eqref{eq:yin1}--\eqref{eq:yang1}. Because the frame map scales the
bisector link to unit length, positions along it are \emph{fractions}: the
crossing we are about to compute is the fraction of the way along the link
at which the bisector point sits, and multiplying that real fraction by the
(unscaled) link vector recovers the fractional summand itself. The actual
distance from the joint, the fraction times the true link length
$M^{-\sigma}$, is the weight $d_1$.

\paragraph{First, the modulus.} From its joint, how far along the forward
bisector link do the two links intersect? This distance is $|R_{1}|$,
because $R_{1}$ is the fractional part of summand $M$ of
$\Sigma_1$, and that summand \emph{is} the forward bisector link. In the
frame the link lies along the real axis, so the intersection is the point
where the segment from $\mathrm{Yin}$ to $\mathrm{Yang}$ crosses the axis. By
the standard segment--axis intersection formula that crossing is
\begin{equation}\label{eq:crossing}
\mathrm{Yin}
\;-\;
\frac{\Im\,\mathrm{Yin}}
     {\Im\,\mathrm{Yang}-\Im\,\mathrm{Yin}}\,
\bigl(\mathrm{Yang}-\mathrm{Yin}\bigr),
\end{equation}
a real number. Substituting \eqref{eq:yin1}--\eqref{eq:yang1} and canceling
$\mathrm{Yang}-\mathrm{Yin}=-\chi M^{2s-1}$ from the ratio,
\begin{equation}\label{eq:yang3lin}
\eqref{eq:crossing}
\;=\;
R\,M^{s}
\;-\;
\frac{\Im\!\bigl(R\,M^{s}\bigr)}
     {\Im\!\bigl(\chi\,M^{2s-1}\bigr)}\,
\chi\,M^{2s-1}.
\end{equation}
Writing $M^{s}=M^{\sigma}e^{i\omega}$, expanding the imaginary parts in
components, and finally writing $\varphi=\arg R$ and $\psi=\arg\chi$ and
collapsing with the sine difference identity
$\sin(2\omega+\psi)\cos(\omega+\varphi)-\cos(2\omega+\psi)\sin(\omega+\varphi)
=\sin(\omega-\varphi+\psi)$, the whole expression reduces to
\begin{equation}\label{eq:frac1}
\eqref{eq:crossing}
\;=\;
M^{\sigma}\,|R|\,
\frac{\sin\!\bigl(\omega-\varphi+\psi\bigr)}{\sin\!\bigl(2\omega+\psi\bigr)}
\;=\; M^{\sigma}\, d_1 .
\end{equation}
The reader will recognize the sine quotient: it is, symbol for symbol, the
definition \eqref{eq:d1} of the weight $d_1$. \emph{This is where that
definition came from.} The crossing \eqref{eq:frac1} is the crossing
fraction (the ``fractional amount'' column of Table~\ref{tab:frac})
along the unit link at which the reverse link crosses, and dividing out the frame's
scaling $M^{\sigma}$ leaves the actual distance from the joint,
$d_1$. (Proposition~\ref{prop:yinyang-chord-finite} is the same identity
run in the other direction: there the definitions are taken as given and
the chord is proved to cross at $\hat d_1=M^{\sigma}d_1$.)

\paragraph{Second, the direction.} The forward bisector link points along the
next summand of $\Sigma_1$: extending the summation cutoff by one adds the
single term $M^{-s}$, and that term is the link vector. Multiplying the
fraction \eqref{eq:frac1} by it undoes the frame map and yields the
fractional summand itself:
\begin{equation}\label{eq:R1-derived}
R_{1}
\;=\;
\bigl(M^{\sigma} d_1\bigr)\cdot M^{-s}
\;=\;
d_1\,M^{-it},
\end{equation}
which is definition \eqref{eq:R1-def}.

\paragraph{The mirror frame gives \texorpdfstring{$d_2$ and $R_{2}$}{d2 and R2}.}
The same procedure in the mirror frame of \S\ref{sec:yinyang-general}
(the footnote there), where the \emph{reverse} bisector link is sent to
$[0,1]$ and the forward one revolves, locates the crossing at the fraction
\begin{equation}\label{eq:frac2}
\frac{M^{1-\sigma}}{|\chi|}\,
|R|\,\frac{\sin\!\bigl(\omega+\varphi\bigr)}{\sin\!\bigl(2\omega+\psi\bigr)}
\;=\;
\frac{M^{1-\sigma}}{|\chi|}\, d_2
\end{equation}
of the way along the unit reverse link, again the ``fractional amount''
column of Table~\ref{tab:frac}, and again the sine quotient is, symbol for
symbol, the definition \eqref{eq:d2} of $d_2$: this is where \emph{it} came
from. (The two normalized fractions are nearly symmetric: since
$|\chi(\sigma+it)|=(t/2\pi)^{\frac12-\sigma}\bigl(1+O(1/t)\bigr)$ and
$\sqrt{t/2\pi}\approx T+\tfrac12$, the prefactor
$M^{1-\sigma}/|\chi|$ is asymptotically
$M^{\sigma}$, matching the forward side.) Multiplying by the
reverse link vector $\chi\,M^{s-1}$ undoes the frame map:
\begin{equation}\label{eq:R2-derived}
R_{2}
\;=\;
\frac{M^{1-\sigma}}{|\chi|}\,d_2
\cdot \chi\,M^{s-1}
\;=\;
d_2\,M^{\,it}\,\frac{\chi}{|\chi|},
\end{equation}
which is definition \eqref{eq:R2-def}.

\begin{figure}[H]
\centering
\includegraphics[width=0.98\textwidth]{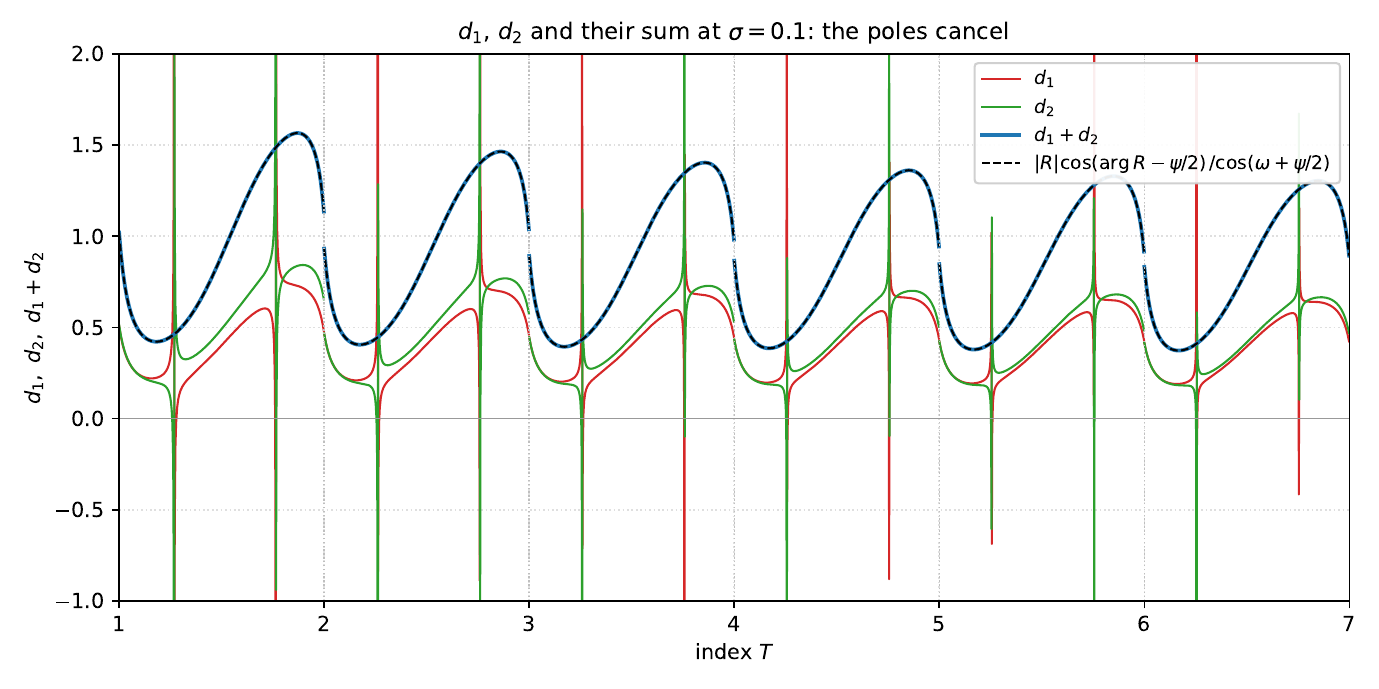}
\caption{$d_1$ (red), $d_2$ (green) and their sum (blue) at $\sigma=0.1$,
$1\le T\le 7$. Twice per unit of $T$ the individual weights blow up at the
parallel-link poles of Appendix~\ref{sec:pole-locations}, but the poles are
equal and opposite and the sum passes smoothly through every one of them; the
black dashed curve is the closed form \eqref{eq:d1-plus-d2}, indistinguishable
from the blue sum (they agree to machine precision on the plotted grid).
Generated by \texttt{fig\_d1\_d2\_sum.py}.}
\label{fig:d1-d2-sum}
\end{figure}

\paragraph{The poles cancel in the sum.} The two weights share the
denominator $\sin(2\omega+\psi)$, and Appendix~\ref{sec:pole-locations}
locates the isolated heights at which it produces genuine poles. In the
\emph{sum} $d_1+d_2$ those poles cancel: adding the numerators with the
sum-to-product identity
$\sin(\omega-\varphi+\psi)+\sin(\omega+\varphi)
=2\sin\bigl(\omega+\tfrac{\psi}{2}\bigr)\cos\bigl(\varphi-\tfrac{\psi}{2}\bigr)$
and writing
$\sin(2\omega+\psi)=2\sin\bigl(\omega+\tfrac{\psi}{2}\bigr)
\cos\bigl(\omega+\tfrac{\psi}{2}\bigr)$, the offending factor divides out:
\begin{equation}\label{eq:d1-plus-d2}
d_1+d_2
\;=\;
|R|\;
\frac{\cos\!\bigl(\varphi-\tfrac{\psi}{2}\bigr)}
     {\cos\!\bigl(\omega+\tfrac{\psi}{2}\bigr)} .
\end{equation}
The zeros of the remaining denominator $\cos(\omega+\tfrac{\psi}{2})$ are
matched by zeros of the numerator (at those instants $R$ is parallel to the
common link direction, forcing $\varphi\equiv-\omega\pmod\pi$), so every
singularity of \eqref{eq:d1-plus-d2} is removable: the sum is smooth across
all the poles of its two terms, which is the closed form behind the
qualitative cancellation remark following Table~\ref{tab:poles}.
Figure~\ref{fig:d1-d2-sum} shows this at $\sigma=0.1$.

This is how the definitions of \S\ref{sec:decomp} were found: the sine
expressions \eqref{eq:d1}--\eqref{eq:d2} are nothing other than the real-axis
crossings of the yang segments, read off in the two bisector frames. Only
afterwards was Theorem~\ref{thm:decomp} proved, confirming that the two
fractional summands so constructed add back to Siegel's $R$ exactly. Note the
economy of the construction: the bisector point, $d_1$, $d_2$, and both
fractional summands are obtained from $R$ and $\chi$ alone. Numerically we
computed $R$ to high accuracy by Kuznetsov's method (\S\ref{sec:kuznetsov}),
which is how the pictures, and then the identity, were checked.

\section{Conjecture: There is a Yin, Yang, \texorpdfstring{$d_1$, $d_2$}{d1, d2} for every link}\label{sec:general-yinyang}

\S\ref{sec:yinyang-origin} watched one pair of links: the reverse bisector link revolving
about the forward bisector link, its ends tracing the yin and yang curves, and
the weights $d_1,d_2$ as the real-axis crossings of that pair. \emph{We
conjecture that the same picture exists at every link of the chain.} On the
critical line which reverse link crosses forward link $k$ is a function of
$T$ alone. Off the line write $C_{\ell}(k;\sigma,T)$, since the segment
intersection may depend on $\sigma$. Once that partner is named, the yin and yang
formulas, the weights, and the remainder split are the same constructions with
that pair in place of the bisector pair. The seven subsections record, in order:
\begin{enumerate}
\item first we need to know which main sum links cross which dual sum links
(\S\ref{sec:links-crossing});
\item then we find the yin and yang curves for each crossing pair
(\S\ref{sec:yinyang-any});
\item then we use those yin-yang formulas to find the weights $d_1$ and
$d_2$ for each link (\S\ref{sec:d1-any});
\item then we use those weights to make the Dirichlet sums $\Sigma_{1x}$
and $\Sigma_{2x}$, whose joints sit at the crossings (\S\ref{sec:sum-x});
\item then we use the crossing points to rearrange the main sum into the
two free-vector sums of the half-links (\S\ref{sec:half-link-sums});
\item then we use the crossing pairing to generalize the partial-summand
formula for $\zeta$ to unequal cutoffs, whose two extreme cuts are example
formulas built on the first link of either chain
(\S\ref{sec:cut-any-link});
\item and finally we list what in this section rests on numerics and is
left unproved (\S\ref{sec:unproved}).
\end{enumerate}

\subsection{Links crossing}\label{sec:links-crossing}

\S\ref{sec:bisector} named one crossing of the two link chains: the forward
bisector link meets the reverse bisector link, and where they meet is the
bisector point $B_1$. All the other links of the forward chain are crossed by some
link of the reverse chain as well, and which one it is depends on $T$. Watching
the link at $m-1$ as $T$, for example, runs from $6$ to $7$, one reverse link
crosses it, and that link is joined by its neighbor that subsequently also
crosses over it, and by the end of the unit interval the crossing links have
advanced by two.

\begin{figure}[H]
\centering
\includegraphics[width=\textwidth]{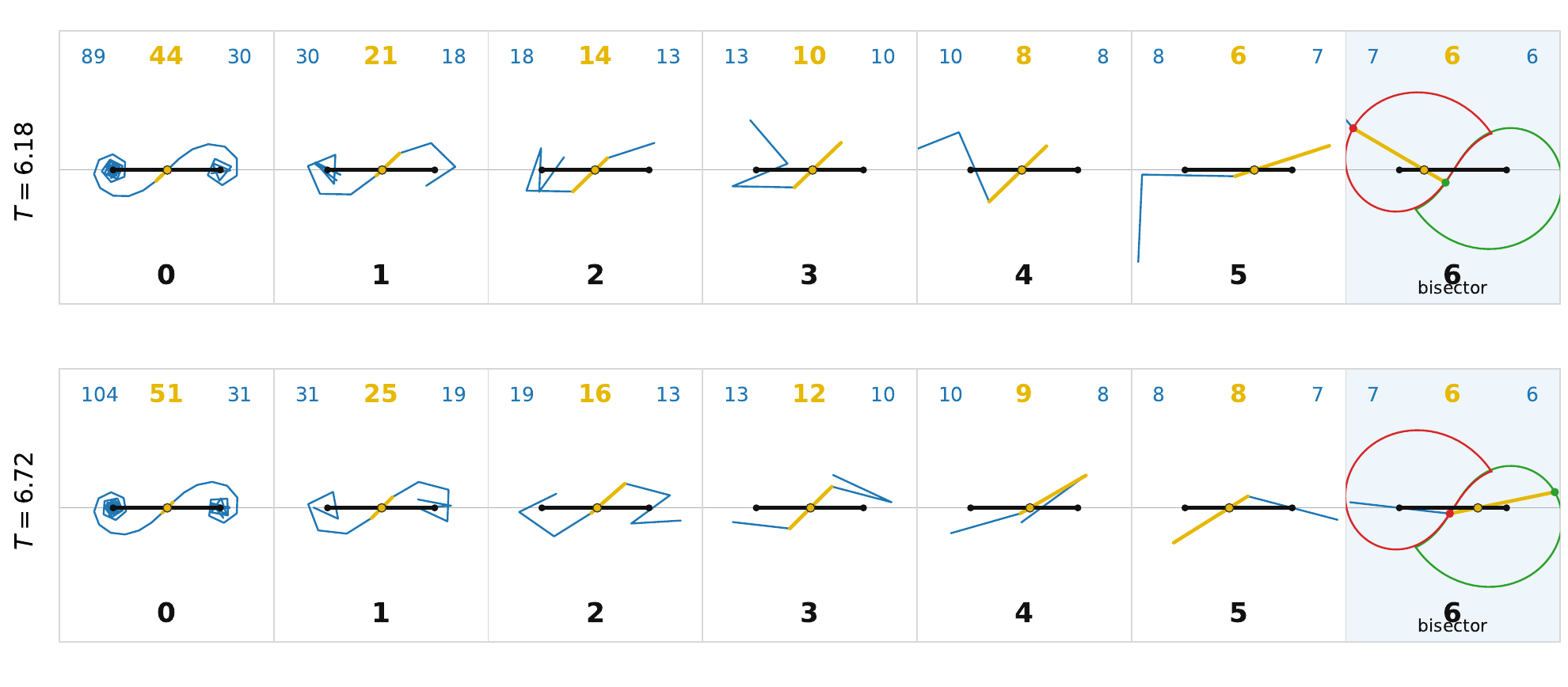}
\caption{Local frames of the forward links at $\sigma=\tfrac12$, $T=6.18$
(top) and $T=6.72$ (bottom), the running example of
Figure~\ref{fig:spiral-summands} and a later height in the same unit
($t=I(T)\approx279.85$ and $327.01$, $m=6$). Each panel is the local coordinate frame for that forward link, sending
one forward link to the unit interval along the $x$-axis;
the number at the bottom is that link. The reflected
reverse chain is drawn in the same frame: the blue links are the part
of it that belongs to the particular forward link, numbered at the two ends with the
outer (higher) link on the left, and the yellow
link is the single reverse link that crosses the forward link, numbered in yellow,
with a yellow dot at the crossing. The rightmost local frame is the bisector; the green and red
curves are the yin and yang loci of the two ends of the reverse
bisector link as $T$ runs through $(6,7)$. Generated by
\texttt{fig\_link\_frames.py}.}
\label{fig:link-frames}
\end{figure}

Numbering summands from one, so that link $k$
carries the summand $n=k+1$ (\S\ref{sec:matrix-product}), forward link $k$ and
reverse link $j$ cross when their two summands are reciprocal about the
Riemann--Siegel cutoff $a=\sqrt{t/2\pi}$ of Appendix~\ref{sec:phase-bound}, as
nearly as two integers can be. Under $t=I(T)$ that cutoff is a function of the
index alone, by \eqref{eq:IT-derived}, so the rule closes in $T$:
\begin{equation}\label{eq:crossing-product}
n\,n'\;=\;a^{2}\;=\;\frac{t}{2\pi}\;=\;\frac{I(T)}{2\pi}
\;=\;\frac{2T+1}{2\log\!\bigl(1+\tfrac1T\bigr)},
\qquad n=k+1.
\end{equation}
Nothing is approximated in $a^{2}$: it is a closed form in $T$ rather than an
asymptotic. But it is not an integer, so $n'=a^{2}/n$ is the real partner, and
the reverse link that actually crosses carries the nearest integer to it. The
product of the two integers therefore misses $a^{2}$, by up to $n/2$; the
rounding is unbiased, its signed mean $-9\times10^{-4}$ over the sample below,
with a typical miss of a quarter of a unit. Write
\begin{equation}\label{eq:crossing-named}
j_0(k,T)\;=\;\Bigl[\frac{a^{2}}{k+1}\Bigr]-1
\;=\;\left[\frac{2T+1}{2(k+1)\log\!\bigl(1+\tfrac1T\bigr)}\right]-1,
\end{equation}
with $[\,\cdot\,]$ the nearest integer. Nearest-integer alone misses at two
families of half-integers. One is the offset
\eqref{eq:siegel-cutoff-index} already recorded: Siegel's cutoff $a$ sits
half a link past $T$, so it steps near the half-integers of $T$ while the
chains themselves step at the integers. The other is the joints of the
hyperbola $nn'=a^{2}$ itself. The \emph{crossing link} is the
rule that names the reverse link integer and removes those misses,
\begin{equation}\label{eq:crossing-rule}
C_{\ell}(k,T)\;=\;
\begin{cases}
m & \text{if }k=m,\\[2pt]
j_0(k,T) & \text{if reverse link }j_0\text{ crosses forward link }k,\\[2pt]
j_0(k,T)-1 & \text{otherwise,}
\end{cases}
\end{equation}
and the crossing summand is $C_{s}(n,T)=C_{\ell}(n-1,T)+1$. The first line
is the self-pair: $a^{2}/(m+1)$ is near a half-integer when $T$ is near an
integer, so $j_0$ names $m\mp1$ at the ends of the unit index, $T$, while the
two bisector links cross at every height. The third line
is the other family, the half-integers of the product $a^{2}/n$ itself:
when $\{a^{2}/n\}\approx\tfrac12$ the geometric crossing has already moved
to the lower neighbor. Replacing $a^{2}$ by $T(T+1)$ or $(T+\tfrac12)^{2}$
does not remove either family; the product is already the exact $I(T)$,
and it only moves the joints. The observed pairing suggests the unproved
involutivity conjecture
\[
C_s(C_s(n,T),T)=n.
\]
At $T=6.18$, where $a^{2}=44.5389$, \eqref{eq:crossing-named} reads
$44,21,14,10,8,6,5$ for $k=0,\dots,6$, and all seven are real crossings
(Figure~\ref{fig:links-crossing}). The sum rule $k+j\approx2T-1$ that the
links beside the bisector obey is this hyperbola's tangent there, and holds
only there.

Two remarks on $a$, since it is easy to read it as a link number. It is not one:
it is a real height, the self-dual summand $n=n'=a$ the law pairs about, the
one at which the forward chain hands over to the reflected one, and by
\eqref{eq:siegel-cutoff-index} it sits half a link past the bisector link, at $a^{2}=(T+\tfrac12)^{2}-\tfrac1{12}+O(T^{-2})$. Nor is it
a fixed number over a unit interval, as $\lfloor T\rfloor$ is: $a^{2}$ climbs
from $42.17$ to $56.17$ as $T$ runs from $6$ to $7$, and it is that climb that
walks each link's partner along inside the interval. Replacing $a^{2}$ by
$\lfloor T\rfloor^{2}=36$ or $(\lfloor T\rfloor+1)^{2}=49$ at $T=6.18$ would
name reverse link $35$ or $48$ as crossing the unit step, and neither does.

\paragraph{The reverse chain is the forward chain reflected in the bisector line.} Number the joints
of both chains from zero as in \S\ref{sec:product-form},
\begin{equation}\label{eq:crossing-joints}
J_n=\sum_{n'\leq n}n'^{-s},
\qquad
K_j=\zeta-\chi(s)\sum_{n\leq j}n^{\,s-1},
\end{equation}
so that $J_0=0$, $K_0=\zeta$, and $K_m=\Sigma_1+R$ is the near end of the
reverse bisector link; link $n$ of either chain runs from its joint $n$ to its
joint $n+1$. On $\sigma=\tfrac12$ we have $n^{\,s-1}=\overline{n^{-s}}$, so
$K_j=\zeta-\chi\overline{J_j}$, and with $\chi=e^{-2i\vartheta}$ and
$Z=e^{i\vartheta}\zeta$ real (Hardy's $Z$, as in \eqref{eq:r-two-cases}),
\begin{equation}\label{eq:chain-mirror}
e^{i\vartheta}K_j \;=\; Z-\overline{e^{i\vartheta}J_j}\,.
\end{equation}
Write $e^{i\vartheta}J_n=X_n+iY_n$. Then \eqref{eq:chain-mirror} puts reverse
joint $j$ at $(Z-X_j,\,Y_j)$: in the $\vartheta$-rotated frame the reverse chain
is the forward chain reflected in the vertical line $X=Z/2$, joint for joint and
link for link. That line is the bisector line of \S\ref{sec:legs}: the
rotation puts $\zeta$ at $Z$ on the real axis, so unrotated it is the
perpendicular bisector of the base running from the origin to $\zeta$, drawn
dotted in
Figures~\ref{fig:full-spirals} and~\ref{fig:legs} through $\zeta/2$ and the
bisector point $B_1$ (Corollary~\ref{cor:bisector-proj}). Doubling-back and the
bisector line are
two different things in the same neighborhood, and worth keeping apart: the
place along the chain is the bisector link, where the chain doubles
back; the bisector line is a line across the plane, the one the two chains
reflect in. The bisector point is where the two meet, the point inside the
bisector link at which the forward chain crosses the bisector line, as
\eqref{eq:crossing-rstar} works out below. This is the
chain-level form of the reflection
$B_2=\chi\overline{B_1}$ behind the equal legs of \S\ref{sec:legs}, and like it,
it is a statement about the critical line only: off the line $|\chi|\neq1$ and
the reverse chain is a scaled copy of the $1-\sigma$ chain instead of a
reflection of this one.

\paragraph{Crossings come in pairs.} Parameterize each chain continuously, the
value $\mu=k+p$ with $0\leq p<1$ meaning the point $p$ of the way along link
$k$, and let $X(\mu),Y(\mu)$ be the rotated coordinates of the forward chain
there and
\begin{equation}\label{eq:crossing-h}
h(\mu)\;=\;X(\mu)-Z/2
\end{equation}
its signed distance to the bisector line. By \eqref{eq:chain-mirror} the reverse
chain at $\nu$ sits at $(Z-X(\nu),Y(\nu))$, so the forward chain at $\mu$ meets
the reverse chain at $\nu$ exactly when
\begin{equation}\label{eq:crossing-conditions}
h(\mu)+h(\nu)=0
\qquad\text{and}\qquad
Y(\mu)=Y(\nu).
\end{equation}
Both conditions are symmetric in $\mu$ and $\nu$, which is why crossings come in
pairs: reverse link $j$ crosses forward link $k$ if and only if reverse link $k$
crosses forward link $j$, and any rule naming the partner must be symmetric
too, as \eqref{eq:crossing-product} is. The pair coincides only when $\mu=\nu$,
and then $h=0$; so a self-paired crossing lies on the bisector line itself, there
is exactly one (the bisector point $B_1$, inside the bisector link of each
chain by Theorem~\ref{thm:d1-bounds}), and every other crossing sits off the
line with its counterpart on the other side.

\paragraph{Sweeps between spiral centers.} Along either chain the turn from one
link to the next is $t\ln(1+1/n)\approx t/n$, decreasing as the chain is walked,
so the joints where that turn is an odd multiple of $\pi$ lie near the
spiral centers \eqref{eq:LN} (the midpoint of the link they bound is closer
still), and between two consecutive centers the chain makes
one full turn.
The middle of a turn is the link where the angle is the even multiple $2\pi c$,
\begin{equation}\label{eq:crossing-sweep}
\frac{t}{n'}=2\pi c
\qquad\Longleftrightarrow\qquad
n'=\frac{t}{2\pi c}=\frac{a^{2}}{c} ,
\end{equation}
and there successive links are parallel rather than antiparallel: their
contributions add in phase, the chain stops circling and runs. Call that run
the $c$-th \emph{sweep}; it carries the chain from the spiral center $S_n=c$ to
the spiral center $S_n=c-1$.

Heuristically, after inserting a smooth compact cutoff, Poisson summation
separates the chain into oscillatory integrals. Stationary phase associates
the $c$-th integral with the saddle $x=a^2/c$ and gives the displayed vector
only as a leading asymptotic:
\begin{equation}\label{eq:crossing-saddle}
\frac{1}{\sqrt c}\,
\exp i\Bigl(\vartheta+t-t\ln\frac{a^{2}}{c}+\frac\pi4\Bigr)
\;=\;\frac{1}{\sqrt c}\,e^{\,i\left(2\Theta-(\vartheta-t\ln c)\right)},
\qquad
2\Theta=2\vartheta+t-t\ln a^{2}+\frac\pi4 .
\end{equation}
No exact Poisson identity is asserted for the unwindowed critical-line
series.
Its modulus is that of the $c$-th summand, $c^{-1/2}$, and its phase is the
$c$-th summand's phase $\vartheta-t\ln c$ reflected in the fixed direction
$\Theta$. Sweep by sweep, the tail of a chain reproduces the head of that chain
reflected: this is the correspondence between joints and spiral centers
recorded in \S\ref{sec:bisector}, Nickel's center-to-center distances and
angles reproducing the initial steps, now carrying an index; the $c$-th sweep
answers to the $c$-th summand.

The reflection \eqref{eq:chain-mirror} undoes the
reflection in $\Theta$. In the frame of the forward chain, therefore, the reverse chain's $c$-th
sweep is the forward chain's own link $c-1$ traversed backwards: over the
sampled heights the center-to-center step and that link agree to a median
$1.4\%$, $2.7\%$, $3.1\%$ and $5.2\%$ of a link length for $c=1,2,3,4$. A copy
of a link laid back along it must cross it, and the quarter turn in
\eqref{eq:crossing-saddle} is what tilts the copy so that it cuts across
instead of lying flat: the two meet near the middle of the link, at some
$43^{\circ}$. Which reverse link carries the $c$-th sweep is
\eqref{eq:crossing-sweep} with $c=n$, and that is
\eqref{eq:crossing-product}. Panel (b) of Figure~\ref{fig:links-crossing} is
the case $c=1$: the whole outermost turn of the reverse chain, links $30$ to
$89$, laid back along the unit step and crossing it at link $44$.

\paragraph{The angle at a crossing.} How the two links lie relative to one
another needs no estimate at all. It is forced, and what forces it is that the
two summands multiply to a whole number.
In the rotated frame forward link $k$ points
along $e^{\,i(\vartheta-t\log n)}$ with $n=k+1$, and by \eqref{eq:chain-mirror}
reverse link $j$ points along $-e^{-i(\vartheta-t\log n')}$ with $n'=j+1$, so
the angle from the first to the second is
\begin{equation}\label{eq:crossing-angle}
\theta(n,n')\;=\;\pi-2\vartheta(t)+t\log(n n')\pmod{2\pi},
\end{equation}
exactly, whether or not the two links meet.

\begin{proposition}\label{prop:crossing-angle}
Fix $C>0$. On $\sigma=\tfrac12$, for positive integers $n,n'$ satisfying
$|nn'-a^2|\leq Ca$, the displayed expansion holds with error $O_C(1/a)$:
\begin{equation}\label{eq:crossing-angle-reduced}
\theta(n,n')
\;\equiv\;
\frac{5\pi}{4}\;-\;\frac{\pi\,(nn'-a^{2})^{2}}{a^{2}}
\;+\;O_C\!\left(\frac1a\right)
\pmod{2\pi}.
\end{equation}
In particular, if $|nn'-a^{2}|\leq a/2$ the two links are never parallel: they
meet at an angle between $\tfrac34\pi$ and $\pi$, that is between $135^{\circ}$
and $180^{\circ}$, at most a quarter turn away from antiparallel.
\end{proposition}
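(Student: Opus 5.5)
We start from the exact formula \eqref{eq:crossing-angle}, $\theta(n,n')=\pi-2\vartheta(t)+t\log(nn')$, and replace $\vartheta$ by its Stirling expansion (Appendix~\ref{sec:phase-bound}),
\[
\vartheta(t)=\tfrac t2\log\tfrac t{2\pi}-\tfrac t2-\tfrac\pi8+O(1/t),
\]
so that, with $a^2=t/2\pi$,
\[
\theta\;\equiv\;\pi+\tfrac\pi4+t-t\log a^{2}+t\log(nn')+O(1/t)
\;=\;\tfrac{5\pi}4+t+t\log\frac{nn'}{a^{2}}+O(1/t).
\]
Next write $nn'=a^2(1+\delta)$ with $\delta=(nn'-a^2)/a^2$. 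The hypothesis gives $|\delta|\le C/a$. Expanding $\log(1+\delta)=\delta-\delta^2/2+O(\delta^3)$ and multiplying by $t=2\pi a^2$ yields three pieces. The linear term is $2\pi(nn'-a^2)$. The quadratic term is $-\pi(nn'-a^2)^2/a^2$. The cubic remainder is $O(t\delta^3)=O(a^2\cdot C^3/a^3)=O_C(1/a)$.

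The step needing care is the combination $t+2\pi(nn'-a^2)=2\pi nn'$, which is an integer multiple of $2\pi$ because $n,n'$ are integers. This is the point the paper stresses, that the summands multiply to a whole number. These two large terms therefore vanish modulo $2\pi$ exactly, with no approximation. What survives is $\tfrac{5\pi}4-\pi(nn'-a^2)^2/a^2+O_C(1/a)$, which is \eqref{eq:crossing-angle-reduced}.

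The only genuine obstacle is bookkeeping of the error terms, and I would check three points:
\begin{itemize}
\item The $O(1/t)$ in $\vartheta$ is absorbed into $O(1/a)$.
\item The cubic bound is uniform, since $|\delta|\le C/a<1/2$ once $a$ is large.
\item Small $a$ is absorbed into the implied constant, because $\theta$ is bounded modulo $2\pi$.
\end{itemize}

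For the corollary, $|nn'-a^2|\le a/2$ makes the quadratic term lie in $[-\pi/4,0]$. Then $\theta\in[\pi,\tfrac{5\pi}4]$ up to $O(1/a)$, which is the same as $[\tfrac34\pi,\pi]$ in unoriented angle measured via $2\pi-\theta$. This is bounded away from $0$ and from $\pi$ modulo $\pi$ except at the endpoint $\pi$, where the links are antiparallel. In particular it stays at least $\pi/4$ from parallel for large $a$.
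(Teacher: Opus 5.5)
\paragraph{The expansion.} Your derivation of the expansion is correct and is the paper's own argument. Stirling cancels $t\log a^{2}-t$ against $2\vartheta$ and leaves $\tfrac\pi4$. The integrality of $nn'$ disposes of the linear term: your identity $t+2\pi(nn'-a^{2})=2\pi nn'$ is the paper's $t\epsilon\equiv -t\pmod{2\pi}$. The cubic term is $O(t\delta^{3})=O_C(1/a)$.

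\paragraph{The ``in particular''.} This is the one place your argument is weaker than the statement. Writing $\theta\in[\pi,\tfrac54\pi]$ ``up to $O(1/a)$'' only gives an unsigned angle $\geq\tfrac34\pi-O(1/a)$. The proposition claims the $135^{\circ}$ floor itself, and the paper later relies on it being ``a floor and not an average''.

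What you are missing is the sign of the corrections at the $\tfrac54\pi$ end; this is what the paper means by ``the correction can only carry it towards $\pi$''. Keep the computation exact instead of truncating:
\[
\theta\equiv\tfrac54\pi+t\bigl(\log(1+\delta)-\delta\bigr)-\bigl(\tfrac1{24t}+O(t^{-3})\bigr)\pmod{2\pi}.
\]
The first correction is nonpositive because $\log(1+\delta)\leq\delta$, and the second is negative for large $a$. Hence $\theta\leq\tfrac54\pi$. On the other side, overshooting $\pi$ by $O(1/a)$ does no harm to an unsigned angle.

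\paragraph{Your closing sentence.} The direction there is reversed. The range $[\tfrac34\pi,\pi]$ is at least $\tfrac34\pi$ away from codirected and at most $\tfrac\pi4$ away from antiparallel. So ``never parallel'' in the proposition has to mean ``never codirected''. Viewed as lines, the two links become nearly parallel (antiparallel) as $|nn'-a^{2}|$ approaches $a/2$; they do not stay ``at least $\pi/4$ from parallel''.
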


The proof depends on $N=nn'$ being an \emph{integer}. Write $N=a^{2}(1+\epsilon)$.
Since $t=2\pi a^{2}$,
\begin{equation}\label{eq:crossing-angle-integrality}
t\epsilon\;=\;\frac{t\,(N-a^{2})}{a^{2}}\;=\;2\pi N-t\;\equiv\;-t\pmod{2\pi},
\end{equation}
so in
$t\log N=t\log a^{2}+t\epsilon-\tfrac12t\epsilon^{2}+\tfrac13t\epsilon^{3}-\cdots$
the first-order term is $-t$ modulo $2\pi$, whatever $N$ is. The Stirling
expansion of the Riemann--Siegel theta function $\vartheta(t)$
(Appendix~\ref{sec:phase-bound}) is
$2\vartheta(t)=t\log a^{2}-t-\tfrac{\pi}{4}+\tfrac{1}{24t}+O(t^{-3})$, so in
\eqref{eq:crossing-angle} the whole of $t\log a^{2}-t$ cancels against it and
the $\tfrac\pi4$ survives:
\[
\theta\;\equiv\;\pi+\frac{\pi}{4}-\frac{t\epsilon^{2}}{2}
+O\bigl(t|\epsilon|^{3}\bigr)+O\bigl(t^{-1}\bigr).
\]
Here $\tfrac12t\epsilon^{2}=\pi(N-a^{2})^{2}/a^{2}$. Under the proposition's
hypothesis, $|\epsilon|\leq C/a$, so
$t|\epsilon|^3=O_C(1/a)$, which proves
\eqref{eq:crossing-angle-reduced}. When $|N-a^{2}|\leq a/2$ we have
$|\epsilon|\leq1/2a$, so the cubic term is
at most $\pi/12a$ and the quadratic one at most $\pi/4$. Read as an unsigned
angle, $\tfrac54\pi$ is $\tfrac34\pi$, and the correction can only carry it
towards $\pi$.

The naming \eqref{eq:crossing-named} stays inside that hypothesis: it makes
$|nn'-a^{2}|=n\,|n'-a^{2}/n|\leq n/2$, and $n\leq m<a$ for every forward link
below the bisector, so the $135^{\circ}$ is a floor and not an average: no
link-crossing that the rule names can be sharper than that. Only the
self-pair, being the bisector links, handled by the first line of
\eqref{eq:crossing-rule}, where the rule overrides the rounding, can fall
outside it. The quarter turn is the
$-\tfrac\pi8$ of $\vartheta$, which is the Gaussian factor of
\eqref{eq:crossing-saddle} over again: the $43^{\circ}$ by which a crossing
misses antiparallel, the tilt that keeps the sweep from lying flat along the
link it reproduces, is the stationary-phase quarter turn, read off exactly
rather than measured. What the proposition fixes is the direction of the
partner and not its position; that the two segments actually meet is
verified numerically below but is not proved here.

\paragraph{Near the bisector, a sum rule.} Near the self-dual link $n=n'=a$ the
hyperbola \eqref{eq:crossing-product} is indistinguishable from its tangent.
Writing $n=a+x$,
\begin{equation}\label{eq:crossing-tangent}
n'=\frac{a^{2}}{n}=a-x+\frac{x^{2}}{a}-\cdots ,
\qquad\text{so}\qquad
k+j\;=\;n+n'-2\;=\;2a-2+\frac{x^{2}}{a}-\cdots ,
\end{equation}
and since $2a=2T+1-\tfrac1{12T}+O(T^{-2})$ by
\eqref{eq:siegel-cutoff-index}, the sum is $2T-1$ up to the quadratic term and
a drift of order $1/12T$: $k+j$ is one of the two integers straddling $2T-1$,
that is
\begin{equation}\label{eq:crossing-sum}
j\;=\;\lfloor2T\rfloor-1-k
\qquad\text{or}\qquad
j\;=\;\lfloor2T\rfloor-k ,
\end{equation}
a rule needing nothing computed at all. The same rule comes out of the reflection
\eqref{eq:chain-mirror}, which is worth doing because it identifies the fixed
point. Near the bisector the
forward chain sweeps across the bisector line and $h$ passes through a single
zero there, close to straight over the few links either side; writing $r^{*}$
for that zero and linearizing turns the first condition of
\eqref{eq:crossing-conditions} into $\mu+\nu=2r^{*}$, the two arcs straddling
the crossing symmetrically, so $k+j$ straddles $S=2r^{*}-1$. And the crossing
fraction is not a new quantity.
By \eqref{eq:r-two-cases}, $h(m)=X_m-Z/2=-r/2$ with $r=e^{i\vartheta}R$ the real
remainder of Appendix~\ref{sec:phase-bound}, and link $m$ of the forward chain,
being $(m+1)^{-s}$ rotated by $e^{i\vartheta}$, has horizontal extent
$X_{m+1}-X_m=\cos(\omega-\vartheta)/\sqrt{m+1}$. So the zero falls inside link
$m$, at the crossing fraction
\begin{equation}\label{eq:crossing-rstar}
r^{*}=m+\lambda,
\qquad
\lambda=\frac{Z/2-X_m}{X_{m+1}-X_m}
=\sqrt{m+1}\;\frac{r}{2\cos(\omega-\vartheta)}
=M^{1/2}d_1=\hat d_1,
\end{equation}
by the exact reduction \eqref{eq:exact-line} and the normalization
\eqref{eq:frac-vs-weight}. The place where the forward chain crosses the bisector
line is therefore the bisector point $B_1$ itself, sitting at the crossing
fraction $\hat d_1$ along the bisector link exactly as
Figure~\ref{fig:bisector-frame} marks it. So $S=2r^{*}-1=2m+2\hat d_1-1$,
against $2m+2\{T\}-1$ from the tangent. These are not the same number: the limit
of $\hat d_1$ is the waveform $d(x)$ of \S\ref{sec:periodicity}, not $x$ itself,
and the two part by nearly $\tfrac12$ at the ends of the unit interval. But the
straddling pair depends only on which side of $\tfrac12$ each falls, and on that
the two agree. The branch form \eqref{eq:d-profile} gives $d(x)+d(1-x)=1$, and
on $(0,\tfrac12)$ the curve stays below $\tfrac12$: there
$d=\tfrac12+\mathcal W_{\infty}$ with
$\mathcal W_{\infty}=-\tfrac12\tan(2\pi x)\tan\beta$ \eqref{eq:W-infty}, and the
two tangents carry the same sign on each of $(0,\tfrac14)$ and
$(\tfrac14,\tfrac12)$, both positive on the first and both negative on the
second. So $d$ crosses
$\tfrac12$ exactly where $x$ does, and the $O(1/T)$ of \S\ref{sec:periodicity}
can disturb this only within $O(1/T)$ of $\{T\}=0,\tfrac12,1$, which the sample
below does not see it do. With $\tfrac15<\hat d_1<\tfrac45$ from
Theorem~\ref{thm:d1-bounds} holding $S$ within $\tfrac35$ of $2m$, the
straddling integers are therefore $\lfloor2T\rfloor-1$ and $\lfloor2T\rfloor$
whichever side of $\tfrac12$ the fractional part is on, and
\eqref{eq:crossing-sum} follows either way: the tangent to the hyperbola and the
linearization of $h$ are the same line.

How far the tangent can be trusted is \eqref{eq:crossing-tangent} again: it
costs half a link once $x^{2}/a=\tfrac12$, so the sum rule is good only within
\begin{equation}\label{eq:crossing-reach}
|n-a|\;\lesssim\;\sqrt{a/2}\;\approx\;\sqrt{T/2}
\end{equation}
of the bisector (three links either side at $T=17$, fourteen at $T=400$, and just
under two at the $T=6.18$ of the figure), which is why $k=4,5,6$ obey it there
and $k=3$ already does not. Inside that range the two indices trade off one for
one: each step back along the forward chain moves the partner one step forward
along the reverse chain, and over one unit of $T$ the sum moves by two, so each
partner advances by two, in two single steps, one when $\{T\}$ passes
$\tfrac12$ and one when $T$ passes an integer, the two heights at which
$\lfloor 2T\rfloor$ jumps. Outside the range the product takes over and the partners run
away as $a^{2}/n$, reaching the far end of the chain at the unit step.

The bisector link is the one place where the pairing is exact rather than
asymptotic, and that is the first line of \eqref{eq:crossing-rule}. It is
the self-dual link, $n=m+1$ and $x=\tfrac12-\{T\}$, so
\eqref{eq:crossing-named} rounds to $m$ only over the middle half of the unit
interval and to $m\mp1$ at its ends; but the two bisector links still cross, and the
self-paired crossing $B_1$ of \eqref{eq:crossing-conditions} is there at every
height. The bisector links always cross, as \S\ref{sec:bisector} has it, and
the links $j_0$ names near the ends of the interval are
second crossings of the same forward link, not replacements for it.

\begin{figure}[H]
\centering
\includegraphics[width=\textwidth]{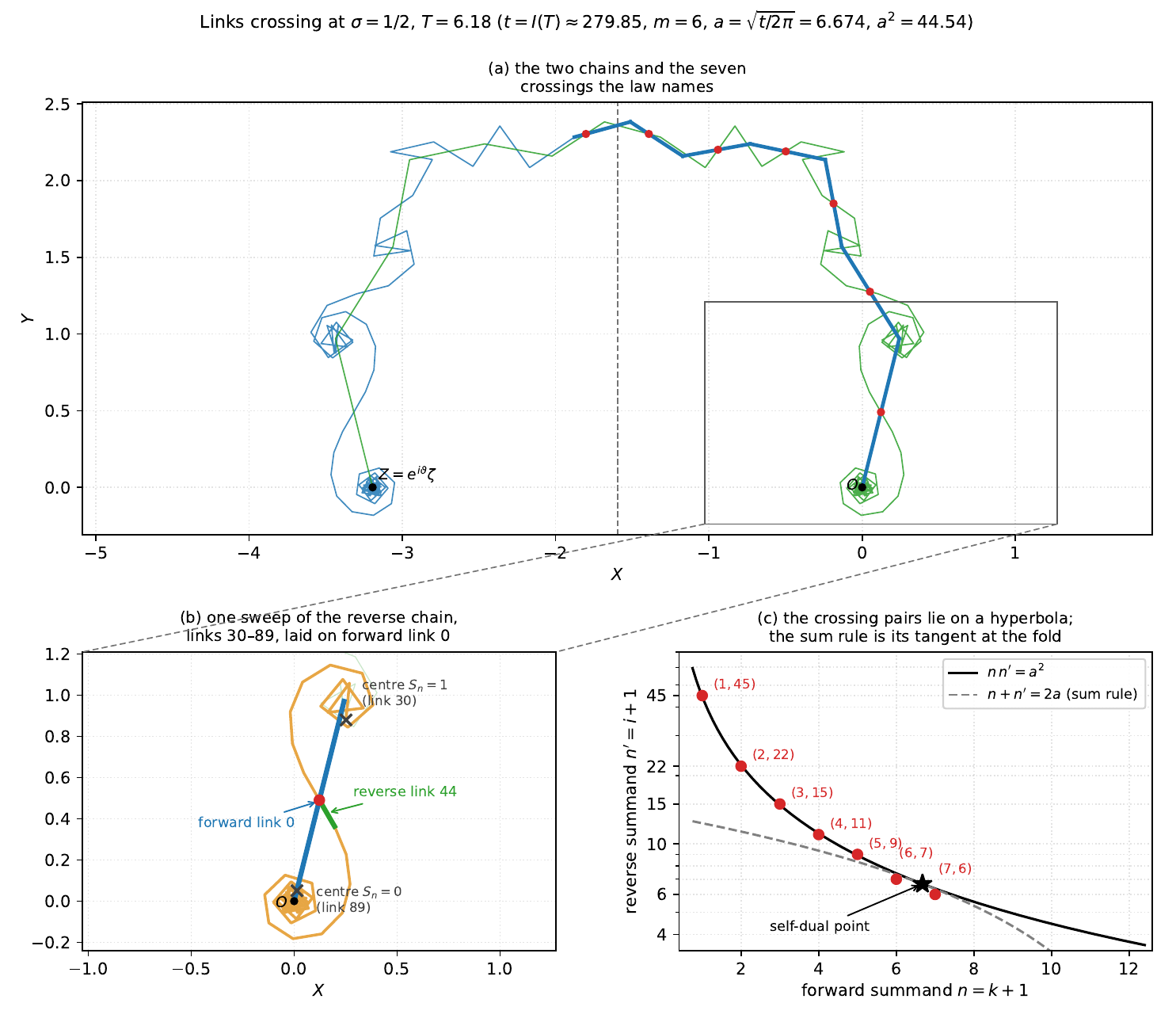}
\caption{Links crossing at $\sigma=\tfrac12$, $T=6.18$, the running example of
Figure~\ref{fig:spiral-summands} ($t=I(T)\approx279.85$, $m=6$,
$a=\sqrt{t/2\pi}=6.674$, $a^{2}=44.54$; the first turn of each chain holds
$90$ links).
(a) Both chains in the $\vartheta$-rotated frame, forward $e^{i\vartheta}J_n$ in
blue from $O$ and reverse $e^{i\vartheta}K_j$ in green from $Z=e^{i\vartheta}\zeta$,
which by \eqref{eq:chain-mirror} is the blue chain reflected in the dashed
bisector line $X=Z/2$. Forward links $0$ through $6$ are drawn heavy and the red dots
are the seven crossings \eqref{eq:crossing-named} names for them,
$j=44,21,14,10,8,6,5$; they run the length of the chain, not just the
neighborhood of the bisector. (b) The boxed region enlarged, showing why. The gold
arc is one turn of the reverse chain, links $30$ to $89$, running from the
spiral center $S_n=1$ to the spiral center $S_n=0$ (crosses, at the links
\eqref{eq:LN} where the turn angle is an odd multiple of $\pi$). By
\eqref{eq:crossing-saddle} that step reproduces forward link $0$ reversed, so
it lies back along the unit step and crosses it near the middle (red dot); the
link carrying the sweep is $44$, the middle of the turn, where the turn angle
is $2\pi$ and $n'=a^{2}/1$. (c) The pairs $(n,n')$ of all seven crossings, on
the hyperbola $nn'=a^{2}$. The self-dual point $n=n'=a$ (star) is the summand
the law pairs about, half a unit of the index past $T$ by
\eqref{eq:siegel-cutoff-index}. These are summand axes, and the star is
plotted here at $(a,a)$ with $a=6.674$, while the bisector link is link $m=6$,
which carries summand $n=7$: the star therefore sits between summands $6$ and
$7$, a third of a summand short of the bisector link's own value rather than
past it.
Near it the hyperbola is indistinguishable from its tangent $n+n'=2a$ (dashed),
which is the sum rule \eqref{eq:crossing-sum}, and away from it the two part
company fast: the tangent gives the unit step a partner near $n'=11$ where the
truth is $45$. Generated by \texttt{fig\_links\_crossing.py}.}
\label{fig:links-crossing}
\end{figure}

\paragraph{How well it holds.} Sampling $m\in\{8,17,40,123,400\}$ at $199$
fractional parts each and testing every forward link $0\leq k\leq m$,
$118{,}007$ cases in all, the link named by \eqref{eq:crossing-rule} crosses
forward link $k$ in every one of them. The named integer
\eqref{eq:crossing-named} alone does so in $117{,}870$, $99.9\%$; the other
$137$ are the two half-integer families of that naming, $13$ of them the
self-pair (first line of \eqref{eq:crossing-rule}) and $124$ the lower
neighbor at $\{a^{2}/n\}\approx\tfrac12$ (third line), never the upper
neighbor and never off by more than one link. The
crossing sits at the middle of the forward link, median position $0.500$ along
it with median deviation $0.009$ four or more links out from the bisector, widening
to $0.148$ at the bisector link, where the crossing is $B_1$ and the position is
$\hat d_1$ instead. The two links meet at a median $137^{\circ}$, tenth to
ninetieth percentile $135^{\circ}$ to $151^{\circ}$, and never below
$135^{\circ}$ in any of them: that floor is
Proposition~\ref{prop:crossing-angle} and not a feature of the sample, and the
exact form \eqref{eq:crossing-angle} holds across the crossings to
$3.8\times10^{-9}$, the accumulated double precision of the chains. The
$43^{\circ}$ of the median is the quarter turn of \eqref{eq:crossing-saddle}
that keeps the sweep from lying flat along the link it reproduces. The center-to-center steps
match those links to a median $1.4\%$, $2.7\%$, $3.1\%$ and $5.2\%$ of a link
length at $c=1,2,3,4$, and the self-pair $(m,m)$ crossed at all $995$ sampled
heights while \eqref{eq:crossing-named} named $m$ at exactly half of them, the
middle half of each unit interval, as the previous paragraph says. The sum rule
\eqref{eq:crossing-sum}, by contrast, names a real crossing in only $10.6\%$ of
the cases overall, but in $99.8\%$ of those inside its range
\eqref{eq:crossing-reach} and $5.1\%$ of those outside it, which is what that
range is worth. The identity $\lambda=\hat d_1$ of \eqref{eq:crossing-rstar}
holds to $6\times10^{-15}$. Computed in \texttt{check\_links\_crossing.py}.

\subsection{General yin and yang form for any link}\label{sec:yinyang-any}

\S\ref{sec:yinyang-origin} watched one link cross another: the reverse bisector link
revolving about the forward bisector link, its two ends tracing the yin and
yang curves \eqref{eq:yin1}--\eqref{eq:yang1}. Now that \eqref{eq:crossing-rule}
says which reverse link crosses forward link $k$, every link of the chain has
such a pair, and they are all one formula. Like the bisector pair it carries no
$\zeta$: the remainder $R$, the factor $\chi$, and $\sigma$ and $T$ are all it
needs.

Send forward link $k$ to $[0,1]$ exactly as \eqref{eq:frame-map} sends the
bisector link: translate its joint $J_k$ to the origin and divide by the link
vector $(k+1)^{-s}$, writing $s=\sigma+iI(T)$ throughout. Reverse joint $j$ of
\eqref{eq:crossing-joints} lands at
\begin{equation}\label{eq:frame-joint-any}
Y_k(j)\;=\;\bigl(K_j-J_k\bigr)(k+1)^{s}
\;=\;(k+1)^{s}\left(\;\sum_{n=k+1}^{m}n^{-s}\;+\;R\;+\;\chi\!\!\sum_{n=j+1}^{m}\!\!n^{\,s-1}\right),
\end{equation}
the second sum read as $-\sum_{n=m+1}^{j}$ once $j>m$. What removes $\zeta$ is
the same fact the bisector frame turned on: $K_m=\Sigma_1+R$, so measuring the
reverse chain from its own joint $m$ replaces the anchor $\zeta$ by the
remainder. The yin and yang points of link $k$ are then the two ends of the
link that crosses it, which are consecutive reverse joints:
\begin{equation}\label{eq:yin-any}
Y_{in}(k,T)=Y_k(j),
\qquad
Y_{ang}(k,T)=Y_k(j+1)=Y_{in}(k,T)-\chi\,(k+1)^{s}(j+1)^{\,s-1},
\qquad j=C_{\ell}(k,T),
\end{equation}
with $j=m$ at the bisector by the first line of \eqref{eq:crossing-rule}.

\begin{remark}
Equation \eqref{eq:frame-joint-any} is the general frame formula, and on the
bisector it \emph{is} the yin formula. It holds there because the crossing
rule sets $j=m$ when $k=m$. Plug $k=m$ and $j=m$ into
\eqref{eq:frame-joint-any}:
\[
Y_m(m)=(m+1)^{s}\left(\sum_{n=m+1}^{m}n^{-s}+R+\chi\sum_{n=m+1}^{m}n^{\,s-1}\right).
\]
The first sum runs from $n=m+1$ to $n=m$ and is empty. The second sum runs
over the same empty range (the joint is $m$, not past it, so the $j>m$
sign-flip is not used). What remains is
\[
Y_m(m)=(m+1)^{s}\bigl(R\bigr)=R\,(m+1)^{s}=\mathrm{Yin}(s),
\]
which is \eqref{eq:yin1}. Geometrically the same thing: $J_m=\Sigma_1$ and
$K_m=\Sigma_1+R$, so
$(K_m-J_m)(m+1)^{s}=R\,(m+1)^{s}$, and the frame map is
\eqref{eq:frame-map} with $M=m+1$. The yang end is the next
reverse joint $j=m+1$. Now $j>m$, so the second sum is
$-\sum_{n=m+1}^{m+1}=-(m+1)^{s-1}$ and the first sum is still empty:
\[
Y_m(m+1)=(m+1)^{s}\bigl(R-\chi\,(m+1)^{s-1}\bigr)
=R\,(m+1)^{s}-\chi\,(m+1)^{2s-1},
\]
which is \eqref{eq:yang1}, since $2s-1=-1+2(\sigma+iI(T))$. So
\eqref{eq:frame-joint-any} at the bisector is not an analogue of yin; it
\emph{is} yin. The closed-form loops of \S\ref{sec:yinyang-general} and
\S\ref{sec:yinyang-origin} are this formula
with the two empty sums already canceled.
\end{remark}

Take $k=m-1$. Its frame factor $m^{s}$ multiplies the summand $m^{-s}$,
so that product is $1$ (forward joint $m$ is the far end of that link), and
\begin{equation}\label{eq:frame-joint-below}
Y_{m-1}(j)\;=\;1+m^{s}\left(R-\chi\!\!\sum_{n=m+1}^{j}\!\!n^{\,s-1}\right).
\end{equation}
Here the crossing changes hands, which is the one thing the bisector's does
not do. By \eqref{eq:crossing-product}, $a^{2}/m=m+2\{T\}+1+O(1/m)$, so the
crossing summand $n'=j+1$ runs $m+1\to m+2\to m+3$ across the unit and the
curves come in three arcs:
\begin{align}
n'=m+1:\quad
Y_{in}&=1+m^{s}R,
&Y_{ang}&=1+m^{s}\bigl(R-\chi(m{+}1)^{s-1}\bigr),
\label{eq:yin-below-a}\\
n'=m+2:\quad
Y_{in}&=1+m^{s}\bigl(R-\chi(m{+}1)^{s-1}\bigr),
&Y_{ang}&=Y_{in}-\chi\,m^{s}(m{+}2)^{s-1},
\label{eq:yin-below-b}\\
n'=m+3:\quad
Y_{in}&=1+m^{s}\Bigl(R-\chi\bigl((m{+}1)^{s-1}+(m{+}2)^{s-1}\bigr)\Bigr),
&Y_{ang}&=Y_{in}-\chi\,m^{s}(m{+}3)^{s-1},
\label{eq:yin-below-c}
\end{align}
the three running over $\{T\}$ in $[0,0.214]$, $[0.215,0.647]$ and
$[0.648,1)$ at $m=6$ (Figure~\ref{fig:yinyang-link5}).

\begin{figure}[H]
\centering
\includegraphics[width=\textwidth]{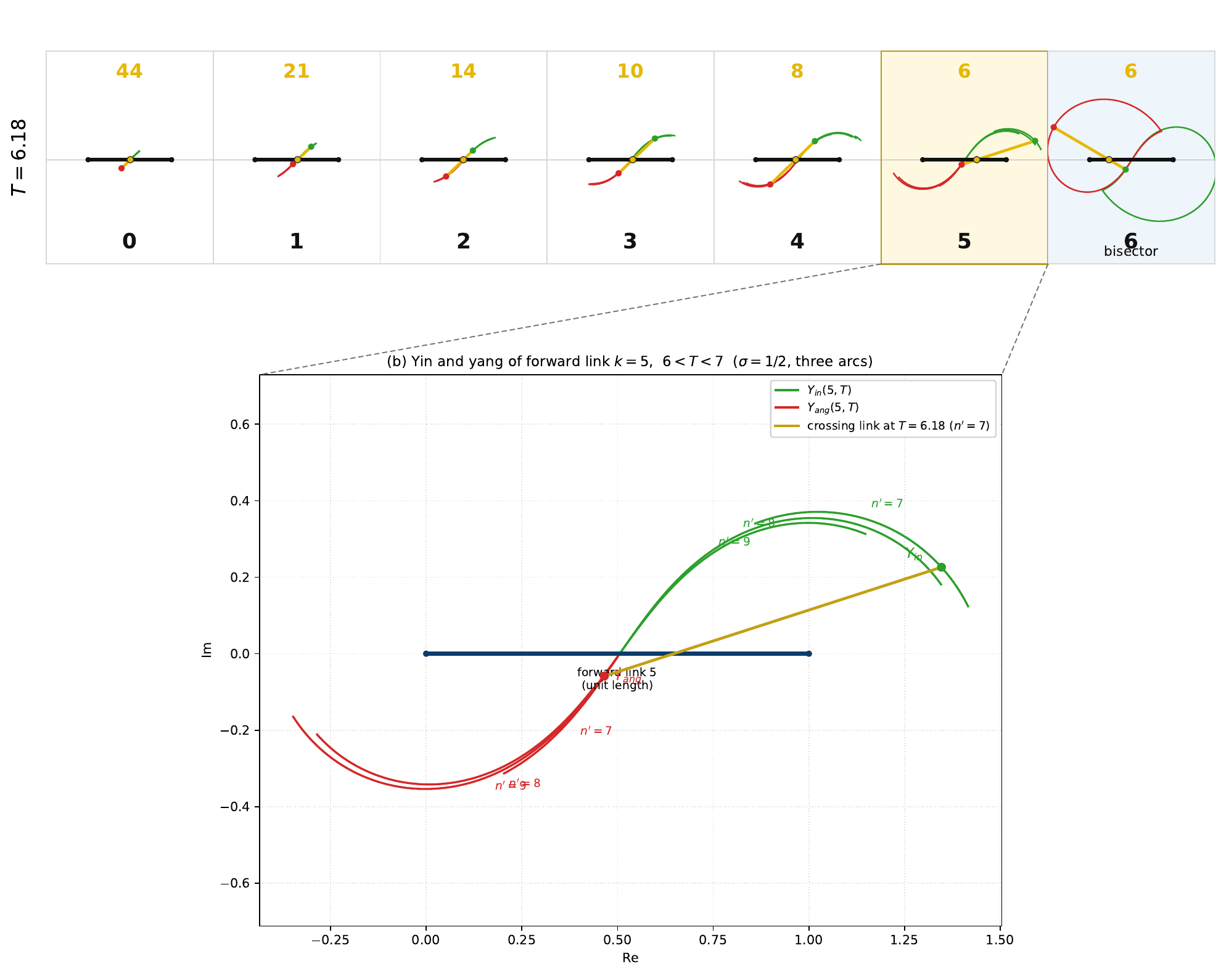}
\caption{(a) Local frames of the forward links at $\sigma=\tfrac12$,
$T=6.18$, as in the top row of Figure~\ref{fig:link-frames}, now with the
yin and yang loci of every link as $T$ runs through $(6,7)$: green
$Y_{in}(k,T)$, red $Y_{ang}(k,T)$. (b) The yin and yang curves of
forward link $k=5$ over $6<T<7$, from
\eqref{eq:frame-joint-below}--\eqref{eq:yin-below-c}.
The frame sends that link to $[0,1]$ (dark blue). Each end traces three arcs,
one per crossing summand $n'=7,8,9$, kept apart where one reverse link
gives way to the next: green is
$Y_{in}(5,T)$, red is $Y_{ang}(5,T)$. The dark-yellow segment is the crossing
link at the running example $T=6.18$, where $C_{\ell}(5,6.18)=6$ so $n'=7$.
Dashed lines carry the local coordinate frame for link $5$ of (a) down to the magnification (b).
Each arc's yin is the previous arc's yang, the same joint passing from one
reverse link to the next.
Generated by \texttt{fig\_yinyang\_link5.py}.}
\label{fig:yinyang-link5}
\end{figure}

The same local frames with each non-bisector arc continued past the unit are
Figure~\ref{fig:yinyang-extended}. The continuation holds the reverse link of
that piece and steps half a unit of $T$ past one end (the end farther from
the local-frame origin for yin, the other end for yang). The bisector is left
alone: its two curves already close.

\begin{figure}[H]
\centering
\includegraphics[width=\textwidth]{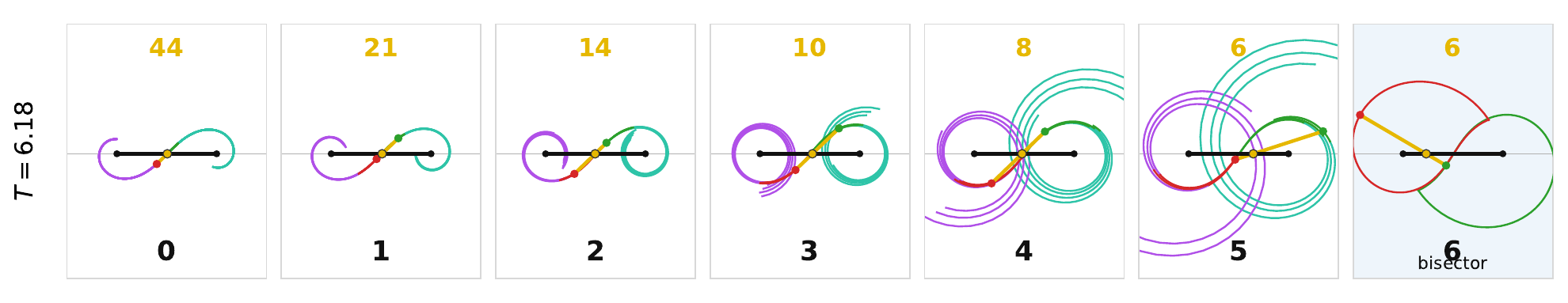}
\caption{Local frames of the forward links at $\sigma=\tfrac12$, $T=6.18$,
as in panel~(a) of Figure~\ref{fig:yinyang-link5}, with each non-bisector
yin and yang arc continued as a solid curve (teal past the green end farther
from the origin, purple past the other red end). The continuation holds the
reverse link of that piece and runs half a unit of $T$. The bisector local frame
is not extended. Generated by \texttt{fig\_yinyang\_extended.py}.}
\label{fig:yinyang-extended}
\end{figure}

Each arc's yin is the preceding arc's yang, the same point at the same $T$:
both are $Y_k(j+1)$, once as the far end of the outgoing crossing link and once
as the near end of the incoming one. That is the crossing of
\S\ref{sec:links-crossing} seen from the ends rather than the middle: one
reverse link is dragged off the forward link and the next is dragged across
it. The joint passing across the forward link belongs to both links at that
instant, so the crossing itself goes through smoothly while the yin curve
steps onto where the yang curve was and the yang steps one joint further out.

The count of reverse links dragged across forward link $k$ in one unit of
$T$ is about $2a/(k+1)$: $n'=a^{2}/(k+1)$ advances by
$2a/(k+1)=(2T+1)/(k+1)+O(T^{-1})$ over a unit of the index, and that is how
many arcs the link gets. On the unit step, $k+1=1$, so about $2a$ reverse
links cross it; at $T=6.18$, $a\approx 6.67$, so roughly a dozen. That is
why the local coordinate frame for link $0$ in Figures~\ref{fig:yinyang-link5}--\ref{fig:yinyang-extended}
is a thicket of short arcs, so close together they are not visible
separately in the figure, while the links beside the bisector only get
two, and the bisector none. Its partner is the other bisector link at
every height, so its yin and yang alone close into single loops, and they
alone have the limit teardrop of \S\ref{sec:yinyang-inf}.

The two ends fix where the crossing reverse link sits along the forward link. In each separate local coordinate frame the link
is $[0,1]$ on the real axis, so the crossing is where the chord from yin to
yang crosses that axis, the computation \eqref{eq:crossing} makes at the bisector.
Writing $v_k=\chi\,(k+1)^{s}(j+1)^{\,s-1}$ for the crossing link's own vector in
the frame, which by \eqref{eq:yin-any} is the offset $Y_{in}-Y_{ang}$, that
crossing fraction is
\begin{equation}\label{eq:crossing-fraction-any}
\lambda_k(T)
\;=\;\operatorname{Re}Y_{in}-\operatorname{Im}Y_{in}\,
\frac{\operatorname{Re}v_k}{\operatorname{Im}v_k}
\;=\;\frac{\operatorname{Im}\bigl(\overline{Y_{in}}\,v_k\bigr)}{\operatorname{Im}v_k}
\;=\;\bigl|Y_{in}\bigr|\,\frac{\sin(\beta-\alpha)}{\sin\beta},
\qquad
\alpha=\arg Y_{in},\quad \beta=\arg v_k,
\end{equation}
Let $u_k$ be the corresponding parameter on the reverse segment. The finite
segments intersect exactly when $0\leq\lambda_k\leq1$ and $0\leq u_k\leq1$.
The distance in the plane from joint $k$ is $\lambda_k\,(k+1)^{-\sigma}$. At
$k=m$ this is \eqref{eq:crossing} itself, $\lambda_m=M^{\sigma}d_1$,
the bisector point's own crossing fraction, which on the critical line is
$\hat d_1$. So the rightmost panel of Figure~\ref{fig:d1-any-link} is the
quantity the rest of the paper is built on, and the panels to its left are that
quantity's counterparts link by link.

Checked against the local frame construction in the Zest web app on the critical
line at $T=6.18,\,6.72,\,12.4,\,17.4,\,40.05$ and $434.37$:
\eqref{eq:frame-joint-any} matches the frame to $1.3\times10^{-15}$,
\eqref{eq:crossing-fraction-any} matches the app's own reading to
$6.9\times10^{-14}$, and $\lambda_m$ matches $\hat d_1$ to $1.0\times10^{-12}$,
in \texttt{probe-yinyang-below-bisector.mjs}.

\subsection{\texorpdfstring{$d_1$ and $d_2$}{d1 and d2} for any link}\label{sec:d1-any}

\S\ref{sec:decomp} split Siegel's remainder along the two bisector links,
and \S\ref{sec:yinyang-origin} found the weights
$d_1,d_2$ as the real-axis crossings of the yin and yang pair in those two
frames. The same two crossings exist for every link: \eqref{eq:yin-any} is
the pair, \eqref{eq:crossing-fraction-any} is the crossing fraction, and the
weights are that fraction read once along the forward link and once along the
reverse one.

Write $n=k+1$, $n'=j+1=C_{s}(n,T)$, and
\begin{equation}\label{eq:W-any}
W_k
\;=\;
\sum_{n=k+1}^{m}n^{-s}
\;+\;R
\;-\;\chi\sum_{n=m+1}^{j}n^{\,s-1}
\end{equation}
for the vector in the plane from forward joint $k$ to reverse joint $j$, so
that $Y_{in}(k,T)=(k+1)^{s}W_k$ by \eqref{eq:frame-joint-any}. At the
bisector both sums are empty and $W_m=R$. Let
\begin{equation}\label{eq:omega-any}
\omega_n=I(T)\log n,
\qquad
\omega_{n'}=I(T)\log n',
\qquad
\varphi_k=\arg W_k,
\qquad
\psi=\arg\chi,
\end{equation}
so $-\omega_n$ is the absolute direction of summand $n$ of $\Sigma_1$, the
same telescoping that made $\omega=\omega_{m+1}$ in \eqref{eq:omega-telescope},
and $\omega_{n'}+\psi$ is the absolute direction of summand $n'$ of
$\Sigma_2$.

\paragraph{Measured along the forward link.} In the frame of link $k$ the crossing fraction
\eqref{eq:crossing-fraction-any} is how far along that link it is from joint
$k$ to the crossing point. The frame scaled the link by $(k+1)^{\sigma}$, so the distance
in the plane is
\begin{equation}\label{eq:d1-any}
d_1(k,T)
\;=\;
(k+1)^{-\sigma}\lambda_k(T)
\;=\;
|W_k|\,
\frac{\sin\!\bigl(\omega_{n'}-\varphi_k+\psi\bigr)}
     {\sin\!\bigl(\omega_n+\omega_{n'}+\psi\bigr)}.
\end{equation}
The sine form is \eqref{eq:crossing-fraction-any} with
$Y_{in}=(k+1)^{s}W_k$ and $v_k=\chi\,(k+1)^{s}(j+1)^{s-1}$, the same
reduction that took \eqref{eq:crossing} to \eqref{eq:d1}:
$\alpha=\omega_n+\varphi_k$, $\beta=\omega_n+\omega_{n'}+\psi$, and
$|Y_{in}|=(k+1)^{\sigma}|W_k|$. At $k=m$ one has $W_m=R$,
$n=n'=\lceil T\rceil$, $\omega_n=\omega_{n'}=\omega$, and
\eqref{eq:d1-any} is \eqref{eq:d1}.

\paragraph{Measured along the reverse link.} The same crossing, read along the reverse
link from joint $j$, is the parameter
$u_k=\Im Y_{in}/\Im v_k$ of the chord in that frame. The reverse link has
length $|\chi|\,(j+1)^{\sigma-1}$, so
\begin{equation}\label{eq:d2-any}
d_2(k,T)
\;=\;
|\chi|\,(j+1)^{\sigma-1}\,u_k
\;=\;
|W_k|\,
\frac{\sin\!\bigl(\omega_n+\varphi_k\bigr)}
     {\sin\!\bigl(\omega_n+\omega_{n'}+\psi\bigr)}.
\end{equation}
$d_1(k,T)$ and $d_2(k,T)$ share a denominator, as $d_1$ and $d_2$ did. At $k=m$ this
is \eqref{eq:d2}. The fractional amounts, the hat of
\eqref{eq:frac-vs-weight}, are the two parameters themselves:
\begin{equation}\label{eq:hat-any}
\hat d_1(k,T)=\lambda_k(T)=(k+1)^{\sigma}\,d_1(k,T),
\qquad
\hat d_2(k,T)=u_k=\frac{d_2(k,T)}{|\chi|\,(j+1)^{\sigma-1}}.
\end{equation}

\paragraph{The direction, and the two-direction form.} Forward link $k$ points along
$e^{-i\omega_n}$ and reverse link $j$ along $e^{i(\omega_{n'}+\psi)}$. The
two fractional summands are those directions, shortened by the two weights,
\begin{equation}\label{eq:R-any}
R_1(k,T)=d_1(k,T)\,e^{-i\omega_n},
\qquad
R_2(k,T)=d_2(k,T)\,e^{\,i(\omega_{n'}+\psi)},
\end{equation}
and they add back to the vector they split:
\begin{equation}\label{eq:W-wedge}
W_k
\;=\;
d_1(k,T)\,e^{-i\omega_n}
\;+\;
d_2(k,T)\,e^{\,i(\omega_{n'}+\psi)}.
\end{equation}
This is the two-direction form \eqref{eq:R-two-dirs} with $R$ replaced by $W_k$ and the two bisector
directions replaced by the two links that actually cross. The sine
expressions \eqref{eq:d1-any}--\eqref{eq:d2-any} are again Cramer's rule for
that $2\times2$ real system, which is why they are real, and why they reduce
to \eqref{eq:d1}--\eqref{eq:d2} at the bisector. On the critical line the
bisector pair still satisfies $d_1(m,T)=d_2(m,T)$, but a general pair need
not: the two links have different lengths, and the crossing sits at
$\hat d_1\approx\tfrac12$ four or more links out from the bisector
(\S\ref{sec:links-crossing}), so $d_1(k)$ and $d_2(k)$ scale as
$n^{-\sigma}$ and $|\chi|n'^{\sigma-1}$.
Figure~\ref{fig:Wk-triangle} is the same decomposition, one local frame per forward
link, in the local frames of the top row of Figure~\ref{fig:yinyang-link5}:
the last local frame is the bisector, $W_6=R$, which is
Figure~\ref{fig:remainder-zoom} seen in that frame.

\begin{figure}[H]
\centering
\includegraphics[width=\textwidth]{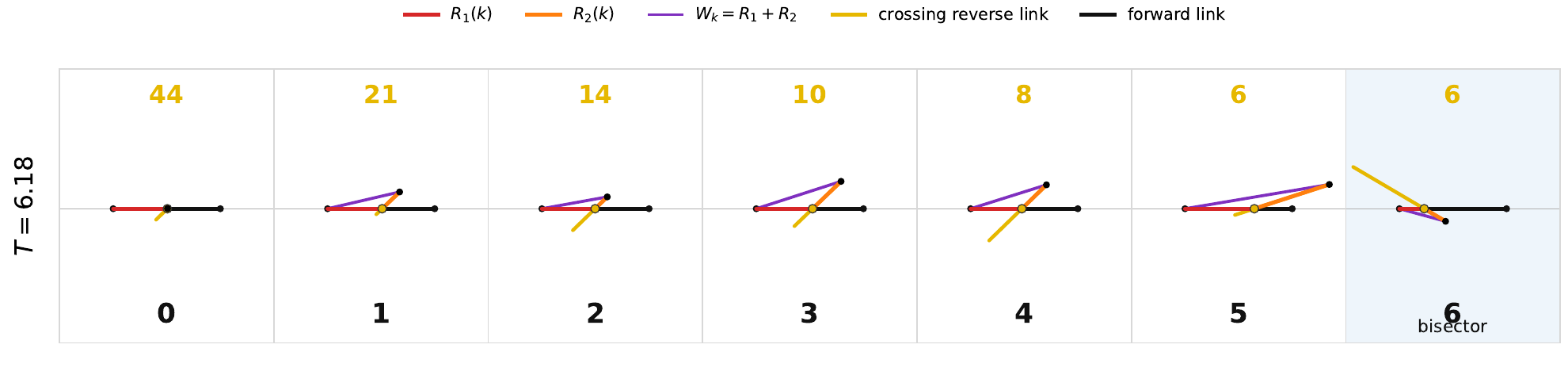}
\caption{The two-direction form \eqref{eq:W-wedge} at $\sigma=\tfrac12$, $T=6.18$, in the
local frame of each forward link $k=0,\dots,6$, the same local frames as the
top row of Figure~\ref{fig:yinyang-link5}. Each local frame sends that link to
$[0,1]$ on the real axis (black); the gold number is the reverse link
that crosses it. Red is $R_1(k)$ from the left joint to the crossing
along the unit link, orange is $R_2(k)$ from the crossing to reverse
joint $K_j$, and purple is the resultant $W_k$. The last local frame is the
bisector, $W_6=R$. Generated by \texttt{fig\_Wk\_triangle.py}.}
\label{fig:Wk-triangle}
\end{figure}

At the running example $\sigma=\tfrac12$, $T=6.18$, the seven pairs are
\begin{center}
\small
\begin{tabular}{crrrr}
\toprule
$k$ & $j$ & $d_1(k)$ & $d_2(k)$ & $\hat d_1(k)$ \\
\midrule
$0$ & $44$ & $0.506$ & $0.00185$ & $0.506$ \\
$1$ & $21$ & $0.360$ & $0.159$ & $0.510$ \\
$2$ & $14$ & $0.286$ & $0.0914$ & $0.496$ \\
$3$ & $10$ & $0.263$ & $0.184$ & $0.525$ \\
$4$ & $8$ & $0.214$ & $0.143$ & $0.478$ \\
$5$ & $6$ & $0.264$ & $0.300$ & $0.647$ \\
$6$ & $6$ & $0.0875$ & $0.0875$ & $0.231$ \\
\bottomrule
\end{tabular}
\end{center}
The last row is Table~\ref{tab:frac} at this $T$: $d_1=d_2=0.0875$ and
$\hat d_1=\sqrt7\,d_1=0.231$. The first six sit near the middle of their
forward link, as \S\ref{sec:links-crossing} measured, and $d_2(0)$ is the
tiny reverse link $44$. Figure~\ref{fig:d1-any-link} draws the same seven
numbers against the whole unit of $T$.

\begin{figure}[H]
\centering
\includegraphics[width=\textwidth]{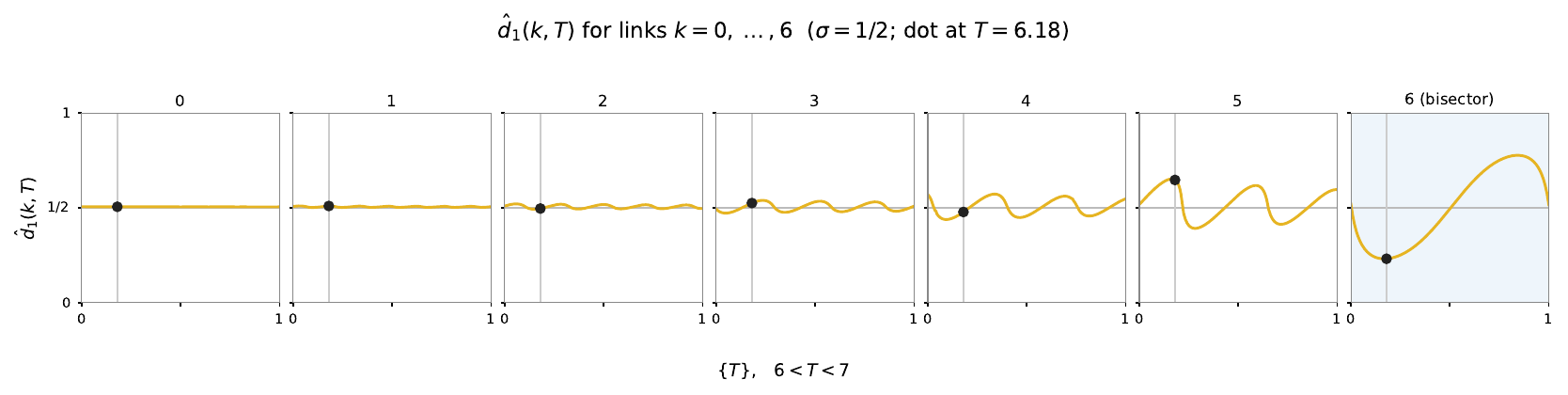}
\caption{$\hat d_1(k,T)=(k+1)^{\sigma}d_1(k,T)$ for forward links
$k=0,\dots,6$ over $6<T<7$, $\sigma=\tfrac12$, one panel per link, $x$ the
fractional part of $T$ and $y$ the crossing fraction along that link from
the left joint. The
light horizontal is $\tfrac12$; the light vertical and the dot are the
running example $T=6.18$, the seven values of the table. Away from the
bisector the track sits on $\tfrac12$ and barely moves; at $k=6$ it is the
normalized $d_1$ of Figure~\ref{fig:d1-periodicity}. Generated by
\texttt{fig\_d1\_any\_link.py}.}
\label{fig:d1-any-link}
\end{figure}

Checked at $\sigma=\tfrac12,\tfrac14,\tfrac7{10}$ and
$T=6.18,\,6.72,\,12.4,\,17.4$, for $k=0,1,m-2,m-1,m$:
\eqref{eq:d1-any}--\eqref{eq:d2-any} at $k=m$ match \eqref{eq:d1}--\eqref{eq:d2}
to $0$, \eqref{eq:hat-any} matches \eqref{eq:crossing-fraction-any} to
$1.5\times10^{-27}$, and \eqref{eq:W-wedge} holds to $4.4\times10^{-28}$, in
\texttt{check\_d1\_any\_link.py}.

\subsection{The crossings: \texorpdfstring{$\Sigma_{1x}$ and $\Sigma_{2x}$}{Sigma\_1x and Sigma\_2x} formula}\label{sec:sum-x}

\S\ref{sec:links-crossing} named the reverse partner of each forward link, and
\S\ref{sec:d1-any} read the crossing fraction \(\hat d_1(k,T)\) along that
link. Those two data locate a point on every forward link \(0,\dots,m\).
Connecting the points in order is the same Dirichlet sum as
\(\Sigma_1+R_{1}\), re-jointed: the joints sit at the crossings instead of
at the integers, and the walk ends at the bisector point \(B_1\) rather than
at \(\Sigma_1\). Call that walk \(\Sigma_{1x}\).\footnote{The subscript \(x\) is for crossing, not a dummy index: \(\Sigma_{1x}\) is the crossing-joint form of \(\Sigma_1\), and \(\Sigma_{2x}\) of \(\Sigma_2\).}

The crossing fraction of link \(k\), from joint \(k\) to the crossing point,
is \(\hat d_1(k,T)\) of \eqref{eq:hat-any}; \(T\) is fixed
through what follows, so we write it \(\hat d_1(k)\). Link \(k\) carries the
summand \(n=k+1\), so the crossing on it is
\begin{equation}\label{eq:sum-x-point}
P_k
\;=\;
J_k+\hat d_1(k)\,(k+1)^{-s}
\;=\;
\sum_{n'\le k}n'^{-s}+\hat d_1(k)\,(k+1)^{-s},
\end{equation}
with \(J_k\) as in \eqref{eq:crossing-joints}. The last crossing is the
bisector point itself, \(P_m=B_1\). The walk
\(0=P_{-1}\to P_0\to\cdots\to P_m\) therefore telescopes to \(B_1\) by
construction. Its first step is \(P_0=\hat d_1(0)\), since \(J_0=0\) and
\(1^{-s}=1\). Taking that step separately and substituting
\eqref{eq:sum-x-point} in the rest, with the empty sum \(\sum_{n'\le0}=0\),
removes the intermediate points and keeps \(0^{-s}\) out of the display:
\begin{equation}\label{eq:sum-1x}
\Sigma_{1x}
\;=\;
\hat d_1(0)+\sum_{k=1}^{m}\Biggl(
\sum_{n'\le k}n'^{-s}+\hat d_1(k)\,(k+1)^{-s}
-\sum_{n'\le k-1}n'^{-s}-\hat d_1(k-1)\,k^{-s}
\Biggr)
\;=\;
\sum_{n'\le m}n'^{-s}+\hat d_1(m)\,(m+1)^{-s}.
\end{equation}
Each full summand \(k^{-s}\) is divided across two consecutive steps,
\(\hat d_1(k-1)\,k^{-s}\) in one and \((1-\hat d_1(k-1))\,k^{-s}\) in the next, and those
two pieces add to one. What remains is the integer partial sum plus the last
fractional stub, which is the ordinary first-half formula
\eqref{eq:B1-sum} once the stub is named.

That stub is \(\hat d_1(m)(m+1)^{-s}\). The bisector link is link
\(m=\lfloor T\rfloor\). It runs from \(J_m=\Sigma_1\) to
\(J_{m+1}=\Sigma_1+(m+1)^{-s}\), so the vector along it is the next Dirichlet
term, \(n=M=m+1\). The factor \((m+1)^{-s}\) is that whole
unused summand, the faint dashed continuation past \(\Sigma_1\) in
Figure~\ref{fig:spiral-summands}. The number \(\hat d_1(m)\) is the crossing
fraction: how far along the link the two bisector links meet. That meeting is \(B_1\), and
\eqref{eq:crossing-rstar} already computed it: the crossing fraction on the
bisector link is the remainder weight itself,
\(\hat d_1(m)=\hat d_1\). Therefore
\begin{equation}\label{eq:sum-1x-remainder}
\hat d_1(m)\,(m+1)^{-s}
\;=\;
\hat d_1\,(m+1)^{-s}
\;=\;
R_{1},
\end{equation}
the ``one more partial summand'' of \S\ref{sec:summand}: not a new term, the
next summand scaled by a real signed coefficient. The unhatted weight \(d_1\)
multiplies a unit vector in the direction of \((m+1)^{-s}\), so
\(|R_{1}|=|d_1|\). The hatted quantity \(\hat d_1\) is a signed normalized
coefficient, and is literally a fraction of the link when
\(0\leq\hat d_1\leq1\); it multiplies the full summand, whose length is
\(M^{-\sigma}\), and
\(\hat d_1\cdot M^{-\sigma}=d_1\) is the same remainder
\eqref{eq:frac-vs-weight}. At the running example \(\sigma=\tfrac12\),
\(T=6.18\), one has \(m=6\), next term \(7^{-s}\) of length
\(1/\sqrt7\approx0.378\), \(d_1\approx0.0875\), and
\(\hat d_1=\sqrt7\,d_1\approx0.231\): the walk uses about \(23\%\) of link
\(6\), and that stub is \(R_{1}\). The first six \(\hat d_1(k)\) in the table of
\S\ref{sec:d1-any} sit near \(\tfrac12\), as the saddle of
\S\ref{sec:links-crossing} said they would; only \(\hat d_1(m)\) is the named
remainder weight. So \eqref{eq:sum-1x} is
\begin{equation}\label{eq:sum-1x-B1}
\Sigma_{1x}
\;=\;
\sum_{n'\le m}n'^{-s}+\hat d_1(m)\,(m+1)^{-s}
\;=\;
\Sigma_1+R_{1}
\;=\;
B_1,
\end{equation}
and not \(\Sigma_1\) itself.

The same construction on the other Dirichlet sum gives \(\Sigma_{2x}\). Write
\(L_k=\chi(s)\sum_{n'\le k}n'^{\,s-1}\) for the joints of \(\Sigma_2\) from
the origin, and \(\hat d_2(k)\) for the crossing fraction of reverse
link \(k\) from \(L_k\) to its crossing point, the \(u_k\) of
\eqref{eq:hat-any}. Then
\begin{equation}\label{eq:sum-2x-point}
Q_k
\;=\;
L_k+\hat d_2(k)\,\chi(s)\,(k+1)^{s-1},
\end{equation}
and the same telescoping, with first step \(Q_0=\hat d_2(0)\,\chi(s)\) and the
empty sum \(L_0=0\), is
\begin{equation}\label{eq:sum-2x}
\begin{aligned}
\Sigma_{2x}
&\;=\;
\hat d_2(0)\,\chi(s)+\sum_{k=1}^{m}\Biggl(
L_k+\hat d_2(k)\,\chi(s)\,(k+1)^{s-1}
-L_{k-1}-\hat d_2(k-1)\,\chi(s)\,k^{s-1}
\Biggr)\\
&\;=\;
\chi(s)\sum_{n'\le m}n'^{\,s-1}+\hat d_2(m)\,\chi(s)\,(m+1)^{s-1}.
\end{aligned}
\end{equation}
The last stub is \(\hat d_2(m)=\hat d_2\), so
\begin{equation}\label{eq:sum-2x-B2}
\Sigma_{2x}
\;=\;
\Sigma_2+R_{2}
\;=\;
B_2.
\end{equation}
Nothing here used \(\sigma=\tfrac12\). The split
\(R=R_{1}+R_{2}\) of \S\ref{sec:decomp} holds at every \(\sigma\), the
weights \(\hat d_1,\hat d_2\) of \eqref{eq:frac-vs-weight} are defined at
every \(\sigma\), and so
\begin{equation}\label{eq:sum-x-zeta}
\Sigma_{1x}+\Sigma_{2x}
\;=\;
B_1+B_2
\;=\;
\zeta
\end{equation}
off the critical line as well as on it. On \(\sigma=\tfrac12\) the two walks
are the two legs of \S\ref{sec:legs}; off it they are just the two summands
of Theorem~\ref{thm:decomp}, re-jointed. Reflecting \(\Sigma_{1x}\) through
\(\zeta/2\) sends the origin to \(\zeta\) and fixes \(B_1\), which is the
other way of seeing the second half on the critical line, not the walk of
\(\Sigma_{2x}\) from the origin. The reverse partners of the forward links
\(0,\dots,m\) are not consecutive, so the points \(P_k\) are not the joints
of a walk along the reverse chain. This re-jointing is not Kapitonets'
iterated-midpoint polygon~\cite{Kapitonets2019}: that construction is the
binomial transform of the partial sums and does not aim at \(B_1\) or
\(B_2\).

\subsection{First-part and second-part sums}\label{sec:half-link-sums}

Here we use the places where each link is crossed to divvy up the main sum
into two separate sums: one adds up the first parts of all the summands, the
other all the second parts. The remainder is handled automatically, because
it is itself the first part of a summand. This subsection is nothing but an
exploration of a rearrangement of the terms of the main sum.

Each forward link $k=0,\dots,m-1$ is cut at its crossing $P_k$ of
\eqref{eq:sum-x-point} into two pieces: a first part
$\hat d_1(k)\,(k+1)^{-s}$ from the joint to the crossing, and a second part
$(1-\hat d_1(k))\,(k+1)^{-s}$ from the crossing to the next joint. The bisector
link contributes only its first part, the stub
$\hat d_1(m)\,(m+1)^{-s}=R_{1}$ from $\Sigma_1$ to $B_1$. Summing those
pieces as free vectors,
\begin{equation}\label{eq:half-link-sums}
V_1
\;=\;
\sum_{k=0}^{m}\hat d_1(k)\,(k+1)^{-s},
\qquad
V_2
\;=\;
\sum_{k=0}^{m-1}\bigl(1-\hat d_1(k)\bigr)\,(k+1)^{-s},
\end{equation}
gives $V_1+V_2=B_1$: the full links $0,\dots,m-1$ plus the bisector stub
are $\Sigma_1+R_{1}$. Figure~\ref{fig:half-link-sums} draws $V_1$ from
the origin and $V_2$ from the tip of $V_1$.

\begin{figure}[H]
\centering
\includegraphics[width=0.82\textwidth]{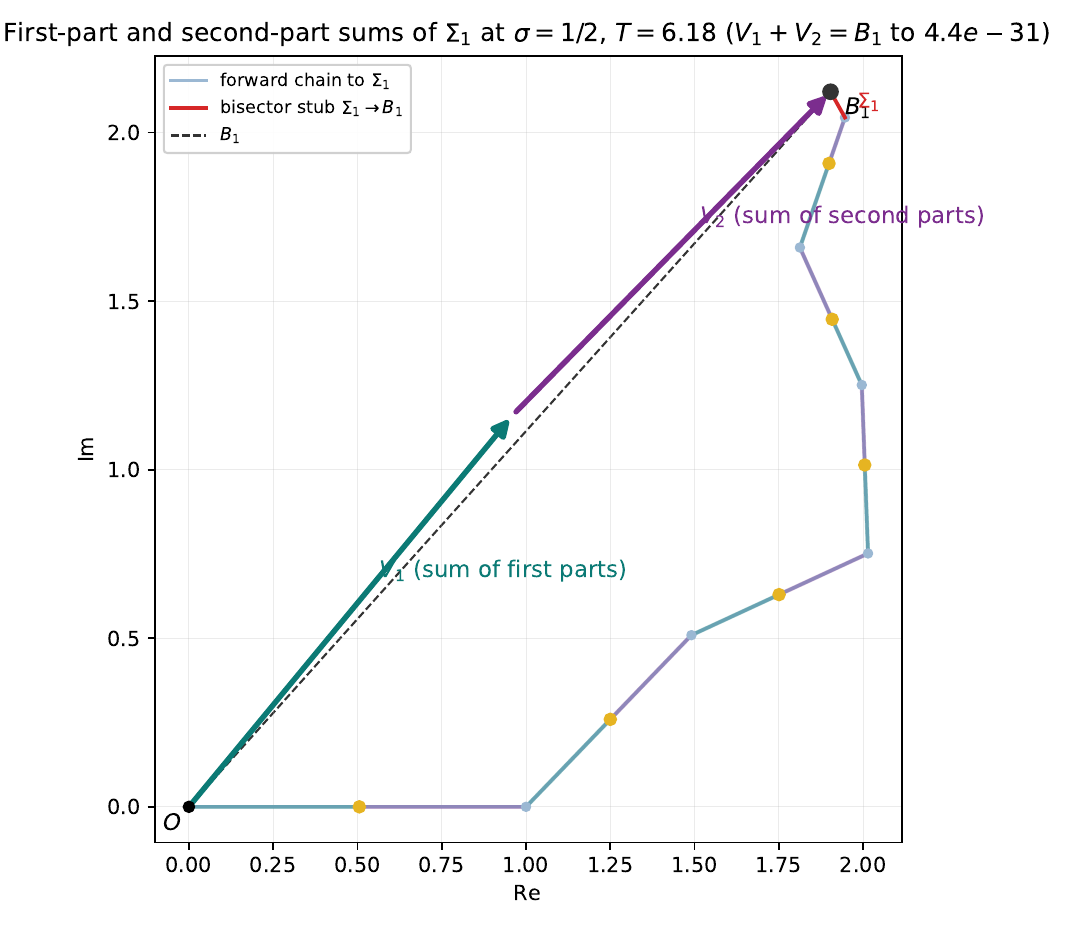}
\caption{Each forward link cut at its yellow crossing: teal the first
part, purple the second, red the bisector stub $\Sigma_1\to B_1$.
The teal arrow is $V_1$, the sum of the first parts including that
stub; the purple arrow is $V_2$, the sum of the second parts, drawn
from the tip of $V_1$. They meet at $B_1$. At
$\sigma=\tfrac12$, $T=6.18$. Generated by
\texttt{fig\_half\_link\_sums.py}.}
\label{fig:half-link-sums}
\end{figure}

\subsection{General partial summand Riemann zeta function}\label{sec:cut-any-link}

The partial-summand Riemann zeta formula \eqref{eq:zeta-clean}, our main
result, presented near the start of this paper, has the same summation upper bound in
both the main sum and the dual sum. But with the knowledge of which links
cross, we can make those two upper bounds different. This subsection presents
that new, more general form of the Riemann zeta function.

For any nonparallel pair of summand directions, Cramer's rule gives unique
real coefficients and the displayed arbitrary-cut identity. The crossing
conjecture is needed only to show that both coefficients lie in $[0,1]$.
For $j=C_{\ell}(k,T)$, the corresponding coefficient relation, written with
the joints of \eqref{eq:crossing-joints}, is
\begin{equation}\label{eq:cut-two-ways}
J_k+\hat d_1(k,T)\,(k+1)^{-s}
\;=\;
K_j-\hat d_2(k,T)\,\chi(s)\,(j+1)^{\,s-1},
\end{equation}
and moving the reverse sum inside $K_j$ across gives
\begin{equation}\label{eq:zeta-any-cut}
\begin{gathered}
\boxed{\;
\zeta(s)
\;=\;
\sum_{n=1}^{k}n^{-s}+\hat d_1(k,T)\,(k+1)^{-s}
\;+\;\chi(s)\Biggl[\;\sum_{n=1}^{j}n^{\,s-1}+\hat d_2(k,T)\,(j+1)^{\,s-1}\Biggr]
\;}\,,\\[4pt]
j=C_{\ell}(k,T).
\end{gathered}
\end{equation}
This is \eqref{eq:zeta-clean} with the two upper summation bounds unequal,
though not independent: $j$ is the crossing partner of $k$ named by the rule
\eqref{eq:crossing-rule}. At $k=m$ the
pairing is the self-pair, $j=m$, both cutoffs are $\lfloor T\rfloor$, and
\eqref{eq:zeta-any-cut} is the boxed formula \eqref{eq:zeta-clean}, hats and all. At any other $k$
the two cutoffs differ: the forward sum stops at $k$ while the reverse one
runs to $j\approx a^{2}/(k+1)$, the two being reciprocal about $a$ by
\eqref{eq:crossing-product}. Reaching that far means carrying the reverse
chain past its own bisector link, the continuation already written into
\eqref{eq:W-any}.

For comparison, equation (4.13.1) of Titchmarsh's
book~\cite[\S4.13]{Titchmarsh1986}, the approximate functional equation,
states: if $h$ is a positive constant and
\[
0<\sigma<1,\qquad 2\pi xy=t,\qquad x>h>0,\qquad y>h>0,
\]
then
\begin{equation}\label{eq:titchmarsh-afe}
\zeta(s)\;=\;\sum_{n\le x}\frac{1}{n^{s}}
\;+\;\chi(s)\sum_{n\le y}\frac{1}{n^{1-s}}
\;+\;O\bigl(x^{-\sigma}\log|t|\bigr)
\;+\;O\bigl(|t|^{\frac12-\sigma}\,y^{\sigma-1}\bigr).
\end{equation}
The two sums are the same, and his constraint $2\pi xy=t$, that is
$xy=a^{2}$, ties the two cutoffs exactly as the crossing product
\eqref{eq:crossing-product} does. Where he pays two error terms for the
freedom to slide the cut, \eqref{eq:zeta-any-cut} pays with the two
fractional terms, exactly.

The extreme cases are the ends of the pairing. Take $j=0$ in
\eqref{eq:zeta-any-cut}: the first reverse link is
$\chi(s)\,1^{\,s-1}=\chi(s)$, so the reverse sum is empty and it is the second
half that collapses to a single term,
\begin{equation}\label{eq:zeta-first-rev-link}
\boxed{\;
\zeta(s)
\;=\;
\sum_{n=1}^{k}n^{-s}+\hat d_1(k,T)\,(k+1)^{-s}+\hat d_2(k,T)\,\chi(s)
\;}\,,
\qquad k=\Bigl[\frac{I(T)}{2\pi}\Bigr]-1\ \text{ or one less},
\end{equation}
the forward cutoff being the partner of $n'=1$, about $(T+\tfrac12)^{2}$
links out. Which of the two it is the crossing test decides, exactly as in the
third line of \eqref{eq:crossing-rule}: the nearest integer names the paired
link at most heights and its lower neighbor at the rest, and off the critical
line the choice can turn on $\sigma$ as well. Here $\Sigma_1$ is walked whole, out to link $k$ and a fraction of the next, and the walk
lands on the segment hanging off $\zeta$ in the direction of $-\chi$. On the
critical line that segment has length one, so the forward chain carried that
far passes within one unit of $\zeta$, and what is left over is
$\hat d_2(k,T)\,\chi(s)$, a real multiple of $\chi$. At $\sigma=\tfrac12$,
$T=6.18$ the cutoff is $k=44$ and the two fractions are $0.012402$ and
$0.505537$, reproducing $\zeta$ to $9.9\times10^{-32}$.

For comparison, equation (4.11.1) of Titchmarsh's
book~\cite[\S4.11]{Titchmarsh1986}, the simplest approximation of
$\zeta(s)$ in the critical strip by a partial sum of its Dirichlet
series, states:
\begin{equation}\label{eq:titchmarsh-partial-sum}
\zeta(s)\;=\;\sum_{n\le x}\frac{1}{n^{s}}
\;-\;\frac{x^{1-s}}{1-s}
\;+\;O\bigl(x^{-\sigma}\bigr),
\end{equation}
uniformly for $\sigma\ge\sigma_0>0$, $|t|<2\pi x/C$, when $C$ is a given
constant greater than $1$. Both close one long partial sum with a small
correction: his is the smooth tail $-x^{1-s}/(1-s)$ plus an error term,
while \eqref{eq:zeta-first-rev-link} closes exactly with the two
fractional terms. And the cutoffs match in scale: his condition puts $x$
at order $t/2\pi$ at least, which is where
$k=\bigl[I(T)/2\pi\bigr]-1$ sits.

The cut at the other end of the pairing is the mirror of this one, $k=0$.
The first forward link runs from $J_0=0$ to
$J_1=1^{-s}=1$: it is the unit segment of the real axis. So the forward sum is
empty, the link is the number $1$, and the entire first half of
\eqref{eq:zeta-any-cut} collapses to a single real number,
\begin{equation}\label{eq:zeta-first-link}
\boxed{\;
\zeta(s)
\;=\;
\hat d_1(0,T)
\;+\;\chi(s)\Biggl[\;\sum_{n=1}^{j}n^{\,s-1}+\hat d_2(0,T)\,(j+1)^{\,s-1}\Biggr]
\;}\,,
\qquad j=C_{\ell}(0,T).
\end{equation}
The reverse cutoff is the partner of the first summand, $n'=a^{2}/1$, so $j$
is $\bigl[\,I(T)/2\pi\,\bigr]-1$ or one less, by \eqref{eq:crossing-rule}
again. On the critical line that is the same integer as the forward cutoff in
\eqref{eq:zeta-first-rev-link}, the chain mirror \eqref{eq:chain-mirror}
carrying the one cut to the other; off the line the two part, as the
measurements at the end of this subsection show. And
$\hat d_1(0,T)=d_1(0,T)$ here: link $0$ has length one, so the fraction of the
link and the distance along it are the same number, and this is the only link
on which the hat of \eqref{eq:frac-vs-weight} does nothing.

Read as a walk, \eqref{eq:zeta-first-link} says this. Start at $\zeta$ and come
in along the reverse chain, $j$ whole links and then a fraction
$\hat d_2(0,T)$ of the next. The walk lands on the real axis, between $0$ and
$1$, and $\hat d_1(0,T)$ is where. Equivalently, $\zeta$ minus that whole
reverse tail is a real number in the unit interval. It is not just any point of
that interval either: it is essentially the midpoint, the $k=0$ panel of
Figure~\ref{fig:d1-any-link} sitting on $\tfrac12$. The crossing fraction has
the closed form of \eqref{eq:d1-any} read at $k=0$, where
$\omega_n=I(T)\log1=0$ and $W_0=K_j$ is the reverse joint itself:
\begin{equation}\label{eq:d1-first-link}
\hat d_1(0,T)
\;=\;
\operatorname{Re}K_j-\operatorname{Im}K_j\,\cot\bigl(\omega_{n'}+\psi\bigr),
\qquad n'=j+1,
\end{equation}
the crossing of the real axis by the line that carries reverse link $j$.

None of this is a shortcut to a value. The reverse sum now has about
$(T+\tfrac12)^{2}$ terms where the balanced cut of \eqref{eq:zeta-clean} has
$2\lfloor T\rfloor$ in all, and the two fractions are read off $\zeta$ just as
$d_1$ and $d_2$ were: \eqref{eq:zeta-first-link} is one complex equation in the
two real unknowns $\hat d_1(0,T)$ and $\hat d_2(0,T)$, so it fixes them from
$\zeta$ and not the other way about. What it fixes is a position. Nor is the
crossing the only one: the continued reverse chain crosses the unit segment many
times, $87$ times before link $400$ at the running example, since both chains
wind. The pairing law picks one of them out, the link paired with the first
summand, and \eqref{eq:d1-first-link} says where that one lands.

At $\sigma=\tfrac12$, $T=6.18$, one has $a^{2}=44.5389$, so $j=44$, the tiny
reverse link of the table in \S\ref{sec:d1-any}; then
$\hat d_1(0)=0.505537$ and $\hat d_2(0)=0.012402$, and the right side of
\eqref{eq:zeta-first-link} reproduces $\zeta$ to $4.6\times10^{-41}$ at forty
digits. The same cut at $k=0,\dots,6$ and that $T$ gives seven formulas for the
one number, with reverse cutoffs $44,21,14,10,8,6,6$: the list of
\S\ref{sec:links-crossing} with the self-pair in last place. Checked on $500$
samples of the critical line spread over $2.05\le T\le60$, and on $100$ more at
$\sigma=0.25,\,0.35,\,0.7,\,0.9$: the crossing lay inside both links at every
sample, and the two sides of \eqref{eq:zeta-first-link} agreed to
$3.2\times10^{-30}$ at thirty digits. The first crossing stayed near the middle
of the unit segment throughout, $\hat d_1(0,T)$ in $[0.5006,0.5233]$ on the line
and in $[0.4858,0.5467]$ off it, and on the line it approaches $\tfrac12$ from
above as $T$ grows: the product $T\bigl(\hat d_1(0,T)-\tfrac12\bigr)$ reads
$0.0354,\,0.0365,\,0.0370,\,0.0373$ at $T=10.5,\,20.5,\,40.5,\,80.5$. The
nearest-integer $j_0$ was the paired link in $481$ of the $500$ line samples;
at the other $19$ it was $j_0-1$, the third line of \eqref{eq:crossing-rule},
and at each of those the crossing sat at $\hat d_2\ge0.98$, at the far end of
the reverse link and about to hand over. In \texttt{check\_first\_link.py}.

The two cuts are mirrors on the critical line: the fractions $0.012402$ and
$0.505537$ of \eqref{eq:zeta-first-rev-link} are the pair of
\eqref{eq:zeta-first-link} exchanged, because the chain mirror
\eqref{eq:chain-mirror} carries forward link $n$ to reverse link $n$ and so
carries the one crossing to the other. Off the critical line the two formulas
are independent, the mirror being what fails there: at $\sigma=\tfrac14$ and
the same height the partner of reverse link $0$ is forward link $43$, with
fractions $0.9721$ and $0.5141$, while the partner of forward link $0$ stays
at reverse link $44$. The two cuts, which name one integer on the line, name
two off it, and which of them drops turns over at $\sigma=\tfrac12$: at
$\sigma=\tfrac34$ and $\sigma=\tfrac9{10}$ it is the reverse cutoff that reads
$43$ and the forward one $44$.

\subsection{Left unproved}\label{sec:unproved}

Section~\ref{sec:general-yinyang} mixes identities that are proved,
identities that are written down and then checked, and readings that are
only measurements. Two
statements about the crossings we would like to prove and have not.

\begin{enumerate}
\item
Every forward link is crossed by a reverse link, and the partner is the
summand $n'=a^{2}/n$ of \eqref{eq:crossing-product}
(\S\ref{sec:links-crossing}).
The product law is motivated by a stationary-phase sweep and then
checked on $118{,}007$ samples.
Proposition~\ref{prop:crossing-angle} settles the direction: the named
partner is never parallel to the link and never more than a quarter turn
from antiparallel, because $nn'$ is an integer. What is missing is the
position. A theorem would have to show that the
two polylines meet, not only that the named integer usually works, and
that needs the sweep \eqref{eq:crossing-saddle} with an effective error
term rather than as a stationary-phase heuristic.

\item
The pairing is an involution: $C_{s}(C_{s}(n,T),T)=n$.
Claimed as the symmetry of \eqref{eq:crossing-product}; if the previous
item is proved, this is immediate.
\end{enumerate}

\section{\texorpdfstring{$\vartheta_1$, $\vartheta_2$}{theta1, theta2} and
the zero-counting function}\label{sec:counting}

We direct our attention back to the legs of \S\ref{sec:legs} and recall the
two leg angles $\vartheta_1$ and $\vartheta_2$. The angle $\vartheta_1$ is
the argument of the bisector point, $B_{1ps}$: it is the angle that Leg~1,
running from the origin to that point,
makes with the real axis. The angle $\vartheta_2$ is the fold angle at the
bisector point: the angle from Leg~1 to Leg~2,
$\vartheta_2=\arg(B_2/B_1)$, recording how far Leg~2 is folded back against
Leg~1. Now isolate $\vartheta_2$ by laying Leg~1 along the $x$-axis, so that
our frame of reference is attached to Leg~1: in that frame we would observe
$\vartheta_2$ swing Leg~2 around while everything else stays stationary. Of
course $\zeta$ sits at the far end of Leg~2, and both legs have the same
length when $\sigma=\tfrac12$, so every time $\vartheta_2$ passes $\pi$,
Leg~2 folds straight back along Leg~1, its far end lands on the origin, and
a zeta zero occurs. Therefore counting the turns of $\vartheta_2$ would give
us a perfect zeta-zero counting formula.

\begin{figure}[H]
\centering
\includegraphics[width=0.9\textwidth]{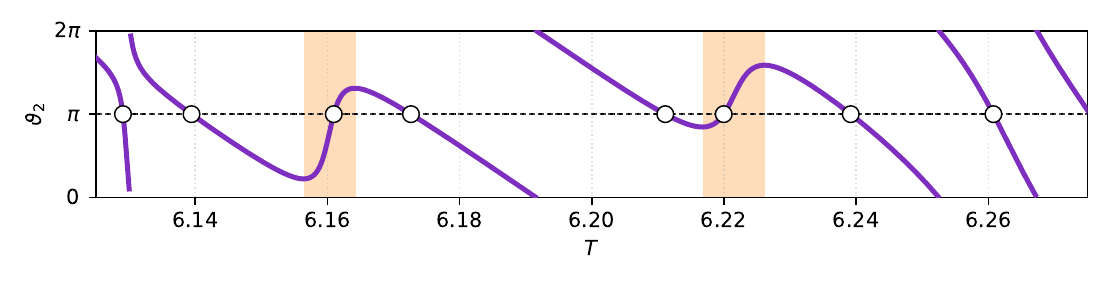}
\caption{The fold angle $\vartheta_2$ of the $ps$ split on the critical line
over $6.125\leq T\leq6.275$, reduced to $[0,2\pi)$.
Open circles mark the zeta zeros, where $\vartheta_2$ passes through $\pi$
(dashed); the shaded stretches are where $\vartheta_2$ runs retrograde, in
the shading of Figure~\ref{fig:count-theta2-ovals}, which returns to this
window in detail. Generated by \texttt{fig\_theta2\_retro\_strip.py}.}
\label{fig:theta2-retro-strip}
\end{figure}

The problem is that $\vartheta_2$
sometimes reverses its motion, and during those times the count would be off
by one. We call the times when $\vartheta_2$ reverses direction
\emph{retrograde}. We chose that name deliberately. When Claudius Ptolemy
observed a planet's orbit from the earth and saw it run backwards in the sky
(retrograde), he kept the earth at the center of the world frame, and so had
to keep the complex epicycles to describe the orbits. We do not want to do
that. We want instead to change our frame of reference to the sun and make
our observation from there, where it is easier to understand that the orbits
are not complicated: they are just ellipses. That is how we fix the
$\vartheta_2$ retrograde problem. We pick a different bisector point, which we call $B_1^{\ast}$ below, with a
different $\vartheta_2$ that never goes retrograde, and from \emph{that}
$\vartheta_2$ we build a new counting formula, which we call $N_{\ast}$.
Unlike $N$, $N_{\ast}$ is continuous and always increasing (the increase is
conditional on RH; Remark~\ref{rem:nstar-mono} makes this precise). And also
unlike $N$, it counts only the zeros on $\sigma=\tfrac12$, while $N$ counts
all the zeros: if there are any off the line, $N$ will count them and
$N_{\ast}$ will not. We think these properties of $N_{\ast}$ will prove
useful. These properties, and the zeta sum's geometry from various frames of
reference, deserve further study.

\subsection{Review: the Riemann--von Mangoldt formula and
\texorpdfstring{$N_{\mathrm{ps}}$}{Nps}}\label{sec:counting-review}

The well-known Riemann--von Mangoldt formula for the number of zeros of $\zeta$ with
imaginary part in $(0,t]$, in its exact form, reads
\begin{equation}\label{eq:N-theta}
N(t) \;=\; \frac{\vartheta(t)}{\pi} \;+\; 1 \;+\; S(t),
\qquad
S(t) \;=\; \frac{1}{\pi}\arg\zeta\!\left(\tfrac12+it\right),
\end{equation}
with $\vartheta$ the Riemann--Siegel theta function of Appendix~\ref{sec:phase-bound}.
The argument here is the one fixed by continuous variation along the two
straight segments from $2$ to $2+it$ and then to $\tfrac12+it$, starting from
$\arg\zeta(2)=0$. The branch has to be named: along any one path $\arg\zeta$ is
determined only up to a multiple of $2\pi$, and $S$ only up to an even integer,
so without the convention \eqref{eq:N-theta} would fix $N$ only modulo $2$.
This is an identity (\cite[Theorem~9.3]{Titchmarsh1986}), obtained by applying the argument
principle to $\xi$ around a rectangle: $\vartheta(t)/\pi+1$ is the contribution of
the $\Gamma$ and $\pi^{-s/2}$ factors, and $S(t)$ is the contribution of $\zeta$
itself along the critical line. Stated in terms of $\vartheta$ the formula is
exact: \eqref{eq:N-theta} carries no error term at all; approximation enters
only when $\vartheta$ is replaced by its expansion, and inexactness only when $S$
is dropped. A word on usage before the counting begins: here and in the rest
of the paper, an \emph{ordinate} always means the ordinate of a zeta zero on
the critical line, the imaginary part $\gamma$ of a zero of
$\zeta(\tfrac12+it)$, equivalently a real zero of $Z$, with
$\gamma_1<\gamma_2<\cdots$ numbering them up the line; under the Riemann
hypothesis these are all the nontrivial zeros, and unconditionally they are
the ones $N_{\ast}$ counts. On the index
the ordinate $\gamma_n$ sits at $T=I^{-1}(\gamma_n)$.
Figure~\ref{fig:staircase-67} shows the two pieces over one unit of
the index.

\begin{figure}[htbp]
\centering
\includegraphics[width=0.92\textwidth]{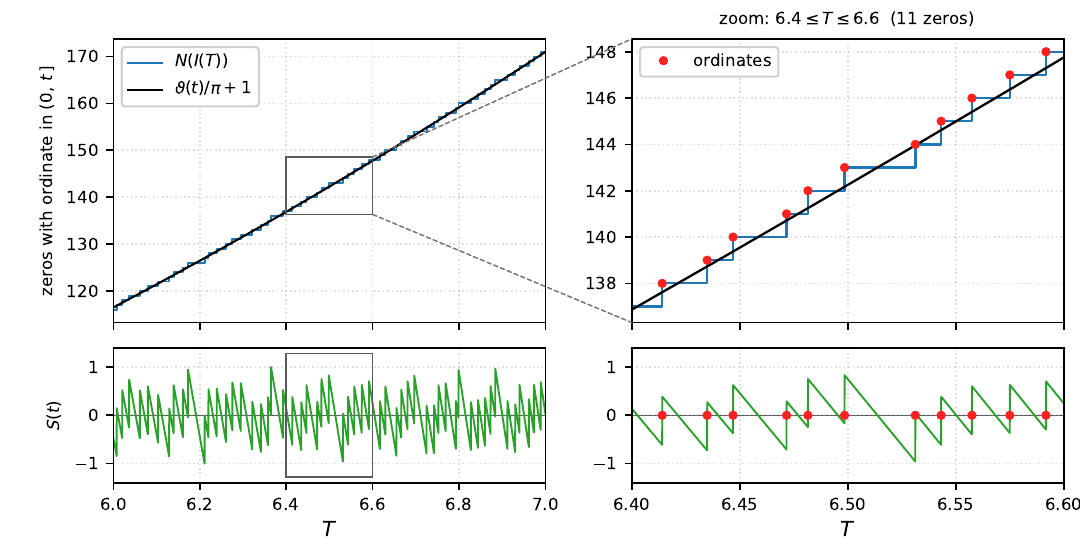}
\caption{\emph{Left:} the staircase $N(I(T))$ (blue) over $6\leq T\leq7$, which
is $264.9\leq t\leq352.9$ and carries the $55$ ordinates $\gamma_{117}$ through
$\gamma_{171}$, against the smooth part $\vartheta(t)/\pi+1$ of
\eqref{eq:N-theta} (black). The vertical gap between them is $S$, which here
stays within $[-1.002,+1.001]$. \emph{Right:} the boxed window
$6.4\leq T\leq6.6$ magnified, with the eleven ordinates it holds,
$\gamma_{138}$ through $\gamma_{148}$, marked in red at the top of each riser
and placed at $T=I^{-1}(\gamma)$. The risers are unevenly spaced while the
smooth part climbs steadily. Below each panel is the gap itself, $S$ (green),
which at this magnification shows plainly as a sawtooth: a unit jump at every
ordinate and a steady slide between them. Generated by \texttt{fig\_staircase\_67.py}.}
\label{fig:staircase-67}
\end{figure}

\paragraph{All of the discontinuity is in $S$.} $N$ is a step function and
$\vartheta(t)/\pi+1$ is analytic, so $S$ carries every jump: at a height
holding zeros it climbs by their total multiplicity, which is one at a simple
ordinate and two at a symmetric off-line pair $\rho,1-\bar\rho$, the two
sharing an imaginary part. In between it falls smoothly, at the rate
\begin{equation}\label{eq:S-slope}
S'(t) \;=\; -\frac{\vartheta'(t)}{\pi} \;=\; -\frac{1}{2\pi}\log\frac{t}{2\pi}
\;+\;O(t^{-2}),
\end{equation}
which is minus the mean zero density. Across an ordinate $S$ jumps up by one (at a
simple zero);
between ordinates it slides back down along that line. The sawtooth is why $S$ has mean zero without being small: each unit jump is
repaid by a decline whose steepness is the density itself. At an ordinate the
definition of $S$ fails, since $\zeta$ vanishes there and its argument is
undefined; one sets $S(\gamma)=\tfrac12[S(\gamma^{-})+S(\gamma^{+})]$. Then
\eqref{eq:N-theta} holds at every $t>0$, reading $N$ the same way at an
ordinate, as $\tfrac12[N(\gamma^{-})+N(\gamma^{+})]$, half a unit below the
count of $(0,\gamma]$; at every other height $N$ is that count outright.

\paragraph{Three counting curves against the staircase.} Expanding $\vartheta$ by
Stirling and dropping $S$ turns \eqref{eq:N-theta} into the Riemann--von
Mangoldt formula in its approximate form,
\begin{equation}\label{eq:N-rvm}
N_{\mathrm{rvm}}(t)
=\frac{t}{2\pi}\log\frac{t}{2\pi}-\frac{t}{2\pi}+\frac78
+\frac{1}{48\pi t}+\frac{7}{5760\pi t^{3}}
\end{equation}
(the $\tfrac78$ collects the $-\tfrac\pi8$ of the expansion, divided by $\pi$,
with the standalone $1$ of \eqref{eq:N-theta}). Nothing of consequence is
discarded in truncating there: at $t=1000$ even the first correction retained,
$1/(48\pi t)$, is only $6.6\times10^{-6}$, while $S(1000)=+0.384$. The whole of
the error that matters is $S$. Set also
\begin{equation}\label{eq:N-ps}
N_{\mathrm{ps}}(T)
=\frac{1}{\pi}\vartheta\bigl(I(T)\bigr)
+\frac{1}{\pi}\vartheta_1\bigl(\tfrac12,T\bigr)+\frac32 ,
\end{equation}
which replaces $S$ by the angle $\vartheta_1$ that leg~1 makes with the real
axis (\S\ref{sec:legs}), the leg running from the origin to the bisector point
$B_1=\Sigma_1+R_{1ps}$. These two behave quite differently against the true
count $N(I(T))$, and a third curve $N_{\ast}$, built below in
\eqref{eq:N-star} from a different splitting of $\zeta$, repairs the one defect
of $N_{\mathrm{ps}}$.

$N_{\mathrm{rvm}}$ is analytic and increasing, so it can only approximate the
staircase in the mean, never landing on an integer at an ordinate, and its
error is exactly the sawtooth $S$.
$N_{\mathrm{ps}}$ instead meets an integer \emph{exactly} at every ordinate,
and only there.

\begin{proposition}\label{prop:N-ps-integer}
On $\sigma=\tfrac12$, at every $T$ with $B_1(T)\neq0$, so that
$\vartheta_1=\arg B_1$ and hence $N_{\mathrm{ps}}$ is defined, the value
$N_{\mathrm{ps}}(T)$ is an integer if and only if $t=I(T)$ is an ordinate.
\end{proposition}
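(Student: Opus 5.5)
The plan is to use the critical-line reflection behind Corollary~\ref{cor:equal-legs} and Corollary~\ref{cor:bisector-proj}, namely $B_2=\chi\overline{B_1}$ with $\chi=e^{-2i\vartheta}$ and $|\chi|=1$ on $\sigma=\tfrac12$. From it we derive $e^{i\vartheta}\zeta=e^{i\vartheta}B_1+\overline{e^{i\vartheta}B_1}=2\operatorname{Re}\bigl(e^{i\vartheta}B_1\bigr)$. The left side is Hardy's $Z(t)$, which is real. Write $B_1=|B_1|e^{i\vartheta_1}$ with $B_1\neq0$. Then
\[
Z(t)=2|B_1|\cos\bigl(\vartheta(t)+\vartheta_1\bigr).
\]
Hence $Z(t)=0$ if and only if $\vartheta+\vartheta_1\equiv\tfrac\pi2\pmod\pi$.

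Next, translate this into the integrality of $N_{\mathrm{ps}}$. By \eqref{eq:N-ps}, $\pi N_{\mathrm{ps}}=\vartheta+\vartheta_1+\tfrac{3\pi}{2}$. So $N_{\mathrm{ps}}\in\mathbb Z$ exactly when $\vartheta+\vartheta_1+\tfrac{3\pi}2\in\pi\mathbb Z$, which is the same as $\vartheta+\vartheta_1\equiv\tfrac\pi2\pmod\pi$. By the first step this holds exactly when $Z(I(T))=0$. Since $|\zeta(\tfrac12+it)|=|Z(t)|$, this says that $t=I(T)$ is an ordinate.

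The condition is a congruence modulo $\pi$, so the branch chosen for $\vartheta_1=\arg B_1$ and the convention for $\vartheta$ are irrelevant. Changing either by a multiple of $2\pi$ shifts $N_{\mathrm{ps}}$ by an even integer, and this does not affect integrality. This remark should be stated explicitly so that the proof does not appear to depend on a continuous choice of branch.

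The only step needing care is the justification of $B_2=\chi\overline{B_1}$ at every $T$ considered. This includes the heights where $\sin(2\omega+\psi)=0$, at which $d_1,d_2$ are defined by continuity; by the remark after Theorem~\ref{thm:decomp} the poles cancel there on the line. It also includes integer $T$, where the $M$-convention of Theorem~\ref{thm:decomp} applies.

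The reflection $\Sigma_2=\chi\overline{\Sigma_1}$ is termwise, because $n^{s-1}=\overline{n^{-s}}$ on the line. The reflection $R_2=\chi\overline{R_1}$ is \eqref{eq:R2-conj-R1}, which is equivalent to $d_1=d_2$ from Proposition~\ref{prop:weights}(iii). At the removable points, we would pass to the limit, using continuity of $B_1$ in $T$, or equivalently the continuity of $R$ together with $d_1=d_2$ on a punctured neighbourhood. The hypothesis $B_1\neq0$ is exactly what makes $\vartheta_1$ defined; no further exceptional cases arise.
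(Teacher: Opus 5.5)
Your proof is correct and follows the paper's own argument. The paper likewise reads $\operatorname{Re}(e^{i\vartheta}B_1)=\tfrac12 Z$ off the reflection $B_2=\chi\overline{B_1}$, in the form $e^{i\vartheta}B_1=\tfrac12Z+ih$ of Corollary~\ref{cor:bisector-proj}, so that $Z=2|B_1|\cos(\vartheta+\vartheta_1)$ vanishes exactly when $\vartheta+\vartheta_1\equiv\tfrac\pi2\pmod\pi$, which is exactly when $N_{\mathrm{ps}}$ is an integer. Your extra remarks on branch independence and on the removable heights are sound refinements that the paper leaves implicit; at those heights no limit is actually needed, since the exact reduction $d_1=d_2=r/(2\cos u)$ of \eqref{eq:exact-line} gives the reflection directly.
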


The reason is already Corollary~\ref{cor:bisector-proj}: on the critical line
\begin{equation}\label{eq:B1-rotated}
B_1\,e^{i\vartheta(t)}=\tfrac12 Z(t)+ih,\qquad h\in\mathbb R.
\end{equation}
The real part is
$\lvert B_1\rvert\cos(\vartheta+\vartheta_1)=\tfrac12 Z$.
That cosine is zero exactly when
$\vartheta+\vartheta_1\equiv\tfrac\pi2\pmod\pi$, which is exactly when
$N_{\mathrm{ps}}=(\vartheta+\vartheta_1)/\pi+\tfrac32$ is an integer.
So $N_{\mathrm{ps}}$ is an integer if and only if $Z=0$. (The cosine step
needs $B_1\neq0$: since $|B_1|^{2}=\tfrac14Z^{2}+h^{2}$, that would require
$Z$ and the offset $h$ of \eqref{eq:B1-rotated} to vanish together, which
happens nowhere in the ranges we computed.)
Multiplying by $e^{i\vartheta}$ is the $\vartheta$-frame of the glossary:
it is the rotation $e^{-i\psi/2}$ of Corollary~\ref{cor:bisector-proj}
written with $\vartheta=-\tfrac{\psi}{2}$, since $\chi=e^{-2i\vartheta}$
on the line, and it puts the base $O\to\zeta$ on the real axis so that a
split point reads as $\tfrac12 Z+ih$.
This does not say $N_{\mathrm{ps}}$ equals the staircase $N$: retrograde
ordinates still land on the wrong integer, as the next paragraph
explains. It only says the value is an integer there, and nowhere else.

\paragraph{Where $N_{\mathrm{ps}}$ misses, and why.} Two things keep
\eqref{eq:N-ps} from being a counting function. First, $\vartheta_1$ is an
angle, so \eqref{eq:N-ps} is determined only up to an even integer, and each
passage of $\vartheta_1$ through $\pm\pi$ breaks the curve; there are nine such
wraps over $3<T<5$, and $\tfrac32$ is the constant that puts the curve on the
staircase. Second, and this is the real defect, the curve sometimes meets the
wrong integer. Differentiating \eqref{eq:B1-rotated} at an ordinate, where $Z$
vanishes, gives
\begin{equation}\label{eq:crossing-rate}
\frac{d}{dt}\arg\bigl(B_1e^{i\vartheta}\bigr)\bigg|_{Z=0}=-\frac{Z'(t)}{2h},
\end{equation}
so the crossing carries the angle forward or backward with the sign of $-hZ'$.
Consecutive simple zeros of $Z$ alternate in the sign of $Z'$, so the crossings
all turn the same way exactly when $h$ alternates in sign too. Where $h$ nearly
vanishes it typically keeps its sign across two ordinates instead; the angle
turns back on itself, the count fails to advance, and the second of the pair
reads one too high. Over the first $400$ ordinates $|h|$ has median $1.081$ at the $350$ that
$N_{\mathrm{ps}}$ places correctly and $0.319$ at the $50$ it misses, and those
$50$ are exactly the crossings that run backwards, which we call
\emph{retrograde}. They are also exactly the
wraps: the nine wraps of $\vartheta_1$ over $3<T<5$ counted above sit one apiece
on the nine retrograde ordinates of that range. Figure~\ref{fig:nps-staircase}
shows the cost over a longer stretch: five retrograde ordinates in
$5.9<T<6.7$, and after each one $N_{\mathrm{ps}}$ runs a unit above the
staircase until the following ordinate restores it.
That restoration is the usual local picture, not a law; the companion
note~\cite{NStar} records the scan through $T=100$.
The two ways of describing the
failure, a broken branch and a backward crossing, are one event. Over $3<T<5$, which holds
$55$ zeros, both curves stay within one of the staircase all the same: the
largest deviation is $0.946$ for $N_{\mathrm{rvm}}$ and $0.999$ for
$N_{\mathrm{ps}}$.

Reading $\vartheta_2$ in place of $\vartheta_1$ cannot help. On the critical
line $B_2=\chi\overline{B_1}$ and $\arg\chi=-2\vartheta$, so
\begin{equation}\label{eq:th2-th1}
\vartheta_2=-2\bigl(\vartheta+\vartheta_1\bigr)\pmod{2\pi},
\end{equation}
whichever remainder defines the split. Every sawtooth built from $\vartheta_2$
is therefore the same function as the one built from $\vartheta_1$ and
retrogrades in the same places: among the $62$ ordinates of $3.0\leq T\leq5.2$
the splits at $R_{1ps}$, at $R/2$ and at $R_{1ak}$ run backwards at $10$, $9$
and $12$ of them. The retrograde belongs to the split, not to the branch.

\subsection{Improving \texorpdfstring{$N_{\mathrm{ps}}$}{Nps}: the velocity
split and a curve that lands on every ordinate}\label{sec:counting-star}

The cure is to split $\zeta$ so that the offset $h$ of \eqref{eq:B1-rotated}
alternates by construction. With $\vartheta'$ as in \eqref{eq:S-slope}, exactly
$\tfrac12\operatorname{Re}\tfrac{\Gamma'}{\Gamma}\bigl(\tfrac14+\tfrac{it}{2}\bigr)-\tfrac12\log\pi$,
put
\begin{equation}\label{eq:vel-split}
B_1^{\ast}=\zeta(s)+\frac{\zeta'(s)}{2\vartheta'(t)},\qquad
B_2^{\ast}=\zeta(s)-B_1^{\ast}=-\frac{\zeta'(s)}{2\vartheta'(t)}
=\frac{i}{2\vartheta'(t)}\frac{d\zeta}{dt},
\end{equation}
so leg~2 is the velocity of the chain's endpoint along the line, quarter-turned
and scaled by $1/2\vartheta'$. The companion $B_1^{\ast}$ already appears in
the literature as the function Levinson calls $G$ at his (1.9), where the weight is written in the
gamma-factor form $f'(s)+f'(1-s)$, which on the line is this same real number
$2\vartheta'(t)$~\cite{Levinson1974}; \S\ref{sec:levinson} sets Levinson's use
of $G$ beside ours.

On $\sigma=\tfrac12$ this split is again
reflection-symmetric, $B_2^{\ast}=\chi\overline{B_1^{\ast}}$, so its legs are
exactly equal at every height,
\begin{equation}\label{eq:vel-legs}
\Bigl|\,\zeta+\frac{\zeta'}{2\vartheta'}\Bigr|=\frac{|\zeta'|}{2|\vartheta'|},
\end{equation}
which is the identity $\operatorname{Re}\bigl(\zeta\overline{\zeta'}\bigr)
=-\vartheta'|\zeta|^{2}$, itself no more than the statement that
$Z=e^{i\vartheta}\zeta$ is real. Differentiating that statement gives
$e^{i\vartheta}\zeta'=-iZ'-\vartheta'Z$, and hence \eqref{eq:B1-rotated} holds for the
new split with
\begin{equation}\label{eq:h-star}
e^{i\vartheta}B_1^{\ast}=\frac{Z}{2}-\frac{i\,Z'}{2\vartheta'},
\qquad h^{\ast}=-\frac{Z'}{2\vartheta'} .
\end{equation}
Now $h^{\ast}$ alternates because $Z'$ does, and \eqref{eq:crossing-rate}
returns $-Z'/2h^{\ast}=\vartheta'$: every crossing turns forward, and at exactly
the mean density. Figure~\ref{fig:b1-star-winding} draws the frame of
\eqref{eq:h-star} over one unit of the index: the path of
$e^{i\vartheta}B_1^{\ast}$ loops around the origin once per ordinate, crossing
the imaginary axis exactly at the ordinates, and it is that winding we now
turn into a counting curve. Figure~\ref{fig:b1-star-winding-lehmer} repeats
the drawing on the hardest ground the count meets this low in the strip: the
window $32.87\leq T\leq32.91$ holding the Lehmer pair
$\gamma_{6709}=7005.063$, $\gamma_{6710}=7005.101$, whose two crossings
arrive only $9\times10^{-5}$ index units apart. The path still winds once
per ordinate; between the two crossings of the pair it passes within
$|B_1^{\ast}|=2.0\times10^{-3}$ of the origin, exactly half the famous
shallow extremum $Z=0.00397$ of Lehmer's phenomenon, since $h^{\ast}$
vanishes with $Z'$ right where $Z$ turns.

\begin{figure}[H]
\centering
\includegraphics[width=0.72\textwidth]{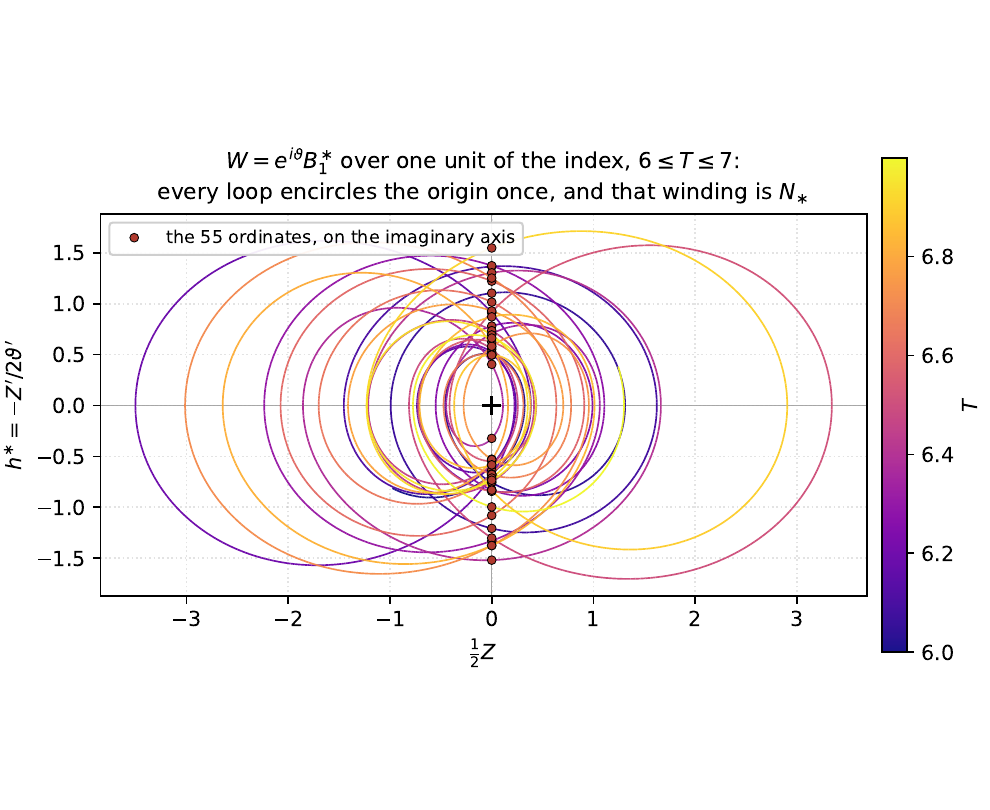}
\caption{The velocity-split companion in the rotated frame of
\eqref{eq:h-star}, $W=e^{i\vartheta}B_1^{\ast}=\tfrac12Z-iZ'/(2\vartheta')$,
over one unit of the index, $6\leq T\leq7$ ($264.9\leq t\leq352.9$), the
color running with $T$. The horizontal coordinate is $\tfrac12Z$ and the
vertical one is the offset $h^{\ast}=-Z'/(2\vartheta')$. Every loop encircles
the origin (black cross) exactly once, and the path crosses the imaginary
axis exactly at the $55$ ordinates of the unit (red dots): that winding is
the counting curve $N_{\ast}$ of \eqref{eq:N-star-prufer}.
Generated by \texttt{fig\_b1\_star\_winding.py}.}
\label{fig:b1-star-winding}
\end{figure}

\begin{figure}[H]
\centering
\includegraphics[width=0.72\textwidth]{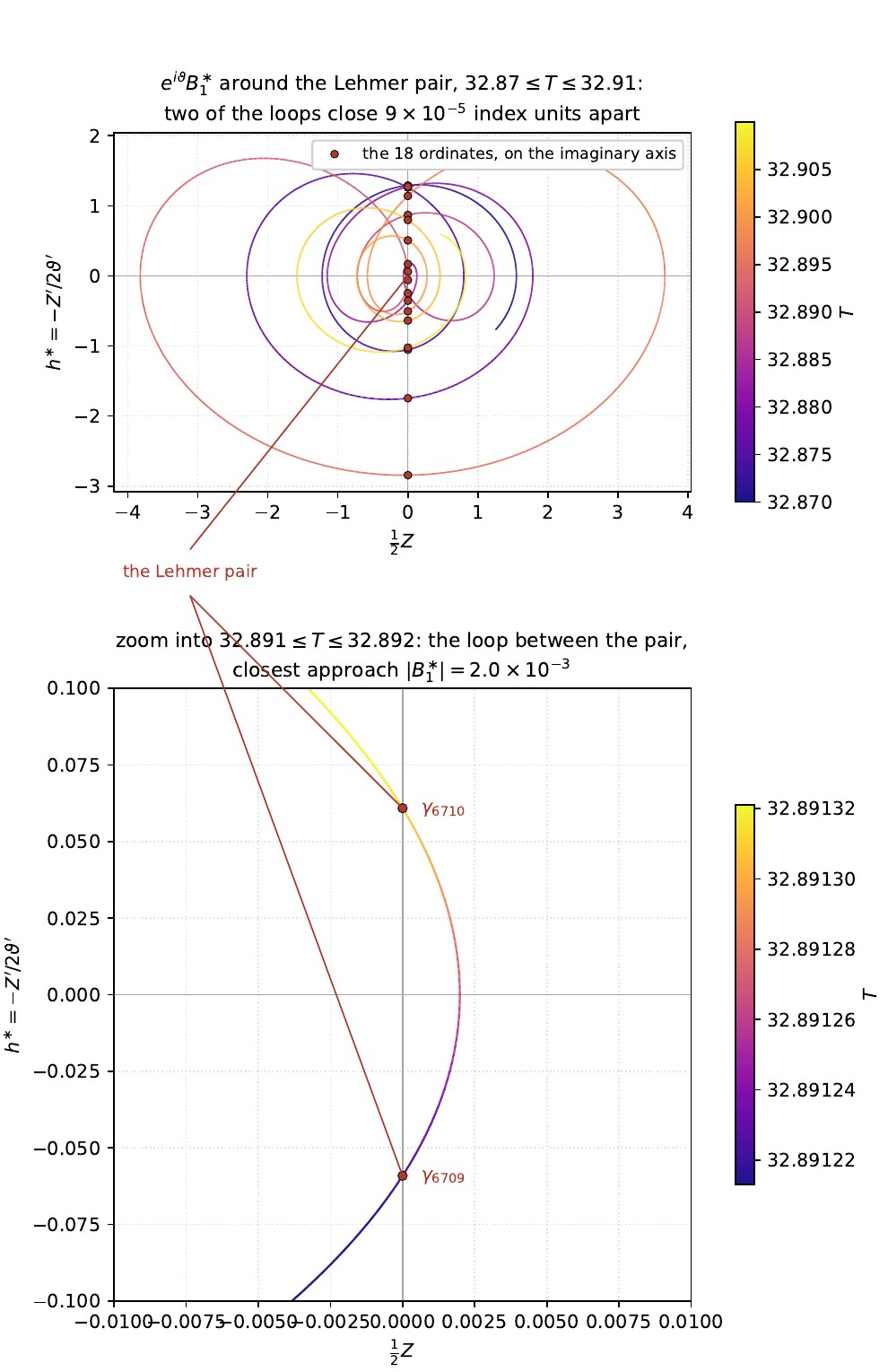}
\caption{The same frame as Figure~\ref{fig:b1-star-winding} over
$32.87\leq T\leq32.91$ ($6996.2\leq t\leq7012.9$), the window around the
Lehmer pair $\gamma_{6709},\gamma_{6710}$. \emph{Top:} the $18$ ordinates of
the window sit on the imaginary axis as before, but the two crossings of the
pair (arrow) land almost on top of one another, $9\times10^{-5}$ index units
apart, and the loop that separates them is nearly invisible at this scale.
\emph{Bottom:} zoom into $32.891\leq T\leq32.892$, the thousandth of a unit
holding the pair, on a horizontal scale stretched ten to one, with the
color scale narrowed to the $9\times10^{-5}$ index units just bracketing
the pair. The two crossings are the red dots at $h^{\ast}=\pm0.06$, and
between them the path bows past the origin at
$|B_1^{\ast}|=2.0\times10^{-3}$, half the shallow extremum $Z=0.00397$ that
makes this pair famous. The winding still advances by exactly one per
ordinate, so $N_{\ast}$ counts the pair correctly.
Generated by \texttt{fig\_b1\_star\_winding\_lehmer.py}.}
\label{fig:b1-star-winding-lehmer}
\end{figure}

We set
\begin{equation}\label{eq:N-star}
N_{\ast}(T)=\frac{1}{\pi}\vartheta\bigl(I(T)\bigr)
+\frac{1}{\pi}\vartheta_1^{\ast}(T)+\frac32,
\qquad \vartheta_1^{\ast}=\arg B_1^{\ast}.
\end{equation}
That is the Riemann--Siegel phase plus the argument of the companion
$B_1^{\ast}=\zeta+\zeta'/(2\vartheta')$, or equivalently the Pr\"ufer angle
of $(Z,\,-Z'/\vartheta')$\footnote{What we have not found in the literature
is the argument of this companion used this way, as an interpolant of the
count. The nearest classical
objects are the exact count $N=\vartheta/\pi+1+S$ of \eqref{eq:N-theta} and the
Pr\"ufer phase of an oscillator~\cite{Prufer1926}.}:
\begin{equation}\label{eq:N-star-prufer}
\boxed{\;
N_{\ast}(T)
=\frac{1}{\pi}\arg\Bigl(Z-\frac{iZ'}{\vartheta'}\Bigr)+\frac32.
\;}
\end{equation}
The argument is the continuous lift, not the principal value: by
\eqref{eq:h-star} it is $\vartheta+\vartheta_1^{\ast}$ of \eqref{eq:N-star},
whose size is carried by $\vartheta$, and it is normalized so that
$N_{\ast}=1$ at $\gamma_1$, which puts the lift at $-\tfrac\pi2$ there.
A principal argument would confine the whole right side to
$(\tfrac12,\tfrac52]$.
The Pr\"ufer angle is Sturm--Liouville theory's device for counting the zeros
of a real oscillator $y(t)$: plot the point $\bigl(y,\,y'/\omega\bigr)$ in the
plane, with $\omega$ the oscillator's local frequency, and track its polar
angle on a continuous branch. The point crosses the vertical axis exactly
where $y$ vanishes, so the angle passes a multiple of $\pi$ at each zero and
its winding is the count; dividing $y'$ by $\omega$ only rounds the orbit,
so the angle advances steadily instead of in bursts. In
\eqref{eq:N-star-prufer} the oscillator is $Z$ and the frequency is
$\vartheta'$, the mean zero density.
The resulting curve is integer-valued at the ordinates and nowhere else, since
$\operatorname{Re}\bigl(e^{i\vartheta}B_1^{\ast}\bigr)=\tfrac12Z$ vanishes only
there, and that advances by exactly one from each ordinate to the next. It lands
on every one of the first $400$ ordinates, $N_{\ast}(I^{-1}(\gamma_n))=n$ to
$2.8\times10^{-13}$, against $50$ misses for $N_{\mathrm{ps}}$; over
$0.6\leq T\leq8$ it is strictly increasing on a grid of $3000$ points, no wrap
occurs ($|\vartheta_1^{\ast}|$ stays below $0.60\pi$), and it rises by $237.64$
across the $237$ zeros of that range. Height costs it nothing and no branch
correction is needed there either: at $\gamma_{10000}=9877.78$ and at
$\gamma_{50000}=40433.69$ the same formula with the same constant $\tfrac32$
still returns exactly $10000$ and $50000$. Where $N_{\mathrm{rvm}}$ threads the
staircase without touching it, and $N_{\mathrm{ps}}$ locates zeros without
counting them, $N_{\ast}$ does both. Like any count read off $Z$ it sees the
zeros on the line, which in the range plotted are all of them.

\begin{figure}[H]
\centering
\includegraphics[width=\textwidth]{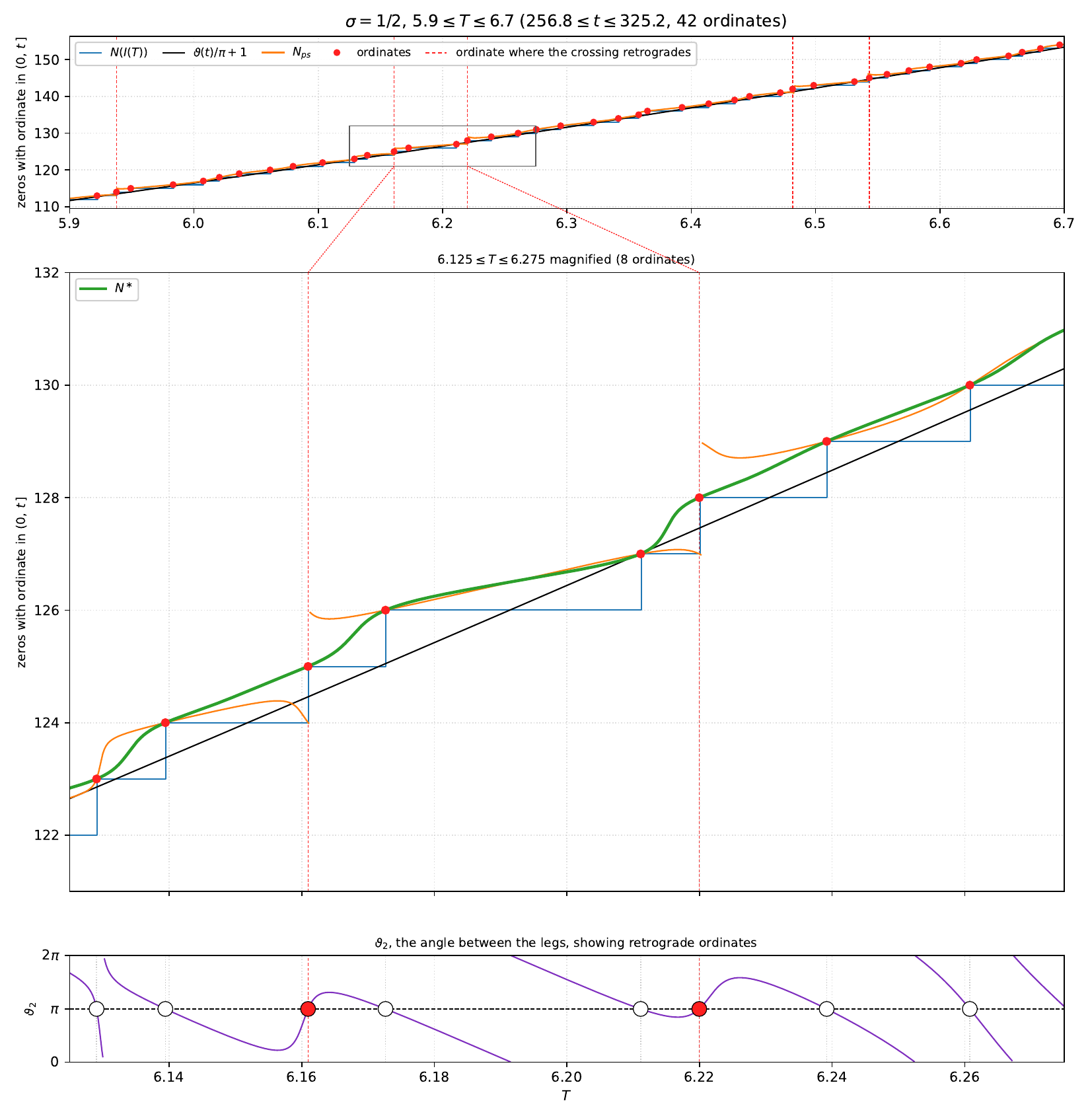}
\caption{Staircase, in \textcolor[HTML]{1F77B4}{blue}, is the exact
Riemann--von Mangoldt formula
\eqref{eq:N-theta}, and the approximate Riemann--von Mangoldt formula
\eqref{eq:N-rvm} is the black curve, that formula with $S$ dropped.
\emph{Top:} $5.9\leq T\leq6.7$, which is $256.8\leq t\leq325.2$ and holds the
$42$ ordinates $\gamma_{113}$ through $\gamma_{154}$, five of them retrograde,
with $N_{\mathrm{ps}}$ of \eqref{eq:N-ps} in
\textcolor[HTML]{FF7F0E}{orange}, broken at its wraps.
\emph{Middle:} the boxed window $6.125\leq T\leq6.275$ magnified, holding $8$
ordinates of which $\gamma_{125}$ and $\gamma_{128}$ are retrograde. The
velocity split $N^{\ast}$ of \eqref{eq:N-star} is drawn in
\textcolor[HTML]{2CA02C}{green}.
\emph{Bottom:} the fold angle of the legs $\vartheta_2$ of \S\ref{sec:legs} on the same
$T$ range, reduced to $[0,2\pi)$. Every passage through $\pi$ (dashed) is at an
ordinate; the two filled dots are the two retrograde ordinates of the panel
above, where $\vartheta_2$ rises through $\pi$, turns around, and comes back
down through it.
Generated by \texttt{fig\_nps\_staircase.py}.}
\label{fig:nps-staircase}
\end{figure}

\begin{remark}[the two offsets $h_{R/2}$ and $h^{\ast}$]\label{rem:taper}
The offsets $h_{R/2}$ and $h^{\ast}$ of the $R/2$ split and the velocity split
are both drawn in Figure~\ref{fig:splits-h}, and they differ only by a
weighting of the links. The velocity offset has the closed form of
\eqref{eq:h-star},
\begin{equation}\label{eq:h-star-slope}
h^{\ast}(t)=-\frac{Z'(t)}{2\,\vartheta'(t)},
\end{equation}
the slope of $Z$ in units of $2\vartheta'$: $h^{\ast}$ vanishes exactly at the
critical points of $Z$ and is largest where $Z$ is steepest. For the $R/2$ split
$e^{i\vartheta}R$ is real, so $h$ is the sine companion of the main sum in the
$\vartheta$-frame,
$h_{R/2}=\operatorname{Im}\bigl(e^{i\vartheta}\Sigma_1\bigr)
=\sum_{n\leq m}n^{-1/2}\sin(\vartheta-t\log n)$, while differentiating the real
part of that rotated main sum and dividing by $-\vartheta'$ gives
\[
h^{\ast}=\sum_{n\leq m}n^{-1/2}
\Bigl(1-\frac{\log n}{\vartheta'}\Bigr)\sin\bigl(\vartheta-t\log n\bigr)+\cdots,
\]
the tail being the remainder's derivative; the displayed sum by itself
reproduces $h^{\ast}$ to $2.3\times10^{-2}$ over the first $400$ ordinates.
The weight is the phase rate $\vartheta'-\log n$ of link $n$ measured in units of
$\vartheta'$, so it runs from $1$ on the first link down to nearly $0$ on the last,
where $\log m\approx\vartheta'$. The counting angle is carried by the same links;
each one contributes in proportion to how much of the rotation rate it retains.
\end{remark}

\begin{remark}[the velocity split off the critical line]\label{rem:vel-offline}
The weight $2\vartheta'(t)$ in \eqref{eq:vel-split} is exactly
$-(\chi'/\chi)(\tfrac12+it)$, since $\chi=e^{-2i\vartheta}$ on the line.
Replacing it by that logarithmic derivative extends the split to the whole
strip:
\begin{equation}\label{eq:vel-split-offline}
B_1^{\ast}(s)=\zeta(s)-\frac{\zeta'(s)}{(\chi'/\chi)(s)},
\qquad
B_2^{\ast}=\zeta-B_1^{\ast},
\qquad
\frac{\chi'}{\chi}(s)=\log 2\pi+\frac{\pi}{2}\cot\frac{\pi s}{2}-\psi(1-s)
\end{equation}
($\psi$ the digamma function, not the $\arg\chi$ of \S\ref{sec:decomp}).
This is again Levinson's $G$ and not an extension of it: his weight
$f'(s)+f'(1-s)$ equals $-(\chi'/\chi)(s)$ at every
$s$~\cite{Levinson1974}, the critical line being the place where it happens to
be real.
Applying the functional equation twice gives $\chi(s)\chi(1-s)=1$, whose
logarithmic derivative forces $(\chi'/\chi)(s)=(\chi'/\chi)(1-s)$;
differentiating $\zeta(s)=\chi(s)\zeta(1-s)$ then yields the exact reflection
\[
B_2^{\ast}(s)=\chi(s)\,B_1^{\ast}(1-s)
\]
at every $s$ in the strip, pairing the split at $(\sigma,T)$ with its mirror
image at $(1-\sigma,T)$. On the critical line the mirror point is the point
itself, the weight is the real number $2\vartheta'$, and the identity
collapses to $B_2^{\ast}=\chi\overline{B_1^{\ast}}$ with its equal legs
\eqref{eq:vel-legs}. Off the line the weight is complex (imaginary part
$\approx\mp7\times10^{-4}$ at $t\approx280$), the offset $h^{\ast}$ ceases to
be a real scalar, and, as for every split, the equal-leg property survives
only on the line. Verified numerically to twenty-four digits at $\sigma=0.3$.

The $\chi$-weight also separates the two chains exactly. Write
$\zeta=F+\chi\widetilde G$ with $F=\Sigma_1+R_1$ and
$\widetilde G=(\Sigma_2+R_2)/\chi$, for any split whose two halves are
holomorphic on the link interval, the $R/2$ and velocity splits among them:
the second half always carries the factor $\chi$, so peeling it off is
natural.
Differentiating, multiplying by $\chi/\chi'$, and subtracting cancels the
$\chi\widetilde G$ term entirely; on the critical line, where
$\chi'/\chi=-2\vartheta'$, the result is
\begin{equation}\label{eq:vel-split-halves}
B_1^{\ast}
=\Sigma_1+R_1+\frac{(\Sigma_1+R_1)'+\chi\,\widetilde G'}{2\vartheta'},
\qquad
B_2^{\ast}
=\Sigma_2+R_2-\frac{(\Sigma_1+R_1)'+\chi\,\widetilde G'}{2\vartheta'}.
\end{equation}
The velocity split therefore hands the whole forward chain $\Sigma_1+R_1$ to
leg~1 and the whole reflected chain $\Sigma_2+R_2$ to leg~2, exactly, with no
mixing at the level of the sums; the two legs trade only the derivative
corrections, damped by $2\vartheta'$. The reflected
half influences $B_1^{\ast}$ only through its rate of change
$\widetilde G'$, never through its value. This is why $B_1^{\ast}$ crowds
against the other split points in Figure~\ref{fig:splits-h}, and the taper
weights $1-\log n/\vartheta'$ of Remark~\ref{rem:taper} are this cancellation
read term by term. Verified to twenty-four digits at $T=6.18$.
\end{remark}

Incidentally, with the off-line definition \eqref{eq:vel-split-offline} in
hand, the zeros of $B_1^{\ast}$ in and around the strip can be pinned down
exactly by the argument principle: the winding number of $B_1^{\ast}$ around
the boundary of a box counts the zeros inside, and bisecting any box that
contains zeros localizes them to arbitrary precision. The winding counts
zeros and nothing else because $B_1^{\ast}$ is analytic on these boxes: its
only poles are at the zeros of $\chi'/\chi$, and that modulus stays above
$3.7$ throughout, while the pole of $\zeta$ at $s=1$ lies at $t=0$, far
below.
Figure~\ref{fig:b1-star-strip-zeros} shows the result. The box
$-1\leq\sigma\leq1$, $6\leq T\leq7$ contains exactly $40$ zeros of
$B_1^{\ast}$, every one strictly to the left of the critical line (the
rightmost at $\sigma\approx0.416$), and each root checks directly,
$|B_1^{\ast}|<10^{-10}$. Two features are worth
recording. First, no zero lies on or to the right of $\sigma=\tfrac12$,
consistent with the reflection $B_2^{\ast}(s)=\chi(s)B_1^{\ast}(1-s)$ placing
the mirror zeros to the right. Second, the window contains $55$ ordinates
but only $40$ zeros, and only $22$ of those sit inside the strip
$\sigma\in[0,1]$: the other $18$ have drifted left of $\sigma=0$, down to
$\sigma\approx-0.71$. Running the same winding count over the box
$-20\leq\sigma\leq-1$ turns up just $5$ more, all between
$\sigma\approx-1.5$ and $\sigma=-1$ (the count confirmed by an independent
densely sampled winding integral around the full boundary), for $45$ zeros
right of $\sigma=-20$. That is ten short of the $55$ ordinates of the
window. We have no theorem saying the two totals should agree, so whether ten
more lie further left, or the tally simply differs from the ordinate count,
we have not determined.

\begin{figure}[H]
\centering
\includegraphics[width=0.7\textwidth]{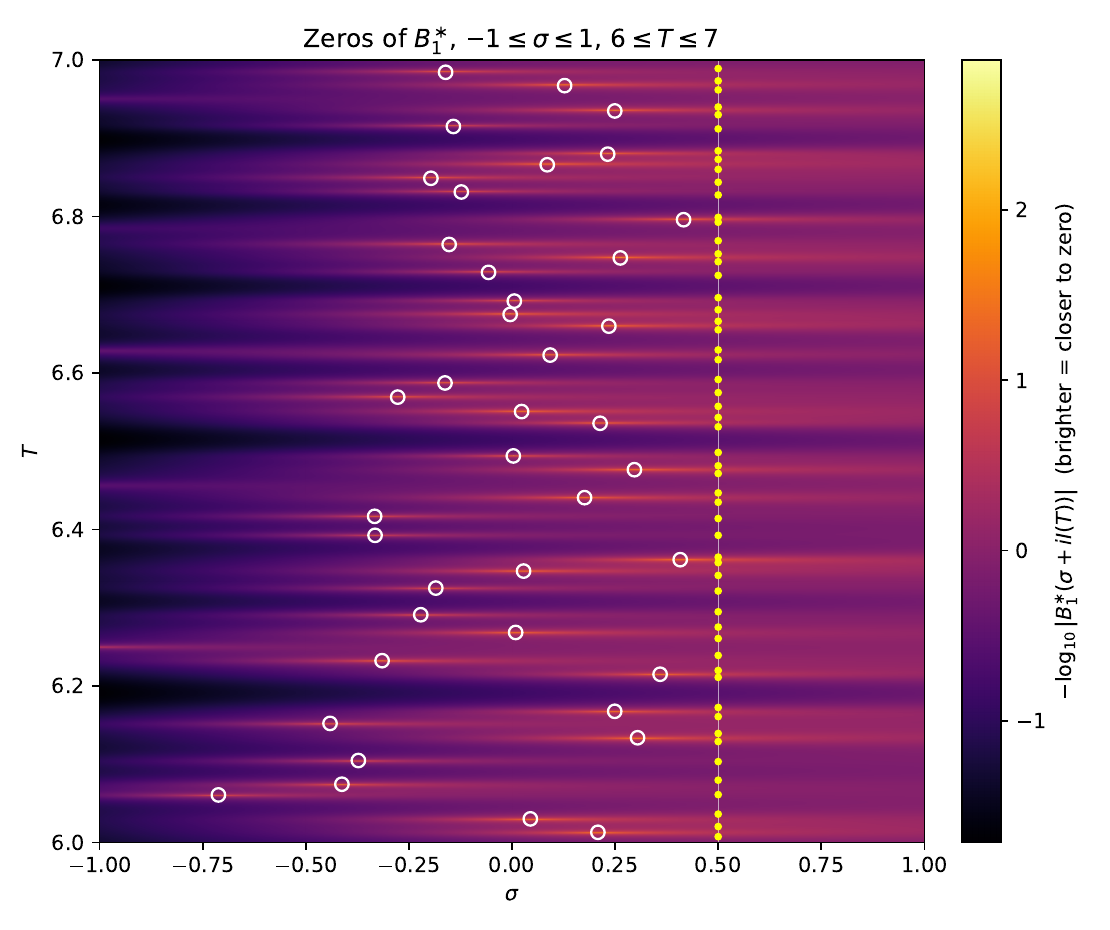}
\caption{The $40$ zeros of $B_1^{\ast}$ in the box $-1\leq\sigma\leq1$,
$6\leq T\leq7$ (white open circles), located by the argument principle:
winding-number counts over recursively bisected boxes, each root then
polished by Muller's method to $|B_1^{\ast}|<10^{-10}$. The background
heatmap, on a grid of $599$ steps in $\sigma\in[-1,1]$ and $5000$ steps in
$T$ with $t=I(T)$ and the off-line definition \eqref{eq:vel-split-offline},
colors each input point by how close the output is to zero,
$-\log_{10}|B_1^{\ast}(\sigma+iI(T))|$, brighter meaning closer; the thin
white vertical line is the critical line, the yellow dots on it are the
$55$ zeta zeros of the window, and the dark horizontal bands are
the ordinates' neighborhoods, where $|B_1^{\ast}|$ stays comparatively
large across the whole region. All $40$ zeros lie strictly left of the
critical line. Generated by \texttt{find\_b1\_star\_zeros.py}.}
\label{fig:b1-star-strip-zeros}
\end{figure}

\begin{figure}[H]
\centering
\includegraphics[width=\textwidth]{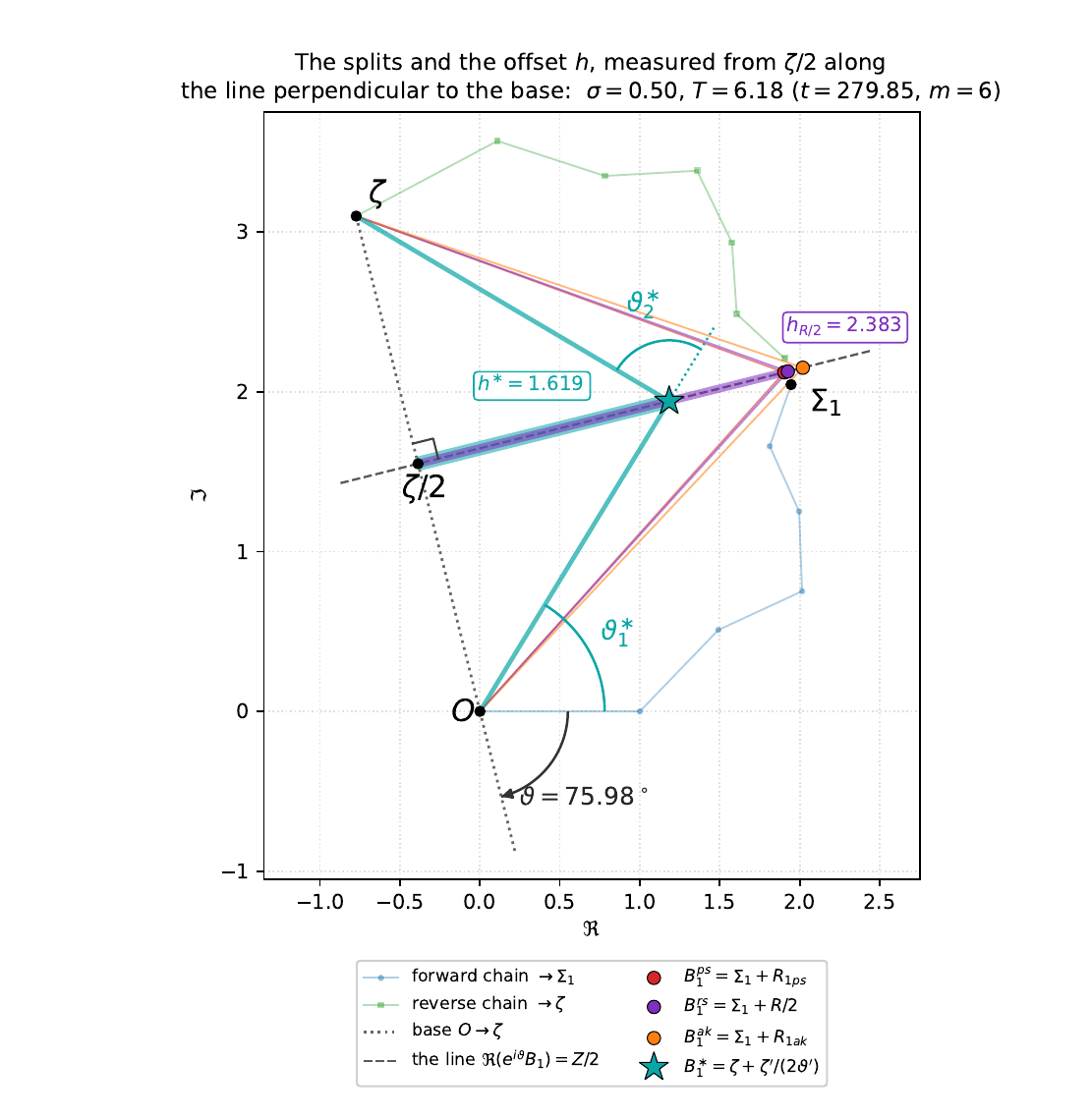}
\caption{The offset $h$ of \eqref{eq:B1-rotated}, drawn at $T=6.18$. Because
$\Re\bigl(e^{i\vartheta}B_1\bigr)=Z/2$ for every split, all of the split points lie
on one line when $\sigma=\tfrac12$, the dashed line through $\zeta/2$
perpendicular to the base $O\to\zeta$ (the right angle is marked). What
distinguishes one split from another is $h$, the signed distance from $\zeta/2$
along that line, drawn as a thick stretch of the line for the $R/2$ split
(purple, $h_{R/2}=2.383$) and for the velocity split (teal, $h^{\ast}=1.619$);
the two legs of each split are drawn in its own color, the velocity split's
legs thickest, and the two chains fainter still. The arc at $O$ is $\vartheta$, swept clockwise off the positive real
axis onto the base line, since $\arg\zeta\equiv-\vartheta\pmod{\pi}$: with
$\vartheta=390.884$ reducing to $75.98^{\circ}$ modulo $\pi$, the base line leaves
the axis at $-75.98^{\circ}$, drawn dotted below $O$, and $\zeta$ itself lies on
the far side of $O$ at $104.02^{\circ}$ because $Z=-3.194$ is negative at this
height. The two teal arcs mark the angles of the velocity split:
$\vartheta_1^{\ast}=\arg B_1^{\ast}$ at $O$, and the fold angle
$\vartheta_2^{\ast}=\arg\bigl(B_2^{\ast}/B_1^{\ast}\bigr)$ at $B_1^{\ast}$,
between the extension of leg~1 (dotted) and leg~2.
Generated by \texttt{fig\_splits\_h.py}.}
\label{fig:splits-h}
\end{figure}

\begin{remark}[$N_{\ast}$ is monotonically increasing if RH is true]
\label{rem:nstar-mono}
Integer at the ordinates and nowhere else, and up by one from each to the next:
what \eqref{eq:N-star} leaves open is whether the curve ever runs backwards in
between. It does not, and the reason is a known consequence of the Riemann
hypothesis. With $W=e^{i\vartheta}B_1^{\ast}=\tfrac12Z-\tfrac{iZ'}{2\vartheta'}$ from
\eqref{eq:h-star} and $\pi N_{\ast}=\arg W+\tfrac{3\pi}{2}$, differentiating
and measuring against the local mean density $\vartheta'$ gives the slope in closed
form,
\begin{equation}\label{eq:nstar-rho}
\rho_{\ast}=\frac{1}{\vartheta'}\,\frac{d}{dt}\arg W
=\frac{\pi}{\vartheta'\,I'(T)}\,\frac{dN_{\ast}}{dT}
=\frac{Z'^2-ZZ''+\dfrac{\vartheta''}{\vartheta'}\,ZZ'}{\vartheta'^2Z^2+Z'^2},
\end{equation}
which returns $1$ at $Z=0$ and $-Z''/\vartheta'^2Z$ at $Z'=0$, as it must. Since
$\vartheta'$ and $I'$ are positive, $\rho_{\ast}$ and $dN_{\ast}/dT$ carry the same sign;
the denominator of \eqref{eq:nstar-rho} is positive and
$\vartheta''/\vartheta'=O(1/t\log t)$, so up to that negligible term the curve
increases exactly where the Laguerre expression $Z'^2-ZZ''$ is positive.

That expression belongs to $\Xi$. Writing
$\Xi(t)=\xi(\tfrac12+it)=-g(t)Z(t)$ with
$g(t)=\tfrac12\bigl(\tfrac14+t^2\bigr)\pi^{-1/4}
\bigl|\Gamma(\tfrac14+\tfrac{it}{2})\bigr|>0$, algebra gives
$\Xi'^2-\Xi\Xi''=g^2\bigl[(Z'^2-ZZ'')-(\log g)''Z^2\bigr]$. If the hypothesis
holds then every zero of $\Xi$ is real, its Hadamard product gives
$-(\log\Xi)''=\sum_k(t-\gamma_k)^{-2}$, and therefore
\begin{equation}\label{eq:laguerre-Z}
Z'^2-ZZ''=Z^2\Bigl[\sum_k\frac{1}{(t-\gamma_k)^2}+(\log g)''\Bigr],
\end{equation}
the sum running over the zeros of $\Xi$ of both signs. The bracket is positive
with room to spare: it is of the order of the squared zero density, carried
mostly by the two ordinates flanking $t$, while $(\log g)''=O(t^{-2})$ takes
away almost nothing. At $t=100.5$ it reads $2.2003$, of which the $600$ zeros
with $|\gamma|\leq542$ supply $2.1972$ and $(\log g)''$ removes
$1.7\times10^{-4}$.

Monotonicity is what makes the curve a counting function outright, rather than a
curve that merely meets the integers in the right places. $N_{\ast}$ is
continuous, it equals $n$ exactly at $\gamma_n$, and it is an integer nowhere
else; if it also increases then between consecutive ordinates it is trapped
strictly between $n$ and $n+1$, and taking the floor recovers
\eqref{eq:N-theta} on the nose,
\begin{equation}\label{eq:floor-N-star}
\bigl\lfloor N_{\ast}(T)\bigr\rfloor=N\bigl(I(T)\bigr),
\end{equation}
with no error term and no sawtooth. We find no exception at $5721$ grid points
across $14\leq t\leq300$ nor at $301$ across $9877\leq t\leq9880$. Three things
are wanted for it, and each is worth naming. The count is read off $Z$, so what
it counts is zeros on the critical line, and it is the hypothesis again that
makes those all of them. The zeros must be simple: at a double zero $Z$ and $Z'$
vanish together, $W=0$, the argument is undefined, and the curve would climb by
one where the count climbs by two; RH does not supply simplicity.
And $\vartheta_1^{\ast}$ must be read on a continuous branch. A fourth is
bookkeeping rather than mathematics: $N$ on the right of
\eqref{eq:floor-N-star} is the plain count of $(0,t]$, so that
$N(\gamma_n)=n$, and not the midpoint value that \S\ref{sec:counting-review}
gives it at an ordinate when reading \eqref{eq:N-theta}. Subtracting
\eqref{eq:N-rvm} from \eqref{eq:N-star} gives
$\vartheta_1^{\ast}=\pi\bigl(N_{\ast}-N_{\mathrm{rvm}}-\tfrac12\bigr)$, so the
principal branch serves precisely while the exact count and the smooth one stay
within one of each other; since $S$ is unbounded that must eventually fail, and
past it the wrap count has to be carried explicitly, which is the bookkeeping
$N_{\mathrm{ps}}$ already needs at every height. There is room to spare at these
heights: over $14\leq t\leq300$ the angle $|\vartheta_1^{\ast}|$ reaches only
$0.506\pi$.

Read the other way, \eqref{eq:nstar-rho} can only fail where $ZZ''>Z'^2$, which
at a critical point of $Z$ means $Z$ and $Z''$ of one sign: a positive local
minimum or a negative local maximum, a pair of zeros that comes to the axis and
fails to reach it. The Laguerre inequality $\Xi'^2\geq\Xi\Xi''$, which Laguerre
proved for every real entire function with only real zeros, is precisely what
forbids that. Being a consequence of the hypothesis and not a theorem about
$\zeta$, it leaves the monotonicity conditional; and being only the first of a
family of inequalities of every order, which Csordas and
Varga~\cite{CsordasVarga1990} show to be equivalent to RH when taken all
together, it is weaker than the hypothesis. A failure of monotonicity would
therefore disprove RH, while a proof of monotonicity would not prove it. Nothing
is close to the boundary: on grids of spacing $0.1$ in windows from $t=10$ up to
$\gamma_{10000}$ the slope never turns negative and never dips below $0.375$.
The near misses do not threaten it either; they push $\rho_{\ast}$ the opposite
way from what one might guess. At the shallow hump inside Lehmer's
pair~\cite{Lehmer1956}, where $Z=3.97\times10^{-3}$, one finds
$\rho_{\ast}=458$, because $-Z''/\vartheta'^2Z$ grows without bound as a hump
flattens onto the axis. What such a pair costs is margin, not sign:
$Z'^2-ZZ''$ reads $0.089$ at that hump against $1.53$ and $1.38$ just outside
the pair. Verified by \texttt{check\_nstar\_mono.py}.
\end{remark}

\begin{remark}[regarding the slope of $N_{\ast}$ and the size of $|Z|$]
\label{rem:nstar-slope}
By \eqref{eq:N-star-prufer},
$\pi N_{\ast}-\tfrac{3\pi}{2}=\arg\bigl(Z-iZ'/\vartheta'\bigr)$ is the polar angle
of the plane point $(Z,-Z'/\vartheta')$, the Pr\"ufer
angle~\cite{Prufer1926} with $\vartheta'$ in the role of the frequency, so the
slope of the counting curve is fixed by $Z$, $Z'$ and $Z''$ at that height
alone; against the local mean density it is the $\rho_{\ast}$ of
\eqref{eq:nstar-rho}. Two values of it are exact. At a simple ordinate,
setting $Z=0$ in \eqref{eq:nstar-rho} leaves $Z'^2/Z'^2$, so $\rho_{\ast}=1$
(also via \eqref{eq:crossing-rate} with $h^{\ast}=-Z'/2\vartheta'$), and the
whole of the variation lives strictly between zeros. At an extremum of $Z$,
where $Z'=0$ leaves $W=\tfrac12 Z$ real,
\begin{equation}\label{eq:rho-peak}
\rho_{\ast}=-\frac{Z''}{\vartheta'^2 Z}\qquad(Z'=0),
\end{equation}
so flatness at a hump is exactly a small curvature-to-height ratio, in units of
$\vartheta'^2$: a broad tall hump gives $\rho_{\ast}$ well below $1$ and a sharp small one
gives $\rho_{\ast}$ well above it. The flatness of $N_{\ast}$ therefore ranks the
size of $|Z|$ between the zeros: over three blocks of $100$ gaps near
$\gamma_{100}$, $\gamma_{1000}$ and $\gamma_{10000}$, $\rho_{\ast}$ at the hump
ranks the height of that hump at $-0.94$ to $-0.99$, and even the single number
$\rho_{\ast}'(\gamma_n)$, read at the zero ahead of the hump, still ranks it at
$-0.66$ to $-0.80$. Figure~\ref{fig:nstar-slope} plots the slope against $|Z|$
in two windows and then the two rankings. What the slope really measures is the
local phase rate of $Z$, not its amplitude: $Z=A\cos\varphi$ with
$\varphi'=\vartheta'$ gives $\rho_{\ast}=1$ whatever $A$ may be, and $\rho_{\ast}<1$
says the phase turns more slowly than $\vartheta'$, which is a wide gap; it is
the sparseness of the zeros that lets $|Z|$ grow ($\rho_{\ast}$ at the hump and
the gap width are collinear to $-0.99$ in rank).
Verified in \texttt{check\_nstar\_slope.py}.
\end{remark}

\begin{table}[H]
\centering
\small
\setlength{\tabcolsep}{4pt}
\renewcommand{\arraystretch}{1.6}
\begin{tabular}{@{}>{\centering\arraybackslash}m{0.14\textwidth}>{\centering\arraybackslash}m{0.13\textwidth}>{\centering\arraybackslash}m{0.22\textwidth}>{\centering\arraybackslash}m{0.16\textwidth}>{\centering\arraybackslash}m{0.20\textwidth}@{}}
\toprule
slope at\ldots & & in $t$ units & normalized by the mean density: the
$\rho_{\ast}$ of \eqref{eq:nstar-rho} & in $T$ units \\
\midrule
an ordinate &
$\dfrac{dN_{\ast}}{dt}\bigg|_{t=\gamma_n}$ &
$\dfrac{\vartheta'(t)}{\pi}=\dfrac{1}{2\pi}\log\dfrac{t}{2\pi}$ &
$\rho_{\ast}=1$ &
$\dfrac{\vartheta'(I(T))\,I'(T)}{\pi}$ \\
\arrayrulecolor{black!20}\midrule
an extremum of $Z$ &
$\dfrac{dN_{\ast}}{dt}\bigg|_{Z'=0}$ &
$-\dfrac{Z''}{\pi\vartheta' Z}$ &
$\rho_{\ast}=-\dfrac{Z''}{\vartheta'^2Z}$ &
$-\dfrac{Z''\,I'(T)}{\pi\vartheta' Z}$ \\
\arrayrulecolor{black}\bottomrule
\end{tabular}
\caption{The exact values of the slope of $N_{\ast}$. At every simple
ordinate the slope is exactly the mean zero density. At a hump of $Z$ the
slope of $N_{\ast}$ is the curvature-to-height ratio of that hump,
\eqref{eq:rho-peak}, so one can use the flatness of $N_{\ast}$ to rank the
heights of the humps of $|Z|$.}
\label{tab:nstar-slope-exact}
\end{table}

\begin{figure}[H]
\centering
\includegraphics[width=\textwidth]{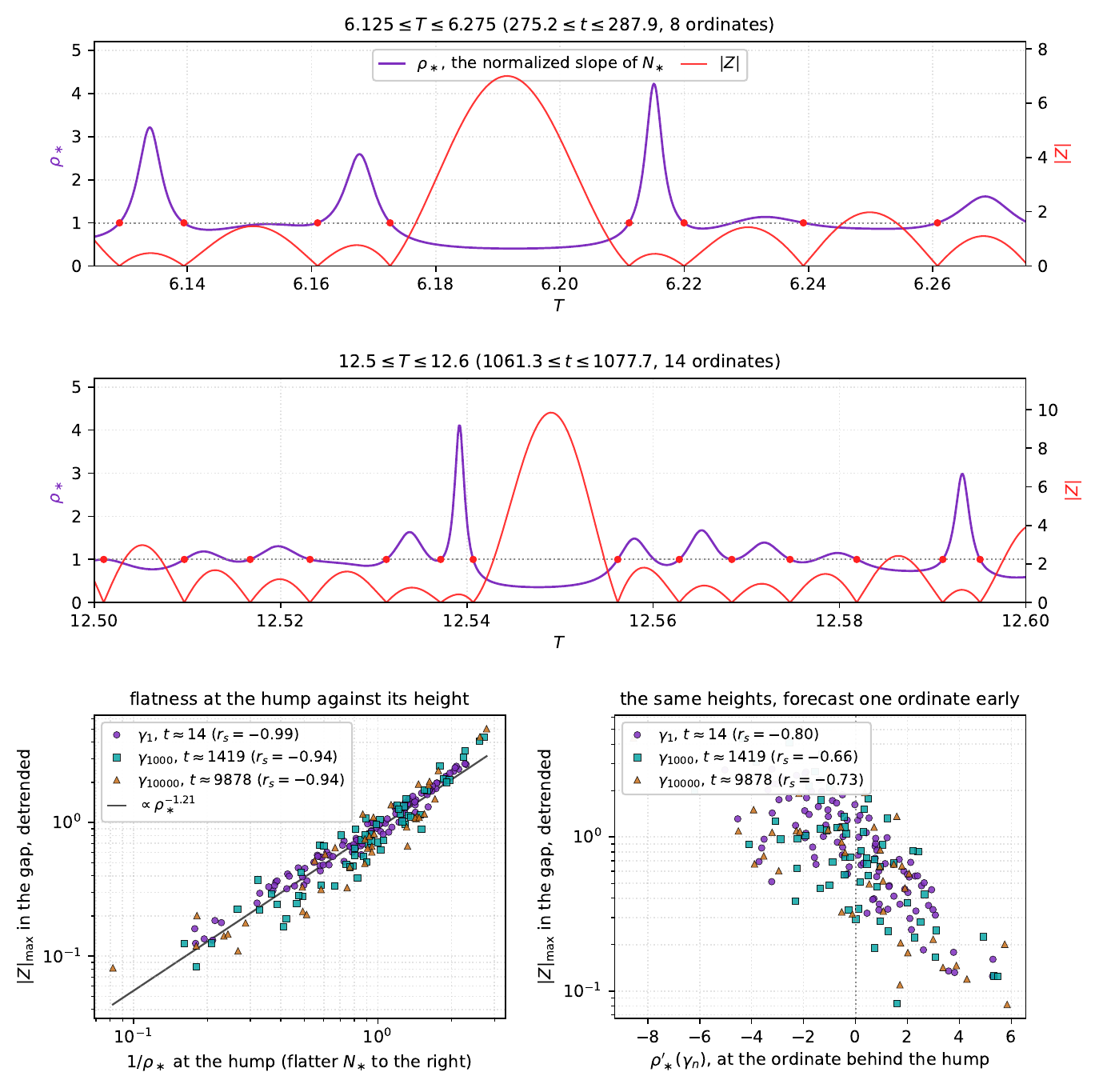}
\caption{The normalized slope $\rho_{\ast}$ of \eqref{eq:nstar-rho} against the size of $|Z|$
between ordinates, everything at $\sigma=\tfrac12$. \emph{Top:} $\rho_{\ast}$ in
purple on the magnified window of Figure~\ref{fig:nps-staircase}, with $|Z|$ in
red on the right-hand scale. The red dots mark the ordinates, where $\rho_{\ast}=1$
exactly; between them $\rho_{\ast}$ dips, and it dips deepest where $|Z|$ rises
highest. \emph{Second:} the same reading twice as high in the index, over
$12.5\leq T\leq12.6$. The hump that reaches $|Z|=9.85$ near $T=12.549$ sits on
the flattest stretch of the window, where $\rho_{\ast}$ bottoms at $0.356$, while each
ordinary hump carries only a shallow arch of $\rho_{\ast}$ just above $1$ and the two
narrowest gaps throw up spikes of $4.11$ and $2.99$.
\emph{Bottom left:} one point per gap for three blocks of ordinates spanning
two decades of $t$, the height of the hump against $1/\rho_{\ast}$ at that hump, both
on log scales, with the heights divided by a $31$-gap moving average so the
slow growth of $|Z|$ with height does not enter. The line is the fit over all
three blocks; the legend quotes the rank correlation against $\rho_{\ast}$ for each
block separately. \emph{Bottom right:} the same hump heights against
$\rho_{\ast}'(\gamma_n)$, read at the ordinate behind the hump rather than at the hump
itself, scaled by the mean spacing $\pi/\vartheta'$. Generated by
\texttt{fig\_nstar\_slope.py}.}
\label{fig:nstar-slope}
\end{figure}

\subsection{Connecting the count, \texorpdfstring{$N_{\ast}$}{N*}, the angle
\texorpdfstring{$\vartheta_2$}{theta2}, and
the equal-leg loci}\label{sec:counting-ovals}

The retrograde ordinates of \S\ref{sec:counting-review} were found by watching
the count; the equal-leg ovals of \S\ref{sec:geometry}
(Figure~\ref{fig:equal-legs-strips}) were found by scanning the strip. They
are the same events. Figure~\ref{fig:count-theta2-ovals} puts the two
side by side. Its top panel repeats the magnified window of
Figure~\ref{fig:nps-staircase}. The bottom panel draws, on the same window,
the fold angles $\vartheta_2$ of the $ps$ split and $\vartheta_2^{\ast}$ of
the velocity split together with the equal-leg
locus $L_1=L_2$ of the $ps$ split in the surrounding strip: the vertical scale
doubles as $2\pi\sigma$, with $\sigma$ on the right-hand axis, so the critical
line, which belongs to the locus at every height, is the horizontal blue line
lying exactly on the dashed line $\vartheta_2=\pi$, and the off-line
components of the locus are the blue ovals straddling it. Shaded are the
stretches of $T$ where $\vartheta_2$ runs \emph{retrograde}: where the
unwrapped angle increases against its prevailing descent.

The correspondence is one to one, three ways. Over $5.9\leq T\leq6.7$ the
angle $\vartheta_2$ runs retrograde on exactly five stretches, each
$0.008$--$0.010$ of $T$ wide. Each stretch contains exactly one ordinate, and
those five are precisely the five retrograde ordinates $\gamma_{114}$,
$\gamma_{125}$, $\gamma_{128}$, $\gamma_{142}$, $\gamma_{145}$ at which
$N_{\mathrm{ps}}$ miscounts in Figure~\ref{fig:nps-staircase}. And the strip
holds exactly five equal-leg ovals over the same window (the two falling in
the zoom window are drawn in Figure~\ref{fig:count-theta2-ovals}; the other
three lie outside it), one per stretch,
coinciding with it end to end to about $10^{-3}$ of $T$. The reading is not
local to this window: twelve windows spread from $T=1$ to $T=50$, holding
$344$ ordinates between them, give $45$ retrograde stretches, $45$ ovals and
$45$ retrograde ordinates, one for one, the stretches narrowing with the
ordinate spacing from $0.013$ of $T$ at $T\approx3$ to $0.0007$ at
$T\approx50$ (\texttt{check\_retro\_ovals.py}). The only other
off-line feature in the window, the thin band at $T\approx6.255$, is not an
oval but the pole band of Figure~\ref{fig:equal-legs-strips}: at
$\{T\}\approx\tfrac14$ the weight $d_1$ passes through its off-line pole
(\S\ref{sec:bisector}), $B_1$ sweeps out and back, and the legs pass through
equality on the way out and on the way back at every $\sigma$ at once, in a
band far narrower than any oval. That last property tells the two apart at
sight: a band still crosses at $\sigma=0.98$, while an oval has closed by
$\sigma\approx0.8$.

The three readings are one event. A retrograde ordinate is one where the
offset $h$ is small (\S\ref{sec:counting-review}: median $|h|$ of $0.319$ at
the missed ordinates against $1.081$ at the rest), which puts the bisector
point near $\zeta/2$; there the two legs are nearly equal \emph{and} nearly
folded, so $\vartheta_2$ sits near $\pi$ and moves slowly, and the slow
passage is what lets the angle turn back on itself. The same near-degeneracy
is what an oval is. On the critical line the leg difference
$\delta=L_1-L_2$ vanishes identically, so an off-line zero of $\delta$ can
open up only where the balance survives a small excursion in $\sigma$, and
that margin exists exactly while the configuration lingers: the oval opens
when the stall begins and closes when prograde motion resumes. During
ordinary prograde motion the legs rotate past each other too quickly for the
balance to hold anywhere off the line, which is why the ovals are confined to
the shaded stretches. Read against \S\ref{sec:counting-star}, the
correspondence also says why the velocity split has no such windows: its
offset $h^{\ast}=-Z'/2\vartheta'$ alternates by construction, no crossing is
retrograde, and $N_{\ast}$ inherits none of the stalls that mark
$N_{\mathrm{ps}}$. What is special about the velocity split is therefore not
the definition of its fold angle
$\vartheta_2^{\ast}=\arg\bigl(B_2^{\ast}/B_1^{\ast}\bigr)$, which is exactly
as for the other splits, but
its behavior: its legs are exactly equal at every height on the line, and
$\vartheta_2^{\ast}$ passes $\pi$ prograde at every ordinate, with no
retrogrades. That
is precisely the property that makes $\vartheta_1^{\ast}$, and hence
$N_{\ast}$, count correctly. Figure~\ref{fig:theta2-star-compare} draws the
two fold angles over the same window.

\begin{figure}[htbp]
\centering
\includegraphics[width=\textwidth]{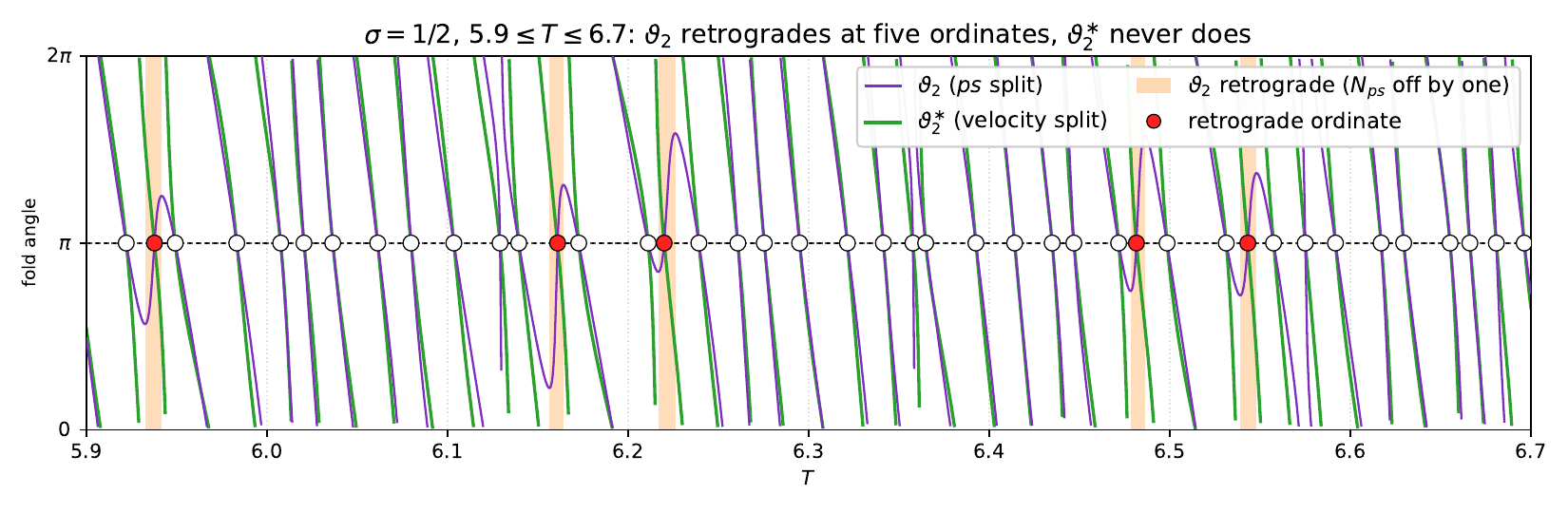}
\caption{The two fold angles compared over $5.9\leq T\leq6.7$ at
$\sigma=\tfrac12$: $\vartheta_2$ of the $ps$ split (purple) and
$\vartheta_2^{\ast}$ of the velocity split (green), both reduced to
$[0,2\pi)$ and broken at their wraps. Dots mark the ordinates, where both
angles pass through the dashed line at $\pi$, filled red at the retrograde
ones. On the shaded stretches $\vartheta_2$ runs retrograde: it stalls at
$\pi$, turns back, and recrosses, and on that stretch $N_{\mathrm{ps}}$ is
off by one. The green curve has no retrograde stretch anywhere: at every
ordinate $\vartheta_2^{\ast}$ passes $\pi$ prograde, and $N_{\ast}$ is never
off the count. Generated by \texttt{fig\_theta2\_star\_compare.py}.}
\label{fig:theta2-star-compare}
\end{figure}

\clearpage
\begin{figure}[H]
\centering
\includegraphics[width=\textwidth]{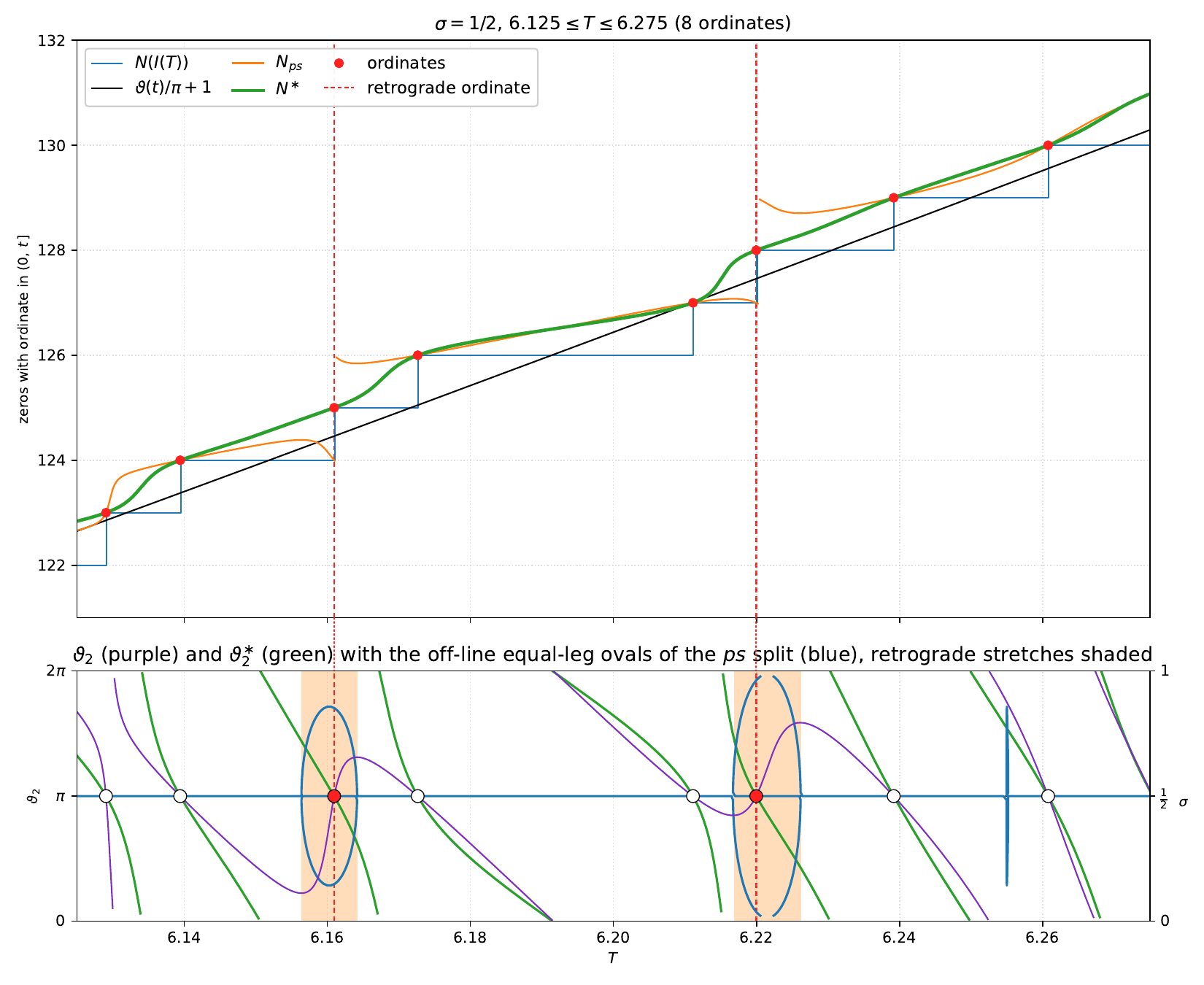}
\caption{The count, the fold angle, and the equal-leg locus of the $ps$ split
read together, everything at $\sigma=\tfrac12$ except the locus.
\emph{Top:} the magnified window $6.125\leq T\leq6.275$ of
Figure~\ref{fig:nps-staircase}: the staircase (blue), the smooth part
(black), $N_{\mathrm{ps}}$ (orange), $N_{\ast}$ (green), and the retrograde
ordinates dashed in red. \emph{Bottom:} $\vartheta_2$ (purple) and
$\vartheta_2^{\ast}$ (green) on the same window, reduced to $[0,2\pi)$,
with the equal-leg
locus $L_1=L_2$ of the $ps$ split drawn in the surrounding strip (blue) on
the shared vertical scale $2\pi\sigma$, $\sigma$ on the right-hand axis. The
horizontal blue line on the dashed $\vartheta_2=\pi$ line is the critical
line, which belongs to the locus at every height; the two blue ovals are the
off-line components in this window. Dots mark the ordinates, where both
angles pass through $\pi$, filled red at the retrograde ones; the
shaded stretches are where $\vartheta_2$ runs retrograde, and there
$\vartheta_2^{\ast}$ passes straight through. Each oval fills a
shaded stretch and each stretch holds one retrograde ordinate. The thin blue
band at $T\approx6.255$ is the pole band of
Figure~\ref{fig:equal-legs-strips} at $\{T\}\approx\tfrac14$, not an oval.
Generated by
\texttt{fig\_count\_theta2\_ovals.py}.}
\label{fig:count-theta2-ovals}
\end{figure}

\subsection{Positivity of the slope under the Riemann hypothesis}
\label{sec:rho-positive}

Remark~\ref{rem:nstar-mono} recorded that the monotonicity of $N_{\ast}$ is a
known consequence of the Riemann hypothesis, through the Laguerre-type
inequality \eqref{eq:laguerre-Z}. Here we give the proof in full, in a form
that shows exactly where the hypothesis enters and how much room the
inequality has.

\begin{theorem}[RH implies $N_{\ast}$ is eventually strictly increasing]
\label{thm:rho-positive}
Assume the Riemann hypothesis. Then there is a $t_0$ such that $\rho_{\ast}(t)>0$
for every $t\geq t_0$ that is not an ordinate; together with $\rho_{\ast}=1$ at the
(simple) ordinates (Remark~\ref{rem:nstar-slope}), $N_{\ast}$ is strictly
increasing there. Grid computations (Remarks~\ref{rem:nstar-mono}
and~\ref{rem:nstar-slope}) support positivity down through the first zero.
\end{theorem}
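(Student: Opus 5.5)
The proof works with the closed form \eqref{eq:nstar-rho}. Its denominator $\vartheta'^2Z^2+Z'^2$ is positive away from double zeros; under RH all zeros are simple. So it suffices to show that the numerator
\[
\mathcal N(t)=Z'^2-ZZ''+\varepsilon\,ZZ',\qquad \varepsilon(t)=\frac{\vartheta''(t)}{\vartheta'(t)}=O\!\Bigl(\frac1{t\log t}\Bigr),
\]
is positive at every large $t$ that is not an ordinate. At such $t$ we have $Z(t)\neq0$, so the plan is to divide by $Z^2$ and study
\[
\frac{\mathcal N}{Z^2}=\Bigl(\sum_k\frac1{(t-\gamma_k)^2}+(\log g)''\Bigr)+\varepsilon\,\frac{Z'}{Z}.
\]
The bracket comes from the Laguerre identity \eqref{eq:laguerre-Z}, which is where RH enters. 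The same Hadamard product, with zeros paired $\pm\gamma_k$ so the sum converges, gives the logarithmic derivative
\[
\frac{Z'}{Z}=\sum_k\frac1{t-\gamma_k}-(\log g)'.
\]
Substituting it turns $\mathcal N/Z^2$ into a single sum over zeros plus an explicit error:
\[
\frac{\mathcal N}{Z^2}=\sum_k\Bigl(x_k^2+\varepsilon x_k\Bigr)+(\log g)''-\varepsilon(\log g)',\qquad x_k=\frac1{t-\gamma_k}.
\]

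Next I split the zeros by distance from $t$. For $|t-\gamma_k|\le 1/\varepsilon$ we have $|x_k|\ge\varepsilon$, so each term $x_k^2+\varepsilon x_k$ is $\ge0$ and can be discarded, except that the nearest zero is kept for a positive lower bound. For the far zeros, with $|t-\gamma_k|>1/\varepsilon\asymp t\log t$, drop the nonnegative $x_k^2$ and bound $\varepsilon\sum x_k$ directly. Pairing $\gamma$ with $-\gamma$ gives terms $2t/(t^2-\gamma^2)$. Using $N(\gamma)\ll\gamma\log\gamma$, the tail over $\gamma\gg t\log t$ is $O(t\sum\gamma^{-2})=O(1)$. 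The few far zeros on the negative side are handled the same way, with each term $O(1/(t\log t))$ and at most $O(1)$ of them contributing appreciably after pairing. Hence the far part is $\varepsilon\cdot O(1)=O(1/(t\log t))$. The explicit terms are also small: Stirling gives $(\log g)''=O(t^{-2})$ and $\varepsilon(\log g)'=O(\varepsilon\log t)=O(1/t)$.

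For the positive main term, the Riemann--von Mangoldt formula gives $N(t+1)-N(t)=\tfrac1{2\pi}\log\tfrac t{2\pi}+O(\log t)$. Taking a window of fixed large length $H$ in place of $1$ shows that consecutive ordinates are at most $H$ apart for large $t$. So some $\gamma_k$ has $|t-\gamma_k|\le H$, and its term is at least $x_k^2+\varepsilon x_k\ge H^{-2}-\varepsilon H^{-1}\ge\tfrac12H^{-2}$. This absolute constant beats all the $O(1/t)$ errors once $t\ge t_0$, so $\mathcal N>0$ and therefore $\rho_\ast>0$. Combined with $\rho_\ast=1$ at simple ordinates, $N_\ast$ is strictly increasing for $t\ge t_0$.

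I expect the main obstacle to be the bookkeeping of the conditionally convergent sum $\sum x_k$. It has to be grouped symmetrically in $\pm\gamma$ consistently with the Hadamard product and the definition of $g$. The cutoff at $1/\varepsilon$ also interacts with zeros of the wrong sign: the zeros near $-t$ must be shown to contribute only $O(1)$ to $\sum x_k$. I would first verify this by writing $\sum_k x_k$ as $\operatorname{Re}\,\xi'/\xi$ on the line, bounded via $\zeta'/\zeta$ and the partial-fraction formula \cite[Theorem~9.6(A)]{Titchmarsh1986}, and only then extract the far tail.
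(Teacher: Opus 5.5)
Your proof is correct, and apart from one ingredient it is the paper's proof. Dividing $\mathcal N$ by $Z^2$ is the paper's Step~1 in another form: indeed $\mathcal N/Z^2=-\vartheta'u'$ with $u=Z'/(\vartheta'Z)$. The Hadamard expansion, the split of the zeros at distance $\vartheta'/\vartheta''$, the $O(\varepsilon)$ far tail and the $O(1/t)$ bound on the $g$-terms are its Steps~2--4.

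The one difference is how you produce a nearby ordinate, and it is an improvement. The paper uses the conditional bound $S(t)=O(\log t/\log\log t)$ to put an ordinate within distance $1$ of $t$. Its Remark~\ref{rem:rh-role} then asserts that the unconditional $S(t)=O(\log t)$ ``will not serve''. Your window of fixed length $H$ shows that it does. The error in $N(t+H)-N(t)=\frac{H}{2\pi}\log\frac{t}{2\pi}+O(\log t)$ comes from $S(t+H)-S(t)$, so its constant does not grow with $H$. For $H$ large the count is therefore positive, which is the classical fact that consecutive ordinates are boundedly spaced. The floor $\tfrac12H^{-2}$ is still an absolute constant that beats every $O(1/t)$ error. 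So your route uses RH only once, structurally: to put all zeros on the line and make every term of $\sum_k(t-\gamma_k)^{-2}$ a positive square. The paper's second, quantitative use of the hypothesis is unnecessary.

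Two small corrections.
\begin{itemize}
\item RH does not imply that the zeros are simple; Remark~\ref{rem:simplicity} makes exactly this point. You do not need simplicity anyway. At a non-ordinate $Z\neq0$, so the denominator is at least $\vartheta'^2Z^2>0$, and the theorem claims $\rho_\ast=1$ only at simple ordinates.
\item In the far tail, the unpaired terms are not the zeros near $-t$. Those lie in the near range, where every term is nonnegative. The unpaired terms are the roughly $t\log t$ zeros $-\gamma$ of $\Xi$ with $1/\varepsilon-t<\gamma\le1/\varepsilon+t$, whose partners $+\gamma$ are near. Each has $x_k=1/(t+\gamma)\asymp1/(t\log t)>0$, so together they are $O(1)$, and they enter with the favorable sign.
\end{itemize}
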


\begin{proof}
\emph{Step 1: reduction to a monotonicity statement (unconditional).}
Between consecutive zeros $Z\neq0$, so we may factor
$W=\tfrac12Z\,(1-iu)$ with
\[
u(t)=\frac{Z'(t)}{\vartheta'(t)\,Z(t)} .
\]
On such an interval $\arg Z$ is constant ($0$ or $\pi$), hence
$\arg W=\mathrm{const}-\arctan u$ and
\[
\rho_{\ast}=\frac{1}{\vartheta'}\frac{d}{dt}\arg W
=\frac{-u'}{\vartheta'\,(1+u^{2})} .
\]
Since $\vartheta'>0$, proving $\rho_{\ast}>0$ is exactly proving that $u$ is
strictly decreasing on every gap ($u$ falls from $+\infty$ just after a zero
to $-\infty$ just before the next).

\emph{Step 2: the Hadamard product (first use of RH).}
Let $\Xi(t)=\xi(\tfrac12+it)$, where
$\xi(s)=\tfrac12s(s-1)\pi^{-s/2}\Gamma(s/2)\zeta(s)$. Then $\Xi=-g\,Z$ with
$g(t)=\tfrac12(t^{2}+\tfrac14)\,\pi^{-1/4}\,
|\Gamma(\tfrac14+\tfrac{it}{2})|$ the smooth positive factor of
Remark~\ref{rem:nstar-mono}. From the Hadamard
product $\xi'/\xi(s)=\sum_{\rho}(s-\rho)^{-1}$ (zeros paired symmetrically),
the Riemann hypothesis puts every zero at $\rho=\tfrac12+i\gamma_k$, so on
$s=\tfrac12+it$ the differences $s-\rho=i(t-\gamma_k)$ are purely imaginary
and
\[
\frac{\Xi'}{\Xi}(t)=\sum_k\frac{1}{t-\gamma_k},
\qquad
(\log\Xi)''(t)=-\sum_k\frac{1}{(t-\gamma_k)^{2}}=:-S_2(t)<0 .
\]
This is the structural use of the hypothesis: it makes every term of $S_2$ a
positive square. An off-line zero $\beta+i\gamma$ with $\beta\neq\tfrac12$
would contribute
$-\mathrm{Re}\,\bigl[(\tfrac12-\beta)+i(t-\gamma)\bigr]^{-2}$, which turns
\emph{negative} for $|t-\gamma|<|\beta-\tfrac12|$: near an off-line zero the
term loses its sign protection, and it is exactly the hypothesis that
forbids this.

\emph{Step 3: differentiate $u$.}
With $Z=-\Xi/g$ we have $Z'/Z=\Xi'/\Xi-g'/g$, hence
\[
-\vartheta'\,u'
=\underbrace{\sum_k\Bigl[\frac{1}{(t-\gamma_k)^{2}}
+\frac{\vartheta''}{\vartheta'}\cdot\frac{1}{t-\gamma_k}\Bigr]}_{\text{zero
terms}}
\;+\;
\underbrace{(\log g)''-\frac{\vartheta''}{\vartheta'}\cdot\frac{g'}{g}}_{=:E(t)} .
\]

\emph{Step 4: positivity of the zero terms, smallness of $E$.}
Each bracket equals
$\bigl[1+\tfrac{\vartheta''}{\vartheta'}(t-\gamma_k)\bigr]/(t-\gamma_k)^{2}$,
positive whenever
$|t-\gamma_k|<\vartheta'/\vartheta''\approx t\log\tfrac{t}{2\pi}$;
so every zero within distance $t\log t$ of $t$ contributes positively. The
distant zeros can contribute negatively only through their
$\tfrac{\vartheta''}{\vartheta'}\tfrac1{t-\gamma_k}$ tails; pairing $\gamma$
with $-\gamma$ and using the counting density, their total is
$O(\vartheta''/\vartheta')=O\bigl(1/(t\log t)\bigr)$.
Stirling gives $E(t)=O(1/t)$. Meanwhile, on the hypothesis
$S(t)=O(\log t/\log\log t)$~\cite[Theorem~14.13]{Titchmarsh1986}, so
$N(t+1)-N(t-1)=\tfrac{2}{\pi}\vartheta'(t)+o(\log t)>0$ for all large $t$:
the interval $(t-1,t+1)$ contains a zero,
$\operatorname{dist}(t,\{\gamma_k\})\leq1$, and the nearest zero's bracket
alone contributes
$\bigl[1-\vartheta''/\vartheta'\bigr]/\operatorname{dist}^{2}\geq\tfrac12$
once $t$ is large. Hence
\[
-\vartheta'u'\;\geq\;\tfrac12-O(1/t)\;>\;0
\]
for all $t$ beyond a modest $t_0$, so $u'<0$ and $\rho_{\ast}>0$ on every gap.
Below $t_0$ the argument gives nothing. There the computations reported in
Remarks~\ref{rem:nstar-mono} and~\ref{rem:nstar-slope} sample the range and
find grid minima $0.356$--$0.521$, nowhere near $0$; that is evidence, not a
proof of positivity between the sample points.
\end{proof}

\begin{remark}[where the hypothesis is indispensable]\label{rem:rh-role}
Steps~2 and~4 use the Riemann hypothesis, and they use it differently.
Step~2 needs it structurally, to make every term of $S_2$ a positive square;
Step~4 needs only the size bound $S(t)=O(\log t/\log\log t)$, and only to know
that an ordinate lies within distance $1$ of $t$. The unconditional
$S(t)=O(\log t)$ is the same order as the main term $\tfrac2\pi\vartheta'(t)$
and will not serve. Everything else is bookkeeping.
This also shows the converse fails gracefully: $\rho_{\ast}>0$ everywhere is
\emph{weaker} than the hypothesis. It is essentially the first
Laguerre-type inequality \eqref{eq:laguerre-Z}, and the theorem of Csordas
and Varga~\cite{CsordasVarga1990} says one needs the whole infinite family
of higher-order inequalities to recover the hypothesis. Unconditionally,
even $\rho_{\ast}>0$ is open. A negative value would contradict the hypothesis
only above the threshold $t_0$ of Theorem~\ref{thm:rho-positive}, which we do
not make effective, so no single observation at a finite height settles
anything by itself; what the theorem forbids under RH is negative values at
unbounded heights.
\end{remark}

\begin{remark}[the theorem doesn't need simple zeros as an extra
assumption: you get away with RH alone]\label{rem:simplicity}
At a hypothetical multiple zero the reduction of Step~1 degenerates: $u$ is
undefined at the zero itself and $\rho_{\ast}$ there is $0/0$. Between zeros the
argument is unaffected. The hypothesis does not by itself rule out multiple
zeros, so strictly the statement is: $\rho_{\ast}>0$ away from ordinates, and
$\rho_{\ast}=1$ at simple ordinates (Remark~\ref{rem:nstar-slope}). All known zeros
are simple.
\end{remark}

\begin{remark}[what keeps the curve climbing up]\label{rem:floor}
The proof makes quantitative what the figures show: $\rho_{\ast}$ can dip only as
low as the widest local gap forces, through $S_2$ shrinking to its two
nearest terms, but never to zero; under the hypothesis the sum of
inverse-square distances to the zeros is the floor holding the counting
curve up.
\end{remark}

\begin{remark}[sharpness: no uniform bounds are possible]
\label{rem:no-bounds}
Between zeros no bound better than $0<\rho_{\ast}<\infty$ can hold, and the reason
is a conservation law. Across each gap $W$ swings from one crossing of the
imaginary axis to the next, so $\arg W$ advances by exactly $\pi$:
\[
\int_{\gamma_k}^{\gamma_{k+1}}\rho_{\ast}\,\vartheta'\,dt=\pi ,
\]
that is, the density-weighted mean of $\rho_{\ast}$ over a gap is
$\pi/\Delta\vartheta\approx1/(\text{gap length in mean spacings})$. Every gap
delivers exactly one zero's worth of angle; the only freedom is the
distribution inside. A gap much narrower than average therefore forces
$\rho_{\ast}$ large (at the Lehmer pair~\cite{Lehmer1956}
$\gamma_{6709},\gamma_{6710}$ the hump value is $\rho_{\ast}=458$,
Remark~\ref{rem:nstar-mono}), and arbitrarily close pairs, which GUE
statistics predict, force $\rho_{\ast}$ arbitrarily large. The coincident case
is not the limit of that, though. At a zero of multiplicity $r$ the numerator
of \eqref{eq:nstar-rho} is $c^{2}r\,x^{2r-2}$ and the denominator
$c^{2}r^{2}x^{2r-2}$, writing $Z=cx^{r}+\cdots$ with $x=t-\gamma$, so
$\rho_{\ast}$ is undefined at the zero itself and approaches $1/r$ around it:
$1$ at a simple zero, as it must, and $\tfrac12$ at a double one. What
diverges as two simple zeros merge is the peak between them, not the value
where they meet. Symmetrically, a gap
twice the mean forces the mean of $\rho_{\ast}$ over it below $\tfrac12$, and
arbitrarily long gaps force $\rho_{\ast}$ arbitrarily near $0$, so the infimum over
all $t$ is (conjecturally) $0$. The measured minima of
Remark~\ref{rem:nstar-mono} drift accordingly with the widest gap sampled,
down to the $0.356$ at the flat stretch under the tall
$|Z|=9.85$ hump of Figure~\ref{fig:nstar-slope}: big humps sit in wide
gaps, and wide gaps mean slow counting. The one hard bound is the sign
(Remark~\ref{rem:rh-role}). A useful mental model: $\rho_{\ast}$
behaves like the reciprocal of the local normalized gap size, smoothed:
pinned to exactly $1$ at every zero, integrating to exactly one count per
gap, and free to spike or stall in between precisely as much as the zero
spacing deviates from its mean.
\end{remark}

\begin{remark}[recap on what $N$ and $N_{\ast}$ count]\label{rem:count-recap}
$N$ is a contour count (Theorem~9.3 of Titchmarsh~\cite{Titchmarsh1986},
already cited in \S\ref{sec:counting-review}), so its rectangle catches a
zero at any $\sigma$; an off-line pair $\beta+i\gamma$, $1-\beta+i\gamma$
shares one ordinate and would bump $N$ by two. $N_{\ast}$ is built from $Z$
and $Z'$, pure critical-line data; it advances only when
$W=\tfrac12Z-iZ'/2\vartheta'$ of \eqref{eq:h-star} crosses the imaginary
axis, which requires $Z=0$, and only on-line zeros do that; off the line
$|Z|$ merely dips, $W$ bows past the origin without crossing, and the curve
glides through without stepping. That gap is why
$\lfloor N_{\ast}\rfloor=N$ doubles as an off-line-zero detector, which
Table~\ref{tab:N-vs-Nstar} and Remark~\ref{rem:nstar-mono} already assert.
\end{remark}

\subsection{Summary: \texorpdfstring{$N$ and $N_{\ast}$}{N and N*} side by side}
\label{sec:counting-summary}

$N_{\ast}$ contains more and less information than $N$. More because it is
continuous and (on RH) increasing between ordinates, and less because it does not
count off-half-line zeros. Table~\ref{tab:N-vs-Nstar} collects the comparison
of the two counting functions in one place, and
Figure~\ref{fig:N-vs-Nstar-strips} draws them side by side: the top panel is
the full window $6\leq T\leq8.5$, and the three panels below it magnify the
outlined bands.

\begin{table}[H]
\centering
\small
\begin{tabular}{@{}p{0.20\textwidth}p{0.36\textwidth}p{0.36\textwidth}@{}}
\toprule
 & $N$ of \eqref{eq:N-theta} & $N_{\ast}$ of \eqref{eq:N-star-prufer} \\
\midrule
definition &
$\dfrac{\vartheta(t)}{\pi}+1+\dfrac{1}{\pi}\arg\zeta\bigl(\tfrac12+it\bigr)$ &
$\dfrac{1}{\pi}\arg\Bigl(Z-\dfrac{iZ'}{\vartheta'}\Bigr)+\dfrac32$ \\[4pt]
\arrayrulecolor{black!20}\midrule
continuity & step function; jumps at every ordinate & continuous \\
\midrule
at the ordinate $\gamma_n$ & jumps from $n-1$ to $n$ & equals $n$ exactly \\
\midrule
integer values & constant integer between ordinates & integer at the
ordinates and nowhere else \\
\midrule
beside the approximate form & the smooth
$\frac{t}{2\pi}\log\frac{t}{2\pi}-\frac{t}{2\pi}+\frac78+\frac{1}{48\pi t}$
of \eqref{eq:N-rvm} threads the staircase in the mean, never landing on an
integer at an ordinate &
meets the integer exactly at every ordinate, where that approximation never
does \\
\midrule
monotonicity & nondecreasing by construction & increasing, eventually and
conditional on RH (Theorem~\ref{thm:rho-positive}) \\
\midrule
what it counts & all zeros with $0<\operatorname{Im}s\leq t$, on or off the
critical line & only the zeros on the critical line \\
\midrule
how it sees them & the argument principle: $\xi$ winds once around a
rectangle enclosing the whole strip up to height $t$ for every zero inside,
wherever it sits & read off $Z$ alone: it passes an integer exactly where
$Z$ changes sign, and an off-line zero never makes $Z$ vanish \\
\midrule
one from the other & no converse: $N$ does not determine $N_{\ast}$ &
$\lfloor N_{\ast}\rfloor=N$ if $N_{\ast}$
increases, by \eqref{eq:floor-N-star} \\
\arrayrulecolor{black}\bottomrule
\end{tabular}
\caption{The exact staircase $N$ beside the continuous counting curve
$N_{\ast}$. The $N_{\ast}$ column reads the argument of
\eqref{eq:N-star-prufer} on its continuous branch throughout, and its
continuity and its exact integer values at the ordinates both need the zeros
there to be simple: at a multiple zero $Z$ and $Z'$ vanish together,
$B_1^{\ast}=0$, and the angle is undefined.}
\label{tab:N-vs-Nstar}
\end{table}

\begin{figure}[p]
\centering
\includegraphics[width=\textwidth,height=0.92\textheight,keepaspectratio]{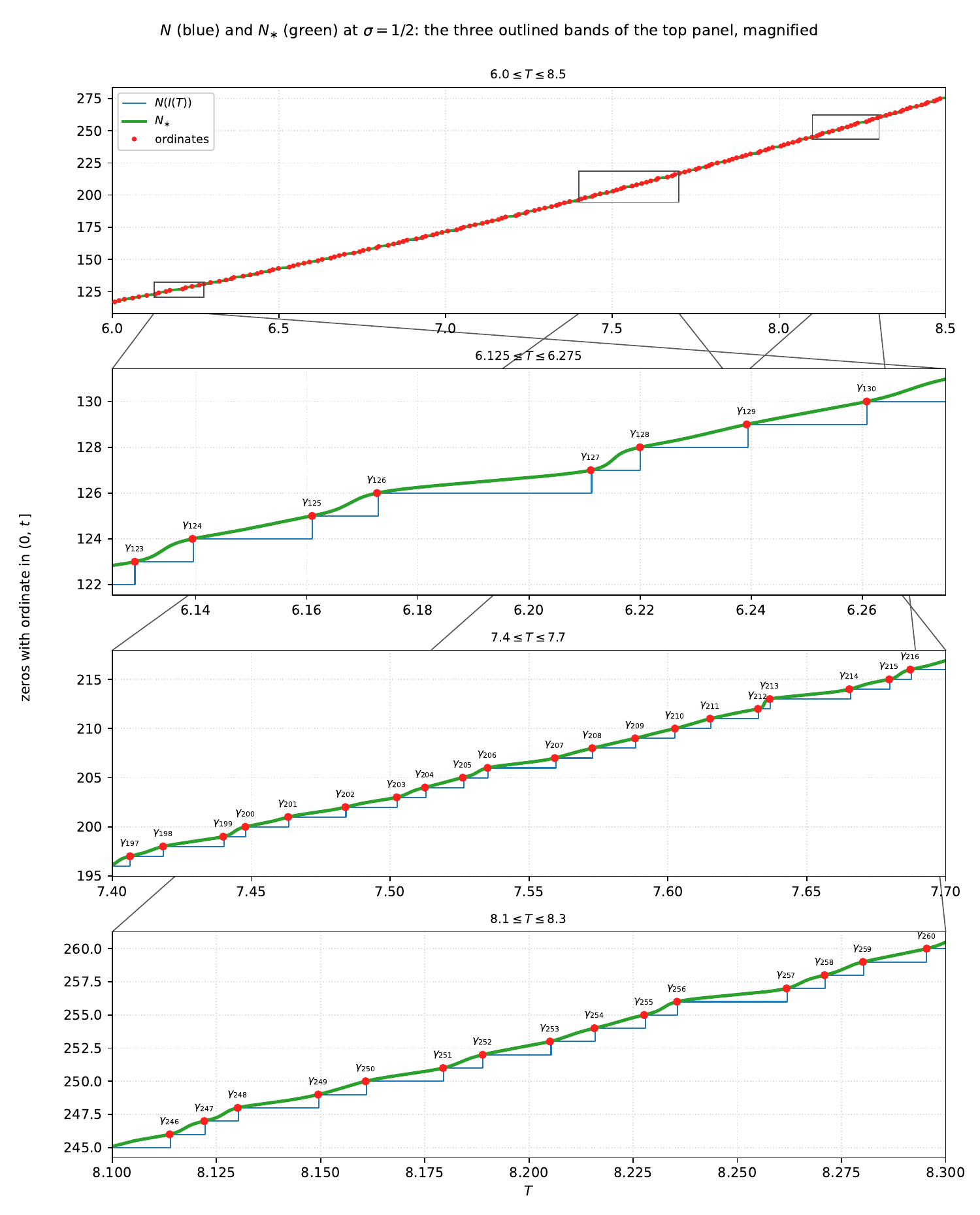}
\caption{Head-to-head comparison of $N$ (blue) and $N_{\ast}$ (thick green)
at $\sigma=\tfrac12$ with the ordinates as red dots. The top panel is the full window
$6\leq T\leq8.5$, holding the $159$ ordinates $\gamma_{117}$ through
$\gamma_{275}$; the three outlined bands are magnified in the panels below
it: $6.125\leq T\leq6.275$ (the window of
Figure~\ref{fig:count-theta2-ovals}), $7.4\leq T\leq7.7$, and
$8.1\leq T\leq8.3$. In the
magnified panels each ordinate carries its label $\gamma_n$.
Generated by \texttt{fig\_N\_vs\_Nstar\_strips.py}.}
\label{fig:N-vs-Nstar-strips}
\end{figure}

\clearpage

\section{Further observations}\label{sec:consequences}

We have made other observations of the zeta function, mostly from a
geometric perspective, and have listed some of them in this final section:
\begin{enumerate}
\item we examine the ratio of the lengths of the two partial sums beside
the ratio of the two partial summand remainders
(\S\ref{sec:length-ratios});
\item we look at the envelope of $\zeta$ on the half-line from a $B_1$ and
a $B_1^{\ast}$ perspective (\S\ref{sec:envelope});
\item since we now have three remainders, we look at their collinearity,
which we could not do when we had just two, because any two points already
lie on a line (\S\ref{sec:colinearity});
\item and since looking at the links separately, each in its own
coordinate frame, was fruitful (it led to the general partial summand
formula for $\zeta$ of \S\ref{sec:cut-any-link}), here we look at the
joint angles (\S\ref{sec:joint-angles}).
\end{enumerate}

\subsection{\texorpdfstring{Length ratios: $\Sigma_1$ to $\Sigma_2$, and $R_{1}$ to $R_{2}$}{Length ratios}}\label{sec:length-ratios}

The four terms of \eqref{eq:zeta-ps} come in two natural pairs: the two
partial sums, and the two fractional links. Within each pair the two lengths
are equal on the critical line, and what follows records what their
ratio does off it. The pairs behave quite differently, and in both cases the
ratio obeys the same reflection law about $\sigma=\tfrac12$.

\begin{remark}[regarding the ratio of the lengths of $\Sigma_1$ and $\Sigma_2$]\label{rem:leg-ratio}
On the critical line $\sigma=\tfrac12$ we have $|\chi|=1$, so the $\Sigma_2$ links
have the same lengths $(n+1)^{-1/2}$ as the $\Sigma_1$ links; each link is then a
rotation composed with a uniform link-length shrink $\sqrt{n/(n+1)}$, and
$\arg\chi=-2\vartheta(t)$, with $\vartheta$ the Riemann--Siegel theta function.
The two chains are thus congruent link for link there, so their end-to-end
displacements agree. Away from the critical line they do not, and the discrepancy has a
closed form. Write $\ell_{\Sigma_1}=|\Sigma_1|$ and $\ell_{\Sigma_2}=|\Sigma_2|$
for the lengths of the net displacements of the two chains,
and put $\Sigma_m(z)=\sum_{n\le m}n^{-z}$. Since
$\sum_{n\le m}n^{\,s-1}=\overline{\Sigma_m\bigl((1-\sigma)+it\bigr)}$, the
second leg is the first one read at the reflected abscissa $1-\sigma$, scaled
by $|\chi|$:
\begin{equation}\label{eq:leg-ratio}
\frac{\ell_{\Sigma_1}}{\ell_{\Sigma_2}}
=
\frac{\bigl|\Sigma_m(\sigma+it)\bigr|}
     {\bigl|\chi(\sigma+it)\bigr|\;
      \bigl|\Sigma_m\bigl((1-\sigma)+it\bigr)\bigr|}.
\end{equation}
\end{remark}

\begin{proposition}\label{prop:leg-ratio-reflection}
At fixed $T$ (hence fixed $t=I(T)$ and $m=\lfloor T\rfloor$), at every
$\sigma$ for which all four lengths are nonzero,
\begin{equation}\label{eq:leg-ratio-symmetry}
\frac{\ell_{\Sigma_1}(\sigma)}{\ell_{\Sigma_2}(\sigma)}
\cdot
\frac{\ell_{\Sigma_1}(1-\sigma)}{\ell_{\Sigma_2}(1-\sigma)}
=
1.
\end{equation}
\end{proposition}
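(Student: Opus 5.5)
The plan is to substitute the closed form \eqref{eq:leg-ratio} of Remark~\ref{rem:leg-ratio} at both abscissae and multiply. Everything then reduces to one property of $\chi$. Fix $T$, so that $t=I(T)$ and $m=\lfloor T\rfloor$ are frozen and only $\sigma$ moves.

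First, at $\sigma$ the ratio is
\[
\frac{|\Sigma_m(\sigma+it)|}{|\chi(\sigma+it)|\,|\Sigma_m((1-\sigma)+it)|}.
\]
I would re-derive this from $\sum_{n\le m}n^{s-1}=\overline{\Sigma_m((1-\sigma)+it)}$, using that conjugation preserves modulus. At $1-\sigma$, with the same $t$, it is
\[
\frac{|\Sigma_m((1-\sigma)+it)|}{|\chi((1-\sigma)+it)|\,|\Sigma_m(\sigma+it)|}.
\]
In the product the four partial-sum moduli cancel in pairs. This is where the hypothesis that all four lengths are nonzero is used: it keeps every factor finite and nonzero, and in particular excludes zeros of $\chi$ at the two points. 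What remains is
\[
\frac{1}{|\chi(\sigma+it)|\,|\chi(1-\sigma+it)|}.
\]

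The one real step, which I expect to be the only obstacle, is to show $|\chi(\sigma+it)|\,|\chi(1-\sigma+it)|=1$. The functional equation gives $\chi(s)\chi(1-s)=1$, but $1-s=(1-\sigma)-it$ sits at height $-t$, not $+t$. To bridge this I would use the reflection $\chi(\bar s)=\overline{\chi(s)}$, which holds because $\chi$ is real on the real axis, being built from $2^s$, $\pi^{s-1}$, $\sin$ and $\Gamma$. It yields $|\chi((1-\sigma)-it)|=|\chi((1-\sigma)+it)|$. Therefore
\[
|\chi(\sigma+it)|\,|\chi((1-\sigma)+it)|=|\chi(s)\chi(1-s)|=1,
\]
and the product equals $1$, which is \eqref{eq:leg-ratio-symmetry}.

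I would close with a remark that the argument never uses the specific cutoff $m$ or the map $I$. Any common cutoff used at both abscissae at the same height works, so the law is purely a consequence of $\chi(s)\chi(1-s)=1$ and conjugate symmetry.
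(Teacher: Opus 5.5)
Your proof is correct and follows the paper's argument: write \eqref{eq:leg-ratio} at $\sigma$ and at $1-\sigma$, cancel the partial-sum moduli, and combine $\chi(s)\chi(1-s)=1$ with the Schwarz reflection $\chi(\bar s)=\overline{\chi(s)}$ to get $|\chi(\sigma+it)|\,|\chi((1-\sigma)+it)|=1$. The nonzero hypothesis is only needed for the partial sums, since at height $t=I(T)>0$ the factor $\chi$ has no zeros or poles.
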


Write \eqref{eq:leg-ratio} at $\sigma$ and at $1-\sigma$ and multiply.
The two partial-sum moduli cancel, leaving
$1\big/\bigl(\lvert\chi(\sigma+it)\rvert\,
\lvert\chi((1-\sigma)+it)\rvert\bigr)$.
From $\chi(s)\chi(1-s)=1$ and the Schwarz reflection
$\chi(\bar s)=\overline{\chi(s)}$ one has
$\chi(1-\sigma+it)=1/\overline{\chi(\sigma+it)}$,
so the product of the moduli is $1$.
The ratio is therefore reciprocal-symmetric about $\sigma=\tfrac12$: whatever
it does on the right half of the strip, it does the reciprocal of on the left,
with the value pinned to $1$ at the center by the link-for-link congruence
of Remark~\ref{rem:leg-ratio}.

\begin{remark}[regarding the ratio of the lengths of $R_{1}$ and $R_{2}$]\label{rem:d-ratio}
The same question for the two fractional links has a shorter answer, because in
the quotient of \eqref{eq:d1} and \eqref{eq:d2} both $|R|$ and the common
denominator $\sin(2\omega+\psi)$ cancel:
\begin{equation}\label{eq:d-ratio}
\frac{d_1}{d_2}
\;=\;
\frac{\sin(\omega+\psi-\arg R)}{\sin(\omega+\arg R)},
\qquad
\frac{|R_{1}|}{|R_{2}|}
\;=\;
\Bigl|\frac{d_1}{d_2}\Bigr| .
\end{equation}
The signed quotient is the one that carries the information; the two moduli
see only its absolute value, and $d_1/d_2$ does go negative off the line, as
the fourth point below records.
This ratio is pure angle data, carrying no length scale at all: it is the law
of sines in the triangle of Figure~\ref{fig:remainder-zoom}, recording how the
direction of $R$ divides the wedge spanned by the two link directions
$e^{-i\omega}$ and $e^{\,i(\omega+\psi)}$. Four consequences are worth
recording. First, on the critical line $\arg R=\tfrac{\psi}{2}$, the two sines
coincide and the ratio is $1$, which is Proposition~\ref{prop:weights}(iii) again.
Second, the ratio at $\sigma$ is the reciprocal of the ratio at $1-\sigma$:

\begin{proposition}\label{prop:d-ratio-reflection}
At fixed $T$, at every $\sigma$ for which $d_2(\sigma)$ and $d_2(1-\sigma)$
are nonzero, so that both ratios are finite,
\begin{equation}\label{eq:d-ratio-symmetry}
\frac{d_1(\sigma)}{d_2(\sigma)}
\cdot
\frac{d_1(1-\sigma)}{d_2(1-\sigma)}
=
1.
\end{equation}
\end{proposition}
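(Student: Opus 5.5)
The plan is to reduce everything to the angle form \eqref{eq:d-ratio}. At fixed $T$, both $t=I(T)$ and $M=\lceil T\rceil$ are unchanged, hence so is $\omega=t\log M$, and the ratio depends on $\sigma$ only through $\varphi=\arg R$ and $\psi=\arg\chi$. So it suffices to express $\varphi'=\arg R(s')$ and $\psi'=\arg\chi(s')$ at the reflected point $s'=(1-\sigma)+it$ in terms of $\varphi,\psi$ at $s=\sigma+it$. Note that $s'=1-\bar s$.

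\emph{Step 1: the duality for $R$.} From the definition $R=R_m=\zeta-\sum_{n\le m}n^{-s}-\chi(s)\sum_{n\le m}n^{s-1}$, apply the functional equation $\zeta(1-s)=\chi(1-s)\zeta(s)$ and $\chi(s)\chi(1-s)=1$. This gives
\[
R(1-s)=\chi(1-s)\,R(s)
\]
with the same cutoff $m$. This is the identity already quoted in the third attribute after Theorem~\ref{thm:yin-convergence}.

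\emph{Step 2: Schwarz reflection.} Every ingredient of $R$ (the functions $\zeta$ and $\chi$, and the Dirichlet sums with real coefficients) satisfies $f(\bar s)=\overline{f(s)}$, and $m$ is fixed. Combining this with Step~1,
\[
R(s')=R(1-\bar s)=\chi(1-\bar s)\,R(\bar s)=\overline{\chi(1-s)}\;\overline{R(s)}.
\]
Since $\chi(1-s)=1/\chi(s)$, this yields
\[
\varphi'\equiv-\varphi+\psi \pmod{2\pi}.
\]
Likewise $\chi(s')=\overline{\chi(1-s)}=1/\overline{\chi(s)}$ has the same argument as $\chi(s)$, so $\psi'\equiv\psi$. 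This is the same computation used in Proposition~\ref{prop:leg-ratio-reflection}.

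\emph{Step 3: substitute.} Putting Step~2 into \eqref{eq:d-ratio} gives
\[
\frac{d_1(1-\sigma)}{d_2(1-\sigma)}
=\frac{\sin(\omega+\psi-\varphi')}{\sin(\omega+\varphi')}
=\frac{\sin(\omega+\varphi)}{\sin(\omega+\psi-\varphi)},
\]
which is exactly the reciprocal of $d_1(\sigma)/d_2(\sigma)$. This proves \eqref{eq:d-ratio-symmetry}.

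The only delicate point is bookkeeping, not analysis. The arguments are determined only modulo $2\pi$, and the sines are invariant under such shifts. If $\arg R$ is instead read modulo $\pi$ (as on the line), the resulting sign flips occur in numerator and denominator together and cancel. We also need $R\neq0$ for $\arg R$ to make sense. The hypothesis that $d_2(\sigma)$ and $d_2(1-\sigma)$ are nonzero excludes this case, since $d_2$ carries the factor $|R|$. That hypothesis also guarantees that neither sine in a denominator vanishes. Finally, the common factor $\sin(2\omega+\psi)$ is identical at $\sigma$ and $1-\sigma$ because $\psi'=\psi$, so its cancellation in \eqref{eq:d-ratio} is legitimate at both points simultaneously.
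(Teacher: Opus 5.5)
Your proof is correct and is essentially the paper's own argument: the duality $R(1-s)=\chi(1-s)R(s)$, combined with Schwarz reflection, gives $R(1-\sigma+it)=\chi(1-\sigma+it)\,\overline{R(\sigma+it)}$. So $\arg R$ goes to $\psi-\arg R$ while $\omega$ and $\psi$ are unchanged, and the two sines of \eqref{eq:d-ratio} simply exchange. Your additional bookkeeping about the branch of $\arg R$ and the nonvanishing of the denominators is sound and makes explicit what the paper leaves implicit.
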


The turn $\omega$ depends only on $T$. As above, one has
$\chi(1-\sigma+it)=1/\overline{\chi(\sigma+it)}$, so $\psi$ is the same at
both abscissae. From \eqref{eq:siegel-named} and
$\zeta(s)=\chi(s)\zeta(1-s)$ the remainder satisfies
$R(1-s)=\chi(1-s)R(s)$; Schwarz reflection
$R(\bar z)=\overline{R(z)}$ then gives
\begin{equation}\label{eq:R-reflect}
R(1-\sigma,T)
\;=\;
\chi(1-\sigma+it)\,\overline{R(\sigma,T)},
\end{equation}
hence $\arg R(1-\sigma)=\psi-\arg R(\sigma)$.
The two sines of \eqref{eq:d-ratio} therefore exchange, and the product is $1$.
On the critical line \eqref{eq:R-reflect} is $R=\chi\overline{R}$, which is
the first point again.

Third, because the scale cancels, this ratio is far tamer than
\eqref{eq:leg-ratio}: sampled on a grid over $1<T<20$ and the whole strip it
runs from $0.387$ to $2.583$ (a reciprocal pair, as the second point
requires). Those are not bounds. A grid steps over the isolated loci of the
fourth point below, where the ratio is negative, or $0$, or infinite; what the
sample shows is that away from them the ratio stays within a factor of three
of $1$, whereas
$\ell_{\Sigma_1}/\ell_{\Sigma_2}$ ranges over several orders of magnitude
there. Fourth, the parallel-link poles of \S\ref{sec:pole-locations} cancel out
of it: where $2\omega+\psi=k\pi$, \eqref{eq:d-ratio} gives
$d_1/d_2=(-1)^{k+1}$, exactly $\pm1$, even though $d_1$ and $d_2$ separately
diverge (at $\sigma=\tfrac1{10}$, $T=4.256594$ we compute
$d_1=-d_2\approx-4.07\times10^{36}$). A negative value is off-line news of a
different kind: it says $R$ has left the wedge, which happens in a narrow window
around each pole. The ratio does reach $0$ and $\infty$, but only on the
isolated loci where $R$ is exactly parallel to one of the two link directions;
for instance $d_1=0$ at $\sigma=\tfrac1{10}$, $T=4.2607679$, whose mirror at
$\sigma=\tfrac9{10}$ has $d_2=0$. Off the critical line the locus $d_1=d_2$ is
the family of ovals plotted in Figure~\ref{fig:leg-imbalance}.
\end{remark}

\subsection{\texorpdfstring{The envelope of $\left|\zeta\!\left(\tfrac12+it\right)\right|$}{The envelope of |zeta(1/2+it)|}}\label{sec:envelope}

On the critical line the chain is isosceles for every reflection-symmetric
split, by Corollary~\ref{cor:equal-legs}, so its two legs share one length
$L_1=|B_1|=|B_2|$ and the base is the chord they subtend across the fold
angle of the legs $\vartheta_2$ of \S\ref{sec:legs}. From $\zeta=B_1(1+e^{i\vartheta_2})$,
\begin{equation}\label{eq:zeta-chord}
\bigl|\zeta\bigl(\tfrac12+it\bigr)\bigr|
=2L_1\bigl|\cos\tfrac{\vartheta_2}{2}\bigr| ,
\end{equation}
so the modulus of $\zeta$ splits into a length carried by the links and an angle
read at the joint between the legs. Bounding the angular factor bounds $\zeta$.
Write $u=\vartheta_2/\pi$ reduced to $[0,2)$. Then
$\bigl|\cos\tfrac{\pi u}{2}\bigr|\leq1$ at once, and since
$\cos\tfrac{\pi u}{2}$ is concave on $0\leq u\leq1$ it stays above the chord
joining its endpoints there, giving
$\bigl|\cos\tfrac{\pi u}{2}\bigr|\geq|1-u|$ on the whole of $[0,2)$ by the
reflection $u\mapsto2-u$, with equality only at $u=0,1,2$. Hence
\begin{equation}\label{eq:env-bounds}
2L_1\bigl|1-u\bigr|
\;\leq\;\bigl|\zeta\bigl(\tfrac12+it\bigr)\bigr|\;\leq\;2L_1 ,
\qquad
u\equiv\frac{\vartheta_2}{\pi}\ (\mathrm{mod}\ 2),\quad 0\leq u<2 .
\end{equation}
The reduction matters: $\vartheta_2$ is the continuous lift whose turns do the
counting in \S\ref{sec:counting}, so $\vartheta_2/\pi$ itself grows without
bound and would make the left side exceed the right.
The upper envelope is attained where the two legs point the same way; the
lower one is attained at every ordinate, where
$u=1$ and both sides vanish, and again where $u$ reaches $0$ or $2$ and the two
envelopes meet. The triangular factor $|1-u|$ is the same sawtooth
whose passages through zero do the counting in \S\ref{sec:counting}, appearing
here with an amplitude on it. Over $6\leq T\leq13$, which is
$264.9\leq t\leq1144.6$, we confirm \eqref{eq:zeta-chord} to
$1.7\times10^{-20}$ and both inequalities of \eqref{eq:env-bounds} at every one
of $2001$ sample points, the two legs agreeing to $2.0\times10^{-20}$.

Which split to use is settled by how often the upper envelope is reached.
Equation~\eqref{eq:B1-rotated} gives $2L_1=\sqrt{Z^2+4h^2}$, so $|\zeta|=2L_1$
exactly where the offset $h$ vanishes. For the split at the partial
summand $h$ changes sign inside a gap exactly when its two ends carry opposite
$h$, which
is the alternation that the count of \S\ref{sec:counting} needs and that fails at
a retrograde ordinate, so the envelope is reached in some gaps and missed in
others: $3$ of the $7$ gaps of $6.125\leq T\leq6.275$, and $7$ of the $13$ gaps
of $12.5\leq T\leq12.6$. Those are counts of sign changes; a gap whose two ends
agree could in principle still hold an even number of zeros of $h$, which would
only add touches to the tally. The velocity split has no such gaps. Its offset
$h^{\ast}=-Z'/2\vartheta'$ vanishes at every extremum of $Z$, so
\begin{equation}\label{eq:env-star}
2L_1^{\ast}=\sqrt{Z^2+\Bigl(\frac{Z'}{\vartheta'}\Bigr)^{2}}
\;\geq\;|Z|=\bigl|\zeta\bigl(\tfrac12+it\bigr)\bigr| ,
\end{equation}
with equality on every hump: $7$ touches in $7$ gaps and $13$ in $13$ in those
two windows, matched to $10^{-25}$ (Figure~\ref{fig:envelope}). This is the familiar envelope of an
oscillation of instantaneous frequency $\vartheta'$, with $Z'/\vartheta'$ as the
quadrature companion of $Z$, and it is tighter on average as well, the mean of
$|\zeta|/2L_1$ over $6\leq T\leq13$ being $0.635$ against $0.586$ for the split
at the partial summand.

Both readings are polar coordinates of one function. The rotated bisector point
$W=e^{i\vartheta}B_1^{\ast}=\tfrac12Z-\tfrac{iZ'}{2\vartheta'}$ of \eqref{eq:h-star}
has modulus $L_1^{\ast}$ and argument $\pi\bigl(N_{\ast}-\tfrac32\bigr)$ by
\eqref{eq:N-star}, and $Z=2\operatorname{Re}W$, so
\begin{equation}\label{eq:Z-polar}
Z(t)=-2\,|W|\,\sin\bigl(\pi N_{\ast}(T)\bigr),
\end{equation}
which we confirm to $6\times10^{-13}$, the precision of the counting curve
itself. This is a restatement rather than a new fact, but it puts the two
subsections together: the amplitude of the velocity split is the envelope of
$\zeta$ and its phase is the counting curve, so the zeros are the moments when
the phase passes an integer and the extrema are the moments when the amplitude
is attained. Remark~\ref{rem:nstar-slope} reads the rate of that phase; the
envelope \eqref{eq:env-star} reads its amplitude. Verified in
\texttt{check\_envelope.py}.

\begin{figure}[H]
\centering
\includegraphics[width=\textwidth]{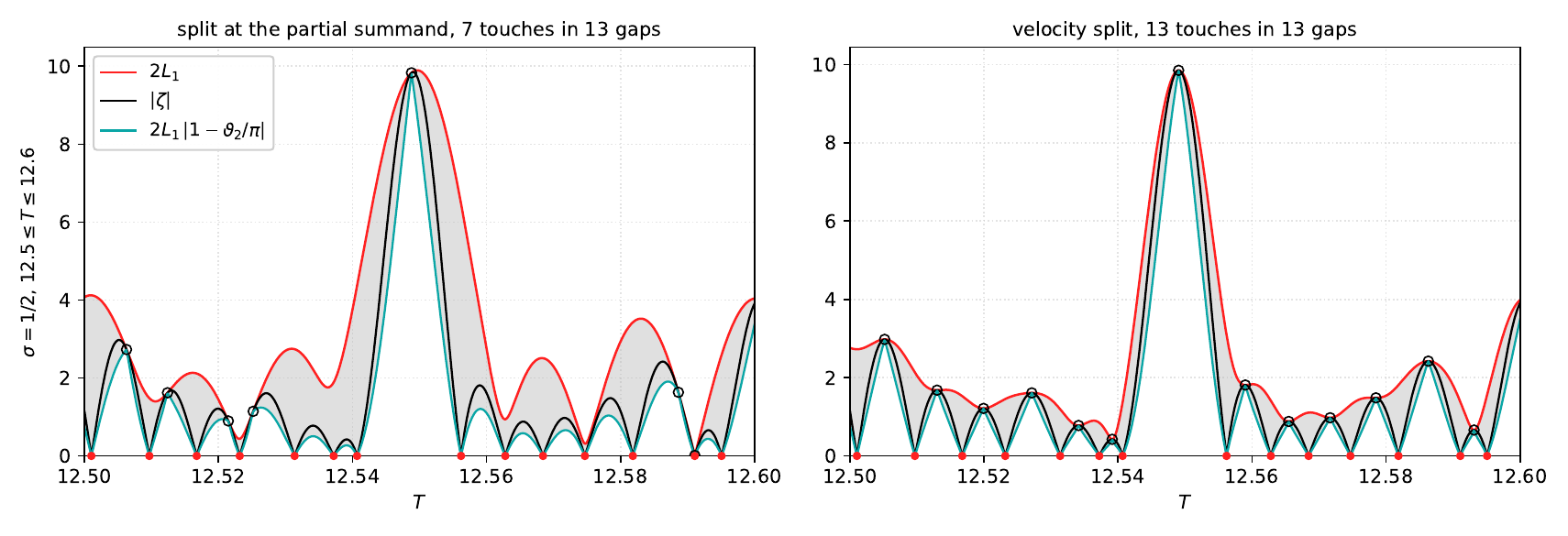}
\caption{The two envelopes of \eqref{eq:env-bounds}, at $\sigma=\tfrac12$, over
$12.5\leq T\leq12.6$, which is $1061.3\leq t\leq1077.7$ and holds $14$
ordinates. In each panel $|\zeta|$ is black,
the upper envelope $2L_1$ red, the lower envelope
$2L_1|1-\vartheta_2/\pi|$ a teal sawtooth, and the gray band between them
is where $|\zeta|$ is confined. Red dots on the axis are the ordinates, where
the lower envelope touches zero along with $|\zeta|$, and the open circles are
the touches of the upper envelope, one for each vanishing of the offset $h$.
\emph{Left:} the split at the partial summand, $B_1=\Sigma_1+R_{1}$, whose $h$
fails to alternate at a retrograde ordinate, so it reaches its bound in only $7$
of the $13$ gaps and several humps rise nowhere near it. \emph{Right:} the
velocity split, whose $h^{\ast}$ vanishes at every extremum of $Z$, so the upper
envelope is touched in every gap. Generated by \texttt{fig\_envelope.py}.}
\label{fig:envelope}
\end{figure}

\begin{remark}[regarding hunting for large values with the first few links]
\label{rem:large-values}
Neither the envelope nor the slope of Remark~\ref{rem:nstar-slope} can be used to
scout, since $2L_1$ needs $R=\zeta-\Sigma_1-\Sigma_2$ and $2L_1^{\ast}$ needs $Z$
and $Z'$, so both are functions of $\zeta$ at a point one has already evaluated.
What can be computed without $\zeta$ is the reason the envelope grows. Leg~1 is
mostly the link sum, so
the size of $2L_1$ is a question of how well the link phases $t\log n$ align, and
the ceiling is the aligned case,
\begin{equation}\label{eq:link-ceiling}
\bigl|\zeta\bigl(\tfrac12+it\bigr)\bigr|\;\leq\;2L_1\;\leq\;
2\sum_{n\leq m}n^{-1/2}+2|R_{1}|\;\leq\;4\sqrt{m}\;\leq\;4\sqrt{T}
\;<\;4\Bigl(\frac{t}{2\pi}\Bigr)^{1/4}
=\frac{4}{(2\pi)^{1/4}}\,t^{1/4}\;<\;2.54\,t^{1/4},
\end{equation}
a bound that costs nothing to evaluate. Every step is an inequality. The
middle one uses $\sum_{n\leq m}n^{-1/2}\leq2\sqrt m-1$, which follows by
induction from $(m+1)^{-1/2}\leq2\sqrt{m+1}-2\sqrt m$, together with
$|R_1|=\hat d_1M^{-1/2}\leq1$, so the fractional link is paid for by the $-1$
and does not disappear. The last inequality in $T$ is $t>2\pi T^2$, which holds
for every $T>0$ because \eqref{eq:IT} makes it $2x+x^2>2\log(1+x)$ at
$x=1/T$. Read in $t$ it is nothing new: the
exponent $\tfrac14$ is the convexity, or trivial, bound $\mu(\tfrac12)\le\tfrac14$
of Lindel\"of~\cite{Lindelof1908}, where
$\mu(\sigma)=\inf\{a:\zeta(\sigma+it)=O(t^{a})\}$,
and \eqref{eq:link-ceiling} recovers it with an
explicit constant. Nor can this path do better. The last step is the triangle
inequality, which keeps only the aligned case and so discards every cancellation
among the phases $t\log n$; in the geometry of this paper $4\sqrt T$ is the arc
length of the chain laid out straight, while $|\zeta|$ is the distance between its
endpoints once it has curled into its spirals. Every improvement past $\tfrac14$
comes from that discarded cancellation, beginning with the $\tfrac16$ that Hardy
and Littlewood obtained by Weyl's method on this same sum and standing today at
$\tfrac{13}{84}$~\cite{Bourgain2017}; the Lindel\"of hypothesis is
$\mu(\tfrac12)=0$. Two things follow, both drawn in
Figure~\ref{fig:large-values}. First, the alignment can be read from a
truncation. Scoring an interval by
$S_K(t)=\bigl|\sum_{n\leq K}n^{-1/2}e^{-it\log n}\bigr|$ and then evaluating $Z$
only at the strongest peaks of that score recovers most of the interval's largest
$|Z|$: averaged over the $19$ unit intervals $5\leq T\leq24$, which is
$190\leq t\leq3771$, the three strongest peaks of the five-link score reach
$0.967$ of it and the single strongest peak of the ten-link score reaches
$0.961$. The point is not the saving at these heights but that $K$ need not grow
with $t$ while $m$ does. Second, the largest sampled values in the intervals
are themselves nearly
regular: $\max|Z|$ over $[T,T+1]$, taken on a grid of eight points per mean
zero spacing and so liable to sit a percent or two under the true crest, fits
$2.93\,T^{0.485}$, its ratio to
$\sqrt{T}$ has mean $2.823$ and standard deviation $0.123$ across those
intervals, and it holds a fraction of the ceiling \eqref{eq:link-ceiling} that
ranges from $0.97$ down to $0.76$, the two extremes falling at $T=9$ and
$T=22$. Most of that spread is the ceiling rather than the maxima: the exact sum
$\sum_{n\leq m}n^{-1/2}=2\sqrt m+\zeta(\tfrac12)+o(1)$ climbs toward its own
asymptote $2\sqrt m$ across the range, from $0.72$ of $4\sqrt T$ at $T=5$ to
$0.86$ at $T=23$, since $\zeta(\tfrac12)=-1.4604$ still weighs heavily at
$m=5$. Against $\sqrt T$ itself the maxima are flat, $2.711$ at $T=5$ and
$2.670$ at $T=23$, which is the same fact as the fitted exponent $0.485$ being
essentially the ceiling's $\tfrac12$: at these heights the maxima simply ride the
trivial bound. In these coordinates the Lindel\"of hypothesis is precisely the
statement that they eventually stop, that $\max|Z|/\sqrt T$ falls like
$T^{-1/2+\varepsilon}$, the chain curling up so efficiently that the distance
between its endpoints grows more slowly than every positive power of $T$ while
its arc length grows like $\sqrt T$. That distance is not bounded, and is not
claimed to be: $|\zeta(\tfrac12+it)|$ is known to be unbounded. Nothing of that is
visible over $19$ intervals with $m\leq23$ links, and nothing here bears on it
either way. So the answer
to whether the next unit of the index will beat the last is yes by default, which
it is in $13$ of the $18$ consecutive comparisons, and the exceptions are what
needs predicting: on the maxima divided by $\sqrt{T}$ the peak of the ten-link
score ranks the intervals at $+0.72$, while the three-link score, whose peak is
nearly the same in every interval, carries nothing at $-0.11$. That last figure
rests on $19$ intervals at modest height and should be read as an indication
only. Verified in \texttt{check\_large\_values.py}.
\end{remark}

\begin{figure}[H]
\centering
\includegraphics[width=\textwidth]{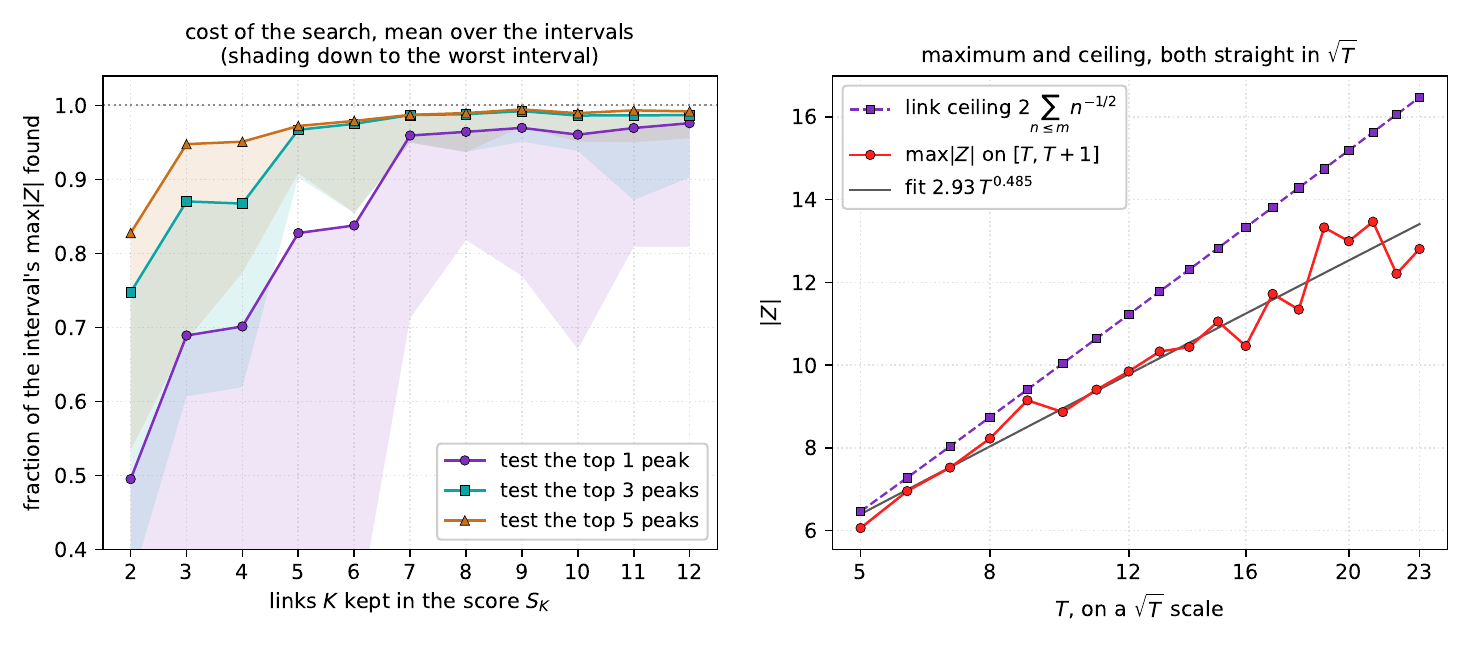}
\caption{Reading large values off the links, over the $19$ unit intervals
$5\leq T\leq24$. \emph{Left:} the fraction of an interval's largest $|Z|$ found by
evaluating $Z$ only at the strongest one, three or five peaks of the $K$-link
score $S_K$, averaged over the intervals, with the shading reaching down to the
worst single interval. With seven links a single candidate captures $95.9\%$ on
average, while the worst sampled interval captures $71.2\%$. \emph{Right:} the interval maximum against $\sqrt{T}$, on which it is
nearly straight, with the link ceiling of \eqref{eq:link-ceiling} above it and
the fitted power law through it. The gap between the two curves is the alignment
the links never quite achieve. Generated by \texttt{fig\_large\_values.py}.}
\label{fig:large-values}
\end{figure}

\subsection{Collinearity of the three first-half remainders}\label{sec:colinearity}

Since all the remainders are relative to the same point, namely $\Sigma_1$,
and since there are three of them, we decided to investigate their
collinearity, which we could not do when there was just one remainder split
(say $R/2$) or two (say $R_{1ak}$), because any two points already lie on a
line.

Collinearity of three points is unchanged by a common translation. The three
points $B_{1ps}$, $B_{1rs}$, and $B_{1ak}$ of
\S\ref{sec:ps-ak-r2} are all relative to the same $\Sigma_1$, so that shared
$\Sigma_1$ cancels, and the
three remainder vectors $R_{1ps}=R_1$, $R_{1rs}=\tfrac12 R$, and $R_{1ak}$ can be
compared directly as points in the plane.
Figure~\ref{fig:colinearity-strip} plots two independent, scale-invariant
measures of how far these three points are from lying on a common line. The
first is the triangle flatness
\begin{equation}\label{eq:flatness}
F
\;=\;
\frac{2\,\lvert\mathrm{area}\rvert}{\mathrm{diam}^2},
\end{equation}
where the area is that of the triangle with the three points as vertices and
the diameter is the largest of the three pairwise distances. The second is
the aspect ratio $\lambda_{\min}/\lambda_{\max}$ of the covariance matrix of
the three points (a principal-component measure). Both quantities are zero
exactly when the points are collinear, and both are invariant under
translation, rotation, and rescaling, so the shrinking of the remainders
with growing $T$ cannot masquerade as collinearity. Each panel is a grid of
$1501$ $\sigma$-samples by $1801$ $T$-rows over $0\le\sigma\le1$,
$3.9\le T\le5.1$, computed from the exact remainder formulas, with the
$\sigma$-sample nearest $\tfrac12$ snapped exactly onto the critical line.
The color scale is logarithmic, and that is what separates residuals at
the double-precision floor, $10^{-16}$ to $10^{-12}$, from the merely
small values, $10^{-4}$ to $10^{-1}$, that fill the rest of the strip.
Exact collinearity on the critical line is the proposition below, not the
heatmap; the heatmap shows that the residual sits at the floor there and
nowhere else in the sampled window.

\begin{figure}[H]
\centering
\includegraphics[width=0.95\textwidth]{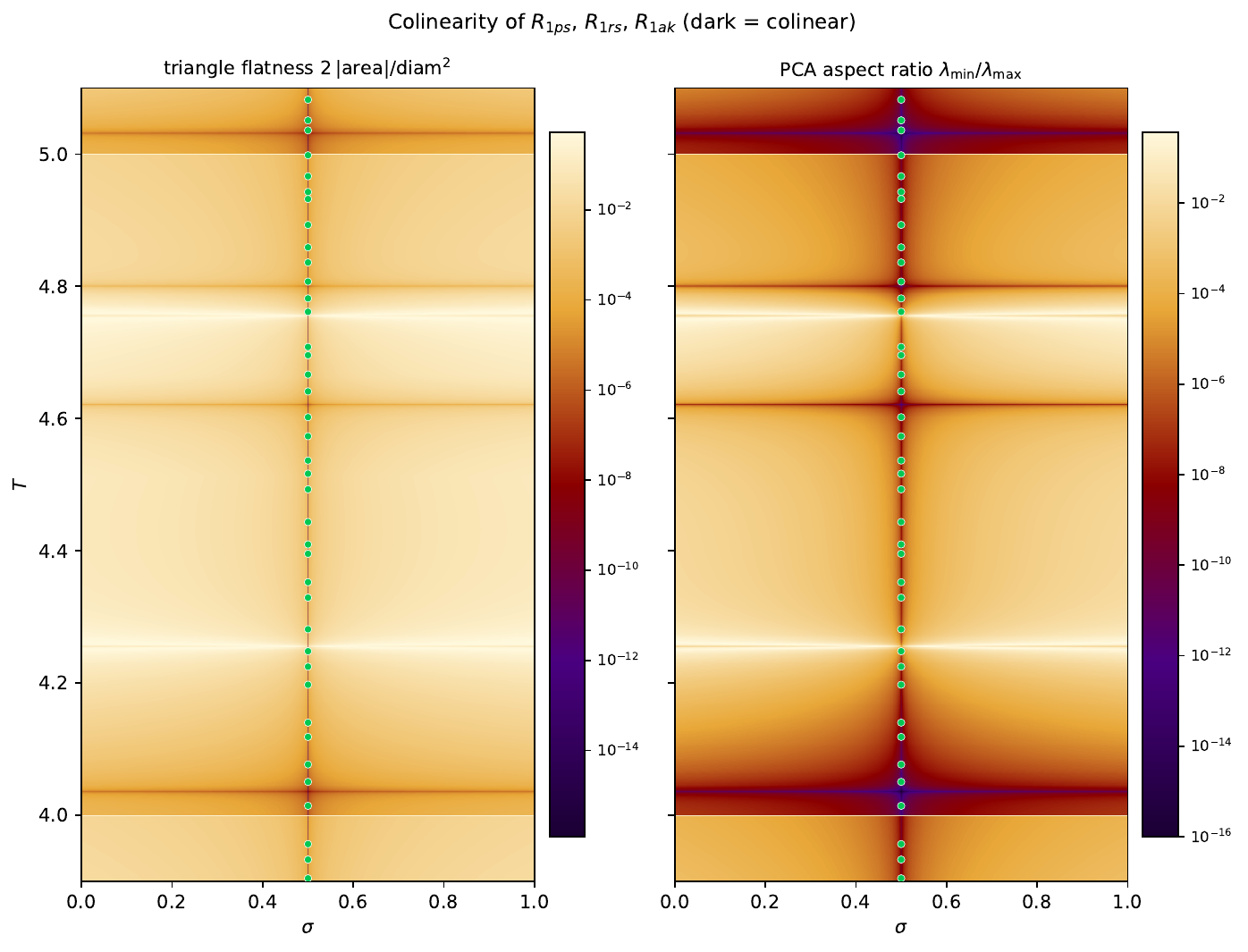}
\caption{Two ways of displaying the same collinearity information for the
three first-half remainders $R_{1ps}$, $R_{1rs}=\tfrac12 R$, $R_{1ak}$
(dark $=$ collinear). The panel on the left uses the flatness measure
\eqref{eq:flatness} of the triangle formed by the three points, and the one
on the right uses the PCA aspect ratio $\lambda_{\min}/\lambda_{\max}$.
Horizontal axis
$\sigma\in[0,1]$; vertical axis the spiral index $T\in[3.9,5.1]$.
Both panels use a log color scale; green
markers are zeta zeros at $\sigma=\tfrac12$. The dark column pinned at the
numerical floor along $\sigma=\tfrac12$ shows the three points are exactly
collinear everywhere on the critical line; off the line collinearity occurs
only along the isolated dark horizontal bands. Generated by
\texttt{fig\_colinearity\_zoom.py} from data produced by
\texttt{remainder-colinearity-heatmap-zoom.mjs}.}
\label{fig:colinearity-strip}
\end{figure}

Both panels show the same two facts. First, the column $\sigma=\tfrac12$
sits at the numerical floor at every $T$: on the critical line the three
first-half remainders are exactly collinear, not just approximately
(Proposition~\ref{prop:colinear-line}). Second,
off the line the measures are many orders of magnitude larger everywhere
except along isolated dark horizontal bands (near $T\approx4.03$, $4.62$,
$4.80$, and $5.03$ in this window), where the triangle momentarily passes
through a flat configuration as its signed area changes sign. The measured
values are symmetric under $\sigma\mapsto1-\sigma$, a reflection of the
functional equation.

The same pattern appears, in the sampled data, across $0\le\sigma\le1$
for $0<T\le20$.
Figure~\ref{fig:colinearity-four-panel} repeats the four-panel layout of
Figure~\ref{fig:leg-imbalance} (two windows spanning the unit cells
$5.95\le T\le7.05$ and $10.95\le T\le12.05$, a third band straddling the
integer $T=17$, and the same full strip
$0\le T\le20$), with the PCA aspect ratio as the plotted color map.
As in the leg-imbalance panels, the dark column on $\sigma=\tfrac12$ runs
unbroken through every window, while off the line the only dark features are
the isolated horizontal collinearity bands. The bands do not sit at integer
$T$: crossing an integer the three remainder formulas jump ($\Sigma_1$ gains
a summand and the sign $(-1)^{\lfloor T\rfloor}$ flips), the signed area of
the triangle changes sign in that jump and lands very close to zero, and the
area then crosses zero a short distance above the integer. The third panel
shows one full unit interval, $16.95\le T\le18.05$: the light-to-dark break
at the integer $T=17$ itself, a nearly flat strip at every $\sigma$ just
above it, and then the five collinearity bands of the cell, at
$T\approx17.011$, $17.252$, $17.612$, $17.752$, and $17.796$, before the
pattern repeats across the next break at $T=18$. Numerically the bands come
five to a unit interval, near $\{T\}\approx0.011$, $0.252$, $0.612$,
$0.752$, $0.796$
at this height, with the second and fourth tracking the pole heights
$\{T\}=\tfrac14,\tfrac34$ of $d_1$ (\S\ref{sec:pole-locations}): near a pole
$R_{1ps}$ runs off to infinity along the fixed direction of the next
summand, so the triangle is dragged through a flat configuration nearby.
The band positions are almost independent of $\sigma$ (they move by less
than $10^{-4}$ between $\sigma=0.2$ and $\sigma=0.35$), which is why the
bands are horizontal.

\begin{figure}[H]
\centering
\includegraphics[width=\textwidth]{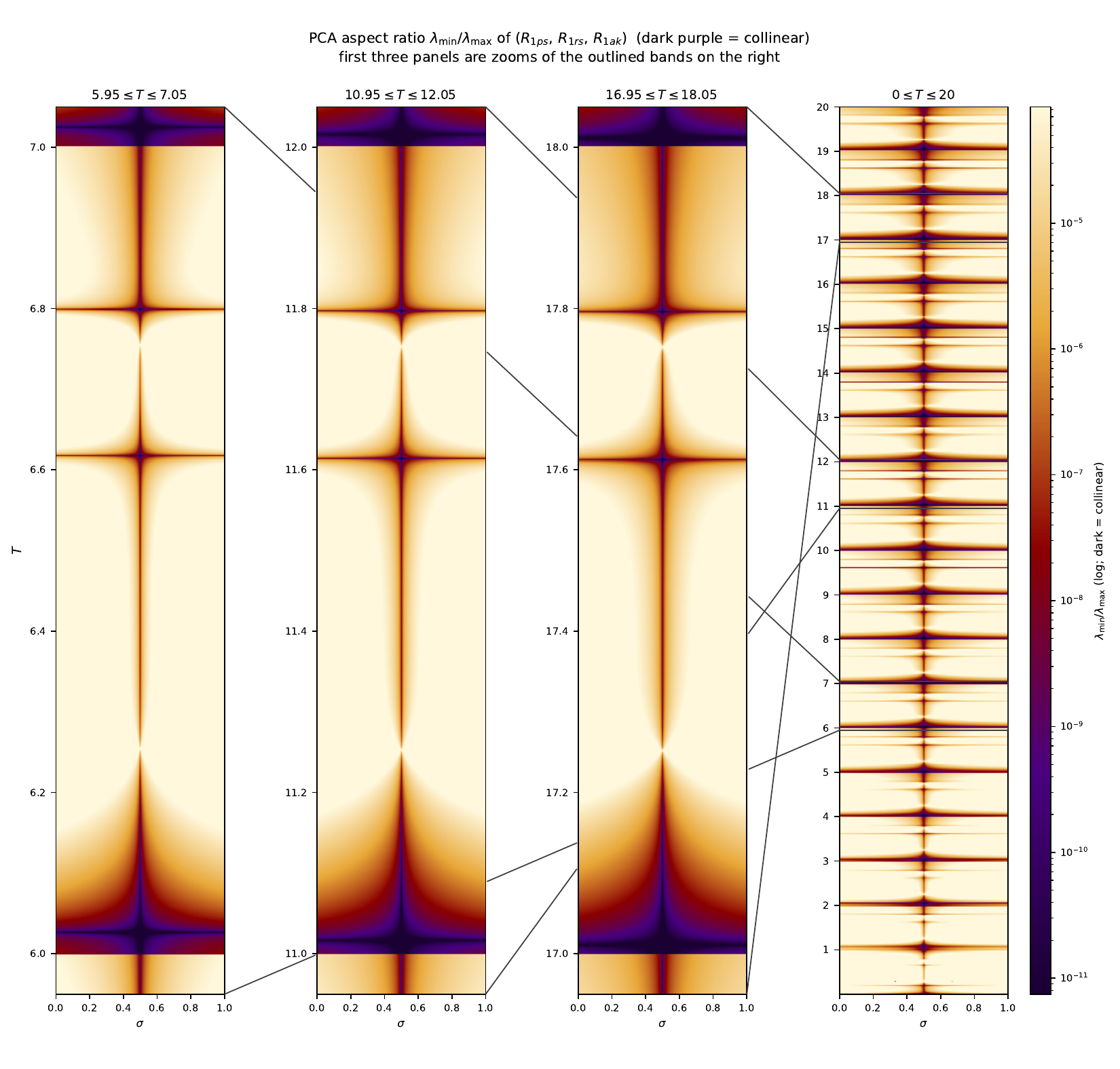}
\caption{PCA aspect ratio $\lambda_{\min}/\lambda_{\max}$ of the three
first-half remainders $(R_{1ps},R_{1rs},R_{1ak})$ (dark purple $=$
collinear), in the four-panel layout of Figure~\ref{fig:leg-imbalance}.
Horizontal axis $\sigma\in[0,1]$; vertical axis the spiral index $T$. Color
is on a log scale. The
rightmost panel is the full strip $0\le T\le20$; the first three panels are
zooms of the outlined bands
($5.95\le T\le7.05$, $10.95\le T\le12.05$, $16.95\le T\le18.05$).
The dark vertical band on $\sigma=\tfrac12$ is the exact collinearity of the
three first-half remainders on the critical line; the dark horizontal bands
are the isolated off-line collinearity events. Generated by
\texttt{plot\_colinearity\_four\_panel.py}.}
\label{fig:colinearity-four-panel}
\end{figure}

Why can this collinearity not be used like the two routes of
\S\ref{sec:toward-proof}? Because collinearity
is \emph{not} forced at a zero. For $\zeta\neq0$ the equal-leg condition for
a split says precisely that $B_1$ is equidistant from $O$ and
$\zeta$, i.e.\ that $B_1$ lies on the perpendicular bisector of the chord
from $O$ to $\zeta$; if all three splits had equal legs at a point where
$\zeta\neq0$, all three bisector points would lie on that single line and the three
first-half remainders would be collinear. But at a zero the chord
degenerates: $\zeta=O$, every point of the plane is equidistant from the two
endpoints, and the (now automatic) equal-leg conditions carry no
collinearity information at all. All three leg pairs are equal at a zero,
as they must be, while the three points can remain in general position.
So a zero implies equal legs for all three splits, but not collinearity, and
the three points need not line up at an off-line zero.

\begin{proposition}\label{prop:colinear-line}
On $\sigma=\tfrac12$ the three first-half remainders $R_{1ps}$,
$R_{1rs}=\tfrac12 R$, and $R_{1ak}$ are collinear at every height $T$,
zeros included.
\end{proposition}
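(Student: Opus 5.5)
The plan is to show that on $\sigma=\tfrac12$ all three bisector points $B_{1ps}$, $B_{1rs}$, $B_{1ak}$ lie on one fixed real line in the plane. That line is
\[
\ell_T=\bigl\{z\in\mathbb C:\ \operatorname{Re}\bigl(e^{i\vartheta(t)}z\bigr)=\tfrac12 Z(t)\bigr\},
\]
where $t=I(T)$. Subtracting the common point $\Sigma_1$ then puts the three remainders $R_{1ps}$, $R_{1rs}=\tfrac12R$, $R_{1ak}$ on the translate $\ell_T-\Sigma_1$, which is again a line, so they are collinear. The point of working with $\ell_T$ rather than with the ``perpendicular bisector of the chord $O\to\zeta$'' is that $\ell_T$ is defined at every height, zeros included. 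When $Z=0$ it is simply the line through the origin orthogonal to $e^{-i\vartheta}$. This is what the discussion just before the proposition says must be avoided: equal legs alone carry no collinearity information at a zero.

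\emph{Step 1: each split is reflection-symmetric on the line.} For any split $\zeta=B_1+B_2$ with $B_2=\chi\overline{B_1}$, multiplying by $e^{i\vartheta}$ and using $\chi=e^{-2i\vartheta}$ gives
\[
Z=e^{i\vartheta}\zeta=e^{i\vartheta}B_1+\overline{e^{i\vartheta}B_1}=2\operatorname{Re}\bigl(e^{i\vartheta}B_1\bigr),
\]
so $B_1\in\ell_T$. This is \eqref{eq:B1-rotated} and Corollary~\ref{cor:bisector-proj}. For the $ps$ split the reflection $B_2=\chi\overline{B_1}$ is already proved, via \eqref{eq:R2-conj-R1} and the proof of Corollary~\ref{cor:equal-legs}. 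For the $rs$ split it is the identity $R=\chi\overline R$ recorded in \S\ref{sec:ps-ak-r2}, combined with $\Sigma_2=\chi\overline{\Sigma_1}$. Neither argument divides by $\zeta$, so both hold at zeros as well.

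\emph{Step 2: the $ak$ split.} The remaining task is to show $\Sigma_2+R_{2ak}=\chi\,\overline{\Sigma_1+R_{1ak}}$ on the line. Here $R_{1ak}$ is read as $f(s)-\Sigma_1$, the exact reading of \eqref{eq:siegel-f}, so that $B_{1ak}=f(s)$; the truncated series would only give collinearity up to its error. The plan is to use Siegel's symmetric formula (his equation~56, the starting point of Kuznetsov's method):
\[
\zeta(s)=f(s)+\chi(s)\,\overline{f(1-\bar s)}.
\]
Its second term is $\Sigma_2+R_{2ak}$ by \eqref{eq:Rak-split}. On $\sigma=\tfrac12$ one has $1-\bar s=s$, so $B_{2ak}=\chi\overline{f(s)}=\chi\overline{B_{1ak}}$, and Step~1 applies.

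\emph{Expected obstacle.} The delicate point is Step~2, namely confirming that the second half of Siegel's formula really is $\chi(s)\overline{f(1-\bar s)}$. This needs the Schwarz reflection property of $f$ from the contour integral \eqref{eq:siegel-58}: conjugating swaps the path $0\swarrow1$ with $0\nwarrow1$, and one has to check that the orientation and the sign factor $-\tfrac12(-1)^m$ line up. If that bookkeeping proves awkward, the fallback is to define $R_{2ak}:=\zeta-f(s)-\Sigma_2$, so that the $ak$ split is exact by construction. One then only needs $\zeta-f(s)=\chi\overline{f(s)}$ on the line, which is equivalent to
\[
\operatorname{Re}\bigl(e^{i\vartheta}f(s)\bigr)=\tfrac12 Z.
\]
This real-part identity is exactly what Step~1 requires for $B_{1ak}=f(s)$. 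With Steps~1 and~2 in place, the three points lie on $\ell_T$ at every $T$ and the proposition follows.
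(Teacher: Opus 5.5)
Your proposal is correct and is essentially the paper's own proof. The paper likewise puts all three bisector points on the line $\operatorname{Re}(e^{i\vartheta}z)=\tfrac12 Z$ of \eqref{eq:B1-line}, checks the reflection $B_2=\chi\overline{B_1}$ for the $ps$ split (Corollary~\ref{cor:equal-legs}), the $rs$ split ($\Sigma_2=\chi\overline{\Sigma_1}$, $R=\chi\overline R$) and the $ak$ split via Siegel's pairing with $B_{1ak}=f(s)$, and notes that nothing divides by $\zeta$. Your form $\zeta(s)=f(s)+\chi(s)\overline{f(1-\bar s)}$ is the precise version of the paper's looser ``$\zeta=f(s)+\chi(s)f(1-s)$, with $f(1-s)=\overline{f(s)}$ on the line''. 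The Schwarz-reflection check you flag does go through: conjugation carries $0\swarrow1$ to $0\nwarrow1$ and flips the sign of $e^{\pi i x}-e^{-\pi i x}$, and reversing orientation absorbs that sign, giving Siegel's second integral over $0\searrow1$. No factor $-\tfrac12(-1)^m$ enters once $R_{1ak}$ is read as $f(s)-\Sigma_1$.
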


The three points $B_{1ps}$, $B_{1rs}$, and $B_{1ak}$ are those remainders
translated by the common $\Sigma_1$, so it is the same to prove the $B_1$
collinear. On the critical line write $\vartheta$ for the Riemann--Siegel
theta function, so $\chi=e^{-2i\vartheta}$ and $Z=e^{i\vartheta}\zeta$ is
real. Any split $\zeta=B_1+B_2$ that satisfies the reflection
$B_2=\chi\overline{B_1}$ then has
\begin{equation}\label{eq:B1-line}
e^{i\vartheta}B_1
\;=\;
\tfrac12 Z+ih,
\qquad h\in\mathbb R.
\end{equation}
Indeed $e^{i\vartheta}B_2=\overline{e^{i\vartheta}B_1}$, so
$e^{i\vartheta}\zeta=e^{i\vartheta}B_1+\overline{e^{i\vartheta}B_1}
=2\Re(e^{i\vartheta}B_1)$, and the left side is the real number $Z$.
Thus every such $B_1$ has the same real part $Z/2$ in the $\vartheta$-frame:
all of them lie on the single vertical line $\Re(e^{i\vartheta}z)=Z/2$,
which is the perpendicular bisector of the segment from $O$ to $\zeta$ when
$\zeta\neq0$, and the imaginary axis of that frame when $\zeta=0$.
The three named splits are of this kind. For the $ps$ split,
$B_2=\chi\overline{B_1}$ is Corollary~\ref{cor:equal-legs}. For the $rs$
split the same reflection is the identity recorded in
\S\ref{sec:ps-ak-r2}: $\Sigma_2=\chi\overline{\Sigma_1}$ and
$R=\chi\overline{R}$, so
$\chi\overline{B_{1rs}}=\Sigma_2+\tfrac12 R=\zeta-B_{1rs}$.
For the $ak$ split, Siegel's contour integral $f$ of
\eqref{eq:siegel-58} gives the exact pairing
$\zeta(s)=f(s)+\chi(s)f(1-s)$~\cite{Siegel1932}, and on the line
$f(1-s)=\overline{f(s)}$, so $B_{1ak}=f(s)$ satisfies the same
reflection; the numerical $R_{1ak}$ of \eqref{eq:Rak-split} is the
Dirichlet-sum truncation of that identity, which is why the heatmaps
sit at the double-precision floor rather than at a symbolic zero.
Equation~\eqref{eq:B1-line} does not use $\zeta\neq0$, so the three
points remain collinear at a critical-line zero as well: there $Z=0$ and
they lie on $\Re(e^{i\vartheta}z)=0$.

A proof through collinearity therefore still needs a bridge that the routes of
\S\ref{sec:toward-proof} do not. The first step of that bridge is the
identity just proved. What remains is to prove that off the line
collinearity fails outside isolated curves like the dark bands of
Figure~\ref{fig:colinearity-strip}, and to connect a hypothetical
off-line zero to that structure, for instance through nearby off-line
points with $\zeta\neq0$ at which all three splits have equal legs, which
by the perpendicular-bisector argument above would force a collinearity
that the off-line geometry forbids. Those two steps would turn
collinearity into a characterization of the critical line and then into
a statement about zeros; both remain unproved.

\subsection{Joint angles}\label{sec:joint-angles}

In our numerical explorations of observing the links in their own coordinate
frames, which led to \eqref{eq:zeta-any-cut}, we decided to look at the joint
angles too. So we plotted them, and noticed a pattern emerge at the Farey
fractions. This section is the result of that investigation.

The joint angles of the forward chain can be read on their own, without the
chain visualization, using one dot per joint with
the turning angle on the vertical axis, the joint index along the horizontal.
The turning angle at joint $n$ is the continuous joint angle \eqref{eq:jangle}
evaluated at an integer joint,
\begin{equation}\label{eq:theta-joint}
\theta_n \;=\; -I(T)\,\log\tfrac{n}{n-1}
\qquad\text{(reduced to $[-\pi,\pi]$)},
\qquad
\nu(n) \;=\; \frac{I(T)}{2\pi\,n(n-1)},
\end{equation}
where $\nu$, the derivative of
the unwrapped angle divided by $2\pi$, is the local frequency of the strip in
cycles per joint. Two features organize the picture, both consequences of
$I(T)$. First, at the bisector joint $n=\lfloor T\rfloor+1$ the frequency is
$\nu=1+\frac{1}{6T(T+1)}$, one full turn per joint to nine decimals at
$T=13000$, and the angle there is $-\pi$ exactly: that is
\eqref{eq:jangle-count}, the bisector joint of \S\ref{sec:IT-origin}, seen as the right
edge of the strip. The joints just behind it inherit almost the same angle,
$3.141109$, $3.139659$, $3.137242$ at $n=T,T-1,T-2$, which is the same
near-stationarity that makes the bisector frame of
Figure~\ref{fig:bisector-frame} repeat. Second, since
$\nu(n)\approx T^{2}/n^{2}$ falls from $\nu$ of order $10^{8}$ at $n=2$ to $1$ at that
bisector joint, the strip is a sampled chirp, and it is stratified by the joints where
$\nu$ is the reciprocal of a rational. Writing that rational as a Farey
fraction $f=p/q$ in $(0,1]$, its \emph{caustic joint} is the solution of
$n(n-1)=f\,I(T)/2\pi$, that is $\nu=1/f$, so
\begin{equation}\label{eq:caustic-joint}
n_{f}\;=\;\tfrac12\Bigl(1+\sqrt{1+2f\,I(T)/\pi}\,\Bigr)\;\approx\;T\sqrt{f},
\end{equation}
and there the joints fall into exactly $p$ strands, the denominator of
$\nu=q/p$, each a parabola of curvature $4\pi\nu/n$ per joint squared because
the unwrapped angle is locally quadratic (exactly
$|\widetilde\theta''(n_f)|$ of \eqref{eq:caustic-derivs} below). Small denominators give the spines
visible in Figure~\ref{fig:joint-angles}, drawn at the integer index $T=13000$,
where $t=I(T)=1{,}061{,}939{,}999.369541$: there the fractions
$\tfrac14,\tfrac13,\tfrac12,\tfrac23,\tfrac34$ sit at
$n=6501,7506,9193,10615,11259$, and $f=1$ is the bisector joint itself. The parabolic
strands are the same local quadratic phase that produces the Cornu-like arcs of
the curlicue literature of \S\ref{sec:curlicues}, here read off the joints
rather than the partial sums.

\begin{figure}[H]
\centering
\includegraphics[width=\textwidth]{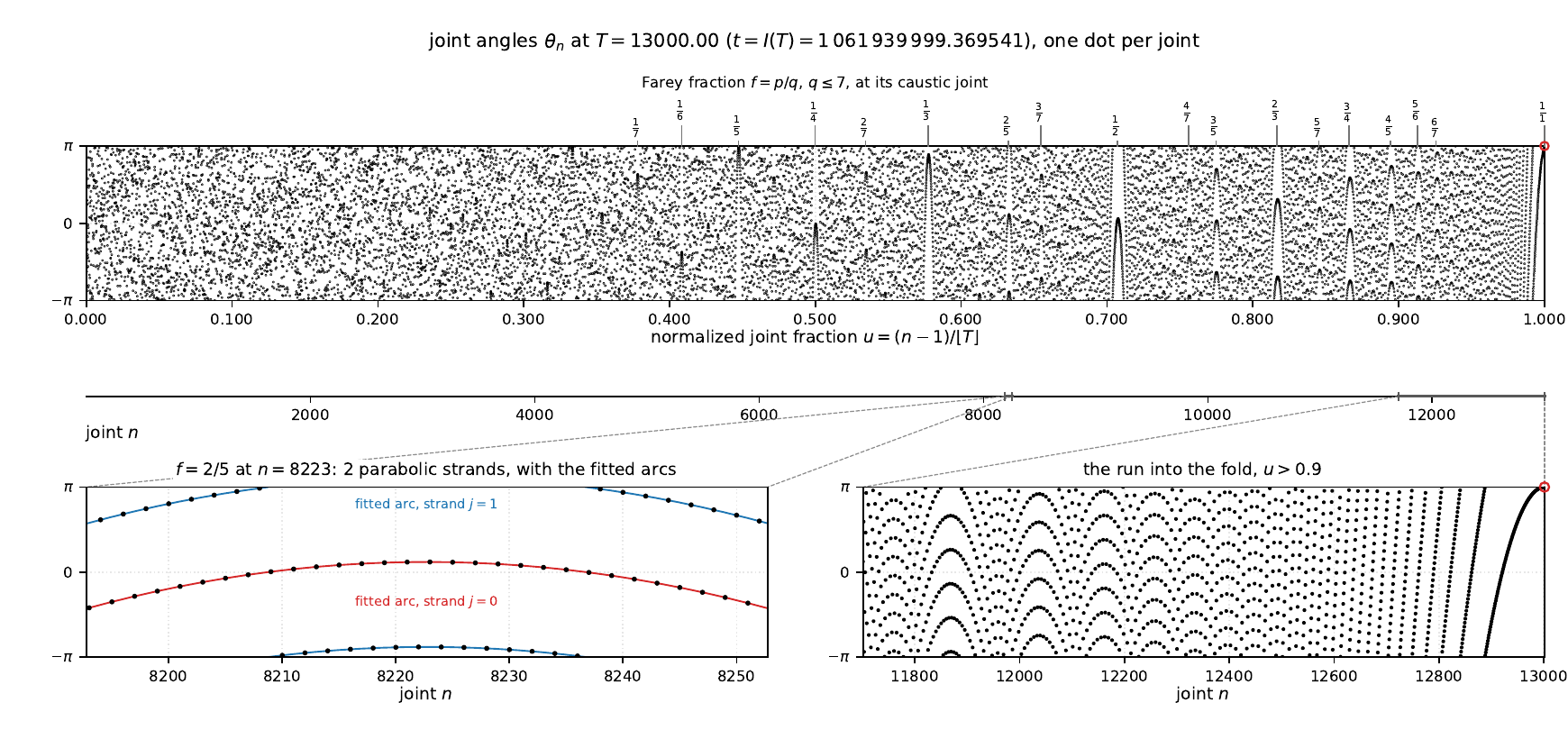}
\caption{The joint-angle strip at $T=13000$, where
$t=I(T)=1{,}061{,}939{,}999.369541$, one dot per joint of the forward
chain, $n=2,\dots,\lfloor T\rfloor+1$, with $\theta_n$ of
\eqref{eq:theta-joint} on the vertical axis. The two edges $\pm\pi$ are
identified, so the strip is a cylinder. \emph{Top:} all $13000$ joints, carrying
three horizontal scales: the Farey fractions $f=p/q$ with $q\le7$ above, each
at its caustic joint \eqref{eq:caustic-joint} and staggered in two rows where
they crowd; the normalized joint fraction
$u=(n-1)/\lfloor T\rfloor$ below, so the bisector joint sits at $u=1$; and the joint index
$n$ below that. The red circle at the right edge is the bisector joint
$n=\lfloor T\rfloor+1$, where $\theta=-\pi$ exactly. \emph{Bottom left:} a
window at $f=\tfrac25$, showing its $p=2$ parabolic strands as dots, each with
its fitted arc \eqref{eq:fitted-arc} drawn through them; the two arcs sit half a
turn apart, so strand $1$ has its vertex wrapped to the opposite edge.
\emph{Bottom right:} the last tenth of the strip, where the
strands steepen and run into that bisector joint on the right in the figure. Dashed lines carry each window back to
the stretch of the joint-$n$ scale it magnifies, marked there by a bracket.
Generated by \texttt{fig\_joint\_angles.py}.}
\label{fig:joint-angles}
\end{figure}

\paragraph{The fitted curve.} The strands are more than parabola-like: they can
be written down. Let $\widetilde\theta(n)=-I(T)\log\frac{n}{n-1}$ be the
unwrapped angle, so $\theta_n$ is $\widetilde\theta(n)$ reduced to
$[-\pi,\pi]$, and
differentiate it three times:
\begin{equation}\label{eq:jangle-derivs}
\widetilde\theta'(n)=\frac{I(T)}{n(n-1)},\qquad
\widetilde\theta''(n)=-\frac{I(T)\,(2n-1)}{[\,n(n-1)\,]^{2}},\qquad
\widetilde\theta'''(n)=\frac{2\,I(T)\,(3n^{2}-3n+1)}{[\,n(n-1)\,]^{3}}.
\end{equation}
At a caustic these have closed forms in the fraction alone, since
\eqref{eq:caustic-joint} fixes $n_f(n_f-1)=f\,I(T)/2\pi$ and
$2n_f-1=\sqrt{1+2f\,I(T)/\pi}$ exactly:
\begin{equation}\label{eq:caustic-derivs}
\begin{aligned}
\widetilde\theta'(n_f)&=\frac{2\pi}{f}=\frac{2\pi q}{p},
\qquad
\widetilde\theta''(n_f)=-\frac{4\pi^{2}}{f^{2}I(T)}
\sqrt{1+\frac{2f\,I(T)}{\pi}}\;\approx\;-\frac{4\pi}{f^{3/2}T},\\[2pt]
\widetilde\theta'''(n_f)&=\frac{16\pi^{3}}{f^{3}I(T)^{2}}
\Bigl(1+\frac{3f\,I(T)}{2\pi}\Bigr)\;\approx\;\frac{12\pi}{f^{2}T^{2}}.
\end{aligned}
\end{equation}
Write $\delta=n-n_f$ for the offset in joints from the caustic. The linear term
$\widetilde\theta'(n_f)\,\delta=(2\pi q/p)\,\delta$ is a carrier sweeping several
radians per joint, and it is what interleaves the dots into strands in the first
place. Along one strand, though, $\delta$ advances by $p$ at a time, and
$(2\pi q/p)\,pm=2\pi q\,m$ drops out: the carrier contributes nothing but a
constant. Absorbing it together with $\widetilde\theta(n_f)$, strand $j$ carries
the phase constant
\begin{equation}\label{eq:caustic-Cj}
C_j\;=\;\widetilde\theta(n_f)+\frac{2\pi q}{p}\bigl([n_f]+j-n_f\bigr),
\qquad j=0,\dots,p-1,
\end{equation}
with $[\,\cdot\,]$ the nearest integer, and the strand itself is that cubic
reduced to $[-\pi,\pi]$
\begin{equation}\label{eq:fitted-arc}
\rho_j(\delta)\;=\; C_j
+\tfrac12\widetilde\theta''(n_f)\,\delta^{2}
+\tfrac16\widetilde\theta'''(n_f)\,\delta^{3}
\qquad\text{(reduced to $[-\pi,\pi]$)}.
\end{equation}
Because $\gcd(p,q)=1$ the constants $C_j$ run through all $p$ residues of
$2\pi/p$, so the $p$ arcs are one parabola stacked at equal spacing around the
cylinder. Third order is the right order: the second derivative supplies the
parabola, the third its visible skew, and the fourth is smaller by a further
$\delta/n_f\sim T^{-1/2}$.

The arcs drawn in Figure~\ref{fig:joint-angles} are \eqref{eq:fitted-arc} at
$f=\tfrac25$ and $T=13000$, where $n_f=8222.74$,
$\widetilde\theta''(n_f)=-3.8208\times10^{-3}$ and
$\widetilde\theta'''(n_f)=1.3941\times10^{-6}$ radians per joint squared and
cubed, and the carrier is $2\pi q/p=5\pi\equiv\pi$, which is why the two strands
sit half a turn apart. Over the $\pm30$ joints shown they track the dots to
$2.2\times10^{-5}$ radians, that being the fourth-order term. Nothing is fitted
in the statistical sense, and \eqref{eq:fitted-arc} is evaluated exactly.
Two limits stand out.
Toward small $f$ the parabola stiffens like $f^{-3/2}$, which is why the
left end of the strip looks like noise at this scale: consecutive joints there
are many turns apart and only the caustics of very small $f$ remain resolved. At
the other end $f=1$ closes the family, with $p=q=1$: a single strand, a carrier of
one full turn per joint, and $C_0=-\pi$. That is the bisector joint of
\eqref{eq:jangle-count} again, so the bisector joint this paper is built on is the last
caustic of the sequence, and the right-hand panel of
Figure~\ref{fig:joint-angles} is the run of the high-denominator caustics into
it.

\paragraph{Incremental change.}\label{sec:incremental}
The strip is a snapshot. Advance the index and every dot moves, though not by
the same amount. Differentiating the unwrapped angle in $T$ at a fixed joint,
\begin{equation}\label{eq:jangle-drift}
\begin{aligned}
\frac{\partial\widetilde\theta}{\partial T}(n)&=-\,I'(T)\log\frac{n}{n-1},
\qquad L=\log\Bigl(1+\tfrac1T\Bigr),\\[2pt]
I'(T)&=\frac{\pi}{L^{2}}\Bigl(2L+\frac{2T+1}{T(T+1)}\Bigr)
=4\pi T+2\pi+O(T^{-1}),
\end{aligned}
\end{equation}
which in cycles is a drift rate
\begin{equation}\label{eq:jangle-drift-cycles}
\mu(n)\;=\;\frac{I'(T)}{2\pi}\log\frac{n}{n-1}
\;\approx\;\frac{2T+1}{n-\tfrac12}\;\approx\;\frac{2}{u},
\qquad u=\frac{n-1}{\lfloor T\rfloor}.
\end{equation}
A value of $\mu$ is easiest to read as a lap count: $\mu(n)=3$ means that dot
laps the strip three times, falling from $+\pi$ to $-\pi$, wrapping, and
repeating, while you advance $T$ by one. Nothing here needs differencing:
$I'(T)$ is available in closed form as
$\pi\bigl(2L+(2T+1)/T(T+1)\bigr)/L^{2}$ with $L=\log(1+1/T)$, which is
$4\pi T+2\pi$ to leading order, about $163{,}369$ at $T=13000$.

The sign is the same everywhere, so the whole strip drifts downward; the
magnitude is not, and that is the point. Since
$\mu(n)\approx(2T+1)/(n-\tfrac12)\approx2/u$, the rate is essentially a $1/u$
hyperbola, equal to exactly $2$ at the bisector joint $u=1$ and growing leftward
to $1.8\times10^{4}$ cycles per unit index at $n=2$. Incidentally, the
$2/u$ form is no longer any good there, reading $26{,}000$, since
$\log\frac{n}{n-1}$ and $1/(n-\tfrac12)$ part company at the first few joints.
The strip does not translate as the index advances, it shears. Read the other
way, $1/\mu(n)$ is the index step that returns joint $n$ to the same angle: a
dot at $u=\tfrac12$ comes back around every $\Delta T=0.25$, one at $u=0.1$
every $\Delta T=0.05$, and one at the bisector joint every half unit.

At the right edge the value is exact and already familiar: the middle expression
of \eqref{eq:jangle-drift-cycles} at $n=\lfloor T\rfloor+1$ is
$(2T+1)/(T+\tfrac12)=2$, and the exact rate is $2.000000001$ at $T=13000$. Two
cycles per unit index is one more than \eqref{eq:jangle-count} reports for the
joint angle, and the discrepancy is the joint itself moving: the bisector joint sits at
$n=\lfloor T\rfloor+1$, which advances one joint per unit index, and one joint
there is worth $\nu=1$ turn, so an observer riding the bisector joint sees $\mu-\nu=1$
cycle, namely $\frac{d}{dT}\pi(2T+1)=2\pi$. The two statements differ only in
whether one rides the bisector joint or stands at a fixed joint.

The pattern the dots drift through, by contrast, hardly moves at all.
Differentiating \eqref{eq:caustic-joint} gives
$dn_f/dT=f\,I'(T)\big/2\pi(2n_f-1)\approx\sqrt f$, so a caustic creeps outward
by a fraction of a joint per unit index, and its normalized position
$u_f=(n_f-1)/\lfloor T\rfloor\approx\sqrt f$ does not depend on $T$ at all: the
spines stand still while the joints stream through them. Their drift rate
follows from pairing \eqref{eq:jangle-drift-cycles} with the cycles per joint
$\nu=I(T)/2\pi n(n-1)$ of \eqref{eq:theta-joint}, which gives
$\mu\approx2\sqrt\nu$: at the caustic of Farey fraction $f$, where $\nu=1/f$,
the dots slide at $2/\sqrt f$ cycles per unit index, four at $f=\tfrac14$ and
two at $f=1$. Only at an integer index does the frame change, when a new link is
appended, a new joint appears at the right edge with $\theta=-\pi$, and $u$
rescales by one part in $\lfloor T\rfloor$.

Figure~\ref{fig:incremental-change} shows one step, $\Delta T=0.02$, which
advances $t$ by $\Delta I=3267.38$. The travel is $0.04/u$ cycles: $14.4^{\circ}$
at the bisector joint, $28.8^{\circ}$ at $u=\tfrac12$, $143.9^{\circ}$ at $u=0.1$, half a
turn at $u=0.0800$, joint $n=1041$, and a full turn at $u=0.0400$, joint
$n=521$. Since the motion is downward everywhere, the arrows in the figure are
drawn downward and wrapped at the lower edge, which is where they land for
joints left of $n=1041$. Left of $n=521$ the joint has gone round at least once,
and rather than draw what is left over the arrow there sweeps the whole strip
from $\pi$ to $-\pi$; at $n=2$ the travel is $2264$ radians, some $360$ turns.
This is the sense in which the left end of the strip is a chirp in the index as
well as in the joint number: it is not only that neighboring joints are many
turns apart at fixed $T$, but that one joint sweeps many turns over a step in
$T$ too small to move the bisector joint by a quarter of a radian. The bottom
panel of Figure~\ref{fig:incremental-change} is $\mu$ on a logarithmic axis, and
the middle panel is a picture of the same quantity at $\Delta T=0.02$.

\begin{figure}[H]
\centering
\includegraphics[width=\textwidth]{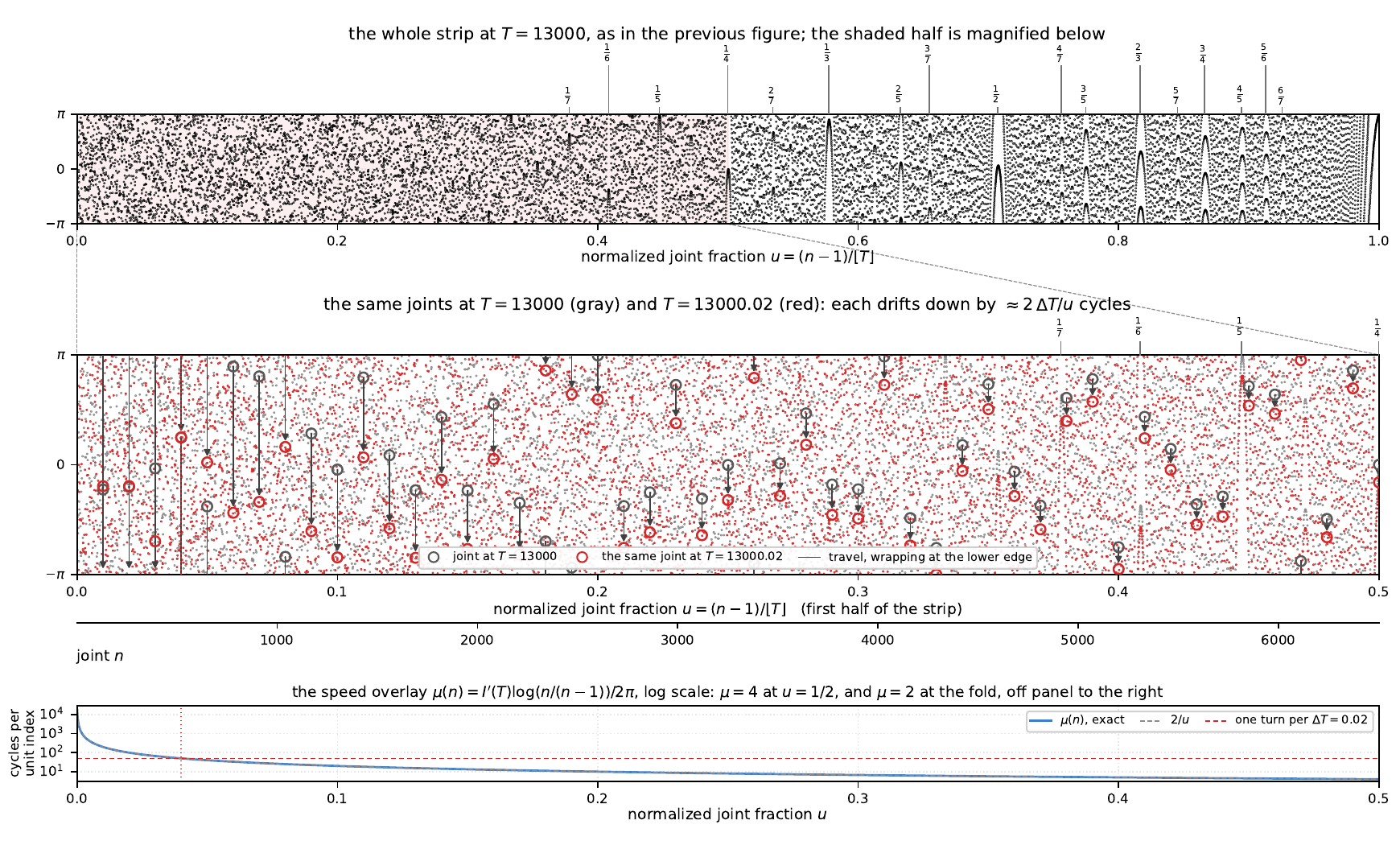}
\caption{One increment of the index, at $T=13000$. \emph{Top:} the whole
joint-angle strip as in Figure~\ref{fig:joint-angles}, with the shaded first
half magnified below. \emph{Middle:} that half twice over, gray at $T=13000$ and
red at $T=13000.02$, every dot having drifted downward by
$\mu(n)\,\Delta T\approx0.04/u$ cycles of \eqref{eq:jangle-drift-cycles}. Fifty
joints spaced evenly across the panel are circled in both colors, with an arrow
along the path traveled; since the drift is downward at every joint the arrows
run downward and wrap at the lower edge, and where the travel exceeds a full turn
the arrow sweeps the whole strip, from $\pi$ to $-\pi$. The travel
lengthens leftward like $1/u$, passing half a turn at $n=1041$ and a full turn
at $n=521$, marked by the dotted vertical line. Both strips carry the Farey
fractions $f=p/q$, $q\le7$, each at its caustic joint
\eqref{eq:caustic-joint}; only $f\le\tfrac14$ falls in the magnified half, since
$u_f\approx\sqrt f$. \emph{Bottom:} the drift rate itself on a logarithmic axis:
exact (blue) against the $2/u$ form (gray), the two separating only at the first
few joints, with the dashed line at one full turn per $\Delta T=0.02$. Generated by
\texttt{fig\_incremental\_change.py}.}
\label{fig:incremental-change}
\end{figure}

\clearpage

\section{Prior literature}\label{sec:prior-lit}

\subsection{Nickel's Argand-diagram geometry}\label{sec:nickel}

Nickel~\cite{Nickel2013}, with a condensed sequel~\cite{Nickel2015}, starts
from the same object this paper starts from,
the Argand diagram of the steps $n^{-s}$, and reaches a construction close
enough to ours that the two are worth setting side by side. They agree in
shape, and they differ in the one respect this paper is about.

\paragraph{Dictionary.} His center of symmetry is the step
$n_p=\bigl\lfloor\sqrt{t/2\pi}\,\bigr\rfloor$, with fractional part $p=\sqrt{t/2\pi}-n_p$. Since
$\sqrt{t/2\pi}\approx T+\tfrac12$ (\S\ref{sec:IT-origin}), his index is
$n_p\approx\lfloor T+\tfrac12\rfloor$ and $p\approx\{T+\tfrac12\}$, so it
nearly agrees with our $m=\lfloor T\rfloor$ on the first half of each unit interval
and exceeds it by one on the second half: the same half-unit offset already
recorded for Siegel's cutoff in \S\ref{sec:kuznetsov}. His factor $Q(s)$,
defined by $\zeta(s)=Q(s)\zeta(1-s)$, is our $\chi(s)$; the displayed form
$Q=n_p^{1-2s}e^{i(t+\pi/4)}$ is the familiar $e^{-2i\vartheta(t)}$ asymptotic,
which reproduces $\chi$ to a relative $1.5\times10^{-5}$ at $T=20.3$ and
$1.6\times10^{-7}$ at $T=200.3$ once the smooth $\sqrt{t/2\pi}$ is used in
place of the rounded $n_p$. His symmetry angle $\Theta$ is our
$\tfrac{\psi}{2}=\tfrac12\arg\chi$.

\paragraph{What matches.} Under that dictionary his Riemann--Siegel equation
$\zeta=P(s)+Q(s)P(1-s)$ is our two-leg decomposition $\zeta=B_1+B_2$ with
$B_2=\chi\,\overline{B_1}$ (Corollary~\ref{cor:equal-legs}, verified in Lean
in Appendix~\ref{app:lean-critical}); his polar form
$\zeta=2\cos(\phi-\Theta)\,\mathsf{P}e^{i\Theta}$ is the two-leg identity
$\zeta=B_1+\chi\overline{B_1}=2|B_1|\cos(\arg B_1-\tfrac{\psi}{2})e^{i\psi/2}$,
not the polar decomposition of $R$; and his observation that the
two magnitudes are equal exactly when $\sigma=\tfrac12$ is the on-line
half of Corollary~\ref{cor:equal-legs} (the present legs can also be equal
on isolated off-line loci). He also writes the classical remainder as
$R=2\mathsf{L}\cos(\theta_L-\Theta)$, where $L=\mathsf{L}e^{i\theta_L}$ runs
from the joint at step $n_p$ to his center: that is exactly the role played
here by the pair $(d_1,-\omega)$. Where his index agrees with ours the two
projected remainders converge, the ratio
$2\mathsf{L}\cos(\theta_L-\Theta)\,/\,2d_1\cos(\omega+\tfrac{\psi}{2})$
falling through $2.24$, $1.34$, $1.11$, $1.03$, $1.01$ at
$T=6.3,\,20.3,\,60.3,\,200.3,\,600.3$.

\paragraph{Two centers, one line.} His $P(s)$ and our $B_1$ are not the same
point, and they are not meant to be. The difference lies almost entirely
along the direction in which $\zeta$ cannot see it: at $T=600.3$ we find
$|P-B_1|=0.064$, of which only $1.2\times10^{-4}$ lies along $e^{i\psi/2}$.
That is his own remark that $\zeta$ is unchanged when $P(s)$ is translated
along the symmetry axis. In the notation of \S\ref{sec:legs} the
admissible centers are the solutions $X$ of $\zeta-X=\chi\overline{X}$, a
line in the plane, and every one of them projects onto $\zeta/2$, which is
Corollary~\ref{cor:bisector-proj}. The freedom he notes and the projection we
prove are the same fact.

\paragraph{The sequel.} Two things in \cite{Nickel2015} touch this paper
further. It traces $P(s)$ and $\zeta(s)$ as two curves over a range of $t$,
joined at regularly spaced values of $t$ to show the two segments, which is the
figure the conveyor belt draws with his
pendant center in place of $B_1$. And it counts zeros: it reads the count off
$\Theta$ through the Gram points and tests that reading against the first
$100{,}000$ ordinates, finding one zero per Gram point on average with the zeros
grouping between successive Gram points. The count there is the classical one,
exact in the mean and never landing on an ordinate; the curves of
\S\ref{sec:counting} land on every one.

\paragraph{What differs.} The correction itself. Nickel's is a lowest-order
determination, carrying a factor $1/(2n_p^{\sigma}\cos 2\pi p)$ that grows
without bound as $\cos2\pi p\to0$, and he is explicit that its error matters
for the distribution of $P(s)$ even though it cancels in $\zeta$; its
direction, $e^{-i(t\ln n_p+2\pi p)}$, is the step direction at $n_p$ turned
by the fractional offset $2\pi p$, not a summand direction of the chain. Our
$R_{1}$ and $R_{2}$ are exact, and their directions are exactly those of
the $(m+1)$st summand of each chain. That is what lets the serial chain of
\S\ref{sec:product-form} close on $\zeta$ identically rather than
asymptotically, and it is the content of the ``one more partial summand''
reading: the correction is not only near the next link but along it.

\subsection{Levinson's \texorpdfstring{$G$}{G}}\label{sec:levinson}

The companion $B_1^{\ast}=\zeta+\zeta'/2\vartheta'$ of \eqref{eq:vel-split},
whose argument carries the counting curve $N_{\ast}$ of
\S\ref{sec:counting-star}, is the function Levinson~\cite{Levinson1974} calls
$G$. That is the paper in which he proves that more than a third of the zeros
of $\zeta$ are on $\sigma=\tfrac12$. Figure~\ref{fig:b1-star-path} draws the
function itself on the critical line, in the plane and in the frame where its
winding is the count.

\begin{figure}[H]
\centering
\includegraphics[width=\textwidth]{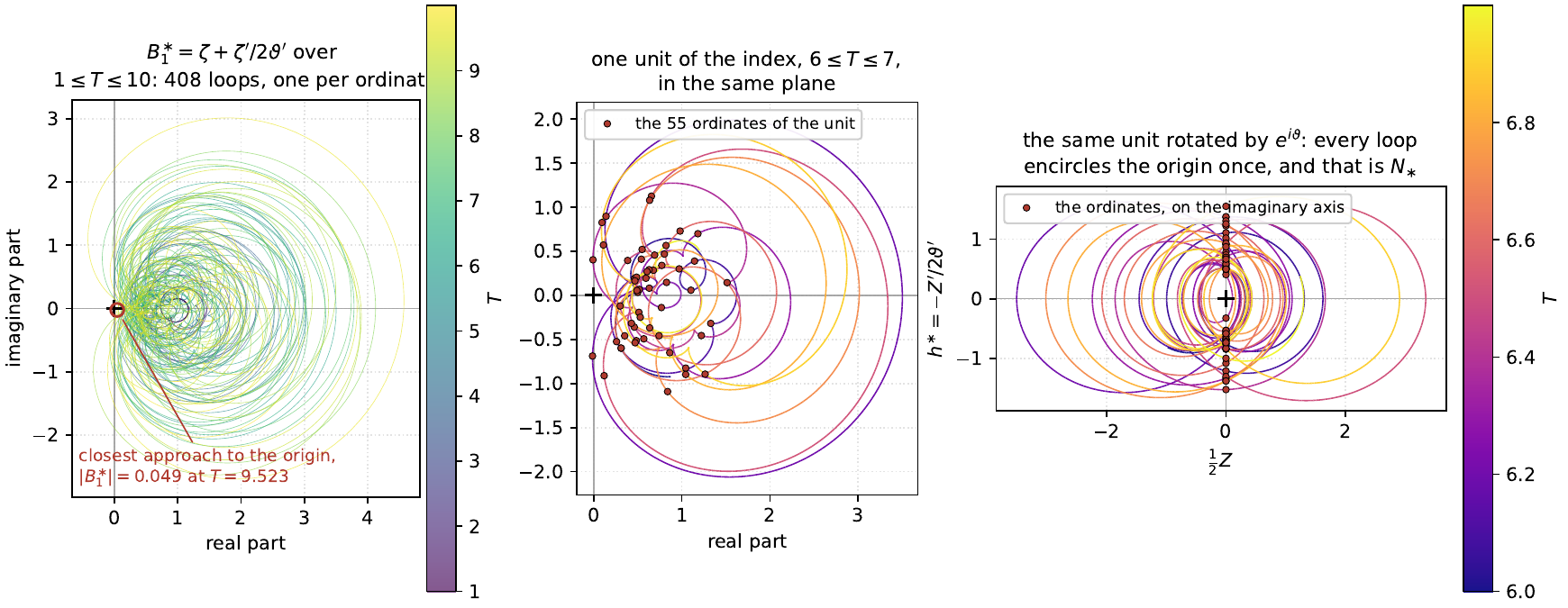}
\caption{Levinson's $G$, which is the companion
$B_1^{\ast}=\zeta+\zeta'/2\vartheta'$ of \eqref{eq:vel-split}, drawn as a path
in the plane on the critical line.
\emph{Left:} the whole of $1\leq T\leq10$, that is $13.6\leq t\leq692.2$, the
color running with $T$: $408$ loops, one for each ordinate of the range. The
path never reaches the origin, which is what lets $\arg G$ carry a count, and
its closest approach, $|G|=0.048$ at $T=9.523$, falls at the tightest pair of
ordinates in the range, $\gamma_{363}$ and $\gamma_{364}$, whose gap is $0.24$
of the mean spacing. There $Z'=0$ and $|G|=\tfrac12|Z|$: it is a Lehmer-style
near-miss, not a small value of $\zeta$ on its own, that brings the curve in.
\emph{Middle:} one unit of the index, $6\leq T\leq7$, in the same plane, with
its $55$ ordinates marked.
\emph{Right:} the same unit rotated by $e^{i\vartheta}$, the frame of
\eqref{eq:h-star} in which the real part is $\tfrac12Z$ and the imaginary part
is the offset $h^{\ast}$. Every loop encircles the origin once and crosses the
imaginary axis exactly at an ordinate. Counting those crossings is Levinson's
reading of the angle; following the turn continuously is $N_{\ast}$.
Computed from $e^{i\vartheta}G=\tfrac12Z-iZ'/2\vartheta'$, which agrees with
$\zeta+\zeta'/2\vartheta'$ to $18$ digits.
Generated by \texttt{fig\_b1\_star\_path.py}.}
\label{fig:b1-star-path}
\end{figure}

\paragraph{Dictionary.} He writes the gamma factor as
$h(s)=\pi^{-s/2}\Gamma(s/2)=e^{f(s)}$, so the functional equation reads
$h(s)\zeta(s)=h(1-s)\zeta(1-s)$. Differentiating it and using it again to
remove $\zeta(1-s)$ leaves him with the argument of
\begin{equation}\label{eq:levinson-G}
G(s)=\zeta(s)+\frac{\zeta'(s)}{f'(s)+f'(1-s)}
\end{equation}
to control, which is his (1.9). His weight is ours, in two steps. From
$f(s)=-\tfrac{s}{2}\log\pi+\log\Gamma(s/2)$,
\[
f'(s)=-\tfrac12\log\pi
+\tfrac12\frac{\Gamma'}{\Gamma}\Bigl(\frac{s}{2}\Bigr),
\]
and $\chi=h(1-s)/h(s)$ has $\log\chi=f(1-s)-f(s)$, so differentiating in $s$
gives
\begin{equation}\label{eq:levinson-weight}
\frac{\chi'}{\chi}(s)=-f'(1-s)-f'(s),
\qquad\text{that is}\qquad
f'(s)+f'(1-s)=-\frac{\chi'}{\chi}(s),
\end{equation}
at every $s$ where the displayed quotients are defined. That is the weight of the off-line form
\eqref{eq:vel-split-offline}, so $G=B_1^{\ast}$ as a meromorphic identity
wherever those quotients are finite and nonzero. On the
critical line the same weight turns real: with $s=\tfrac12+it$ we have
$1-s=\bar{s}$ and $f'(1-s)=\overline{f'(s)}$, so the sum is twice a real part,
\[
f'(s)+f'(1-s)=2\operatorname{Re}f'\bigl(\tfrac12+it\bigr)
=\operatorname{Re}\frac{\Gamma'}{\Gamma}
\Bigl(\tfrac14+\tfrac{it}{2}\Bigr)-\log\pi
=2\vartheta'(t),
\]
the last equality being the exact $\vartheta'$ recorded above
\eqref{eq:vel-split}. Substituting that into \eqref{eq:levinson-G} returns
$\zeta+\zeta'/2\vartheta'$, which is \eqref{eq:vel-split}. So the two are the
same function and not two versions of one: at $t=20$, to take one height, the
two weights are both $1.157750994844834$ and the two companions agree to the
forty digits carried. Levinson himself trades
the weight for $\log(t/2\pi)$ at his (1.4), which costs $O(1/t)$; we keep the
exact $2\vartheta'$. In the language of \S\ref{sec:legs}, $G$ is a leg: the
first leg of the split whose second leg $\zeta-G$ is the velocity of the
endpoint of the chain along the line, quarter-turned and halved by the local
rate of the phase.

\paragraph{The same angle.} His (1.8) locates the ordinates by the condition
$\arg\bigl(h(s)G(s)\bigr)\equiv\tfrac{\pi}{2}\pmod\pi$, and on the critical line
$\arg h$ is exactly the Riemann--Siegel phase,
$\arg h(\tfrac12+it)=\arg\Gamma(\tfrac14+\tfrac{it}{2})-\tfrac{t}{2}\log\pi
=\vartheta(t)$. So the angle he watches is the one \eqref{eq:N-star} is built
from,
\[
\arg\bigl(hG\bigr)=\vartheta+\arg B_1^{\ast}
=\pi\bigl(N_{\ast}-\tfrac32\bigr)\pmod{2\pi},
\]
and his condition on it is precisely the statement that $N_{\ast}$ is an
integer. The two readings of the same angle differ in what is asked of it.
Levinson needs only how often the condition holds, so what he must control is
how far $\arg G$ can move: he counts zeros of $G$ in a rectangle with the
critical line for its left edge and $\sigma=3$ for its right, where
$|G-1|<\tfrac13$ pins the argument down, and a mollifier and Littlewood's
lemma bound the count in between. His remark that $\zeta$ would have
essentially all of its zeros on the line if $\arg G$ never moved at all is the
extreme case of the same reading. Here the angle is wanted between the
ordinates as well as at them. The exact weight gives
$\operatorname{Re}\bigl(e^{i\vartheta}G\bigr)=\tfrac12Z$ by \eqref{eq:h-star},
so the angle advances by exactly $\pi$ from one ordinate to the next, and
$N_{\ast}$ is a continuous curve, integer-valued at the ordinates and nowhere
else, carrying in its slope the size of $|Z|$
(Remark~\ref{rem:nstar-slope}). One further consequence of
\eqref{eq:levinson-G} is common to both readings, the corollary Levinson
credits to Montgomery: a zero of $\zeta$ of multiplicity $m$ leaves $G$ a zero
of multiplicity $m-1$.

%
%
%
%
%

\subsection{Spirals of exponential sums}\label{sec:curlicues}

For quadratic phases this geometry is rigorous ground. The partial sums
$S_N(\tau)=\sum_{n\le N}e^{i\pi\tau n^{2}}$ draw what Berry and
Goldberg~\cite{BerryGoldberg1988} call curlicues: replacing the sum by an
integral produces the Cornu spiral exactly, and a renormalization then carries
the whole pattern onto a magnified and rotated copy of a shorter sum of the same
kind. Coutsias and Kazarinoff~\cite{CoutsiasKazarinoff1987,CoutsiasKazarinoff1998}
work that picture the way this paper works ours. They give the polyline a
discrete radius of curvature
\begin{equation}\label{eq:ck-curvature}
R_N=\tfrac12\bigl|\csc(\psi_N/2)\bigr|,
\qquad \psi_N=\pi\tau(2N+1),
\end{equation}
$\psi_N$ being the turn between consecutive terms, so that inflections of the
pattern sit where $\psi_N$ is nearest a multiple of $2\pi$ and cusps where it
turns around. They then cut the sum \emph{at an inflection}, and replace each
whole spiral of the first sum by a single vector carrying a scale and a phase,
the Fresnel scale $1/\sqrt{|\tau|}$ with signed phase
$e^{i\,\operatorname{sgn}(\tau)\,\pi/4}$ to the spiral's mid-vector. What
comes out is Hardy and Littlewood's approximate functional formula for the theta
function, sharpened to an error uniform in $\tau$ and stated as a Diophantine
inequality that contains Gauss's sum formula. Cutting at the bisector and standing
one link in for a whole spiral is the reading of \S\ref{sec:decomp}; their
$e^{i\pi/4}$ is the Gaussian integral's, the same factor that leaves the
$-\pi/8$ in $\vartheta$; and \eqref{eq:ck-curvature} is the bisector construction of
\S\ref{sec:bisector} for unit links, the distance from a step to the meeting of
the angle bisectors at its two ends, which is Nickel's $L$ once the length
$n^{-\sigma}$ is restored.

The arithmetic is what does not carry over. Their turn $\psi_N$ is linear in
$N$, so the second difference is the constant $2\pi\tau$, the arcs are Cornu
spirals outright, and the theory turns on the continued fraction of $\tau$:
the pattern is a self-similar hierarchy of curlicues within curlicues. Our turn
is $\theta_n=-t\log\bigl(1+\tfrac1n\bigr)$ and its second difference falls like
$t/n^{2}$, passing $2\pi$ once. That is why the chain has a single distinguished
inflection, at $n\approx\sqrt{t/2\pi}$, instead of a hierarchy of them, and why its
arcs are Cornu spirals only locally. The geometric machinery transfers; the
renormalization does not.

One further reading of the same diagram deserves mention.
Kapitonets~\cite{Kapitonets2019} draws his axis of symmetry through the
\emph{midpoint} of the remainder vector at $\sigma=\tfrac12$, observes that the
remainder is then perpendicular to that axis, and concludes both that
away from zeros, where $\arg\zeta$ is defined, $\arg\zeta$ is the normal
direction, and that the two chains project onto the axis
with canceling sums. That is the half-split of
\S\ref{sec:remainders-summary} together with Corollary~\ref{cor:bisector-proj},
read off the picture. His other construction,
recovering $\zeta$ by repeatedly replacing a polygon of partial sums by the
polygon of their midpoints, is unrelated to the bisector point: iterated
midpointing is the binomial transform, and the limit is Ces\`aro summation of the
Dirichlet series, as he says himself.

\clearpage

\section{Glossary}\label{sec:glossary}

The paper uses a geometric language for objects that other writers have
named differently, or not named at all. This section collects those words
in one place. Each entry is a definition in ordinary language, a short
formula when the formula is short, a pointer to the section that
introduces the object, and, where we know one, the name another author
has used for the same thing or a neighboring one. The names are those
already used above; this list is only a map.

The list is not alphabetical. It runs from the parameters and the
polyline itself to the more specialized constructions built on them,
and the entries in each group belong with one another.

\subsection*{The parameters}

\begin{description}
\item[Index $T$, and $I(T)$.]
The real parameter that replaces Siegel's pair $(t,m)$:
$t=I(T)=\pi(2T+1)/\log(1+1/T)$ and $m=\lfloor T\rfloor$
(\S\ref{sec:IT}).
$T$ is called the index even though it is not an integer.
The map is chosen so that integer $T$ lands on a handoff
(\S\ref{sec:IT-origin}).

\item[Unit of the index $T$.]
One stretch of $T$ between consecutive integers: the interval
$(m,m+1)$ with $m=\lfloor T\rfloor$, for example $T$ from $6$ to $7$.
The chains keep the same number of partial-sum links on that stretch.
The ends of the unit are $\{T\}$ near $0$ and near $1$; the middle is
$\{T\}$ around $\tfrac12$
(\S\ref{sec:links-crossing}).

\item[$\chi(s)$.]
The functional-equation factor,
$\zeta(s)=\chi(s)\zeta(1-s)$
(\S\ref{sec:siegel}).
Nickel writes $Q(s)$ for the same factor.

\item[$\omega$.]
The clockwise turn accumulated by the forward summands:
$\omega=I(T)\log\lceil T\rceil$, so that $(m+1)^{-s}$ points along
$e^{-i\omega}$, at absolute direction $-\omega$
(\S\ref{sec:decomp}).

\item[$a$, $a^{2}$.]
Siegel's cutoff $a=\sqrt{t/2\pi}$ and its square
$a^{2}=t/2\pi=I(T)/2\pi$.
The continuous partner prediction is $a^{2}/n$; integer labels
satisfy $n\,n'\approx a^{2}$.
Under $t=I(T)$ one has $a^{2}\approx(T+\tfrac12)^{2}$.
\end{description}

\subsection*{The chain: joints and links}

\begin{description}
\item[Polyline.]
The drawing: a path of straight segments laid end to end, which is
how a partial sum appears in the plane once each summand is drawn as
a vector from the end of the last one
(\S\ref{sec:matrix-product}).
The word is about the picture and not about any new object. The
forward and reverse spirals are the two polylines of this paper, and
statements like ``the two polylines meet'' are statements about the
drawn segments rather than about the sums.

\item[Sum chain, or link chain.]
The partial-sum polyline in the plane: joints connected by links,
one Dirichlet summand per link
(\S\ref{sec:matrix-product}).
``Sum chain'' reads it as a running total; ``link chain'' reads it
as a serial chain of segments. The two names are the same object.
We also call it a spiral, though only the later links wind.
Nickel~\cite{Nickel2013} draws it as the Argand diagram of the
steps $n^{-s}$.
Erickson~\cite{Erickson2005} identifies the later coils as Cornu
spirals and labels their centers $C_n$.
Kapitonets~\cite{Kapitonets2019} calls the polyline the Riemann
spiral.
Berry and Goldberg~\cite{BerryGoldberg1988} call the quadratic-phase
analogue a curlicue; our turn $\theta_n=-t\log(1+1/n)$ is not
quadratic, so the coils are Cornu only locally
(\S\ref{sec:curlicues}).
The $3\times3$ matrices of \S\ref{sec:product-form} are the planar
Denavit--Hartenberg encoding of a serial chain
\cite{DenavitHartenberg1955}.

\item[Joint.]
A vertex of a chain: the value of the partial sum after an integer
number of summands.
Joint $n$ of the forward chain is $J_n=\sum_{r=1}^{n}r^{-s}$ for
$n\ge1$, with $J_0=0$, so joint $0$ is the origin and joint $m$ is $\Sigma_1$ itself
(\S\ref{sec:matrix-product}, \S\ref{sec:links-crossing}).
Erickson's centers $C_n$ stand in one-to-one correspondence with the
joints, though a center is not itself a joint
(\S\ref{sec:bisector}; see \emph{Spiral center}).

\item[Link.]
The segment between two consecutive joints: one Dirichlet summand,
drawn as a vector.
Link $k$ of the forward chain runs from joint $k$ to joint $k+1$ and
carries the summand $n=k+1$
(\S\ref{sec:matrix-product}).
Nickel calls the same object a step.
Coutsias and Kazarinoff~\cite{CoutsiasKazarinoff1987} treat the
quadratic-phase analogue as a discrete curve with a radius of curvature
at each turn.

\item[Spiral center.]
The point a coil of the chain winds around, and the clearing it
leaves at the middle of that coil
(\S\ref{sec:geometry}, \S\ref{sec:links-crossing},
Figure~\ref{fig:last-spiral-zoom}).
The link nearest it is $L_N(T,S_n)$ of \eqref{eq:LN}, where the turn
per link $t\log(1+1/n)$ is the odd multiple $\pi(2S_n+1)$: successive
links are antiparallel there, so the chain sweeps back and forth
across the center and the joints settle onto a ring around it of
radius about half a link length, which is the lens-shaped clearing
the near-diameter links envelope. The center need not lie on any link
or joint, not even on the nearest link.
The centers are not scattered. Numbering them by the spiral number
$S_n=c$ outward from the end of the chain, center $c=0$ is the last
spiral, the largest, and sits at $\zeta$ to within that radius; each
further center is one sweep back: by
\eqref{eq:crossing-saddle} the step from center $c$ to center $c-1$ is
the $c$-th summand reflected in the fixed direction $\Theta$, of
length $c^{-1/2}$. Read outward from $\zeta$ the centers therefore
reproduce the head of the chain reflected, which is Nickel's
center-to-center distances and angles reproducing the initial steps.
In summands, center $c$ is near $n=a^{2}/(c+\tfrac12)$, interleaving
with the sweeps at $n=a^{2}/c$ of \eqref{eq:crossing-sweep}.
Erickson writes $C_n$ for these points.

\item[$\Sigma_1$, the forward chain (or forward spiral).]
The sum chain of the first Dirichlet sum,
$\Sigma_1=\sum_{n=1}^{m}n^{-s}$
(\S\ref{sec:siegel}, \S\ref{sec:matrix-product}).
The chain is a polyline of summands, not a placement: it may be
drawn in any frame.

\item[$\Sigma_2$, the reverse chain (or reverse spiral).]
The sum chain of the reflected Dirichlet sum,
$\Sigma_2=\chi(s)\sum_{n=1}^{m}n^{s-1}$
(\S\ref{sec:siegel}).
In the plane the reverse chain is usually drawn from $\zeta$,
running counter to the forward chain; as a free vector it can sit
anywhere.
On the critical line it is the functional-equation image of the
forward chain: each summand of $\Sigma_2$ is $\chi$ times the
conjugate of the matching summand of $\Sigma_1$.
Nickel's second half $Q(s)P(1-s)$ is the same reflection, with his
$Q$ equal to our $\chi$ (\S\ref{sec:nickel}).
\end{description}

\subsection*{The remainder on the chain}

\begin{description}
\item[$R$.]
Siegel's unsplit remainder integral, so that
$\zeta=\Sigma_1+\Sigma_2+R$ (\S\ref{sec:siegel}).
Every named split below recovers this same $R$, exactly or
approximately.

\item[Split.]
The choice of meeting point: a writing
$R=R_1+R_2$ (or $\zeta=B_1+B_2$) that picks where the two
halves join
(\S\ref{sec:remainders-summary}).
The two pieces leave the same pair of joints: $R_1$ starts at
$\Sigma_1$, and $R_2$ arrives at $\Sigma_1+R$. What a split
chooses is only the point $\Sigma_1+R_1$ between those joints,
its own bisector point. The three named splits are the
partial-summand split $ps$, the half-split $rs$, and the
Kuznetsov / Siegel-$f$ split $ak$. The $rs$ split just halves
$R$ along itself. The $ak$ split does not follow the summand
directions.

\item[Cut.]
To sever a single object, a link, a chain, or a sum, at one point
along it: each forward link is cut at its crossing
(\S\ref{sec:half-link-sums}), the chain is cut at an arbitrary
link in \S\ref{sec:cut-any-link}, and Coutsias and Kazarinoff cut
their sum at an inflection (\S\ref{sec:curlicues}).
The three verbs divide the work: a \emph{split} writes $R$ (or
$\zeta$) as two pieces, a \emph{cut} severs one thing at one
place, and two curves that meet \emph{cross}.

\item[Wedge.]
What the $ps$ split looks like in the plane: the two halves run
along two different link directions,
$R=d_1\,e^{-i\omega}+d_2\,e^{i(\omega+\psi)}$, with real weights
(\S\ref{sec:decomp}, \eqref{eq:R-two-dirs}).
Those two rays form an angle, and $R$ sits in that angle when
both weights are positive. The same picture at a general forward
link, with $W_k$ in place of $R$, is \eqref{eq:W-wedge}.
A positive $ps$ split is drawn as a wedge; a general algebraic split
need not determine one.
In convex analysis the set $\{a\,u+b\,v : a,b\ge0\}$ spanned by
two rays is called a cone in any dimension, including~2, but so
as not to evoke a 3-d mental picture we chose ``wedge'' instead.

\item[$R_{1}$, $R_{2}$ (the $ps$ split; also $R_{1ps}$, $R_{2ps}$).]
The partial-summand split of $R$:
$R=R_{1}+R_{2}$, with
$R_{1}=d_1\,e^{-i\omega}$ and
$R_{2}=d_2\,e^{i(\omega+\arg\chi)}$
(\S\ref{sec:decomp}).
Each piece is the next Dirichlet term of its sum, shortened by a
real weight.
This split is the paper's own, so it is written plainly as
$R_1$, $R_2$; the subscript $ps$ appears only where the three
splits are compared side by side
(\S\ref{sec:remainders-summary}), to distinguish this exact split
from the approximate remainders of other authors.

\item[Fractional summand.]
A remainder that is exactly the next Dirichlet term, scaled by a
real factor: $R_{1}=\hat d_1\,(m+1)^{-s}$ and
$R_{2}=\hat d_2\,\chi\,(m+1)^{s-1}$
(\S\ref{sec:summand}).
The word ``fractional'' means a fraction of one summand, not a
summand with a non-integer index.

\item[$d_1$, $d_2$.]
The signed coefficients of the two fractional summands:
$R_{1}=d_1\,e^{-i\omega}$ and $R_{2}=d_2\,e^{i(\omega+\arg\chi)}$,
so $|R_{1}|=|d_1|$ and $|R_{2}|=|d_2|$
(\S\ref{sec:decomp}, \S\ref{sec:length-ratios}).
On the critical line both are positive and $d_1=d_2$.
Off the line they can change sign, and the signed ratio is
\eqref{eq:d-ratio}.
The sine formulas are \eqref{eq:d1}--\eqref{eq:d2}.

\item[$\hat d_1$, $\hat d_2$ (fractional amounts).]
The same weights read as fractions of a link rather than as
lengths:
$\hat d_1=\lceil T\rceil^{\sigma}\,d_1$ and
$\hat d_2=\lceil T\rceil^{1-\sigma}\,d_2/|\chi|$
(\S\ref{sec:summand}).
$\hat d_1$ is the crossing fraction of the bisector link.
At the running example $\sigma=\tfrac12$, $T=6.18$, one has
$d_1\approx0.0875$ and $\hat d_1\approx0.231$.

\item[$R_{ak}$, $R_{1ak}$, $R_{2ak}$.]
The Kuznetsov / Siegel-$f$ split:
$R\approx R_{ak}=R_{1ak}+R_{2ak}$, where
$R_{1ak}=f(s)-\Sigma_1$
(\S\ref{sec:kuznetsov}, \S\ref{sec:remainders-summary}).
Approximate, and not a single fractional summand.

\item[$R_{rs}$, $R_{1rs}$, $R_{2rs}$.]
The half-split $R_{1rs}=R_{2rs}=\tfrac12 R$
(\S\ref{sec:remainders-summary}).
Kapitonets' midpoint of the remainder vector is this split
(\S\ref{sec:curlicues}).
\end{description}

\subsection*{The bisector}

\begin{description}
\item[Bisector link.]
Link $m=\lfloor T\rfloor$ of a chain: the link that sits at the
boundary between the nearly-straight early chain and the coils
(\S\ref{sec:bisector}).
Each chain has one.
The forward bisector link runs from $\Sigma_1$ along the
$(m+1)$st summand; the reverse bisector link runs from $\Sigma_1+R$
along its $(m+1)$st summand.
On $\sigma=\tfrac12$ the bisector line crosses this link at the
bisector point $B_1$: that is where the forward chain meets the
bisector line, at crossing fraction $\hat d_1$ along the link, not
(except at a handoff) at the bisector joint itself.
This is the neighborhood of Siegel's cutoff
$m=\lfloor\sqrt{t/2\pi}\rfloor$ (\S\ref{sec:siegel}), of Nickel's
center-of-symmetry step $n_p=\bigl\lfloor\sqrt{t/2\pi}\,\bigr\rfloor$ (offset from our $m$
by about half a unit, \S\ref{sec:nickel}), and of the inflection at
which Coutsias and Kazarinoff cut their sum
(\S\ref{sec:curlicues}).

\item[Bisector joint.]
Joint $m$ of the forward chain, the integer vertex at which the
bisector link begins: the point $\Sigma_1$.
The matching joint of the reverse chain is $\Sigma_1+R$.
All three remainder splits of \S\ref{sec:remainders-summary} leave
from this pair of joints.

\item[Bisector point.]
The crossing of the two bisector links,
$B_1=\Sigma_1+R_{1}=\Sigma_1+R-R_{2}$
(\S\ref{sec:bisector}), wherever that crossing is finite.
Off the line it fails at a pole of $d_1$; on the line the
apparent singularity is removable and $B_1$ is defined by
continuity.
On the critical line it is the apex of the isosceles triangle with
base from the origin to $\zeta$.
Nickel's center $P(s)$ plays the same role and is not the same
point: the two differ mostly along the symmetry axis, which $\zeta$
cannot see (\S\ref{sec:nickel}).
Kapitonets draws his axis through the midpoint of the remainder
vector $R$; that midpoint is the half-split $R_{rs}$, not $B_1$
(\S\ref{sec:curlicues}).

\item[Bisector line.]
The line through the bisector point $B_1$ and $\zeta/2$
(\S\ref{sec:legs}, Figure~\ref{fig:full-spirals}),
or a specified limiting direction when those two points coincide
(when $B_1=\zeta/2$).
On $\sigma=\tfrac12$ it is the perpendicular bisector of the
segment from the origin to $\zeta$, and the axis of symmetry of
the isosceles triangle $O$--$B_1$--$\zeta$ whose equal sides are
the two legs. It crosses the bisector link at $B_1$. Off the
critical line the legs are unequal, so there is no isosceles
triangle and the line is no longer a symmetry axis, but we keep
the same name for the line through $B_1$ and $\zeta/2$.

\item[$B_1$, $B_2$; $B_1^{\ast}$, $B_2^{\ast}$.]
The two half-sums that add to $\zeta$:
$B_1=\Sigma_1+R_{1}$ and $B_2=\Sigma_2+R_{2}$, so
$\zeta=B_1+B_2$ (\S\ref{sec:matrix-product}).
$B_1$ is the bisector point.
On the critical line, $B_2=\chi\,\overline{B_1}$.
The starred pair is the velocity-split point: where
$\vartheta'\neq0$,
$B_1^{\ast}=\zeta+\zeta'/(2\vartheta')$ and
$B_2^{\ast}=-\zeta'/(2\vartheta')$ of \eqref{eq:vel-split}, with
$B_2^{\ast}=\chi\,\overline{B_1^{\ast}}$ on the line
(\S\ref{sec:counting-star}).
It is a two-leg meeting point, not another bisector point.
Nickel's Riemann--Siegel equation $\zeta=P(s)+Q(s)P(1-s)$ is this
two-leg split with a different choice of the meeting point
(\S\ref{sec:nickel}).
\end{description}

\subsection*{Frames}

\begin{description}
\item[Link frame.]
The coordinate frame of an arbitrary forward link $k$: translate
joint $k$ to the origin and divide by the link vector $(k+1)^{-s}$,
so that link $k$ becomes the unit interval $[0,1]$
(\S\ref{sec:yinyang-any}, Figure~\ref{fig:link-frames}).

\item[Bisector frame.]
The link frame at $k=m$: it sends the forward bisector link to
$[0,1]$ on the real axis
(\S\ref{sec:IT-origin}, \S\ref{sec:yinyang-origin}).
In that frame the reverse bisector link revolves as $T$ advances,
and the crossing with the axis is the number $d_1$
(or the crossing fraction $\hat d_1$, once the link is read at unit
length).

\item[$\vartheta$-frame.]
The rotation that makes $\zeta$ real on the critical line:
multiply every vector by $e^{i\vartheta(t)}$, where $\vartheta$ is
the Riemann--Siegel theta function, so that
$Z=e^{i\vartheta}\zeta$ lies on the real axis
(\S\ref{sec:counting-review}, Corollary~\ref{cor:bisector-proj}).
The base $O\to\zeta$ is then the real axis, and a split point
reads as $\tfrac12 Z+ih$.
This is the same rotation as $e^{-i\psi/2}$ with
$\vartheta=-\psi/2$, since $\chi=e^{-2i\vartheta}$ on the line.
$Z$ is Hardy's $Z$-function.
\end{description}

\subsection*{Legs}

\begin{description}
\item[Legs.]
The two segments that add to $\zeta$: Leg~1 is the vector $B_1$
from the origin to the bisector point, and Leg~2 is the vector
$B_2=\zeta-B_1$ from the bisector point to $\zeta$
(\S\ref{sec:legs}).
Their lengths are $L_1=|B_1|$ and $L_2=|B_2|$.
Nickel's two vectors $P(s)$ and $Q(s)P(1-s)$ are legs in the same
sense, meeting at his center rather than at $B_1$.

\item[$\vartheta_1$, $\vartheta_2$.]
The two leg angles:
$\vartheta_1=\arg B_1$ is the heading of Leg~1 from the positive
real axis, and
$\vartheta_2=\arg(B_2/B_1)$ is the turn from Leg~1 to Leg~2
(\S\ref{sec:legs}).
Then $\zeta=L_1 e^{i\vartheta_1}+L_2 e^{i(\vartheta_1+\vartheta_2)}$.
These are not the Riemann--Siegel theta function $\vartheta(t)$ of
\eqref{eq:N-theta}. Since $\chi=e^{-2i\vartheta}$, a chosen square
root has phase $-\vartheta\pmod{\pi}$, and $\vartheta/\pi+1$ is only
the smooth part of the zero count; the clash of names is unfortunate
and the two are kept apart by the subscripts.

\item[Half-line.]
The critical line $\sigma=\tfrac12$, read at $t>0$, which is where all the
zeros this paper counts sit. The two names are used interchangeably, as is
``the line''; \emph{off-half-line} and \emph{off-line} both mean
$\sigma\neq\tfrac12$.

\item[Ordinate.]
The imaginary part $\gamma$ of a nontrivial zero $\rho=\beta+i\gamma$.
A critical-line ordinate is one with $\beta=\tfrac12$, equivalently
a real zero of $Z$, and $\gamma_1<\gamma_2<\cdots$ numbers those up
the line (\S\ref{sec:counting}).
On the index, that ordinate sits at $T=I^{-1}(\gamma_n)$.
The word is never used here in its generic sense, the
$y$-coordinate of an arbitrary point.

\item[Retrograde, prograde.]
The two directions of the fold angle $\vartheta_2$
(\S\ref{sec:counting}, Figure~\ref{fig:nps-staircase}).
Prograde is the usual way: $\vartheta_2$ keeps going through
$\pi$ at an ordinate and does not come back.
Retrograde is the other way: $\vartheta_2$ rises through $\pi$,
turns around, and comes back down through it (or the reverse).
The boundary between the two is where the derivative vanishes,
$\vartheta_2'=0$.
A retrograde passage is where $N_{\mathrm{ps}}$ miscounts;
$\vartheta_2^{\ast}$ of the velocity split never retrogrades, which is
why $N_{\ast}$ counts correctly
(\S\ref{sec:counting-ovals}, Figure~\ref{fig:theta2-star-compare}).

\item[Equal legs.]
The condition $L_1=L_2$.
On the critical line it holds identically
(Corollary~\ref{cor:equal-legs}).
Off the line it is a locus of ovals and thin bands in the strip
(Figure~\ref{fig:equal-legs-strips}).
A zero of $\zeta$ requires equal legs \emph{and} $\vartheta_2=\pi$.
Nickel notes that his two magnitudes are equal on
$\sigma=\tfrac12$ (\S\ref{sec:nickel}).

\item[$\mathcal{E}_{\mathrm{ps}}$, $\mathcal{E}_{\mathrm{rs}}$.]
The equal-leg loci of two different splits of the remainder.
Each is the set of $(\sigma,T)$ at which the two legs have the
same length, $|B_1|=|\zeta-B_1|$, for $B_1=B_{1ps}$ and
$B_1=B_{1rs}$ respectively
(\S\ref{sec:ps-ak-r2}).
A zero forces equal legs for every split, so every zero lies in
the intersection
$\mathcal{E}_{\mathrm{ps}}\cap\mathcal{E}_{\mathrm{rs}}$.
The whole critical line is in both.
Off the line, a meeting of the two oval families would be
necessary for an off-line zero (not sufficient).
If they never meet off $\sigma=\tfrac12$, there is no off-line
zero.

\item[Fold, folded.]
Two related uses, both meaning that two segments occupy the same
line and point opposite ways.
At an integer $T$ the neighboring link \emph{folds back} onto the
bisector link, the joint angle is an odd multiple of $\pi$, and
the bisector point hands off to the next link
(\S\ref{sec:IT-origin}, \S\ref{sec:yinyang-origin}): that is a
\emph{handoff}, and for zeta every handoff is of this kind.
Separately, a \emph{folded-leg} point in the strip is a height
where $\vartheta_2=\pi$, so that Leg~2 folds back onto Leg~1
(Figure~\ref{fig:equal-legs-strips}); a zero is an equal-leg point
that is also folded-leg.
\end{description}

\subsection*{How the picture moves with $T$}

\begin{description}
\item[Handoff.]
The instant, at integer $T$, when the bisector point moves from
link $m-1$ onto link $m$
(\S\ref{sec:periodicity}, \S\ref{sec:IT-origin}).
For zeta the two links fold back onto one another and the point
slides across the joint with no jump.

\item[Event.]
Any instant at which the bisector point moves from one link to the
next (\S\ref{sec:IT-Lfunctions}).
For zeta every event is a handoff by fold-back.
Dirichlet $L$-functions have a second kind.

\item[Folded event.]
An event at which the turning angle at the current joint is
$\pm\pi$, so the chain reverses and the bisector point slides
across the joint (\S\ref{sec:IT-Lfunctions}).
For zeta, every handoff is a folded event, once per unit of $T$.
For $L(s,\chi_p)$ these occur at $T\equiv0\pmod{p-1}$.

\item[Bisector event.]
An event at which the bisector \emph{line} (the perpendicular
bisector of the segment from $0$ to the $L$-value) sweeps through
a joint and carries the crossing onto the neighboring link
(\S\ref{sec:IT-Lfunctions}).
Zeta has none.
For $L(s,\chi_p)$ they occupy the remaining residues modulo
$p-1$.

\item[Phantom link.]
A zero summand of $L(s,\chi_p)$, occurring where $\chi_p(n)=0$
(\S\ref{sec:IT-Lfunctions}).
It contributes no segment and receives no index $T$; only the
nonzero steps are links. The name is kept because that is what
\S\ref{sec:IT-Lfunctions} calls them.
\end{description}

\subsection*{Crossings along the chain}

\begin{description}
\item[Cross.]
What two curves do where they meet: the forward chain crosses the
bisector line at $B_1$, each forward link is crossed by a reverse
link, and the chord from yin to yang crosses the real axis at the
crossing fraction. The meeting point itself is the crossing,
next.

\item[Crossing (the point).]
Where a forward link and a reverse link meet in the plane.
The distinguished one is the bisector point $B_1$; in the sampled
range every other forward link $k$ has one as well, written $P_k$
(\S\ref{sec:links-crossing}, \S\ref{sec:sum-x}). That existence is
numerical.

\item[Crossing partner.]
The reverse link that crosses forward link $k$.
The saddle predicts a continuous partner
$n'_{\mathrm{cont}}=a^{2}/n$ with $a=\sqrt{t/2\pi}$ and $n=k+1$;
the integer reverse-link label is the nearest integer to that
target, so $n\,n'\approx a^{2}$
(\S\ref{sec:links-crossing}).
This is the same saddle that Siegel's contour is built around.

\item[Crossing fraction.]
How far along a link the crossing sits, from the left joint toward
the right, as a number in $[0,1]$
(\S\ref{sec:d1-any}, \S\ref{sec:sum-x}).
On forward link $k$ it is $\hat d_1(k)$; on reverse
link $k$ it is $\hat d_2(k)$, the $T$ of $\hat d_1(k,T)$ and
$\hat d_2(k,T)$ being suppressed where it is fixed.
At the bisector, $\hat d_1(m)=\hat d_1$.
Away from the fold the other $\hat d_1(k)$ sit near $\tfrac12$.
This is a fraction of the straight link, not of a curved arc.

\item[$\Sigma_{1x}$, $\Sigma_{2x}$.]
The two Dirichlet sums re-jointed at the crossings instead of at
the integers
(\S\ref{sec:sum-x}).
$\Sigma_{1x}$ walks $P_{-1}=0,P_0,\dots,P_m=B_1$ and totals
$B_1=\Sigma_1+R_{1}$;
$\Sigma_{2x}$ does the same from the origin along $\Sigma_2$ and
totals $B_2$.
The subscript $x$ is for crossing, not a dummy index.
The step lengths are the crossing fractions $\hat d_1(k)$ and
$\hat d_2(k)$.
\end{description}

\subsection*{Yin and yang}

\begin{description}
\item[Yin, yang.]
The two ends of the reverse bisector link, watched in the
bisector frame as $T$ advances
(\S\ref{sec:yinyang-origin}).
Each traces a teardrop; the pair resembles a yin-yang.
Yin is the joint-$m$ end, yang the joint-$(m+1)$ end.
The chord from yin to yang crosses the real axis at the bisector
point, and that crossing is how $d_1$ and $d_2$ were found
(\S\ref{sec:yinyang-origin}).
The same pair exists for every forward link $k$, once the crossing
partner is known (\S\ref{sec:yinyang-any}).
The names are ours.

\item[$\mathrm{Yin}_{\infty}$, $\mathrm{Yang}_{\infty}$.]
The limits of the yin and yang curves as $T\to\infty$ at fixed
fractional part $x=\{T\}$:
$\mathrm{Yin}_{\infty}(x)=1-\Psi(x)\,e^{-2\pi i(x^{2}-1/16)}$ of
\eqref{eq:yin-inf}, with
$\mathrm{Yang}_{\infty}(x)=e^{4\pi ix}\bigl(1-\mathrm{Yin}_{\infty}(x)\bigr)$
(\S\ref{sec:yinyang-inf}).
The two are one curve: yang follows yin half a unit behind
(Lemma~\ref{lem:yinf-polar}).
For $0<T<1$ the yin path is $\zeta$ itself (Erickson's $C_0$).

\item[$\Psi$.]
The classical Riemann--Siegel correction
$\Psi(x)=\cos\bigl(2\pi(x^{2}-x-1/16)\bigr)/\cos(2\pi x)$
of \eqref{eq:psi-fn},
used away from $x=\tfrac14,\tfrac34$, with the continuous values
$\Psi(\tfrac14)=\Psi(\tfrac34)=\tfrac12$,
as in Titchmarsh~\cite[\S15.3]{Titchmarsh1986} and
Pugh~\cite[\S5.4.1]{Pugh1998}.
It enters this paper as the amplitude of $\mathrm{Yin}_{\infty}$,
not as a remainder of its own.

\end{description}

\subsection*{The far chain (links near $\zeta$)}

\begin{description}
\item[Conveyor belt.]
The stretch of the reverse chain between the last two spirals
(spirals $1$ and $0$ of \eqref{eq:LN}), the links near $\zeta$.
The name is ours.

\item[Last link.]
The link nearest the center of the last spiral:
$L_N(T,0)=\lfloor I(T)/\pi\rfloor=\lfloor t/\pi\rfloor\approx 2T^{2}$
(\S\ref{sec:geometry}, \eqref{eq:LN}).
It is the link closest to $\zeta$, far past the bisector; $\zeta$
itself is not typically on it.
The same formula with spiral number $S_n$ names the link nearest the
center of any spiral,
$L_N(T,S_n)=\bigl\lfloor I(T)/\bigl(\pi(2S_n+1)\bigr)\bigr\rfloor$.
Consecutive links there are antiparallel.
\end{description}

\clearpage

\section*{Acknowledgments}
\begin{itemize}
  \item Chris Cowherd for all his extensive help over the last nine years in building many visualization and experimental mathematics tools, including the Zest zeta sum visualization tool
  \item Nicolas Fish for his four years of excellent programming skills in building and tweaking the Zest visualization and measurement tool
  \item Daniel Flores Galiote for his kind tutoring and helpful discussions over the last two years
  \item Brian Conrey for his encouragement and acceptance of this amateur mathematician
  \item Jakob von Raumer for his help with a version of the Lean proof
  \item My family for their support:
    \begin{itemize}
      \item my wife, Flo
      \item Sophie
      \item Anthony
      \item Thomas
    \end{itemize}
  \item Freeman Dyson for his article {\it Birds and Frogs} \\(\texttt {\small https://pdodds.w3.uvm.edu/files/papers/others/2009/dyson2009a.pdf}), which inspired me to begin my journey to investigate one-dimensional quasi-crystals and their possible connection to the Riemann hypothesis nine years ago.
  \item Turingbot, Processing, Unity, OEIS, wolframalpha, ChatGPT, Claude (and Opus and Fable), Cursor, and last but not least: Desmos and Lean (and Leo De Moura, my friend and neighbor, and the creator of LEAN).  Thank you
\end{itemize}

\paragraph{Statement on the use of AI.}
Cursor and various AIs, including Anthropic's Claude (Opus and Fable),
OpenAI's ChatGPT (5.6 Sol), and Grok 4.5, were used in preparing this
paper. AI systems assisted with Python and JavaScript figure-generation
programs, portions of the Zest web app, Lean formalizations, literature
review, glossary, and \LaTeX{} preparation. Figure captions identify their
generating programs. The author checked and edited the resulting text, code,
figures, and formalizations.

Just before this version was finalized, the author set up an adversarial
debate on the paper's content between two competing AIs, with GPT~5.6 Sol Max
acting as the reviewer and Claude Opus~5 Extra High as the defender. The
reviewer raised $171$ issues, each one debated by the defender, with the
author as the final adjudicator of every proposed change. In the author's
opinion about ten of the issues were substantive; some fifty more were acted
upon without being substantive, and the rest were trivial or wrong.

\clearpage

\appendix

\clearpage

\section{Proof of Proposition~\texorpdfstring{\ref{prop:weights}}{4.1}: reality and positivity of \texorpdfstring{$d_1$ and $d_2$}{d1 and d2}}\label{sec:positivity}

This section proves Proposition~\ref{prop:weights}, one subsection per
ingredient. \S\ref{sec:proof-i-ii} proves parts (i) and (ii), which are
short and purely algebraic. \S\ref{sec:phase-bound} sets up the
Riemann--Siegel notation and proves the key phase bound: the factor
$\cos u$ stays away from zero at every fractional part.
\S\ref{sec:proof-iii} proves part (iii), positivity on the critical
line, where no fractional parts need to be excluded at all.
\S\ref{sec:proof-iv} proves part (iv), positivity across the strip
outside two windows of width $O(1/T)$ per unit interval, and
\S\ref{sec:pole-locations} locates those windows, near the
parallel-summand heights $\{T\}\approx\tfrac14,\tfrac34$.
\S\ref{sec:limits} proves the two limit statements used in
\S\ref{sec:periodicity}: the normalized weight converges to the profile
\eqref{eq:d-profile}, and the pinned waves converge to the tangent
waveform \eqref{eq:W-infty}. Finally, \S\ref{sec:d1-bounds} proves the
uniform finite-$T$ bounds $\tfrac15<\hat d_1=\hat d_2<\tfrac45$ on the
critical line. Throughout, the
criterion is one of signs: both weights are positive exactly when the two
numerators of \eqref{eq:d1}--\eqref{eq:d2} carry the same sign as the
common denominator $\sin(2\omega+\psi)$.

\subsection{Proof of parts (i) and (ii): remainders are along the summands, and \texorpdfstring{$d_1$ and $d_2$}{d1 and d2} are real}\label{sec:proof-i-ii}

\begin{proof}
\begin{enumerate}
\item[(i)] For non-integer $T$ we have $M=m+1$, so the next
summand of $\Sigma_1$ factors as
$(m+1)^{-s}=(m+1)^{-\sigma}e^{-it\log(m+1)}=(m+1)^{-\sigma}e^{-i\omega}$,
a positive real multiple of the unit direction $e^{-i\omega}$; by
\eqref{eq:R1-def}, $R_1=d_1e^{-i\omega}$ lies along it. Likewise the next
summand of $\Sigma_2$ factors as $\chi(s)\,(m+1)^{s-1}
=|\chi|\,(m+1)^{\sigma-1}\,e^{\,i(\omega+\psi)}$, a positive real multiple
of $e^{\,i(\omega+\psi)}$, and by \eqref{eq:R2-def},
$R_2=d_2e^{\,i(\omega+\psi)}$ lies along it. Explicitly,
\[
R_1=(m+1)^{\sigma}d_1\cdot(m+1)^{-s},
\qquad
R_2=\frac{(m+1)^{1-\sigma}}{|\chi(s)|}\,d_2\cdot\chi(s)\,(m+1)^{s-1},
\]
with both scale factors real by part (ii).
\item[(ii)] View \eqref{eq:R-two-dirs} as a real $2\times2$ linear
system for the unknowns $d_1,d_2$, one equation each from the real and
imaginary parts. Cramer's rule gives exactly the pair of sine expressions
\eqref{eq:d1}--\eqref{eq:d2}, which are real by inspection: every
ingredient ($|R|$, and sines of real angles) is real. \qedhere
\end{enumerate}
\end{proof}

\subsection{Notation and the key phase bound}\label{sec:phase-bound}

Let $\vartheta(t)=\arg\Gamma\bigl(\tfrac14+\tfrac{it}2\bigr)
-\tfrac t2\log\pi$ be the Riemann--Siegel theta function, with asymptotic
expansion $\vartheta(t)=\tfrac t2\log\tfrac t{2\pi}-\tfrac t2-\tfrac\pi8
+O(1/t)$, and let $Z(t)=e^{i\vartheta(t)}\zeta(\tfrac12+it)$ be the (real)
Riemann--Siegel $Z$ function. On the critical line
$\chi(\tfrac12+it)=e^{-2i\vartheta(t)}$, so $\psi=-2\vartheta$ there. Put
\begin{equation}
a=\sqrt{\tfrac t{2\pi}},\qquad N=\lfloor a\rfloor,\qquad \hat p=a-N,
\qquad u=\omega-\vartheta(t),
\end{equation}
so that $N$ is the Riemann--Siegel cutoff while $m=\lfloor T\rfloor$ is
ours, and $u$ is the phase of the next summand measured against
$-\vartheta$. Expanding $\log(1+\tfrac1T)$ in $t=I(T)$ gives
\begin{equation}\label{eq:a-vs-T}
\frac{t}{2\pi}=\Bigl(T+\tfrac12\Bigr)^{2}-\tfrac1{12}+O\!\Bigl(\tfrac1{T^{2}}\Bigr),
\qquad\text{hence}\qquad
a=T+\tfrac12+O\!\Bigl(\tfrac1{T}\Bigr),
\end{equation}
so, apart from boundary intervals of width $O(1/T)$ in the fractional part
$x=\{T\}$ near $x=0,\tfrac12,1$,
\begin{equation}\label{eq:cut-cases}
x<\tfrac12 \iff N=m,\ \ \hat p=x+\tfrac12+O(1/T),
\qquad
x>\tfrac12 \iff N=m+1,\ \ \hat p=x-\tfrac12+O(1/T).
\end{equation}
Insert $t=2\pi a^{2}$ and $m+1=a+(1-\hat p)$ (case $N=m$) or
$m+1=a-\hat p$ (case $N=m+1$) into $\omega=t\log(m+1)$, expand the
logarithm about $\log a$, and reduce modulo $2\pi$ using $a=N+\hat p$ and
$\pi N^{2}\equiv\pi N \pmod{2\pi}$; with the asymptotic series of
$\vartheta$ this gives
\[
u\equiv\pi(N-1)+4\pi\hat p-2\pi\hat p^{2}+\tfrac\pi8+O(1/a)
\quad(N=m),
\qquad
u\equiv\pi N-2\pi\hat p^{2}+\tfrac\pi8+O(1/a)
\quad(N=m+1),
\]
and substituting $\hat p=x+\tfrac12$, respectively $\hat p=x-\tfrac12$,
turns both cases into one and the same statement:
\begin{equation}\label{eq:cosu}
\cos u \;=\;(-1)^{m+1}\,
\cos\!\bigl(2\pi x(1-x)-\tfrac{3\pi}8\bigr)\;+\;O(1/T).
\end{equation}
Since $2\pi x(1-x)\in[0,\tfrac\pi2]$ for $x\in[0,1]$, the argument of the
cosine on the right lies in $(-\tfrac{3\pi}8,\tfrac\pi8]$, so
\begin{equation}\label{eq:cosu-bound}
|\cos u|\;\geq\;\sin\tfrac\pi8-O(1/T)\;=\;0.3826\ldots-O(1/T):
\end{equation}
the factor $\cos u$ never approaches zero, at any fractional part. That one
inequality is the key ingredient in both halves of the proof. We also need two
elementary facts about the leading Riemann--Siegel correction
$C_0(\hat p)$.

\begin{lemma}\label{lem:C0-trig}
Write $A=2\pi\hat p^{2}-\tfrac\pi8$ and $B=2\pi\hat p$, and let
$C_0(\hat p)=\cos(A-B)/\cos B$ be the leading term of the Riemann--Siegel
correction. Then:
\begin{enumerate}
\item[(i)] $C_0(\hat p)>0$ for $\hat p\in(\tfrac12,1)$, and
$\min_{[1/2,\,1]}C_0=\cos\tfrac{3\pi}8=\sin\tfrac\pi8$, attained at
$\hat p=\tfrac12$;
\item[(ii)] for $\hat p\in[0,\tfrac12]$,
\[
\Lambda(\hat p)\;:=\;\frac{C_0(\hat p)}
{2\cos\bigl(\pi(\tfrac18-2\hat p^{2})\bigr)}
\;\leq\;\tfrac12,
\]
with equality exactly at the endpoints, and $\Lambda(\tfrac14)=\tfrac14$.
\end{enumerate}
\end{lemma}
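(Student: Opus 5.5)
The plan is to put $C_0$ in the single form that both parts need. Expand the numerator with $\cos(A-B)=\cos A\cos B+\sin A\sin B$ to get
\[
C_0(\hat p)=\cos A+\sin A\,\tan B ,
\]
valid wherever $\cos B\neq0$. At $\hat p=\tfrac14,\tfrac34$ the apparent singularity is removable, since there $A=\pm\tfrac{\pi}{2}+\ldots$ forces $\cos(A-B)=0$. This is the same identity behind $\tfrac12\bigl(1-\tan(2\pi x)\tan\beta\bigr)$ in \S\ref{sec:tangent-derivation} and Proposition~\ref{prop:yinyang-chord}.

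For part (ii), the denominator of $\Lambda$ is $2\cos(\pi(\tfrac18-2\hat p^2))=2\cos A$. On $[0,\tfrac12]$ we have $A\in[-\tfrac\pi8,\tfrac{3\pi}8]$, so $\cos A>0$. Dividing gives
\[
\Lambda=\tfrac12\bigl(1+\tan A\,\tan B\bigr).
\]
It then suffices to check signs. $A<0$ exactly for $\hat p<\tfrac14$ and $A>0$ for $\hat p>\tfrac14$. Meanwhile $\tan B>0$ on $(0,\tfrac14)$ and $\tan B<0$ on $(\tfrac14,\tfrac12)$. Hence $\tan A\tan B<0$ on the open interval minus $\tfrac14$, and it vanishes at $\hat p=0,\tfrac12$, where $\tan B=0$. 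That gives $\Lambda\le\tfrac12$ with equality exactly at the endpoints.

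At $\hat p=\tfrac14$ we take the limit. Write $\hat p=\tfrac14+\delta$, so $A=\pi\delta+O(\delta^2)$ and $\tan B=-1/(2\pi\delta)+O(\delta)$. Then $\tan A\tan B\to-\tfrac12$, so $\Lambda(\tfrac14)=\tfrac14$ and the limit is strictly below $\tfrac12$. This part is routine.

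For part (i), I will first use the symmetry $C_0(\hat p)=C_0(1-\hat p)$. It holds because $\hat p^2-\hat p$ is symmetric and $\cos 2\pi(1-\hat p)=\cos2\pi\hat p$. It turns the claim on $[\tfrac12,1]$ into the same claim on $[0,\tfrac12]$. The claim there is $C_0\ge\cos\tfrac{3\pi}8$, attained only at $\hat p=\tfrac12$.

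Positivity on $[0,\tfrac12]$ follows from the identity $C_0=\cos A+\sin A\tan B$. We have $\cos A>0$, and $\sin A\tan B$ is of one sign, negative. So positivity needs a bound rather than just signs. I would instead argue positivity directly from the quotient form. The zeros of the numerator $\cos(A-B)$ and of the denominator $\cos B$ on $[0,\tfrac12]$ coincide, both occurring only at $\hat p=\tfrac14$. Indeed $A-B=2\pi(\hat p^2-\hat p)-\tfrac\pi8$ ranges over $[-\tfrac{5\pi}8,-\tfrac\pi8]$ and hits $-\tfrac\pi2$ only when $\hat p^2-\hat p=-\tfrac3{16}$, that is $\hat p=\tfrac14$. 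Both factors therefore change sign together, so the quotient is positive. Its value at $\tfrac14$ is $\tfrac12$ by l'H\^opital.

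For the minimum I would aim to show that $C_0$ is monotone decreasing on $[0,\tfrac12]$, from $\cos\tfrac\pi8$ at $0$ to $\cos\tfrac{3\pi}8$ at $\tfrac12$. My first attempt is to differentiate $\cos A+\sin A\tan B$. The derivative is
\[
A'\bigl(-\sin A+\cos A\tan B\bigr)+\sin A\,B'\sec^2B,
\]
with $A'=4\pi\hat p$ and $B'=2\pi$. I would then check its sign separately on $[0,\tfrac14]$ and $[\tfrac14,\tfrac12]$, treating the neighbourhood of $\tfrac14$ by Taylor expansion. If the sign check gets messy, the fallback is to prove the weaker inequality $\cos(A-B)-\cos\tfrac{3\pi}8\cos B\ge0$ for $\cos B>0$, and $\le0$ for $\cos B<0$. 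This is a one-variable trigonometric inequality on a compact interval. I would establish it by bounding $A=2\pi\hat p^2$ between chords and using monotonicity of cosine on the relevant ranges.

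This last step, the sharp minimum in (i), is where I expect the real work. Part (ii) and the positivity claim reduce to sign bookkeeping.
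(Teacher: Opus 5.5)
Your part (ii) and the positivity half of (i) are correct and follow the paper's argument. The paper also writes $\Lambda=\tfrac12(1+\tan A\tan B)$ and reads off opposite signs on either side of $\hat p=\tfrac14$. For positivity it checks that $\cos(A-B)$ and $\cos B$ change sign together, directly on $(\tfrac12,\tfrac34)$ and $(\tfrac34,1)$; your reflection $C_0(\hat p)=C_0(1-\hat p)$ only moves that check to $[0,\tfrac12]$. One small slip: at $\hat p=\tfrac14,\tfrac34$ the numerator vanishes because $A-B=-\tfrac\pi2$ (with $A=0$ and $A=\pi$ respectively), not because $A=\pm\tfrac\pi2$.

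The gap is the sharp minimum in (i), which you leave as a plan. You never determine the sign of $C_0'$, and ``bounding $A$ between chords'' is not yet an argument. The step is more delicate than it looks. By the very symmetry you invoke, $C_0'(\tfrac12)=0$, so the minimum is only of second order. In your cross-multiplied fallback, the inequality $\cos(A-B)\le\cos\tfrac{3\pi}8\cos B$ on $(\tfrac14,\tfrac12]$ is an equality at \emph{both} ends (at $\tfrac14$ both sides vanish), so any estimate must be sharp at each end. The paper's proof does not supply this either: it simply asserts that $C_0$ is monotone on $[\tfrac12,\tfrac34]$ and on $[\tfrac34,1]$.

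Here is one way to close it in your reflected variable, without needing monotonicity. On $[0,\tfrac14)$ put $\hat p=\tfrac14-\eta$. Then $C_0=\sin\bigl(\pi\eta(1+2\eta)\bigr)/\sin(2\pi\eta)\geq\sin(\pi\eta)/\sin(2\pi\eta)=1/(2\cos\pi\eta)\geq\tfrac12$. On $(\tfrac14,\tfrac12]$ put $r=4\hat p-1\in(0,1]$ and $q=r(2-r)$. Then $A-B=-\tfrac\pi2-\tfrac\pi8 q$ and $B=\tfrac\pi2(1+r)$, so
\[
C_0=\frac{\sin(\tfrac\pi8 q)}{\sin(\tfrac\pi2 r)}\;\geq\;\frac{q\,\sin\tfrac\pi8}{\sin(\tfrac\pi2 r)}\;\geq\;\sin\tfrac\pi8 .
\]
The first step is concavity of $\sin$ on $[0,\tfrac\pi8]$. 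The second is $\sin(\tfrac\pi2 r)\leq r(2-r)$ on $[0,1]$, which is $\cos\tfrac{\pi x}{2}\leq1-x^2$ with $x=1-r$. To see it, let $h(x)=1-x^2-\cos\tfrac{\pi x}2$. Then $h(0)=h'(0)=h(1)=0$, and $h''$ decreases from $\tfrac{\pi^2}4-2>0$ to $-2$. So $h$ is convex then concave, and therefore nonnegative. Equality holds only at $r=1$, that is at $\hat p=\tfrac12$. Together with $C_0\geq\tfrac12$ on the other piece, this gives the stated minimum.
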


\begin{proof}
(ii) first, since its argument is the more instructive. Note
$\pi(\tfrac18-2\hat p^{2})=-A$, so
$\Lambda=\cos(A-B)/(2\cos A\cos B)=\tfrac12(1+\tan A\tan B)$ by the product
formula, and the claim is that $\tan A\tan B\leq0$ on $[0,\tfrac12]$. Both
critical transitions happen at the same point: $A=0$ exactly when
$\hat p=\tfrac14$, and $B=\tfrac\pi2$ exactly when $\hat p=\tfrac14$. For
$\hat p<\tfrac14$ we have $A\in(-\tfrac\pi8,0)$, so $\tan A<0$, while
$B\in(0,\tfrac\pi2)$, so $\tan B>0$; for $\hat p>\tfrac14$ the signs
reverse ($A\in(0,\tfrac{3\pi}8)$, $B\in(\tfrac\pi2,\pi)$). The product is
therefore $\leq0$ throughout, vanishing only as $\hat p\to0,\tfrac12$
(where $\tan A\tan B\to0$, so $\Lambda\to\tfrac12$); at $\hat p=\tfrac14$
the two singular factors balance, $\tan A\tan B\to-\tfrac12$, giving the
removable value $\Lambda(\tfrac14)=\tfrac14$.

(i) is a sign inspection of numerator and denominator. On
$(\tfrac12,\tfrac34)$ both $\cos(A-B)$ and $\cos B$ are negative; on
$(\tfrac34,1)$ both are positive (they change sign together at
$\hat p=\tfrac34$, which is the removable singularity of $C_0$), so
$C_0>0$ throughout. That is positivity. The minimum is a separate
claim: $C_0$ extends continuously to the compact interval
$[\tfrac12,1]$ (the value at $\tfrac34$ is removable), and on each of
the two subintervals it is monotone, rising from
$C_0(\tfrac12)=\sin\tfrac\pi8$ through $C_0(\tfrac34)=\tfrac12$
to $C_0(1)=\cos\tfrac\pi8$. Hence the minimum is the left endpoint.
\end{proof}

\subsection{Proof of Proposition~\ref{prop:weights}(iii): on the critical line \texorpdfstring{$d_1$ and $d_2$}{d1 and d2} are equal, real, and positive}\label{sec:proof-iii}

We prove the following quantitative form, which contains
Proposition~\ref{prop:weights}(iii).

\begin{theorem}[quantitative critical-line positivity]\label{thm:line-bound}
For every $c<\tfrac12\sin\tfrac\pi8$ there is an effectively computable
$T_0(c)$ such that for every non-integer $T\geq T_0(c)$, at
$\sigma=\tfrac12$,
\[
d_1(T)=d_2(T)\;\geq\;\frac{c}{a^{1/2}}\;>\;0,
\]
with no interval of $\{T\}$ excluded.
\end{theorem}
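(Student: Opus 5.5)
The plan is to reduce everything on the critical line to a one-variable sign problem and then apply the phase bound \eqref{eq:cosu-bound} together with Lemma~\ref{lem:C0-trig}.

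\emph{Step 1: the exact reduction.} At $\sigma=\tfrac12$ one has $|\chi|=1$ and $\psi=-2\vartheta$. The functional equation gives $R=\chi\overline R$, as recorded in \S\ref{sec:ps-ak-r2}. Hence $r:=e^{i\vartheta}R$ is real, and $\arg R\equiv-\vartheta\pmod\pi$. I substitute this into \eqref{eq:d1}--\eqref{eq:d2} with $u=\omega-\vartheta$. Both numerators become $\pm\sin u$, and the denominator is $\sin 2u=2\sin u\cos u$. The common factor $\sin u$ cancels, which removes the apparent pole on the line. This gives the exact identity
\[
d_1=d_2=\frac{r}{2\cos u}.
\]
This is the reduction \eqref{eq:exact-line} the paper uses. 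It proves equality and reality at once, including at $\sin 2u=0$ by continuity.

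\emph{Step 2: asymptotics of $r$.} I write $Z=e^{i\vartheta}(\Sigma_1+\Sigma_2+R)=2\sum_{n\le m}n^{-1/2}\cos(\vartheta-t\log n)+r$. I compare this with Gabcke's form of the Riemann--Siegel expansion with cutoff $N$. Two cases arise, as in \eqref{eq:cut-cases}. If $N=m$, then $r=(-1)^{N-1}a^{-1/2}(C_0(\hat p)+E)$. If $N=m+1$, the extra summand appears: $r=2(m+1)^{-1/2}\cos u+(-1)^{N-1}a^{-1/2}(C_0(\hat p)+E)$. In both cases $|E|\le C/a$ effectively, by Gabcke's bound.

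\emph{Step 3: signs, case $N=m$.} I insert \eqref{eq:cosu}, which gives $\cos u=(-1)^{m+1}\cos\beta'+O(1/T)$ with $\beta'=2\pi x(1-x)-\tfrac{3\pi}8$. The signs $(-1)^{N-1}$ and $(-1)^{m+1}$ agree, so
\[
d_1=\frac{a^{-1/2}(C_0(\hat p)+E)}{2(\cos\beta'+O(1/T))}.
\]
Here $\hat p\in(\tfrac12,1)$. Lemma~\ref{lem:C0-trig}(i) gives $C_0\ge\sin\tfrac\pi8$, and $0<\cos\beta'\le1$. So $d_1\ge a^{-1/2}\tfrac12\sin\tfrac\pi8\,(1-O(1/T))$.

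\emph{Step 4: signs, case $N=m+1$.} Here $\hat p\in(0,\tfrac12)$. I get
\[
\sqrt{m+1}\,d_1=1-\sqrt{\tfrac{m+1}{a}}\,\frac{C_0(\hat p)+E}{2|\cos u|}.
\]
Since $\hat p=x-\tfrac12$, one has $\beta'=\pi(\tfrac18-2\hat p^2)$ up to $O(1/T)$. So the subtracted term is $\Lambda(\hat p)+O(1/T)$, and Lemma~\ref{lem:C0-trig}(ii) gives $1-\Lambda\ge\tfrac12$. This lower bound is larger than the one needed.

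\emph{Main obstacle.} The hard part is the boundary strips of width $O(1/T)$ in $x$ near $0$, $\tfrac12$ and $1$. There \eqref{eq:cut-cases} does not decide whether $N=m$ or $N=m+1$, and the expansion of $u$ about the wrong branch fails. I would handle these strips by using the exact $N$ directly, not its value predicted from $x$. Each case formula of Steps~3--4 holds whenever its own $N$-relation holds. The lemma's bounds are continuous up to the endpoints: at $\hat p\to\tfrac12$ or $1$ via (i), and at $\hat p\to0$ or $\tfrac12$ via (ii), where $\Lambda\to\tfrac12$. So the strips only cost $O(1/T)$ in the constant. All error terms are effective, namely Gabcke's $E$, the $O(1/T)$ in \eqref{eq:a-vs-T} and \eqref{eq:cosu}, and $|\cos u|\ge0.38-O(1/T)$. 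For any $c<\tfrac12\sin\tfrac\pi8$ one can therefore choose $T_0(c)$ explicitly.
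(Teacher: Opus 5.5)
Your proposal is correct and follows the paper's proof essentially step for step: the exact reduction $d_1=d_2=r/(2\cos u)$, Gabcke's bound applied to $r$ in the two cases $N=m$ and $N=m+1$, and the sign bookkeeping with Lemma~\ref{lem:C0-trig}(i) and (ii) in those two cases. Your handling of the $O(1/T)$ boundary strips is a slightly more explicit version of the paper's remark that the two case formulas agree to leading order at the seams: you work with the actual value of $N$ and the $\hat p$-form of the phase expansion, rather than the value of $N$ predicted from $x$, and absorb by continuity the case where $\hat p$ dips just below $\tfrac12$.
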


\begin{proof}
\emph{Step 1 (exact reduction).} On $\sigma=\tfrac12$ the reflection
identities $\Sigma_2=\chi\overline{\Sigma_1}$ and $\zeta=\chi\overline\zeta$
give $R=\chi\overline R$; with $\chi=e^{-2i\vartheta}$ this says
$r:=e^{i\vartheta}R$ is real, i.e.\ $\arg R=-\vartheta+\varepsilon\pi$ with
$\varepsilon\in\{0,1\}$ and $r=(-1)^{\varepsilon}|R|$. Substituting
$\psi=-2\vartheta$ and $\arg R=-\vartheta+\varepsilon\pi$ into
\eqref{eq:d1}--\eqref{eq:d2} makes both numerators
$(-1)^{\varepsilon}\sin u$ and the denominator $\sin2u=2\sin u\cos u$, so
the factor $\sin u$ cancels identically and
\begin{equation}\label{eq:exact-line}
d_1=d_2=\frac{r}{2\cos u}
\qquad(\cos u\neq0).
\end{equation}

\emph{Step 2 (Riemann--Siegel input).} The real number $r$ is the
Riemann--Siegel tail cut at $m$ instead of at $N$: since on the line
$e^{i\vartheta}(\Sigma_1+\Sigma_2)=2\operatorname{Re}\bigl(e^{i\vartheta}
\Sigma_1\bigr)=2\sum_{n\leq m}n^{-1/2}\cos(\vartheta-t\log n)$,
\begin{equation}\label{eq:r-two-cases}
r=r_{\mathrm{RS}}+\bigl[N=m+1\bigr]\cdot\frac{2\cos u}{\sqrt{m+1}},
\qquad
r_{\mathrm{RS}}:=Z-2\!\!\sum_{n\leq N}\!n^{-1/2}\cos(\vartheta-t\log n),
\end{equation}
where $[\,\cdot\,]$ is the Iverson bracket, equal to $1$ if the condition
inside holds and $0$ otherwise; the bracketed term is the summand $n=m+1$, whose phase is exactly
$\vartheta-\omega=-u$. Gabcke's rigorous form of the Riemann--Siegel
formula \cite{Gabcke1979} gives
\begin{equation}\label{eq:gabcke}
r_{\mathrm{RS}}=\frac{(-1)^{N-1}}{a^{1/2}}\Bigl(C_0(\hat p)+E\Bigr),
\qquad |E|\leq c_1\,a^{-1}\quad(t\geq200).
\end{equation}

\emph{Step 3 (case $N=m$, i.e.\ $x=\{T\}<\tfrac12$,
$\hat p\in(\tfrac12,1)$).} Here $r=r_{\mathrm{RS}}$, and the two signs
multiply out: $(-1)^{N-1}$ from \eqref{eq:gabcke} against $(-1)^{m+1}$ from
\eqref{eq:cosu} give $+1$, so
\[
d_1=\frac{C_0(\hat p)}
{2\cos\bigl(2\pi x(1-x)-\tfrac{3\pi}8\bigr)}\cdot\frac{1}{a^{1/2}}
\;+\;O(a^{-3/2}).
\]
By Lemma~\ref{lem:C0-trig}(i) the numerator is at least $\sin\tfrac\pi8$,
and the cosine in the denominator is at most $1$, so
$d_1\geq\bigl(\tfrac12\sin\tfrac\pi8\bigr)a^{-1/2}-O(a^{-3/2})$.

\emph{Step 4 (case $N=m+1$, i.e.\ $x>\tfrac12$,
$\hat p\in(0,\tfrac12)$).} Now \eqref{eq:r-two-cases} has the extra term,
\[
d_1=\frac1{\sqrt{m+1}}+\frac{r_{\mathrm{RS}}}{2\cos u},
\]
and this time the two signs multiply to $-1$: the correction is
\emph{subtracted}, with magnitude $\Lambda(\hat p)\,a^{-1/2}+O(a^{-3/2})$
(here $\cos(2\pi x(1-x)-\tfrac{3\pi}8)=\cos(\pi(\tfrac18-2\hat p^{2}))$
under $x=\hat p+\tfrac12$). Lemma~\ref{lem:C0-trig}(ii) caps
$\Lambda\leq\tfrac12$, and $a=N+\hat p=m+1+\hat p\geq m+1$, so
\[
d_1\;\geq\;\frac1{\sqrt{m+1}}-\frac{1}{2a^{1/2}}-O(a^{-3/2})
\;\geq\;\frac{1}{2\,a^{1/2}}-O(a^{-3/2}).
\]

In both cases $d_1\geq(\tfrac12\sin\tfrac\pi8-o(1))a^{-1/2}$, so any
$c<\tfrac12\sin\tfrac\pi8$ is attained once $a$ is large enough that the
explicit $O(1/a)$ errors of \eqref{eq:gabcke} and of the phase expansion
\eqref{eq:cosu} are dominated; $T_0(c)$ is effectively computable because
those error constants are. The boundary intervals of
\eqref{eq:cut-cases} are covered because the two case formulas agree to
leading order at the seams: as $x\to\tfrac12$ from either side, and across
each integer, both give $d_1\approx\tfrac12a^{-1/2}$.
\end{proof}

\begin{remark}[why the critical line has no poles of $d_1=d_2$]
A pole of $d_1$ and $d_2$ occurs when the two directions $e^{-i\omega}$
and $e^{i(\omega+\psi)}$ of \eqref{eq:R-two-dirs} are parallel, so that the
linear system of Proposition~\ref{prop:weights}(ii) degenerates and the
common denominator $\sin(2\omega+\psi)$ vanishes. On the line that
denominator is $\sin2u=2\sin u\cos u$, which vanishes twice per unit
interval of $T$, near $\{T\}=\tfrac14$ and $\tfrac34$. But both vanishings
belong to the factor $\sin u$, which cancels identically against the
numerator in \eqref{eq:exact-line}; the factor $\cos u$, the only one
that could produce a pole, stays bounded away from zero by
\eqref{eq:cosu-bound}. Off the line the
cancellation is spoiled, and the same two zeros become genuine poles. The
next proof bounds the width of the affected windows in each unit interval.
\end{remark}

\subsection{Proof of Proposition~\ref{prop:weights}(iv): off the line, \texorpdfstring{$d_1$ and $d_2$}{d1 and d2} are positive except on two windows}\label{sec:proof-iv}

\begin{proof}
Write $u'=\omega+\psi/2$ and let $\tau=\arg R-\psi/2\pmod\pi$, folded to
$[-\tfrac\pi2,\tfrac\pi2)$, measure the tilt of $R$ away from the bisector
direction of the two summand phases. Up to a common positive factor and a
common sign, \eqref{eq:d1}--\eqref{eq:d2} read
\begin{equation}\label{eq:offline-form}
d_1\propto\frac{\sin(u'-\tau)}{\sin 2u'},
\qquad
d_2\propto\frac{\sin(u'+\tau)}{\sin 2u'}.
\end{equation}
Two inputs control the picture.

\emph{(i) The tilt $\tau$ is small.} The first term of Siegel's asymptotic
expansion of $R$ at fixed $\sigma$ is a real multiple of $e^{i\psi/2}$ up
to sign and up to a phase error $O_\sigma(1/t)$; the deviation of the exact
$R$ from that argument comes from the next term of the expansion, which is
smaller by a factor $t^{-1/2}$. Hence
$|\tau|\leq C_2(\sigma)\,t^{-1/2}$.

\emph{(ii) The phase $u'$ has the same skeleton as on the line.} Since
$\psi(\sigma+it)=-2\vartheta(t)+O_\sigma(1/t)$ uniformly for $\sigma$ in
compacta, $u'=u+O_\sigma(1/t)$, and \eqref{eq:cosu}--\eqref{eq:cosu-bound}
apply: $|\cos u'|\geq\sin\tfrac\pi8-o(1)$, so the estimates predict
two candidate zeros of $\sin u'$ per unit interval, at fractional parts
converging to $\tfrac14$ and $\tfrac34$, each crossed with slope
$|du'/d\{T\}|=\pi+O(1/T)$. Exactly two simple roots, and the signs on
their two sides, need a uniform derivative bound and a radial-sign
estimate that are not written here; the windows below are those
predicted candidates.

Away from those zeros (say $|\sin u'|\geq2|\tau|$) the ratios
$\sin(u'\mp\tau)/\sin u'$ are positive, so \eqref{eq:offline-form} carries
the same signs as on the line, and the Step-3/Step-4 analysis of the
critical-line proof, with the $O_\sigma$ errors absorbed into the
constants, gives positivity of both weights. Near a zero of $\sin u'$ the
numerator of $d_1$ crosses zero at $u'=\tau$ while the denominator crosses
at $u'=0$: between the two crossings, an interval of $u'$-length exactly
$|\tau|$, the signs disagree and $d_1<0$; mirror-symmetrically $d_2<0$ on
the interval of length $|\tau|$ on the other side. Converting $u'$-length
to $\{T\}$-length by the slope $\pi+O(1/T)$ bounds each window by
$C_2(\sigma)\,t^{-1/2}/\pi\asymp C(\sigma)/T$, since
$t=I(T)\asymp2\pi T^{2}$, centered $O(1/T)$ from the denominator zero. At
the denominator zero itself the two numerators are $\mp\sin\tau$, equal
and opposite, so $d_1/d_2\to-1$ there, exactly as stated in
Proposition~\ref{prop:weights}(iv). On the critical line $\tau\equiv0$
exactly (Step~1 of the critical-line proof), the two crossings coincide,
and the windows are empty, consistent with no poles there.
\end{proof}

\subsection{Where are the poles?}\label{sec:pole-locations}

Off the line the exact $d_1$ and $d_2$ (equivalently $R_{1}$ and $R_{2}$)
have genuine poles at fractional heights $\{T\}\approx\tfrac14$ and
$\tfrac34$, but they are not exactly at those fractions. Here we locate
them precisely.

A pole occurs when the two next summands have the same argument: the two
unit directions $e^{-i\omega}$ and $e^{i(\omega+\psi)}$ of
\eqref{eq:R-two-dirs} become parallel, the pair no longer spans the plane,
and the common denominator $\sin(2\omega+\psi)$ of
\eqref{eq:d1}--\eqref{eq:d2} vanishes. Subtracting one argument from the
other produces the pole condition, their difference being a multiple of
$\pi$ (even $k$ is codirected, odd $k$ is antiparallel):
\begin{equation}\label{eq:pole-condition}
2\,\omega \;+\; \arg\chi(s) \;=\; k\pi,
\qquad k\in\mathbb{Z},
\end{equation}
with $\omega$, $s$, and $M$ as in \eqref{eq:IT} and
\eqref{eq:omega-def}.
This formula was used to find the locations of the first $20$ poles
numerically (Table~\ref{tab:poles}).

\begin{table}[h!]
\centering
\begin{tabular}{|c|c|c|c|}
\hline
\textbf{Odd Pole \#} & \textbf{$T$ (odd)} & \textbf{Even Pole \#} & \textbf{$T$ (even)} \\ \hline
1  & 1.268870258145  & 2  & 1.763736013695 \\
3  & 2.261710393365  & 4  & 2.759511841350 \\
5  & 3.258504081544  & 6  & 3.757283144824 \\
7  & 4.256679717535  & 8  & 4.755902912218 \\
9  & 5.255501039250  & 10 & 5.754963311284 \\
11 & 6.254676427788  & 12 & 6.754282121179 \\
13 & 7.254067037740  & 14 & 7.753765522842 \\
15 & 8.253598276826  & 16 & 8.753360246216 \\
17 & 9.253226472440  & 18 & 9.753033785343 \\
19 & 10.252924347790 & 20 & 10.752765174160 \\
\hline
\end{tabular}
\caption{First $20$ poles for $\sigma=0.6$, which is the same result as
for $\sigma=0.4$. The symmetry about $\sigma=\tfrac12$ exists because
$\arg\chi(1-\sigma+it)\equiv\arg\chi(\sigma+it)\pmod{2\pi}$: the identity
$\chi(s)\,\chi(1-s)=1$ with the Schwarz reflection
$\chi(\bar s)=\overline{\chi(s)}$ gives
$\chi(1-\sigma+it)=1/\overline{\chi(\sigma+it)}$, the same input as
Proposition~\ref{prop:d-ratio-reflection}. The pole condition
\eqref{eq:pole-condition} depends on $\sigma$ only through that phase, and
$\omega$ is a function of $T$ alone, so reflecting $\sigma\mapsto1-\sigma$
leaves the equation unchanged and yields the same $T$-solutions. Note the poles occur at approximately
$T=\lfloor T\rfloor+\tfrac14$ and $T=\lfloor T\rfloor+\tfrac34$
(equivalently, an integer $\pm\tfrac14$), and approach those values as
$T\to\infty$.}
\label{tab:poles}
\end{table}

\begin{remark}[the poles cancel in the sum]
The remainder $R$ itself has no poles: it is Siegel's convergent contour
integral, finite and continuous in $T$. Since $R=R_{1}+R_{2}$ exactly
(Theorem~\ref{thm:decomp}), and at a pole the two summand directions
coincide ($e^{i(\omega+\psi)}=e^{-i\omega}$ by
\eqref{eq:pole-condition}), the divergences of $d_1$ and $d_2$ must
cancel exactly: $d_1+d_2$ stays bounded while each weight blows up, which
is the $d_1/d_2\to-1$ behavior of
Proposition~\ref{prop:weights}(iv) seen from the other direction.
\end{remark}

\subsection{The limit profile and the limit waveform}\label{sec:limits}

The two convergence statements used in \S\ref{sec:periodicity} are now
within reach: the machinery of Theorem~\ref{thm:line-bound} proves both. We
begin with the profile of Figure~\ref{fig:d-limit}, from which the
convergence of the pinned waves of Figure~\ref{fig:pinned-waves} follows
as a corollary.

\begin{theorem}[limit profile]\label{thm:d1-limit}
Let $\sigma=\tfrac12$ and fix $x\in(0,1)$. Then
\[
\lim_{\substack{T\to\infty\\ \{T\}=x}}\sqrt{m+1}\;d_1(T)
\;=\;d(x)\;=\;\tfrac12+\mathcal W_{\infty}(x),
\]
with $d$ the closed-form profile \eqref{eq:d-profile} and
$\mathcal W_{\infty}$ the tangent waveform \eqref{eq:W-infty}, the
removable points $x=\tfrac14,\tfrac34$ filled in by continuity
($d(\tfrac14)=\tfrac14$, $d(\tfrac34)=\tfrac34$). The same limit holds
with $d_2$ in place of $d_1$, since $d_2=d_1$ on the line. The
convergence rate is $O(1/T)$.
\end{theorem}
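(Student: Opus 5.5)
The plan is to run the machinery of Theorem~\ref{thm:line-bound} with asymptotic equalities in place of lower bounds. Start from the exact reduction \eqref{eq:exact-line}, $d_1=d_2=r/(2\cos u)$, valid wherever $\cos u\neq0$. By \eqref{eq:cosu-bound} that is every $T\ge T_0$, with $|\cos u|$ bounded below uniformly in $x$. Then split $r$ as in \eqref{eq:r-two-cases} and insert Gabcke's bound \eqref{eq:gabcke}:
\[
\sqrt{m+1}\,d_1=\sqrt{\tfrac{m+1}{a}}\;\frac{(-1)^{N-1}\bigl(C_0(\hat p)+E\bigr)}{2\cos u}+\bigl[N=m+1\bigr].
\]
For fixed $x\in(0,1)\setminus\{\tfrac12\}$ and $T$ large, \eqref{eq:cut-cases} fixes $N$ and gives $\hat p=x\pm\tfrac12+O(1/T)$. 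We also have $\tfrac{m+1}{a}=1+O(1/T)$ by \eqref{eq:a-vs-T}, and $E=O(1/a)$.

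\textbf{The phase.} Use the two angle expansions of $u$ preceding \eqref{eq:cosu}, keeping the sign. This gives $\cos u=(-1)^{m+1}\cos\beta+O(1/T)$, where $\beta=2\pi(x-\tfrac14)(x-\tfrac34)$; note $2\pi x(1-x)-\tfrac{3\pi}8=-\beta$. Since $|\cos\beta|\ge\sin\tfrac\pi8$, dividing costs only $O(1/T)$. Since $C_0$ is smooth on the relevant compacta (its singularity at $\hat p=\tfrac34$ is removable), we can write $C_0(\hat p)=C_0(x\pm\tfrac12)+O(1/T)$.

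\textbf{Case $x<\tfrac12$.} Here $N=m$ and the signs cancel as in Step~3, so the limit is $C_0(x+\tfrac12)/(2\cos\beta)$.

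\textbf{Case $x>\tfrac12$.} Here $N=m+1$ and the signs multiply to $-1$ as in Step~4, so the limit is $1-C_0(x-\tfrac12)/(2\cos\beta)$.

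\textbf{Identifying the closed form.} Set $A=2\pi\hat p^2-\tfrac\pi8$ and $B=2\pi\hat p$. Lemma~\ref{lem:C0-trig} writes $C_0=\cos(A-B)/\cos B$. With $\hat p=x\pm\tfrac12$ one has $\cos B=-\cos 2\pi x$, and $A$ differs from $\beta$ by a fixed multiple of $\tfrac\pi2$ together with $\pm2\pi x$. Expanding $\cos(A\mp B)$ turns each branch into $\tfrac12\bigl(1-\tan(2\pi x)\tan\beta\bigr)=\tfrac12+\mathcal W_\infty(x)$. This is the same computation as in the proof of Proposition~\ref{prop:yinyang-chord}, where $x_0=\cos(B+\beta)/(2\cos B\cos\beta)$. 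Matching it with \eqref{eq:d-profile} is a direct trigonometric check of the $g$-branch and its reflection $1-g(1-x)$. At $x=\tfrac14,\tfrac34$ the formula is $0/0$ in $\cos 2\pi x$. There, work with $C_0$ directly: $C_0$ is continuous at those points, so the limit is $C_0(\tfrac34)/(2\cos 0)=\tfrac14$, and symmetrically $\tfrac34$ at $x=\tfrac34$. This agrees with the continuous extension of $d$.

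\textbf{Main obstacle.} The step I expect to be hardest is the seam $x=\tfrac12$, and more generally uniformity of the $O(1/T)$ rate near it. At fixed $x=\tfrac12$, $T$ sits at distance $O(1/T)$ from the flip of $N$, and which case applies can depend on $m$. I would handle this with a consistency check rather than a case decision. At $\hat p\to1$, case one gives $C_0(1)/(2\cos\beta(\tfrac12))=\cos\tfrac\pi8/(2\cos\tfrac\pi8)=\tfrac12$. At $\hat p\to0$, case two gives $1-\cos\tfrac\pi8/(2\cos\tfrac\pi8)=\tfrac12$. The exact expression is continuous in $T$ across the flip of $N$, because $r$ itself does not depend on the Riemann--Siegel cutoff. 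So both representations are valid up to $O(1/T)$ on an $O(1/T)$-neighbourhood of the seam, and the limit is $\tfrac12=d(\tfrac12)$ there. The rate then follows because every ingredient is $O(1/T)$: $E$, the ratio $(m+1)/a$, the phase remainder, and the shift $\hat p-(x\pm\tfrac12)$. All the constants are uniform for $x$ in compacta of $(0,1)$. Finally, the statement for $d_2$ is immediate since $d_2=d_1$ on the line.
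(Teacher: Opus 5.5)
Your proposal is correct and follows the paper's proof essentially step for step. The paper likewise multiplies the two case formulas of Theorem~\ref{thm:line-bound} by $\sqrt{m+1}$ and converts the branch limits into $\tfrac12(1-\tan\alpha\tan\beta)$, with $\alpha=2\pi x$, by the product formula. There is one small slip in your write-up: it is not $A$ itself that differs from $\beta$ as you describe, but $A-B=\alpha+\beta-\pi$ when $\hat p=x+\tfrac12$, while $A=\beta$ and $B=\alpha-\pi$ when $\hat p=x-\tfrac12$. The only real divergence is at $x=\tfrac12$. There the paper leans on the refined expansion $a=T+\tfrac12-\tfrac1{24T}+O(T^{-2})$, which in fact forces $N=m$ and $\hat p\to1^-$ for all large $T$, so the case does not depend on $m$. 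Your check that both branches give $\tfrac12$ makes that case decision unnecessary.
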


\begin{proof}
Fix $x$ and let $T=m+x\to\infty$. Refining \eqref{eq:a-vs-T} one order,
$a=T+\tfrac12-\tfrac1{24T}+O(T^{-2})$, so the offset $\hat p$ converges
to its case value in \eqref{eq:cut-cases} at rate $1/T$; and
$(m+1)/a\to1$ at the same rate, and all error terms in Steps~2--4 of the
proof of Theorem~\ref{thm:line-bound} are $O(1/a)$. Multiplying the two
case formulas of that proof by $\sqrt{m+1}$ therefore gives
\[
\sqrt{m+1}\,d_1\longrightarrow
\frac{C_0(x+\tfrac12)}{2\cos\bigl(2\pi x(1-x)-\tfrac{3\pi}8\bigr)}
\quad(x<\tfrac12),
\qquad
\sqrt{m+1}\,d_1\longrightarrow 1-\Lambda\bigl(x-\tfrac12\bigr)
\quad(x>\tfrac12).
\]
It remains to identify both branches with
$d(x)=\tfrac12+\mathcal W_{\infty}(x)$. Write $\alpha=2\pi x$ and
$\beta=2\pi(x-\tfrac14)(x-\tfrac34)=2\pi(x^{2}-x)+\tfrac{3\pi}8$, so that
$\mathcal W_{\infty}(x)=-\tfrac12\tan\alpha\tan\beta$.

For $x<\tfrac12$, substitute $\hat p=x+\tfrac12$ into
$C_0(\hat p)=\cos(A-B)/\cos B$ of Lemma~\ref{lem:C0-trig}:
$A-B=2\pi\bigl(\hat p^{2}-\hat p-\tfrac1{16}\bigr)=2\pi x^{2}-\tfrac{5\pi}8$
and $\cos B=\cos(2\pi x+\pi)=-\cos(2\pi x)$, so the branch limit is
\[
\frac{-\cos\bigl(2\pi x^{2}-\tfrac{5\pi}8\bigr)}
{2\cos(2\pi x)\cos\bigl(2\pi x(1-x)-\tfrac{3\pi}8\bigr)}
\;=\;g(x)
\;=\;
\frac{\cos(\alpha+\beta)}{2\cos\alpha\,\cos\beta}
\;=\;\tfrac12\bigl(1-\tan\alpha\tan\beta\bigr)
\;=\;\tfrac12+\mathcal W_{\infty}(x),
\]
which is the first branch of \eqref{eq:d-profile}: here
$\alpha+\beta=2\pi x^{2}+\tfrac{3\pi}8$, so
$\cos(\alpha+\beta)=-\cos(2\pi x^{2}-\tfrac{5\pi}8)$; the denominator uses
$2\pi x(1-x)-\tfrac{3\pi}8=-\beta$ and the evenness of the cosine; and the
last two steps are the product formula.

For $x>\tfrac12$, take $\hat p=x-\tfrac12$ and recall from the proof of
Lemma~\ref{lem:C0-trig}(ii) that
$\Lambda(\hat p)=\tfrac12(1+\tan A\tan B)$. This time
$A=2\pi\hat p^{2}-\tfrac\pi8=\beta$ exactly, and $B=2\pi\hat p=\alpha-\pi$,
so $\tan B=\tan\alpha$ and
\[
\Lambda\bigl(x-\tfrac12\bigr)
=\tfrac12\bigl(1+\tan\alpha\tan\beta\bigr)
=\tfrac12-\mathcal W_{\infty}(x),
\qquad\text{hence}\qquad
1-\Lambda\bigl(x-\tfrac12\bigr)=\tfrac12+\mathcal W_{\infty}(x).
\]
Since $\mathcal W_{\infty}(1-x)=-\mathcal W_{\infty}(x)$ (the tangent
factor is odd about $x=\tfrac12$ while the quadratic factor is symmetric),
this also equals $1-g(1-x)$, the second branch of \eqref{eq:d-profile}.
The error terms accumulated along the way are all $O(1/T)$.
\end{proof}

\begin{corollary}[the waves converge to the tangent waveform]\label{cor:W-limit}
Let $\sigma=\tfrac12$ and fix $x\in(0,1)$. Then
\[
\lim_{\substack{T\to\infty\\ \{T\}=x}}\mathcal W(T)
\;=\;\mathcal W_{\infty}(x),
\]
again at rate $O(1/T)$.
\end{corollary}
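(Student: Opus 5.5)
The corollary follows from Theorem~\ref{thm:d1-limit} once the two normalizations defining $\mathcal W$ are unwound. On $[m,m+1]$ with $T=m+x$,
\[
\mathcal W(T)=T^{1/2}\bigl(d_1(T)-\Phi(-1,\tfrac12,m+1)\bigr)-(1-x)\varepsilon_m+x\,\varepsilon_{m+1}.
\]
The plan is to show three things: the centered term $P(T)$ tends to $d(x)-\tfrac12=\mathcal W_\infty(x)$, and each endpoint constant $\varepsilon_n$ tends to $0$. All three should hold at rate $O(1/T)$.

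\emph{Step 1: the centered term.} First, $T^{1/2}=\sqrt{m+1}\,(1+O(1/T))$. Theorem~\ref{thm:d1-limit} gives $\sqrt{m+1}\,d_1(T)=d(x)+O(1/T)$, and $d_1=O(T^{-1/2})$, so $T^{1/2}d_1=d(x)+O(1/T)$. For the Lerch term, use the standard asymptotic for the alternating tail,
\[
\sum_{k\ge n}(-1)^{k-n}k^{-1/2}=\tfrac12 n^{-1/2}+O(n^{-3/2}).
\]
This comes from pairing consecutive terms, or from Euler--Boole summation. It gives $T^{1/2}\Phi(-1,\tfrac12,m+1)=\tfrac12+O(1/T)$. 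Combining, $P(T)=d(x)-\tfrac12+O(1/T)=\mathcal W_\infty(x)+O(1/T)$.

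\emph{Step 2: the endpoint constants.} By definition, $\varepsilon_n=n^{1/2}\bigl(\Phi(-1,\tfrac12,n)-d_1(n^-)\bigr)$. The same tail asymptotic gives $n^{1/2}\Phi(-1,\tfrac12,n)=\tfrac12+O(1/n)$. It then suffices to show $n^{1/2}d_1(n^-)\to\tfrac12$ at rate $O(1/n)$, i.e.\ that the profile limit extends to the left endpoint with value $d(1)=\tfrac12$.

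This is the main obstacle. Theorem~\ref{thm:d1-limit} is stated for fixed $x\in(0,1)$, whereas here $x\to1^-$ together with $T$, inside the $O(1/T)$ boundary window of \eqref{eq:cut-cases}. My first attempt would revisit Step~4 of the proof of Theorem~\ref{thm:line-bound} at $x=1^-$, where $N=m+1$ and $\hat p\to\tfrac12$ (the seam noted at the end of that proof). The error terms of \eqref{eq:gabcke} and \eqref{eq:cosu} are uniform in $\hat p$, and $\cos u$ stays bounded away from $0$ by \eqref{eq:cosu-bound}. So evaluating the case formula there gives
\[
\sqrt{m+1}\,d_1=1-\Lambda(\tfrac12)+O(1/T)=\tfrac12+O(1/T),
\]
provided the case assignment of $N$ is unambiguous. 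The window where $a$ lies within $O(1/T)$ of an integer needs a separate check. There, at the case seam, both formulas agree to leading order at $\tfrac12a^{-1/2}$, and a one-order Taylor expansion in $\hat p$ of $C_0$ and $\Lambda$ should confirm the $O(1/T)$ error. As a cross-check at the left endpoint, the handoff relation $d_1(m^+)+d_1(m^-)=m^{-1/2}$ from the footnote shows that $\varepsilon_m\to0$ is equivalent to $\sqrt m\,d_1(m^+)\to\tfrac12$, i.e.\ the same seam statement from the other side.

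\emph{Step 3: assemble.} With $\varepsilon_m,\varepsilon_{m+1}=O(1/T)$ and the chord weights $1-x$ and $x$ bounded, the chord correction is $O(1/T)$. Hence $\mathcal W(T)=\mathcal W_\infty(x)+O(1/T)$, which is the claimed limit and rate.
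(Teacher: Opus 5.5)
Your proposal is correct and follows the paper's proof essentially step for step. It uses the same three-piece split of $\mathcal W(T)$, Theorem~\ref{thm:d1-limit} for $T^{1/2}d_1$, the Boole-summation asymptotic $n^{1/2}\Phi(-1,\tfrac12,n)=\tfrac12+O(1/n)$ for the Lerch term, and the Step-4 case formula of Theorem~\ref{thm:line-bound} with $\hat p\to\tfrac12$ and $\Lambda(\tfrac12)=\tfrac12$ to get $\varepsilon_n=O(1/n)$. The extra seam check you flag is unnecessary: as $T\to n^{-}$ one has $a=n+\tfrac12-\tfrac1{24n}+O(n^{-2})$, which is near a half-integer, so $N=n=m+1$ is unambiguous. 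The integer-$a$ seam occurs only near $\{T\}=\tfrac12$ and never enters the endpoint evaluation.
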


\begin{proof}
On $[m,m+1]$ the pinned wave is
\[
\mathcal W(T)
=T^{1/2}\bigl(d_1(T)-\Phi(-1,\tfrac12,m+1)\bigr)
-(1-x)\,\varepsilon_m+x\,\varepsilon_{m+1},
\]
and each of the three ingredients has a limit.

First, $T^{1/2}d_1=\sqrt{T/(m+1)}\cdot\sqrt{m+1}\,d_1\to
d(x)=\tfrac12+\mathcal W_{\infty}(x)$ by Theorem~\ref{thm:d1-limit},
since $T/(m+1)\to1$ at rate $O(1/T)$.

Second, by Boole summation (the alternating analogue of
Euler--Maclaurin), $\sum_{j\ge0}(-1)^{j}f(n+j)=\tfrac12f(n)
-\tfrac14f'(n)+O(|f'''(n)|)$; applied to $f(y)=y^{-1/2}$ it gives
\[
n^{1/2}\,\Phi(-1,\tfrac12,n)=\tfrac12+\frac1{8n}+O(n^{-3}),
\]
so $T^{1/2}\Phi(-1,\tfrac12,m+1)=\tfrac12+O(1/T)$.

Third, the endpoint shortfalls vanish:
\[
\varepsilon_n
=n^{1/2}\Phi(-1,\tfrac12,n)-n^{1/2}d_1(n^-)
=O(1/n),
\]
because as $T\to n^{-}$ the Step-4 case formula of
Theorem~\ref{thm:line-bound} applies with $\hat p=\tfrac12+O(1/n)$, where
$\Lambda(\tfrac12)=\tfrac12$ by Lemma~\ref{lem:C0-trig}(ii) and $\Lambda$
is continuous, giving $n^{1/2}d_1(n^-)=\tfrac12+O(1/n)$, the same limit
as the Lerch term. The chord contribution
$-(1-x)\varepsilon_m+x\varepsilon_{m+1}$ is therefore $O(1/T)$, and
assembling the three pieces,
\[
\mathcal W(T)
=\Bigl(\tfrac12+\mathcal W_{\infty}(x)\Bigr)-\tfrac12+O(1/T)
=\mathcal W_{\infty}(x)+O(1/T). \qedhere
\]
\end{proof}

\subsection{Uniform bounds: the fraction stays between \texorpdfstring{$\tfrac15$ and $\tfrac45$}{1/5 and 4/5}}\label{sec:d1-bounds}

Theorem~\ref{thm:d1-limit} confines the fraction to $[m_0,1-m_0]$ in the
limit. At finite $T$ the excursions are slightly larger, and round
constants cover every height at once: the fractional summand never uses
less than a fifth, or more than four fifths, of the next term.

\begin{theorem}\label{thm:d1-bounds}
Let $\sigma=\tfrac12$. For every non-integer $T>1$,
\[
\frac15\;<\;\hat d_1(T)=\hat d_2(T)\;<\;\frac45 .
\]
The constants are nearly sharp: no better $T$-independent ones exist than
\[
\inf_{T>1}\hat d_1=m_0=0.2268951\ldots,
\qquad
\sup_{T>1}\hat d_1=0.78499047437\ldots,
\]
the infimum approached but not attained as $T\to\infty$ along
$\{T\}\to x^{*}=0.1629962\ldots$, the supremum attained at
$T=1.85307336433\ldots$.
\end{theorem}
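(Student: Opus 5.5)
The plan is to split the range of $T$ into a large-$T$ regime, handled by the quantitative asymptotics of Theorem~\ref{thm:line-bound} and Theorem~\ref{thm:d1-limit}, and a finite initial segment $1<T\le T_1$, handled by a certified computation. On the line we use the exact reduction \eqref{eq:exact-line}. With \eqref{eq:frac-vs-weight} it gives $\hat d_1=\hat d_2=\sqrt{m+1}\,r/(2\cos u)$, so everything is a statement about the single real function $\sqrt{m+1}\,r/(2\cos u)$ of $T$.

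\textbf{Large $T$.} By Steps 3--4 of the proof of Theorem~\ref{thm:line-bound}, for $x=\{T\}<\tfrac12$ we have
\[
\hat d_1=\sqrt{\tfrac{m+1}{a}}\,\frac{C_0(\hat p)+E}{2\cos u\,(-1)^{m+1}},
\]
and for $x>\tfrac12$ we have $\hat d_1=1-\sqrt{(m+1)/a}\,(\Lambda(\hat p)+E')$. In both cases every error is $O(1/a)$, with explicit constants coming from Gabcke's $|E|\le c_1a^{-1}$ in \eqref{eq:gabcke} and from the phase expansion \eqref{eq:cosu}. Theorem~\ref{thm:d1-limit} identifies the main term as $d(x)$, uniformly in $x$ including the seams. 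We will first show that $d$ takes values in $[m_0,1-m_0]$, where $m_0=\min_{[0,1/2]}g$ and $g$ is the branch function of \eqref{eq:d-profile}. This is a one-variable calculus check: locate the critical point $x^*$ of $g$ on $(0,\tfrac14)$, use the symmetry $d(x)+d(1-x)=1$, and note that $d$ is increasing on $(\tfrac14,\tfrac34)$ because $\tan(2\pi x)\tan\beta$ has constant sign there. Since $m_0>\tfrac15$, it then suffices to make every error term explicit and choose $T_1$ with $C/T_1<m_0-\tfrac15\approx0.027$. The infimum statement also follows from this: the error is $o(1)$, and the profile attains $m_0$ only at $x^*$, so $\hat d_1\to m_0$ along $\{T\}\to x^*$. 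The supremum is not approached at large $T$, since there $\hat d_1\le1-m_0+o(1)<0.785$.

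\textbf{Finite range.} On $1<T\le T_1$, with $T_1$ of order a few hundred, we will evaluate $\hat d_1$ by interval arithmetic. We compute $r=e^{i\vartheta}R=Z-2\sum_{n\le m}n^{-1/2}\cos(\vartheta-t\log n)$ with rigorous enclosures of $Z$ and $\vartheta$, and we bound $|d\hat d_1/dT|$ on each unit interval so that a grid suffices. The open endpoints are integers; there the one-sided limits are controlled by the handoff relation $d_1(m^+)+d_1(m^-)=m^{-1/2}$. Near $T=1.853$ we will locate the maximum by verifying that the derivative has a single sign change, bracketing $0.78499\ldots$. Strict inequality at $\tfrac15$ and $\tfrac45$ then follows from the margins, which are about $0.027$ and $0.015$.

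\textbf{Main obstacle.} The hard step is making the $O(1/T)$ errors fully explicit, especially the phase error in \eqref{eq:cosu} near the seams $x\approx0,\tfrac12,1$. There the case split \eqref{eq:cut-cases} between $N=m$ and $N=m+1$ is ambiguous, and $1/\cos u$ amplifies errors by up to $1/\sin\tfrac\pi8$. I would try to avoid the seam issue by bounding $u$ directly from the exact $t=I(T)$ and the Stirling remainder for $\vartheta$, rather than passing through $\hat p$. If the resulting constant forces $T_1$ too large for comfortable computation, the fallback is to use Gabcke's second correction term $C_1$ to sharpen \eqref{eq:gabcke}.
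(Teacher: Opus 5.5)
Your two-regime plan matches the paper's proof. For large $T$ the paper bounds the deviation of $\hat d_1$ from the profile $d$ using Steps 3--4 of Theorem~\ref{thm:line-bound} and the range $[m_0,1-m_0]$ of $d$; below a threshold ($T=30$ there) it computes. Your version is more careful: the paper's Step 3 takes the size of the collected $E$ and $\delta$ errors from measured convergence rates, and its Step 4 grid is not certified.

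The gap is in the sharpness part. You say the infimum statement follows because the error is $o(1)$ and $d$ attains $m_0$ only at $x^{*}$. That only gives $\hat d_1(m+x^{*})\to m_0$, i.e.\ $\inf\hat d_1\le m_0$. The statement asserts equality with the value not attained, which means $\hat d_1(T)>m_0$ for every non-integer $T>1$. An unsigned bound $|\hat d_1-d(\{T\})|\le C/T$ yields only $\hat d_1>m_0-C/T$, which suffices for $\tfrac15$ but says nothing about $m_0$.

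Since $d$ is flat at its minimum, the sign of the first-order term decides the matter. You need $\hat d_1(m+x)=d(x)+c(x)/T+O(T^{-2})$ for $x$ near $x^{*}$, with $c(x^{*})>0$. The factor $\rho=\sqrt{(m+1)/a}>1$ of the $N=m$ branch pushes upward. But Gabcke's next coefficient, $C_1(\hat p)/a$, and the $O(1/a)$ remainder in the phase expansion behind \eqref{eq:cosu} enter at the same order, so all three coefficients have to be computed, not merely bounded.

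The finite-range certification must likewise prove $\hat d_1>m_0$, not just $\hat d_1>\tfrac15$, and the margin shrinks like $1/T$. The paper reports an interval minimum of $0.227369$ at $T=59.164$, less than $5\times10^{-4}$ above $m_0$. That margin, not the $0.027$ one, is what sizes $T_1$ and the grid. The paper itself supports ``approached from above'' only by such computed minima, so here you would be supplying a proof it does not contain.

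A smaller slip concerns your monotonicity claim. $\tan(2\pi x)\tan\beta$ is not of constant sign on $(\tfrac14,\tfrac34)$; it changes sign at $x=\tfrac12$, and constant sign would not give monotonicity anyway. What the signs give is $\tan(2\pi x)\tan\beta>0$, hence $d<\tfrac12$, on $(0,\tfrac12)$. Uniqueness of the critical point $x^{*}$ there needs a separate derivative check.
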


\begin{proof}
On the critical line $d_2=d_1$ and $|\chi|=1$, so $\hat d_2=\hat d_1$ by
\eqref{eq:frac-vs-weight} and it suffices to bound
$\hat d_1=\sqrt{m+1}\,d_1$.

\emph{Step 1 (exact finite-$T$ form).} Multiplying the two case formulas
in the proof of Theorem~\ref{thm:line-bound} by $\sqrt{m+1}$ and
writing $\rho=\sqrt{(m+1)/a}$,
\begin{align*}
N=m\ (\{T\}<\tfrac12):&\qquad
\hat d_1=\rho\cdot
\frac{C_0(\hat p)+E}{2\cos\bigl(2\pi x(1-x)-\tfrac{3\pi}8+\delta\bigr)},
&&\rho=\sqrt{1+\tfrac{1-\hat p}{a}}
\in\Bigl[1,\sqrt{1+\tfrac1{2a}}\,\Bigr],\\[2pt]
N=m+1\ (\{T\}>\tfrac12):&\qquad
\hat d_1=1-\rho\cdot
\frac{C_0(\hat p)+E}{2\cos\bigl(\pi(\tfrac18-2\hat p^{2})+\delta\bigr)},
&&\rho=\sqrt{1-\tfrac{\hat p}{a}}
\in\Bigl[\sqrt{1-\tfrac1{2a}},1\Bigr],
\end{align*}
with $|E|\leq c_1a^{-1}$ from \eqref{eq:gabcke} and $|\delta|\leq
c_2a^{-1}$ from the phase expansion \eqref{eq:cosu}. At $\rho=1$,
$E=\delta=0$ these are exactly the two branches of
Theorem~\ref{thm:d1-limit}.

\emph{Step 2 (range of the main terms).} On $(0,\tfrac12)$ the branch
function $d(x)$ descends from $\tfrac12$ to its single interior minimum
$m_0=d(x^{*})$ and climbs back to $\tfrac12$ (one sign change of $d'$; the
critical point $x^{*}$ is the root of an explicit elementary equation).
Hence the first main term ranges over $[m_0,\tfrac12]$ and the second over
$[\tfrac12,\,1-\rho\,m_0]$.

\emph{Step 3 (all $T\geq30$).} The total deviation of $\hat d_1$ from its
main term is at most $(\rho-1)\cdot\tfrac12$ plus a term collecting $E$
and $\delta$, with $\rho-1\leq\tfrac1{4a}$; the collected constant is
validated by convergence-rate measurements against
Theorem~\ref{thm:d1-limit} (observed absolute error ${\leq}\,0.008$ at
$T\approx20$, decreasing like $1/T$), giving a total deviation below
$0.02$ for $T\geq30$. Therefore, for $T\geq30$,
\[
\{T\}<\tfrac12:\quad
\hat d_1\in\bigl(m_0-0.02,\ \tfrac12(1+\tfrac1{120})+0.02\bigr)
\subset(0.206,\,0.525),
\qquad
\{T\}>\tfrac12:\quad
\hat d_1\in(0.48,\,0.795),
\]
and both windows lie inside $(\tfrac15,\tfrac45)$.

\emph{Step 4 ($1<T\leq30$, computational).} A dense evaluation of the
exact $\hat d_1$ (grid step $5\times10^{-4}$ on $(1,3]$, $10^{-3}$ on
$(3,12]$, $2\times10^{-3}$ on $(12,30]$;
\texttt{check\_uniform\_bounds.py}) gives the observed range
$[0.2274,\ 0.78499]$, with margin at
least $0.015$ to both $\tfrac15$ and
$\tfrac45$ and sampled variation between adjacent grid points below
$0.004$, a factor of nearly four inside the margin. This covers the
remaining range and, refined near its endpoints, produces the sharp values
in the statement: the maximum $0.78499047437\ldots$ at
$T=1.85307336433\ldots$ (on the $m=1$ interval, where $\rho<1$ pushes the
second branch above its limiting value), and interval minima decreasing
toward $m_0$ from above as $T$ grows ($0.227369$ at $T=59.164$, for
instance).
\end{proof}

\begin{remark}
For the unnormalized weight the theorem reads
$\tfrac15(m+1)^{-1/2}<d_1<\tfrac45(m+1)^{-1/2}$; in particular
$d_1\leq0.78499\ldots/\sqrt2=0.55508\ldots$, attained at the same
$T=1.853\ldots$, and no positive $T$-independent lower bound exists for
$d_1$ itself.
\end{remark}

\clearpage

\section{Lean formalization of \texorpdfstring{$R=R_{1}+R_{2}$}{R = R1 + R2}}
\label{app:lean-decomp}

This appendix reproduces the Lean file formalizing the algebraic content of
Theorem~\ref{thm:decomp}. The file contains no \texttt{axiom} and no
\texttt{sorry}: every step is proved, and the only axioms the final theorem
depends on are Lean's own three (\texttt{propext},
\texttt{Classical.choice}, \texttt{Quot.sound}), as reported by
\texttt{\#print axioms}. It was checked against Lean~4.32.2 with Mathlib at
commit \texttt{905b95818e}. The proof was written by the author, and the
formalization of the proof in Lean code was written with the assistance of
AI coding assistants (Anthropic's Claude).

\subsection{Correspondence with the written proof}

The file follows the proof of Theorem~\ref{thm:decomp} step for step. The
definitions \texttt{d\_j1} and \texttt{d\_j2} are the sine weights
\eqref{eq:d1}--\eqref{eq:d2}, and \texttt{R\_1ps} and \texttt{R\_2ps} are Step~1,
transcribed from \eqref{eq:R1-def}--\eqref{eq:R2-def} in the
$\lceil T\rceil^{\mp iI(T)}$ form rather than the $e^{\mp i\omega}$ form
(the Lean identifiers keep the names \texttt{R\_1ps}, \texttt{R\_2ps} for
what this paper writes $R_1$, $R_2$).
Two small bridges convert them: \texttt{ceil\_cpow} proves
$\lceil T\rceil^{\,ir}=e^{\,ir\log\lceil T\rceil}$ for $T>0$, which with
$\omega(T)=I(T)\log\lceil T\rceil$ (the definition \texttt{omg}) is exactly
$\lceil T\rceil^{\,iI(T)}=e^{i\omega}$; and
\texttt{div\_norm\_eq\_exp\_arg} proves $z/|z|=e^{i\arg z}$ for $z\neq0$, applied
to $\chi$ to give $\chi/|\chi|=e^{i\psi}$. Then
\texttt{factoring\_lemma} is the regrouping of Step~2, pulling out the common
factor $|R|/\sin(2\omega+\psi)$, and
\texttt{key\_algebraic\_identity} is the content of Steps~3--5, namely
\begin{equation}\label{eq:lean-key}
e^{-i\omega}\sin(\omega-\phi+\psi)+e^{\,i(\omega+\psi)}\sin(\omega+\phi)
\;=\;
e^{\,i\phi}\sin(2\omega+\psi);
\end{equation}
it is proved by writing each sine as
$(e^{i\theta}-e^{-i\theta})/2i$ (\texttt{sin\_to\_exp}, itself proved from
Mathlib's $e^{i\theta}=\cos\theta+i\sin\theta$), clearing the denominator,
merging the resulting products of exponentials into single exponentials, and
letting \texttt{ring\_nf} cancel the two copies of $e^{\,i(\psi-\phi)}$. The
closing lines are Step~6: cancel $\sin(2\omega+\psi)$ and recognize
$|R|e^{i\phi}=R$, which is Mathlib's
\texttt{Complex.norm\_mul\_exp\_arg\_mul\_I}.

\subsection{What is proved and what is hypothesized}

The theorem \texttt{R\_1ps\_plus\_R\_2ps\_eq\_R} carries three hypotheses. Two are
needed for the objects to exist at all: $\chi(\sigma+iI(T))\neq0$, without which
$\chi/|\chi|$ is meaningless, and $\sin(2\omega+\psi)\neq0$, without which $d_1$
and $d_2$ of \eqref{eq:d1}--\eqref{eq:d2} are undefined. The third, $T>0$, makes
$\lceil T\rceil$ a positive base, so that its complex power can be written as an
exponential. Nothing is assumed about whether $T$ is an integer.

The conventions match exactly. Steps~2--6, that is
\texttt{factoring\_lemma} and \texttt{key\_algebraic\_identity}, hold for
arbitrary real $\omega,\phi,\psi$ and make no reference to $T$; the only
$T$-dependent ingredient is the bridge \texttt{ceil\_cpow}, which holds
for every $T>0$ with $\omega$ defined as $I(T)\log\lceil T\rceil$. In the
paper's variables that reads $t\log M$, which is precisely the definition
\eqref{eq:omega-def}, for integer and non-integer $T$ alike. The file therefore proves the identity in the same
generality as Theorem~\ref{thm:decomp}: every $T>0$, under the three
hypotheses above.

What remains outside the formalization is not algebra but analysis. The
functional-equation factor $\chi$, the index map $I$, and Siegel's remainder $R$
enter as free parameters, so what is machine-checked is that any $R$ written in
polar form splits as $R_{1}+R_{2}$ with the weights
\eqref{eq:d1}--\eqref{eq:d2}, which is exactly Steps~1--6. That $R$ is Siegel's
remainder, and that $I$ is the map of \S\ref{sec:IT}, are inputs to the
theorem rather than consequences of it. Positivity, analytic continuation,
pole cancellation, and singular limits are likewise outside the file.
The \texttt{\#print axioms} commands were run separately; their console
output is not reproduced here.

\subsection{Source listing}

\begin{lstlisting}[style=leanstyle]
/-
  Formalization of Theorem (R = R_1ps + R_2ps) from
  "Remainder Terms of the Riemann Zeta Function".

  Contains no `axiom` and no `sorry`: every step is proved.
-/
import Mathlib.Analysis.SpecialFunctions.Complex.Circle
import Mathlib.Analysis.SpecialFunctions.Pow.Complex

open Complex Real

/-! ### Phases -/

/-- The phase of Siegel's remainder, `φ = arg R`. -/
noncomputable def phiR (R : ℂ) : ℝ := Complex.arg R

/-- The phase of the functional-equation factor, `ψ = arg χ(σ + i I(T))`. -/
noncomputable def psi (χ : ℂ → ℂ) (σ T : ℝ) (If : ℝ → ℝ) : ℝ :=
  Complex.arg (χ (σ + If T * I))

/-- The turn angle `ω(T) = I(T) · log⌈T⌉`, so that `⌈T⌉^{i I(T)} = e^{iω}`. -/
noncomputable def omg (T : ℝ) (If : ℝ → ℝ) : ℝ := If T * Real.log (⌈T⌉ : ℝ)

/-! ### The two fractional link lengths -/

noncomputable def d_j1 (R : ℂ) (w f p : ℝ) : ℂ :=
  ((‖R‖ * (Real.sin (w - f + p) / Real.sin (2 * w + p)) : ℝ) : ℂ)

noncomputable def d_j2 (R : ℂ) (w f p : ℝ) : ℂ :=
  ((‖R‖ * (Real.sin (w + f) / Real.sin (2 * w + p)) : ℝ) : ℂ)

/-! ### The two partial-summand remainders, exactly as in the paper -/

noncomputable def R_1ps (σ T : ℝ) (R : ℂ) (χ : ℂ → ℂ) (If : ℝ → ℝ) : ℂ :=
  d_j1 R (omg T If) (phiR R) (psi χ σ T If) * ((⌈T⌉ : ℝ) : ℂ) ^ (-(I * (If T : ℂ)))

noncomputable def R_2ps (σ T : ℝ) (R : ℂ) (χ : ℂ → ℂ) (If : ℝ → ℝ) : ℂ :=
  d_j2 R (omg T If) (phiR R) (psi χ σ T If) * ((⌈T⌉ : ℝ) : ℂ) ^ (I * (If T : ℂ))
    * (χ (σ + If T * I) / (‖χ (σ + If T * I)‖ : ℂ))

/-! ### Step 0: elementary bridges -/

/-- `⌈T⌉^{i r} = e^{i r log⌈T⌉}` for `T > 0`, with no hypothesis on the
integrality of `T`. -/
lemma ceil_cpow (T r : ℝ) (hT : 0 < T) :
    (((⌈T⌉ : ℝ)) : ℂ) ^ (I * (r : ℂ))
      = Complex.exp (I * ((r * Real.log (⌈T⌉ : ℝ) : ℝ) : ℂ)) := by
  have hpos : (0 : ℝ) < (⌈T⌉ : ℝ) := lt_of_lt_of_le hT (Int.le_ceil T)
  have hne : (((⌈T⌉ : ℝ)) : ℂ) ≠ 0 := by
    simpa using (ne_of_gt hpos)
  rw [Complex.cpow_def_of_ne_zero hne, ← Complex.ofReal_log hpos.le]
  congr 1
  push_cast
  ring

/-- `z/‖z‖ = e^{i arg z}` for `z ≠ 0`. -/
lemma div_norm_eq_exp_arg (z : ℂ) (hz : z ≠ 0) :
    z / (‖z‖ : ℂ) = Complex.exp (I * (Complex.arg z : ℂ)) := by
  have hn : ((‖z‖ : ℝ) : ℂ) ≠ 0 := by
    simpa using norm_ne_zero_iff.mpr hz
  rw [div_eq_iff hn, mul_comm I ((Complex.arg z : ℝ) : ℂ),
    mul_comm (Complex.exp (((Complex.arg z : ℝ) : ℂ) * I)) ((‖z‖ : ℝ) : ℂ)]
  exact (Complex.norm_mul_exp_arg_mul_I z).symm

/-- Exponential form of the sine of a real argument, valued in `ℂ`. -/
lemma sin_to_exp (θ : ℝ) :
    (Real.sin θ : ℂ)
      = (Complex.exp (I * (θ : ℂ)) - Complex.exp (-I * (θ : ℂ))) / (2 * I) := by
  have h1 : Complex.exp (I * (θ : ℂ)) = Complex.cos (θ : ℂ) + Complex.sin (θ : ℂ) * I := by
    rw [mul_comm]; exact Complex.exp_mul_I _
  have h2 : Complex.exp (-I * (θ : ℂ)) = Complex.cos (θ : ℂ) - Complex.sin (θ : ℂ) * I := by
    have h : (-I * (θ : ℂ)) = (-(θ : ℂ)) * I := by ring
    rw [h, Complex.exp_mul_I, Complex.cos_neg, Complex.sin_neg]; ring
  rw [h1, h2, Complex.ofReal_sin]
  have hI : (2 * I : ℂ) ≠ 0 := by simp [Complex.I_ne_zero]
  field_simp
  ring

/-! ### Steps 3-5: the key algebraic identity -/

/-- The heart of the theorem: the two sine-weighted phasors combine into a
single phasor along `e^{iφ}`. -/
theorem key_algebraic_identity (w f p : ℝ) :
    Complex.exp (-I * (w : ℂ)) * (Real.sin (w - f + p) : ℂ) +
    Complex.exp (I * ((w + p : ℝ) : ℂ)) * (Real.sin (w + f) : ℂ) =
    Complex.exp (I * (f : ℂ)) * (Real.sin (2 * w + p) : ℂ) := by
  rw [sin_to_exp, sin_to_exp, sin_to_exp]
  have hI : (2 * I : ℂ) ≠ 0 := by simp [Complex.I_ne_zero]
  field_simp
  simp only [mul_sub, ← Complex.exp_add]
  push_cast
  ring_nf

/-! ### Step 2: factoring -/

/-- Pulling the common factor `|R|/sin(2ω+ψ)` out front. -/
lemma factoring_lemma (absR w f p : ℝ) :
    ((absR * (Real.sin (w - f + p) / Real.sin (2 * w + p)) : ℝ) : ℂ) *
        Complex.exp (-(I * (w : ℂ))) +
    ((absR * (Real.sin (w + f) / Real.sin (2 * w + p)) : ℝ) : ℂ) *
        Complex.exp (I * (w : ℂ)) * Complex.exp (I * (p : ℂ)) =
    ((absR : ℝ) : ℂ) / (Real.sin (2 * w + p) : ℂ) *
      (Complex.exp (-I * (w : ℂ)) * (Real.sin (w - f + p) : ℂ) +
       Complex.exp (I * ((w + p : ℝ) : ℂ)) * (Real.sin (w + f) : ℂ)) := by
  have hsplit : I * ((w + p : ℝ) : ℂ) = I * (w : ℂ) + I * (p : ℂ) := by
    push_cast; ring
  rw [hsplit, Complex.exp_add]
  push_cast
  ring

/-! ### The theorem -/

/-- **R = R₁ps + R₂ps.**  The only hypotheses are that the index is positive,
that `χ` does not vanish at the point in question, and that the denominator
`sin(2ω+ψ)` is nonzero (without which `d₁, d₂` are undefined). In particular no
assumption is made about `T` being an integer or not. -/
theorem R_1ps_plus_R_2ps_eq_R (σ T : ℝ) (R : ℂ) (χ : ℂ → ℂ) (If : ℝ → ℝ)
    (hT : 0 < T)
    (hχ : χ (σ + If T * I) ≠ 0)
    (h_sin : Real.sin (2 * omg T If + psi χ σ T If) ≠ 0) :
    R_1ps σ T R χ If + R_2ps σ T R χ If = R := by
  have hsinC : (Real.sin (2 * omg T If + psi χ σ T If) : ℂ) ≠ 0 := by
    exact_mod_cast h_sin
  -- e^{iψ} for the normalized functional-equation factor
  have hchi : χ (σ + If T * I) / (‖χ (σ + If T * I)‖ : ℂ)
      = Complex.exp (I * (psi χ σ T If : ℂ)) := div_norm_eq_exp_arg _ hχ
  -- the two cpow factors become e^{∓iω}
  have hpow : (((⌈T⌉ : ℝ)) : ℂ) ^ (I * (If T : ℂ))
      = Complex.exp (I * (omg T If : ℂ)) := by
    rw [ceil_cpow T (If T) hT]; rfl
  have hpow' : (((⌈T⌉ : ℝ)) : ℂ) ^ (-(I * (If T : ℂ)))
      = Complex.exp (-(I * (omg T If : ℂ))) := by
    have h : -(I * (If T : ℂ)) = I * ((-(If T) : ℝ) : ℂ) := by push_cast; ring
    rw [h, ceil_cpow T (-(If T)) hT]
    congr 1
    simp only [omg]
    push_cast
    ring
  -- |R| e^{iφ} = R
  have hR : ((‖R‖ : ℝ) : ℂ) * Complex.exp (I * (phiR R : ℂ)) = R := by
    rw [mul_comm I ((phiR R : ℝ) : ℂ)]
    exact Complex.norm_mul_exp_arg_mul_I R
  unfold R_1ps R_2ps d_j1 d_j2
  rw [hchi, hpow, hpow', factoring_lemma, key_algebraic_identity]
  -- cancel sin(2ω+ψ)
  field_simp
  linear_combination hR
\end{lstlisting}

\section{Lean formalization of the critical line: \texorpdfstring{$d_1=d_2$, $|R_{1ps}|=|R_{2ps}|$, and equal legs}{d1 = d2, |R1ps| = |R2ps|, and equal legs}}
\label{app:lean-critical}

This appendix reproduces the Lean file establishing the three critical-line
statements of the paper: Proposition~\ref{prop:weights}(iii) in its compact
form \eqref{eq:R2-conj-R1} ($d_1=d_2$ and $|R_{1ps}|=|R_{2ps}|$),
Corollary~\ref{cor:equal-legs}
($L_1=L_2$), and Corollary~\ref{cor:bisector-proj} (the bisector point projects
onto $\tfrac{\zeta}{2}$). As in Appendix~\ref{app:lean-decomp} the file contains
no \texttt{axiom} and no \texttt{sorry}, and each result depends only on Lean's
own three axioms (\texttt{propext}, \texttt{Classical.choice},
\texttt{Quot.sound}) as reported by \texttt{\#print axioms}; it was checked
against Lean~4.32.2 with Mathlib at commit \texttt{905b95818e}. The Lean code was
written with the assistance of AI coding assistants (Anthropic's Claude).

\subsection{The chain of implications}

All three statements descend from a single fact, and the file is organized around
it.

\emph{Step 0: the two partial sums are conjugate.} At $\sigma=\tfrac12$ one has
$\overline{-s}=s-1$, so $n^{\,s-1}=\overline{n^{-s}}$ for every $n$; summing over
$n\le m$ gives $\Sigma_2=\chi\,\overline{\Sigma_1}$. This is
\texttt{partial\_sum\_reflect}, proved from Mathlib's \texttt{Complex.cpow\_conj}.

\emph{Step 1: the rotated remainder is real.} Write $\psi=\arg\chi$ and let
$u=e^{\,i\psi/2}$, a square root of $\chi$. With $R=\zeta-\Sigma_1-\Sigma_2$,
\begin{equation}\label{eq:lean-hinge}
e^{-i\psi/2}R
\;=\;
\underbrace{e^{-i\psi/2}\zeta}_{\text{real}}
\;-\;
\underbrace{\bigl(e^{-i\psi/2}\Sigma_1+e^{\,i\psi/2}\overline{\Sigma_1}\bigr)}_{=\;2\Re\left(e^{-i\psi/2}\Sigma_1\right)}
\;\in\;\mathbb{R},
\end{equation}
the first term being real because the functional equation at $\sigma=\tfrac12$
reads $\zeta=\chi\overline{\zeta}$, so that
$e^{-i\psi/2}\zeta=\overline{e^{-i\psi/2}\zeta}$. In the file this is
\texttt{rotated\_remainder\_self\_conj} (the algebra, from $u\bar u=1$ and
$u^2=\chi$) and \texttt{rotated\_remainder\_im\_eq\_zero} (its imaginary part
vanishes). It is the fact quoted in \S\ref{sec:ps-ak-r2} and
Appendix~\ref{sec:phase-bound} as $R=e^{-i\vartheta}r$ with $r$ real and
$\psi=-2\vartheta$ on the line. Stated as the reality of $e^{-i\psi/2}R$ it is
exact; stated through the argument it holds modulo $\pi$, which is visible in
$R=2d_1\cos(\omega+\tfrac\psi2)e^{\,i\psi/2}$, whose argument is
$\tfrac\psi2+\pi$ wherever the cosine is negative. The modulo-$\pi$ version is
all the sequel uses.

\emph{Step 2: $d_1=d_2$.} Since $\sin A-\sin B=2\sin\frac{A-B}{2}\cos\frac{A+B}{2}$,
the two numerators of \eqref{eq:d1}--\eqref{eq:d2} differ by
\begin{equation}\label{eq:lean-numerators}
\sin(\omega-\phi+\psi)-\sin(\omega+\phi)
\;=\;
2\,\sin\!\Bigl(\frac{\psi}{2}-\phi\Bigr)\cos\!\Bigl(\omega+\frac{\psi}{2}\Bigr),
\end{equation}
and Step~1 kills the first factor: \texttt{sin\_half\_sub\_arg\_eq\_zero} turns
the reality of $e^{-i\psi/2}R$ into $\sin(\tfrac\psi2-\arg R)=0$, using
$\cos\arg R=\Re R/|R|$ and $\sin\arg R=\Im R/|R|$. Hence
\texttt{d1\_eq\_d2\_of\_im\_eq\_zero} and its critical-line specialization
\texttt{d1\_eq\_d2}. Note that $\omega$ is an arbitrary real throughout: the
equality of the weights depends on the direction of $R$ relative to $\chi$ and
not at all on the turn angle.

\emph{Step 3: $|R_{1ps}|=|R_{2ps}|$.} Both phases are unimodular, so this is
$|d_1|=|d_2|$: \texttt{norm\_R1ps\_eq\_\allowbreak norm\_R2ps}. The sharper form
\eqref{eq:R2-conj-R1}, $R_{2ps}=\chi\,\overline{R_{1ps}}$, is
\texttt{R2ps\_eq\_chi\_conj\_R1ps}, and it is what feeds Step~4; it needs only
that $d_1$ is real and $d_1=d_2$.

\emph{Step 4: the legs.} Combining Steps~0 and~3,
$B_2=\Sigma_2+R_{2ps}=\chi\,\overline{\Sigma_1+R_{1ps}}=\chi\overline{B_1}$
(\texttt{leg2\_eq\_\allowbreak chi\_conj\_\allowbreak leg1}), and since $|\chi|=1$ on the line the two
legs have the same length (\texttt{legs\_norm\_eq}). Finally, from
$\zeta=B_1+\chi\overline{B_1}$,
\begin{equation}\label{eq:lean-proj}
e^{-i\psi/2}\zeta
=e^{-i\psi/2}B_1+\overline{e^{-i\psi/2}B_1}
=2\,\Re\bigl(e^{-i\psi/2}B_1\bigr),
\end{equation}
which says that the perpendicular projection of the bisector point onto the line
$\mathbb{R}\,e^{\,i\psi/2}$ carrying $\zeta$ lands exactly on $\tfrac{\zeta}{2}$:
the apex of the isosceles triangle sits over the midpoint of its base. This is
\texttt{proj\_eq\_half\_zeta}, and it is the dotted bisector line drawn in
Figures~\ref{fig:full-spirals} and~\ref{fig:legs}.

\subsection{What is proved and what is hypothesized}

Two analytic facts about $\chi$ on the critical line enter as hypotheses:
$|\chi|=1$, used in the form $\chi=e^{\,i\arg\chi}$
(\texttt{eq\_exp\_arg\_of\_norm\_one}), and the functional equation
$\zeta=\chi\overline{\zeta}$, which is $\zeta(s)=\chi(s)\zeta(1-s)$ together with
$1-s=\bar s$ at $\sigma=\tfrac12$. Everything else above is derived, including
the reflection $\Sigma_2=\chi\overline{\Sigma_1}$ of Step~0, which is proved
rather than assumed. The projection statement additionally takes
$\zeta=B_1+B_2$, which is Theorem~\ref{thm:decomp}, formalized in
Appendix~\ref{app:lean-decomp}.

The remainders are written here in the $ps$ notation of
\S\ref{sec:remainders-summary}, in the form $R_{1ps}=d_1e^{-i\omega}$,
$R_{2ps}=d_2e^{\,i(\omega+\psi)}$ of \eqref{eq:R-phasors} (where they are the
plain $R_1$, $R_2$ of \S\ref{sec:decomp}) rather than the
$\lceil T\rceil^{\mp iI(T)}$ form of
\eqref{eq:R1-def}--\eqref{eq:R2-def}; the bridge between them is
\texttt{ceil\_cpow} of Appendix~\ref{app:lean-decomp}. As there, $\zeta$, $\chi$,
$\Sigma_1$ and $\omega$ are free parameters, so what is checked is the geometry
of the critical line and not the identity of the functions involved.

\subsection{Source listing}

\begin{lstlisting}[style=leanstyle]
/-
  Critical-line consequences of the decomposition R = R1ps + R2ps:
    (1) e^{-iψ/2} R is real, i.e. arg R = ψ/2 modulo π,
    (2) d₁ = d₂,
    (3) |R1ps| = |R2ps|, indeed R2ps = χ · conj R1ps,
    (4) the two legs have equal length, and the bisector point projects onto ζ/2.

  Contains no `axiom` and no `sorry`: every step is proved.
-/
import Mathlib.Analysis.SpecialFunctions.Complex.Circle
import Mathlib.Analysis.SpecialFunctions.Pow.Complex

open Complex Real ComplexConjugate

/-! ### Unit phases -/

/-- A purely imaginary exponent gives a unit phase. -/
lemma norm_exp_imag (x : ℝ) : ‖Complex.exp (I * (x : ℂ))‖ = 1 := by
  rw [Complex.norm_exp]
  simp

lemma norm_exp_neg_imag (x : ℝ) : ‖Complex.exp (-(I * (x : ℂ)))‖ = 1 := by
  rw [Complex.norm_exp]
  simp

/-- Conjugation flips the sign of a purely imaginary exponent. -/
lemma conj_exp_imag (x : ℝ) :
    conj (Complex.exp (I * (x : ℂ))) = Complex.exp (-(I * (x : ℂ))) := by
  rw [← Complex.exp_conj]
  congr 1
  simp [Complex.ext_iff]

lemma conj_exp_neg_imag (x : ℝ) :
    conj (Complex.exp (-(I * (x : ℂ)))) = Complex.exp (I * (x : ℂ)) := by
  rw [← Complex.exp_conj]
  congr 1
  simp [Complex.ext_iff]

/-- A number of unit modulus is the exponential of its argument: this is the
form in which `|χ| = 1` on the critical line will be used. -/
lemma eq_exp_arg_of_norm_one {χ : ℂ} (hχ : ‖χ‖ = 1) :
    χ = Complex.exp (I * ((Complex.arg χ : ℝ) : ℂ)) := by
  have h := Complex.norm_mul_exp_arg_mul_I χ
  rw [hχ] at h
  rw [mul_comm I _]
  simpa using h.symm

/-! ### Step 0: the second partial sum is the reflection of the first

At `σ = 1/2` we have `n^{s-1} = conj (n^{-s})` for every `n`, because
`conj(-s) = s - 1` there. Summing, `Σ₂ = χ · conj Σ₁`, which is the hypothesis
the results below are stated with. -/

lemma conj_natCast_cpow (n : ℕ) (z : ℂ) :
    conj ((n : ℂ) ^ z) = (n : ℂ) ^ (conj z) := by
  have harg : ((n : ℂ)).arg ≠ π := by
    rw [show ((n : ℂ)) = (((n : ℝ)) : ℂ) by push_cast; ring,
      Complex.arg_ofReal_of_nonneg (by positivity)]
    exact Ne.symm Real.pi_ne_zero
  rw [Complex.cpow_conj _ _ harg, Complex.conj_natCast]

/-- On the critical line the two Dirichlet partial sums are conjugate. -/
theorem partial_sum_reflect (m : ℕ) (t : ℝ) :
    ∑ n ∈ Finset.Icc 1 m, ((n : ℂ)) ^ (((1 / 2 : ℂ) + (t : ℂ) * I) - 1)
      = conj (∑ n ∈ Finset.Icc 1 m, ((n : ℂ)) ^ (-((1 / 2 : ℂ) + (t : ℂ) * I))) := by
  rw [map_sum]
  refine Finset.sum_congr rfl fun n _ => ?_
  rw [conj_natCast_cpow]
  congr 1
  simp [Complex.ext_iff]
  norm_num

/-! ### Step 1: the rotated remainder is real

On the critical line the functional equation reads `ζ = χ · conj ζ`, and the
second partial sum is the reflection of the first, `S₂ = χ · conj S₁`. With
`u = e^{iψ/2}` a square root of `χ`, the remainder `R = ζ - S₁ - S₂` satisfies
`conj (conj u * R) = conj u * R`, that is, `e^{-iψ/2} R` is real. -/

lemma rotated_remainder_self_conj (ζ S₁ χ u : ℂ)
    (hu : u * conj u = 1) (hu2 : u ^ 2 = χ)
    (hζ : ζ = χ * conj ζ) :
    conj (conj u * (ζ - S₁ - χ * conj S₁)) = conj u * (ζ - S₁ - χ * conj S₁) := by
  simp only [map_sub, map_mul, Complex.conj_conj]
  rw [← hu2] at hζ ⊢
  simp only [map_pow]
  linear_combination (-conj u) * hζ
    + (-u * conj ζ + u * conj S₁ - conj u * S₁) * hu

lemma rotated_remainder_im_eq_zero (ζ S₁ χ : ℂ) (p : ℝ)
    (hp : χ = Complex.exp (I * (p : ℂ)))
    (hζ : ζ = χ * conj ζ) :
    (Complex.exp (-(I * ((p / 2 : ℝ) : ℂ))) * (ζ - S₁ - χ * conj S₁)).im = 0 := by
  have hu : Complex.exp (I * ((p / 2 : ℝ) : ℂ)) * conj (Complex.exp (I * ((p / 2 : ℝ) : ℂ))) = 1 := by
    rw [conj_exp_imag, ← Complex.exp_add]
    simp
  have hu2 : Complex.exp (I * ((p / 2 : ℝ) : ℂ)) ^ 2 = χ := by
    rw [hp, pow_two, ← Complex.exp_add]
    congr 1
    push_cast
    ring
  have hc : conj (Complex.exp (I * ((p / 2 : ℝ) : ℂ)))
      = Complex.exp (-(I * ((p / 2 : ℝ) : ℂ))) := conj_exp_imag _
  rw [← hc]
  exact Complex.conj_eq_iff_im.mp
    (rotated_remainder_self_conj ζ S₁ χ _ hu hu2 hζ)

/-! ### Step 2: reality of `e^{-ip/2} R` says `arg R = p/2` modulo `π` -/

lemma sin_half_sub_arg_eq_zero (R : ℂ) (p : ℝ) (hR : R ≠ 0)
    (him : (Complex.exp (-(I * ((p / 2 : ℝ) : ℂ))) * R).im = 0) :
    Real.sin (p / 2 - Complex.arg R) = 0 := by
  have hexp : Complex.exp (-(I * ((p / 2 : ℝ) : ℂ)))
      = ((Real.cos (p / 2) : ℝ) : ℂ) - ((Real.sin (p / 2) : ℝ) : ℂ) * I := by
    have h : -(I * ((p / 2 : ℝ) : ℂ)) = ((-(p / 2) : ℝ) : ℂ) * I := by
      push_cast; ring
    rw [h, Complex.exp_mul_I, ← Complex.ofReal_cos, ← Complex.ofReal_sin,
      Real.cos_neg, Real.sin_neg]
    push_cast
    ring
  have hnorm : ‖R‖ ≠ 0 := norm_ne_zero_iff.mpr hR
  rw [hexp] at him
  simp only [Complex.mul_im, Complex.sub_re, Complex.sub_im, Complex.mul_re,
    Complex.ofReal_re, Complex.ofReal_im, Complex.I_re, Complex.I_im] at him
  rw [Real.sin_sub, Complex.cos_arg hR, Complex.sin_arg]
  field_simp
  linear_combination -him

/-! ### The weights and the two fractional remainders -/

/-- `d₁ = |R| sin(ω - arg R + ψ)/sin(2ω + ψ)`. -/
noncomputable def d1 (R : ℂ) (w p : ℝ) : ℝ :=
  ‖R‖ * (Real.sin (w - Complex.arg R + p) / Real.sin (2 * w + p))

/-- `d₂ = |R| sin(ω + arg R)/sin(2ω + ψ)`. -/
noncomputable def d2 (R : ℂ) (w p : ℝ) : ℝ :=
  ‖R‖ * (Real.sin (w + Complex.arg R) / Real.sin (2 * w + p))

/-- `R1ps = d₁ e^{-iω}`. -/
noncomputable def R_1ps (R : ℂ) (w p : ℝ) : ℂ :=
  ((d1 R w p : ℝ) : ℂ) * Complex.exp (-(I * ((w : ℝ) : ℂ)))

/-- `R2ps = d₂ e^{i(ω+ψ)}`. -/
noncomputable def R_2ps (R : ℂ) (w p : ℝ) : ℂ :=
  ((d2 R w p : ℝ) : ℂ) * Complex.exp (I * ((w + p : ℝ) : ℂ))

/-! ### Result 1: the two weights are equal

Note that `ω` is arbitrary here: the equality of the weights depends only on the
direction of `R` relative to `χ`, not on the turn angle. -/

theorem d1_eq_d2_of_im_eq_zero (R : ℂ) (w p : ℝ)
    (him : (Complex.exp (-(I * ((p / 2 : ℝ) : ℂ))) * R).im = 0) :
    d1 R w p = d2 R w p := by
  rcases eq_or_ne R 0 with h | h
  · simp [d1, d2, h]
  · have hs : Real.sin (p / 2 - Complex.arg R) = 0 :=
      sin_half_sub_arg_eq_zero R p h him
    have hnum : Real.sin (w - Complex.arg R + p) = Real.sin (w + Complex.arg R) := by
      have hd := Real.sin_sub_sin (w - Complex.arg R + p) (w + Complex.arg R)
      have harg : (w - Complex.arg R + p - (w + Complex.arg R)) / 2
          = p / 2 - Complex.arg R := by ring
      rw [harg, hs, mul_zero, zero_mul] at hd
      linarith
    rw [d1, d2, hnum]

/-- **d₁ = d₂ on the critical line.** -/
theorem d1_eq_d2 (ζ S₁ χ : ℂ) (w p : ℝ)
    (hp : χ = Complex.exp (I * (p : ℂ)))
    (hζ : ζ = χ * conj ζ) :
    d1 (ζ - S₁ - χ * conj S₁) w p = d2 (ζ - S₁ - χ * conj S₁) w p :=
  d1_eq_d2_of_im_eq_zero _ w p (rotated_remainder_im_eq_zero ζ S₁ χ p hp hζ)

/-! ### Result 2: the two fractional remainders have equal length -/

/-- **|R1ps| = |R2ps|.** -/
theorem norm_R1ps_eq_norm_R2ps (R : ℂ) (w p : ℝ) (hd : d1 R w p = d2 R w p) :
    ‖R_1ps R w p‖ = ‖R_2ps R w p‖ := by
  rw [R_1ps, R_2ps, norm_mul, norm_mul, norm_exp_neg_imag, norm_exp_imag, hd]

/-- The sharper statement: `R2ps` is the mirror image of `R1ps` across the real
axis, followed by the rotation `χ`. -/
theorem R2ps_eq_chi_conj_R1ps (R χ : ℂ) (w p : ℝ)
    (hp : χ = Complex.exp (I * (p : ℂ)))
    (hd : d1 R w p = d2 R w p) :
    R_2ps R w p = χ * conj (R_1ps R w p) := by
  have hexp : Complex.exp (I * ((w + p : ℝ) : ℂ))
      = Complex.exp (I * (p : ℂ)) * Complex.exp (I * ((w : ℝ) : ℂ)) := by
    rw [← Complex.exp_add]
    congr 1
    push_cast
    ring
  rw [R_1ps, R_2ps, map_mul, Complex.conj_ofReal, conj_exp_neg_imag, hd, hexp, hp]
  ring

/-! ### Result 3: the two legs have equal length -/

/-- With `B₁ = S₁ + R1ps` and `B₂ = S₂ + R2ps = χ conj S₁ + R2ps`, the second leg
is the reflection of the first. -/
theorem leg2_eq_chi_conj_leg1 (S₁ R χ : ℂ) (w p : ℝ)
    (hp : χ = Complex.exp (I * (p : ℂ)))
    (hd : d1 R w p = d2 R w p) :
    χ * conj S₁ + R_2ps R w p = χ * conj (S₁ + R_1ps R w p) := by
  rw [map_add, mul_add, R2ps_eq_chi_conj_R1ps R χ w p hp hd]

/-- **L₁ = L₂.** -/
theorem legs_norm_eq (S₁ R χ : ℂ) (w p : ℝ) (hχ : ‖χ‖ = 1)
    (hp : χ = Complex.exp (I * (p : ℂ)))
    (hd : d1 R w p = d2 R w p) :
    ‖χ * conj S₁ + R_2ps R w p‖ = ‖S₁ + R_1ps R w p‖ := by
  rw [leg2_eq_chi_conj_leg1 S₁ R χ w p hp hd, norm_mul, hχ, one_mul,
    RCLike.norm_conj]

/-! ### Result 4: the bisector point projects onto ζ/2 -/

/-- If `B₂ = χ conj B₁` and `ζ = B₁ + B₂`, then the orthogonal projection of the
bisector point `B₁` onto the line `ℝ·e^{iψ/2}` through `ζ` is exactly `ζ/2`. The
apex of the isosceles triangle sits over the midpoint of its base. -/
theorem proj_eq_half_zeta (B₁ ζ χ : ℂ) (p : ℝ)
    (hp : χ = Complex.exp (I * (p : ℂ)))
    (hzeta : ζ = B₁ + χ * conj B₁) :
    (((Complex.exp (-(I * ((p / 2 : ℝ) : ℂ))) * B₁).re : ℝ) : ℂ) *
        Complex.exp (I * ((p / 2 : ℝ) : ℂ))
      = ζ / 2 := by
  set em := Complex.exp (-(I * ((p / 2 : ℝ) : ℂ))) with hem
  set ep := Complex.exp (I * ((p / 2 : ℝ) : ℂ)) with hep
  have hu : ep * em = 1 := by
    rw [hep, hem, ← Complex.exp_add]
    simp
  have hu2 : ep ^ 2 = χ := by
    rw [hep, hp, pow_two, ← Complex.exp_add]
    congr 1
    push_cast
    ring
  have hconj : conj em = ep := by rw [hem, hep, conj_exp_neg_imag]
  set z := em * B₁ with hz
  have key : ((z.re : ℝ) : ℂ) * 2 = z + conj z := by
    rw [Complex.add_conj]
    push_cast
    ring
  have hcz : conj z = ep * conj B₁ := by rw [hz, map_mul, hconj]
  rw [eq_div_iff (two_ne_zero : (2 : ℂ) ≠ 0), hzeta]
  linear_combination ep * key + ep * hz + B₁ * hu + ep * hcz + conj B₁ * hu2
\end{lstlisting}


\end{document}